\documentclass[11pt,twoside,reqno,a4paper]{amsart}

\usepackage{microtype}
\usepackage[OT1]{fontenc}
\usepackage{lmodern}
\usepackage{mathtools}
\usepackage{amssymb}
\usepackage{stmaryrd}
\SetSymbolFont{stmry}{bold}{U}{stmry}{m}{n}
\usepackage[shortlabels]{enumitem}
\usepackage{xcolor}
\usepackage{tikz}
\usepackage{aliascnt}
\usepackage{array}
\usepackage{supertabular}
\makeatletter
\@ifpackagelater{supertabular}{2025/06/26}{}{%
  \@ifundefined{@startpbox@action}{}{%
    \RequirePackage{etoolbox}
    \patchcmd{\x@supertabular}
      {\let\org@startpbox=\@startpbox
       \let\org@endpbox=\@endpbox
       \let\@startpbox=\ST@astartpbox}
      {\let\org@startpbox@action=\@startpbox@action
       \let\@startpbox@action=\ST@astartpbox
       \let\org@endpbox=\@endpbox}
      {}{\PackageError{supertabular-compat}{Cannot patch table initialisation}
           {Use supertabular v4.2d or newer.}}
    \patchcmd{\ST@restore}
      {\let\@startpbox\org@startpbox}
      {\let\@startpbox@action\org@startpbox@action}
      {}{\PackageError{supertabular-compat}{Cannot patch table restoration}
           {Use supertabular v4.2d or newer.}}}}
\makeatother
\newcolumntype{L}[1]{>{\raggedright\arraybackslash}p{#1}}
\usepackage[a4paper,margin=1in]{geometry}
\usepackage{hyperref}
\hypersetup{colorlinks=true,linkcolor=black,citecolor=black,urlcolor=black}
\usepackage[capitalise,noabbrev,nameinlink]{cleveref}

\numberwithin{equation}{section}
\allowdisplaybreaks

\crefformat{equation}{(#2#1#3)}
\Crefformat{equation}{(#2#1#3)}
\crefformat{section}{#2\S#1#3}
\crefformat{subsection}{#2\S#1#3}
\Crefformat{section}{#2\S#1#3}
\Crefformat{subsection}{#2\S#1#3}
\crefformat{appendix}{#2Appendix~#1#3}
\crefformat{subappendix}{#2Appendix~#1#3}
\Crefformat{appendix}{#2Appendix~#1#3}
\Crefformat{subappendix}{#2Appendix~#1#3}
\crefname{part}{Part}{Parts}
\Crefname{part}{Part}{Parts}

\theoremstyle{plain}
\newtheorem{theorem}{Theorem}[section]
\newaliascnt{corollary}{theorem}
\newtheorem{corollary}[corollary]{Corollary}
\aliascntresetthe{corollary}
\newaliascnt{proposition}{theorem}
\newtheorem{proposition}[proposition]{Proposition}
\aliascntresetthe{proposition}
\newaliascnt{lemma}{theorem}
\newtheorem{lemma}[lemma]{Lemma}
\aliascntresetthe{lemma}
\newaliascnt{conjecture}{theorem}

\aliascntresetthe{conjecture}
\theoremstyle{definition}
\newaliascnt{definition}{theorem}
\newtheorem{definition}[definition]{Definition}
\aliascntresetthe{definition}
\newaliascnt{example}{theorem}

\aliascntresetthe{example}

\theoremstyle{remark}
\newaliascnt{remark}{theorem}
\newtheorem{remark}[remark]{Remark}
\aliascntresetthe{remark}

\crefname{corollary}{Corollary}{Corollaries}      \Crefname{corollary}{Corollary}{Corollaries}
\crefname{proposition}{Proposition}{Propositions} \Crefname{proposition}{Proposition}{Propositions}
\crefname{lemma}{Lemma}{Lemmas}                   \Crefname{lemma}{Lemma}{Lemmas}
\crefname{conjecture}{Conjecture}{Conjectures}    \Crefname{conjecture}{Conjecture}{Conjectures}
\crefname{definition}{Definition}{Definitions}    \Crefname{definition}{Definition}{Definitions}
\crefname{example}{Example}{Examples}             \Crefname{example}{Example}{Examples}
\crefname{remark}{Remark}{Remarks}                \Crefname{remark}{Remark}{Remarks}

\newcommand{\restatedhead}{}
\theoremstyle{plain}
\newtheorem*{restatedbody}{\restatedhead}
\newenvironment{restate}[1]
 {\renewcommand{\restatedhead}{\Cref{#1} of \cref{part:introduction}, restated}\begin{restatedbody}}
 {\end{restatedbody}}

\newcommand{\bbP}{\mathbb{P}}
\newcommand{\bbE}{\mathbb{E}}
\newcommand{\bbN}{\mathbb{N}}
\newcommand{\bbZ}{\mathbb{Z}}
\newcommand{\bbR}{\mathbb{R}}
\newcommand{\bbB}{\mathbb{B}}
\newcommand{\cA}{\mathcal{A}}
\newcommand{\cC}{\mathcal{C}}
\newcommand{\cD}{\mathcal{D}}
\newcommand{\cE}{\mathcal{E}}
\newcommand{\cF}{\mathcal{F}}
\newcommand{\cI}{\mathcal{I}}
\newcommand{\cJ}{\mathcal{J}}
\newcommand{\cL}{\mathcal{L}}
\newcommand{\cM}{\mathcal{M}}
\newcommand{\cN}{\mathcal{N}}
\newcommand{\cR}{\mathcal{R}}
\newcommand{\cS}{\mathcal{S}}
\newcommand{\cT}{\mathcal{T}}
\newcommand{\cU}{\mathcal{U}}
\newcommand{\cX}{\mathcal{X}}
\newcommand{\eps}{\varepsilon}
\newcommand{\troot}{\emptyset}
  
\renewcommand{\leq}{\leqslant} \renewcommand{\geq}{\geqslant}
\DeclareMathOperator{\diam}{diam}
\DeclareMathOperator{\Aut}{Aut}
\DeclareMathOperator{\supp}{supp}
\DeclareMathOperator{\rep}{rep}
\DeclareMathOperator{\shrink}{sh}
\DeclarePairedDelimiter{\set}{\lbrace}{\rbrace}
\DeclarePairedDelimiter{\abs}{\lvert}{\rvert}
\DeclarePairedDelimiter{\floor}{\lfloor}{\rfloor}

\makeindex

\ExplSyntaxOn
\NewDocumentCommand{\leanbreaks}{m}
 {
  \tl_set:Nn \l_tmpa_tl {#1}
  \tl_replace_all:Nnn \l_tmpa_tl { . } { .\allowbreak }
  \tl_replace_all:Nnn \l_tmpa_tl { / } { /\allowbreak }
  \tl_use:N \l_tmpa_tl
 }
\ExplSyntaxOff

\NewDocumentCommand{\leancomment}{m t\;}{\unskip}

\usepackage{subfiles}

\begin{document}

\title{The quasi-isometry classes of Galton--Watson trees}

\author[J.~S.~Athreya]{Jayadev S.~Athreya}
\address[J.~S.~Athreya]{Fordham University, Mathematics Department, 113 West 60th Street, Room 813
New York, NY 10023}
\email{jathreya@fordham.edu}
\author[S.~Troscheit]{Sascha Troscheit}
\address[S.~Troscheit]{Department of Mathematics, Box 524, 751 20 Uppsala University, Sweden}
\email{sascha.troscheit@math.uu.se}

\subjclass[2020]{Primary 60J80; Secondary 51F30, 30L10, 28A80, 60K35, 68V20}
\keywords{Galton--Watson tree, quasi-isometry, coarse geometry, random tree, tree-indexed
  Markov chain, quasisymmetric map, fractal percolation, formalised mathematics}

\dedicatory{Dedicated to the memory of Krishna B.\ Athreya (1939--2023).}

\begin{abstract}
  We classify, up to quasi-isometry, the large-scale geometry of Galton--Watson trees for every
  finitely supported offspring distribution. Apart from the trivial finite diameter regimes, we
  condition on infinite diameter. We find that the remaining classes are the \emph{ray class}, the
  \emph{full tree class} (which includes the deterministic binary tree), one \emph{chain class}
  $(\mathrm{C}_\Lambda)$ for every possible branching semigroup $\Lambda$, and the \emph{bushy}
  class. Two independent trees almost surely admit a root-preserving quasi-isometry when their
  offspring distributions belong to the same class and are almost surely \textit{not}
  quasi-isometric when they belong to different classes. Any two survival-conditioned supercritical
  realisations with finitely supported offspring laws nevertheless almost surely admit
  quasi-isometric embeddings in both directions. 
  
  We prove that the class of a conditioned
  infinite realisation depends only on the support of the offspring distribution, not its specific
  distribution. For offspring distributions supported on $\set{1,2}$, we additionally obtain an explicit
  exponential tail bound (in the constant $D$) for the probability of non-existence of a
  root-preserving $D$-quasi-isometry. The classification also implies that the corresponding random
  Cantor boundaries are almost surely quasisymmetrically equivalent.
  
  Conditioned on nonextinction,
  this applies to general strongly separated fractal percolation in the nontrivial supercritical
  phase, even when the underlying self-similar iterated function systems and retention parameters
  differ. We also classify two families of binary trees with continuous random branching times. Our
  proofs use new automorphism-matching theorems for random graph labellings of trees, based on
  contraction estimates for mismatch potentials. These matching results for Markov labellings on the
  binary tree are fairly general and may be of independent interest. The quasi-isometry
  classification, mutual embeddability and both matching theorems are formally verified in Lean.
\end{abstract}

\maketitle

\vspace{-1em}
\setcounter{tocdepth}{1}
\makeatletter
\begingroup
\small
\setlength{\baselineskip}{12.0pt}
\renewcommand{\l@part}{\@tocline{-1}{2pt plus2pt}{0pt}{}{\bfseries}}
\tableofcontents
\par\removelastskip
\endgroup
\vskip28pt plus4pt\relax
\makeatother

\vspace{-1em}

\part{Introduction and main results}\label{part:introduction}

\section{Introduction}

\paragraph*{\bf Random branching} Galton--Watson trees are among the canonical models of random branching. They arise throughout
probability, from population dynamics and percolation to random graphs and random fractal
constructions, and often serve as local models for more complicated random objects. Their theory
of extinction, generation growth, and limiting population size is classical. We refer to
Harris~\cite{Harris1963} and Athreya--Ney~\cite{AthreyaNey1972} for the foundations of the
subject, and to Lyons--Peres~\cite{LyonsPeres2016} for a modern account of branching processes and
probability on trees.

\paragraph*{\bf Large-scale geometry} By contrast, much less is known about the large-scale metric geometry of an entire realisation.
With the graph metric, a Galton--Watson tree is a random metric space, and its geometry can
be studied up to \emph{quasi-isometry}, the equivalence relation that retains distances
at large scales while discarding uniformly bounded errors. Quasi-isometry is central to geometric
group theory, where it organises spaces, in particular Cayley graphs of finitely generated groups,
by their coarse geometry, a perspective initiated by, among others, Gromov. See de la
Harpe~\cite{DeLaHarpe2000} and Bridson--Haefliger~\cite{BridsonHaefliger1999} for more background.

\paragraph*{\bf Quasi-isometries and bi-Lipschitz equivalence} The deterministic theory of branching
trees (and their interpretation, for regular trees,  as Cayley graphs of free groups)
suggests a considerable flexibility. All regular trees
of degree at least three are bi-Lipschitz equivalent by Papasoglu~\cite{Papasoglu1995}, and all
\emph{bushy} trees of bounded valence belong to one quasi-isometry class by
Mosher--Sageev--Whyte~\cite[\S2.1]{MosherSageevWhyte2003}. This geometric use of \emph{bushy}
differs from the bushy class (B) in our classification. Whyte~\cite{Whyte1999} further showed
that, for non-amenable uniformly discrete spaces of bounded geometry, every quasi-isometry lies a
bounded distance from a bi-Lipschitz map. Lindquist~\cite{Lindquist2018} obtained a related result
for spaces satisfying linear isoperimetric inequalities and applied it to regularly branching
trees and hyperbolic fillings of compact Ahlfors-regular spaces. These results cover the homogeneous
case in which branching occurs at a uniformly bounded distance from every vertex.
The distinction between quasi-isometry and bi-Lipschitz equivalence remains important in familiar
coarse models. For $n\geq2$, every separated net in $\bbR^n$ is quasi-isometric to $\bbZ^n$, but
Burago--Kleiner~\cite{BuragoKleiner1998} and McMullen~\cite{McMullen1998} constructed nets that
are not bi-Lipschitz equivalent to $\bbZ^n$. Their obstruction comes from density fluctuations
across scales, a phenomenon excluded in Whyte's non-amenable setting. For more recent results in
this direction (focusing on Delone sets linked to substitution tilings), see
Smilansky--Solomon~\cite{SmilanskySolomon2026}.

\paragraph*{\bf Nonhomogeneity}
Galton--Watson trees need not be homogeneous in the sense of large-scale geometry. For example, if
the probability of having a single offspring is positive, infinite trees almost surely produce
arbitrarily long chains without branching, while if the probability of having zero offspring is
positive, leaves will produce finite bushes of arbitrarily large depth. Neither feature is confined
to a bounded part of the tree, and each changes its large-scale geometry. The classical theory
therefore does not decide whether two independent realisations from the same offspring law are
quasi-isometric, still less whether realisations from different laws can have the same coarse
geometry.

\paragraph*{\bf Random metric spaces} The question for Galton--Watson trees
is an instance of a wider problem for random metric spaces: when do two independent
realisations have the same large-scale geometry almost surely? Benjamini~\cite{Benjamini2013}
discusses this question in several settings. For random subsets of the line,
Peled~\cite{Peled2010} obtained quantitative finite-scale estimates, and
Basu--Sly~\cite{BasuSly2014} proved that two independent Poisson processes are almost surely
roughly isometric. Related embedding questions for independent, identically distributed (i.i.d.) fields were studied by
Basu--Sidoravicius--Sly~\cite{BasuSidoraviciusSly2018}. By contrast,
Li--Yu--Zheng~\cite{LiYuZheng2026} recently proved that two independent random subsets of a
product of regular trees are almost surely not quasi-isometric. Galton--Watson trees provide a
natural rank-one family: their branching favours flexibility, but their chains and finite bushes
create inhomogeneities of flatness on every scale.

\paragraph*{\bf Quasisymmetry} There is also a boundary version of the same geometric problem.
A locally finite tree is hyperbolic, and a
quasi-isometry induces a quasisymmetric map between its visual boundaries. This is part of the
general correspondence between the coarse geometry of hyperbolic spaces and the quasisymmetric
geometry of their boundaries developed by Bonk--Schramm~\cite{BonkSchramm2000} and described by
Buyalo--Schroeder~\cite{BuyaloSchroeder2007}, inspired by classical results of Mostow. For a non-ray
Galton--Watson tree conditioned on having infinite diameter, the boundary is a random Cantor space.
In standard geometric constructions such boundaries are coding spaces of random fractals. Thus a
classification of the trees produces maps between random fractals, rather than only an equality
between numerical invariants, something we will further discuss below.

\paragraph*{\bf Classification} In this paper we give a complete quasi-isometry classification for every finitely supported
offspring distribution. The answer is determined by the occurrence of leaves and unary offspring,
together with one arithmetic invariant generated by the possible branching excesses. In
particular, only the support of the offspring distribution is of relevance for the classification,
not the exact probabilities.
\subsection{The classification}\label{sec:classification}

Subcritical Galton--Watson trees and trees with a non-deterministic critical offspring law are
finite almost surely. The same is true of every extinct realisation of a supercritical law. Since
all finite metric spaces are quasi-isometric, these realisations form the single class
$(\mathrm{Fin})$ and it remains to classify infinite realisations. The only nonstandard invariant in
the classification is the \emph{branching semigroup}:

\begin{definition}[Branching semigroup]\label{def:branching-semigroup}
  Suppose that an offspring distribution $\theta$ satisfies $\theta_0=0$. Its
  \emph{branching semigroup} is the additive submonoid
  \[
    \Lambda_\theta
    \coloneqq\left\langle k-1 \colon k\geq1\text{ and }\theta_k>0\right\rangle
    \subseteq\bbN_0.
  \]
\end{definition}

\paragraph*{\bf Branching, clustering, and semigroups}
A vertex with $k$ children contributes the excess $k-1$. Thus $\Lambda_\theta$ records the possible total
excesses that branch points can realise. If $\theta_2 >0$, then
$\Lambda_{\theta} = \bbN_0$. Conversely, every additive submonoid $\Lambda$ of $\bbN_0$ generated by
a nonempty finite set of positive integers is the branching semigroup of some finitely supported
offspring law with $\theta_0=0<\theta_1<1$. We call these the \emph{possible branching
semigroups}. The branching semigroup $\Lambda_{\theta}$ is a coarse invariant only when long unary
chains can isolate such clusters (that is, when $\theta_0=0$, $\theta_1>0$). For each such 
$\Lambda$, we write $(\mathrm{C}_\Lambda)$ for the class of distributions with
branching semigroup $\Lambda$. The following theorem gives the complete 
\emph{root-preserving} quasi-isometry classification.

\begin{theorem}[Complete quasi-isometry classification]\label{thm:trichotomy}
  All finite Galton--Watson realisations form the quasi-isometry class $(\mathrm{Fin})$, and no
  finite tree is quasi-isometric to an infinite tree.

  Let $\theta$ and $\theta'$ be finitely supported offspring distributions, each either
  supercritical or with $\theta_1=1$, and let $\cT,\cT'$ be independent Galton--Watson trees
  with these offspring distributions, conditioned on having infinite diameter. Every such
  distribution belongs to exactly one of the following classes:
  \begin{itemize}
    \item[(R)] $\theta_1=1$, the \emph{ray} class, in which $\cT$ is the ray.
    \item[(F)] $\theta_0=\theta_1=0$, the \emph{full tree} class, in which $\cT$ admits
      a quasi-isometry to the binary tree.
    \item[$(\mathrm{C}_\Lambda)$] $\theta_0=0<\theta_1<1$ and
      $\Lambda_\theta=\Lambda$, the \emph{chain} class indexed by the branching
      semigroups $\Lambda$.
    \item[(B)] $\theta_0>0$, the \emph{bushy} class.
  \end{itemize}
  Two independent realisations belonging to the same class almost surely admit a root-preserving
  quasi-isometry.
  If they belong to different classes, then they are almost surely not quasi-isometric.
\end{theorem}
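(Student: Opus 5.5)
The proof splits into three parts: the finite case together with the partition into classes, the positive direction (same class gives a root-preserving quasi-isometry), and the negative direction (different classes give no quasi-isometry).

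\textbf{Finite case and partition.} A quasi-isometry $f\colon X\to Y$ with $X$ bounded has bounded image. Coarse surjectivity would then force $Y$ to be bounded, so no finite tree is quasi-isometric to an infinite one. Any two bounded nonempty spaces are quasi-isometric via a constant map. That the four classes partition the admissible laws is immediate from the cases $\theta_1=1$, $\theta_0>0$, $\theta_0=0=\theta_1$ and $\theta_0=0<\theta_1<1$. The ray case is trivial: conditioned on survival, the tree is the ray itself.

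\textbf{Positive direction.} For the remaining classes I will reduce to a canonical model and match two independent realisations level by level. This uses an automorphism-matching theorem for Markov labellings of the binary tree: a contraction estimate for a mismatch potential, applied along a tree-indexed Markov chain, yields almost surely a labelled map with bounded distortion.
\begin{itemize}
  \item[(F)] Every vertex has at least two children. Replace each vertex with $k$ children by a binary subtree of depth at most $\lceil\log_2 k\rceil$. Since $k$ is bounded, this is a deterministic root-preserving quasi-isometry to the binary tree.
  \item[(B)] Condition on survival and pass to the reduced tree of vertices with infinite lines of descent; it is Galton--Watson with no leaves. The discarded finite bushes are attached at each vertex, and their depths have exponential tails but are unbounded. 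I plan to absorb them by a coarse contraction: each bush is collapsed onto its attachment point with additive error comparable to its depth. Since the depths are unbounded, this collapse is not by itself a quasi-isometry. The argument must therefore be that the matching between two bushy trees pairs bushes with comparable ones, exploiting the fact that arbitrarily deep bushes occur at all scales in both trees.
  \item[$(\mathrm{C}_\Lambda)$] Encode the tree as a binary-type skeleton whose labels are chain lengths and cluster excesses. Chains of lengths $L$ and $L'$ can be matched with distortion $O(1)$ only when $L/L'$ is bounded, so the matching must pair long chains with long chains. Within a region between long chains, a cluster of total excess $m$ can be regrouped into any other configuration with the same excess, which is why only $\Lambda$ matters.
\end{itemize}
In each case the Markov matching theorem supplies the almost sure matching, and the fact that its hypotheses depend only on which labels have positive probability shows that only the support of $\theta$ matters.

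\textbf{Negative direction.} Here I would use coarse invariants, each holding almost surely.
\begin{itemize}
  \item The ray has one end, while the other classes have uncountably many ends, which separates (R).
  \item (F) has uniformly bounded branching gaps, meaning every vertex lies within bounded distance of a branching point whose two subtrees are both infinite. Arbitrarily long unary chains in $(\mathrm{C}_\Lambda)$ and in the reduced tree of (B) violate this coarsely.
  \item To separate (B) from $(\mathrm{C}_\Lambda)$: in (B) there are arbitrarily deep dead-end bushes, that is, points at distance $R$ from every end for every $R$, whereas in (C) every vertex lies on a geodesic ray. Being within bounded distance of every ray is quasi-isometry invariant up to a multiplicative change of the constants, so this invariant separates the two classes.
  \item To separate $(\mathrm{C}_\Lambda)$ from $(\mathrm{C}_{\Lambda'})$: isolate, between very long chains, a bounded cluster with $e$ incoming and $e+n$ outgoing long chains. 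A quasi-isometry must map long chains near long chains and preserve the counts, because the chains are much longer than the quasi-isometry constants. The number $n$ of extra outgoing chains realised in one tree must therefore lie in $\Lambda$. Since independent realisations almost surely realise every element of their semigroup as such an isolated cluster, the two semigroups must coincide.
\end{itemize}

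\textbf{Main obstacle.} The hardest step will be the bushy case of the positive direction together with the quantitative matching in $(\mathrm{C}_\Lambda)$, namely showing that the mismatch potential contracts so that the matching errors stay uniformly bounded almost surely. I would first try a multi-type supercritical branching argument in which the bad, unmatched pairs form a subcritical process.
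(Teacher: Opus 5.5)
Your negative direction is sound in outline, but the positive direction for the chain and bushy classes does not go through as described, because the mechanism is misidentified.

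\Cref{thm:markov-matching} never gives an almost-sure match at a fixed resolution. Once $\zeta_\alpha\leq\eps$ it bounds the failure probability by $K\zeta_\alpha$, and that failure probability is genuinely positive; for instance, the two root labels are incompatible with positive probability. Only the structural hypotheses (M1) and (M2) are determined by supports. The potential $\zeta_\alpha$ depends on the actual label law, and for raw labels such as exact neck lengths or exact decorated chains it is not small at all.

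The missing idea is a scale $D$ together with the limit $D\to\infty$. Neck lengths are quantised as $\lfloor\log_D m\rfloor$ on the path graph, so adjacent labels give lengths comparable within $D^2$. The double-exponential decay of the class masses then bounds the potential by $16\,\theta_1^{D(D-5/2)}$ (\cref{thm:eta-bound}). Shapes are quantised by a greedy net under $D$-marked quasi-isometries. A shape of size $n$ is comparable to a supported shape of size $O(n/\sqrt D+1)$, so its class has compatible mass at least $e^{-O(n/\sqrt D+1)}$, which the exponential size tail beats for large $D$.

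The profiles, type classes and core-arity probabilities are fixed independently of $D$, hence so are $K$ and $\eps$. The event that a root-preserving quasi-isometry exists does not depend on $D$, so it has probability at least $1-K\zeta(D)\to1$. This limit, not a support-only hypothesis, is why only the support matters. In fact \cref{thm:core-weight-obstruction} shows the constants cannot be chosen independently of the core-arity weights. If you intend to reprove the matching theorem, note that tracking mismatch probabilities alone does not close, because the straight and crossed pairings share the target pair (\cref{thm:full-probability-recursion}). The paper instead contracts a weighted potential at an exponent with $\lambda_\alpha<1$.

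Two further constructions are needed before the theorem applies at all.
\begin{itemize}
\item \emph{Common label law and common presentation.} Two different laws need a common label law and compatible presentations of their arities on $\bbB$. The paper re-cuts the second law's classes at $\lfloor1+\gamma(D^k-1)\rfloor$ with boundary randomisation (\cref{thm:chain-coupling}). It then builds binary profiles for the arities $a+1$, with $a$ an atom of $\Lambda$; these lie in both supports. Every other arity is obtained by composing these profiles through leaf replacement, so that (M1) and (M2) hold. Your regrouping heuristic is the right intuition, but it has to be implemented this way: for an arbitrary choice of profiles the matching can fail even when the semigroups agree (\cref{sec:profile-obstruction}).
\item \emph{Labels independent of types.} The Markov model requires labels independent of the type transitions. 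In the bushy regime with $J\geq3$, however, the exit bouquet of a shape is correlated with the arity of its split (\cref{thm:split-joint}). The paper therefore couples each conditional shape law $\mu_k$ with the mixture $\mu$ and reads the label from the partner (\cref{thm:relabel}), before gluing the marked comparisons. Your bushy paragraph stops exactly where this is needed.
\end{itemize}

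In the negative direction, ends, bushiness and arm counts are valid substitutes for the paper's three rays, thin balls and sphere counts, though you must still prove that bushiness is a quasi-isometry invariant. However, ``every vertex lies on a geodesic ray'' does not separate (C) from (B), since every vertex of a bushy realisation lies on a geodesic ray starting at itself. Use rays from a fixed base point, or lines as the paper does. Your treatment of (F) by binary profiles is correct and more elementary than the paper's citation of Mosher--Sageev--Whyte.
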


\paragraph*{\bf Regimes} We retain (R), (F), (C), and (B) for the four \emph{coarse} regimes used to organise the proof, with
regime (C) denoting the union of the classes $(\mathrm{C}_\Lambda)$. See \cref{fig:classes} for illustrative representatives of these classes.

\begin{figure}[tp]
  \centering
  \includegraphics[width=.41\textwidth]{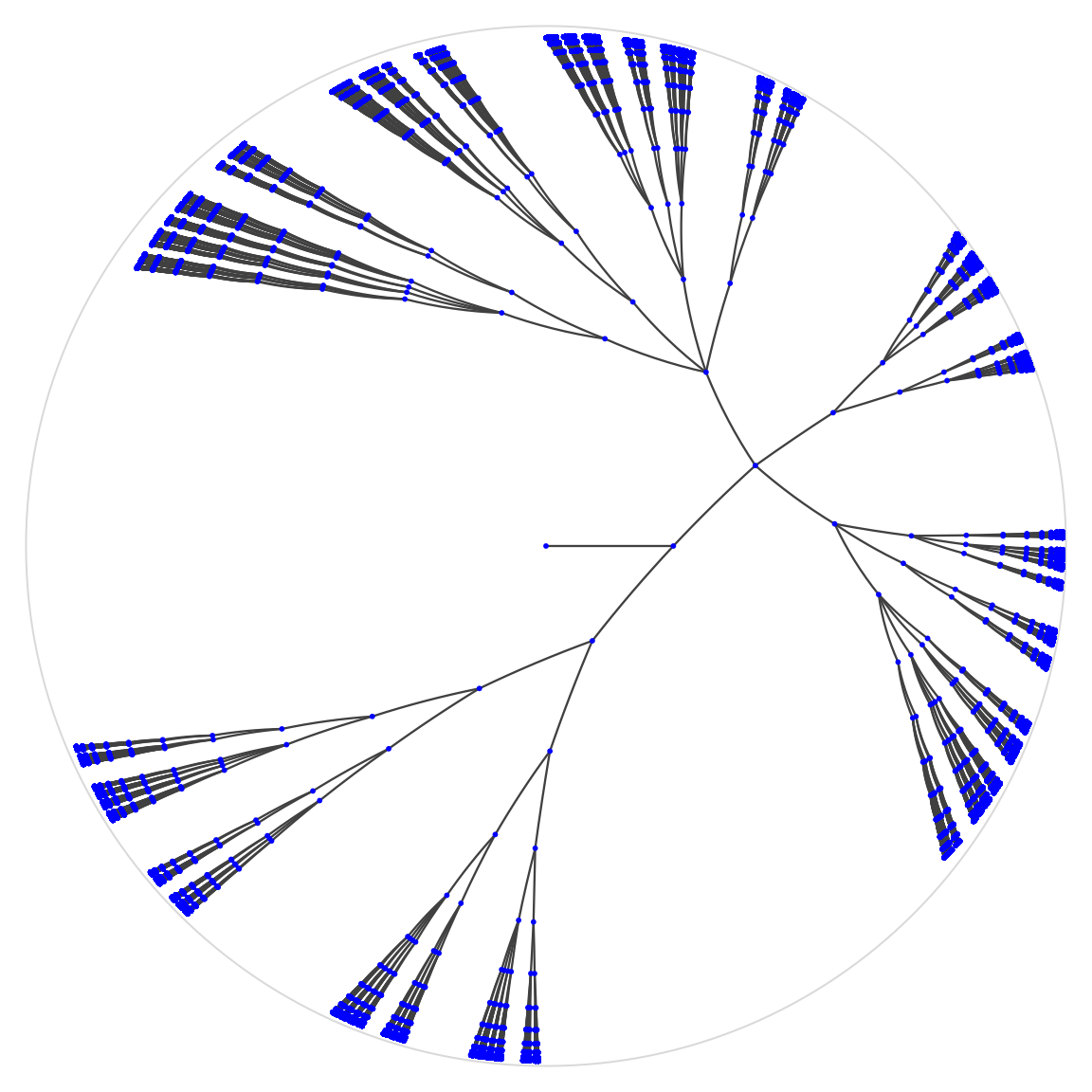}\hfill
  \includegraphics[width=.41\textwidth]{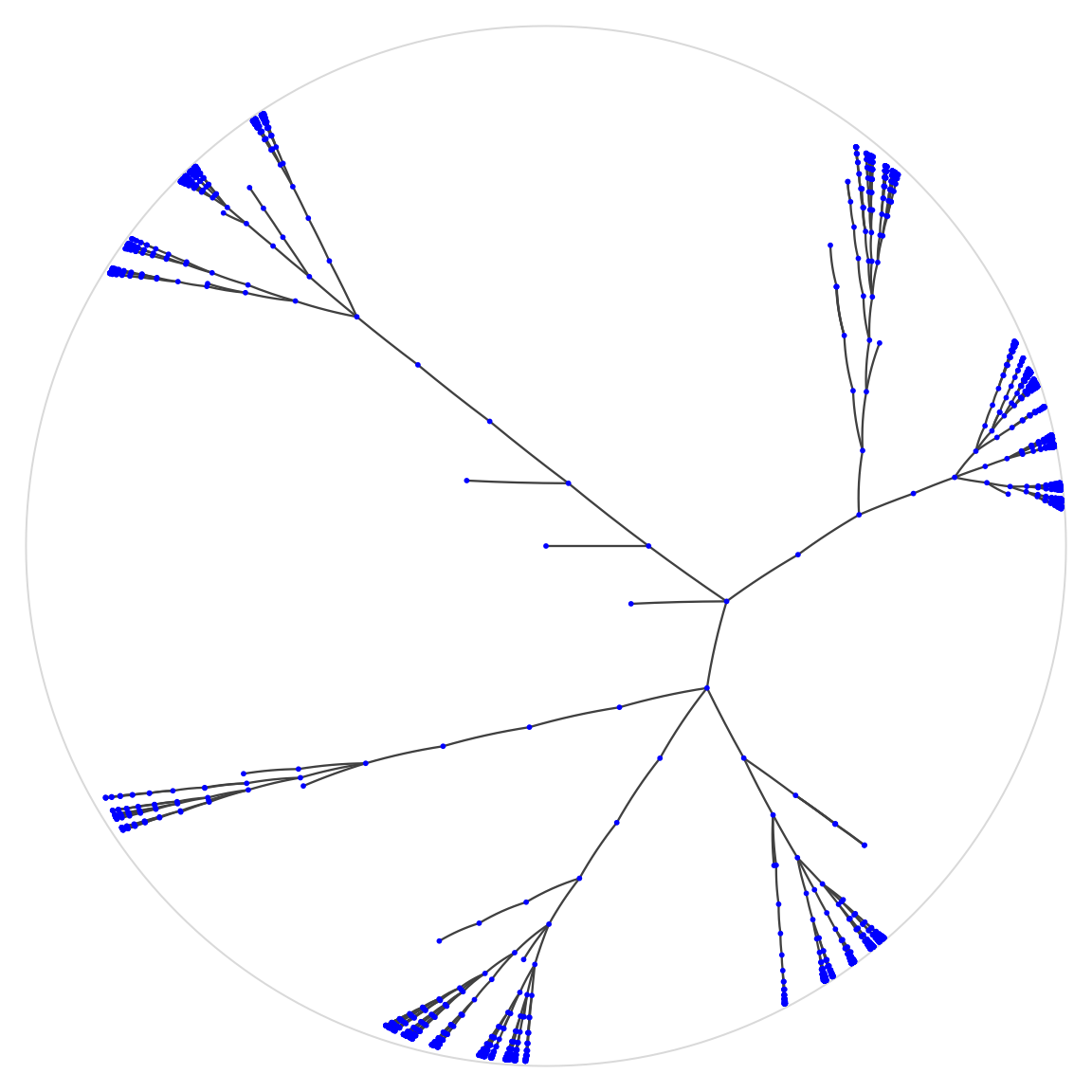}

  \includegraphics[width=.41\textwidth]{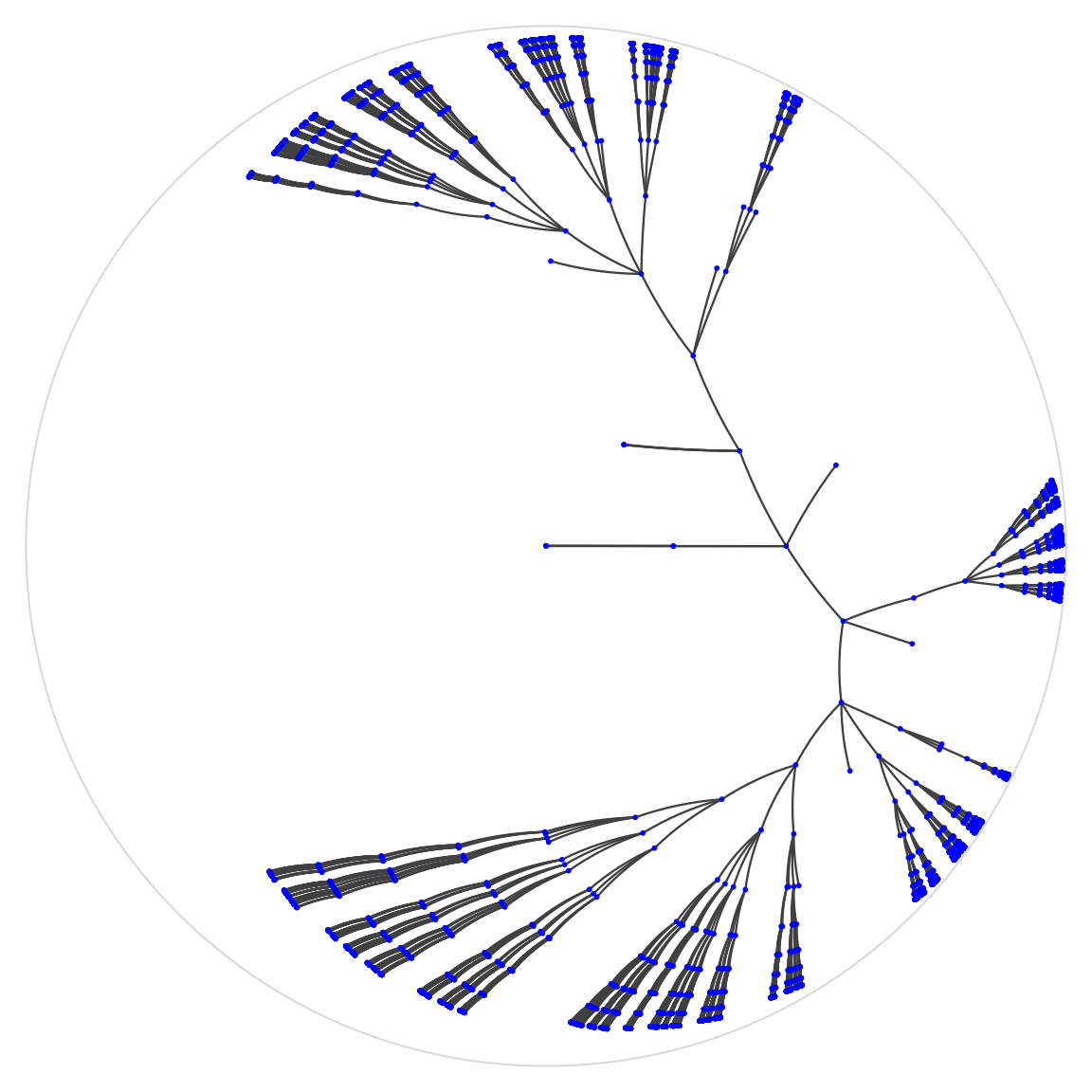}\hfill
  \includegraphics[width=.41\textwidth]{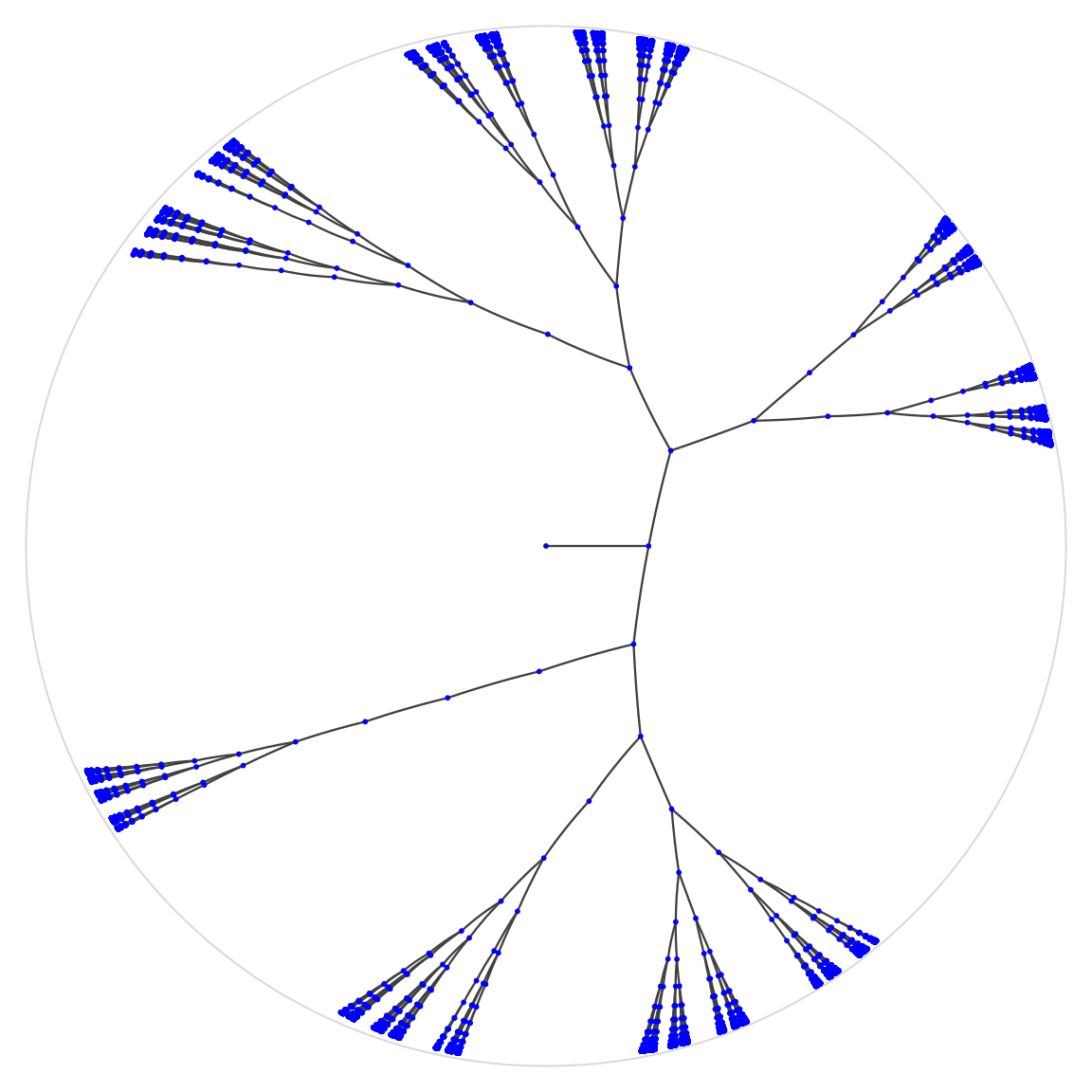}

  \includegraphics[width=.41\textwidth]{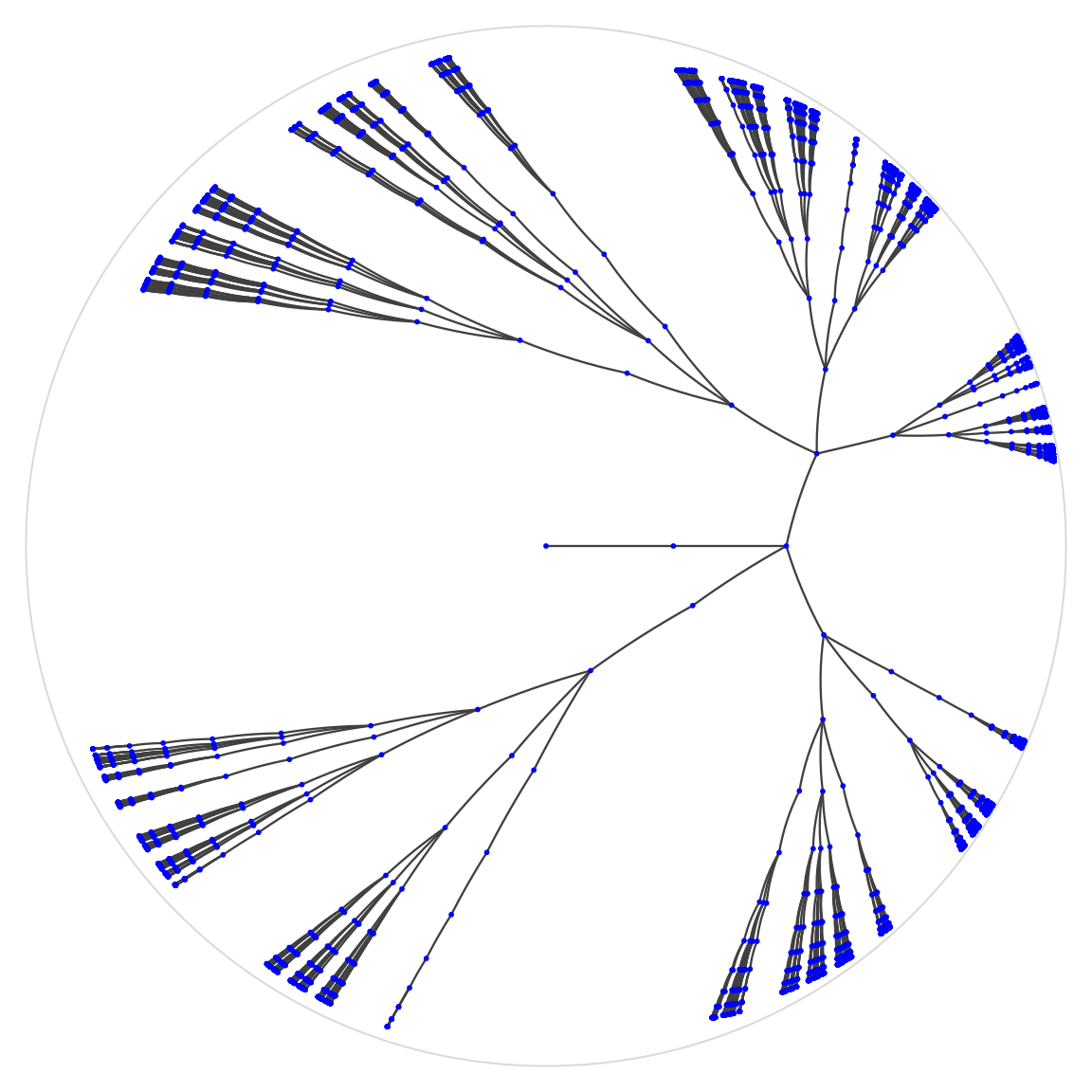}\hfill
  \includegraphics[width=.41\textwidth]{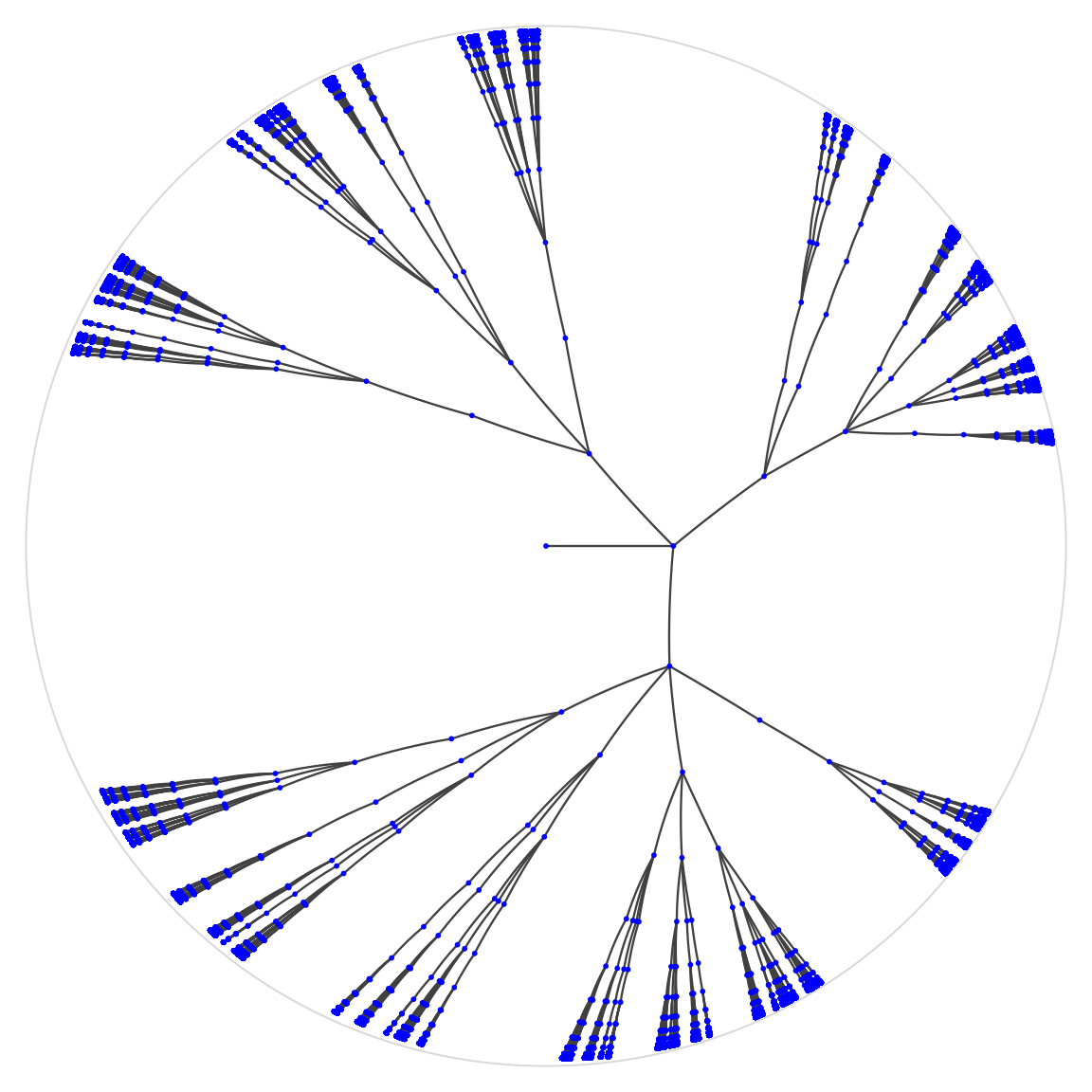}
  \caption{
    Finite truncations illustrating the three non-ray infinite regimes, embedded in the
    Poincar\'e disc. Regime (R) is omitted. The top row shows regime (F), with support
    $\set{2,3,4}$, and regime (B), with support $\set{0,1,2,3}$. The middle row shows regime (B),
    with support $\set{0,1,2,3,4}$, and $(\mathrm{C}_{\langle1\rangle})$, with support $\set{1,2}$.
    The bottom row shows $(\mathrm{C}_{\langle2\rangle})$, with support $\set{1,3}$, and
    $(\mathrm{C}_{\langle2,3\rangle})$, with support $\set{1,3,4}$. Apart from the two examples of
    realisations in the bushy regime, no realisations in these regimes are quasi-isometrically
    equivalent, almost surely.
  }
  \label{fig:classes}
\end{figure}

\paragraph*{\bf Independent realisations}
The classes in \cref{thm:trichotomy} completely resolve the classification problem for independent
realisations of every finitely supported offspring law. In particular, two independent realisations
from the same law, conditioned on having infinite diameter when necessary, almost surely admit a
root-preserving quasi-isometry. The cross-law statement is more rigid due to the class structure.
Full trees have the deterministic class of the binary tree, while the bushy regime has one universal
class despite the presence of arbitrarily deep finite dead ends. The chain regime splits into the
classes $(\mathrm{C}_\Lambda)$, since long unary stretches isolate finite clusters of branch points
and make their total excesses visible at large scales. This is precisely what the semigroups
$\Lambda_\theta$ encode.

\paragraph*{\bf GCDs and tails} A finitely generated additive submonoid of $\bbN_0$ with a positive
generator contains every sufficiently large multiple of the greatest common divisor of its positive
generators. Thus the greatest common divisor determines the eventual tail, but finitely many
exceptional gaps can still distinguish two branching semigroups.

\paragraph*{\bf Embeddability}
If we drop the coarse-surjectivity requirement and ask only for a quasi-isometric embedding,
the distinctions between the supercritical classes disappear. In particular, the branching
semigroup no longer obstructs comparison in either direction. We write $\bbB$ for the rooted
binary tree.

\begin{theorem}[Mutual embeddability]\label{thm:embedding-hierarchy}
  Let $\theta$ be a finitely supported supercritical offspring distribution, and let $\cT$ be a
  Galton--Watson tree with offspring distribution $\theta$, conditioned on having infinite
  diameter. Almost surely, there are quasi-isometric embeddings
  \[
    \cT\hookrightarrow\bbB
    \quad\text{and}\quad
    \bbB\hookrightarrow\cT.
  \]
\end{theorem}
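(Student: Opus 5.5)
We prove the two embeddings separately and use \cref{thm:trichotomy} to cut down the number of cases. If $\theta_0>0$ or $\theta_1>0$, the realisation $\cT$ contains dead ends or long unary chains. In general, then, it is not quasi-isometric to $\bbB$, and we must construct the two embeddings by hand. We always condition on survival, which for a supercritical law is the same as infinite diameter.

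\textbf{Embedding $\cT\hookrightarrow\bbB$.} This direction is deterministic. Let $m=\max\supp\theta<\infty$, so $\cT$ has bounded degree. Every rooted tree of bounded degree admits a quasi-isometric embedding into $\bbB$. To see this, replace each vertex with $k\leq m$ children by a binary subtree of depth $\lceil\log_2 m\rceil$, with the $k$ children attached at distinct leaves and the remaining leaves extended arbitrarily. A vertex with one child is sent to a path of the same bounded length. This gives a map $\cT\to\bbB$ that preserves the ancestor relation and stretches each edge by a factor between $1$ and $c=\lceil\log_2 m\rceil$, taking $c\geq 1$.

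Distances in a tree are computed through the confluent $u\wedge v$. The map sends confluents to confluents, because distinct children are routed into disjoint binary subtrees. Consequently $d(u,v)\leq d(\phi u,\phi v)\leq c\,d(u,v)$, and $\phi$ is a bi-Lipschitz embedding. Leaves of $\cT$ cause no problem, since the map need not be surjective.

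\textbf{Embedding $\bbB\hookrightarrow\cT$.} This is the probabilistic step. Choose $L$ large and look at the $L$-th generation of the embedded Galton--Watson process of surviving vertices. Because $\theta$ is supercritical and finitely supported, the reduced tree of vertices with infinite lines of descent is again a Galton--Watson tree. Its offspring law $\tilde\theta$ has $\tilde\theta_0=0$ and mean $\mu>1$.

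Call a surviving vertex $v$ \emph{good} if it has at least two surviving descendants at relative depth exactly $L$. Fix $L$ so large that $p_L\coloneqq\bbP(\text{good})$ is close to $1$; this is possible because $\tilde Z_L\to\infty$ almost surely. Now build the map greedily. Send the root of $\bbB$ to some good vertex, and send the two children of any image vertex to two good surviving descendants at depth $L$ below it, lying in different subtrees. Choosing descendants in different subtrees makes confluents correspond to within $L$, so the map is $L$-bi-Lipschitz up to an additive constant.

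The main obstacle is ensuring that good vertices can always be found at every step. We handle this with a branching-process comparison. The vertices at depth $kL$ of the reduced tree that have at least two good-rooted descendants in separate branches form a supercritical Galton--Watson process, with mean at least $2p_L>1$ once $L$ is large. This process therefore survives with positive probability, and in that event it contains a binary subtree. To get an almost sure statement, use the fact that the surviving tree almost surely has infinitely many vertices in independent subtrees. Each of these subtrees independently contains such a binary skeleton with positive probability, so one eventually succeeds. The embedding then starts at that vertex, at bounded distance from the root, which costs only an additive constant.

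One technical point needs checking: a Galton--Watson tree with mean greater than $1$ contains a binary subtree along generations $kL$ with positive probability. We would cite the standard fact that a supercritical tree with $\bbP(Z_1\geq2)$ large has this property, or prove it directly via the fixed-point equation $q=f(q)$ for the probability of failing to contain a binary tree.
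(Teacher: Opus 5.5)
Your embedding $\cT\hookrightarrow\bbB$ is correct and more explicit than the paper's, which composes the isometric inclusion $\cT\subseteq\cN(N)$ with $\cN(N)\simeq\bbB$ from \cref{thm:bushy}. (For $k\geq3$ the confluent of two images can lie up to $c-1$ levels below the image of the confluent, so ``confluents go to confluents'' is not literally true. Incomparable vertices are at distance at least $2$, however, so $d(u,v)\leq d(\phi u,\phi v)\leq c\,d(u,v)$ still holds.)

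The gap is in $\bbB\hookrightarrow\cT$, at exactly the step you flag. Your greedy map needs every chosen vertex to have two chosen descendants at relative depth $L$, each of which can again be continued forever. In other words, the $L$-coarse-grained skeleton, a Galton--Watson tree whose offspring law is that of $\tilde Z_L$, must contain a binary subtree. Survival of an auxiliary supercritical process only guarantees an infinite line of descent, not a binary subtree. Neither criterion you propose to cite is true. For an offspring law on $\{1,2\}$ with $\bbP(Z=2)=1-\epsilon$, the mean exceeds $1$ and $\bbP(Z\geq2)$ is as close to $1$ as you like. Yet a binary subtree rooted at $v$ would have to contain every descendant of $v$, so almost surely there is none. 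Likewise, $q=f(q)$ is the extinction equation. The relevant equation is $p=\bbP(\operatorname{Bin}(Z,p)\geq2)$, whose largest root in $[0,1]$ is $0$ in this example, although extinction is impossible.

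What repairs the argument is information you already have but do not use: $\tilde Z_L\to\infty$ almost surely gives $\bbP(\tilde Z_L\geq K)\to1$ for every fixed $K$, not merely for $K=2$. Prune the coarse-grained tree as follows. Let $V_0$ be all its vertices and $V_{n+1}$ those with at least two coarse children in $V_n$, so that $p_0=1$ and $p_{n+1}=\bbP(\operatorname{Bin}(\tilde Z_L,p_n)\geq2)$. Choose $L$ with $\bbP(\tilde Z_L\geq4)\geq\tfrac34$. Then $p_n\geq\tfrac12$ implies $p_{n+1}\geq\tfrac34\cdot\tfrac{11}{16}>\tfrac12$. The decreasing-finite-sets argument in the proof of \cref{thm:concentrated-regular-subtree} then shows that every vertex of $V_\infty=\bigcap_nV_n$ has two coarse children in $V_\infty$. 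Hence a binary subtree along generations $kL$ exists with probability at least $\tfrac12$, and your independent-trials step finishes the proof.

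This is the same binomial pruning estimate the paper uses, but the paper obtains the required concentration differently. It invokes \cref{thm:trichotomy} to replace $\theta$ by a law in the same chain class with mass at least $7/8$ on one large arity $J$. It then prunes that tree to an isometric copy of $\bbB$, transfers it back by a quasi-isometry, and treats the bushy regime through the skeleton. Your coarse-graining produces the concentration without changing the law. Once repaired, it therefore proves the theorem without the classification, and with constants depending only on $\theta$.
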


\paragraph*{\bf Pruning} We prove \cref{thm:embedding-hierarchy} in \cref{sec:embedding-hierarchy}, using the
classification to choose a convenient representative in each chain class and pruning an infinite
regular subtree from that representative.

\paragraph*{\bf A strict order} By composition, any two independent survival-conditioned supercritical realisations with finitely
supported offspring laws almost surely admit quasi-isometric embeddings in both directions, even
when their laws belong to different classes in \cref{thm:trichotomy}. Together with the elementary
finite and ray comparisons, this gives the strict order
\[
  \mathrm{Fin}\;\hookrightarrow\;\mathrm{Ray}\;\hookrightarrow\;\mathrm{Supercritical}.
\]
Here each arrow denotes quasi-isometric embeddability, with no embedding in the reverse direction.
The finite and ray categories each consist of one quasi-isometry class. All full tree, chain and
bushy classes belong to the last category, but remain distinct under quasi-isometry. In particular,
\textit{mutual quasi-isometric embeddability does not imply quasi-isometry}.

\paragraph*{\bf Random branching-time}
While we only deal with finitely supported offspring distributions in our main results,
the random branching-time model in \cref{sec:branching-times} goes beyond this
setting. Its unit-time coarse graining can have unbounded arity while retaining the quasi-isometry
type of a weighted binary tree. Thus \cref{thm:branching-times} identifies classes represented
by some unbounded-arity random trees. We do not attempt a classification for arbitrary offspring
laws with infinite support.

\subsection{Quantitative and geometric consequences}

\paragraph*{\bf Universality and effectiveness}
The classification is qualitative, but the basic chain model admits an explicit estimate for the
quasi-isometry constant. It is also the simplest setting in which the matching mechanism of this
paper appears.

\begin{theorem}[Universality in the two-value family]\label{thm:twovalue}
  Let $\theta$ be an offspring distribution with $\theta_1+\theta_2=1$, and let
  $\cT(\omega),\cT(\omega')$ be two independent Galton--Watson trees with offspring distribution
  $\theta$. Then, almost surely, there is a root-preserving quasi-isometry
  $\cT(\omega)\to\cT(\omega')$.
  Moreover, if $0<\theta_1<1$, there are constants $D_0=D_0(\theta_1)$ and
  $C=C(\theta_1)$ such that for every integer $D\geq D_0$,
  \begin{equation}\label{eq:rate}
    \bbP\bigl(\text{there is no root-preserving }(D^2+3)\text{-quasi-isometry }
    \cT(\omega)\to\cT(\omega')\bigr)
    \leq C\,\theta_1^{D(D-5/2)}.
  \end{equation}
\end{theorem}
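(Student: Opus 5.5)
The degenerate cases come first. If $\theta_1=1$, both trees are the ray, and if $\theta_2=1$, both are the binary tree $\bbB$. In either case the identity works. From now on we assume $0<\theta_1<1$. The tree is then infinite and has no leaves. Contracting every maximal unary chain to an edge turns each realisation into a copy of $\bbB$ whose edges carry i.i.d.\ lengths. More precisely, the vertices $v$ of $\bbB$ are labelled by independent geometric variables $L_v$ with $\bbP(L_v\geq k)=\theta_1^{k}$, where $L_v$ is the number of unary vertices preceding the $v$-th branch point. Up to bounded error, $\cT(\omega)$ is $\bbB$ with edge $v$ stretched to length $L_v+1$.

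A root-preserving quasi-isometry is built from an automorphism-type matching $\phi$ of $\bbB$, which we construct top-down. We want the following: for each $v$, the label $L'_{\phi(v)}$ is at most $D$ when $L_v\leq D$, and long chains $L_v>D$ are paired with chains of comparable length. A pairing with the length error bounded multiplicatively by $D$ suffices, so that $L_v\leq D\,L'_{\phi(v)}+D$ and symmetrically. Given such a matching, we map the stretched edge of $v$ linearly onto the stretched edge of $\phi(v)$. This gives a $(D^2+3)$-quasi-isometry. The constant $D^2$ comes from composing the ratio bound with the additive bound on short edges, and the $+3$ absorbs rounding.

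We also need freedom in the matching. When a short edge faces a long one, the construction should not fail at once. Instead, it swallows bounded-depth subtrees: since $\bbB$ is self-similar, a $D$-neighbourhood below a vertex can be reassigned. So the matching is really a \emph{mismatch potential} process. At each vertex we carry a debt $X_v$, the accumulated length discrepancy. Children inherit $X_v$ plus the new discrepancy, and we have a choice of which child is matched to which (two choices, left or right swap) to reduce the debt. A vertex fails only if its debt exceeds $D$.

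The key estimate is a contraction bound. The debt process along the matched tree is a tree-indexed random walk with negative drift after optimal pairing: with two children on each side, we pair the larger-debt options against each other. It stays bounded by $D$ at all vertices with positive probability, and failure requires a cluster of unusually long chains. A chain of length at least $D$ has probability $\theta_1^{D}$, and a genuine obstruction needs about $D-5/2$ such events forced along a path. This gives $\theta_1^{D(D-5/2)}$ per vertex near the root. Summing over the bounded number of vertices whose failure cannot be repaired by re-choosing the swap yields \eqref{eq:rate} with $D_0$ chosen so that the branching is dominated.

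The almost sure statement follows by letting $D\to\infty$. I expect the main obstacle to be the contraction step, namely showing that a single local failure deep in the tree can always be repaired. The point is that a binary tree has exponentially many alternative subtrees, so the probability that every alternative fails decays doubly exponentially. I would first try a Galton--Watson survival argument on the "good" vertices, with offspring mean greater than $1$ once $D\geq D_0$.
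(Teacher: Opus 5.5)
Your reduction to an i.i.d.\ geometric chain-length field on $\bbB$, the degenerate cases and the final union over $D$ are fine. The central step is missing, though. You need one automorphism of $\bbB$, found with probability $1-O(\theta_1^{D(D-5/2)})$ uniformly in the height, under which every chain is comparable to its image. The mechanism you sketch cannot supply it.

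\emph{The debt cannot stay bounded.} The ``debt'' $X_v$ is an accumulated additive discrepancy, which does not match your own multiplicative target. Requiring $\abs{X_v}\leq D$ everywhere forces $\abs{L_v-L'_{\phi(v)}}\leq 2D$ at every vertex. At level $k$, the largest of the $2^k$ source lengths exceeds every target length at that level by more than $2D$ with probability bounded below independently of $k$. These events are independent across levels, and automorphisms preserve levels, so almost surely no such $\phi$ exists. The increments are differences of i.i.d.\ variables, so there is no negative drift either. An automorphism also leaves no room to ``swallow'' or reassign subtrees.

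\emph{Survival of good vertices is the wrong event.} A Galton--Watson survival argument produces an infinite subtree of successful matches. A matching automorphism must handle every vertex: $x\approx_{h+1}y$ requires compatible roots \emph{and} both children matched under either the straight or the crossed pairing. The two pairings share their subtrees, so their failures are strongly correlated; one source subtree with large bad degree defeats both. Consequently a recursion for the mean failure probability does not close, as the paper makes explicit in \cref{thm:full-probability-recursion,thm:iid-scalar-failure}. The ``bounded number of unrepairable vertices'' and the exponent $D(D-5/2)$ are asserted rather than derived.

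The missing ideas are these.
\begin{enumerate}
\item Only per-chain comparability is needed, with no accumulation. Quantise each chain length by $\ell_D(\lambda)=\floor{\log_D\lambda}$. Labels at distance at most one on the path $\mathsf P$ then give lengths comparable within $D^2$. The transfer lemma turns factor-$K$ comparability into a $(K+3)$-quasi-isometry, so $K=D^2$ is where $D^2+3$ comes from, not a composition of ratio and additive bounds.
\item The matching of two i.i.d.\ label fields is controlled by the weighted potential $\eta_{\mathsf P,5/2}=\sum_j p_j(1-s_j)/s_j^{5/2}$. It tracks the whole distribution of bad degrees, not just their mean. It contracts under the symmetrised square, $\Phi(R^\square,\mu^2)\leq A_{5/2}\Phi+B_{5/2}\Phi^2$ with $A_{5/2}<7/8$. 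Together with the product estimate for the root condition, this gives the height-uniform failure bound $16\eta$. K\H{o}nig's lemma then passes to the infinite tree.
\item The rate comes from computing $\eta_{\mathsf P,5/2}(p^{(D)})\leq16\theta_1^{D(D-5/2)}$. The dominant term is the mass $\approx\theta_1^{D^2}$ of chains of length at least $D^2$. It is divided by the $5/2$-th power of their compatible mass $\approx\theta_1^{D}$, which comes from the level below.
\end{enumerate}
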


\noindent That is, the failure probability has an exponentially small upper bound in the
quasi-isometry constant $D$. It may be interesting to see if this upper bound is sharp (that is,
if there is a corresponding lower bound for the failure probability). The exponential estimate
makes a zero--one law unnecessary, although \cref{rem:no-zero-one} records one that applies to
these realisations. Analogous, but less explicit estimates drive the cross-law and bushy-regime
arguments in \cref{part:binary-classification,part:full-classification}.

\paragraph*{\bf From QI to QS} Quasi-isometries between proper hyperbolic spaces induce quasisymmetries of their visual
boundaries. Our classification therefore has the following random-fractal consequence.

\begin{corollary}[Quasisymmetry of the boundaries]\label{thm:boundary}
  Let $\theta$ and $\theta'$ satisfy the hypotheses of \cref{thm:trichotomy}, suppose that neither
  belongs to the ray class, and suppose that they belong to the same class. Let $\cT(\omega)$ and
  $\cT'(\omega')$ be independent Galton--Watson trees with these offspring distributions,
  conditioned on having infinite diameter. Almost surely, their visual boundaries are compact,
  perfect, totally disconnected metric spaces, and there is a quasisymmetric homeomorphism
  \[
    (\partial\cT(\omega),d_{\mathrm{vis}})
    \longrightarrow(\partial\cT'(\omega'),d_{\mathrm{vis}}).
  \]
\end{corollary}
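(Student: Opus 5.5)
We derive the corollary from \cref{thm:trichotomy} together with the standard correspondence between quasi-isometries of proper geodesic hyperbolic spaces and quasisymmetries of their boundaries (Bonk--Schramm~\cite{BonkSchramm2000}, Buyalo--Schroeder~\cite{BuyaloSchroeder2007}). We work on the almost sure event on which both conditioned trees are infinite and \cref{thm:trichotomy} provides a root-preserving quasi-isometry $f\colon\cT(\omega)\to\cT'(\omega')$. Since the offspring laws are finitely supported, both trees are locally finite, hence proper geodesic spaces. Every tree is $0$-hyperbolic. Fixing the root $\troot$ as basepoint, the visual metric $d_{\mathrm{vis}}(\xi,\eta)=e^{-(\xi|\eta)_\troot}$ is a genuine ultrametric on the space of geodesic rays from the root. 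Here the Gromov product $(\xi|\eta)_\troot$ is just the length of the common initial segment of the two rays.

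The first step is to establish the topological properties of $\partial\cT$. Compactness follows from local finiteness: $\partial\cT$ is an inverse limit of the finite sets of generation-$n$ vertices that have infinite descent, so König's lemma applies. Total disconnectedness is immediate, because the cylinder sets (rays through a fixed vertex) are clopen balls of the ultrametric. Perfectness is the only step that uses the class hypothesis. We must show that almost surely every vertex with an infinite line of descent has a descendant with at least two children of infinite descent. This is exactly where the exclusion of the ray class enters.

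\begin{itemize}
\item In regime (F), every vertex branches and no vertex is a leaf, so perfectness is immediate.
\item In regimes (C) and (B), with $\theta$ supercritical, we argue as follows. By the branching property, the subtree below any fixed surviving vertex is itself a survival-conditioned Galton--Watson tree. Such a tree almost surely has infinitely many ends, because the number of surviving lineages in generation $n$ tends to infinity on survival. This is the standard reduced-tree argument: the reduced tree of surviving lines is again a Galton--Watson tree whose law is supercritical with no zero offspring, so it branches infinitely often.
\item We then take a countable union over vertices to get the statement almost surely for all surviving vertices at once.
\end{itemize}

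For the main step, $f$ is a quasi-isometry between proper geodesic hyperbolic spaces, so it maps geodesic rays to quasi-geodesic rays. By the Morse lemma these stay within bounded distance of geodesic rays, and this induces a boundary map $\partial f\colon\partial\cT\to\partial\cT'$, which is a homeomorphism. Applying the same construction to a quasi-inverse of $f$ gives the inverse map. The standard estimate then does the rest. Because $f$ moves the root by a bounded amount (in fact not at all, since it is root-preserving), Gromov products satisfy
\[
  \lambda^{-1}(\xi|\eta)_\troot-c\le(\partial f\xi|\partial f\eta)_\troot\le\lambda(\xi|\eta)_\troot+c.
\]
This makes $\partial f$ a power quasisymmetry, indeed a quasi-Möbius, bi-Hölder map, and hence quasisymmetric for visual metrics on boundaries of trees. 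I would cite this estimate directly from Buyalo--Schroeder, stated for visual metrics with parameter $1$ on $0$-hyperbolic spaces, rather than reprove it.

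I expect the only step needing real care is perfectness: it requires that isolated ends do not arise almost surely in classes (C) and (B), and that the conditioning on infinite diameter coincides with survival. Everything else is a citation.
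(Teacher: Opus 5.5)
Your proposal is correct and follows the paper's proof essentially step for step: \cref{thm:trichotomy} supplies the quasi-isometry, the Buyalo--Schroeder correspondence turns it into a quasisymmetry of the visual boundaries, compactness and total disconnectedness come from local finiteness, and perfectness comes from the branching property applied at the countably many vertices of the survival skeleton.

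One caution on the boundary step. Your displayed two-sided Gromov-product bound only gives bi-H\"older control, and bi-H\"older maps need not be quasisymmetric, so "hence quasisymmetric" does not follow from that bound; the conclusion genuinely rests on the cited theorem, Buyalo--Schroeder~\cite[Theorem~5.2.17]{BuyaloSchroeder2007}. That theorem should be applied to the geodesic unit-edge realisations of the trees rather than to their vertex sets. Also, the paper's $d_{\mathrm{vis}}=2^{-\abs{\xi\wedge\zeta}}$ is the visual metric with parameter $\log 2$ rather than $1$, which is harmless because the theorem covers any visual metrics.
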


\paragraph*{\bf Supercritical fractal percolation} In particular, consider two independent strongly separated fractal percolation constructions in
the nontrivial supercritical phase. Their underlying iterated function systems (IFSs) and retention probabilities may both
differ. Conditioned on nonextinction, their limit sets are almost surely quasisymmetrically
equivalent. The realisations in \cref{fig:fractal-percolation} illustrate this conclusion when
the IFS is fixed and only the retention probability changes. We prove the more general consequence
in \cref{thm:fractal-percolation}.

\paragraph*{\bf Quasisymmetry and conformal dimension} Quasisymmetry is a central equivalence relation in analysis on metric spaces and in the geometry of
self-similar sets. See David--Semmes~\cite{DavidSemmes1997} and Heinonen~\cite{Heinonen2001} for
more background.
Conformal dimension in the sense of Pansu~\cite{Pansu1989} is the infimum of the Hausdorff
dimensions in a quasisymmetric gauge, see also Mackay--Tyson~\cite{MackayTyson2010} for a systematic
account.
Bonk--Meyer~\cite{BonkMeyer2022} characterised the metric trees quasisymmetrically equivalent to
the continuum self-similar tree as precisely the trivalent, uniformly branching quasiconformal
trees. Chrontsios-Garitsis--Ioannidis--Vellis~\cite[Theorem~1.9]
{ChrontsiosGaritsisIoannidisVellis2026} extended this classification to every finite valence: for
each $n\geq 3$, the uniformly $n$-branching quasiconformal trees form a single quasisymmetry class
represented by a self-similar tree. Leaves and unary chains are exactly what break the corresponding
uniformity in our setting. Beyond trees,
Bonk's uniformisation theorem~\cite{Bonk2011} maps suitable Sierpi\'nski carpets to round carpets,
whereas Bonk--Merenkov~\cite{BonkMerenkov2013} proved that standard square carpets are
quasisymmetrically rigid.

\paragraph*{\bf Mandelbrot percolation} For random fractal geometry, the classical grid-based model is \textit{Mandelbrot percolation}. Conditioned
on nonextinction, Rossi--Suomala~\cite{RossiSuomala2021} showed that almost every classical
fractal percolation realisation has conformal dimension strictly below its Hausdorff dimension. For
the fat variant, Anttila--Eriksson-Bique--Py\"or\"al\"a~\cite{AnttilaErikssonBiquePyorala2025}
showed that, under the same conditioning, almost every realisation is minimal for conformal
dimension. For the dense variant they obtained the analogous conclusion for power quasisymmetries.
Fraser--Tyson~\cite{FraserTyson2026} studied the distortion of intermediate dimensions under
quasiconformal and quasisymmetric maps, with
consequences for Mandelbrot percolation realisations.
We refer the reader to the recent survey by Kolossv\'ary--Troscheit~\cite{KolossvaryTroscheit2025}
for more background.
We highlight here that these results compute or bound quasisymmetric invariants, whereas our
conclusion instead produces the stronger fact of quasisymmetric equivalence between two independent
realisations in strongly separated models.

\paragraph*{\bf Random branching times} Our matching theorems \cref{thm:matching,thm:markov-matching}
also apply when the binary genealogy is deterministic and the times between
successive branch points are random. In \cref{sec:branching-times}, we introduce a family which
permits branch points to be arbitrarily close and arbitrarily far apart. We prove that two
independent realisations, even with different parameters, are almost surely quasi-isometric. This
family has non-doubling boundary and is separated from the shifted-exponential model, whose
branching times are bounded below and whose boundary is doubling. Finite truncations of both
families are shown in \cref{fig:branching-times}.

\paragraph*{\bf Brownian trees and spheres} Finally, we highlight another connection of branching processes with mathematical physics.
The Brownian continuum random tree of
Aldous~\cite{Aldous1991,Aldous1993} is the universal scaling limit of critical Galton--Watson
trees with finite offspring variance conditioned to have $n$ vertices, as $n\to\infty$. A related
object from random geometry is the Brownian sphere. Le~Gall~\cite{LeGall2013} and
Miermont~\cite{Miermont2013} established convergence to it from uniform quadrangulations and
related families of planar maps.
While it is known, see Troscheit~\cite{Troscheit2021}, that neither
the Brownian tree nor the Brownian sphere admits a quasisymmetric embedding into a
doubling metric space, a finer question asks whether two independent realisations are
quasisymmetrically equivalent almost surely. This was recently answered in the negative for the
Brownian sphere by Miller--Tian~\cite{MillerTian2026c}, who also computed the conformal dimensions
of the continuum random tree and Brownian sphere as $1$ and $2$, respectively. See
\cite{MillerTian2026a,MillerTian2026b} for details.
The natural question that is left open between our regime of supercritical branching processes and
that of the Brownian sphere is that for the continuum random tree, which was also asked by
Bonk--Meyer in \cite[Problem~9.1]{BonkMeyer2022}.
A finer analysis of our matching results may be able to answer this question.

\subsection{Ideas of the proof}
The main new tool we introduce in this paper is an automorphism-matching principle for random
graph-indexed labellings of trees, which are of independent interest. We first contract or decompose
a Galton--Watson realisation until its coarse geometry is recorded by a random choice of
graph-indexed labels on a \emph{branching skeleton}. A tree automorphism of the skeleton then
matches two independent label fields while allowing bounded local errors, and local quasi-isometries
between compatible labels glue to a global quasi-isometry.

\paragraph*{\bf Graph labellings}
In many of our settings, the skeleton will be an infinite binary tree, and the labels at each vertex
will be independent and take values in an arbitrary countable graph. The graph records which local
pieces are compatible, and a single scalar potential controls the matching failures. This gives
\cref{thm:matching}. More generally, the skeleton has several possible arities. To compare such
skeletons on the binary tree, we replace a split with $k$ children by a finite binary tree with $k$
leaves. Several successive binary splits may now represent a single original branch point, so the
resulting labels are dependent. We describe this dependence by a tree-indexed Markov field, in the
sense of Benjamini--Peres~\cite{BenjaminiPeres1994}, and extend the matching argument to this
setting in \cref{thm:markov-matching}.

The binary replacements must also allow us to compare trees with different branching laws.
In the chain regime, equality of the branching semigroups lets us choose compatible
replacements. In the bushy regime, the finite decorations provide enough flexibility to compare
any two offspring laws. Both matching theorems are independent of Galton--Watson trees and may
be useful in other tree-indexed matching problems.

\paragraph*{\bf Embeddings}
For \cref{thm:embedding-hierarchy}, the classification itself becomes a tool. Within each chain
class, we choose an offspring law concentrated at a large arity whose excess belongs to the given
branching semigroup. We repeatedly discard vertices that do not have this arity or have too few
children remaining from the previous round. A binomial estimate gives positive probability that
the root survives every round, and independent trials along a ray locate a surviving vertex at
finite depth almost surely. The retained descendants contain an isometric binary tree, whose
embedding transfers to the original law by quasi-isometry. For a bushy realisation, we apply this
argument to its survival skeleton and leave the finite bushes unused. The reverse embedding
follows from the bounded offspring support.

\subsection{Lean formalisation}

\paragraph*{\bf Overview}
The Galton--Watson classification, mutual embeddability and the i.i.d.\ and Markov matching
theorems have been formalised in Lean~4 over the mathematical library Mathlib. The Lean files are
freely available at
\url{https://github.com/sascha2718/qi-trees-and-matching-lean} and are archived and machine-verified on
\textit{Palomar}~\cite{AthreyaTroscheit2026}.
The development consists of four libraries. The
first, \texttt{BranchingProcess}, constructs Galton--Watson trees from i.i.d.\ offspring fields and
proves the required measurability, extinction, conditioning, Harris decomposition, progeny tails,
pruning estimates and coarse geometric results. \texttt{GraphMatching} proves the finite and
infinite i.i.d.\ automorphism-matching theorems, for both leaf and full labellings. \texttt{GraphMarkovMatching}
proves the general Markov matching theorem at finite and infinite height, together with its
quantitative bounds. \texttt{ChainClasses} formalises the chain and shape encodings, their laws, the
coupling and geometric transfer arguments, and the universality, separation and classification of
conditioned infinite realisations, both within and across arbitrary bounded-support offspring laws.
In particular, all four classes $(\mathrm{R})$, $(\mathrm{F})$, $(\mathrm{C}_{\Lambda})$
and $(\mathrm{B})$, as well as the qualitative and quantitative two-value results, are covered.
The development also proves \cref{thm:embedding-hierarchy}, mutual embeddability for two
survival-conditioned laws and the strict hierarchy with finite trees and the ray. The pruning
estimates, conditional law of the retained descendant tree and regular-subtree construction are
proved in \texttt{BranchingProcess}. The library \texttt{ChainClasses} locates a retained vertex
almost surely, proves the explicit first-child-ray tail bound, and transfers the embedding
through the classification.

\paragraph*{\bf Challenge files} 
To verify the machine-checked version of our results we use the \textit{comparator} audit tool
(\url{https://github.com/leanprover/comparator}).
It requires a single challenge file, \texttt{Challenge.lean}, which imports Mathlib
and nothing from our four
libraries, and defines only the vocabulary needed to state thirteen headline results of this paper:
the four finite- and infinite-height conclusions of \cref{thm:markov-matching}; the leaf and full
finite-height bounds
\cref{eq:leaf-bound,eq:full-bound} and the infinite-tree conclusion of
\cref{thm:matching}\labelcref{it:matching-full}; the complete classification of
\cref{thm:trichotomy} in three statements, that bounded connected graphs are quasi-isometric to a
point, that a law with mean at most one and $\theta_1\neq1$ dies out almost surely, and the
classification itself over the unconditioned laws, finite class included; the mutual-embeddability
statement of \cref{thm:embedding-hierarchy}; and the qualitative
conclusion and quantitative bound of
\cref{thm:twovalue}, the latter being \cref{eq:rate}. Its theorems have the placeholder proof
\texttt{sorry} by design. The proofs live in the separate file
\texttt{Solution.lean}, and the comparator audit checks that they prove exactly the challenge statements.
Our libraries and the solution are free of \texttt{sorry} and assume no external mathematical
result as an axiom. Only the standard axioms of
propositional extensionality, choice, and quotient soundness are assumed.
This comparator check was also performed by the Palomar registration. 

\paragraph*{\bf Lacunae} The formalisation does not include the random
branching-time theorem of \cref{sec:branching-times} or the external passage from quasi-isometries
to quasisymmetries of visual boundaries used in \cref{thm:boundary} and its fractal applications.
Formalising them would require significant work on an interface between the different categories
rather than checking their otherwise simple proofs.
For the declaration correspondences, see \cref{app:lean}. Detailed notes on differences in
formulation are contained in \texttt{paper-correspondence.yaml}, as part of the Lean development and
also recorded in the Palomar record \cite{AthreyaTroscheit2026}.

\subsection{Organisation.}
In the remainder of \cref{part:introduction} we fix the common notation, prove the boundary
applications, outline the proof of the classification, and state the i.i.d.\ and Markov matching
theorems. We then apply the i.i.d.\ theorem to random branching times in
\cref{sec:branching-times} and end with the further
directions in \cref{sec:beyond}. \Cref{part:binary-classification} develops the binary-support model,
including the explicit estimate in \cref{thm:twovalue} and the treatment of finite bushes.
\Cref{part:full-classification} restates the Markov matching theorem in full detail, proves it,
and assembles the full classification. It concludes with the embedding theorem and strict hierarchy
in \cref{sec:embedding-hierarchy}. A more detailed breakdown of the proof ideas can be found in
\cref{sec:strategy} after we introduce more notation. We strongly urge the reader to read the
remainder of \cref{part:introduction} before reading
\cref{part:binary-classification,part:full-classification}. This gives a more concise introduction
to our proofs. We further recommend studying \cref{part:binary-classification} before
\cref{part:full-classification}. While the results in \cref{part:full-classification} strictly
subsume those in \cref{part:binary-classification}, extending the techniques to offspring
distributions with arity greater than $2$ requires more technical work. A glossary and symbol list
can be found in \cref{app:glossary}, which collects the notation used throughout the paper. It is
followed by the Lean appendix in \cref{app:lean}, which lists the declaration correspondences.

\section*{Acknowledgements}

This project started in 2017 and over its almost decade-long history many funders were
involved and many colleagues contributed through discussions. We record here a necessarily
incomplete account of the agencies and colleagues to whom the authors are grateful. Without them,
this article would not have been completed, at least not in its current form.

JA was partially supported by National Science Foundation (NSF) grants DMS-2003528 and DMS-2404705, the
Pacific Institute for the Mathematical Sciences, the Royalty Research Fund and the Victor Klee
Fund at the University of Washington, and the Chaire Jean Morlet programme at the Centre
International de Rencontres Math\'ematiques (CIRM) Luminy.

ST was initially supported by the Natural Sciences and Engineering Research Council of Canada
(NSERC) grants 2014-03154 and 2016-03719, and the University of Waterloo; subsequently by the
Austrian Science Fund (FWF) grant M-2813 and by the European Union/European Research Council
(EU/ERC)Marie Sk\l{}odowska-Curie Actions Postdoctoral Fellowship no.\ 101064701; and from 2025 by
the Royal Swedish Academy of Sciences grant MA2025-0070.

The authors thank CIRM Luminy for hospitality and ideal working conditions in Autumn 2024, where
many of the `grand ideas' for this paper came together, even if ST broke his literal back.
In particular, we thank Alice Contat for
an inspiring talk at the CIRM workshop \textit{Probability and Geometry in, on and of non-Euclidean spaces}, on the car-parking problem on the binary tree studied by
Aldous--Contat--Curien--H\'enard~\cite{AldousContatCurienHenard2023} and its generalisations.
This eventually led to the matching theorems of this article, even if the connection is no longer
recognisable.

We would also like to thank Krishna B. Athreya, Benny Avelin, Svante Janson, Pekka Pankka, Steffen Rohde, Eero Saksman,
Richard Schwartz, Jeff Steif, and Ville Suomala for interesting discussions.

For the formalisation, ST would like to thank Tobias Osborne for making the `informal
formalisation' tools \textit{vibefeld} and \textit{alethfeld} freely available. These tools helped
us structure several earlier versions of the proofs and allowed us to overcome technical hurdles.

ST is also very grateful to Kalle Kyt\"ol\"a for encouraging him to learn Lean and thanks Heather
Macbeth for an excellent introductory talk on the topic. The recent AI-assisted and machine-checked
account of interior De Giorgi--Nash--Moser theory by Armstrong and Kempe~\cite{ArmstrongKempe2026}
as well as the Lean formalisation by Armstrong and Kuusi of their coarse-graining theory
(\url{https://github.com/scottnarmstrong/CoarseGraining}), which covers a uniformly elliptic case
of their homogenisation estimates~\cite{ArmstrongKuusi2025}, provided further motivation for the
full formalisation of this article.
Terence Tao's article
\href{https://terrytao.wordpress.com/2023/11/18/formalizing-the-proof-of-pfr-in-lean4-using-blueprint-a-short-tour/}{\emph{Formalizing the proof of PFR in Lean4 using Blueprint: a short tour}}
was particularly helpful for us in thinking how one can organise a large formalisation.

\section*{Statement on the use of artificial intelligence}

This disclosure follows the principles of the Leiden Declaration on the responsible use of
artificial intelligence in research and writing (\url{https://leidendeclaration.ai}). The AI models
Anthropic's Claude and OpenAI's ChatGPT/Codex were used to formalise arguments in Lean, optimise
parts of the original proofs, and assist with copy-editing.

Two AI-assisted formalisation workflows were explored: the \textit{vibefeld} system developed by
Tobias Osborne, which produced a readable intermediate formalisation, and a custom derived workflow
developed by ST for the final Lean formalisation. The authors selected, checked, and curated every
retained mathematical argument and verified any non-human parts of the manuscript and the
formalisation. They accept full responsibility for their contents.

\section{Notation and preliminaries}

We collect the conventions used throughout the paper. We first describe rooted trees and their
Galton--Watson laws, and then recall the two metric equivalence notions used for the trees and their boundaries.

\subsection{Words and rooted trees}\label{sec:words}

For $N\in\bbN$, let $\cN=\cN(N)$ be the tree of finite words over
$\set{1,\dots,N}$. Its root $\troot$ is the empty word, and $v$ is joined to $vj$ for every
letter $j$. We equip $\cN$ with the graph metric
\[
  d(v,w)=\abs v+\abs w-2\abs{v\wedge w},
\]
where $v\wedge w$ is the longest common prefix. The binary tree is $\bbB=\cN(2)$. We write
$B_Y(y,r)$ for the closed ball in a metric space $Y$ and $\Aut(\bbB)$ for the rooted
automorphism group of $\bbB$.

\subsection{Galton--Watson trees}\label{sec:gw-trees}

Let $\theta=(\theta_0,\dots,\theta_N)$ be a finitely supported probability distribution on
$\bbN_0$, and let $(\chi_v)_{v\in\cN}$, the child count of vertex $v$, be independent random variables with this law. The
associated Galton--Watson tree $\cT\subseteq\cN$ contains the root, and whenever $v\in\cT$, it
contains exactly the children $v1,\dots,v\chi_v$. The law
is \emph{supercritical} when
\[
  \sum_{k=0}^N k\theta_k>1.
\]
Unless stated otherwise, realisations in the infinite classification are conditioned on having
infinite diameter and distinct realisations appearing in the same statement are assumed to come from
independent sampling.
We use $\bbP$ and $\bbE$ for the probability and expectation under the offspring law
specified in the current statement or section, including its conditioning on infinite diameter
and any independent auxiliary variables. We write $\bbP^*$ when we need to distinguish the
survival-conditioned law explicitly from the original Galton--Watson law.

\subsection{Quasi-isometries}\label{sec:qi-qs}

\begin{definition}[Quasi-isometry]\label{def:qi}
  Let $(Y_1, d_1)$ and $(Y_2, d_2)$ be metric spaces. We say that a map $\psi\colon(Y_1,d_1)\to(Y_2,d_2)$ is an \emph{$(A,B,C)$-quasi-isometry} if
  \begin{enumerate}
    \item\label{it:embed} $A^{-1}d_1(x,y)-B\leq d_2(\psi(x),\psi(y))\leq A d_1(x,y)+B$ for all
      $x,y\in Y_1$;
    \item every $y\in Y_2$ lies at distance at most $C$ from $\psi(Y_1)$,
  \end{enumerate}
  where $A\geq1$ and $B,C\geq0$.
\end{definition}
For rooted spaces, a quasi-isometry is \emph{root-preserving} if it sends the source root to the
target root. Changing only the image of the root is a bounded change of the map and therefore
preserves the quasi-isometry property, although its constants may change.

If a map satisfies only the metric inequality in \cref{def:qi}\labelcref{it:embed}, we call it a
\emph{quasi-isometric embedding}. We write $X\preccurlyeq Y$ when such an embedding
$X\hookrightarrow Y$ exists, and $X\prec Y$ when $X\preccurlyeq Y$ but $Y\not\preccurlyeq X$.
Quasi-isometric embeddings compose, so $\preccurlyeq$ is transitive.
Note that a quasi-isometry from $Y_1$ to $Y_2$ induces a quasi-isometry from $Y_2$ to $Y_1$ and
hence being quasi-isometric is an equivalence relation.
We write $Y_1\simeq Y_2$ when a quasi-isometry exists.
Throughout the paper, a \emph{$D$-quasi-isometry} means a $(D,D,D)$-quasi-isometry. We do not
aim to optimise constants.

\subsection{Visual boundaries and quasisymmetries}\label{sec:visual}

For an infinite locally finite rooted tree $\cT$, let $\partial\cT$ be the set of infinite rays
from the root. For $\xi,\zeta\in\partial\cT$, set
\begin{equation}\label{eq:visual-metric}
  d_{\mathrm{vis}}(\xi,\zeta)=
  \begin{cases}
    2^{-\abs{\xi\wedge\zeta}},&\xi\ne\zeta,\\
    0,&\xi=\zeta.
  \end{cases}
\end{equation}
This defines a compact ultrametric boundary.

\begin{definition}[Quasisymmetry]\label{def:qs}
  Let $\eta\colon[0,\infty)\to[0,\infty)$ be a homeomorphism. A homeomorphism
  $\psi\colon(Y_1,d_1)\to(Y_2,d_2)$ is \emph{$\eta$-quasisymmetric} if, for all distinct
  $x,y,z\in Y_1$ and all $t>0$,
  \[
    d_1(x,z)\leq t d_1(y,z)
    \quad\Longrightarrow\quad
    d_2(\psi(x),\psi(z))\leq\eta(t)d_2(\psi(y),\psi(z)).
  \]
\end{definition}

We refer to Buyalo--Schroeder~\cite{BuyaloSchroeder2007} for more background on visual boundaries and to
Heinonen~\cite{Heinonen2001} for an extensive treatment of quasisymmetric maps.

\section{Boundaries and quasisymmetry}

\paragraph*{\bf Boundaries} We now prove the boundary consequence stated in \cref{thm:boundary}. Recall that the ray regime
is excluded there because its boundary is a singleton rather than a Cantor space.

\begin{proof}[Proof of \cref{thm:boundary}]
  By \cref{thm:trichotomy}, the two trees are almost surely quasi-isometric. The unit-edge geometric
  realisation of each tree is a proper geodesic $0$-hyperbolic space, and the inclusion of the
  vertex set is a quasi-isometry. The two geometric realisations are therefore quasi-isometric.
  Buyalo--Schroeder~\cite[Theorem~5.2.17]{BuyaloSchroeder2007} gives a quasisymmetric homeomorphism
  between their Gromov boundaries for any choices of visual metrics. The end boundary is canonically
  the same as that of the vertex tree, and the metric in \cref{eq:visual-metric} is the visual
  metric based at the root with parameter $\log 2$.

  The end boundary of an infinite locally finite rooted tree is compact and totally disconnected.
  Outside the ray and finite regimes, almost surely every vertex of the survival skeleton has a
  descendant with at least two infinite child subtrees. This is immediate in the full tree regime.
  In the chain and bushy regimes, the branching property gives probability zero to a surviving
  subtree whose skeleton remains a single ray forever. Applying this fact at the countably many
  vertices shows that no boundary point is isolated. Both boundaries are therefore perfect.
\end{proof}

\subsection{Random Cantor boundaries} 
The visual boundary of a tree gives a metric that records the successive branching choices along
infinite rays. Long unary chains produce gaps between branching scales and decrease diameters of
balls, while finite bushes disappear from the boundary but still affect the geometry used to
construct the quasi-isometry. Quasi-isometries of the trees correlate to quasisymmetries on this
boundary and our \Cref{thm:boundary} shows that two
independent realisations are still equivalent up to the `right' notion of mapping. As mentioned
above this also implies that quasisymmetric invariants, like the conformal dimension, are preserved
between independent generic realisations of the same class. The reverse implication is not
necessarily true.

\paragraph*{\bf Separated systems} 
This quasisymmetric conclusion persists even when the geometry of the boundary is replaced by
strongly separated self-similar systems. The coding maps are bi-Lipschitz when we equip the boundaries
with the visual metrics determined by the contraction ratios. The corresponding tree edge lengths
are uniformly comparable to unit lengths. For the percolation models below, the retained trees
belong to class (B), so their weighted versions are almost surely quasi-isometric.
The two same-IFS examples in
\cref{fig:fractal-percolation} are therefore a special case of the following corollary. We use
\emph{fractal percolation} to mean a general percolated structure of self-similar iterated function
constructions. Mandelbrot percolation is the special
case in which the IFS is generated by subdividing an $n$-dimensional cube into congruent smaller
$n$-dimensional cubes.

\begin{figure}[t]
  \centering
  \includegraphics[width=.42\textwidth]{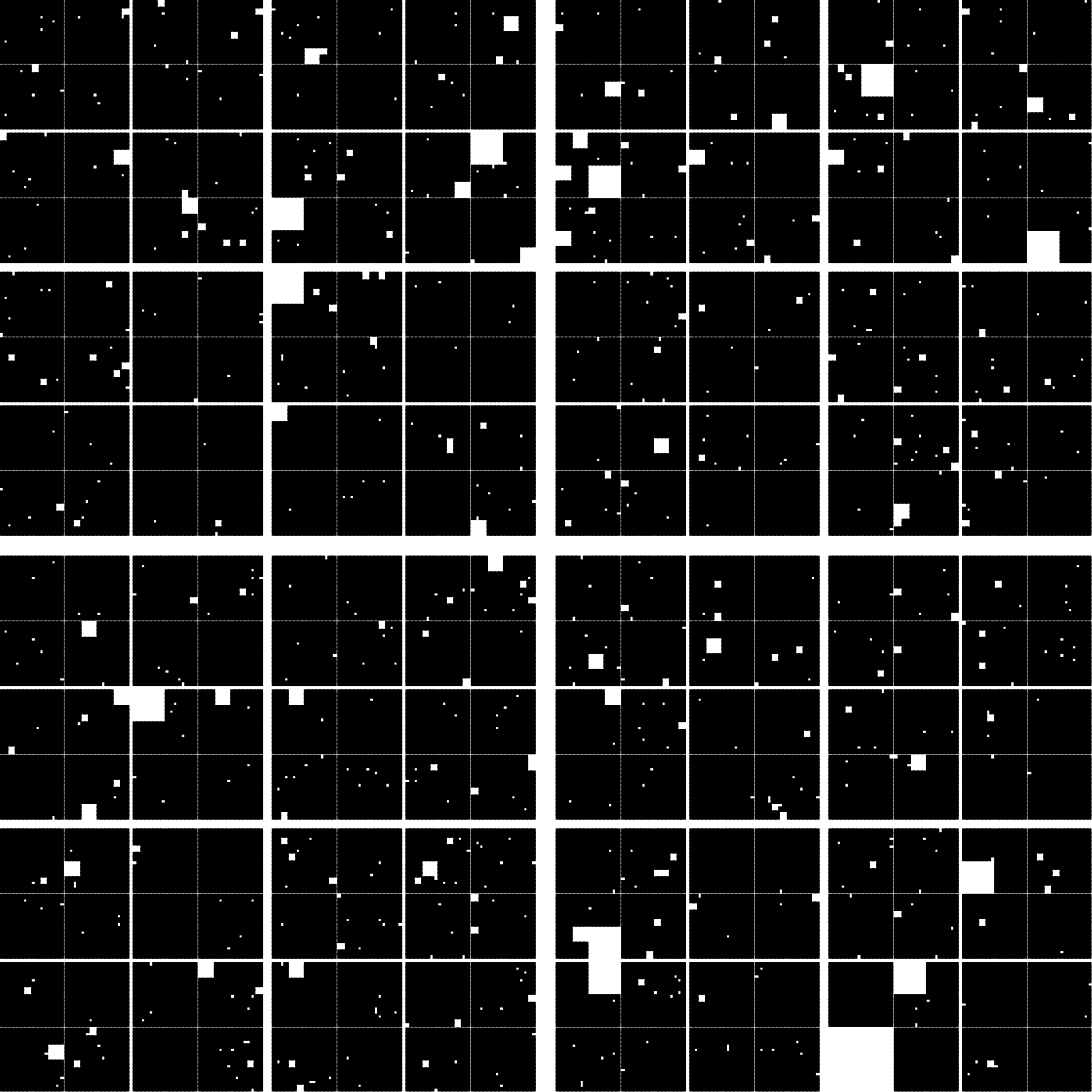}\hfill
  \includegraphics[width=.42\textwidth]{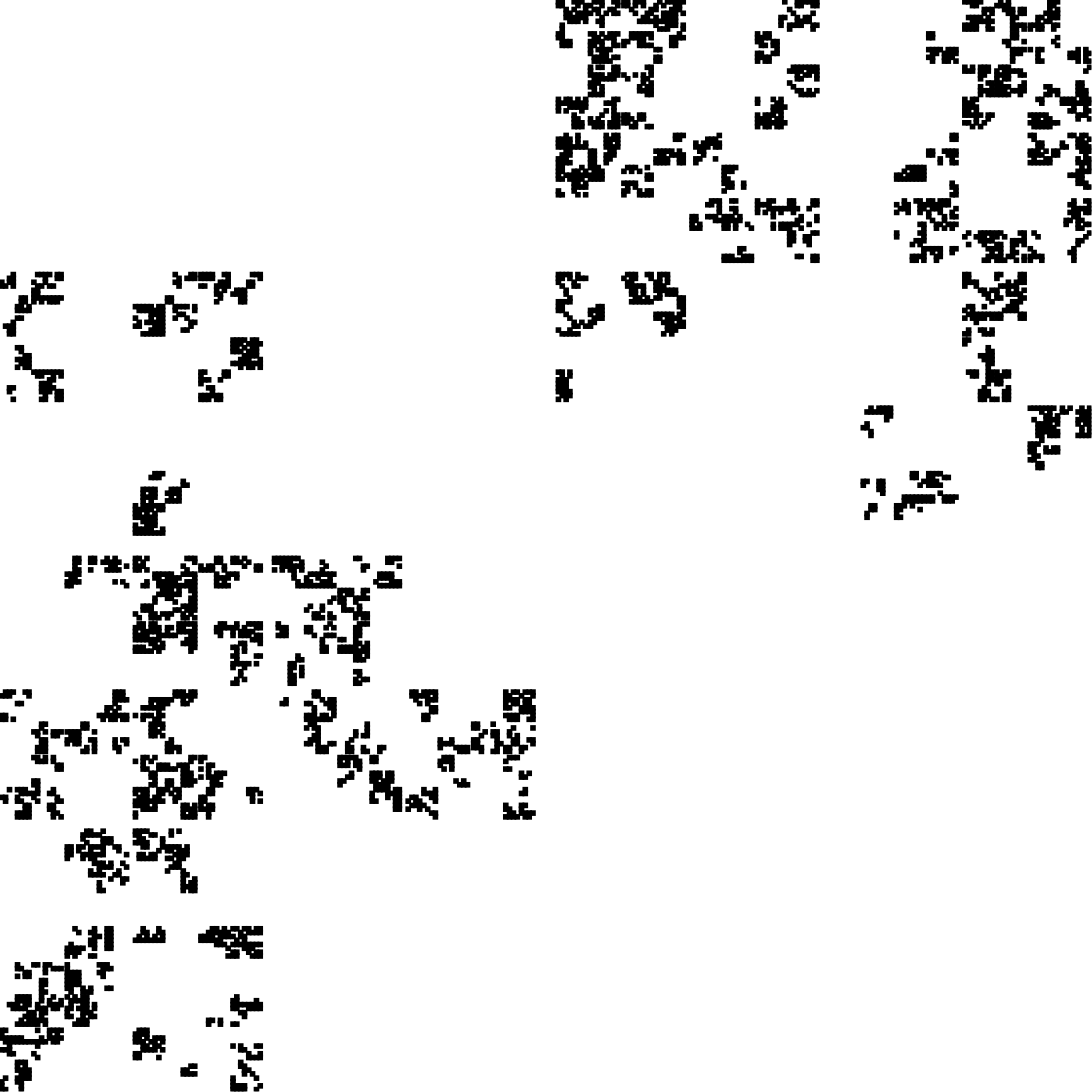}
  \caption{Finite approximations to two fractal percolation realisations for the IFS formed by
    the four corner similarities of ratio $0.49$. The retention probabilities are $0.99$ on the
    left and $0.7$ on the right.}
  \label{fig:fractal-percolation}
\end{figure}

\begin{corollary}[Fractal percolation across different IFSs]
  \label{thm:fractal-percolation}
  Let $\Phi,\Phi'$ be finite strongly separated iterated function systems consisting of $M,M'\geq2$
  contracting similarities, and retain each child independently with probability $p,p'$,
  respectively. Suppose that
  \[
    0<p,p'<1
    \quad\text{and}\quad
    pM,p'M'>1.
  \]
  Let $E$ and $E'$ be independent fractal percolation limit sets for these constructions.
  Conditioned on both sets being nonempty, they are almost surely quasisymmetrically equivalent.
\end{corollary}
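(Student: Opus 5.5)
The plan is to encode each limit set by its retained coding tree, identify that tree as a Galton--Watson tree in class (B), and then transfer the quasi-isometry supplied by \cref{thm:trichotomy} to the limit sets through the boundary correspondence used in \cref{thm:boundary}.

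\emph{Coding trees.} Let $\Phi=\{\phi_1,\dots,\phi_M\}$ have contraction ratios $r_1,\dots,r_M\in(0,1)$. The retained words form a subtree $\cT\subseteq\cN(M)$. Each vertex independently keeps each of its $M$ children with probability $p$. Hence $\cT$ is a Galton--Watson tree with offspring law $\mathrm{Bin}(M,p)$. This law is finitely supported and supercritical since $pM>1$. Its mass at zero is $\theta_0=(1-p)^M>0$ because $p<1$. So $\theta$ belongs to class (B), and likewise $\theta'=\mathrm{Bin}(M',p')$. Nonemptiness of $E$ is equivalent to survival of $\cT$, which for a locally finite tree is equivalent to infinite diameter. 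Therefore \cref{thm:trichotomy} gives, almost surely under the conditioning, a quasi-isometry $\cT\to\cT'$ between the unit-edge trees.

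\emph{Weighted trees.} Give the edge from $v$ to $vj$ the length $-\log r_j$. This weighted tree $\cT_w$ is a proper geodesic $0$-hyperbolic space. Let $r_{\min}$ and $r_{\max}$ denote the smallest and largest ratios. Every edge length then lies in $[-\log r_{\max},-\log r_{\min}]\subset(0,\infty)$. So the identity on vertices is bi-Lipschitz from $\cT$ to $\cT_w$, and $\cT_w\simeq\cT\simeq\cT'\simeq\cT'_w$. Applying Buyalo--Schroeder~\cite[Theorem~5.2.17]{BuyaloSchroeder2007} as in the proof of \cref{thm:boundary}, we obtain a quasisymmetric homeomorphism between the Gromov boundaries of $\cT_w$ and $\cT'_w$. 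Here each boundary carries the visual metric
\[
\rho(\xi,\zeta)=\exp\bigl(-(\xi\mid\zeta)_\troot\bigr)=\prod_{i\le\abs{\xi\wedge\zeta}}r_{\xi_i},
\]
which is the visual metric with parameter $1$. It is a genuine ultrametric, since the Gromov product in a tree is the length of the common geodesic.

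\emph{Coding maps.} The coding map is $\pi(\xi)=\lim_n\phi_{\xi_1}\circ\dots\circ\phi_{\xi_n}(x_0)$, from $(\partial\cT,\rho)$ onto $E$. This is the step needing the most care. Take \emph{strongly separated} to mean that there are a compact set $K$ and a $\delta>0$ with $\phi_i(K)\subseteq K$ for all $i$ and $d(\phi_i(K),\phi_j(K))\ge\delta$ for $i\ne j$.

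Let $\xi\neq\zeta$ share the prefix $u=\xi\wedge\zeta$ of length $n$, and write $r_u=r_{u_1}\cdots r_{u_n}$.
\begin{itemize}
  \item[] \emph{Upper bound.} Both points lie in $\phi_u(K)$, so $\abs{\pi(\xi)-\pi(\zeta)}\le \diam(K)\,r_u$.
  \item[] \emph{Lower bound.} The two rays leave $u$ through distinct children $i\neq j$. Then $\pi(\xi)\in\phi_u\phi_i(K)$ and $\pi(\zeta)\in\phi_u\phi_j(K)$. Since $\phi_u$ is a similarity with ratio $r_u$, these sets are at distance at least $\delta r_u$.
\end{itemize}
Thus $\pi$ is bi-Lipschitz onto its image, with constants depending only on $\Phi$. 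In particular $\pi$ is injective on the retained rays, so it is a bi-Lipschitz homeomorphism onto $E$.

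It remains to check that $E$ is exactly $\pi(\partial\cT)$. This is the standard description of the percolation limit set: a point lies in $E$ if and only if it lies in a retained cylinder $\phi_u(K)$ at every level. By compactness and separation, this is equivalent to lying on the image of a retained infinite ray.

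\emph{Conclusion.} Bi-Lipschitz maps are quasisymmetric, and compositions of quasisymmetries are quasisymmetric. Composing $\pi^{-1}$, the boundary quasisymmetry, and $\pi'$ therefore gives the claimed quasisymmetric homeomorphism $E\to E'$ almost surely under the conditioning.

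The main obstacle is the coding map: matching the precise definition of strong separation to the two-sided distance estimate, and confirming that the limit set coincides with the image of the retained rays. The remaining steps are direct transfers of \cref{thm:trichotomy} and the argument of \cref{thm:boundary}.
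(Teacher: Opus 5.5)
Your proposal is correct and follows the paper's proof step for step. Both identify the retained coding trees as $\mathrm{Bin}(M,p)$ and $\mathrm{Bin}(M',p')$ Galton--Watson trees in class (B), apply \cref{thm:trichotomy}, pass to the bi-Lipschitz equivalent weighted trees with edge lengths $\log(1/r_i)$ to obtain a boundary quasisymmetry, and use strong separation to make the coding maps bi-Lipschitz. You give more detail than the paper on the two-sided coding estimate and on identifying $E$ with $\pi(\partial\cT)$.
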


\begin{proof}[Proof of \cref{thm:fractal-percolation}]
  The retained words for $\Phi$ and $\Phi'$ form Galton--Watson trees $\cT$ and $\cT'$ with
  offspring laws $\operatorname{Bin}(M,p)$ and $\operatorname{Bin}(M',p')$, respectively. These
  laws are supercritical and have respective supports $\set{0,\ldots,M}$ and
  $\set{0,\ldots,M'}$, so both belong to regime (B). By \cref{thm:trichotomy}, conditioned on
  nonextinction, the unit-edge trees $\cT$ and $\cT'$ are almost surely quasi-isometric.

  Write $r_i$, $1\leq i\leq M$, for the contraction ratios (so $r_i \in (0, 1)$) of the maps in
  $\Phi$, and write $r'_i$, $1\leq i\leq M'$, for those of the maps in $\Phi'$. We give an edge of
  $\cT$ ending in the letter $i$ length $\log (1/r_i)$, and define the edge lengths of $\cT'$
  analogously using $r'_i$. Since both systems are finite, the identity map from each unit-edge tree
  to its weighted version is bi-Lipschitz. The two weighted trees are therefore quasi-isometric, and
  the boundary theorem used in the proof of \cref{thm:boundary} gives a quasisymmetric homeomorphism
  between their visual boundaries.

  It remains to identify these boundaries with the random limit sets. If two coding rays have
  longest common prefix $u$, their visual distance in the weighted tree is the product of the
  contraction ratios along $u$. Strong separation bounds the Euclidean distance of their images
  above and below by fixed multiples of the same product. The coding maps from $\partial\cT$ and
  $\partial\cT'$ to $E$ and $E'$, respectively, are consequently bi-Lipschitz. Composing these
  coding maps with the boundary homeomorphism proves the claim.
\end{proof}

\paragraph*{\bf Percolation structures}
We note that choosing the same percolation structure was purely for convenience and all we required
here was that their associated Galton--Watson trees lie in the same quasi-isometry class of
\cref{thm:trichotomy}. Analogous results hold for much wider classes of stochastically self-similar
fractals. Further, the limit sets of those constructions are totally disconnected and have conformal
dimension $0$. Our result goes beyond that by saying that these different realisations of fractal
percolation are also quasisymmetrically equivalent when their associated trees lie in the same
quasi-isometry class. This is possibly counter-intuitive as these sets are not uniformly perfect
(they have lower dimension $0$), yet are maximal with respect to the Assouad
dimension~\cite{FraserMiaoTroscheit2018,KolossvaryTroscheit2025}.

\section{Proof strategy}\label{sec:strategy}

\paragraph*{\bf Scheme}
The regimes illustrated in \cref{fig:classes} require different local approaches, but each case of
quasi-isometric equivalence is proved by the same strategy:
\[
  \begin{gathered}
    \text{encoding realisations}
    \longrightarrow\text{branching skeleton with local pieces}
    \longrightarrow\text{quantised random labels}\\
    \longrightarrow\text{automorphism matching}
    \longrightarrow\text{(root preserving) quasi-isometry}.
  \end{gathered}
\]
\paragraph*{\bf Labellings and matchings}
The construction of the labels and the matching theorem change from one regime to another, but
the transfer back to the original realisations always glues uniformly controlled local maps.

\subsection{The two-value chain model}
When $0<\theta_1<1$ and $\theta_1+\theta_2=1$, we contract maximal unary chains and label vertices
in the binary tree $\bbB$ by that contraction length. The contracted lengths form an i.i.d.~field
of geometrically distributed random variables on $\bbB$. To try and build a quasi-isometry
depending on a parameter $D>1$, we quantise a length
$\lambda$ by its logarithmic (base $D$) scale, that is, $\floor{\log_D\lambda}$. Matchable
`adjacent' quantised
labels correspond to lengths comparable within a factor of order $D^2$, and \cref{thm:matching}
aligns all labels by one automorphism. The transfer theorem then produces a
$(D^2+3)$-quasi-isometry. The potential estimate \cref{eq:eta-bound} gives the probability bound \cref{eq:rate}, and
taking the union over $D$ proves almost-sure equivalence.

\subsection{Leaves and finite shapes}
When $\theta_0>0$, a realisation conditioned on having infinite diameter splits into an infinite
skeleton and finite bushes attached to it, the \emph{Harris decomposition}.
This decomposition makes the decorations independent after conditioning on
the skeleton. A local label now records a finite decorated shape rather than one chain length. We
replace the shape space by a countable net at scale $D$, use entropy dilution to bound its graph
potential, apply the matching theorem, and glue the matched local quasi-isometries.
\Cref{part:binary-classification} carries this out for offspring supported on $\set{0,1,2}$.

\subsection{General arity and two offspring laws} 
For the general, arbitrary bounded support case, the skeleton has several possible arities.
We replace each split
with $k$ children by a finite binary tree with $k$ leaves. Its root carries the label of the
original piece, and its other internal vertices carry the label of a one-vertex piece. These
replacements have bounded height for each fixed pair of offspring laws, so the resulting
assemblies are quasi-isometric to the original trees.

The choice of a replacement determines where the next independent labels occur. We record
the position within each replacement as a type, obtaining a tree-indexed Markov field to which
\cref{thm:markov-matching} applies. To compare two branching laws, we choose the replacements
from a common family of smaller binary trees. In the bushy regime, the reduced skeletons both
allow binary splits. In the chain regime, a common family exists whenever the branching
semigroups agree. We can then match the labels and glue the local comparisons as in the binary
case.

\subsection{Coarse obstructions}
The negative results use deterministic features that occur at arbitrarily large scales, namely:
\begin{itemize}
  \item Three diverging rays rule out the ray regime.
  \item Long unary chains create thin balls and separate the chain regime from the full tree regime.
  \item Arbitrarily deep finite dead ends separate the bushy regime from the leafless regimes.
\end{itemize}
Finally, if $m\in\Lambda_\theta\setminus\Lambda_{\theta'}$, the first chain law almost surely
produces isolated clusters with total excess $m$ and arbitrarily long outgoing arms. No realisation
from the second law can reproduce the corresponding coarse star. These arguments are proved
alongside the classification results in
\cref{part:binary-classification,part:full-classification}.

\section{Matching random labellings}\label{sec:matching}

\paragraph*{\bf Automorphism matching}
The common probabilistic ingredient is the following automorphism-matching theorem. Let $G=(V,E)$ be a
countable graph and let $\mu$ be a probability measure on $V$. We view the vertex set $V$ as a set
of possible \emph{labels}, and will call maps from fixed binary trees to $G$ \emph{labellings}. We
say labels $v,w\in V$ are \emph{compatible} when $d_G(v,w)\leq1$. Write
\[
  b(v)=\mu\bigl(B_G(v,1)\bigr)
\]
and, for $\alpha\geq1$, we define the graph potential (to penalise labels that are likely to occur
but difficult to match) by:
\begin{equation}\label{eq:potential}
  \eta_{G,\alpha}(\mu)
  =\sum_{v:\,\mu(v)>0}\mu(v)\left(\frac{1-b(v)}{b(v)^\alpha}\right).
\end{equation}

\paragraph*{\bf Leaf and full labellings} Let $\bbB_h$ be the complete binary tree of height $h$.
A \emph{$G$-leaf labelling} is a map $x\colon\{1,2\}^h\to V$ assigning a label to each leaf.
A \emph{full $G$-labelling} is a map $x\colon\bbB_h\to V$ assigning a label to every vertex.
An automorphism matches two labellings, either leaf or full, if it sends every relevant vertex
to one carrying a compatible label.

\begin{theorem}[Matching theorem]\label{thm:matching}
  Let two independent labellings of $\bbB_h$ be given, with labels sampled independently
  from $\mu$ within each labelling.
  \begin{enumerate}[(1),ref=(\arabic*)]
    \item\label{it:matching-leaf} If $\eta_{G,5/2}(\mu)\leq\tfrac1{256}$, then for leaf
      labellings
      \begin{equation}\label{eq:leaf-bound}
        \bbP(\text{no automorphism of }\bbB_h\text{ matches every leaf})
        \leq\eta_{G,5/2}(\mu)\left(\frac{253}{256}\right)^{\!h},
      \end{equation}
      which tends to zero exponentially as $h\to\infty$.
    \item\label{it:matching-full} If $\eta_{G,5/2}(\mu)\leq10^{-4}$, then for full
      labellings, uniformly in the height,
      \begin{equation}\label{eq:full-bound}
        \bbP(\text{some automorphism of }\bbB_h\text{ matches every vertex})
        \geq1-16\eta_{G,5/2}(\mu)>0.
      \end{equation}
      The same lower bound holds for the existence of a single automorphism matching every
      vertex of two independent labellings of the infinite rooted binary tree, each with
      i.i.d.\ labels of law $\mu$.
  \end{enumerate}
\end{theorem}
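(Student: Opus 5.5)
We argue by induction on the height $h$, tracking a single scalar "mismatch potential" along the recursion. At height $h$, consider two independent labellings $x,y$ of $\bbB_h$. Let $p_h$ denote the failure probability, the probability that no automorphism matches every leaf (respectively every vertex). The basic recursion comes from the root. An automorphism of $\bbB_h$ either fixes or swaps the two root subtrees, and restricts to automorphisms of the subtrees. So the leaf matching fails exactly when both pairings $(x_1,y_1),(x_2,y_2)$ and $(x_1,y_2),(x_2,y_1)$ fail. Here $x_i,y_i$ are the labellings of the two height-$(h-1)$ subtrees. The four subtree labellings are independent. Writing $F_{ij}$ for the event that $x_i,y_j$ fail to match, we need $\bbP\bigl((F_{11}\cup F_{22})\cap(F_{12}\cup F_{21})\bigr)$. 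These events are not independent, since $F_{11}$ and $F_{12}$ share $x_1$. So we do not work with $p_h$ alone. Instead we work with the conditional failure function $q_h(x)=\bbP(\text{no match}\mid x)$, which depends on one labelling, and with suitable moments of it.

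For the leaf case, set $\phi_h=\bbE[q_h(x)]$ together with a higher moment such as $\psi_h=\bbE[q_h(x)^{2}]$, or a weighted version. Conditioning on $x_1,x_2$, a union bound and independence of $y_1,y_2$ give
\[
  q_h(x)\leq \bigl(q_{h-1}(x_1)+q_{h-1}(x_2)\bigr)^2 .
\]
More precisely, $q_h(x)\leq \bbP(F_{11}\cup F_{22}\text{ and }F_{12}\cup F_{21}\mid x)$, and each cross term $\bbP(F_{11}\cap F_{12}\mid x)$ needs $y$-independence. The term $F_{11}\cap F_{21}$, which involves the same $y_1$, is exactly where the correlation enters. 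At height $0$, $q_0(v)=1-b(v)$. The exponent $\alpha=5/2$ in \cref{eq:potential} should arise because the natural controlling quantity is $\bbE\bigl[q(x)/b(x)^{\alpha}\bigr]$-type weights: a label of small $b$ is hard to match from both sides, and the reversed conditioning costs powers of $b$. The plan is therefore to define the potential $\eta_h=\bbE\bigl[q_h(x)\,w_h(x)\bigr]$, with weight $w$ built from the leaf $b$-values, so that $\eta_0=\eta_{G,5/2}(\mu)$. We then prove a quadratic contraction $\eta_h\leq c\,\eta_{h-1}+C\eta_{h-1}^2$ with $c<1$. Under the smallness hypothesis $\eta\leq 1/256$ this gives geometric decay at rate $253/256$. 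Bounding $p_h\leq\eta_h$ yields \cref{eq:leaf-bound}.

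For full labellings the root labels must additionally be compatible. This adds a failure term $1-b(x_\troot)$ at each level that does not decay. Hence we expect a uniform bound rather than decay: $\eta_h\leq \eta_{G,5/2}(\mu)+c\,\eta_{h-1}+C\eta_{h-1}^2$. Iterating under $\eta\leq10^{-4}$ keeps $\eta_h\leq16\eta$, or similar, and $p_h\leq 16\eta_{G,5/2}(\mu)$ gives \cref{eq:full-bound}. For the infinite tree we use compactness. The sets of automorphisms matching the first $h$ levels form a decreasing sequence of nonempty finite sets of restricted automorphisms. The event that all are nonempty is the decreasing limit of events each of probability at least $1-16\eta$. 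By König's lemma a nested consistent choice exists, which yields one automorphism of $\bbB$.

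The main obstacle is the correlation between $F_{11}$ and $F_{21}$, which share the labelling $y_1$, and symmetrically the correlation through $x_1$. A naive estimate $p_h\leq 4p_{h-1}^2$ fails because of them. To close the recursion we must choose the weight (the power $5/2$ of $b$) so that averaging over the shared labelling converts a product of conditional failures into the potential. The first thing to try is Cauchy--Schwarz, $\bbE[q(y_1)^2]$-type bounds, combined with the elementary estimate that a label $v$ fails to match a random label with probability $1-b(v)$. We would then verify that the constants $1/256$ and $253/256$ emerge.
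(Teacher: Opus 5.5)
You have the right architecture: a scalar potential obeying a quadratic recursion with linear coefficient below one, a non-decaying root term for full labellings, and K\H{o}nig's lemma for the infinite tree. But the proposal stops exactly where the proof begins, and the one concrete inequality you write down is false.

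The bound $q_h(x)\leq\bigl(q_{h-1}(x_1)+q_{h-1}(x_2)\bigr)^2$ fails already when $x_1=x_2$. Then both pairings coincide, so with $q=q_{h-1}(x_1)$ one gets $q_h(x)=1-(1-q)^2=2q-q^2$, which exceeds $4q^2$ for $0<q<2/5$. The culprit is the column event you flag, in which one target subtree matches neither source subtree. Its conditional probability $c=\bbP(x_1\not\approx_{h-1}Y,\ x_2\not\approx_{h-1}Y)$ can be as large as $\min(q_1,q_2)$, so it is first order. After averaging over $x_1,x_2$ it equals $\bbE_Y[q_{h-1}(Y)^2]$, which is of the same order as $\bbE[q_{h-1}]$ when a rare labelling has $q_{h-1}$ near one. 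So this correlation feeds the \emph{linear} coefficient, and the whole content of the theorem is showing that this coefficient is still below one. Unweighted moment systems such as the first and second moments of $q_h$ with Cauchy--Schwarz do not close. Each new moment brings in joint information about pairs of labellings, the mutual acceptance probabilities of the exact recursion, rather than a function of the moments you track.

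The missing idea is to use the same functional at every height, applied to the height-$h$ matching relation itself:
\[
 \Phi_h=\bbE\,\phi_\alpha\bigl(q_h(X)\bigr),\qquad \phi_\alpha(t)=\frac{t}{(1-t)^\alpha}.
\]
The weight is thus the inverse good degree $(1-q_h(x))^{-\alpha}$ of the height-$h$ labelling, not something built from leaf $b$-values. At $h=0$ it gives $\eta_{G,5/2}(\mu)$, since $1-q_0(v)=b(v)$. One then proves a single lemma for an arbitrary symmetric reflexive relation $R$ and its symmetrised square: $\Phi(R^\square)\leq A_\alpha\Phi+B_\alpha\Phi^2$.
\begin{itemize}
  \item The numerator is bounded by the $2\times2$ transversal argument, $q^\square\leq q_1^2+q_2^2+2c$.
  \item The denominator is bounded by inclusion--exclusion, $1-q^\square\geq2r_1r_2-\min(r_1,r_2)^2$.
  \item The row terms give $2L_\alpha\Phi$, with $L_\alpha=\max_{[0,1]}t(1+t)^{-\alpha}$.
  \item For the column term, Fubini turns $2c/(r_1r_2)^\alpha$ into $2\bbE_Y[H(Y)^2]$, where $H(y)=\bbE_X[\mathbf 1_{\{X\not\mathrel R y\}}r(X)^{-\alpha}]\leq q(y)+\alpha\Phi$. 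Then $q^2\leq K_\alpha\phi_\alpha(q)$, with $K_\alpha=\max_{[0,1]}t(1-t)^\alpha$, gives $4K_\alpha\Phi$.
\end{itemize}
So $A_\alpha=2L_\alpha+4K_\alpha$. This equals $59/54>1$ at $\alpha=2$ and is below $7/8$ at $\alpha=5/2$, which is where the exponent comes from. Together with $B_{5/2}<29$ it yields the rate $7/8+29/256=253/256$.

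For full labellings you also need a product estimate to combine the independent root condition with the child-pair potential:
\[
 \phi_\alpha\bigl(e+(1-e)q\bigr)\leq\phi_\alpha(e)+\phi_\alpha(q)+2\alpha\phi_\alpha(e)\phi_\alpha(q).
\]
It gives $\Theta_{h+1}\leq\Phi_0+(1+2\alpha\Phi_0)(A_\alpha\Theta_h+B_\alpha\Theta_h^2)$. Under this recursion the interval $[0,16\Phi_0]$ is invariant when $\Phi_0\leq10^{-4}$. Your K\H{o}nig argument for the infinite tree is fine as stated.
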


\paragraph*{\bf Decay and finite-height problems} The decay in \cref{eq:leaf-bound} solves the finite-height leaf problem. For full labellings,
K\H{o}nig's infinity lemma~\cite{Konig1927} turns compatible finite-height automorphisms into one
automorphism of the infinite tree, so the uniform estimate \cref{eq:full-bound} gives the stated
infinite conclusion. The numerical hypotheses are convenient rather than sharp, and will be
discussed in more detail in \cref{rem:matching-constants}.

\subsection{Scalar recursion and a failure criterion}\label{sec:iid-scalar-recursion}

Write
\[
  p_c=\bbP\bigl(d_G(X,Y)\leq1\bigr),
  \qquad X,Y\sim\mu\text{ independently},
\]
for the one-site compatibility probability. Let $\mu_h$ be the product law on full height-$h$
labellings, write $x\approx_hy$ when some automorphism of $\bbB_h$ matches $x$ and $y$, and put
\[
  u_h=\bbP(X\approx_hY),
  \qquad X,Y\sim\mu_h\text{ independently}.
\]
For two fixed height-$h$ labellings define their mutual acceptance probability by
\[
  a_h(x_1,x_2)=\bbP(x_1\approx_h Z,\ x_2\approx_h Z),
  \qquad Z\sim\mu_h.
\]
Here and below, $X_1,X_2$ are independent with law $\mu_h$.

\paragraph*{\bf A recursion for labelling} The mutual acceptance moment measures the overlap between the straight and crossed child
pairings. It gives the correction term in the following exact recursion.

\begin{proposition}[Exact full-labelling recursion]\label{thm:full-probability-recursion}
  We have $u_0=p_c$, and, for every $h\geq0$,
  \[
    u_{h+1}=p_c\left(2u_h^2-\bbE\bigl[a_h(X_1,X_2)^2\bigr]\right).
  \]
\end{proposition}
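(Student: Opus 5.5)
Base case: a height-$0$ full labelling is one root label, so $X\approx_0Y$ means $d_G(X,Y)\leq1$ and $u_0=p_c$.

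For the recursion I will decompose a height-$(h+1)$ labelling $X$ as a triple $(X_\troot,X^{(1)},X^{(2)})$: the root label and the two height-$h$ subtree labellings rooted at $1$ and $2$. Under $\mu_{h+1}$ these three pieces are independent, with $X^{(i)}\sim\mu_h$. Every rooted automorphism of $\bbB_{h+1}$ fixes the root and either preserves or swaps the two children, then acts by automorphisms of $\bbB_h$ on the subtrees. Hence
\[
  X\approx_{h+1}Y\iff d_G(X_\troot,Y_\troot)\leq1\ \text{and}\ (S\ \text{or}\ C),
\]
where $S=\{X^{(1)}\approx_hY^{(1)},\ X^{(2)}\approx_hY^{(2)}\}$ is the straight pairing and $C=\{X^{(1)}\approx_hY^{(2)},\ X^{(2)}\approx_hY^{(1)}\}$ is the crossed pairing. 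The root event depends only on root labels, which are independent of the subtrees, so $u_{h+1}=p_c\,\bbP(S\cup C)$.

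By inclusion--exclusion, $\bbP(S\cup C)=\bbP(S)+\bbP(C)-\bbP(S\cap C)$. The four subtree labellings are i.i.d.\ with law $\mu_h$. Thus $\bbP(S)=u_h^2$, and $\bbP(C)=u_h^2$ as well, since $(Y^{(2)},Y^{(1)})$ has the same law as $(Y^{(1)},Y^{(2)})$.

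The intersection $S\cap C$ is the event that both $X^{(1)}$ and $X^{(2)}$ match $Y^{(1)}$, and both match $Y^{(2)}$. Condition on $(X^{(1)},X^{(2)})=(x_1,x_2)$. The events "both $x_1,x_2$ match $Y^{(1)}$" and "both match $Y^{(2)}$" are then independent, because $Y^{(1)}$ and $Y^{(2)}$ are independent, and each has probability $a_h(x_1,x_2)$. So $\bbP(S\cap C)=\bbE[a_h(X_1,X_2)^2]$, which gives the stated identity.

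The only delicate point is the structural claim about automorphisms of $\bbB_{h+1}$: matching means existence of some automorphism, and one has to see that this splits exactly into the two child pairings, with the subtree automorphisms chosen independently. This follows because $\Aut(\bbB_{h+1})\cong(\Aut(\bbB_h)\times\Aut(\bbB_h))\rtimes\bbZ/2$. Measurability is immediate since all events are determined by finitely many labels.
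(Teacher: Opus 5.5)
Your proposal is correct and follows essentially the same route as the paper. You split at the root into the straight and crossed child pairings, give each pairing probability $u_h^2$, and apply inclusion--exclusion with the overlap equal to $\bbE[a_h(X_1,X_2)^2]$ by conditioning on the source subtrees; independence of the root labels then supplies the factor $p_c$. The paper phrases the pairing step as the $2\times2$ case of Hall's theorem rather than through the structure of $\Aut(\bbB_{h+1})$, but this is the same observation.
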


\begin{proof}
  The identity $u_0=p_c$ follows from the definitions. At a binary branch, the choice between the
  straight and crossed child pairings is the $2\times2$ case of Hall's marriage
  theorem~\cite{Hall1935}. Each pairing succeeds with probability $u_h^2$. Conditional on the two
  source subtrees $X_1,X_2$, both pairings succeed precisely when each of the two independent target
  subtrees matches both sources, an event of probability $a_h(X_1,X_2)^2$. Inclusion--exclusion
  gives the expression in parentheses, and independence of the root labels from the principal
  subtrees supplies the factor $p_c$.
\end{proof}

\paragraph*{\bf Non-closed form and moment bounds}
The recursion does not close in terms of $u_h$, since the mutual acceptance moment contains
information not determined by the mean matching probability. Universal moment bounds nevertheless
give a useful obstruction.

\begin{corollary}[Scalar bounds and failure criterion]\label{thm:iid-scalar-failure}
  For every $h\geq0$,
  \[
    \bbE a_h(X_1,X_2)\geq u_h^2,
    \qquad
    \bbE\bigl[a_h(X_1,X_2)^2\bigr]\geq u_h^4,
  \]
  and consequently
  \[
    u_{h+1}\leq p_cu_h^2(2-u_h^2)
    \leq\frac{4\sqrt6}{9}p_cu_h.
  \]
  In particular, if
  \[
    p_c<\frac{9}{4\sqrt6}=\frac{3\sqrt6}{8},
  \]
  then $u_h\to0$ geometrically, and two independent infinite full labellings almost surely admit
  no matching automorphism.
\end{corollary}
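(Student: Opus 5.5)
The corollary follows from \cref{thm:full-probability-recursion} by two moment inequalities and then an elementary maximisation.

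\textbf{Step 1: first moment of $a_h$.} Since $X_1,X_2,Z$ are independent, conditioning on $Z$ gives
\[
  \bbE a_h(X_1,X_2)
  =\bbE\bigl[\bbP(X_1\approx_h Z\mid Z)\,\bbP(X_2\approx_h Z\mid Z)\bigr]
  =\bbE\bigl[g(Z)^2\bigr],
\]
where $g(z)=\bbP(X\approx_h z)$ with $X\sim\mu_h$. Cauchy--Schwarz (Jensen) yields $\bbE[g(Z)^2]\geq(\bbE g(Z))^2=u_h^2$.

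\textbf{Step 2: second moment of $a_h$.} Apply Jensen once more: $\bbE[a_h^2]\geq(\bbE a_h)^2\geq u_h^4$. The only point to check is that $\approx_h$ is symmetric. It is, because an automorphism matching $x$ to $y$ has an inverse matching $y$ to $x$, and compatibility $d_G\leq1$ is symmetric. So the order of arguments in $g$ is immaterial.

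\textbf{Step 3: the scalar bound.} Substitute Step 2 into the exact recursion:
\[
  u_{h+1}\leq p_c(2u_h^2-u_h^4)=p_cu_h^2(2-u_h^2).
\]
Write this as $p_cu_h\cdot f(u_h)$ with $f(u)=u(2-u^2)=2u-u^3$ on $[0,1]$. Then $f'(u)=2-3u^2$ vanishes at $u=\sqrt{2/3}$, and
\[
  f\bigl(\sqrt{2/3}\bigr)=\sqrt{2/3}\cdot\tfrac43=\frac{4\sqrt6}{9}.
\]
This gives $u_{h+1}\leq\frac{4\sqrt6}{9}p_cu_h$. Under the stated hypothesis the ratio $\frac{4\sqrt6}{9}p_c$ is less than $1$, so $u_h\leq u_0\,r^h$ with $r<1$, i.e.\ $u_h\to0$ geometrically.

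\textbf{Step 4: passage to infinite labellings.} If two infinite full labellings admit a matching automorphism, then its restriction to $\bbB_h$ matches their height-$h$ truncations. These truncations are independent with law $\mu_h$. Hence
\[
  \bbP(\text{infinite match})\leq u_h\quad\text{for every }h,
\]
and letting $h\to\infty$ this probability is $0$.

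The only delicate point is Step 1: one must identify $\bbE a_h$ as a second moment by conditioning on $Z$, rather than trying to compare $a_h$ directly with a product. Everything else is routine.
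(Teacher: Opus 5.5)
Your proposal is correct and follows the paper's proof essentially line by line. It conditions on the target labelling (Fubini) and applies Jensen to get $\bbE a_h\geq u_h^2$, applies Jensen again for $\bbE[a_h^2]\geq u_h^4$, substitutes into the exact recursion, maximises $t(2-t^2)$ on $[0,1]$, and notes that the infinite matching event lies inside each height-$h$ matching event; the symmetry of $\approx_h$ that you check in Step 2 is not actually needed, since $\bbE g(Z)=\bbP(X\approx_h Z)=u_h$ directly, but it does no harm.
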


\begin{proof}
  Fubini's theorem and Jensen's inequality give
  \[
    \bbE \left[a_h(X_1,X_2)\right]
    =\bbE\left[\bbP(X\approx_hY\mid Y)^2\right]
    \geq u_h^2.
  \]
  A second application of Jensen's inequality gives
  \[
    \bbE\bigl[a_h(X_1,X_2)^2\bigr]
    \geq\bigl(\bbE a_h(X_1,X_2)\bigr)^2
    \geq u_h^4.
  \]
  Substitution in \cref{thm:full-probability-recursion} gives the first scalar bound. The second
  follows from
  \[
    \max_{0\leq t\leq1}t(2-t^2)=\frac{4\sqrt6}{9}.
  \]
  Under the stated hypothesis, the resulting linear contraction shows that $u_h\to0$
  geometrically. For each $h$, the infinite matching event is contained in the height-$h$ matching
  event, which has probability $u_h$, so the infinite matching event has probability zero.
\end{proof}

\paragraph*{\bf Thresholds} The threshold in \cref{thm:iid-scalar-failure} is
$3\sqrt6/8=0.918558653\ldots$. For comparison, a direct first-moment count for full labellings
gives
\[
  \bbE\bigl[\text{number of matching automorphisms of }\bbB_h\bigr]
  =2^{2^h-1}p_c^{2^{h+1}-1},
\]
which tends to zero only under the more restrictive condition $p_c<1/\sqrt{2}$.

\section{Matching Markov labellings}

\paragraph*{\bf Markov labellings} The independent-label theorem does not directly apply when a split with several children is
represented by successive binary splits. As described in \cref{sec:strategy}, the original
vertices retain their random labels and the inserted vertices receive the label of a
one-vertex piece. The branching choices determine where new random labels occur, creating
dependence between the labels. We describe this dependence by assigning a \emph{type} to each
vertex. The next theorem applies to general transitions between types, including the
binary encodings used in the classification.

\subsection{The process and its one-site potential}\label{sec:process}

We use the graph $G=(V,E)$, label law $\mu$, compatible mass $b(v)$ and potential
$\eta_{G,\alpha}(\mu)$ of \cref{sec:matching}. Fix a distinguished label $0\in V$.
In addition to comparing two independent labels with law $\mu$, we must compare such a
label with $0$. The latter comparison fails with probability $1-b(0)$. For $\alpha\geq1$,
we set
\begin{equation}\label{eq:root-defect}
 \zeta_{G,\alpha}(\mu)\coloneqq
 \max\left\{\eta_{G,\alpha}(\mu),\frac{1-b(0)}{b(0)^\alpha}\right\}.
\end{equation}
For fixed $G$, we abbreviate the two potentials by $\eta_\alpha(\mu)$ and
$\zeta_\alpha(\mu)$, omitting $\mu$ when it is fixed as well. We make the convention that
$\zeta_\alpha=\infty$ when $b(0)=0$.

The second term in \cref{eq:root-defect} can be necessary even when $\mu(0)=0$. For
example, take $G$ to be the path $0$--$1$--$2$ and let
$\mu=(1-t)\delta_2+t\delta_1$, where $0<t<1$. The supported labels are mutually compatible,
so $\eta_\alpha=0$, whereas $b(0)=t$. A root with prescribed label $0$ is incompatible with
an independent root with law $\mu$ with probability $1-t$, and the corresponding term is
$(1-t)/t^\alpha$. Thus $\eta_\alpha$ alone cannot control comparisons involving inserted
vertices, whose label is $0$. If $\mu(0)\geq p>0$, the summand of $\eta_\alpha$ at $0$ gives
\[
 \zeta_\alpha\leq\eta_\alpha/p.
\]

\paragraph*{\bf Sets of types} Let $\cI$ be a nonempty countable set of types and choose a subset $\cI_\mu\subseteq\cI$.
At a vertex whose type belongs to $\cI_\mu$, we draw a label with law $\mu$; at every
other vertex we put the label $0$. For each $t\in\cI$, let $P_t$ be a probability law
on $\cI\times\cI$. At a vertex of type $t$, independently of its label, we draw the
types of its two children with law $P_t$. Conditioned on these types, the two descendant
processes are independent and follow the same rule. We use independent
draws at distinct vertices.

\paragraph*{\bf Matching}
Matching has the same meaning as in \cref{sec:matching}: an automorphism of the rooted binary tree
must pair compatible labels at every vertex. We emphasise here: \textit{It need not preserve types.}
For independent processes started at $s,t\in\cI$, write $\cM_h(s,t)$ for the event of a match
through height $h$, and $\cM_\infty(s,t)$ for a match of the infinite labellings. Two different
models are included by taking the disjoint union of their type sets and letting each transition law
stay within its own model.

\subsection{The finite-type hypotheses}\label{sec:finite-hypotheses}

We now state the additional assumptions for a finite type set. Partition $\cI$ into classes
$\cI_0,\ldots,\cI_{g-1}$, where $g\geq1$. All types receiving random labels belong to
$\cI_0$, and a vertex with type in $\cI_j$ has both child vertex types in $\cI_{j+1}$, with indices read
modulo $g$. Note that our model allows random types for children with deterministic labels. Thus we require
\[
 \cI_\mu\subseteq\cI_0,\qquad
 \supp P_t\subseteq\cI_{j+1}\times\cI_{j+1}
 \quad(t\in\cI_j).
\]
We allow $g=1$, when there is a single class $\cI_0=\cI$.
\Cref{fig:markov-type-transitions} illustrates this case and a model with two classes.
A possible type path follows either child in transitions of positive probability.
We require the following two conditions:
\begin{enumerate}[label=(M\arabic*),ref=(M\arabic*),leftmargin=*]
\item\label{it:markov-positivity} \emph{Positive matching.} For every $h\geq0$ and
$s,t\in\cI_\mu$, each labelling that occurs with positive probability from $s$ through height $h$
has a positive probability of matching an independent labelling started at $t$.
\item\label{it:markov-returns} \emph{Bounded returns.} There is an integer $H\geq0$ such that,
whenever $s,t$ belong to the same class $\cI_i$, every possible type path from $s$ visits
$\cI_\mu$ at some depth $n\leq H$ for which a possible type path from $t$ also visits $\cI_\mu$.
\end{enumerate}
The path from $t$ in condition~\labelcref{it:markov-returns} may depend on the path from $s$.

\begin{figure}[tbp]
  \centering
\begin{tikzpicture}[
  x=1cm,y=1cm,font=\small,
  random/.style={circle,draw=black,line width=.65pt,fill=white,
    minimum size=5.4mm,inner sep=0pt},
  prescribed/.style={rectangle,draw=black,line width=.65pt,fill=black!12,
    minimum size=5.4mm,inner sep=0pt},
  edge/.style={line width=.65pt},
  continuation/.style={densely dotted,line width=.6pt},
  annotation/.style={font=\footnotesize,inner sep=1pt}
]
  \node[random] at (.3,1.45) {$F$};
  \node[anchor=west] at (.75,1.45) {independent label with law $\mu$};
  \node[prescribed] at (8.65,1.45) {$Z$};
  \node[anchor=west] at (9.1,1.45) {prescribed label $0$};

  \node at (3.5,.65) {\textbf{(a) Two or three descendants: $g=1$}};
  \node at (11.3,.65) {\textbf{(b) Four descendants: $g=2$}};
  \draw[black!25] (7.7,.9) -- (7.7,-4.1);

  \node[annotation] at (1.55,.16) {probability $p$};
  \node[random] (a0) at (1.55,-.35) {$F$};
  \node[random] (a1) at (.85,-1.45) {$F$};
  \node[random] (a2) at (2.25,-1.45) {$F$};
  \draw[edge] (a0) -- (a1) (a0) -- (a2);

  \node[annotation] at (5.15,.16) {probability $1-p$};
  \node[random] (b0) at (5.15,-.35) {$F$};
  \node[prescribed] (b1) at (4.4,-1.45) {$Z$};
  \node[random] (b2) at (5.9,-1.45) {$F$};
  \node[random] (b11) at (3.95,-2.55) {$F$};
  \node[random] (b12) at (4.85,-2.55) {$F$};
  \draw[edge] (b0) -- (b1) (b0) -- (b2) (b1) -- (b11) (b1) -- (b12);

  \node[annotation] at (11.3,.16) {probability $1$};
  \node[random] (c0) at (11.3,-.35) {$F$};
  \node[prescribed] (c1) at (10.1,-1.45) {$Z$};
  \node[prescribed] (c2) at (12.5,-1.45) {$Z$};
  \node[random] (c11) at (9.6,-2.55) {$F$};
  \node[random] (c12) at (10.6,-2.55) {$F$};
  \node[random] (c21) at (12,-2.55) {$F$};
  \node[random] (c22) at (13,-2.55) {$F$};
  \draw[edge] (c0) -- (c1) (c0) -- (c2)
    (c1) -- (c11) (c1) -- (c12) (c2) -- (c21) (c2) -- (c22);

  \foreach \v in {a1,a2,b2,b11,b12,c11,c12,c21,c22}{
    \draw[continuation] (\v.south) -- ++(-.15,-.28);
    \draw[continuation] (\v.south) -- ++(.15,-.28);
  }

  \node[annotation,anchor=east] at (14.5,-.35) {depth $0$};
  \node[annotation,anchor=east] at (14.5,-1.45) {depth $1$};
  \node[annotation,anchor=east] at (14.5,-2.55) {depth $2$};

  \node at (3.5,-3.35) {$\cI_0=\{F,Z\},\quad\cI_\mu=\{F\}$};
  \node at (11.3,-3.35) {$\cI_0=\{F\},\quad\cI_1=\{Z\}$};
  \node[annotation] at (11.3,-3.85) {$\cI_\mu=\{F\}$};

  \node[align=center] at (3.5,-4.65) {
    $\nu_F(\{F\}\times\{F\})=p$\\[3pt]
    $\nu_F(\{Z\}\times\{F\})=1-p$};
  \node at (11.3,-4.65) {$\nu_F(\{Z\}\times\{Z\})=1$};

  \node at (7.1,-5.6) {
    In both examples: $\nu_Z(\{F\}\times\{F\})=1$,\qquad $0\leq p\leq1$.};
\end{tikzpicture}
  \caption{Two examples of type transitions. Circles receive independent labels with law
    $\mu$, and squares receive label $0$.
    In (a), the next $F$ vertices occur at depths $1,1$ or $1,2,2$, according to the chosen
    transition. Both types belong to the same class, and $p=0$ gives the encoding of the
    ternary tree.
    In (b), the next four $F$ vertices all have depth two and the classes
    alternate. The return condition \labelcref{it:markov-returns} holds with $H=2$ in (a)
    and $H=1$ in (b).}
  \label{fig:markov-type-transitions}
\end{figure}

\subsection{The admissible exponent}\label{sec:admissible-exponent}

For $\alpha\geq1$, we set
\begin{equation}\label{eq:markov-exponent}
 \lambda_\alpha\coloneqq2\min_{0\leq\beta\leq1}\left(
 \max_{0\leq q\leq1}
 \left\{\frac{q}{(1+q)^\alpha}+\beta(1-q)^\alpha\right\}
 +\frac{\alpha^\alpha(1-\beta)^{\alpha+1}}{(\alpha+1)^{\alpha+1}}
 \right).
\end{equation}
The condition $\lambda_\alpha<1$ holds, in particular, for $\alpha\geq4/3$.
We evaluate this coefficient in \cref{sec:exponent-values}.

\begin{theorem}[Markov matching]\label{thm:markov-matching}
Fix $\alpha\geq1$ with $\lambda_\alpha<1$. Consider the Markov models above, and
assume either that $b(0)=1$, or that the type set is finite and satisfies the assumptions
of \cref{sec:finite-hypotheses}. There are $K<\infty$ and $\eps>0$ such that, if $\zeta_\alpha\leq\eps$, then
\[
 \bbP\bigl(\cM_h(s,t)^c\bigr)\leq K\zeta_\alpha\quad(h\geq0),\qquad
 \bbP\bigl(\cM_\infty(s,t)^c\bigr)\leq K\zeta_\alpha.
\]
When $b(0)=1$, this holds for every pair of initial types, and the constants depend
only on $\alpha$. Otherwise it holds whenever $s,t$ belong to the same class $\cI_j$, and the
constants depend only on $\alpha,H$ and the family of transition laws $(P_t)_{t\in\cI}$.
\end{theorem}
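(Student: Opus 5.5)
We argue by induction on the height $h$, tracking a single scalar quantity: the worst-case failure probability
\[
 f_h\coloneqq\sup_{s,t}\bbP\bigl(\cM_h(s,t)^c\bigr),
\]
taken over all initial types when $b(0)=1$, and over pairs in a common class $\cI_j$ otherwise. The aim is a contraction inequality $f_{h+1}\leq \lambda' f_h+C\zeta_\alpha$ with some $\lambda'<1$, valid while $f_h$ stays small. Iterating it gives $f_h\leq K\zeta_\alpha$ uniformly in $h$, with $K=C/(1-\lambda')$.

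\textbf{Conditional failure potentials.} A plain mean is not enough to run the induction, because the correction term $\bbE[a_h^2]$ in \cref{thm:full-probability-recursion} shows that the recursion does not close in terms of means. We therefore work with a conditional potential. For a labelling $x$ sampled from $s$, let $q_h(x)$ be the probability that $x$ fails to match an independent labelling from $t$. The quantity we control is
\[
 \Phi_h\coloneqq\bbE\left[\frac{q_h(X)}{(1-q_h(X))^{\alpha}}\right],
\]
which mirrors the shape of $\eta_\alpha$ in \cref{eq:potential}. At the root, the one-step analysis uses the $2\times2$ Hall alternative exactly as in \cref{thm:full-probability-recursion}: matching succeeds if either the straight or the crossed pairing succeeds. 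Conditioning on the two source subtrees with failure probabilities $q_1,q_2$, the match fails only if both pairings fail. Using independence of the two target subtrees, this event is bounded by an expression in $q_1,q_2$ together with the root incompatibility, and the root incompatibility is controlled by $1-b(\cdot)$.

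\textbf{Source of the exponent.} Averaging the bound over the law of $(q_1,q_2)$, and optimising the pointwise inequality by splitting a convex combination with weight $\beta$, produces exactly the two terms inside \cref{eq:markov-exponent}. The term $q/(1+q)^\alpha+\beta(1-q)^\alpha$ comes from a Young/Hölder-type split of the potential. The term $\alpha^\alpha(1-\beta)^{\alpha+1}/(\alpha+1)^{\alpha+1}$ is the maximum of $x^\alpha(1-\beta-x)$, which bounds the residual. The factor $2$ accounts for the two children. Thus $\Phi_{h+1}\leq\lambda_\alpha\Phi_h+O(\zeta_\alpha)$ provided $\Phi_h$ is below a fixed threshold $\eps'$. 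With $\lambda_\alpha<1$ and $\zeta_\alpha\leq\eps$, the threshold propagates, and we obtain $\Phi_h\leq K\zeta_\alpha$ for all $h$, hence the same bound for $f_h$. Where a random label meets an inserted label $0$, the relevant compatible mass is $b(0)$; this is precisely why $\zeta_\alpha$ rather than $\eta_\alpha$ appears.

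\textbf{Type bookkeeping.} When $b(0)=1$, a prescribed label $0$ is compatible with every label, so inserted vertices never obstruct matching. Each vertex then contributes either a $\mu$-label comparison or nothing, and the recursion above holds uniformly in the types, with constants depending only on $\alpha$.

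In the finite-type case, the recursion instead runs over blocks between successive visits to $\cI_\mu$. Condition \labelcref{it:markov-returns} guarantees that from any pair $s,t$ in a common class both processes reach random labels at matching depths $n\leq H$. Inside a block we compare through the finite deterministic shapes. The number of shapes and their probabilities are bounded in terms of $H$ and $(P_t)$, so at most a constant factor is lost; this is where the constants' dependence on $H$ and the transition laws enters. Condition \labelcref{it:markov-positivity} ensures that every positive-probability configuration has a positive matching probability, so the block contraction constant can be taken below $1$.

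\textbf{Infinite height.} For $\cM_\infty$, matching automorphisms at height $h+1$ restrict to matching automorphisms at height $h$, and $\bbB$ is locally finite. K\H{o}nig's lemma~\cite{Konig1927} therefore gives $\bbP(\cM_\infty^c)=\lim_h\bbP(\cM_h^c)\leq K\zeta_\alpha$.

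The main obstacle is the one-step potential inequality: producing exactly $\lambda_\alpha$ from the Hall-type inclusion--exclusion while absorbing the correlation term $\bbE[a_h^2]$. For this I would first try the choice $\Phi=\bbE[q/(1-q)^\alpha]$ and the convexity split in $\beta$ described above.
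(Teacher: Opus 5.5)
Your architecture matches the paper's: a potential of the form $\bbE[q/(1-q)^\alpha]$ contracting with coefficient $\lambda_\alpha$, followed by K\H{o}nig's lemma. The induction as you set it up cannot close, however, because in the finite-type case with $b(0)<1$ your $\Phi_h$ is infinite.

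Condition (M1) concerns only initial types in $\cI_\mu$. Take two types of one class where the target root carries the prescribed label $0$, e.g.\ $F$ against $Z$ in \cref{fig:markov-type-transitions}(a). Every source labelling whose root label lies outside $B_G(0,1)$ then has $q_h=1$, and this happens with positive probability. Even for $s,t\in\cI_\mu$, the step to height $h+1$ compares child subtrees of such mixed types.

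The paper therefore uses the restricted potential $\cE_h(s,t)$, which sums only over source labellings of positive matching degree, and bounds the excluded mass separately. A dichotomy lemma shows that a zero-degree labelling whose root is compatible with $0$ has one of two properties. Either one child has zero degree against every possible target child type, or each child has zero degree against some possible target child type. Condition (M2), together with (M1), stops the first alternative from recurring beyond $H$ steps. This gives $\delta_{1,h}\le Z\le2S_H(1-b(0))$, and an analogous weighted bound $E(M)$ controls the $\delta_2$ term.

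So (M1) is a qualitative stopping device, not the source of a contraction constant below one. Moreover, $H$, $T=\max_i\#\cI_i$ and $B=\max_t\sum_jP_t(j)^{-\alpha}$ enter only through $Z$ and $E(M)$, which are $O(\zeta_\alpha+M^2)$. Your block argument loses a constant factor depending on $H$ and $(P_t)$; if that factor multiplies the linear term, the contraction is destroyed.

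Second, the one-step inequality with linear coefficient exactly $\lambda_\alpha$ is asserted, not derived. Identifying $K_\alpha(\beta)$ as $\max_x x^\alpha(1-\beta-x)$ is correct, but the mechanism is missing. In \cref{thm:four-law-contraction}, the two source and two target children have four possibly different laws $\rho_1,\rho_2,\tau_1,\tau_2$. If both pairings fail, then some source child matches neither target child, or some target child matches neither source child.

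The row terms are bounded using $\phi(xy)\le\tfrac{L_\alpha(\beta)}2(\phi(x)+\phi(y))-\tfrac\beta2(x+y)$. The column terms couple $X_1$ and $X_2$ through a common target point. Conditioning on that point factorises them, and $q^2\le K_\alpha(\beta)\phi(q)+\beta q$ is then applied to the target-side degrees $q_{\rho_i}(Y_j)$. The linear $\beta$-terms cancel because symmetry of the relation gives $\sum_{i,j}\bbE q_{\tau_j}(X_i)=\sum_{i,j}\bbE q_{\rho_i}(Y_j)$. Consequently the induction must also bound the reverse potentials $\cE(\tau_j,\rho_i)$, not only the forward ones.

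Finally, averaging over the random target transition needs two further ingredients. One is convexity of $x\mapsto(1-x)x^{-\alpha}$, with an extra term $(\alpha+1)wr^{-\alpha}$ when some transitions give zero degree. The other is the bound $\mathbf1_{\{r>0\}}r^{-\alpha}\le\sum_jP_t(j)^{-\alpha}\mathbf1_{\{r_j>0\}}r_j^{-\alpha}$, which is where the transition laws actually enter the constants.
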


\paragraph*{\bf Constants and type transitions} We give a more detailed restatement in \cref{sec:markov-proof}, specifying the uniform
dependence of the constants on the type transitions and the cost of using
$\eta_\alpha$. When every label in the support of $\mu$ is compatible with $0$, we have $b(0)=1$. The
theorem then imposes no restriction on the type transitions. It allows arbitrarily many
vertices labelled $0$ between successive draws from $\mu$, or branches with no further
draws. Otherwise, the finite-type assumptions ensure that comparisons between random and
prescribed labels can be controlled uniformly in the height. For two finitely supported arity laws with the same branching semigroup, we construct finite
binary replacements satisfying the theorem's hypotheses. This supplies the matching needed
for the general classification. We prove the theorem in \cref{sec:markov-proof} and construct the replacements
in \cref{sec:common-presentations}.

\section{Random branching times}\label{sec:branching-times}

\paragraph*{\bf Random edge lengths} The chain encoding separates the binary genealogy from the
distances between successive branch points. We now use this observation when the genealogy is the
deterministic binary tree and assign independent positive random lengths to its vertices. The two
edges leaving each vertex share its length. The metric geometry is governed by the same matching
theorem. Given a function $l\colon\bbB\to(0,\infty)$, let $X_l$ be the geometric realisation of
$\bbB$ in which both edges from $w$ to its children have length $l(w)$. Thus $X_l$ is a metric tree
whose branch vertices are indexed by $\bbB$.

\paragraph*{\bf Branching-time densities} We consider two families of branching-time densities. For $\lambda>0$, put
\[
  f^+_\lambda(t)=
  \begin{cases}
    \lambda e^{-\lambda(t-1)}&t\geq1,\\
    0&0<t<1,
  \end{cases}
  \qquad
  f^\pm_\lambda(t)=
  \begin{cases}
    (\lambda/2)e^\lambda t^{-2}e^{-\lambda/t}&0<t<1,\\
    (\lambda/2)e^\lambda e^{-\lambda t}&t\geq1.
  \end{cases}
\]
The first is a shifted exponential law. For the second, the restrictions to $[1,\infty)$ and
$(0,1)$ each have mass $1/2$, and conditional on either half, respectively $l$ and $l^{-1}$ have
the shifted exponential law. Let $X^+_\lambda$ and $X^\pm_\lambda$ denote the random spaces
$X_l$ when $(l(w))_{w\in\bbB}$ are independent with the corresponding density. The top and
bottom rows of \cref{fig:branching-times} illustrate the one-sided and two-sided families,
respectively.

\begin{figure}[t]
  \begin{center}
  \includegraphics[width=.4235\textwidth]{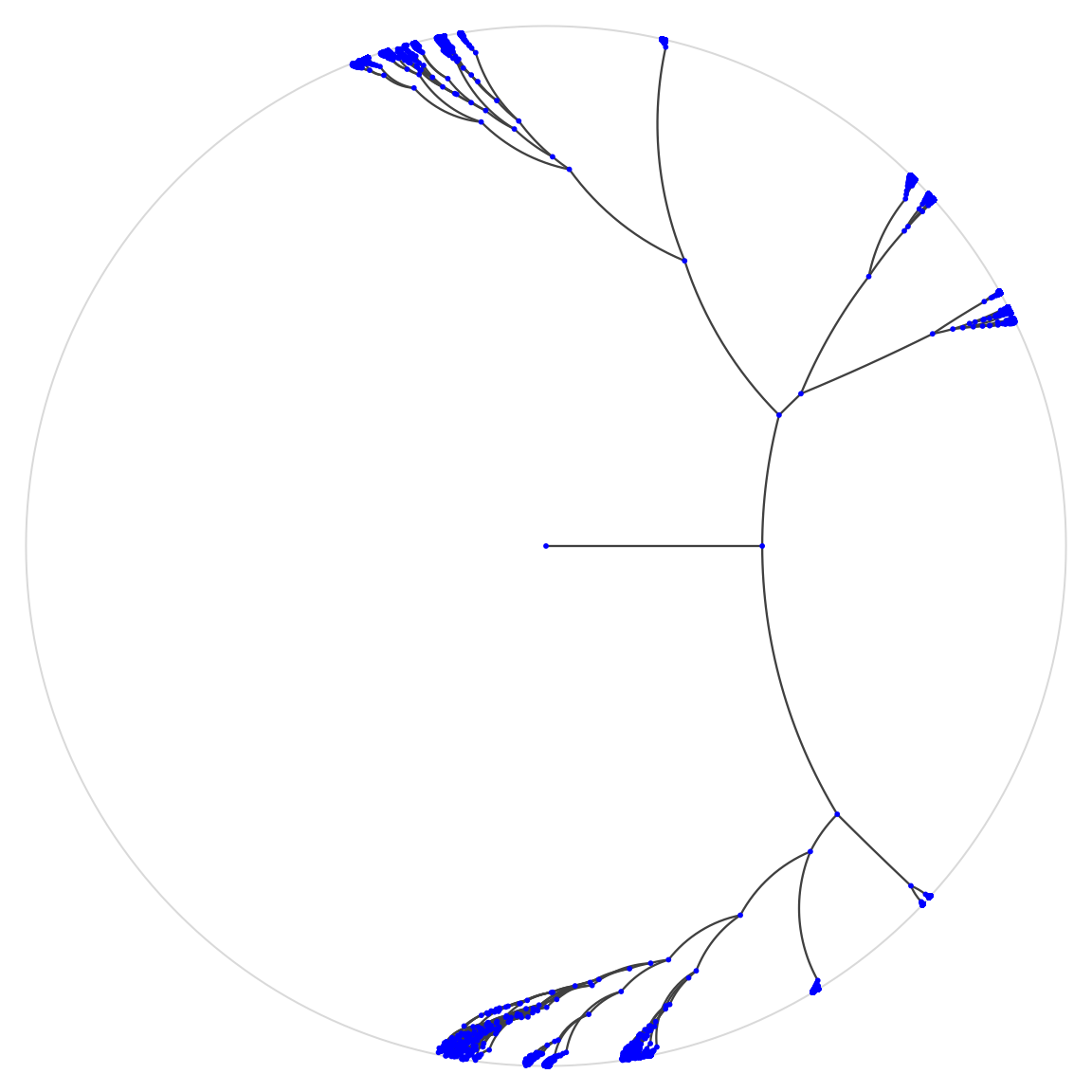}\hfill
  \includegraphics[width=.4235\textwidth]{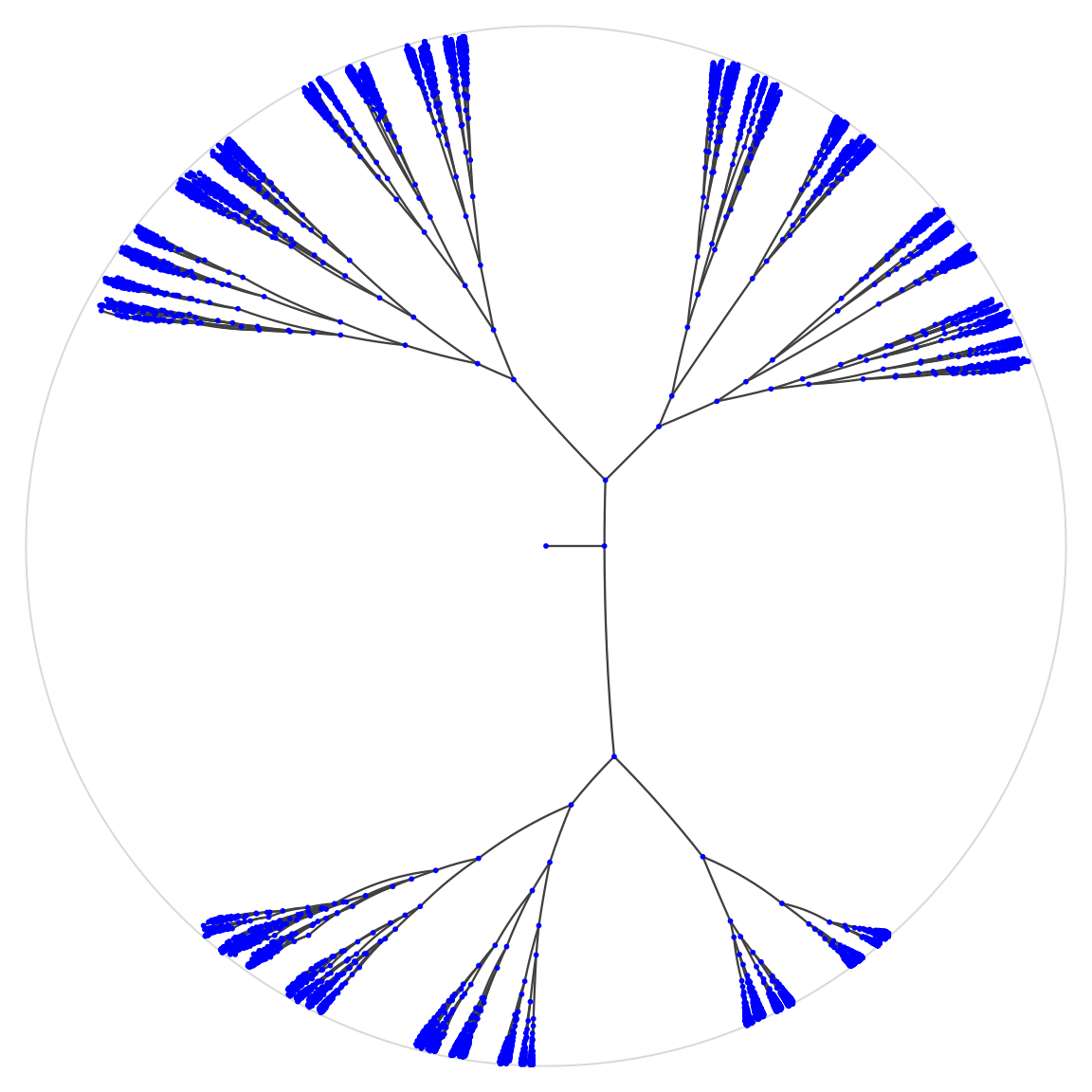}
\end{center}

\begin{center}
  \includegraphics[width=.4235\textwidth]{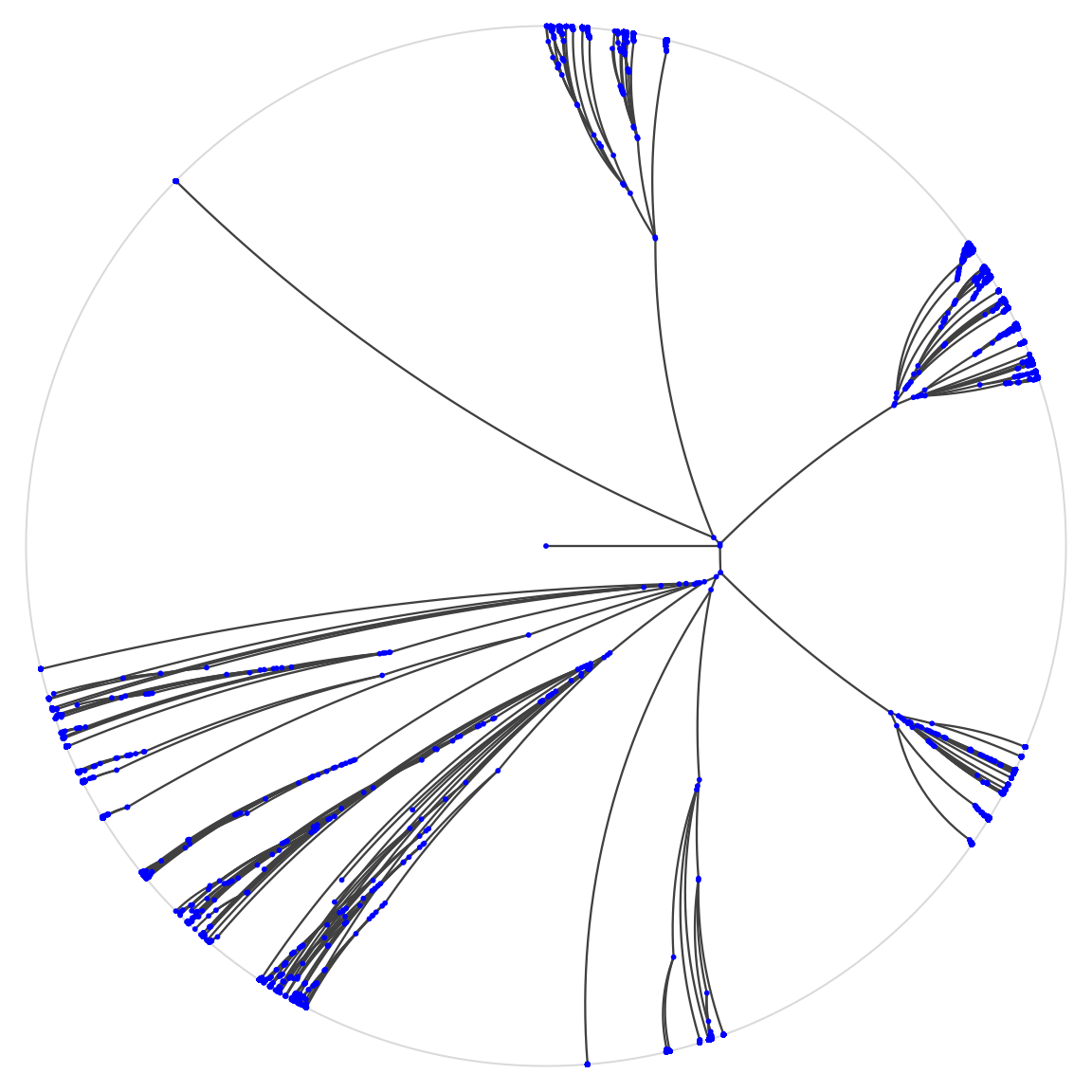}\hfill
  \includegraphics[width=.4235\textwidth]{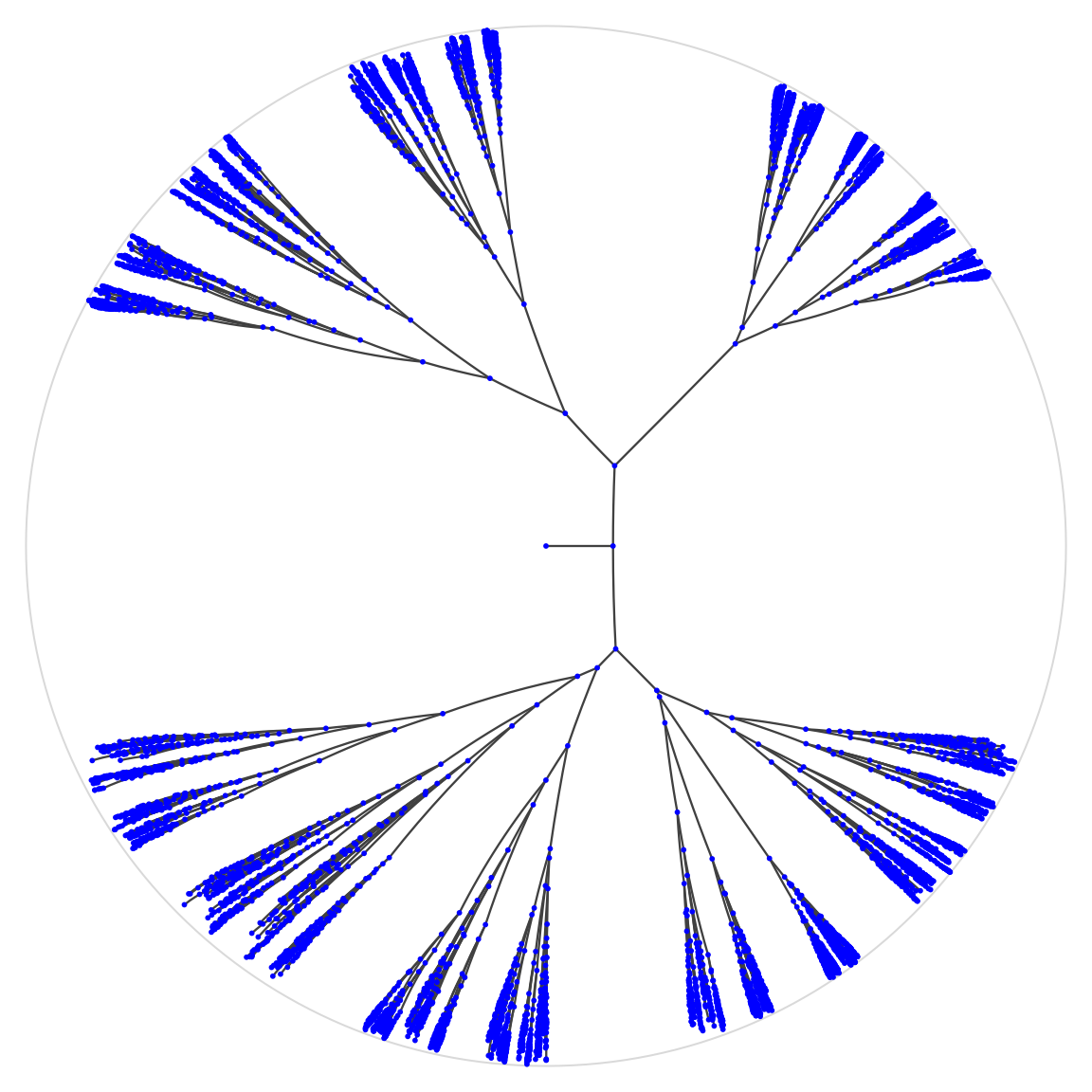}
\end{center}
  \caption{Finite truncations of the random branching-time models embedded in the Poincar\'e disc.
    The top row shows $X^+_{0.2}$ and $X^+_{0.5}$, which are almost surely quasi-isometric by
    \cref{thm:branching-times}\labelcref{it:branching-times-one-sided}. The bottom row shows
    $X^\pm_{0.1}$ and $X^\pm_1$, which are almost surely quasi-isometric by
    \cref{thm:branching-times}\labelcref{it:branching-times-two-sided}. Independent realisations
    represented in different rows are almost surely not quasi-isometric by
    \cref{thm:branching-times}\labelcref{it:branching-times-separation}. The arbitrarily short edges
    in the bottom row illustrate this separation.}
  \label{fig:branching-times}
\end{figure}

\paragraph*{\bf Collapsing} The branching-time interpretation also gives a discrete model with unbounded arity. Collapse each
connected component of
\[
  \set{x\in X_l:n\leq d_{X_l}(\troot,x)<n+1},
  \qquad n\in\bbN_0,
\]
to a vertex, and join two such vertices when the closures of the corresponding components meet.
Each collapsed component has diameter at most two, so the resulting rooted tree is quasi-isometric
to $X_l$. In the two-sided model, arbitrarily many binary splits can occur during one unit of time.
Indeed, for every $h$, there is positive probability that all edges in the first $h$ generations
have length less than $1/h$. The set of possible arities in the coarse tree is therefore unbounded.
Consequently, \cref{thm:branching-times} also compares a structured family of unbounded-arity
random trees but does not give a classification for general offspring laws with infinite support.

\paragraph*{\bf Quantising} We quantise the time until splits in the one-sided family by the
half-line $\bbN_0$, as in the chain model. The two-sided
family will be quantised by the path on $\bbZ$ instead. The following estimate gives the only additional
required probabilistic bounds.

\begin{lemma}[Two-sided branching-time potential]\label{thm:two-sided-potential}
  Let $a\in(0,1)$ and, for an integer $D\geq2$, define
  \[
    p_k=a^{D^k-1}-a^{D^{k+1}-1}\qquad(k\geq0).
  \]
  Define a probability measure $q^{(D)}$ on $\bbZ$ by
  \[
    q^{(D)}_k=q^{(D)}_{-k-1}=\frac{p_k}{2}\qquad(k\geq0).
  \]
  If $\mathsf P_{\bbZ}$ is the path on $\bbZ$, then
  \[
    \eta_{\mathsf P_{\bbZ},5/2}\bigl(q^{(D)}\bigr)\longrightarrow0
    \qquad\text{as }D\to\infty.
  \]
\end{lemma}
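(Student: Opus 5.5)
We compute the potential directly from \cref{eq:potential}, with $\alpha=5/2$, on the path graph $\mathsf P_{\bbZ}$, where $B(k,1)=\set{k-1,k,k+1}$. First note that $\sum_{k\geq0}p_k=a^{0}-\lim a^{D^{k+1}-1}=1$ by telescoping, so $q^{(D)}$ is a probability measure. It is symmetric under $k\leftrightarrow -k-1$, and the two central atoms $0$ and $-1$ each carry mass $p_0/2$. Hence
\[
  \eta=2\sum_{k\geq0}\frac{p_k}{2}\cdot\frac{1-b(k)}{b(k)^{5/2}}
  =\sum_{k\geq0}p_k\,\frac{1-b(k)}{b(k)^{5/2}}.
\]
We split this sum into the term $k=0$ and the tail $k\geq1$.

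For $k=0$ we have $b(0)=q_{-1}+q_0+q_1=p_0+p_1/2$. As $D\to\infty$, $p_0=1-a^{D-1}\to1$, so $b(0)\to1$ and $1-b(0)\leq a^{D-1}\to0$. Since $b(0)\geq 1/2$ for large $D$, the $k=0$ term is at most $2^{5/2}a^{D-1}\to0$.

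For $k\geq1$ we use $b(k)\geq q_{k-1}=p_{k-1}/2$ together with $1-b(k)\leq1$. This gives a tail bound
\[
  \sum_{k\geq1}p_k\Bigl(\frac{2}{p_{k-1}}\Bigr)^{5/2}
  \leq 2^{5/2}\sum_{k\geq1}\frac{a^{D^k-1}}{p_{k-1}^{5/2}}.
\]
The key estimate is a lower bound on $p_{k-1}$. Set $x=a^{D^{k-1}-1}\leq1$. Then
\[
  p_{k-1}=x-a^{D^k-1}\geq x/2,
\]
because $a^{D^k-1}\leq a^{D^{k-1}-1}\cdot a^{D^{k-1}(D-1)}\leq x/2$ once $D$ is large enough that $a^{D-1}\leq1/2$. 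Therefore each summand is at most
\[
  2^{5/2}\,\frac{a^{D^k-1}}{x^{5/2}}
  =2^{5/2}\,a^{\,D^k-1-\frac52(D^{k-1}-1)}.
\]
Since $D\geq 4$, the exponent satisfies
\[
  D^k-\tfrac52D^{k-1}+\tfrac32\geq D^{k-1}\bigl(D-\tfrac52\bigr)\geq D^{k-1}.
\]
Consequently the tail is at most $C\sum_{k\geq1}a^{D^{k-1}(D-5/2)}$. Every term here is bounded by $a^{(D-5/2)k}$, so the tail is dominated by a geometric series with ratio $a^{D-5/2}$, and it is $O(a^{D-5/2})\to0$.

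The main point needing care is the ratio $a^{D^k}/p_{k-1}^{5/2}$. The tiny mass $p_k$ at a far label must beat the $5/2$-power of the neighbour mass $p_{k-1}$. This works because the exponents $D^k$ grow superexponentially in $k$. Thus $p_k\approx p_{k-1}^{D}$, and $D>5/2$ is what makes the bound go through.
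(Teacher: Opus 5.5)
Your proof is correct and follows essentially the same route as the paper. You reduce to the positive half-line by the reflection $k\leftrightarrow -k-1$, control the central term via $1-b(0)\leq a^{D-1}$, and bound the remaining terms using $b(k)\geq p_{k-1}/2$ and $p_{k-1}\geq\tfrac12a^{D^{k-1}-1}$, which gives summands of order $a^{D^{k-1}(D-5/2)}$.

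The differences are cosmetic. You fold the $k=1$ term into the tail and sum via the domination $D^{k-1}\geq k$, giving $O(a^{D-5/2})$ rather than the paper's sharper $64a^{D(D-5/2)}$; this still suffices for the limit.
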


\begin{proof}
  The numbers $p_k$ are successive differences of the tail $a^{D^k-1}$, so they sum to one and
  the displayed formula defines a probability measure. Write $q_k=q^{(D)}_k$ and
  $b_k=q_{k-1}+q_k+q_{k+1}$. The symmetry $q_k=q_{-k-1}$ lets us estimate the two central terms
  and then twice the terms on the positive half-line.

  For all sufficiently large $D$, we have $p_0\geq1/2$. Since
  \[
    b_0=b_{-1}=p_0+\frac{p_1}{2}
    \quad\text{and}\quad
    1-b_0\leq1-p_0=a^{D-1},
  \]
  the two central terms in the potential tend to zero. The pair of terms at $1$ and $-2$ is at
  most a constant multiple of $p_1\leq a^{D-1}$.

  For $k\geq2$, we have $b_k\geq p_{k-1}/2$. Increasing the threshold for $D$ if necessary,
  \[
    p_{k-1}
    =a^{D^{k-1}-1}\bigl(1-a^{D^k-D^{k-1}}\bigr)
    \geq\frac12a^{D^{k-1}-1}.
  \]
  Dropping the factor $1-b_k\leq1$, the sum of the terms at $k$ and $-k-1$ is therefore at most
  \[
    2^{5/2}p_kp_{k-1}^{-5/2}
    \leq32a^{D^{k-1}(D-5/2)}.
  \]
  For large $D$, consecutive terms of the series on the right have ratio at most $1/2$. Their sum
  is at most $64a^{D(D-5/2)}$, which also tends to zero. Combining the three estimates proves the
  claim.
\end{proof}

\paragraph*{\bf Exponential branching times}
For an ordinary exponential branching time,
\[
  \bbP(l<t)=1-e^{-\lambda t}\sim\lambda t
\]
as $t\downarrow0$. This decay is too slow for our matching estimate: under the same geometric
quantisation, adjacent bins near zero have comparable masses, and the graph potential is infinite.
The two-sided density $f^\pm_\lambda$ instead gives
$\bbP(l<t)=\tfrac12e^{\lambda-\lambda/t}$ for $0<t<1$, whose faster decay makes the
preceding potential estimate possible. It would be very interesting to try and study the
quasi-isometric properties of the standard exponential branching time model.

\begin{theorem}[Universality and separation for random branching times]
  \label{thm:branching-times}
  Let $\lambda,\lambda'>0$, and take all realisations below independently.
  \begin{enumerate}[(1),ref=(\arabic*),itemsep=5pt]
    \item\label{it:branching-times-one-sided} Almost surely,
      $X^+_\lambda\simeq X^+_{\lambda'}$.
    \item\label{it:branching-times-two-sided} Almost surely,
      $X^\pm_\lambda\simeq X^\pm_{\lambda'}$.
    \item\label{it:branching-times-separation} Almost surely,
      $X^+_\lambda\not\simeq X^\pm_{\lambda'}$.
  \end{enumerate}
  The quasi-isometries in \labelcref{it:branching-times-one-sided,it:branching-times-two-sided}
  can be chosen to be root-preserving.
\end{theorem}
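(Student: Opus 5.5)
For \labelcref{it:branching-times-one-sided,it:branching-times-two-sided} we follow the chain-model scheme of \cref{sec:strategy}: quantise lengths, apply \cref{thm:matching}\labelcref{it:matching-full} on the infinite binary tree, glue the matched local comparisons into a quasi-isometry, and take a union over $D$.

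\emph{Quantisation.} Fix an integer $D\geq2$. In the one-sided model we label $w\in\bbB$ by $\floor{\log_D l(w)}\in\bbN_0$, using the path graph on $\bbN_0$. Since $l-1$ is exponential, the bin $[D^k,D^{k+1})$ has mass $a^{D^k-1}-a^{D^{k+1}-1}$ with $a=e^{-\lambda}$. In the two-sided model we take labels in $\bbZ$ by quantising $l$ on $[1,\infty)$ and $l^{-1}$ on $(0,1)$ symmetrically. By the tail formula $\bbP(l<t)=\tfrac12e^{\lambda-\lambda/t}$, this yields exactly the measure $q^{(D)}$ of \cref{thm:two-sided-potential}. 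The one-sided potential is handled by the positive half of that lemma's proof. In both cases $\eta_{5/2}\to0$ as $D\to\infty$.

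\emph{Different parameters.} For $\lambda\neq\lambda'$ the two label laws differ, while \cref{thm:matching} is stated for a common $\mu$. We therefore couple both to a common law. We use the relation $l'\overset{d}{=}\phi(l)$, where $1+(\lambda/\lambda')(l-1)$ in the one-sided case, with the analogous map on $l^{-1}$ in the two-sided case. Since $\phi$ distorts $\log_D$ by a bounded additive amount, a label from one model is within one step of the corresponding label of the other for large $D$. Equivalently, we apply the matching theorem to a quantised common law with a slightly enlarged compatibility graph, whose potential still tends to $0$. Given an automorphism $\sigma$ matching all labels, compatible labels mean that $l(w)$ and $l'(\sigma w)$ agree up to a factor $O(D^2)$. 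We then map $X_l$ to $X_{l'}$ edge by edge along $\sigma$, rescaling each edge linearly. This map is root-preserving and bi-Lipschitz with constant $O(D^2)$, so it is in particular a quasi-isometry.

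\emph{Almost-sure conclusion.} \Cref{thm:matching} gives failure probability at most $16\eta_{5/2}(\mu_D)\to0$. Letting $D\to\infty$ then shows that a quasi-isometry exists almost surely.

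\emph{Separation.} For \labelcref{it:branching-times-separation} we use a coarse invariant. In $X^+_\lambda$ every branch point has edges of length at least $1$, so every ball of radius $r$ contains at most about $2^{r+1}$ branch points at mutual distance at least $1$; this is a uniform growth bound. In $X^\pm_{\lambda'}$, for every $h$ there is positive probability that a subtree contains $2^h$ branch points within distance $2$ of its top. By independence along disjoint subtrees, almost surely there are such dense bushes of every height $h$, and these contain $2^h$ points that are $1/h$-separated yet also far apart in the tree.

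A quasi-isometry $\psi$ into $X^+_\lambda$ should contradict this. Take $R$ points in the bush pairwise at distance at least $R_0\gg B$ within a ball of radius $R_0+O(1)$; the two-sided tree admits exponentially many such points compared with any fixed exponential rate. Their images must be $B$-separated inside a ball of radius $AR_0+B$, which caps their number at about $2^{AR_0+B}$.

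The obstacle is making this exponent comparison beat the quasi-isometry constant. Dense bushes of unbounded height only give $2^h$ points in a bounded ball, and those points are not coarsely separated. What is needed instead is the existence of regions where the branching rate per unit length exceeds any fixed constant over long distances. For any $M$ we want subtrees in which, over depth $L$, all lengths are below $1/M$. Such a subtree has positive probability, occurs almost surely somewhere, and gives volume growth $2^{ML}$ at coarse scales. A quasi-isometry would force $2^{ML}\lesssim 2^{AL+B}$, which fails once $M>A$. This contradiction proves \labelcref{it:branching-times-separation}.
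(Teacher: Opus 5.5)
Your parts (1) and (2) are essentially the paper's proof. The paper quantises the second realisation by the cut points $h_k=1+c(D^k-1)=\phi(D^k)$, with $c=\lambda/\lambda'$, and by their reciprocals below $1$. That is, it labels $w$ by $\ell_D(\phi^{-1}(l'(w)))$, so the two independent fields have exactly the common law $p^{(D)}$, respectively $q^{(D)}$. No enlarged compatibility graph is needed. The bound $\min(1,c)D^k\le h_k\le\max(1,c)D^k$ is what makes adjacent labels give lengths comparable within $\rho D^2$, where $\rho=\max(c,1/c)$, and hence makes the edgewise linear map bi-Lipschitz. For the one-sided potential you can cite \cref{thm:eta-bound} with $\theta_1=e^{-\lambda}$ directly.

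For part (3) you take a genuinely different route.

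\textbf{The paper's route.} It passes to visual boundaries. Edges of length at least $1$ make $\partial X^+_\lambda$ doubling. Complete subtrees of height $n$ with all lengths below $(\log2)/(2n)$ make $\partial X^\pm_{\lambda'}$ non-doubling. After proving that $X^\pm_{\lambda'}$ is proper, the paper invokes the Buyalo--Schroeder QI-to-QS theorem and quasisymmetric invariance of doubling.

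\textbf{Your route and the missing step.} Your coarse-growth comparison stays inside the trees and needs neither properness nor boundary theory, so it is more elementary. However, the step ``gives volume growth $2^{ML}$ at coarse scales'' is exactly where your own objection to the bushes applies. An upper bound $1/M$ on the lengths gives no lower bound on distances between the generation-$ML$ vertices, so they need not be coarsely separated.

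\textbf{The repair.} Push points outward along rays.
\begin{enumerate}
\item Almost surely every vertex has a descending ray of infinite length, since its leftmost ray has infinitely many edges of length at least $1$.
\item Suppose all generation-$n$ vertices below $v$ lie within distance $t$ of $v$. Then the points at distance $t+R$ from $v$ on rays through distinct such vertices are pairwise at least $2R$ apart. Indeed, their branch point is an ancestor of both, so it lies within distance $t$ of $v$.
\item With $t=L-R_0$ this yields your $2^{\lfloor M(L-R_0)\rfloor}$ separated points in $B(v,L)$.
\end{enumerate}

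With $t\le1$ the same construction already rescues the bushes you discarded. It gives $2^n$ points pairwise $2R$ apart in $B(v,R+1)$, for every $n$. This directly contradicts the uniform $O(r2^r)$ bound on $1$-separated points in an $r$-ball of $X^+_\lambda$. Here you should count separated points on the $O(2^r)$ edges meeting the ball, not branch points, since images under a quasi-isometry need not be vertices. No exponent comparison is needed, and this is the coarse counterpart of the paper's non-doubling estimate.
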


\begin{proof}[Proof of \cref{thm:branching-times}]
  We first prove the two positive statements. Fix an integer $D\geq2$, put $a=e^{-\lambda}$ and
  $c=\lambda/\lambda'$, and define
  \[
    h_k=1+c(D^k-1)\qquad(k\geq0).
  \]
  Quantise a length from the first realisation by the intervals $[D^k,D^{k+1})$, and a length
  from the second by $[h_k,h_{k+1})$. For the one-sided laws, both quantised labels have masses
  \[
    p_k=a^{D^k-1}-a^{D^{k+1}-1}\qquad(k\geq0).
  \]
  This is the law $p^{(D)}$ in \cref{thm:quantised-law}, with $\theta_1=a$. Hence
  \cref{thm:eta-bound} shows that its potential on the path $\bbN_0$ tends to zero.

  For the two-sided laws, use the same intervals above $1$, give their labels the indices
  $k\geq0$, and use the reciprocal intervals below $1$, with label $-k-1$ assigned to
  \[
    (D^{-k-1},D^{-k}]
    \quad\text{and}\quad
    (h_{k+1}^{-1},h_k^{-1}]
  \]
  in the first and second realisations, respectively. Both label fields then have the law
  $q^{(D)}$ of \cref{thm:two-sided-potential}. Its potential on the path $\bbZ$ tends to zero.

  In either case the two quantised fields are independent, and within each field the labels are
  independent and identically distributed. For all sufficiently large $D$, \cref{thm:matching}
  gives a matching automorphism with probability at least $1-16\eta_D$, where $\eta_D\to0$ is the
  relevant potential.

  Put $m=\min(1,c)$, $M=\max(1,c)$, and $\rho=M/m$. The cut points satisfy
  \[
    mD^k\leq h_k\leq MD^k\qquad(k\geq0).
  \]
  Thus labels at distance at most one correspond to lengths comparable within the factor
  $\rho D^2$, both for the direct intervals and their reciprocals. On the matching event,
  extending the matching automorphism linearly along edges gives a root-preserving
  $\rho D^2$-bi-Lipschitz homeomorphism, by summing the edgewise bounds along geodesics.
  The event that the two spaces are quasi-isometric consequently has probability
  at least $1-16\eta_D$ for every sufficiently large $D$. Letting $D\to\infty$ proves
  \cref{it:branching-times-one-sided,it:branching-times-two-sided}.

  It remains to separate the two families. For a proper weighted binary tree $X_l$, equip its end
  boundary with the visual metric
  \[
    d_l(\xi,\zeta)=\exp\bigl(-d_{X_l}(\troot,\xi\wedge\zeta)\bigr).
  \]
  Every edge of $X^+_\lambda$ has length at least one. Each boundary ball is therefore a cylinder,
  and its two child cylinders have diameter at most $e^{-1}$ times its diameter. They cover the
  original ball by balls of half its radius, so the boundary of every realisation of
  $X^+_\lambda$ is doubling.

  We next show that the boundary of $X^\pm_{\lambda'}$ is almost surely not doubling. For each
  $n\geq1$, put $\eps_n=(\log2)/(2n)$. At any fixed vertex, the event that all edge lengths in the
  complete binary subtree of height $n$ are below $\eps_n$ has positive probability. Taking these
  events over an infinite antichain gives independent trials, so such a subtree occurs almost
  surely for every $n$.

  Let $v$ be the root of one of these subtrees and put
  $\rho=\exp(-d_{X_l}(\troot,v))$. The cylinder at $v$ is a boundary ball of radius $\rho$.
  Choose one ray through each of the $2^n$ vertices at depth $n$ below $v$. Any two chosen rays
  separate at distance at most $n\eps_n=(\log2)/2$ from $v$, so their boundary distance is at least
  $\rho/\sqrt2>\rho/2$. Hence the cylinder cannot be covered by fewer than $2^n$ balls of radius
  $\rho/2$.
  Since this holds for every $n$, the boundary is not doubling.

  Finally, the two-sided weighted tree is almost surely proper. Indeed, choose $r>0$ so small that
  $2\bbP(l<r)<1$. An infinite ray of finite length would eventually use only edges of length less
  than $r$, whereas the clusters of such edges are dominated by a subcritical binary
  Galton--Watson process. Almost surely no such ray exists. K\H{o}nig's infinity lemma then shows
  that every bounded ball meets only finitely many branch vertices, and hence is compact.

  A quasi-isometry between the two proper metric trees would induce a quasisymmetry between their
  visual boundaries, as in the proof of \cref{thm:boundary}. Quasisymmetries preserve doubling,
  which contradicts the boundary properties above and proves \cref{it:branching-times-separation}.
\end{proof}

\section{Further directions}\label{sec:beyond}

\paragraph*{\bf Possible extensions} The classification settles the bounded-support problem: extinct realisations form the finite class,
while \cref{thm:trichotomy} classifies every infinite case. It leaves several natural extensions.

\begin{enumerate}
  \item \emph{Unbounded offspring support.}
    The unit-time coarse graining in \cref{sec:branching-times} shows that the matching theorem
    already applies to some structured random trees with unbounded arity. For arbitrary offspring
    laws, whether an exponential offspring tail suffices remains open. A complementary problem is
    to identify a heavy-tailed law whose realisations are not quasi-isometric to any bounded-support
    realisation.

  \item \emph{Critical and size-biased trees.}
    For a critical law, the local weak limit under conditioning on survival to generation $n$ is
    Kesten's size-biased tree. Its distinguished spine and critical bushes produce a geometry
    unlike the supercritical realisations studied here and suggest different coarse invariants.
    One may ask whether two independent realisations admit a natural equivalence after a
    scale-dependent rescaling.

  \item \emph{Fractal percolation without strong separation.}
    The proof of \cref{thm:fractal-percolation} uses strong separation to identify the random limit
    set bi-Lipschitzly with the visual boundary of its Galton--Watson coding tree. In Mandelbrot
    percolation, retained cubes may share boundary points, so the coding map need not be injective
    and Euclidean distances are not controlled by common-prefix length. The retained cubes still
    have a Galton--Watson genealogy, but this tree does not record their horizontal adjacencies. A
    ringed tree records these adjacencies by joining neighbouring retained cubes at each level. It
    is open whether two independent nonempty realisations of Mandelbrot percolation with the same
    parameters are almost surely quasisymmetrically equivalent. More generally, one may ask the
    same question for general fractal percolation without strong separation.

  \item \emph{Continuum random trees.}
    The Brownian-tree problem discussed above asks whether two independent Brownian continuum
    random trees are almost surely quasisymmetrically equivalent. Their non-doubling
    geometry~\cite{Troscheit2021} prevents the discrete bounded-degree reduction used here, but the
    comparison between independent realisations suggests looking for a continuum analogue of the
    matching principle.

  \item \emph{Random higher rank.}
    Li--Yu--Zheng~\cite{LiYuZheng2026} find rigidity for random subsets of products of regular
    trees. It remains to understand which products or random subspaces retain the rank-one
    flexibility of \cref{thm:trichotomy}, and which inherit higher-rank rigidity.
\end{enumerate}

\ifSubfilesClassLoaded{%

}{}

\end{document}

\clearpage
\part{Quasi-isometry classification for offspring distributions supported on \texorpdfstring{$\{0,1,2\}$}{\{0,1,2\}}}\label{part:binary-classification}

\section{Quasi-isometries through chains}
\paragraph*{\bf Binary branching}
We first consider only offspring distributions $\theta$ that are fully supported on $\set{1,2}$,
i.e.\ those with $\theta_1+\theta_2=1$ and $0<\theta_1<1$. The case $\theta_1=1$ gives the
deterministic ray, and the case $\theta_1 = 0$ gives the deterministic binary tree, making the
conclusions of \cref{thm:twovalue} trivial in those settings.
\subsection{The chain encoding}\label{sec:encoding}
Under the assumption $\theta_1+\theta_2=1$, $0<\theta_1<1$, the random tree
$\cT(\omega)\subseteq\cN$ consists of binary branch points connected by chains of single-descendant
vertices. Collapsing the chains encodes the random tree $\cT$ by a binary tree with a random
labelling, where the values at each node are independent, identically
distributed geometric $\theta_2$-random variables. Formally, given $v\in\cN$, define
$\kappa(v)=\kappa(v,\omega)$ to be the least $k\geq1$ such that
$\chi_{v1^{k-1}}=2$, where $1^k=11\cdots1$ ($k$ times). If no such $k$ exists set
$\kappa(v)=\infty$. Since $\theta_2>0$, the first-child ray $v,v1,v11,\dots$ from each
$v\in\cN$ almost surely contains a vertex with offspring count $2$, with
\begin{equation*}
  \bbP(\kappa(v) = k) = \theta_1^{k-1} \theta_2.
\end{equation*}
To make this more formal in a way
that will be useful for us when we generalise this construction, note that since $\cN$ is countable
and $\theta_1+\theta_2=1$, the event
\[
  \Omega_0=\set{\omega\in\Omega\colon
    \chi_v(\omega)\in\set{1,2}\text{ and }\kappa(v,\omega)<\infty\text{ for every }v\in\cN}
\]
has full probability. Conditioning on this full probability event $\Omega_0$, we define the
\emph{initial vertex map} $\iota\colon\bbB\times\Omega_0\to\cN$, which locates the initial vertex
in $\cT$ of the chain associated to the vertex $w \in \bbB$ and a realisation $\omega \in \Omega_0$.
Suppressing the argument $\omega$, $\iota(\troot)=\troot$ and, for $w\in\bbB$ and $j\in\set{1,2}$,
\begin{equation}\label{eq:phi-def}
  \iota(wj)=\iota(w)1^{\kappa(\iota(w))-1}j.
\end{equation}
The labelling $\lambda\colon\bbB\times\Omega_0\to\bbN$ is formally defined by
\begin{equation}\label{eq:lambda-def}
  \lambda(w)=\kappa(\iota(w)).
\end{equation}
To reach $\iota(wj)$ from $\iota(w)$, we follow the chain to its terminating branch point and
then take its $j$th child. Thus $d(\iota(w),\iota(wj))=\lambda(w)$: the number of vertices in
the chain is the distance to the start of the child chains. The labelled binary tree
$(\bbB,\lambda)$ determines $\cT(\omega)$, see \cref{fig:binary}. Unfolding \cref{eq:phi-def} gives
\begin{equation}\label{eq:normal-form}
  \iota(w_1\cdots w_n)=1^{k_0}w_1\,1^{k_1}w_2\cdots1^{k_{n-1}}w_n,
  \qquad
  k_q=\lambda(w_1\cdots w_q)-1,
\end{equation}
where the empty prefix $w_1\cdots w_0$ is read as the root $\troot$. In particular, $\iota(w)$ is a
function of the labels at the strict prefixes of $w$. The chain associated to $w \in \bbB$ is given by
\[
  \set{\iota(w)1^{l}\colon0\leq l\leq\lambda(w)-1}.
\]
It begins at $\iota(w)$, ends at the branch point
$\iota(w)1^{\lambda(w)-1}$, and carries $\lambda(w)$ vertices. By construction, it is clear that
these chains partition the tree and that their first-child rays are separated. We record these
observations about the decomposition of $\cT$ and the distribution of the labels as a pair of
lemmas for later reference.

\begin{lemma}\label{thm:chains}
  Let $\omega\in\Omega_0$.
  \begin{enumerate}[(i),ref=(\roman*)]
    \item Every $v\in\cT(\omega)$ has a unique representation
      $v=\iota(w)1^{l}$ with $w\in\bbB$ and $0\leq l\leq\lambda(w)-1$: the tree is the
      disjoint union of the chains.
    \item Let $w,w'\in\bbB$ and suppose that $w$ is not a prefix of
      $w'$. Then the ray $\set{\iota(w)1^{j}\colon j\geq0}$ is disjoint from the chain of
      $w'$.
  \end{enumerate}
\end{lemma}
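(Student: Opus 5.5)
The plan is to prove both parts directly from the normal form \cref{eq:normal-form} and the recursion \cref{eq:phi-def}, by induction on the depth of $v$ in $\cN$. Throughout, $\omega\in\Omega_0$ is fixed, so every $\kappa$ is finite and every label $\lambda(w)\geq1$.

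\textbf{Existence in (i).} I will induct on $\abs v$ for $v\in\cT(\omega)$. The root is $\iota(\troot)1^0$, since $\lambda(\troot)\geq1$. Suppose $v=\iota(w)1^l$ with $0\leq l\leq\lambda(w)-1$, and let $u$ be a child of $v$ in $\cT$. There are two cases.
\begin{itemize}
  \item If $l<\lambda(w)-1$, then by definition of $\kappa$ the vertex $v$ is not yet the branch point, so $\chi_v=1$. Hence $u=v1=\iota(w)1^{l+1}$, which is again of the required form.
  \item If $l=\lambda(w)-1$, then $\chi_v=2$ and $u=vj$ with $j\in\set{1,2}$. By \cref{eq:phi-def}, $u=\iota(wj)=\iota(wj)1^0$.
\end{itemize}
Since every vertex of $\cT$ other than the root is the child of a vertex of $\cT$, this covers all of $\cT$. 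Conversely, induction on $\abs w$ shows that each $\iota(w)1^l$ with $l<\lambda(w)$ lies in $\cT$, because every vertex strictly before the branch point of a chain has exactly one child.

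\textbf{Uniqueness in (i).} I will read words in $\set{1,2}^*$ from the left. By \cref{eq:normal-form}, $\iota(w_1\cdots w_n)1^l$ is the word $1^{k_0}w_1 1^{k_1}\cdots w_n1^l$. Given $v$, the labels force the parse. The first $\lambda(\troot)-1$ letters must be $1$. If $v$ ends before or at that point, then $w=\troot$. Otherwise the next letter is $w_1$, and this determines $w_1$; the parse then continues with $\lambda(w_1)$. Because every $k_q$ is determined by the already-read prefix, the decomposition is deterministic, so $(w,l)$ is unique.

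\textbf{Part (ii).} I will use the same parsing. Suppose $\iota(w)1^j=\iota(w')1^{l}$ with $l<\lambda(w')$. The greedy parse of the left-hand side reads $w$ and then continues with further letters $1$. Two sub-cases arise.
\begin{itemize}
  \item If $j<\lambda(w)$, the parse ends at $(w,j)$. Uniqueness from (i) then gives $w'=w$.
  \item If $j\geq\lambda(w)$, the parse passes the branch point of $w$ and continues along first children. It therefore ends at a pair $(w1^m,\cdot)$ for some $m\geq1$, so $w'=w1^m$.
\end{itemize}
In either case $w$ is a prefix of $w'$, which contradicts the hypothesis.

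The only delicate point is formalising the parse, namely the fact that the normal form is prefix-determined. I expect this to be the main, though mild, obstacle.
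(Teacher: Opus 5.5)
Your proposal is correct and is essentially the paper's own reasoning made explicit. The paper only asserts that the partition into chains and the separation of first-child rays are clear from the construction, using just the recursion \cref{eq:phi-def} and positivity of the labels; your length-driven left-to-right parse of the normal form \cref{eq:normal-form}, together with the identity $\iota(w)1^j=\iota(w1)1^{j-\lambda(w)}$ for $j\geq\lambda(w)$ in part (ii), is precisely the argument that fills this in.
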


\begin{figure}[tb]
  \begin{center}
\begin{tikzpicture}[]
  \node[circle,draw=black] (a0) at (0,0) {$2$};

  \node[circle,draw=black] (a00) at (-1,-1) {$2$};
  \node[circle,draw=black] (a01) at (1,-1) {$3$};
  \draw (a0)--(a00);
  \draw (a0)--(a01);

  \node[circle,draw=black] (a000) at (-1-.5,-2) {$1$};
  \node[circle,draw=black] (a001) at (-1+0.5,-2) {$5$};
  \node[circle,draw=black] (a010) at (1-0.5,-2) {$1$};
  \node[circle,draw=black] (a011) at (1+0.5,-2) {$2$};
  \draw (a00)--(a000);
  \draw (a00)--(a001);
  \draw (a01)--(a010);
  \draw (a01)--(a011);

  \node () at (-1-.5,-2.5) {$\vdots$};
  \node () at (-1+.5,-2.5) {$\vdots$};
  \node () at (1-.5,-2.5) {$\vdots$};
  \node () at (1+.5,-2.5) {$\vdots$};

  \node (embed) at (3.1,-1) {$\longmapsto$};
  \node (embed2) at (3.1,-1.5) {$\iota(\,\cdot\,,\omega)$};

  \newcommand{\xoff}{6}
  \newcommand{\yoff}{1.5}
  \newcommand{\hcord}{0+\yoff}
  \node[circle,fill=black,minimum size=3pt,inner sep=0pt, label=right:{\tiny$\troot$}] (b0) at (0+\xoff,\hcord) {};

  \renewcommand{\hcord}{-0.5+\yoff}
  \node[circle,fill=black,minimum size=3pt,inner sep=0pt,label=right:{\tiny$(1)$}] (b00) at (0+\xoff,\hcord) {};
  \draw (b0)--(b00);

  \renewcommand{\hcord}{-1.0+\yoff}
  \node[circle,fill=black,minimum size=3pt,inner sep=0pt, label=left:{\tiny$(11)$}] (b000) at (-.6+\xoff,\hcord) {};
  \node[circle,fill=black,minimum size=3pt,inner sep=0pt, label=right:{\tiny$(12)$}] (b001) at (.6+\xoff,\hcord) {};
  \draw (b00)--(b000);
  \draw (b00)--(b001);

  \renewcommand{\hcord}{-1.5+\yoff}
  \node[circle,fill=black,minimum size=3pt,inner sep=0pt, label=left:{\tiny$(111)$}] (b0000) at (-.6+\xoff,\hcord) {};
  \draw (b000)--(b0000);

  \renewcommand{\hcord}{-2.0+\yoff}
  \node[circle,fill=black,minimum size=3pt,inner sep=0pt, label=left:{\tiny$(1111)$}] (b00000) at (-.6-0.3+\xoff,\hcord) {};
  \draw (b0000)--(b00000);
  \node[circle,fill=black,minimum size=3pt,inner sep=0pt] (b00001) at (-.6+0.3+\xoff,\hcord) {};
  \draw (b0000)--(b00001);

  \renewcommand{\hcord}{-2.5+\yoff}
  \node[circle,fill=black,minimum size=3pt,inner sep=0pt, label=left:{\tiny$(11111)$}] (b000000) at (-.6-0.3-0.15+\xoff,\hcord) {};
  \draw (b00000)--(b000000);
  \node[circle,fill=black,minimum size=3pt,inner sep=0pt] (b000001) at (-.6-0.3+0.15+\xoff,\hcord) {};
  \draw (b00000)--(b000001);
  \node () at (-0.6-0.3+\xoff,\hcord-0.3) {$\vdots$};

  \renewcommand{\hcord}{-2.5+\yoff}
  \node[circle,fill=black,minimum size=3pt,inner sep=0pt] (b000010) at (-.6+0.3+\xoff,\hcord) {};
  \draw (b00001)--(b000010);

  \renewcommand{\hcord}{-3.0+\yoff}
  \node[circle,fill=black,minimum size=3pt,inner sep=0pt] (b0000100) at (-.6+0.3+\xoff,\hcord) {};
  \draw (b000010)--(b0000100);

  \renewcommand{\hcord}{-3.5+\yoff}
  \node[circle,fill=black,minimum size=3pt,inner sep=0pt, label=left:{\tiny$(1112111)$}] (b00001000) at (-.6+0.3+\xoff,\hcord) {};
  \draw (b0000100)--(b00001000);

  \renewcommand{\hcord}{-4.0+\yoff}
  \node[circle,fill=black,minimum size=3pt,inner sep=0pt] (b000010000) at (-.6+0.3+\xoff,\hcord) {};
  \draw (b00001000)--(b000010000);

  \renewcommand{\hcord}{-4.5+\yoff}
  \node[circle,fill=black,minimum size=3pt,inner sep=0pt, label=left:{\tiny$(111211111)$}] (b0000100000) at (-.6+0.3-0.15+\xoff,\hcord) {};
  \draw (b000010000)--(b0000100000);
  \node[circle,fill=black,minimum size=3pt,inner sep=0pt, label=right:{\tiny$(111211112)$}] (b0000100001) at (-.6+0.3+0.15+\xoff,\hcord) {};
  \draw (b000010000)--(b0000100001);
  \node () at (-0.6+0.3+\xoff,\hcord-0.3) {$\vdots$};

  \renewcommand{\hcord}{-1.5+\yoff}
  \node[circle,fill=black,minimum size=3pt,inner sep=0pt, label=right:{\tiny$(121)$}] (b0010) at (.6+\xoff,\hcord) {};
  \draw (b001)--(b0010);

  \renewcommand{\hcord}{-2.0+\yoff}
  \node[circle,fill=black,minimum size=3pt,inner sep=0pt] (b00100) at (.6+\xoff,\hcord) {};
  \draw (b0010)--(b00100);

  \renewcommand{\hcord}{-2.5+\yoff}
  \node[circle,fill=black,minimum size=3pt,inner sep=0pt] (b001000) at (.6-0.3+\xoff,\hcord) {};
  \draw (b00100)--(b001000);
  \node[circle,fill=black,minimum size=3pt,inner sep=0pt, label=right:{\tiny$(12112)$}] (b001001) at (.6+0.3+\xoff,\hcord) {};
  \draw (b00100)--(b001001);

  \renewcommand{\hcord}{-3.0+\yoff}
  \node[circle,fill=black,minimum size=3pt,inner sep=0pt] (b0010000) at (.6-0.3-.15+\xoff,\hcord) {};
  \draw (b001000)--(b0010000);
  \node[circle,fill=black,minimum size=3pt,inner sep=0pt] (b0010001) at (.6-0.3+.15+\xoff,\hcord) {};
  \draw (b001000)--(b0010001);
  \node () at (0.6-0.3+\xoff,\hcord-0.3) {$\vdots$};

  \renewcommand{\hcord}{-3.0+\yoff}
  \node[circle,fill=black,minimum size=3pt,inner sep=0pt] (b0010010) at (.6+0.3+\xoff,\hcord) {};
  \draw (b001001)--(b0010010);

  \renewcommand{\hcord}{-3.5+\yoff}
  \node[circle,fill=black,minimum size=3pt,inner sep=0pt] (b00100100) at (.6+0.3-.15+\xoff,\hcord) {};
  \draw (b0010010)--(b00100100);
  \node[circle,fill=black,minimum size=3pt,inner sep=0pt] (b00100101) at (.6+0.3+.15+\xoff,\hcord) {};
  \draw (b0010010)--(b00100101);
  \node () at (0.6+0.3+\xoff,\hcord-0.3) {$\vdots$};

\end{tikzpicture}
\end{center}
  \caption{The labelled binary tree and its corresponding Galton--Watson tree.}
  \label{fig:binary}
\end{figure}

\begin{lemma}\label{thm:geometric}
  The random variables $\lambda(w)$, $w\in\bbB$, are independent and geometrically distributed:
  \[
    \bbP\bigl(\lambda(w)=n\bigr)=\theta_1^{n-1}\theta_2\qquad(n\geq1).
  \]
\end{lemma}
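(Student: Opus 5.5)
We prove \cref{thm:geometric} by exposing the labels along the tree in breadth-first order, showing that each new label depends only on offspring variables that no earlier step has used.

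The starting point is \cref{eq:normal-form}. It shows that $\iota(w)$ is a deterministic function of the labels $\lambda(u)$ at the strict prefixes $u$ of $w$. Moreover, $\lambda(w)=\kappa(\iota(w))$ is determined by the offspring counts $\chi_{\iota(w)1^{l}}$ for $l\geq0$, that is, by the variables on the first-child ray from $\iota(w)$.

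Next we record a disjointness fact. For distinct $w,w'\in\bbB$, the segments of first-child rays actually read by $\lambda(w)$ and $\lambda(w')$ are the chains $C(w)=\set{\iota(w)1^l:0\leq l<\lambda(w)}$ and $C(w')$. By \cref{thm:chains}, distinct chains are disjoint. We also need that $\kappa$ reads no variables beyond its stopping position, so each label is a stopping time along its ray.

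The core step is the following claim. Fix a finite prefix-closed set $S\subseteq\bbB$ and a vertex $w\notin S$ whose parent lies in $S$. Condition on the values $(\lambda(u))_{u\in S}$. Then $\iota(w)$ is determined, and we must check that the variables $(\chi_{\iota(w)1^l})_{l\geq0}$ are independent of this conditioning and i.i.d.\ with law $\theta$.

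To see this, note that the event $\set{\lambda(u)=n_u,\ u\in S}$ is measurable with respect to the offspring counts on $\bigcup_{u\in S}C(u)$. Those chains are themselves determined by the $n_u$. The vertices $\iota(w)1^l$ do not lie in any of these chains: the case where $u$ is not a prefix of $w$ is covered by \cref{thm:chains}(ii), and when $u$ is a strict prefix of $w$, the vertex $\iota(w)$ is a strict descendant past the end of $C(u)$, off its first-child ray after a letter $2$, or beyond the branch point otherwise.

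This gives a strong Markov, or optional stopping, decomposition. We sum over the deterministic configurations $(n_u)_{u\in S}$. For each fixed configuration, the vertex $\iota(w)$ is a fixed word, and the relevant $\chi$ variables are independent of the conditioning event because they are indexed by disjoint sites of the i.i.d.\ field. Hence
\[
\bbP\bigl(\lambda(w)=n\mid(\lambda(u))_{u\in S}\bigr)=\theta_1^{n-1}\theta_2 .
\]
This value is computed as $\bbP(\chi_{v}=\dots=\chi_{v1^{n-2}}=1,\ \chi_{v1^{n-1}}=2)$ for a fixed $v$.

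Induction on $S$ along any exhaustion of a finite subset then gives joint independence with geometric marginals, which proves \cref{thm:geometric}. The main obstacle is making the disjointness precise in the strict-prefix case, since \cref{thm:chains}(ii) only covers the non-prefix case. We handle it via the normal form: $\iota(w)$ extends $\iota(u)1^{\lambda(u)-1}$ by the letter $w_{|u|+1}$. This places $\iota(w)$ outside $C(u)$. For letter $1$ it is the vertex right after the branch point, whose offspring is unread by $\kappa(\iota(u))$ because $\kappa$ stops at the branch point.
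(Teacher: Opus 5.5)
Your argument is correct, and it is the reasoning the paper leaves implicit. The paper states this lemma without a written proof, as a consequence of the construction: each label is read, as a stopping time, from the offspring variables on its own chain, and \cref{thm:chains} makes these chains disjoint sites of the i.i.d.\ field. Your exposure of labels along a prefix-closed set $S$, summing over the deterministic configurations $(n_u)_{u\in S}$, is exactly how to make that rigorous.

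One simplification: since $S$ is prefix-closed and $w\notin S$, the word $w$ is a prefix of no $u\in S$. Hence \cref{thm:chains}(ii) already gives disjointness for every $u\in S$, including the strict prefixes of $w$, so your separate case analysis is not needed. Here (ii) is applied to the deterministic chain starts of the fixed configuration, which is legitimate because chain separation uses only the recursion and positivity of the labels.
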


\subsection{From label comparability to quasi-isometry}\label{sec:transfer}

We now compare trees associated with arbitrary positive integer labellings. A \emph{labelling}
of $\bbB$ is a function $\lambda\colon\bbB\to\bbN$. Applying the recursion
\cref{eq:phi-def} to a labelling, $\iota(\troot)=\troot$ and
$\iota(wj)=\iota(w)1^{\lambda(w)-1}j$, defines the \emph{associated tree}
\[
  \cT=\bigcup_{w\in\bbB}\set{\iota(w)1^{l}\colon0\leq l\leq\lambda(w)-1}\subseteq\cN,
\]
with the metric inherited from $\cN$. Conditioned on $\Omega_0$, the Galton--Watson tree is
the tree associated with its label field by \cref{thm:chains}. The chain-separation argument
uses only the recursion and positivity of the labels, so every vertex of an associated tree
has unique coordinates $x=\iota(w)1^l$ with $0\leq l\leq\lambda(w)-1$. Expanding $\iota(w)$
gives the normal form of \cref{eq:normal-form}. An automorphism of $\bbB$ acts on a labelling
and preserves chain lengths and branching, inducing the following isometry.

\begin{lemma}\label{thm:isometry}
  Let $\pi\in\Aut(\bbB)$ and let $\lambda$ be a labelling with associated tree $\cT$. The tree
  $\cT'$ associated with the labelling $\lambda'(w)=\lambda(\pi^{-1}w)$ admits a
  root-preserving isometry from $\cT$.
\end{lemma}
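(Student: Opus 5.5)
We construct the isometry explicitly through the chain coordinates. By the discussion preceding the lemma, every vertex of $\cT$ has unique coordinates $x=\iota(w)1^{l}$ with $w\in\bbB$ and $0\leq l\leq\lambda(w)-1$. Write $\iota'$ for the initial vertex map of $\lambda'$. We define
\[
  \Psi\bigl(\iota(w)1^{l}\bigr)=\iota'(\pi w)1^{l}.
\]
Since $\lambda'(\pi w)=\lambda(w)$, the range condition $0\leq l\leq\lambda'(\pi w)-1$ holds, so $\Psi(x)\in\cT'$. Applying the same construction to $\pi^{-1}$ and $\lambda'$ gives an inverse map, so $\Psi$ is a bijection $\cT\to\cT'$. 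It is root-preserving because $\pi\troot=\troot$ and $\iota'(\troot)=\troot$.

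To show that $\Psi$ is an isometry, we compute the distance in normal form. Take $x=\iota(w)1^l$ and $y=\iota(u)1^m$, and let $z=w\wedge u$ be the longest common prefix in $\bbB$.

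\emph{Depth.} By \cref{eq:normal-form}, $\abs{\iota(w)}=\sum_{q<n}\lambda(w_1\cdots w_q)$, the sum of the labels along the strict prefixes of $w$. Hence $\abs{x}=\abs{\iota(w)}+l$. Since $\pi$ maps the prefixes of $w$ bijectively onto the prefixes of $\pi w$ and $\lambda'\circ\pi=\lambda$, the depth $\abs{x}$ is preserved.

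\emph{Common prefix, case one.} If $w$ is a prefix of $u$ (or conversely), both $x$ and $y$ lie on the first-child ray from $\iota(w)$, extended through the later chains along $u$. Then $\abs{x\wedge y}=\min(\abs x,\abs y)$, because one vertex is an ancestor of the other. The ancestry relation is preserved, since $\pi w$ is a prefix of $\pi u$ and $\iota'$ extends along prefixes. The common prefix depth is therefore preserved.

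\emph{Common prefix, case two.} If neither word is a prefix of the other, write $w=zj\cdots$ and $u=zj'\cdots$ with $j\neq j'$. By the normal form, $\iota(w)$ and $\iota(u)$ agree through $\iota(z)1^{\lambda(z)-1}$ and then diverge in the next letter. This gives
\[
  \abs{x\wedge y}=\abs{\iota(z)}+\lambda(z)-1.
\]
Any later agreement is excluded by \cref{thm:chains}(ii) and by the uniqueness of coordinates. The same formula holds in $\cT'$ with $\pi z=\pi w\wedge\pi u$, and both terms in it are preserved by the depth computation.

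Combining the two cases with $d(x,y)=\abs x+\abs y-2\abs{x\wedge y}$ shows that $\Psi$ is an isometry.

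The main obstacle is the common-prefix bookkeeping: one must check that the longest common prefix of $x$ and $y$ in $\cN$ is determined by $w\wedge u$ exactly as described. The plan is to read this directly off \cref{eq:normal-form}, noting that all letters inserted by $\iota$ are $1$s and that divergence occurs only at a letter $w_q\neq u_q$. This requires some care when the differing letter is $1$ versus $2$, because the two words then share additional leading $1$s of the next chain only in degenerate positions. This is exactly the situation controlled by \cref{thm:chains}(ii).
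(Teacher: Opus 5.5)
Your proof is correct and is the argument the paper intends: the paper's one-line justification is that $\pi$ carries each chain to a chain of the same length and preserves branching, and your map $\Psi(\iota(w)1^l)=\iota'(\pi w)1^l$, checked through depths and meets in the normal form, makes this precise. The worry in your last paragraph is unfounded. Since $x$ and $y$ extend $\iota(zj)$ and $\iota(zj')$, which first differ in a single letter, the meet is fixed there and \cref{thm:chains}(ii) is not needed; alternatively, checking that $\Psi$ preserves parent--child pairs gives the isometry even more directly.
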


\paragraph*{\bf Label fields and quasi-isometries}
The key deterministic argument converts multiplicative comparability of the two label fields into a
quasi-isometry of the encoded trees.

\begin{lemma}[Transfer lemma]\label{thm:transfer}
  Let $\lambda,\lambda'$ be labellings of $\bbB$ with values in $\bbN$ and let $D\geq1$ satisfy
  \[
    D^{-1}\lambda'(w)\leq\lambda(w)\leq D\,\lambda'(w)
    \qquad\text{for all }w\in\bbB.
  \]
  Then there exists a root-preserving $(D+3)$-quasi-isometry $\psi\colon\cT\to\cT'$
  between the associated trees.
\end{lemma}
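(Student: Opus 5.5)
The plan is to define $\psi$ chainwise, using the unique coordinates $x=\iota(w)1^l$ with $0\leq l\leq\lambda(w)-1$ from \cref{thm:chains}. Set
\[
  \psi\bigl(\iota(w)1^l\bigr)=\iota'(w)1^{l'},\qquad
  l'=\min\bigl(\lambda'(w)-1,\floor{l\lambda'(w)/\lambda(w)}\bigr),
\]
so that each chain of $\cT$ is mapped monotonically onto the corresponding chain of $\cT'$. The chain start $\iota(w)$ goes to $\iota'(w)$, so in particular the root is preserved, and the terminal branch point goes to the terminal branch point, since $l=\lambda(w)-1$ gives $l'=\lambda'(w)-1$ after the $\min$ (possibly up to one step, which I would repair by fixing endpoints explicitly). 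Within one chain, $\abs{l_1'-l_2'}\leq D\abs{l_1-l_2}+1$, and conversely $\abs{l_1-l_2}\leq D\abs{l'_1-l'_2}+D$. Coarse surjectivity is immediate: every vertex of $\cT'$ lies in some chain $w$, and consecutive image points on that chain differ by at most $D$ steps, so every vertex is within about $D/2+1$ of $\psi(\cT)$.

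For the metric inequality I would use the tree structure. Given $x=\iota(u)1^{l}$ and $y=\iota(v)1^{m}$, let $w=u\wedge v$ in $\bbB$. By \cref{eq:normal-form}, the geodesic from $x$ to $y$ passes through the chains of the prefixes of $u$ and $v$ strictly below $w$, plus a portion of the chain of $w$. Its length is
\[
  d(x,y)=\sum_{w<z<u}\lambda(z)+\sum_{w<z<v}\lambda(z)+(\text{partial terms at }u,v,w).
\]
The sums run over full chains, each of length $\lambda(z)$, one edge being counted per step into a child chain. The image geodesic has the same combinatorial shape in $\cT'$, with full chains of lengths $\lambda'(z)\in[\lambda(z)/D,D\lambda(z)]$. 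Partial chains contribute an additive error of at most $1$ each, and there are at most three of them: at $u$, at $v$, and at $w$, where the two branches leave at the common terminal branch point unless $u$ is a prefix of $v$.

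Summing the termwise comparisons then gives $d'(\psi x,\psi y)\leq D\,d(x,y)+3$, and symmetrically $d(x,y)\leq D\,d'(\psi x,\psi y)+3D$. Converting to the $(D+3)$ form means absorbing the multiplicative $D$ and the additive errors into $D+3$. The lower bound is written as $(D+3)^{-1}d-(D+3)$, so an additive loss of order $3$ is what must be checked.

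The main obstacle is exactly this bookkeeping of the partial-chain terms. There are three cases: $u=v$ (same chain, direct estimate), $u$ a prefix of $v$ (the geodesic goes from position $l$ down to the end of chain $u$, then through the chains between), and incomparable $u,v$ (both go up to the branch point of chain $w$, contributing the tail $\lambda(w)-1-l_w$ as zero, since they meet at the branch point). In each case I would check that the rounding in the floor costs at most $1$ per partial chain, with at most $2$ partial chains total plus $1$ for the endpoint correction. This yields an additive constant at most $3$ in the upper bound. For the lower bound, I would apply the same argument to the inverse map $\cT'\to\cT$, which is defined in the same way and is a near-inverse of $\psi$. The resulting constants are then unified as $(D+3,D+3,D+3)$, using the slack available since $D\geq1$.
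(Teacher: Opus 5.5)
Your construction and the termwise comparison of the two geodesics (complete chains plus endpoint pieces, in the three cases $u=v$, $u$ a strict prefix of $v$, and $u,v$ incomparable) are essentially the paper's proof. The paper rescales by $(\lambda'(w)-1)/\lambda(w)$ rather than $\lambda'(w)/\lambda(w)$, which only shifts the bookkeeping. One claim in your proposal is false, however, and the repair you suggest for it should be dropped rather than carried out.

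\textbf{Branch points are not preserved.} Your map does not send branch points to branch points, even up to one step. For $l=\lambda(w)-1$ it gives $l'=\lambda'(w)-\lceil\lambda'(w)/\lambda(w)\rceil$, so the $\min$ never acts. This falls short of $\lambda'(w)-1$ by up to $\lceil D\rceil-1$. For instance, if $\lambda(w)=1$ and $\lambda'(w)=D$ is an integer, the single vertex $\iota(w)$ goes to $\iota'(w)$, at distance $D-1$ from the target branch point.

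\textbf{The repair fails.} ``Fixing endpoints explicitly'' is impossible when $\lambda(w)=1$, because the chain start and the branch point coincide; at $w=\troot$ it would contradict root preservation. Elsewhere it moves an image point by up to $\lceil D\rceil-1$. That breaks your within-chain bound $\abs{l_1'-l_2'}\leq D\abs{l_1-l_2}+1$ and your ``$+1$ for the endpoint correction''.

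\textbf{None of this is needed.} In the prefix case, the endpoint piece at $u$ is the distance $\lambda(u)-l$ from $x$ to the start of the next chain. Your unmodified map already satisfies $\lambda'(u)-l'\leq D(\lambda(u)-l)+1$ and $\lambda(u)-l\leq D(\lambda'(u)-l')$, which is the analogue of \cref{eq:chain-tail}. In the incomparable case, the pieces are just the two heads and the two unit edges at the branch point of $w$. Summing termwise gives $d'(\psi x,\psi y)\leq D\,d(x,y)+1$ and $d(x,y)\leq D\,d'(\psi x,\psi y)+2D$ directly. So the detour through a near-inverse for the lower bound is also unnecessary; it would in any case require a displacement bound for $\bar\psi\psi$, of order $D$, that you have not supplied. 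Together with the $\lceil D\rceil$-density of the image, these bounds give the root-preserving $(D+3)$-quasi-isometry.
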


\begin{proof}
  We write $\iota'$ for the chain-start map of $\lambda'$ and define
  \begin{align*}
    \phi_w(l)&=\floor{l\,\frac{\lambda'(w)-1}{\lambda(w)}}
      &&\bigl(0\leq l\leq\lambda(w)\bigr),\\
    \psi\bigl(\iota(w)1^l\bigr)&=\iota'(w)1^{\phi_w(l)}
      &&\bigl(0\leq l<\lambda(w)\bigr).
  \end{align*}
  Unique chain coordinates make $\psi$ well defined, and $\phi_\troot(0)=0$ gives root
  preservation. The floor inequalities give
  $l\lambda'(w)/\lambda(w)-2<\phi_w(l)\leq l\lambda'(w)/\lambda(w)$, so the label comparison
  yields
  \begin{equation}\label{eq:chain-ratio}
    \phi_w(l)\leq D\,l
    \qquad\text{and}\qquad
    l\leq D\,\phi_w(l)+2D.
  \end{equation}
  Subtracting from $\lambda'(w)$ gives the corresponding bounds for the distance to the
  start of either child chain:
  \begin{equation}\label{eq:chain-tail}
    \lambda'(w)-\phi_w(l)\leq D\bigl(\lambda(w)-l\bigr)+2
    \qquad\text{and}\qquad
    \lambda(w)-l\leq D\bigl(\lambda'(w)-\phi_w(l)\bigr).
  \end{equation}

  Segments between successive chain starts have lengths $\lambda(v)$, so their lengths
  compare term by term within factor $D$, without additive error. For two points in one chain,
  $\phi_w(l+a)-\phi_w(l)\in[\phi_w(a),\phi_w(a)+1]$, and \cref{eq:chain-ratio} applies to their
  separation $a$. For points in different chains, the corresponding geodesics consist of
  complete segments and either a tail and a head, or two heads and the two unit edges leaving
  their common branch point. Applying \cref{eq:chain-ratio,eq:chain-tail} to the endpoint
  pieces and summing gives
  \begin{equation}\label{eq:transfer-bounds}
    d(\psi(x),\psi(y))\leq D\,d(x,y)+2
    \qquad\text{and}\qquad
    d(x,y)\leq D\,d(\psi(x),\psi(y))+4D.
  \end{equation}
  The two unit edges contribute no additional error because $D\geq1$.

  Finally, $\phi_w(0)=0$, $\phi_w(\lambda(w))=\lambda'(w)-1$, and successive values of
  $\phi_w$ differ by at most $D+1$. This also bounds the gap from the last image vertex to
  the target chain endpoint, so $\psi(\cT)$ is $(D+1)$-dense in $\cT'$. Dividing the reverse
  inequality in \cref{eq:transfer-bounds} by $D$ gives an additive error bounded by $4$. Since
  $D$, $4$ and $D+1$ are all at most $D+3$, the map $\psi$ is a $(D+3)$-quasi-isometry.
\end{proof}

\subsection{Quantisation into level classes}\label{sec:quantisation}

We group chain lengths into classes so that lengths in neighbouring classes are multiplicatively
comparable. For an integer $D\geq2$, define the \emph{level map}
$\ell_D\colon\bbN\to\bbN_0$ and its classes by
\[
  \ell_D(n)=\floor{\log_D n},
  \qquad
  \cC_k=\set{n\in\bbN\colon D^k\leq n<D^{k+1}}\qquad(k\in\bbN_0).
\]
The classes are the fibres of $\ell_D$ and partition $\bbN$. We represent $\cC_k$ by the chain
of $D^k$ vertices. For $n\in\cC_k$ we have $1\leq n/D^k<D$, and two lengths $n,m\in\cC_k$
satisfy $D^{-1}<n/m<D$. If $\abs{\ell_D(m)-\ell_D(m')}\leq1$, then
\[
  m<D^{\ell_D(m)+1}\leq D^{\ell_D(m')+2}\leq D^2m'.
\]
Interchanging $m$ and $m'$ now gives the following lemma.

\begin{lemma}\label{thm:level}
  Let $D\geq2$ be an integer and $m,m'\in\bbN$. If $\abs{\ell_D(m)-\ell_D(m')}\leq1$, then
  $D^{-2}m'<m<D^2m'$.
\end{lemma}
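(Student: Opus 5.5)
The argument is essentially the one-line computation displayed just before the statement, symmetrised. Set $k=\ell_D(m)$ and $k'=\ell_D(m')$. By definition of the level map and its classes $\cC_k$, we have
\[
  D^{k}\leq m<D^{k+1},\qquad D^{k'}\leq m'<D^{k'+1}.
\]
The hypothesis $\abs{k-k'}\leq1$ gives $k+1\leq k'+2$.

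For the upper bound we chain these inequalities:
\[
  m<D^{k+1}\leq D^{k'+2}=D^2\cdot D^{k'}\leq D^2m'.
\]
This gives $m<D^2m'$, with strictness coming from the strict inequality $m<D^{k+1}$. Since $D\geq2$, the step $D^{k+1}\leq D^{k'+2}$ is valid because $t\mapsto D^t$ is increasing.

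For the lower bound we interchange the roles of $m$ and $m'$, which is legitimate because the hypothesis is symmetric. The same chain then gives $m'<D^2m$, that is, $D^{-2}m'<m$. Combining the two bounds yields $D^{-2}m'<m<D^2m'$.

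There is no real obstacle here. The only point needing care is that $\ell_D(n)=\floor{\log_D n}$ really places $n$ in $[D^{\ell_D(n)},D^{\ell_D(n)+1})$ for every $n\in\bbN$, including $n=1$ with level $0$. This holds because the classes $\cC_k$ partition $\bbN$, as already noted before the statement.
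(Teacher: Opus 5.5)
Your proof is correct and matches the paper's argument: you chain the class bounds as $m<D^{\ell_D(m)+1}\leq D^{\ell_D(m')+2}\leq D^2m'$, then interchange $m$ and $m'$ using the symmetry of the hypothesis to get the lower bound.
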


\begin{lemma}[Quantised chain law]\label{thm:quantised-law}
  Let $\theta_1+\theta_2=1$ with $0<\theta_1<1$, and let $D\geq2$ be an integer. The labels
  $\ell_D(\lambda(w))$, $w\in\bbB$, are independent with common law $p^{(D)}$, the image under
  $\ell_D$ of the geometric law $\theta_1^{n-1}\theta_2$, $n\geq1$. For every $k\in\bbN_0$,
  \begin{equation}\label{eq:qk}
    p^{(D)}_k=\bbP\bigl(\lambda(w)\in\cC_k\bigr)
      =\theta_1^{D^k-1}-\theta_1^{D^{k+1}-1}.
  \end{equation}
  In particular,
  \[
    p^{(D)}_0=1-\theta_1^{D-1}
    \quad\text{and}\quad
    p^{(D)}_k\leq\theta_1^{D^k-1}\qquad(k\geq1).
  \]
\end{lemma}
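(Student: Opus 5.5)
Independence is the starting point: \cref{thm:geometric} says the chain lengths $\lambda(w)$, $w\in\bbB$, are independent with the geometric law $\bbP(\lambda(w)=n)=\theta_1^{n-1}\theta_2$. The labels $\ell_D(\lambda(w))$ are obtained by applying one fixed deterministic map $\ell_D$ to each of these independent variables. Hence they are again independent, and all of them have the same law. That law is the pushforward $p^{(D)}=(\ell_D)_*\mathrm{Geom}$, and it does not depend on $w$.

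To compute $p^{(D)}_k$, note that the event $\set{\ell_D(\lambda(w))=k}$ is exactly $\set{\lambda(w)\in\cC_k}=\set{D^k\leq\lambda(w)<D^{k+1}}$. This uses that $D$ is an integer, so the class endpoints are integers. For the geometric law we use the tail formula
\[
  \bbP(\lambda(w)\geq m)=\sum_{n\geq m}\theta_1^{n-1}\theta_2=\theta_1^{m-1},
\]
which holds because $\theta_2=1-\theta_1$ and $m\geq1$. Then
\[
  p^{(D)}_k=\bbP(\lambda\geq D^k)-\bbP(\lambda\geq D^{k+1})=\theta_1^{D^k-1}-\theta_1^{D^{k+1}-1},
\]
which is \cref{eq:qk}.

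Both special cases follow by substitution. For $k=0$ we have $D^0-1=0$, so $p^{(D)}_0=1-\theta_1^{D-1}$. For $k\geq1$, the subtracted term $\theta_1^{D^{k+1}-1}$ is nonnegative, so dropping it gives $p^{(D)}_k\leq\theta_1^{D^k-1}$.

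No step here is a real obstacle. The only point needing care is that the classes $\cC_k$ are the fibres of $\ell_D$, so that $\floor{\log_D n}=k$ exactly when $D^k\leq n<D^{k+1}$. This holds because $D$ is an integer at least $2$ and $n$ is a positive integer.
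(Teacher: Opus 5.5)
Your proposal is correct and follows the paper's proof. The paper also takes independence and the common law from \cref{thm:geometric} via the fixed map $\ell_D$, obtains \cref{eq:qk} by differencing the tail identity $\bbP(\lambda(w)\geq n)=\theta_1^{n-1}$ at $D^k$ and $D^{k+1}$, and reads off the two special cases from that formula.
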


\begin{proof}
  By \cref{thm:geometric}, the chain lengths are independent with the stated geometric law,
  so their images under $\ell_D$ are independent with law $p^{(D)}$. The tail identity
  $\bbP(\lambda(w)\geq n)=\theta_1^{n-1}$ gives \cref{eq:qk} by taking the difference at
  $D^k$ and $D^{k+1}$. The mass at zero and the tail bound follow from this formula.
\end{proof}

\subsection{Chain classes across two laws}

By \cref{thm:level}, lengths in adjacent classes are comparable up to the factor $D^2$.
This comparison depends only on the class endpoints, while the offspring distribution 
affects the class probabilities \cref{eq:qk}.
Applying the same classes to a second offspring distribution with support $\set{1,2}$ generally
gives different class probabilities. For sufficiently large $D$, we instead adjust the subdivision
of classes for the second distribution and randomise the assignment of lengths at their boundaries
so that the resulting indices have the same law $p^{(D)}$ as the first distribution. The new divisions
remain within multiplicative factors of the original ones, depending only on the two offspring laws.

\begin{lemma}[Chain coupling]\label{thm:chain-coupling}
  Let $\theta_1,\theta_1'\in(0,1)$, put
  \[
    \gamma=\frac{\log\theta_1}{\log\theta_1'}>0,
  \]
  and let $D\geq2$ be an integer with $\gamma(D-1)\geq1$. Let $p^{(D)}$ be the law of
  \cref{thm:quantised-law} for $\theta_1$ and $D$. Let $\lambda'$ be geometric with
  $\bbP(\lambda'=n)=(\theta_1')^{n-1}(1-\theta_1')$ for $n\geq1$, and let $U$ be uniform on
  $[0,1)$ and independent of $\lambda'$. Then there are integers $1=D_0'<D_1'<D_2'<\cdots$, the
  \emph{cut points} at which consecutive classes of the second law meet, and a map
  $\ell'\colon\bbN\times[0,1)\to\bbN_0$, nondecreasing in its first argument, such that the
  following hold for every $k\in\bbN_0$.
  \begin{enumerate}[(i),ref=(\roman*)]
    \item\label{it:chain-coupling-classes} For every $u\in[0,1)$ the class
      $\cC_k'(u)=\set{n\in\bbN\colon\ell'(n,u)=k}$ consists of consecutive integers and
      satisfies
      \[
        \set{n\in\bbN\colon D_k'<n<D_{k+1}'}\subseteq\cC_k'(u)
        \subseteq\set{n\in\bbN\colon D_k'\leq n\leq D_{k+1}'},
      \]
      so that only the two endpoints $D_k'$ and $D_{k+1}'$ have a class depending on $u$.
    \item\label{it:chain-coupling-law} $\bbP\bigl(\ell'(\lambda',U)=k\bigr)=p^{(D)}_k$.
    \item\label{it:chain-coupling-ratio} $\min(1,\gamma/2)\leq D_k'/D^k\leq\max(1,\gamma)$.
  \end{enumerate}
\end{lemma}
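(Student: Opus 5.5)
The natural approach is a quantile (distributional-transform) coupling. Because the geometric law is memoryless, adding suitable uniform-type noise to $\lambda'-1$ turns it into an exact continuous exponential variable. The target law $p^{(D)}$ is itself the image of a continuous tail under thresholds, so it suffices to cut that exponential variable at the right real thresholds and read off the integer cut points.

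Concretely, I will fix the function $W\colon[0,1)\to[0,1)$ given by the inverse distribution function of the density proportional to $(\theta_1')^{y}$ on $[0,1)$. I then put
\[
  Y=(\lambda'-1)+W(U).
\]
Given $\lambda'=n$, the variable $Y$ has density proportional to $(\theta_1')^{y}$ on $[n-1,n)$. Combining this with $\bbP(\lambda'\geq n)=(\theta_1')^{n-1}$ gives $\bbP(Y\geq y)=(\theta_1')^{y}$ for all real $y\geq0$. Next set $t_k=\gamma(D^k-1)$ and define $\ell'(n,u)=k$ iff $(n-1)+W(u)\in[t_k,t_{k+1})$. Since $(\theta_1')^{\gamma}=\theta_1$, we obtain
\[
  \bbP(\ell'(\lambda',U)=k)=(\theta_1')^{t_k}-(\theta_1')^{t_{k+1}}=\theta_1^{D^k-1}-\theta_1^{D^{k+1}-1}=p^{(D)}_k
\]
by \cref{eq:qk}, which is \labelcref{it:chain-coupling-law}. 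For fixed $u$ the map $n\mapsto(n-1)+W(u)$ is increasing, so $\ell'$ is nondecreasing in $n$ and its fibres are consecutive integers.

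For the cut points, let $D_k'=\floor{t_k}+1$, the unique integer with $D_k'-1\leq t_k<D_k'$. Then $t_0=0$ gives $D_0'=1$. Moreover,
\[
  t_{k+1}-t_k=\gamma D^k(D-1)\geq\gamma(D-1)\geq1,
\]
so the floors strictly increase, which gives $D_0'<D_1'<\cdots$. This is the step where the hypothesis $\gamma(D-1)\geq1$ enters.

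For \labelcref{it:chain-coupling-classes}, take $D_k'<n<D_{k+1}'$. Then $n-1\geq D_k'>t_k$, and also $n\leq D_{k+1}'-1=\floor{t_{k+1}}\leq t_{k+1}$. Hence $[n-1,n)\subseteq[t_k,t_{k+1})$, so $n\in\cC_k'(u)$ for every $u$. Conversely, if $n<D_k'$ then $n\leq\floor{t_k}\leq t_k$, so $(n-1)+W(u)<t_k$ and $n\notin\cC_k'(u)$. Similarly, if $n>D_{k+1}'$ then $n-1\geq D_{k+1}'>t_{k+1}$ and again $n\notin\cC_k'(u)$. Thus only the endpoints $D_k'$ and $D_{k+1}'$ can depend on $u$.

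Finally, for \labelcref{it:chain-coupling-ratio}, the upper bound is
\[
  D_k'\leq t_k+1=\gamma D^k+(1-\gamma)\leq\max(1,\gamma)D^k,
\]
where the last inequality is checked separately in the cases $\gamma\leq1$ and $\gamma>1$. For the lower bound, $k=0$ gives ratio $1$. For $k\geq1$ we have $D^k\geq2$, so $D_k'>t_k\geq\gamma D^k/2$. The main obstacle is not conceptual: it is getting the floor and endpoint bookkeeping exactly right so that the half-open intervals $[n-1,n)$ sit correctly relative to the thresholds $t_k$. That bookkeeping is what makes the strict monotonicity of the cut points and the inclusions in \labelcref{it:chain-coupling-classes} hold simultaneously.
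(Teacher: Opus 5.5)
Your proof is correct and is in substance the paper's construction: your thresholds $t_k=\gamma(D^k-1)$ are the paper's $1+\gamma(D^k-1)$ shifted by one, so the cut points $D_k'=\floor{t_k}+1$ coincide, and the paper's boundary probability simplifies to $\beta_k=\bigl(1-(\theta_1')^{t_k-\floor{t_k}}\bigr)/(1-\theta_1')$, which gives $W(\beta_k)=t_k-\floor{t_k}$; hence your rule sending $D_k'$ to class $k-1$ exactly when $W(u)<t_k-\floor{t_k}$ is the paper's rule $u<\beta_k$, and the map $\ell'$ is identical. The only difference is how (ii) is verified: the paper telescopes the contributions of the interior lengths and the two randomised endpoints, whereas your interpolated variable $Y$ has the exact exponential tail $(\theta_1')^y$, which makes the law immediate and explains where the $\beta_k$ come from.
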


\begin{proof}
  We first equate the exponential formulas for the two tails. Since
  $\theta_1=(\theta_1')^{\gamma}$, the real thresholds $t_k=1+\gamma(D^k-1)$ satisfy
  \begin{equation}\label{eq:threshold}
    \theta_1^{D^k-1}=(\theta_1')^{t_k-1}.
  \end{equation}
  These thresholds need not be integers, so we put $D_k'=\floor{t_k}$. Then $D_0'=1$ because
  $t_0=1$, and
  $t_{k+1}-t_k=\gamma\,D^k(D-1)\geq\gamma(D-1)\geq1$, so the $D_k'$ are strictly increasing and
  tend to infinity with the $t_k$.

  From $D_k'\leq t_k<D_k'+1$, $0<\theta_1'<1$ and \cref{eq:threshold} we get
  \begin{equation}\label{eq:atom}
    (\theta_1')^{D_k'}<\theta_1^{D^k-1}\leq(\theta_1')^{D_k'-1}.
  \end{equation}
  The boundary length $D_k'$ has mass
  $\bbP(\lambda'=D_k')=(\theta_1')^{D_k'-1}-(\theta_1')^{D_k'}$. We assign it to the preceding
  class with probability
  \[
    \beta_k=\frac{(\theta_1')^{D_k'-1}-\theta_1^{D^k-1}}
      {(\theta_1')^{D_k'-1}-(\theta_1')^{D_k'}}\in[0,1).
  \]
  Define
  \[
    \ell'(n,u)=k\quad\text{for }D_k'<n<D_{k+1}',
    \qquad
    \ell'(D_k',u)=
    \begin{cases}
      k-1,&u<\beta_k,\\
      k,&u\geq\beta_k.
    \end{cases}
  \]
  The two clauses cover $\bbN$ because $D_0'=1$ and the $D_k'$ increase to infinity, and
  $\beta_0=0$ since $t_0=D_0'=1$, so $\ell'$ indeed takes values in $\bbN_0$ and is
  nondecreasing in its first argument. \Cref{it:chain-coupling-classes} holds by construction.

  For \cref{it:chain-coupling-law}, we distinguish lengths strictly between the two
  cut points from the two boundary lengths. Independence of $\lambda'$ and $U$ gives
  \begin{align*}
    \bbP\bigl(\ell'(\lambda',U)=k\bigr)
    &=\bbP\bigl(D_k'<\lambda'<D_{k+1}'\bigr)
      +(1-\beta_k)\,\bbP(\lambda'=D_k')
      +\beta_{k+1}\,\bbP(\lambda'=D_{k+1}')\\
    &=\Bigl((\theta_1')^{D_k'}-(\theta_1')^{D_{k+1}'-1}\Bigr)
      +\Bigl(\theta_1^{D^k-1}-(\theta_1')^{D_k'}\Bigr)
      +\Bigl((\theta_1')^{D_{k+1}'-1}-\theta_1^{D^{k+1}-1}\Bigr)\\
    &=\theta_1^{D^k-1}-\theta_1^{D^{k+1}-1}=p^{(D)}_k,
  \end{align*}
  where the second equality uses the definitions of $\beta_k$ and $\beta_{k+1}$,
  and the final equality is \cref{eq:qk}.

  For \cref{it:chain-coupling-ratio}, $t_k/D^k=\gamma\bigl(1-D^{-k}\bigr)+D^{-k}$ is a convex
  combination of $\gamma$ and $1$, so it lies between $\min(1,\gamma)$ and $\max(1,\gamma)$. In
  particular $D_k'\leq t_k\leq\max(1,\gamma)D^k$. For the lower bound, $D_0'=1=D^0$, while for
  $k\geq1$ we have $D^k\geq2$ and therefore
  \[
    D_k'>t_k-1=\gamma\bigl(D^k-1\bigr)\geq\tfrac{\gamma}{2}D^k,
  \]
  as required.
\end{proof}

The chain lengths of the second law are independent by \cref{thm:geometric}. Applying this
construction with independent uniform variables, also independent of those lengths, gives
independent class labels with the same law $p^{(D)}$. 

\section{Allowing extinction: matching shapes}\label{sec:shapes}

The chain encoding rested on two features of the two-value family: every realisation is infinite,
and every vertex has an infinite line of descent. Both fail for $\theta_0>0$. A realisation is now
finite with positive probability, and any two finite trees are quasi-isometric, being bounded
metric spaces. The classification therefore conditions on the family tree having infinite
diameter. The vertices with infinite progeny then form the \emph{surviving skeleton}, and the
remaining vertices form finite subtrees hanging off it, the bushes. A branch point of the
realisation need not be a branch point of the skeleton, so collapsing the chains of the
realisation encodes the wrong tree.
Instead, we collapse the chains of the skeleton and replace their integer lengths by
\emph{shapes}. Each shape records one skeleton chain, including its terminating branch point,
together with the bushes attached along it, see \cref{fig:shapes}.

We first describe the surviving skeleton, its bushes and the resulting shape law. We then
quantise the shapes in \cref{sec:shape-net} and show how comparisons of corresponding shapes
give quasi-isometries of their assemblies in \cref{sec:shape-transfer}.

Throughout this section $\theta$ is supported on $\set{0,1,2}$ and supercritical with
$\theta_0>0$, so that $\theta_2>\theta_0>0$. We write
$\bbP^*=\bbP(\,\cdot\mid\diam\cT(\omega)=\infty)$ for the law conditioned on having infinite
diameter.

\subsection{Shapes and the Harris decomposition}\label{sec:shape-harris}

A vertex of $\cT(\omega)$ is a \emph{skeleton} vertex if its subtree is infinite, a
\emph{split} if it has two children and both are skeleton vertices, and a \emph{neck} vertex
if it is a skeleton vertex with exactly one skeleton child. The finite subtrees rooted at the
remaining children are the \emph{bushes}. Conditioned on the tree having infinite diameter, the
skeleton is again Galton--Watson. Elementary calculations give its offspring law and, conditioned
on the skeleton, the law of its bushes, which are independent subcritical trees. For offspring
supported on $\set{0,1,2}$ both laws are explicit, and the skeleton falls in the chain regime, so
the machinery of \cref{sec:encoding} applies to it verbatim.

The decomposition is classical, due to Harris~\cite{Harris1948} and
Sevastyanov~\cite{Sevastyanov1951}, and we refer the reader to Athreya--Ney~\cite{AthreyaNey1972}
and Lyons--Peres~\cite{LyonsPeres2016} for standard accounts.
It holds for any supercritical offspring law. We state and prove the case of offspring supported
on $\set{0,1,2}$ because the later sections use its constants explicitly.
A tilde marks a quantity of the skeleton, the skeleton law $\widetilde\theta$ and its chain labels
$\widetilde\lambda$, and a dagger marks the conjugate law of the bushes.

\begin{proposition}[Harris decomposition]\label{thm:harris}
  Let $\theta$ be supported on $\set{0,1,2}$ and supercritical with $\theta_0>0$, and let
  $\cT(\omega)$ be a Galton--Watson tree with offspring distribution $\theta$. Then
  $\cT(\omega)$ is finite with probability $\theta_0/\theta_2$. Conditioned on
  $\cT(\omega)$ having infinite diameter, the following hold:
  \begin{enumerate}[(i),ref=(\roman*)]
    \item\label{it:harris-skeleton} the skeleton is a Galton--Watson tree with offspring
      distribution
      \[
        \widetilde\theta_1=\theta_1+2\theta_0,
        \qquad
        \widetilde\theta_2=\theta_2-\theta_0,
      \]
      which is a two-value law, $\widetilde\theta_1+\widetilde\theta_2=1$, lying in the chain
      regime: supercriticality gives $\theta_2>\theta_0$, and hence
      $0<\widetilde\theta_1<1$;
    \item\label{it:harris-bushes} conditionally on the skeleton, the skeleton vertices are
      decorated independently. A split has no dying child, and so carries no bush. A neck
      vertex has a second, dying child with probability $2\theta_0/(\theta_1+2\theta_0)$ and
      no second child with the complementary probability $\theta_1/(\theta_1+2\theta_0)$; a
      dying child is the root of a bush, which is an independent Galton--Watson tree with the
      conjugate law
      \[
        \theta^\dagger=(\theta^\dagger_0,\theta^\dagger_1,\theta^\dagger_2)
        =(\theta_2,\theta_1,\theta_0),
      \]
      whose mean is $\widetilde\theta_1$ and which is therefore subcritical.
  \end{enumerate}
\end{proposition}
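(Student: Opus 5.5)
The plan is to follow the classical generating-function route, specialised to the quadratic $f(s)=\theta_0+\theta_1 s+\theta_2 s^2$. The first step identifies the extinction probability $q$. It is the smallest root in $[0,1]$ of $f(s)=s$. The equation $\theta_2 s^2-(1-\theta_1)s+\theta_0=0$ has roots $1$ and $\theta_0/\theta_2$, because the product of the roots is $\theta_0/\theta_2$. Supercriticality, $\theta_1+2\theta_2>1$, together with $\theta_0+\theta_1+\theta_2=1$, is equivalent to $\theta_2>\theta_0$. Hence $q=\theta_0/\theta_2<1$. Since bounded degree makes finiteness equivalent to finite diameter, we have $\bbP^*=\bbP(\,\cdot\mid\cT\text{ infinite})$.

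The second step computes the joint law of the child count and the survival indicators at a single vertex. By the branching property, the subtrees of the children are independent Galton--Watson trees, each dying with probability $q$. Conditioned on the root surviving (probability $1-q$), the probability of exactly two surviving children is $\theta_2(1-q)^2/(1-q)=\theta_2(1-q)=\theta_2-\theta_0$. This gives $\widetilde\theta_2$; then $\widetilde\theta_1=1-\widetilde\theta_2=\theta_1+2\theta_0$, using $\theta_0+\theta_1+\theta_2=1$.

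Within the event of exactly one surviving child, we split further:
\begin{itemize}
  \item the probability of one child in total is $\theta_1$;
  \item the probability of two children with exactly one dying is $\theta_2\cdot 2q(1-q)/(1-q)=2\theta_2q=2\theta_0$.
\end{itemize}
This yields the neck probabilities $2\theta_0/(\theta_1+2\theta_0)$ and $\theta_1/(\theta_1+2\theta_0)$. A split (two surviving children) evidently has no dying child.

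The third step identifies the bush law. It is the law of a Galton--Watson tree conditioned on extinction, namely the Doob transform with offspring weights $\theta_k q^k/q$. These give $\theta_0/q=\theta_2$, $\theta_1$, and $\theta_2 q=\theta_0$, which is the conjugate law $\theta^\dagger$. Its mean is $\theta_1+2\theta_0=\widetilde\theta_1<1$, so it is subcritical.

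The fourth and main step is the global independence statement: the skeleton is Galton--Watson with law $\widetilde\theta$, and the decorations are conditionally independent given the skeleton. I would argue by induction on generations. Write $S_v$ for the event that the subtree at $v$ survives. The families $(\chi_v,\mathbf 1_{S_{v1}},\dots)$ at distinct skeleton vertices are conditionally independent given the survival indicators of their parents. This holds because the subtree at $v$ depends only on the variables $\chi_u$ for $u$ extending $v$, and these subtrees are disjoint for incomparable $v$.

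Concretely, one writes the $\bbP^*$-probability of a finite-height skeleton cylinder decorated with specified bushes. It factorises as a product over skeleton vertices of the one-vertex probabilities above, times $\theta^\dagger$-probabilities of the bushes. The factors $(1-q)$ and $q$ telescope at each generation. The only delicate points are bookkeeping: conditioning on an infinite-probability-zero type event is avoided by working with finite cylinders and survival indicators of the boundary vertices. A monotone class argument then extends the factorisation to the full law.
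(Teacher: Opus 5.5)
Your proof is correct, and its first three steps are the paper's own: the factorisation $(s-1)(\theta_2s-\theta_0)=0$ giving $q=\theta_0/\theta_2$, the one-vertex weights $\theta_1(1-q)$, $2\theta_2q(1-q)$ and $\theta_2(1-q)^2$ normalised by $1-q$, and the tilt $\theta^\dagger_j=\theta_jq^{j-1}$.

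The difference is in the global step. The paper does not prove that the skeleton is Galton--Watson or that the decorations are conditionally independent. It cites Lyons--Peres (Proposition~5.28) for the first, and the embedding into a two-type (surviving/dying) branching process from Athreya--Ney (Chapter~I.12) for the second. You prove both directly. You compute $\bbP^*$ of a height-$n$ cylinder that also records the survival indicators at depth $n$. The factors then telescope:
\begin{itemize}
\item each skeleton vertex $v$ with $k_v$ surviving children contributes $\theta_{\chi_v}(1-q)^{k_v-1}q^{\chi_v-k_v}$;
\item each bush, truncated or not, contributes its $\theta^\dagger$-cylinder probability, because $\theta_j=\theta^\dagger_jq^{1-j}$ leaves exactly one factor $q$ per bush root, namely the one placed at its parent.
\end{itemize}
A $\pi$--$\lambda$ argument then identifies the law.

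The citation keeps the paper short and covers all supercritical laws at once. Your route is self-contained and records which child position carries the bush, which the word-tree encoding actually uses. It also carries over verbatim to the general support of \cref{thm:harris-general}.

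One sentence is loose: that the vertex families are conditionally independent ``given the survival indicators of their parents''. The correct conditioning is on each vertex's own survival indicator, which is exactly the two-type viewpoint the paper cites. Your cylinder computation does not use this sentence, so you can drop it.
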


\begin{proof}
  Write $q$ for the extinction probability, which is the root in $[0,1)$ of
  $s=\theta_0+\theta_1s+\theta_2s^2$. Rearranging and factoring gives
  $(s-1)(\theta_2s-\theta_0)=0$, so $q=\theta_0/\theta_2$. A fixed child founds a surviving
  subtree with probability $1-q$, independently of its siblings. A vertex whose subtree is
  infinite has either one child which survives, two children of which exactly one survives,
  or two children which both survive. Their respective weights are $\theta_1(1-q)$,
  $2\theta_2q(1-q)$ and $\theta_2(1-q)^2$. These sum to $1-q$, the probability of survival.
  Hence,
  \[
    \bbP(\text{two survive}\mid\text{some survive})
    =\frac{\theta_2(1-q)^2}{1-q}
    =\theta_2(1-q)=\theta_2-\theta_0.
  \]
  The complementary probability is $\theta_1+2\theta_2q=\theta_1+2\theta_0$. These give the
  offspring probabilities in \cref{it:harris-skeleton}, and they sum to one. Since
  $\theta_0>0$, we have $\widetilde\theta_1>0$, and supercriticality gives
  $\widetilde\theta_1<1$.

  A child conditioned to die founds a Galton--Watson tree with the law tilted by the
  extinction probability, $\theta^\dagger_j=\theta_jq^{j-1}$, which is
  $(\theta_2,\theta_1,\theta_0)$. Its mean is $\theta_1+2\theta_0=\widetilde\theta_1$, so
  subcriticality of the bushes is the same inequality. For the two neck cases, cancelling
  the common factor $1-q$ leaves weights $\theta_1$ and $2\theta_2q=2\theta_0$. Normalising
  by their sum $\theta_1+2\theta_0$ gives the two decoration probabilities of
  \cref{it:harris-bushes}. A split has both children surviving, so it carries no bush.

  Conditioned on the realisation having infinite diameter, the skeleton is a
  Galton--Watson tree with the offspring law computed
  above~\cite[Proposition~5.28]{LyonsPeres2016}. The conditional
  independence of the decorations follows from the embedding into a two-type branching
  process, one type the vertices with infinite line of descent and the other the vertices
  with finitely many descendants~\cite[Chapter~I.12]{AthreyaNey1972}.
\end{proof}

We encode the skeleton as in \cref{sec:encoding} but with $\widetilde\theta$ in place of
$\theta$. Its chains are indexed by $\bbB$, and under $\bbP^*$ their lengths
$\widetilde\lambda(w)$, $w\in\bbB$, are independent geometric variables with
$\bbP^*(\widetilde\lambda(w)=m)=\widetilde\theta_1^{\,m-1}\widetilde\theta_2$, by
\cref{thm:geometric} applied to the skeleton law.

\begin{definition}[Shapes]\label{def:shape}
  A \emph{shape} is a pair $\sigma=(m,(b_1,\dots,b_{m-1}))$, where $m\geq1$ and each $b_i$ is
  either empty or a finite tree with offspring numbers in $\set{0,1,2}$. Its
  \emph{realisation} is the finite rooted tree obtained from a path $v_1\cdots v_m$ by
  attaching, for each nonempty $b_i$, the root of $b_i$ by an edge at $v_i$. The \emph{entry}
  is $v_1$ and the \emph{exit} is $v_m$. The \emph{size} $\abs\sigma$ is the number of
  vertices of the realisation, and we write $\cS$ to denote the (countable) set of all shapes.
\end{definition}

\paragraph*{\bf Assembling shapes} We write $S(w)$ for the shape formed by the skeleton chain indexed by $w$, including its
terminating split and the bushes at its preceding neck vertices. A family of shapes indexed by $\bbB$ is \emph{assembled} into a tree by joining, for every
$w\in\bbB$, the exit of the shape at $w$ to the entries of the shapes at $w1$ and $w2$ by an
edge, and rooting the result at the entry of the shape at $\troot$.

\begin{figure}[tb]
  \begin{center}
\begin{tikzpicture}[shapenode/.style={circle,draw=black,minimum size=0.85cm,inner sep=0pt,font=\footnotesize}]
  \node[shapenode] (s0) at (-4.9,-0.9) {$S(\troot)$};
  \node[shapenode] (s1) at (-5.95,-2.2) {$S(1)$};
  \node[shapenode] (s2) at (-3.85,-2.2) {$S(2)$};
  \draw (s0)--(s1);
  \draw (s0)--(s2);
  \node[shapenode] (s11) at (-6.6,-3.5) {$S(11)$};
  \node[shapenode] (s12) at (-5.45,-3.5) {$S(12)$};
  \node[shapenode] (s21) at (-4.35,-3.5) {$S(21)$};
  \node[shapenode] (s22) at (-3.2,-3.5) {$S(22)$};
  \draw (s1)--(s11);
  \draw (s1)--(s12);
  \draw (s2)--(s21);
  \draw (s2)--(s22);
  \draw (s11)--(-6.85,-4.15);
  \draw (s11)--(-6.35,-4.15);
  \draw (s12)--(-5.7,-4.15);
  \draw (s12)--(-5.2,-4.15);
  \draw (s21)--(-4.6,-4.15);
  \draw (s21)--(-4.1,-4.15);
  \draw (s22)--(-3.45,-4.15);
  \draw (s22)--(-2.95,-4.15);
  \node () at (-6.85,-4.42) {$\vdots$};
  \node () at (-6.35,-4.42) {$\vdots$};
  \node () at (-5.7,-4.42) {$\vdots$};
  \node () at (-5.2,-4.42) {$\vdots$};
  \node () at (-4.6,-4.42) {$\vdots$};
  \node () at (-4.1,-4.42) {$\vdots$};
  \node () at (-3.45,-4.42) {$\vdots$};
  \node () at (-2.95,-4.42) {$\vdots$};

  \node () at (-1.75,-1.5) {$\longmapsto$};
  \node () at (-1.75,-1.95) {assembly};

  \begin{scope}[xshift=1.5cm]
  \node[circle,fill=black,minimum size=3.5pt,inner sep=0pt] (r) at (1,0) {};
  \node[circle,draw=black,minimum size=3.5pt,inner sep=0pt] (x) at (0.2,-0.55) {};
  \node[circle,fill=black,minimum size=3.5pt,inner sep=0pt] (a) at (1,-0.8) {};
  \draw (r)--(x);
  \draw[thick] (r)--(a);

  \node[circle,fill=black,minimum size=3.5pt,inner sep=0pt] (b) at (-0.2,-1.6) {};
  \node[circle,fill=black,minimum size=3.5pt,inner sep=0pt] (c) at (2.6,-1.6) {};
  \draw[thick] (a)--(b);
  \draw[thick] (a)--(c);

  \node[circle,fill=black,minimum size=3.5pt,inner sep=0pt] (b1) at (-0.45,-2.4) {};
  \draw[thick] (b)--(b1);
  \node[circle,draw=black,minimum size=3.5pt,inner sep=0pt] (y1) at (0.4,-2.75) {};
  \node[circle,draw=black,minimum size=3.5pt,inner sep=0pt] (y2) at (0.65,-3.35) {};
  \draw (b1)--(y1);
  \draw (y1)--(y2);
  \node[circle,fill=black,minimum size=3.5pt,inner sep=0pt] (b2) at (-0.7,-3.2) {};
  \draw[thick] (b1)--(b2);

  \node[circle,fill=black,minimum size=3.5pt,inner sep=0pt] (d1) at (-1.15,-4.0) {};
  \node[circle,fill=black,minimum size=3.5pt,inner sep=0pt] (d2) at (-0.25,-4.0) {};
  \draw[thick] (b2)--(d1);
  \draw[thick] (b2)--(d2);
  \node[circle,fill=black,minimum size=3.5pt,inner sep=0pt] (d11) at (-1.4,-4.6) {};
  \node[circle,fill=black,minimum size=3.5pt,inner sep=0pt] (d12) at (-0.9,-4.6) {};
  \draw[thick] (d1)--(d11);
  \draw[thick] (d1)--(d12);
  \node () at (-1.4,-4.95) {$\vdots$};
  \node () at (-0.9,-4.95) {$\vdots$};
  \node[circle,draw=black,minimum size=3.5pt,inner sep=0pt] (z) at (0.35,-4.4) {};
  \node[circle,fill=black,minimum size=3.5pt,inner sep=0pt] (e) at (-0.5,-4.75) {};
  \draw (d2)--(z);
  \draw[thick] (d2)--(e);
  \node[circle,fill=black,minimum size=3.5pt,inner sep=0pt] (e1) at (-0.75,-5.35) {};
  \node[circle,fill=black,minimum size=3.5pt,inner sep=0pt] (e2) at (-0.25,-5.35) {};
  \draw[thick] (e)--(e1);
  \draw[thick] (e)--(e2);
  \node () at (-0.75,-5.7) {$\vdots$};
  \node () at (-0.25,-5.7) {$\vdots$};

  \node[circle,fill=black,minimum size=3.5pt,inner sep=0pt] (c1) at (2.45,-2.4) {};
  \node[circle,fill=black,minimum size=3.5pt,inner sep=0pt] (c2) at (3.45,-2.4) {};
  \draw[thick] (c)--(c1);
  \draw[thick] (c)--(c2);
  \node[circle,fill=black,minimum size=3.5pt,inner sep=0pt] (f1) at (2.4,-3.1) {};
  \draw[thick] (c1)--(f1);
  \node[circle,fill=black,minimum size=3.5pt,inner sep=0pt] (f11) at (2.15,-3.7) {};
  \node[circle,fill=black,minimum size=3.5pt,inner sep=0pt] (f12) at (2.65,-3.7) {};
  \draw[thick] (f1)--(f11);
  \draw[thick] (f1)--(f12);
  \node () at (2.15,-4.05) {$\vdots$};
  \node () at (2.65,-4.05) {$\vdots$};
  \node[circle,fill=black,minimum size=3.5pt,inner sep=0pt] (g1) at (3.2,-3.0) {};
  \node[circle,fill=black,minimum size=3.5pt,inner sep=0pt] (g2) at (3.7,-3.0) {};
  \draw[thick] (c2)--(g1);
  \draw[thick] (c2)--(g2);
  \node () at (3.2,-3.35) {$\vdots$};
  \node () at (3.7,-3.35) {$\vdots$};

  \draw[dashed,rounded corners=4pt] (-0.15,0.3) rectangle (1.4,-1.1);
  \node () at (1.9,-0.3) {$S(\troot)$};
  \draw[dashed,rounded corners=4pt] (-1.1,-1.25) rectangle (0.95,-3.6);
  \node () at (-1.6,-2.4) {$S(1)$};
  \draw[dashed,rounded corners=4pt] (2.3,-1.3) rectangle (2.9,-1.9);
  \node () at (3.35,-1.6) {$S(2)$};
  \draw[dashed,rounded corners=4pt] (-1.45,-3.75) rectangle (-0.85,-4.25);
  \node () at (-2.0,-4.0) {$S(11)$};
  \draw[dashed,rounded corners=4pt] (-0.8,-3.75) rectangle (0.65,-5.0);
  \node () at (1.2,-4.4) {$S(12)$};
  \draw[dashed,rounded corners=4pt] (2.15,-2.15) rectangle (2.8,-3.3);
  \node () at (1.65,-2.7) {$S(21)$};
  \draw[dashed,rounded corners=4pt] (3.2,-2.15) rectangle (3.7,-2.65);
  \node () at (4.20,-2.4) {$S(22)$};
  \end{scope}

\end{tikzpicture}
\end{center}
  \caption{A bushy realisation conditioned on having infinite diameter, and its shape labelling.
  Filled vertices denote the surviving skeleton and thick edges denote its chains. Hollow vertices form the
  bushes. The dashed regions are the shapes: each runs from a child of a split, or from the root,
  to the next split, and collects the bushes of its neck vertices. Assembling the shape-labelled
  binary tree recovers the realisation up to isometry.}
  \label{fig:shapes}
\end{figure}

\begin{proposition}[Shape decomposition]\label{thm:shape-iid}
  Conditioned on the tree having infinite diameter, the shapes $S(w)$, $w\in\bbB$, are i.i.d.\
  with the law $\mu$ of $(m,(B_1,\dots,B_{m-1}))$, where
  $\bbP(m=k)=\widetilde\theta_1^{\,k-1}\widetilde\theta_2$ and, given $m$, the decorations
  $B_1,\dots,B_{m-1}$ are independent, each empty with probability
  $\theta_1/\widetilde\theta_1$ and otherwise an independent Galton--Watson tree with law
  $\theta^\dagger$. Moreover $\cT(\omega)$ is isometric to the assembly of
  $(S(w))_{w\in\bbB}$.
\end{proposition}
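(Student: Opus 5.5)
The plan is to deduce the statement from \cref{thm:harris} together with the chain encoding of \cref{sec:encoding}, applied to the skeleton. First I will fix the deterministic picture. Given the skeleton and its decorations, each skeleton chain indexed by $w\in\bbB$ has vertices $\iota(w)1^l$, $0\leq l\leq\widetilde\lambda(w)-1$, in skeleton coordinates. The first $\widetilde\lambda(w)-1$ of these are neck vertices, and the last is a split. Each neck vertex carries either nothing or one dying child, and the latter roots a finite bush. Reading these decorations in order along the chain defines $S(w)=(\widetilde\lambda(w),(B_1,\dots,B_{\widetilde\lambda(w)-1}))$.

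Next I will check that the assembly is isometric to $\cT(\omega)$. Every vertex of $\cT(\omega)$ is either a skeleton vertex or lies in a bush hanging from a neck vertex. Splits carry no bush, and the root is a skeleton vertex under the conditioning. By \cref{thm:chains} applied to the skeleton, the skeleton vertices are partitioned into the chains. Hence the vertex sets of the realisations of the $S(w)$ partition $\cT(\omega)$. The edges of $\cT(\omega)$ are of three kinds: edges within a chain, edges from a neck vertex to a bush root (together with the edges inside bushes), and the two edges from a split to the entries of the child chains. These are exactly the edges of the assembly. So the natural bijection is a graph isomorphism fixing the root, and therefore an isometry for the graph metrics.

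For the law, I will condition on $\diam\cT=\infty$ and use \cref{thm:harris}. By \cref{thm:harris}\labelcref{it:harris-skeleton}, the skeleton is Galton--Watson with the two-value law $\widetilde\theta$. \Cref{thm:geometric} then gives that the $\widetilde\lambda(w)$ are i.i.d.\ geometric with $\bbP^*(\widetilde\lambda(w)=k)=\widetilde\theta_1^{\,k-1}\widetilde\theta_2$. By \cref{thm:harris}\labelcref{it:harris-bushes}, conditionally on the skeleton the decorations at the neck vertices are independent. Each is empty with probability $\theta_1/\widetilde\theta_1$ and is otherwise an independent $\theta^\dagger$-tree. The neck vertices of distinct chains are disjoint sets that are determined by the skeleton. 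Therefore, conditionally on the skeleton, the decoration tuples of the $S(w)$ are independent. Their conditional law depends on the skeleton only through $\widetilde\lambda(w)$, and given $\widetilde\lambda(w)=m$ it is the product law described in the statement.

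The joint law follows by integrating out the skeleton. For finitely many distinct $w_1,\dots,w_n$ and events $E_i\subseteq\cS$, I will compute $\bbP^*(S(w_i)\in E_i\ \forall i)=\bbE^*\prod_i g_i(\widetilde\lambda(w_i))$. Here $g_i(m)$ is the probability that a shape with first coordinate $m$ and the stated decoration law lies in $E_i$. Independence of the $\widetilde\lambda(w_i)$ factorises this expectation into $\prod_i\mu(E_i)$. The main obstacle is making the conditional independence in \cref{thm:harris}\labelcref{it:harris-bushes} precise jointly with the skeleton structure. One must check that conditioning on the whole skeleton, an infinite object, still leaves the bushes i.i.d.\ with the stated law. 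I would handle this through the two-type branching process description cited in the proof of \cref{thm:harris}, verified on cylinder events of finite depth and extended by a $\pi$--$\lambda$ argument.
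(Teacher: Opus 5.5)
Your proposal is correct and follows the paper's proof essentially step by step. It obtains the geometric chain labels of the skeleton from the Harris decomposition and the chain lemma, uses conditional independence of the neck decorations given the skeleton together with disjointness of distinct chains, and gets the isometry from the vertex and edge partition of $\cT(\omega)$. Your explicit factorisation through $\bbE^*\prod_i g_i(\widetilde\lambda(w_i))$, and your remark on making the conditioning on the infinite skeleton rigorous, spell out what the paper compresses into ``together with the independence of the labels'' and its citation of the two-type branching process description.
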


\begin{proof}
  We use the independent geometric chain labels described above, obtained from
  \cref{thm:harris}\labelcref{it:harris-skeleton} and \cref{thm:geometric}. The chains
  partition the skeleton by \cref{thm:chains}. Conditioned on the skeleton, the decorations
  at the neck vertices are independent, each nonempty with probability
  $2\theta_0/\widetilde\theta_1$ by \cref{thm:harris}\labelcref{it:harris-bushes}.
  The shape $S(w)$ is the pair of
  $\widetilde\lambda(w)$ and the decorations at the $\widetilde\lambda(w)-1$ neck vertices of
  the chain of $w$. Distinct chains are disjoint, so, conditioned on the skeleton, the
  decoration families of distinct shapes are independent, and the conditional law of the
  decorations of $S(w)$ depends on the skeleton only through $\widetilde\lambda(w)$.
  Together with the independence of the labels, this shows that the shapes are i.i.d.\ with
  the stated law.
  Finally, the skeleton chains together with their bushes partition the vertices of
  $\cT(\omega)$, and the chain of $w$ with its bushes realises $S(w)$. The edges between
  consecutive shapes are exactly the skeleton edges from a split to its skeleton children,
  so identifying each shape with its realisation gives an isometry from the assembly to
  $\cT(\omega)$.
\end{proof}

\paragraph*{\bf Shape mass bounds} Let $S$ be a shape with law $\mu$. We give an exponential upper bound for the probability that
$S$ is large and an exponential lower bound for the mass of each shape with positive probability.

\begin{lemma}[Shape mass bounds]\label{thm:shape-mass}
  There are $c,C,n_0>0$, depending on $\theta$, such that
  \begin{enumerate}[(i),ref=(\roman*)]
    \item\label{it:shape-mass-tail} $\mu(\abs S\geq n)\leq e^{-cn}$ for all $n\geq n_0$, and
    \item\label{it:shape-mass-point} $\mu(\sigma)\geq e^{-C\abs\sigma}$ for every
      $\sigma\in\cS$ with $\mu(\sigma)>0$.
  \end{enumerate}
\end{lemma}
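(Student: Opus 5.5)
We treat the two bounds separately, using the explicit form of $\mu$ from \cref{thm:shape-iid}. Write $\rho=\theta_1/\widetilde\theta_1$ for the probability that a decoration is empty. For a shape $\sigma=(m,(b_1,\dots,b_{m-1}))$ we have
\[
  \mu(\sigma)=\widetilde\theta_1^{\,m-1}\widetilde\theta_2\prod_{i:\,b_i=\varnothing}\rho\prod_{i:\,b_i\neq\varnothing}(1-\rho)\,\bbP_{\theta^\dagger}(b_i),
\]
where $\bbP_{\theta^\dagger}(b)=\prod_{v\in b}\theta^\dagger_{\chi_v}$ is the probability of the finite plane tree $b$ under the conjugate law. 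The size is $\abs\sigma=m+\sum_i\abs{b_i}$.

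\textbf{Part (ii).} This is a direct product estimate. Let $\delta>0$ be the minimum of the positive numbers among $\widetilde\theta_1,\widetilde\theta_2,\rho,1-\rho,\theta^\dagger_0,\theta^\dagger_1,\theta^\dagger_2$. Every factor in the formula is then at least $\delta$. The number of factors is at most $m+1+(m-1)+\sum_i\abs{b_i}\leq 2\abs\sigma+1\leq 3\abs\sigma$: one for each chain step, one for $\widetilde\theta_2$, one per decoration slot, and one per bush vertex. Hence $\mu(\sigma)\geq\delta^{3\abs\sigma}$, which gives (ii) with $C=3\log(1/\delta)$.

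Degenerate parameters, such as $\theta_1=0$ giving $\rho=0$, only remove shapes from the support. They do not affect the bound on the supported ones.

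\textbf{Part (i).} We use exponential moments. Write $\abs S=m+\sum_{i<m}\abs{B_i}$, where $\abs{B_i}=0$ for an empty decoration.

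The bush sizes are total progenies of the subcritical law $\theta^\dagger$, which is bounded with mean $\widetilde\theta_1<1$. Such total progenies have exponential moments. We will check this via the generating function $Z(s)=\bbE s^{\abs{B}}$. It satisfies $Z=s\,g^\dagger(Z)$, with $g^\dagger$ the generating function of $\theta^\dagger$. Since $g^{\dagger\prime}(1)<1$, the implicit function theorem extends the minimal solution analytically to $s\in[1,1+\epsilon)$, giving $\bbE s^{\abs B}<\infty$ and $\bbE s^{\abs B}\to1$ as $s\downarrow1$.

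Let $\phi(s)=\bbE s^{\abs{B_i}}$ for a decoration, including the empty case; this is finite near $1$ and tends to $1$. Conditioning on $m$ gives
\[
  \bbE s^{\abs S}=\sum_{k\geq1}\widetilde\theta_1^{\,k-1}\widetilde\theta_2\,s^k\phi(s)^{k-1}.
\]
This sum is finite once $s\phi(s)\widetilde\theta_1<1$, which holds for some $s_0>1$ by continuity because $\widetilde\theta_1<1$. Markov's inequality then yields $\mu(\abs S\geq n)\leq s_0^{-n}\bbE s_0^{\abs S}$. Taking $c=\tfrac12\log s_0$ and $n_0$ large enough to absorb the constant gives (i).

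\textbf{Main obstacle.} The only nonroutine point is the exponential moment of the subcritical total progeny. If the analytic argument is awkward, we will instead use the Otter--Dwass hitting-time identity, $\bbP(\abs B=n)=\frac1n\bbP(\xi_1+\dots+\xi_n=n-1)$ with $\xi_j$ i.i.d.\ $\theta^\dagger$. A Cramér--Chernoff bound applies because the mean is $<1$ and the $\xi_j$ are bounded, and it gives $\bbP(\abs B=n)\leq e^{-c'n}$.
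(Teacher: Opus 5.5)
Your proposal is correct and follows the paper's proof. Part (ii) is the same count of factors bounded below by the least positive constant: the paper gets at most $2\abs\sigma$ factors, and your overcount to $3\abs\sigma$ is harmless. Part (i) is the same argument: an exponential moment of the subcritical progeny, combined with the geometric neck length, then a Chernoff/Markov bound. Your Otter--Dwass/Chernoff fallback is exactly the paper's ``Chernoff bound for the offspring sums in a tree exploration''. If you use the implicit-function route instead, note that passing from analytic continuation of $z=s\,g^\dagger(z)$ past $s=1$ to $\bbE s^{\abs B}<\infty$ needs one more step. Either invoke Pringsheim's theorem, which applies since the coefficients are nonnegative, or bound the monotone iterates by the fixed point slightly above $1$.
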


\begin{proof}
  For \cref{it:shape-mass-point}, fix $\sigma=(m,(b_1,\dots,b_{m-1}))$ with
  $\mu(\sigma)>0$. The geometric chain probability contributes $m$ factors, the decoration
  choices contribute $m-1$, and each bush vertex contributes one offspring factor. Thus
  $\mu(\sigma)$ is a product of at most $2\abs\sigma$ factors, each of them one of
  $\widetilde\theta_1$, $\widetilde\theta_2$,
  $\theta_1/\widetilde\theta_1$, $2\theta_0/\widetilde\theta_1$, or an offspring weight
  $\theta^\dagger_j$ of a bush vertex. Let $p$ be the least positive number among these
  seven constants. Since $\mu(\sigma)>0$, every factor is at least $p\in(0,1]$, so
  $\mu(\sigma)\geq p^{2\abs\sigma}$. With $C=2\log p^{-1}$ this is $e^{-C\abs\sigma}$.

  For \cref{it:shape-mass-tail}, a Chernoff bound for the offspring sums in a tree
  exploration gives a finite exponential moment for the total progeny $Z$ under the
  subcritical law $\theta^\dagger$, whose support is bounded. By \cref{thm:shape-iid},
  $\abs S$ is stochastically dominated by $m+Z_1+\dots+Z_{m-1}$, with $m$ geometric and the
  $Z_i$ independent copies of $Z$ independent of $m$, so $\bbE e^{t\abs S}<\infty$ for some
  $t>0$. Another Chernoff bound gives
  $\mu(\abs S\geq n)\leq(\bbE e^{t\abs S})e^{-tn}\leq e^{-tn/2}$ for all sufficiently
  large $n$.
\end{proof}

\subsection{Net quantisation of shape classes}\label{sec:shape-net}

We compare shapes through maps that control their entries and exits, the positions where the shapes
join in the assembled trees. A \emph{$K$-marked quasi-isometry} $\sigma\to\tau$ is a $K$-quasi-isometry
of the realisations mapping the entry of $\sigma$ within distance $K$ of the entry of $\tau$, and likewise
for the exits. Two shapes are \emph{$K$-comparable} if a $K$-marked quasi-isometry exists in at
least one of the two directions. Comparability at a fixed scale is reflexive and symmetric but
need not be transitive, so we build the classes as a net around representatives.

\begin{definition}[The net and the label graph]\label{def:shape-net}
  Fix an integer $D\geq2$ and enumerate $\cS$ by size. If two shapes have the same size, enumerate
  them arbitrarily. We process this enumeration in order, retaining a shape precisely when it
  admits no $D$-marked quasi-isometry to an earlier retained shape. We write $\cR_D\subseteq\cS$ for the
  set of retained shapes. For $\sigma\in\cS$, let $\rep_D(\sigma)$ be the earliest member of
  $\cR_D$ to which $\sigma$ admits a $D$-marked quasi-isometry. The \emph{label graph} $G_D$ has
  vertex set $\cR_D$, with an edge between distinct
  $\sigma^*,\tau^*\in\cR_D$ whenever they are $27D^4$-comparable.
\end{definition}

\paragraph*{\bf Measurability} Since $\cS$ is countable, $\rep_D$ and every event built from it below are measurable.

\begin{lemma}[Basic net properties]\label{thm:shape-net}
  The composition of a $K$-marked and a $K'$-marked quasi-isometry is $3KK'$-marked, and
  every $K$-marked quasi-isometry has a $3K^2$-marked quasi-inverse.

  Every shape $\sigma\in\cS$ has a representative $\rep_D(\sigma)$. For each
  $\sigma^*\in\cR_D$, we have $\rep_D(\sigma^*)=\sigma^*$, so every class
  $\rep_D^{-1}(\sigma^*)$ contains its representative. The size bound
  $\abs{\rep_D(\sigma)}\leq\abs\sigma$ holds for every $\sigma\in\cS$.

  Any two members of one class admit a $9D^3$-marked quasi-isometry. Members of equal or adjacent
  classes are $729D^7$-comparable.
\end{lemma}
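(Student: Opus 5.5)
The plan is to prove the four assertions in the order stated, since each uses the previous ones.

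\emph{Composition.} Let $f\colon\sigma\to\tau$ be $K$-marked and $g\colon\tau\to\rho$ be $K'$-marked. For the upper bound,
\[
d(gf x,gf y)\leq K'(K d(x,y)+K)+K' \leq 3KK'\,d(x,y)+3KK'.
\]
The lower bound is analogous, using $K,K'\geq1$. For density, a point of $\rho$ is within $K'$ of some $g(y)$, and $y$ is within $K$ of some $f(x)$, so it is within $K'K+K'+K'\leq 3KK'$ of $gf(x)$. For the markings, $g(f(\mathrm{entry}))$ lies within $K'\cdot K+K'$ of $g(\mathrm{entry}_\tau)$, which in turn lies within $K'$ of $\mathrm{entry}_\rho$; the total is again at most $3KK'$, and the exits are handled the same way.

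\emph{Quasi-inverse.} Given a $K$-marked $f$, define $\bar f(y)$ to be any $x$ with $d(f(x),y)\leq K$, and choose $\bar f(\mathrm{entry}_\tau)$ and $\bar f(\mathrm{exit}_\tau)$ to be the entry and exit of $\sigma$. This choice is allowed because those points already have images within $K$. Standard estimates then give constants of the form $K\cdot(\text{distance}+2K)+K^2$, which are bounded by $3K^2$ after a careful count. The density of $\bar f$ is immediate, since $x$ lies within $K(K+K)+K^2$-type distance of $\bar f(f(x))$. I expect this bookkeeping of additive constants to be the only fiddly point, and I will aim only for the stated bounds, not sharp ones.

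\emph{Net properties.} The identity map is a $1$-marked, hence $D$-marked, quasi-isometry. So every shape either is retained, in which case it represents itself, or was rejected because it admits a $D$-marked map to an earlier retained shape. In either case $\rep_D(\sigma)$ exists, and it is earlier in the size enumeration, which gives $\abs{\rep_D(\sigma)}\leq\abs\sigma$. For $\sigma^*\in\cR_D$, no earlier retained shape receives a $D$-marked map from $\sigma^*$, while $\sigma^*$ maps to itself; hence $\rep_D(\sigma^*)=\sigma^*$.

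\emph{Class comparisons.} If $\sigma,\sigma'$ lie in one class with representative $\sigma^*$, compose the $D$-marked map $\sigma\to\sigma^*$ with the $3D^2$-marked quasi-inverse of $\sigma'\to\sigma^*$. This gives a $3\cdot D\cdot3D^2=9D^3$-marked map. If the classes of $\sigma$ and $\sigma'$ are adjacent, there is a $27D^4$-marked map between the representatives in one direction; invert it if necessary to get a map $\sigma^*\to\tau^*$ with constant at most $3(27D^4)^2$. That constant is too large, so the ordering of steps matters here. The better route is to keep the direction as given and compose three maps: $\sigma\to\sigma^*$ (constant $D$), $\sigma^*\to\tau^*$ (constant $27D^4$), and the quasi-inverse $\tau^*\to\tau$ (constant $3D^2$). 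Two compositions multiply the constants: $3\cdot D\cdot27D^4=81D^5$, then $3\cdot81D^5\cdot3D^2=729D^7$. When the given direction is $\tau^*\to\sigma^*$, the same composition runs from $\tau$ to $\sigma$, and comparability is symmetric by definition. Members of equal classes are covered by the $9D^3\leq729D^7$ bound.
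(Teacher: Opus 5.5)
Your proposal is correct and follows the paper's proof essentially step for step: triangle-inequality bookkeeping for the $3KK'$ and $3K^2$ constants, the greedy construction for the properties of $\rep_D$, and composition through the representatives, oriented along the $27D^4$-marked map, for the $9D^3$ and $729D^7$ bounds. One harmless slip: prescribing $\bar f(\mathrm{entry}_\tau)=\mathrm{entry}_\sigma$ and $\bar f(\mathrm{exit}_\tau)=\mathrm{exit}_\sigma$ is inconsistent when $\tau$ has neck length $m=1$ (so its entry equals its exit). No such adjustment is needed, however: for any coarse preimage, the lower quasi-isometry bound gives $d(\bar f(\mathrm{entry}_\tau),\mathrm{entry}_\sigma)\leq K(2K+K)=3K^2$, and likewise for the exit, which is exactly how the paper obtains the marks.
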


\begin{proof}
  The bounds $3KK'$ and $3K^2$ follow from \cref{def:qi} and the triangle inequality,
  choosing a quasi-inverse $f^-$ of a $K$-marked map $f$ with $d(f(f^-(y)),y)\leq K$.
  The greedy construction gives every shape a representative no later in the size enumeration
  and fixes retained shapes, proving all assertions about $\rep_D$.

  Composing through the common representative gives a marked quasi-isometry between members
  of one class with constant $3D(3D^2)=9D^3$. For adjacent classes, exchange the shapes if
  necessary to orient the $27D^4$-marked map from the first representative to the second.
  Composing the $D$-marked map to the first representative, this map, and a $3D^2$-marked
  quasi-inverse from the second representative to the second shape gives the constant
  $9D(27D^4)(3D^2)=729D^7$.
\end{proof}

\paragraph*{\bf Connectivity} Although we do not use connectivity below, we can see it by deleting
leaves. Given a representative $\sigma^*$ of size at least two, delete a leaf from a nonempty bush,
or the exit if the shape is a bare path, to obtain a shape $\tau$ of size $\abs{\sigma^*}-1$.
Define a map $\sigma^*\to\tau$ by mapping the deleted leaf to its neighbour and
fixing every other vertex. This is a $1$-marked quasi-isometry.
Composing with $\tau\to\rep_D(\tau)$ gives a $3D$-marked map by \cref{thm:shape-net}, so
$\sigma^*$ is adjacent to the strictly smaller representative $\rep_D(\tau)$. Iterating
connects every vertex to the singleton representative, proving the following lemma.

\begin{lemma}[Connectivity of the label graph]\label{thm:shape-connected}
  The graph $G_D$ is connected.
\end{lemma}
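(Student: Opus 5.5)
The plan is to show that every vertex $\sigma^*\in\cR_D$ of size at least two is adjacent in $G_D$ to a representative of strictly smaller size. Induction on size then joins every vertex by a finite path to the unique representative of size one. That representative is the singleton shape $(1,())$, which is retained because it is the first shape in the enumeration.

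\textbf{The descent step.} Let $\sigma^*=(m,(b_1,\dots,b_{m-1}))$ with $\abs{\sigma^*}\geq2$, and produce a shape $\tau$ with $\abs\tau=\abs{\sigma^*}-1$ in one of two ways.
\begin{enumerate}[(a)]
  \item If some $b_i$ is nonempty, delete a leaf of $b_i$. If $b_i$ is a single vertex, set $b_i$ to be empty. Deleting a leaf of a tree with offspring numbers in $\set{0,1,2}$ leaves such a tree, so $\tau\in\cS$.
  \item Otherwise $\sigma^*$ is a bare path with $m\geq2$, and $\tau=(m-1,())$ is obtained by deleting the exit.
\end{enumerate}
Define $f\colon\sigma^*\to\tau$ to fix every surviving vertex and send the deleted vertex to its unique neighbour. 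This is a $1$-quasi-isometry: distances among surviving vertices are unchanged, the deleted vertex moves by one, and $f$ is surjective. The entry is preserved. In case (a) the exit is fixed. In case (b) the exit is sent to $v_{m-1}$, which is the exit of $\tau$. So $f$ is $1$-marked.

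\textbf{Composing and concluding.} By \cref{thm:shape-net}, $\tau$ admits a $D$-marked map to $\rep_D(\tau)$, and composing with $f$ gives a $3D$-marked map $\sigma^*\to\rep_D(\tau)$. Since $3D\leq27D^4$, the two representatives are $27D^4$-comparable. The same lemma gives $\abs{\rep_D(\tau)}\leq\abs\tau<\abs{\sigma^*}$, so $\rep_D(\tau)\neq\sigma^*$ and they are joined by an edge of $G_D$.

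Iterating this step strictly decreases size, so it terminates at a representative of size one. Hence every vertex of $G_D$ is connected to $(1,())$.

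The only point needing any care is that the deletion map is marked in case (b), where the exit itself is removed. Sending it to $v_{m-1}$, the new exit, handles this.
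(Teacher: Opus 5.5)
Your proof is correct and follows the paper's argument essentially verbatim: delete a bush leaf (or the exit of a bare path) to get a $1$-marked map onto a shape of size $\abs{\sigma^*}-1$, compose with the $D$-marked map to its representative to obtain a $3D\leq27D^4$-marked map, and descend by size to the singleton representative. Your explicit check that deleting the exit in the bare-path case still gives a marked map, by sending $v_m$ to the new exit $v_{m-1}$, is a detail the paper leaves implicit.
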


\paragraph*{\bf Counting representatives} We next bound the number of representatives below a given size. The factor $D^{-1/3}$ in the bound records the gain from allowing comparisons at scale $D$.

\begin{lemma}[Entropy dilution]\label{thm:dilution}
  There is an absolute constant $c_0$ such that for all integers $D\geq2$ and $n\geq1$,
  every family of shapes of size at most $n$, no two of which admit a $D$-marked
  quasi-isometry, has at most $\exp\bigl(c_0(nD^{-1/3}+1)\bigr)$ members. In particular, there
  are at most $\exp\bigl(c_0(nD^{-1/3}+1)\bigr)$ representatives of size at most $n$.
\end{lemma}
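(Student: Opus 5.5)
Since the members of the family pairwise admit no $D$-marked quasi-isometry, it suffices to build a coarse encoding: a map $F$ from shapes of size at most $n$ to a set of at most $\exp\bigl(c_0(nD^{-1/3}+1)\bigr)$ codes, such that two shapes with the same code admit a $D$-marked quasi-isometry. Then $F$ is injective on the family, which proves the first claim. The statement about representatives follows immediately, because by \cref{def:shape-net} the retained shapes of size at most $n$ form such a family.

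To construct $F$, set $r=\lfloor D^{1/3}\rfloor$ (or $r=1$ when $D<8$, where the claim is trivial from the total count of shapes of size at most $n$). Recall that this count is $\exp(O(n))$, since rooted plane trees of size at most $n$ with a marked path are counted by Catalan-type bounds. For a shape $\sigma$ with realisation $T$, choose a set $N\subseteq T$ that is a maximal $r$-separated set, containing the entry and the exit. Then every vertex of $T$ lies within $r$ of $N$. Packing disjoint balls of radius $r/2$ in a tree gives $\abs N\lesssim\abs\sigma/r+2$: a connected tree of size at least $r/2$ around each net point can be chosen disjointly, because any ball of radius $r/2$ in a connected tree of diameter at least $r$ contains at least $r/2$ vertices. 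The bounded case, where the diameter is less than $r$, is handled by mapping everything to the entry. The code $F(\sigma)$ is the isomorphism type of the tree $T_N$ on $N$, obtained as the coarse spanning tree whose edges join net points of distance at most $3r$ and is then pruned to a tree, together with the marked entry and exit. Since $T_N$ has at most $C(n/r+1)$ vertices with bounded degree, the number of codes is at most $\exp\bigl(c(n/r+1)\bigr)\leq\exp\bigl(c_0(nD^{-1/3}+1)\bigr)$.

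It remains to check that equal codes give $D$-marked quasi-isometries. The inclusion $N\to T$ is a $(3r,\,O(r))$-quasi-isometry from $T_N$ with its graph metric, because distances in $T$ and in $T_N$ are comparable up to a factor $3r$ and an additive error $O(r)$. Composing one inclusion with a nearest-point projection, which is its quasi-inverse, through an isomorphism of the codes gives a quasi-isometry $\sigma\to\tau$ with constants $O(r^2)=O(D^{2/3})\leq D$ for $D$ large. The markings are preserved exactly. Small $D$ is absorbed into $c_0$.

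The main obstacle is making the coarse tree $T_N$ a genuine bounded-degree tree whose metric is $r$-comparable to $T$, since a naive net graph may contain cycles or have high degree near a branching point with many short bushes. I would first try to define $T_N$ by contracting the Voronoi cells of $N$ in $T$. Each cell is connected and has radius less than $r$, so the quotient is a tree whose adjacency exactly records cell adjacency. Its degree can be unbounded, but counting trees on $k$ vertices with arbitrary degree still gives $4^k$, so bounded degree is not actually needed. The comparison of metrics then follows from the bounded cell radius.
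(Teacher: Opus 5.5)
Your proposal is correct and has the same overall architecture as the paper's proof. Both partition the realisation into connected pieces of diameter $O(s)$ and use the quotient tree as the code, rooted at the entry piece with the exit piece marked. Both count codes by the Catalan bound, which, as you note, needs no degree bound, and compare shapes with isomorphic codes through the projections. Both absorb small $D$ into $c_0$ with the crude $e^{3n}$ count.

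The genuine difference is how the partition is produced. The paper cuts greedily from the leaves towards the root, declaring a part once the collected unassigned vertices number at least $s$. Every cut part then has at least $s$ vertices and consists of a top vertex with hanging remainders of fewer than $s$ vertices. So the count $\abs\sigma/s+1$ and the diameter bound $2s-2$ come directly from the size rule, with no packing argument. You instead take Voronoi cells of a maximal $r$-separated net. The diameter bound comes from the covering radius and the count from a ball-packing estimate, which in turn needs a tie-breaking rule and the observation that balls of radius about $r/2$ in a tree of diameter at least $r$ contain about $r/2$ vertices.

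On constants, the paper routes the comparison through the generic bounds of \cref{thm:shape-net}: $3KK'$ for compositions and $3K^2$ for quasi-inverses. This costs $72s^3\le D$ and is the only source of the exponent $1/3$. Your explicit projection-and-section composition avoids the quasi-inverse loss and gives $O(r^2)$, as you claim. Done carefully it is even linear: if all cells have diameter at most $\delta$, sending $x$ to the centre of the matched cell is a $(2\delta+1)$-marked quasi-isometry. Your choice $r=\lfloor D^{1/3}\rfloor$ therefore has room to spare.

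A few small repairs are needed.
\begin{enumerate}
\item An $r$-separated set cannot contain both the entry and the exit when $0<d(\text{entry},\text{exit})<r$. Either exempt that one pair from the separation requirement, or start the net at the entry and mark the cell containing the exit, as the paper does with parts. The latter moves the exit by at most a cell diameter, which is enough for a marked map.
\item Ties in the nearest-point assignment must be broken by a fixed order on $N$. With that rule, each cell contains the geodesic from any of its points to its centre and so is connected; arbitrary tie-breaking can disconnect cells.
\item Discard the first version of $T_N$, the $3r$-net graph pruned to a spanning tree. Pruning can inflate distances without bound, and it is the Voronoi quotient that gives the metric comparison.
\item The greedy construction of \cref{def:shape-net} only excludes a $D$-marked map from a later representative to an earlier one. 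For the statement about representatives you therefore need equal codes to yield $D$-marked maps in both directions. Your construction is symmetric and does give this, but you should state it explicitly.
\end{enumerate}
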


\begin{proof}
  The Catalan bound of $4^{k-1}$ for rooted plane trees with $k$
  vertices~\cite[Section~I.5.1]{FlajoletSedgewick2009} gives at most
  $\sum_{k=1}^K k4^{k-1}\leq e^{3K}$ rooted trees with at most $K$ vertices and one additional
  marked vertex, up to isomorphism. For $D\leq729$, both counts are at most
  $e^{3n}\leq e^{27nD^{-1/3}}$, since the marked realisations have at most $n$ vertices and a
  marked isometry gives a $D$-marked map in either direction.

  For $D>729$, set $s=\floor{(D/72)^{1/3}}\geq2$, so that $72s^3\leq D$ and
  $s^{-1}\leq9D^{-1/3}$. We cut the realisation of a shape $\sigma$ into connected parts.
  We process the vertices from the leaves towards the root. At each vertex, we collect
  the vertex itself and all its descendants that have not yet been assigned to a part.
  If this set has at least $s$ vertices, we declare it a part. Otherwise, we leave its
  vertices unassigned. Since each child subtree contains fewer than $s$ unassigned vertices
  and each vertex has at most two children, every cut part has between $s$ and $2s-1$ vertices.
  For $\abs\sigma\leq n$, the disjoint cut parts and the possible smaller remainder at the root
  therefore form at most $n/s+1$ parts. Contract them to obtain $T(\sigma)$, rooted at the part
  of the entry and marked at the part of the exit, and let $P$ be the projection.
  The path from $x$ to $y$ contains $d(P(x),P(y))$ edges between parts and at most
  $2s-2$ edges within each part it meets. Hence
  \[
    d(P(x),P(y))\leq d(x,y)\leq(2s-1)d(P(x),P(y))+2s-2.
  \]
  Thus $P$ is a surjective $2s$-marked quasi-isometry preserving both marks exactly.

  If $T(\sigma)$ and $T(\tau)$ are isomorphic as rooted marked trees, composing the projections
  and their quasi-inverses through the isomorphism gives marked quasi-isometries in both
  directions with constant $3\cdot2s\cdot3(2s)^2=72s^3\leq D$, by \cref{thm:shape-net}.
  Hence distinct family members have nonisomorphic marked contractions, as do distinct
  representatives, since a map from a later representative to an earlier one is excluded by
  the greedy construction. Counting the contractions gives both bounds with $c_0=27$, since
  \[
    e^{3(n/s+1)}\leq e^{27nD^{-1/3}+3}\leq e^{27(nD^{-1/3}+1)}.\qedhere
  \]
\end{proof}

\subsection{Glued transfer}\label{sec:shape-transfer}

We combine marked quasi-isometries between corresponding shapes by defining the map separately
on each copy. A path between two copies traverses each intermediate chain together with a
joining edge. These traversals have positive length, so their additive errors can be absorbed
into a multiplicative bound. Only the two endpoint terms leave a fixed additive error.

\begin{lemma}[Glued transfer]\label{thm:glued-transfer}
  Let $(\sigma_w)_{w\in\bbB}$ and $(\sigma'_w)_{w\in\bbB}$ be families of shapes with
  assemblies $\cT$ and $\cT'$, let $K\geq1$, and suppose that for every $w\in\bbB$ there is
  a $K$-marked quasi-isometry $\sigma_w\to\sigma'_w$. Then there is a root-preserving
  $8K^2$-quasi-isometry $\cT\to\cT'$.
\end{lemma}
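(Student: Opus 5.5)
Fix for every $w\in\bbB$ a $K$-marked quasi-isometry $f_w\colon\sigma_w\to\sigma'_w$. Since the entry of $\sigma_\troot$ is mapped within distance $K$ of the entry of $\sigma'_\troot$, we first modify each $f_w$ so that it sends the entry of $\sigma_w$ exactly to the entry of $\sigma'_w$ and the exit exactly to the exit. This is a change of at most $K$ at two points. By the remark after \cref{def:qi}, each modified map is a $3K$-quasi-isometry that preserves both marks exactly; one checks the constant directly from the triangle inequality. We then define $\psi\colon\cT\to\cT'$ copywise by $\psi|_{\sigma_w}=f_w$. The assemblies are vertex-disjoint unions of the realisations, so $\psi$ is well defined, and it is root-preserving because $f_\troot$ fixes the entry.

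\textbf{Metric bounds.} Take $x\in\sigma_u$ and $y\in\sigma_v$. If $u=v$, the bounds are those of $f_u$, because each copy is isometrically embedded: geodesics inside a shape stay inside it. Otherwise let $u\wedge v$ be the common prefix in $\bbB$. The geodesic from $x$ to $y$ decomposes into the following pieces:
\begin{itemize}
  \item an initial piece in $\sigma_u$, running from $x$ to the entry of $\sigma_u$;
  \item for each intermediate index, a full entry-to-exit passage through that shape plus one joining edge;
  \item a piece inside $\sigma_{u\wedge v}$ from exit to exit (or from exit to $x$ when $u\wedge v=u$);
  \item the symmetric pieces on the side of $y$.
\end{itemize}
The geodesic from $\psi(x)$ to $\psi(y)$ has exactly the same combinatorial decomposition in $\cT'$, because $\psi$ preserves entries and exits and the index trees coincide.

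Each intermediate piece has length $L\geq1$, counting its joining edge, and its image length $L'$ satisfies $L'\leq 3KL+3K+1$ and $L\leq 3KL'+3K^2+\dots$. Since $L\geq1$, the additive constants are absorbed, giving $L'\leq (6K+1)L$ and symmetrically. Summing the pieces, only the endpoint pieces in $\sigma_u$ and $\sigma_v$ (and possibly the piece in $\sigma_{u\wedge v}$) retain additive errors. These are bounded by a fixed multiple of $K^2$.

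\textbf{Main obstacle.} The main obstacle is bookkeeping the constants down to the clean value $8K^2$. The step I expect to need care is the absorption argument: I would verify that each inner traversal, including the joining edge, has length at least $1$ on both sides, and I would choose the modification of the marks so that the multiplicative constant is at most $4K$ and the accumulated additive error from at most three pieces is at most $8K^2$.

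\textbf{Coarse surjectivity.} Each $f_w$ is $3K$-dense in $\sigma'_w$, and the shapes $\sigma'_w$ cover $\cT'$, so $\psi(\cT)$ is $3K$-dense in $\cT'$. Combining the three bounds under the single constant $\max\{4K,8K^2,3K\}=8K^2$ would complete the proof.
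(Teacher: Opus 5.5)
Your architecture matches the paper's. You define $\psi$ copywise. You note that a geodesic between two copies follows the path in the common index tree $\bbB$, splitting into two endpoint pieces, full entry-to-exit traversals each followed by a joining edge, and the joining edges at the branch point. You then absorb the additive errors of the full traversals because each has length at least one. The gap is in the normalisation you perform first: it is impossible in general, and it is also why your constants do not close.

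\textbf{The normalisation can fail.} You cannot always modify $f_w$ to send the entry exactly to the entry and the exit exactly to the exit. If $\sigma_w$ has neck length $m=1$, its entry and exit coincide. The target $\sigma'_w$ may have $m'\geq2$ and hence distinct entry and exit, and one vertex cannot be sent to both. This case is not exotic. The one-vertex shape maps to the two-vertex path $(2,(\emptyset))$ by a $1$-marked quasi-isometry, and in \cref{thm:hairy} all shapes of size at most $D^2$ are matched with one another.

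The normalisation is also unnecessary. The decomposition of the target geodesic comes from the index trees coinciding and from $\psi$ mapping copy $u$ into copy $u$, not from exact preservation of marks. The paper keeps the given maps and passes through the marks by the triangle inequality:
$d'(\psi(x),a'_u)\leq K\,d(x,a_u)+2K$ and $d(x,a_u)\leq K\,d'(\psi(x),a'_u)+2K^2$, and likewise at exits. This gives additive errors $3K$ and $3K^2$ for the entry-to-exit distances. Since $m_w,m'_w\geq1$, these are absorbed into $m'_w\leq4Km_w$ and $m_w\leq8K^2m'_w$. Summing the pieces gives $d'\leq4Kd+4K$ and $d\leq8K^2d'+4K^2$.

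Only then is $\psi$ redefined at the single root vertex, which is always possible. This raises the additive errors to $5K$ and $8K^3+4K^2$ and the density constant to $2K$. Dividing the second inequality by $8K^2$ puts every constant at or below $8K^2$.

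\textbf{The constant is not established.} Even where your normalisation is possible, recording the modified maps only as $3K$-quasi-isometries loses the multiplicative constant. Take an intermediate copy whose target has entry equal to exit, so $L'=1$. Your bound then gives only $L\leq3K\cdot3K+1=9K^2+1>8K^2$. The original $K$-marked map gives the much better $d(a_w,b_w)\leq K(2K+K)=3K^2$. So your closing step $\max\{4K,8K^2,3K\}=8K^2$ is asserted rather than derived. To reach $8K^2$, keep the multiplicative constant $K$ of the original maps and carry the mark discrepancies as additive terms, as above.
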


\begin{proof}
  Choose a $K$-marked quasi-isometry $\varphi_w\colon\sigma_w\to\sigma'_w$ for each $w$ and
  define $\psi$ by these maps on the disjoint copies. Write $d,d'$ for the assembly metrics,
  $a_w,b_w$ for the entry and exit of the copy at $w$, and $m_w=d(a_w,b_w)+1$. We denote the
  corresponding target quantities with primes.
  Each copy is an induced subtree, so its internal
  distances agree with assembly distances. For $x$ in the copy at $u$ and $y$ in the copy at
  $v$, each intermediate copy contributes its entry-to-exit distance and one joining edge,
  giving
  \begin{align}
    d(x,y)&=d(x,b_u)+1+\sum_{u<t<v}m_t+d(y,a_v)&&(u\text{ a strict prefix of }v),
      \label{eq:assembly-anc}\\
    d(x,y)&=d(x,a_u)+d(y,a_v)+2+\sum_{z<t<u}m_t+\sum_{z<t<v}m_t
      &&(z=u\wedge v,\ u\neq z\neq v),\label{eq:assembly-div}
  \end{align}
  where sums run over strict prefix intervals and the additional $1$ or $2$ counts the
  joining edges at the ancestor exit or the common ancestor exit $b_z$.

  The marked quasi-isometry bounds give
  \[
    d'\bigl(\psi(x),a'_u\bigr)\leq K\,d(x,a_u)+2K,
    \qquad
    d(x,a_u)\leq K\,d'\bigl(\psi(x),a'_u\bigr)+2K^2,
  \]
  and likewise for the exits. Comparing both marks gives additive errors $3K$ and $3K^2$
  for the entry-to-exit distances, which we absorb using $m_w,m'_w\geq1$ to obtain
  \[
    m'_w\leq4K\,m_w,
    \qquad
    m_w\leq8K^2m'_w.
  \]

  Comparing \cref{eq:assembly-anc,eq:assembly-div} with their primed versions term by term,
  after exchanging $x,y$ if necessary, gives
  \[
    d'\bigl(\psi(x),\psi(y)\bigr)\leq4K\,d(x,y)+4K,
    \qquad
    d(x,y)\leq8K^2d'\bigl(\psi(x),\psi(y)\bigr)+4K^2,
  \]
  which also hold within one copy by the bounds for $\varphi_w$.

  The image is $K$-dense, since this holds copy by copy. Finally, modify $\psi$ only at the root,
  setting $\psi(a_{\troot})=a'_{\troot}$. This changes its value there by at most $K$.
  The additive errors increase to $5K$ and $8K^3+4K^2$, and the image is $2K$-dense.
  Since $K\geq1$, these bounds satisfy
  \cref{def:qi} with constant $8K^2$.
\end{proof}

\section{Matching independent graph labels on the binary tree}\label{sec:setup}

We now prove \cref{thm:matching} for independent, identically distributed labellings. At each
split, an automorphism can either preserve or exchange the two child subtrees. We control this
recursive pairing by a potential that contracts in successive steps when it is of small value. 
For full labellings, the
matching must also respect the labels at the root, so we need a second estimate for combining
independent conditions.

\subsection{Setup and the compatibility rule}
We recall the notation of \cref{sec:words,sec:matching}. For
$h\geq0$, the complete binary tree $\bbB_h$ consists of the words over $\set{1,2}$ of length at most
$h$, with leaves $\set{1,2}^h$. An automorphism $\pi\in\Aut(\bbB_h)$ fixes the root, preserves
levels and recursively fixes or interchanges the two child subtrees at each internal vertex.

\smallskip
\paragraph*{\bf Compatibility} The label graph $G=(V,E)$ is countable and undirected and $\mu$ is a probability measure on $V$.
While the definitions allow disconnected $G$, taking distances between components to be infinite,
the hypotheses in \cref{thm:matching} imply that $\mu$ is supported on a single connected
component. See also \cref{rem:connectedness} below.
We say that two labels $v,w\in V$ are compatible when
\begin{equation}\label{eq:compat}
 d_G(v,w)\leq1 ,
\end{equation}
and write $B_G(v,1)=\set{w\in V\colon d_G(v,w)\leq1}$ for the closed unit ball. The relation
\cref{eq:compat} is symmetric and reflexive, the only two properties used by the estimates.

\smallskip
\paragraph*{\bf Leaf and full labellings} A leaf labelling of $\bbB_h$ is a map $x\colon\set{1,2}^h\to V$ and a full labelling is a map
$x\colon\bbB_h\to V$. For two labellings of the same kind, we write
$x\approx^{\mathrm{leaf}}_h y$, respectively $x\approx_h y$, if there is an automorphism
$\pi\in\Aut(\bbB_h)$ such that
\[
 d_G\bigl(x(\pi v),y(v)\bigr)\leq1
\]
simultaneously at every leaf, respectively at every vertex.
Throughout, labels within each labelling are independent with common law $\mu$, and the two
labellings are independent of each other.

\smallskip
\paragraph*{\bf Masses and potentials} Recall that the compatible mass at $v$ is the probability that an independent label is
compatible with $v$,
\begin{equation}\label{eq:mv}
 b(v)=\mu\bigl(B_G(v,1)\bigr).
\end{equation}
For $\alpha\geq1$, the graph potential of \cref{eq:potential} is
\begin{equation}\label{eq:etaG}
 \eta_{G,\alpha}(\mu)=\sum_{v\colon\mu(v)>0}\mu(v)\,\frac{1-b(v)}{b(v)^{\alpha}} ,
\end{equation}
with $\alpha=5/2$ in the matching theorem and a variable exponent in the estimates, see also
\cref{rem:exponent} below. Reflexivity gives $b(v)\geq\mu(v)>0$ on the support of $\mu$, so every
denominator in \cref{eq:etaG} is positive and the potential is well-defined in $[0,\infty]$.

\begin{restate}{thm:matching}
Let two independent labellings of $\bbB_h$ be given, with labels sampled independently
from $\mu$ within each labelling.
\begin{enumerate}[(1)]
\item If $\eta_{G,5/2}(\mu)\leq\tfrac1{256}$, then for leaf labellings
\[
 \bbP(\text{no automorphism of }\bbB_h\text{ matches every leaf})
 \leq\eta_{G,5/2}(\mu)\left(\frac{253}{256}\right)^{\!h},
\]
which tends to zero as $h\to\infty$.
\item If $\eta_{G,5/2}(\mu)\leq10^{-4}$, then for full labellings, uniformly in the height,
\[
 \bbP(\text{some automorphism of }\bbB_h\text{ matches every vertex})
 \geq1-16\,\eta_{G,5/2}(\mu)>0 .
\]
The same lower bound holds for the existence of a single automorphism matching every
vertex of two independent labellings of the infinite rooted binary tree, each with
i.i.d.\ labels of law $\mu$.
\end{enumerate}
\end{restate}

\begin{remark}\label{rem:connectedness}
Either hypothesis of \cref{thm:matching} confines $\mu$ to a single connected component of $G$.
Let $C$ be a component with $m:=\mu(C)>0$. Every $v\in C$ has $B_G(v,1)\subseteq C$ and hence
$b(v)\leq m$, so the terms of \cref{eq:etaG} indexed by $C$ sum to at least
$m(1-m)m^{-5/2}=(1-m)m^{-3/2}$, which is at least $\sqrt2$ once $m\leq\tfrac12$. As
$\sqrt2>\tfrac1{256}$, every component of positive mass carries more than half of it, so exactly
one component carries mass and it carries all of it.
\end{remark}

\paragraph*{\bf Potential estimates} For $\alpha\geq1$, write
\begin{equation}\label{eq:alpha}
 \phi_\alpha(t)\coloneqq\frac{t}{(1-t)^\alpha}\qquad(0\leq t<1),
\end{equation}
so the summand in \cref{eq:etaG} is
$\mu(v)\phi_\alpha(1-b(v))$. This weight is nonnegative and, since $(1-t)^\alpha\leq1$,
\begin{equation}\label{eq:phi-dominates}
 \phi_\alpha(t)\geq t\qquad(0\leq t<1).
\end{equation}

\noindent The following elementary estimates control the child-pair recursion and the combination of independent
conditions.

\begin{lemma}[Pointwise potential estimates]\label{thm:toolkit}
Let $\alpha\geq1$, let $L,K>0$ satisfy
\begin{equation}\label{eq:pointwise-constants}
 \frac{t}{(1+t)^\alpha}\leq L\quad(0\leq t\leq1),\qquad
 t(1-t)^\alpha\leq K\quad(0\leq t\leq1),
\end{equation}
and put $c_\alpha:=2(2^\alpha-1)$. For $x,y,q,e\in[0,1)$ and $z\in[0,1/2]$,
\begin{align}
 \phi_\alpha(xy)&\leq\frac{L}2
 \bigl(\phi_\alpha(x)+\phi_\alpha(y)\bigr),
 \label{eq:product-estimate}\\
 (1-q)^{-\alpha}&\leq1+\alpha\,\phi_\alpha(q),
 \label{eq:tangent}\\
 q^2&\leq K\,\phi_\alpha(q),
 \label{eq:Ksq}\\
 (1-z)^{-\alpha}&\leq1+c_\alpha z,
 \label{eq:chord}\\
 \phi_\alpha\bigl(e+(1-e)q\bigr)
 &\leq\phi_\alpha(e)+\phi_\alpha(q)
 +2\alpha\,\phi_\alpha(e)\phi_\alpha(q).
 \label{eq:split}
\end{align}
\end{lemma}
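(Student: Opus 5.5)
The five inequalities are elementary and independent of one another, so I will prove each directly. Four of them follow in a line or two from the definition \cref{eq:alpha}. The product estimate \cref{eq:product-estimate} needs a genuine argument. I will do the easy ones first, in an order that lets \cref{eq:split} reuse \cref{eq:tangent}.

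For \cref{eq:tangent}, Bernoulli's inequality gives $(1-q)^\alpha\geq1-\alpha q$ for $\alpha\geq1$, that is, $1-(1-q)^\alpha\leq\alpha q$. Dividing by $(1-q)^\alpha$ yields $(1-q)^{-\alpha}-1\leq\alpha\phi_\alpha(q)$. For \cref{eq:Ksq}, write $q^2=q(1-q)^\alpha\cdot\phi_\alpha(q)$ and bound the first factor by $K$ using \cref{eq:pointwise-constants}. For \cref{eq:chord}, the map $z\mapsto(1-z)^{-\alpha}$ is convex on $[0,1/2]$. It therefore lies below its chord from $(0,1)$ to $(1/2,2^\alpha)$, which is exactly $1+2(2^\alpha-1)z$. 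For \cref{eq:split}, note that $1-\bigl(e+(1-e)q\bigr)=(1-e)(1-q)$ and that $e+(1-e)q\leq e+q$. Hence
\[
 \phi_\alpha\bigl(e+(1-e)q\bigr)\leq\phi_\alpha(e)(1-q)^{-\alpha}+\phi_\alpha(q)(1-e)^{-\alpha}.
\]
Applying \cref{eq:tangent} to both factors $(1-q)^{-\alpha}$ and $(1-e)^{-\alpha}$ then gives the stated bound with cross term $2\alpha\phi_\alpha(e)\phi_\alpha(q)$.

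For \cref{eq:product-estimate}, I may assume $x,y>0$, since otherwise the left side vanishes. The plan is to express $\phi_\alpha(xy)$ as a multiple of each of $\phi_\alpha(x)$ and $\phi_\alpha(y)$. Using $1-xy=(1-y)+y(1-x)$, set $q=y(1-x)/(1-y)$, so that
\[
 \phi_\alpha(xy)=r_1\phi_\alpha(y),\qquad r_1=\frac{x}{(1+q)^\alpha}.
\]
Symmetrically, with $q'=x(1-y)/(1-x)$, we get $\phi_\alpha(xy)=r_2\phi_\alpha(x)$ with $r_2=y/(1+q')^\alpha$. Then
\[
 \phi_\alpha(xy)=\frac{r_1r_2}{r_1+r_2}\bigl(\phi_\alpha(x)+\phi_\alpha(y)\bigr),
\]
so it suffices to show $r_1r_2/(r_1+r_2)\leq L/2$, that is, that the harmonic mean of $r_1,r_2$ is at most $L$. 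The key relations are $qq'=xy$, $x=q(1-y)/\bigl(y(1-x)\bigr)\cdot x$ and similar identities. I expect that in the symmetric case $x=y$ one has $q=q'=x$ and $r_1=r_2=x/(1+x)^\alpha\leq L$ directly from \cref{eq:pointwise-constants}.

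The main obstacle is the asymmetric case. Since $\min(r_1,r_2)$ dominates the harmonic mean only up to a factor of $2$, bounding $\min(r_1,r_2)$ alone loses that factor. My first attempt is to parametrise by $(q,q')$, using $x=q(1+q')/(1+q)\cdot$ and the relation $qq'=xy$ to rewrite $x,y$ in terms of $q,q'$. I would then show that the harmonic mean of $q/(1+q)^\alpha\cdot(\text{correction})$ and its counterpart is maximised on the diagonal $q=q'$, for example by checking that $\log r_i$ is suitably concave in $\log q$. If this fails, the fallback is to bound each $r_i$ by $t_i/(1+t_i)^\alpha$ with $t_1=\sqrt{xy}\,\cdot$ (a ratio at most one) and use $t\mapsto t/(1+t)^\alpha\leq L$ on $[0,1]$ for both terms. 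The constraint $qq'=xy<1$ guarantees that at least one of $q,q'$ lies in $[0,1]$, which is where the hypothesis on $L$ applies.
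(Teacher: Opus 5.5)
Your arguments for \cref{eq:tangent,eq:Ksq,eq:chord,eq:split} are correct and essentially match the paper's. For \cref{eq:split} you bound the numerator by $e+(1-e)q\le e+q$, whereas the paper keeps the exact identity and uses $(1-e)^{1-\alpha}\le(1-e)^{-\alpha}$; the effect is the same.

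The gap is in \cref{eq:product-estimate}, the one inequality with real content. Your reduction is valid: with $q=y(1-x)/(1-y)$ and $q'=x(1-y)/(1-x)$ one has $\phi_\alpha(xy)=r_1\phi_\alpha(y)=r_2\phi_\alpha(x)$. Hence the claim is equivalent to $2r_1r_2/(r_1+r_2)\le L$. But this only restates the inequality, and the asymmetric case, which is the whole difficulty, is left as an unverified concavity hope. The fallback cannot work, because it would need to bound $r_1$ and $r_2$ separately by $L$. For $x\uparrow1$ and $y\downarrow0$ you get $q\to0$ and $r_1=x/(1+q)^\alpha\to1$, while the smallest admissible $L$, namely $\max_{0\le t\le1}t/(1+t)^\alpha$, is at most $1/2$. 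Knowing that one of $q,q'$ lies in $[0,1]$ controls $q/(1+q)^\alpha$, not $r_1$. In that regime the harmonic mean is small only because $r_2$ is tiny, so $r_1$ and $r_2$ must be handled jointly.

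The missing idea is a convexity comparison with the diagonal point $\sqrt{xy}$. The paper does this algebraically: $(1-xy)^2-(1-x^2)(1-y^2)=(x-y)^2\geq0$ gives
\[
 \phi_\alpha(xy)\leq\frac{xy}{(1-x^2)^{\alpha/2}(1-y^2)^{\alpha/2}}
 =\sqrt{\phi_\alpha(x^2)\,\phi_\alpha(y^2)}
 \leq\tfrac12\bigl(\phi_\alpha(x^2)+\phi_\alpha(y^2)\bigr),
\]
and then $\phi_\alpha(x^2)=\phi_\alpha(x)\cdot x/(1+x)^\alpha\leq L\,\phi_\alpha(x)$.

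If you prefer to keep your diagonal heuristic, make it precise by fixing the product $p=xy=qq'$. Note that $q=q'$ forces $(y-x)(1-xy)=0$, that is, $x=y$. The function $a\mapsto\log\phi_\alpha(e^{-a})=-a-\alpha\log(1-e^{-a})$ has second derivative $\alpha e^a/(e^a-1)^2>0$. Hence $a\mapsto\phi_\alpha(e^{-a})$ is convex, and $\phi_\alpha(x)+\phi_\alpha(y)\geq2\phi_\alpha(\sqrt p)$. Finally, $\phi_\alpha(p)=\phi_\alpha(\sqrt p)\,\sqrt p/(1+\sqrt p)^\alpha\leq L\,\phi_\alpha(\sqrt p)$ completes the proof.
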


\begin{proof}
We prove the five inequalities in turn.

\emph{Proof of \cref{eq:product-estimate}.}
Expanding both squares,
\[
 (1-xy)^2-(1-x^2)(1-y^2)
 =(1-2xy+x^2y^2)-(1-x^2-y^2+x^2y^2)
 =(x-y)^2\geq0,
\]
so $1-xy\geq\sqrt{(1-x^2)(1-y^2)}$ and therefore
\[
 \phi_\alpha(xy)=\frac{xy}{(1-xy)^\alpha}
 \leq\frac{xy}{(1-x^2)^{\alpha/2}(1-y^2)^{\alpha/2}}
 =\sqrt{\phi_\alpha(x^2)\,\phi_\alpha(y^2)}
 \leq\frac{\phi_\alpha(x^2)+\phi_\alpha(y^2)}2,
\]
the last step by the AM--GM inequality. Finally
\[
 \phi_\alpha(x^2)=\frac{x^2}{(1-x)^\alpha(1+x)^\alpha}
 =\phi_\alpha(x)\cdot\frac{x}{(1+x)^\alpha}
 \leq L\,\phi_\alpha(x)
\]
by \cref{eq:pointwise-constants}, and likewise for $y$.

\emph{Proof of \cref{eq:tangent}.}
Multiplying by $(1-q)^\alpha>0$, the claim is equivalent to
$1\leq(1-q)^\alpha+\alpha q$. This is Bernoulli's inequality in the form
$(1-q)^\alpha\geq1-\alpha q$.

\emph{Proof of \cref{eq:Ksq}.}
By \cref{eq:pointwise-constants},
\[
 q^2=\phi_\alpha(q)\,q(1-q)^\alpha\leq K\,\phi_\alpha(q).
\]

\emph{Proof of \cref{eq:chord}.}
The map $z\mapsto(1-z)^{-\alpha}$ is convex on $[0,1/2]$, hence lies below the chord through
its endpoints $(0,1)$ and $(1/2,2^\alpha)$. The chord is
$z\mapsto1+2(2^\alpha-1)z=1+c_\alpha z$.

\emph{Proof of \cref{eq:split}.}
Write $r:=1-q$ and $s:=1-e$, so that $1-(e+(1-e)q)=sr$ and
\[
 \phi_\alpha\bigl(e+(1-e)q\bigr)
 =\frac{e+sq}{s^\alpha r^\alpha}
 =\frac{e}{s^\alpha}\cdot r^{-\alpha}
 +\frac{q}{r^\alpha}\cdot s^{1-\alpha}
 =\phi_\alpha(e)\,r^{-\alpha}+\phi_\alpha(q)\,s^{1-\alpha}.
\]
By \cref{eq:tangent}, $r^{-\alpha}\leq1+\alpha\phi_\alpha(q)$ and
$s^{1-\alpha}\leq s^{-\alpha}\leq1+\alpha\phi_\alpha(e)$. Expanding the two products gives
$\phi_\alpha(e)+\phi_\alpha(q)+2\alpha\phi_\alpha(e)\phi_\alpha(q)$, as claimed.
\end{proof}

\subsection{Overlap recursion and contraction}\label{sec:contraction}

We prove one step of the matching recursion for an arbitrary symmetric reflexive relation.
Let $(\cX,\mu)$ be a countable probability space and let $R$ be such a relation on $\cX$.
For $x\in\cX$ define the bad and good degrees
\[
 q(x):=\mu\{y\colon x\not\mathrel{R}y\},\qquad r(x):=1-q(x),
\]
and write $\supp\mu:=\{x\in\cX\colon\mu\{x\}>0\}$ for the support. Reflexivity gives
$x\mathrel{R}x$, so $r(x)\geq\mu\{x\}>0$ and hence $q(x)\leq1-\mu\{x\}<1$
for every $x\in \supp\mu$. Thus $\phi_\alpha(q(x))$ is defined on $\supp\mu$. Put
\begin{equation}\label{eq:iid-potential}
 \Phi(R,\mu):=\bbE\,\phi_\alpha(q(X))=\sum_{x\in \supp\mu}\mu\{x\}\,\phi_\alpha(q(x)),\qquad X\sim\mu,
\end{equation}
taking values in $[0,\infty]$. On $\cX^2$, with
the product measure $\mu^2$, define the \emph{symmetrised square relation} $R^\square$ by
\begin{equation}\label{eq:square}
 (x_1,x_2)\mathrel{R^\square}(y_1,y_2)
 \iff
 \bigl(x_1\mathrel{R}y_1\wedge x_2\mathrel{R}y_2\bigr)
 \vee
 \bigl(x_1\mathrel{R}y_2\wedge x_2\mathrel{R}y_1\bigr).
\end{equation}
It is again symmetric and reflexive, and the support of $\mu^2$ is
$\supp\mu\times \supp\mu$.

\paragraph*{\bf Pairings} The symmetrised square describes one step in matching the two child subtrees. Its two possible
pairings use the same target pair, so their success events overlap. These overlaps prevent the
mean bad degree from closing the recursion by itself. The weighted potential $\Phi$ supplies
the additional moment control needed after averaging. We choose its exponent so that the linear
coefficient in the resulting estimate is strictly less than one.

\paragraph*{\bf Constants} For $\alpha\geq1$, define the constants for the pointwise bounds by
\begin{equation}\label{eq:maxima}
 \begin{split}
 L_\alpha&\coloneqq\max_{0\leq t\leq1}\frac{t}{(1+t)^\alpha}
 =\begin{cases}
   2^{-\alpha},&1\leq\alpha\leq2,\\
   \alpha^{-\alpha}(\alpha-1)^{\alpha-1},&\alpha>2,
 \end{cases}\\
 K_\alpha&\coloneqq\max_{0\leq t\leq1}t(1-t)^\alpha
 =\alpha^\alpha(1+\alpha)^{-(1+\alpha)}.
 \end{split}
\end{equation}
With the chord constant $c_\alpha$ from \cref{thm:toolkit}, set
\begin{equation}\label{eq:contraction-constants}
 M_\alpha:=\max\Bigl(2L_\alpha c_\alpha+\tfrac12,\ \tfrac52\Bigr),
 \qquad
 A_\alpha:=2L_\alpha+4K_\alpha,
 \qquad
 B_\alpha:=M_\alpha+4\alpha^2 .
\end{equation}
In the contraction below, $A_\alpha$ is the coefficient of the linear term in $\Phi(R,\mu)$,
and $B_\alpha$ controls the quadratic correction. For sufficiently small potential, the recursion
in \cref{sec:reduction} closes when $A_\alpha<1$. We discuss the choice of exponent in
\cref{rem:exponent}.

\begin{lemma}[Uniform contraction]\label{thm:contraction}
Let $\alpha\geq1$. For every countable $(\cX,\mu,R)$ as above,
\begin{equation}\label{eq:one-step}
 \Phi(R^\square,\mu^2)\leq A_\alpha\,\Phi(R,\mu)+B_\alpha\,\Phi(R,\mu)^2 .
\end{equation}
In particular, if $A_\alpha+B_\alpha\Phi(R,\mu)\leq\rho$ for some $\rho<1$, then
\begin{equation}\label{eq:strict}
 \Phi(R^\square,\mu^2)\leq\rho\,\Phi(R,\mu).
\end{equation}
\end{lemma}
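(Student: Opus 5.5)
The plan is to compute the bad degree of the symmetrised square relation exactly, and then bound its $\phi_\alpha$-weight with \cref{thm:toolkit}. Fix $(x_1,x_2)\in\supp\mu\times\supp\mu$ and write $q_i=q(x_i)$, $N_i=\{y\colon x_i\mathrel{R}y\}$. Also put
\[
 t(x_1,x_2)=\mu(N_1^c\cap N_2^c),
\]
the mass of labels incompatible with both $x_1$ and $x_2$. For independent $Y_1,Y_2\sim\mu$, the straight and crossed pairings each succeed with probability $r_1r_2$. Both succeed exactly when $Y_1,Y_2\in N_1\cap N_2$, an event of probability $s^2$ with $s=\mu(N_1\cap N_2)=1-q_1-q_2+t$. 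Inclusion--exclusion, as in \cref{thm:full-probability-recursion}, gives the square bad degree
\[
 Q(x_1,x_2)=1-2r_1r_2+s^2=2t(1-q_1-q_2)+q_1^2+q_2^2+t^2 .
\]
So $Q$ splits into a joint-failure part of order $2t$ and a quadratic part $q_1^2+q_2^2$. Averaging over the support shows the origin of the two linear constants. The quadratic part contributes through \cref{eq:Ksq}, $q_i^2\leq K_\alpha\phi_\alpha(q_i)$. Summed over $i=1,2$, and with the factor two produced when we split $\phi_\alpha$ of a sum, this should give the coefficient $4K_\alpha$.

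For the joint term, the key observation is
\[
 t(x_1,x_2)=\bbE_Y\bigl[\mathbf 1\{x_1\not\mathrel{R}Y\}\,\mathbf 1\{x_2\not\mathrel{R}Y\}\bigr],
\]
a product structure. I would bound $\phi_\alpha(2t)$ up to a correction in $\phi_\alpha(t)$, and then control $\phi_\alpha(t)$ by the product estimate \cref{eq:product-estimate}. Its coefficient $L_\alpha/2$, doubled by the two pairings, gives $2L_\alpha$. The factor in front of $t$ is handled by the chord bound \cref{eq:chord} on the region where the relevant argument is at most $1/2$; this is where the term $2L_\alpha c_\alpha$ in $M_\alpha$ enters. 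To organise this, I would apply the splitting estimate \cref{eq:split} with $e$ the joint-failure part and $q$ the quadratic part. This writes $\phi_\alpha(Q)$ as $\phi_\alpha(e)+\phi_\alpha(q)$ plus a cross term $2\alpha\phi_\alpha(e)\phi_\alpha(q)$. After averaging over independent $X_1,X_2$, the cross term is bounded by $4\alpha^2\Phi(R,\mu)^2$, using independence and $\phi_\alpha\geq$ identity \cref{eq:phi-dominates}. This accounts for the $4\alpha^2$ in $B_\alpha$.

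The configurations where some $q_i\geq 1/2$, or where the chord bound does not apply, I would treat crudely. There $\phi_\alpha(q_i)\geq 1/2$, so the contribution of such pairs is at most a constant times $\mu\{q\geq1/2\}$ times a further $\Phi$-factor. This uses independence of $X_1$ and $X_2$ and is absorbed into the constant $M_\alpha\geq 5/2$. Collecting the terms gives \cref{eq:one-step}. The strict form \cref{eq:strict} then follows immediately by factoring out $\Phi(R,\mu)$.

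I expect the main obstacle to be the joint-failure term. The quantity $t(x_1,x_2)$ is not directly a product of the form $xy$, so \cref{eq:product-estimate} cannot be applied verbatim. My first attempt is to bound $\phi_\alpha(t)$ by a Cauchy--Schwarz/AM--GM step in the spirit of the proof of \cref{eq:product-estimate}, namely $t\leq\sqrt{\bbE_Y[\cdot]\,\bbE_Y[\cdot]}$. After that, I would pass to $\phi_\alpha(q_i^2)\leq L_\alpha\phi_\alpha(q_i)$ exactly as in that proof. If that bound is too lossy, the fallback is to exploit the symmetry $\bbE_{X_1,X_2}t=\bbE_Y q(Y)^2$, and to redistribute the weight onto $Y$ before applying the pointwise estimates.
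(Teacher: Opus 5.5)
There is a genuine gap, and it lies in the joint-failure term. You plan to bound $\phi_\alpha(t)$ pointwise in $(q_1,q_2)$ and then apply \cref{eq:product-estimate}. But $t(x_1,x_2)$ is not bounded by $q_1q_2$: it can be as large as $\min(q_1,q_2)$, for instance at two leaves of a star whose leaves are pairwise incompatible. There the Cauchy--Schwarz bound $t\leq\sqrt{q_1q_2}$ is essentially sharp, and the AM--GM step only gives $\phi_\alpha(t)\leq\frac12(\phi_\alpha(q_1)+\phi_\alpha(q_2))$. The factor $L_\alpha$ is missing, because it arises only from $\phi_\alpha(x^2)\leq L_\alpha\phi_\alpha(x)$ applied to a genuine product. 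The term $2t$ then carries a linear coefficient close to $2$, and the contraction is lost.

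No bound that is pointwise in $(q_1,q_2)$ can repair this. If every label has bad degree $\eps$ (uniform mass, with incompatibilities forming a perfect matching), such a bound must still allow $t\approx\eps$, as the star shows, whereas $\bbE_{X_1,X_2}t=\bbE_Yq(Y)^2=\eps^2$. The smallness is only visible after averaging, so the step you list as a fallback is the essential one.

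The paper bounds this part of the weight using only $r^\square\geq r_1r_2$. It writes $t$ as an average over $Y$ and averages over $X_1,X_2$ first; conditionally on $Y$ the two factors are independent. This gives $2\bbE_Y[H(Y)^2]$ with $H(y)=\bbE_X[\mathbf 1_{\{X\not\mathrel{R}y\}}r(X)^{-\alpha}]\leq q(y)+\alpha\Phi$, by symmetry of $R$ and \cref{eq:tangent}. Then $H^2\leq2q^2+2\alpha^2\Phi^2$ together with \cref{eq:Ksq} gives $4K_\alpha\Phi+4\alpha^2\Phi^2$. So $4K_\alpha$ and $4\alpha^2$ come from the joint term, not from the quadratic part and the cross term of \cref{eq:split}. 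That cross term does not factor over $X_1,X_2$, because $t$ couples them. Your decomposition is also not well posed: $1-q_1-q_2$ can be negative and $q_1^2+q_2^2$ can exceed one.

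Conversely, $2L_\alpha$ belongs to the quadratic part, and \cref{eq:Ksq} cannot produce it because it ignores the denominator $(r^\square)^{-\alpha}$. If $q_2=0$, then $t=0$, $Q=q_1^2$, $r^\square=(1-q_1)(1+q_1)$, and $\phi_\alpha(Q)=\phi_\alpha(q_1)\,q_1(1+q_1)^{-\alpha}$. The sharp constant here is $L_\alpha$, attained at $q_1=2/3$ when $\alpha=5/2$.

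This also defeats your crude treatment of the region $q_i\geq\frac12$. Pairs with $q_1\geq\frac12$ and $q_2=0$ contribute a multiple of $\bbE[\phi_\alpha(q(X))\mathbf 1_{\{q(X)\geq1/2\}}]\,\mu\{q=0\}$, which is linear in $\Phi$. A single label of small mass $\delta$ and bad degree $2/3$ already makes both $\Phi$ and this contribution of order $\delta$. So there is no extra factor $\Phi$, and nothing can be absorbed into $M_\alpha\Phi^2$.

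The paper instead keeps the sharper bound $r^\square\geq2r_1r_2-\min(r_1,r_2)^2=(1-q_{\max})(1+q_{\max}-2q_{\min})$ for the row part and splits at $q_{\min}=q_{\max}/2$. Below the split, the chord bound applied to $2q_{\min}/(1+q_{\max})\leq\frac12$ gives $L_\alpha\phi_\alpha(q_{\max})+(2L_\alpha c_\alpha+\frac12)\phi_\alpha(q_{\max})\phi_\alpha(q_{\min})$. Above it, $q_{\max}/q_{\min}<2$ gives $\frac52\phi_\alpha(q_{\max})\phi_\alpha(q_{\min})$.

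Your exact formula for $Q$ is correct and implies the numerator bound $Q\leq q_1^2+q_2^2+2t$ that the paper uses. What is missing is to divide its two pieces by these two different lower bounds for $r^\square$, treating the quadratic piece pointwise and the joint piece only after averaging over $X_1,X_2$.
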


At $\alpha=5/2$ the constants are
\begin{equation}\label{eq:constants-at-five-halves}
 A_\alpha=\frac{12\sqrt{15}}{125}+\frac87\left(\frac57\right)^{5/2}<\frac78,
 \qquad
 M_\alpha<4,
 \qquad
 B_\alpha<29 ,
\end{equation}
so $\Phi(R,\mu)\leq\tfrac1{256}$ gives $\rho=\tfrac{253}{256}$ in \cref{eq:strict}.

\paragraph*{\bf Good transversals} We observe the following description of failure for the two possible pairings, which we record as a lemma for future reference:

\begin{lemma}[$2\times2$ transversal]\label{thm:transversal}
Mark each cell of a $2\times2$ array good or bad. If no row and no column
is entirely bad, then the array has a good transversal, i.e.\ a good main
diagonal or a good antidiagonal.
\end{lemma}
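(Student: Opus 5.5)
The statement is a finite combinatorial fact, so I would prove it by a short case analysis and avoid any appeal to Hall's theorem. Label the cells $(i,j)$ with $i,j\in\{1,2\}$. The main diagonal is $\{(1,1),(2,2)\}$ and the antidiagonal is $\{(1,2),(2,1)\}$. I argue by contraposition: assume neither transversal is good, and show that some row or some column is entirely bad.

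If neither transversal is good, then each contains at least one bad cell. By the symmetry that swaps the two rows, which exchanges the two transversals, or the one that swaps the two columns, I may assume $(1,1)$ is bad. Now choose a bad cell on the antidiagonal; it is either $(1,2)$ or $(2,1)$. In the first case, $(1,1)$ and $(1,2)$ are both bad, so row $1$ is entirely bad. In the second case, $(1,1)$ and $(2,1)$ are both bad, so column $1$ is entirely bad. Either outcome contradicts the hypothesis, so a good transversal exists.

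An equivalent way to say this: every cell of the main diagonal shares a row or a column with every cell of the antidiagonal. Hence any choice of one bad cell from each transversal places two bad cells in a common line.

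No step here poses a real obstacle. The only care needed is to make the symmetry reduction explicit, or else to list the four possible pairs (a bad cell from the diagonal, a bad cell from the antidiagonal) and check each one directly. In the application, a cell $(i,j)$ is good when source child subtree $i$ matches target child subtree $j$. The lemma then turns the event ``some pairing succeeds'' into a condition on row and column failures, and this is what feeds the bad-degree estimate for the symmetrised square relation $R^\square$ in \cref{thm:contraction}.
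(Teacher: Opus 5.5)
Your proof is correct and complete. Contraposition, combined with the fact that every main-diagonal cell shares a row or column with every antidiagonal cell (so one bad cell from each transversal fills a whole line), is all the argument needs. Your symmetry reduction is also valid: a single row or column swap moves the bad antidiagonal cell to $(1,1)$ and exchanges the two transversals. The paper gives no written proof of this lemma. It treats the lemma as the $2\times2$ case of Hall's marriage theorem, as it says in the proof of \cref{thm:full-probability-recursion}, and your direct check is exactly the elementary content of that case.
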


\begin{proof}[Proof of \cref{thm:contraction}]
Fix $x_1,x_2\in \supp\mu$, write $q_i=q(x_i)$, $r_i=1-q_i$, and let
$Y,Y_1,Y_2\sim\mu$ be independent. Put
\[
 c:=\bbP(x_1\not\mathrel{R}Y,\;x_2\not\mathrel{R}Y).
\]
Let $r^\square=r^\square(x_1,x_2)$ be the probability that
$(x_1,x_2)\mathrel{R^\square}(Y_1,Y_2)$, and $q^\square=1-r^\square$; thus $q^\square$ is the
bad degree of $(x_1,x_2)$ under $R^\square$.

\emph{Good degree.}
The two diagonals in \cref{eq:square} each succeed with probability
$r_1r_2$, by independence of $Y_1,Y_2$. Both succeed exactly when each of
$Y_1,Y_2$ is compatible with both $x_1$ and $x_2$. A single label is compatible with both
with probability at most $\min(r_1,r_2)$, so independence and inclusion--exclusion give
\begin{equation}\label{eq:R-exact}
 \begin{split}
 r^\square&=2r_1r_2-\bbP(x_1\mathrel{R}Y,\;x_2\mathrel{R}Y)^2\\
 &\geq2r_1r_2-\min(r_1,r_2)^2.
 \end{split}
\end{equation}

\emph{Bad degree.}
Form the $2\times2$ array with cell $(i,j)$ good if and only if
$x_i\mathrel{R}Y_j$. A good transversal is precisely the event
$(x_1,x_2)\mathrel{R^\square}(Y_1,Y_2)$. By
\cref{thm:transversal}, absence of a good transversal forces a bad
row or a bad column. Row $i$ is bad with probability $q_i^2$ (independence
of $Y_1,Y_2$), and each column is bad with probability $c$. The union
bound gives
\begin{equation}\label{eq:Q-union}
 q^\square\leq q_1^2+q_2^2+2c.
\end{equation}

\emph{Ordering the bad degrees.}
Put
\[
 q_{\max}:=\max(q_1,q_2),\qquad q_{\min}:=\min(q_1,q_2),
\]
whence $0\leq q_{\min}\leq q_{\max}<1$ and
$(1-q_{\max})(1-q_{\min})=r_1r_2$. Since $\min(r_1,r_2)=1-q_{\max}$,
\cref{eq:R-exact} gives
\begin{equation}\label{eq:iid-R-lower}
 \begin{split}
 r^\square&\geq2(1-q_{\max})(1-q_{\min})-(1-q_{\max})^2\\
 &=(1-q_{\max})s,\qquad s:=1+q_{\max}-2q_{\min} .
 \end{split}
\end{equation}
Also
\begin{equation}\label{eq:s-v}
 s-(1-q_{\min})=q_{\max}-q_{\min}\geq0 ,
\end{equation}
so $s\geq1-q_{\min}$.
Since $\phi_\alpha(q^\square)=q^\square/(r^\square)^\alpha$,
the inequalities \cref{eq:Q-union,eq:iid-R-lower} yield the pointwise bound
\begin{equation}\label{eq:iid-split}
 \frac{q^\square}{(r^\square)^\alpha}
 \leq\frac{q_{\max}^2+q_{\min}^2}{(1-q_{\max})^\alpha s^\alpha}
 +\frac{2c}{(1-q_{\max})^\alpha s^\alpha}.
\end{equation}

\emph{Row contribution.}
We claim the pointwise estimate
\begin{equation}\label{eq:row-bound}
 \begin{split}
 \frac{q_{\max}^2+q_{\min}^2}{(1-q_{\max})^\alpha s^\alpha}
 &\leq L_\alpha\bigl(\phi_\alpha(q_{\max})+\phi_\alpha(q_{\min})\bigr)\\
 &\quad+M_\alpha\,\phi_\alpha(q_{\max})\phi_\alpha(q_{\min}).
 \end{split}
\end{equation}
If $q_{\max}=0$ then $q_1=q_2=0$, both sides vanish, and there is nothing to prove.
Assume instead $q_{\max}>0$.

Suppose first $q_{\min}\leq q_{\max}/2$. Then
$0\leq2q_{\min}/(1+q_{\max})\leq\tfrac12$, so \cref{eq:chord} gives
\[
 \begin{aligned}
 s^{-\alpha}
 &=(1+q_{\max})^{-\alpha}
 \left(1-\frac{2q_{\min}}{1+q_{\max}}\right)^{-\alpha}\\
 &\leq\frac{1+2c_\alpha q_{\min}}{(1+q_{\max})^\alpha}.
 \end{aligned}
\]
Using this bound, \cref{eq:maxima} and \cref{eq:phi-dominates}, we obtain
\begin{equation}\label{eq:d-term}
 \begin{split}
 \frac{q_{\max}^2}{(1-q_{\max})^\alpha s^\alpha}
 &=\phi_\alpha(q_{\max})\,\frac{q_{\max}}{s^\alpha}\\
 &\leq L_\alpha\phi_\alpha(q_{\max})
 +2L_\alpha c_\alpha\,\phi_\alpha(q_{\max})\phi_\alpha(q_{\min}).
 \end{split}
\end{equation}
Moreover, by \cref{eq:s-v} and $q_{\min}\leq q_{\max}/2$,
\begin{equation}\label{eq:e-term}
 \begin{split}
 \frac{q_{\min}^2}{(1-q_{\max})^\alpha s^\alpha}
 &=\phi_\alpha(q_{\max})\phi_\alpha(q_{\min})\,
 \frac{q_{\min}}{q_{\max}}\left(\frac{1-q_{\min}}s\right)^{\alpha}\\
 &\leq\tfrac12\,\phi_\alpha(q_{\max})\phi_\alpha(q_{\min}).
 \end{split}
\end{equation}
The coefficient of the product term in \cref{eq:d-term,eq:e-term} is $2L_\alpha c_\alpha+\tfrac12$,
which is at most $M_\alpha$ by \cref{eq:contraction-constants}, so adding
\cref{eq:d-term,eq:e-term} proves \cref{eq:row-bound} in this case.

If instead $q_{\min}>q_{\max}/2>0$, then
\[
 \begin{aligned}
 \frac{q_{\max}^2+q_{\min}^2}{(1-q_{\max})^\alpha s^\alpha}
 &\leq\frac{q_{\max}^2+q_{\min}^2}{(1-q_{\max})^\alpha(1-q_{\min})^\alpha}\\
 &=\phi_\alpha(q_{\max})\phi_\alpha(q_{\min})
 \left(\frac{q_{\max}}{q_{\min}}+\frac{q_{\min}}{q_{\max}}\right)\\
 &<\tfrac52\,\phi_\alpha(q_{\max})\phi_\alpha(q_{\min}),
 \end{aligned}
\]
because $1\leq q_{\max}/q_{\min}<2$ and $\tfrac52\leq M_\alpha$, which again implies
\cref{eq:row-bound}.

\emph{Averaging the row contribution.}
Now take independent $X_1,X_2\sim\mu$, independent also of $Y$, and apply the preceding
estimates with $x_i=X_i$. Write $\Phi:=\Phi(R,\mu)$. The degrees $q_{\max},q_{\min}$
are $q_1=q(X_1)$ and $q_2=q(X_2)$ in some order, so the symmetric combinations are unchanged:
$\phi_\alpha(q_{\max})+\phi_\alpha(q_{\min})=\phi_\alpha(q_1)+\phi_\alpha(q_2)$ and
$\phi_\alpha(q_{\max})\phi_\alpha(q_{\min})=\phi_\alpha(q_1)\phi_\alpha(q_2)$.
Independence of $X_1,X_2$ gives $\bbE[\phi_\alpha(q_i)]=\Phi$ and
$\bbE[\phi_\alpha(q_1)\phi_\alpha(q_2)]=\Phi^2$.
Taking expectations in \cref{eq:row-bound},
\begin{equation}\label{eq:first-final}
 \bbE\!\left[\frac{q_{\max}^2+q_{\min}^2}{(1-q_{\max})^\alpha s^\alpha}\right]
 \leq2L_\alpha\Phi+M_\alpha\Phi^2 .
\end{equation}

\emph{Overlap contribution.}
For $y\in \supp\mu$, write
$H(y):=\bbE_X\bigl[\mathbf 1_{\{X\not\mathrel{R}y\}}r(X)^{-\alpha}\bigr]$, with $X\sim\mu$, for the
weighted mass incompatible with $y$. For a fixed pair $x_1,x_2$, we have $s\geq1-q_{\min}$
and $(1-q_{\max})(1-q_{\min})=r_1r_2$, so
\[
 \frac{2c}{(1-q_{\max})^\alpha s^\alpha}
 \leq\frac{2c}{(1-q_{\max})^\alpha(1-q_{\min})^\alpha}
 =\frac{2c}{r_1^\alpha r_2^\alpha}.
\]
Apply this bound with $x_i=X_i$ and write
$c=\bbE_Y\bigl[\mathbf 1_{\{X_1\not\mathrel{R}Y\}}
\mathbf 1_{\{X_2\not\mathrel{R}Y\}}\bigr]$. Fubini's theorem for nonnegative integrands lets
us first average over $X_1,X_2$. Conditioned on $Y$, these variables are independent with law
$\mu$, so their weighted incompatibility indicators each have conditional expectation $H(Y)$.
The conditional expectation of their product is therefore $H(Y)^2$, giving
\begin{equation}\label{eq:overlap-id}
 \bbE\!\left[\frac{2c}{(1-q_{\max})^\alpha s^\alpha}\right]
 \leq2\,\bbE_{X_1,X_2}\!\left[\frac{c}{r(X_1)^\alpha r(X_2)^\alpha}\right]
 =2\,\bbE_Y\bigl[H(Y)^2\bigr].
\end{equation}
To bound $H$, we separate the ordinary bad degree from the weighted correction. Applying
\cref{eq:tangent} with $q=q(x)$ gives
\begin{equation}\label{eq:W-bound}
 r(x)^{-\alpha}-1=\frac{1-r(x)^\alpha}{r(x)^\alpha}\leq\alpha\,\phi_\alpha(q(x)).
\end{equation}
By \cref{eq:W-bound} and symmetry of $R$ (which gives
$\bbE_X\mathbf 1_{\{X\not\mathrel{R}y\}}
=q(y)$),
\[
 \begin{aligned}
 H(y)&\leq q(y)+\alpha\,\bbE_X\bigl[\mathbf 1_{\{X\not\mathrel{R}y\}}
 \phi_\alpha(q(X))\bigr]\\
 &\leq q(y)+\alpha\Phi,
 \end{aligned}
\]
where we bounded the indicator by $1$ and used $\phi_\alpha\geq0$.
Thus $H(y)^2\leq2q(y)^2+2\alpha^2\Phi^2$, and averaging gives
\begin{equation}\label{eq:iid-H-bound}
 2\,\bbE\bigl[H(Y)^2\bigr]\leq4\,\bbE\bigl[q(Y)^2\bigr]+4\alpha^2\Phi^2 .
\end{equation}
By \cref{eq:Ksq} with $K=K_\alpha$, we have $\bbE[q(Y)^2]\leq K_\alpha\Phi$. Consequently,
\cref{eq:overlap-id,eq:iid-H-bound} bound the expectation of the overlap term of
\cref{eq:iid-split} by
\begin{equation}\label{eq:overlap-final}
 4K_\alpha\Phi+4\alpha^2\Phi^2 .
\end{equation}

\emph{Combination.}
Averaging \cref{eq:iid-split} and adding \cref{eq:first-final,eq:overlap-final},
\[
 \Phi(R^\square,\mu^2)
 \leq(2L_\alpha\Phi+M_\alpha\Phi^2)+(4K_\alpha\Phi+4\alpha^2\Phi^2)
 =A_\alpha\Phi+B_\alpha\Phi^2 ,
\]
which is \cref{eq:one-step}. If moreover $A_\alpha+B_\alpha\Phi\leq\rho$, then
$\Phi(R^\square,\mu^2)\leq(A_\alpha+B_\alpha\Phi)\Phi\leq\rho\Phi$, which is \cref{eq:strict}.
\end{proof}

\begin{proof}[Proof of \cref{eq:constants-at-five-halves}]
An elementary differentiation evaluates the first maximum of \cref{eq:maxima} at $\alpha=5/2$
\begin{equation}\label{eq:lambda-max}
 L_{5/2}=\frac{2/3}{(5/3)^{5/2}}=\frac{6\sqrt{15}}{125}
 \quad(\text{at }t=\tfrac23),
\end{equation}
and the second maximum
\begin{equation}\label{eq:kappa}
 K_{5/2}=\frac27\left(\frac57\right)^{5/2}
 \quad(\text{at }t=\tfrac27).
\end{equation}
Hence $2L_{5/2}=\tfrac{12\sqrt{15}}{125}$ and $4K_{5/2}=\tfrac87(5/7)^{5/2}$.
Substitution gives $A_{5/2}<0.865<\tfrac78$ and $M_{5/2}<3.963<4$,
hence $B_{5/2}=M_{5/2}+25<29$. Finally, writing $\Phi=\Phi(R,\mu)$,
$\Phi\leq\tfrac1{256}$ gives
$A_{5/2}+B_{5/2}\Phi<\tfrac78+\tfrac{29}{256}=\tfrac{253}{256}$.
\end{proof}

\begin{remark}\label{rem:matching-constants}
The numerical constants in \cref{thm:matching} are convenient uniform choices rather than
optimised values. For the leaf estimate, we replace $A_{5/2}$ and $B_{5/2}$ by the rational
bounds $7/8$ and $29$, which gives the contraction factor $253/256$ at potential $1/256$.
The choices for the full matching estimate are checked in \cref{sec:reduction}. Optimising
the intermediate pointwise bounds would improve both conclusions.
\end{remark}

\begin{remark}\label{rem:exponent}
The exponent $5/2$ is not structural. The pointwise estimates and the product step require only
$\alpha\geq1$, while the present contraction estimate requires $A_\alpha<1$. This fails at
$\alpha=2$, where $A_2=59/54$, and holds at $\alpha=5/2$ by
\cref{eq:constants-at-five-halves}. We use $5/2$
because it gives short explicit bounds and because squaring inequalities involving a
half-integer power reduces them to polynomial inequalities with rational coefficients.
For full labellings, the stronger estimate in \cref{thm:four-law-contraction}, used to prove
\cref{thm:markov-matching}, allows every $\alpha\geq4/3$, including exponents below $2$,
with an $\alpha$-dependent constant and smallness threshold. Smaller exponents reduce the
potential at the cost of the constant in the matching bound as well as the admissible threshold.
\end{remark}

\subsection{A product lemma}

For full labellings, matching the two child subtrees must be combined with compatibility at the
root. These conditions involve independent labels, so we need a product estimate for their
potentials.

\begin{lemma}[Product of relations]\label{thm:product}
Let $(\cX_i,\mu_i,R_i)$, $i=1,2$, be countable probability spaces with
symmetric reflexive relations, and let $R_1\otimes R_2$ be the
relation on $\cX_1\times\cX_2$ defined by
\[
 (x_1,x_2)\mathrel{R_1\otimes R_2}(y_1,y_2)
 \iff x_1\mathrel{R_1}y_1\ \text{and}\ x_2\mathrel{R_2}y_2 .
\]
With $\Phi_i:=\Phi(R_i,\mu_i)$ and any $\alpha\geq1$,
\begin{equation}\label{eq:product-bound}
 \Phi(R_1\otimes R_2,\mu_1\otimes\mu_2)
 \leq\Phi_1+\Phi_2+2\alpha\,\Phi_1\Phi_2 .
\end{equation}
\end{lemma}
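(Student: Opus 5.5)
The plan is a pointwise bound followed by averaging, using the split estimate \cref{eq:split} of \cref{thm:toolkit}. Fix $(x_1,x_2)$ in the support of $\mu_1\otimes\mu_2$, which is $\supp\mu_1\times\supp\mu_2$. For the product relation, the pair is compatible with an independent $(Y_1,Y_2)\sim\mu_1\otimes\mu_2$ exactly when $x_1\mathrel{R_1}Y_1$ and $x_2\mathrel{R_2}Y_2$. By independence the good degree factorises as
\[
 r(x_1,x_2)=r_1(x_1)\,r_2(x_2).
\]
Writing $e=q_1(x_1)$ and $q=q_2(x_2)$, the bad degree of the pair is therefore
\[
 1-(1-e)(1-q)=e+(1-e)q .
\]
Both $e$ and $q$ lie in $[0,1)$ by reflexivity, as noted before \cref{eq:iid-potential}, so \cref{eq:split} applies and gives
\[
 \phi_\alpha\bigl(q(x_1,x_2)\bigr)\leq\phi_\alpha\bigl(q_1(x_1)\bigr)+\phi_\alpha\bigl(q_2(x_2)\bigr)
 +2\alpha\,\phi_\alpha\bigl(q_1(x_1)\bigr)\phi_\alpha\bigl(q_2(x_2)\bigr).
\]

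Next, average this inequality over $(X_1,X_2)\sim\mu_1\otimes\mu_2$. The first two terms have expectations $\Phi_1$ and $\Phi_2$. The product term factorises by independence of $X_1$ and $X_2$, giving $\Phi_1\Phi_2$. Every quantity is nonnegative, so Tonelli justifies the interchange, and the bound stays valid in $[0,\infty]$ when some $\Phi_i$ is infinite. This yields \cref{eq:product-bound}.

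I expect essentially no obstacle here. The only points needing care are the exact factorisation of the good degree, which is where independence of the two coordinates of the target is used, and the verification that $e,q<1$ on the support, so that each $\phi_\alpha$ value is finite and \cref{eq:split} is applicable.
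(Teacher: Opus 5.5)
Your proposal is correct and follows the paper's proof essentially verbatim: factorise the good degree of a support point as $r_1r_2$ so its bad degree is $q_1+(1-q_1)q_2$, apply the split estimate \cref{eq:split} pointwise, and average over the independent coordinates. Your added remarks on Tonelli and on $e,q<1$ via reflexivity are exactly the checks the paper leaves implicit.
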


\begin{proof}
Fix $x=(x_1,x_2)\in \supp\mu_1\times \supp\mu_2$, and let $q_i$ be the bad degree of $x_i$ in $R_i$ and
put $r_i=1-q_i$. Under $R_1\otimes R_2$, the good degree of $x$ is $r_1r_2$, so its bad degree is
$q=q_1+(1-q_1)q_2$. Thus \cref{eq:split} gives
\[
 \phi_\alpha(q)
 \leq\phi_\alpha(q_1)+\phi_\alpha(q_2)
 +2\alpha\phi_\alpha(q_1)\phi_\alpha(q_2).
\]
Taking expectations over the independent coordinates $X_1\sim\mu_1$ and $X_2\sim\mu_2$ gives
\cref{eq:product-bound}.
\end{proof}

\subsection{Reduction to the one-site potential}\label{sec:reduction}

The reduction uses only symmetry and reflexivity of the label relation. We therefore fix an
arbitrary symmetric reflexive relation $R_0$ on a countable label space $(V,\mu)$ and let
\begin{equation}\label{eq:Phi0}
 \Phi_0:=\Phi(R_0,\mu)
\end{equation}
be its one-site potential. We use $x\approx^{\mathrm{leaf}}_h y$ and $x\approx_h y$ for the
existence of an automorphism matching the relevant labels by $R_0$, at every leaf and every
vertex respectively. The compatibility rule \cref{eq:compat} is the special case
$v\mathrel{R_0}w\iff d_G(v,w)\leq1$, treated in \cref{sec:graph}. Leaf matching iterates the
symmetrised square. Full matching also imposes the independent root condition, so it uses the
product estimate as well as contraction. The recursion identities do not depend on the
exponent, and the potential estimates of \cref{thm:contraction,thm:product} hold for every
$\alpha\geq1$.

We split height-$(h+1)$ labellings according to the two principal subtrees. For leaf labellings,
write $x=(x_1,x_2)$ and $y=(y_1,y_2)$, where $x_i,y_i\colon\set{1,2}^h\to V$. For full
labellings, write $x=(a,x_1,x_2)$ and $y=(a',y_1,y_2)$, where $a,a'\in V$ are the root labels
and $x_i,y_i$ are the full height-$h$ labellings of the corresponding subtrees. An automorphism
of $\bbB_{h+1}$ fixes the root, either preserves or swaps the two principal subtrees, and then
acts by independent automorphisms inside them. Hence
\begin{align}
 x\approx^{\mathrm{leaf}}_{h+1}y &\iff
 (x_1\approx^{\mathrm{leaf}}_h y_1\wedge x_2\approx^{\mathrm{leaf}}_h y_2)\vee
 (x_1\approx^{\mathrm{leaf}}_h y_2\wedge x_2\approx^{\mathrm{leaf}}_h y_1),
 \label{eq:leaf-rec}\\[2pt]
 x\approx_{h+1}y &\iff
 a\mathrel{R_0}a'\ \wedge\
 \bigl((x_1\approx_h y_1\wedge x_2\approx_h y_2)\vee
 (x_1\approx_h y_2\wedge x_2\approx_h y_1)\bigr).
 \label{eq:full-rec}
\end{align}
At height $0$ the tree is a single vertex, so both $\approx^{\mathrm{leaf}}_0$ and $\approx_0$
coincide with $R_0$, and
\begin{equation}\label{eq:base}
 \Phi(\approx^{\mathrm{leaf}}_0,\mu)=\Phi(\approx_0,\mu)=\Phi_0 .
\end{equation}

In each of the leaf and full cases, write $\cX_h$ for the corresponding countable labelling
space and $\mu_h$ for its product law. Each matching relation is symmetric and reflexive:
this holds at height zero, and \cref{eq:leaf-rec,eq:full-rec} preserve both properties because
the symmetrised square and the product of relations do so. Under the subtree decomposition,
the leaf law at height $h+1$ is $\mu_h^2$ on $\cX_h^2$, with matching relation
$\bigl(\approx^{\mathrm{leaf}}_h\bigr)^\square$. The full law is $\mu\otimes\mu_h^2$ on
$V\times\cX_h^2$, with matching relation $R_0\otimes\approx_h^\square$. The root label is
independent of the two subtree labellings because all labels are i.i.d.

When $A_\alpha<1$, a sufficiently small one-site potential makes the leaf potential decrease
with height. In the full case, the root condition contributes at each step, and we instead keep the
potential below a fixed multiple of $\Phi_0$. These two bounds give the following conclusions.

\begin{theorem}[Reduction]\label{thm:reduction}
Let $\alpha\geq1$ and let $R_0$ be symmetric and reflexive with one-site potential $\Phi_0$.
\begin{enumerate}
\item If $\rho:=A_\alpha+B_\alpha\Phi_0<1$, then for leaf labellings
\begin{equation}\label{eq:leaf-red}
 \bbP(\text{no leaf matching automorphism of }\bbB_h)
 \leq\Phi_0\,\rho^{\,h}.
\end{equation}
\item Let $\mathfrak K>0$ and suppose
\begin{equation}\label{eq:full-hyp}
 \bigl(1+2\alpha\Phi_0\bigr)\bigl(A_\alpha\mathfrak K+B_\alpha\mathfrak K^2\Phi_0\bigr)
 \leq\mathfrak K-1 .
\end{equation}
Then for full labellings, uniformly in $h$,
\begin{equation}\label{eq:full-red}
 \bbP(\text{some automorphism of }\bbB_h\text{ matches every vertex})
 \geq1-\mathfrak K\Phi_0 ,
\end{equation}
and the same lower bound holds for two independent infinite labelled
trees.
\end{enumerate}
\end{theorem}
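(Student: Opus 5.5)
The plan is to run the recursions \cref{eq:leaf-rec,eq:full-rec} as relation-level identities and feed them into \cref{thm:contraction,thm:product}. Write $\Phi^{\mathrm{leaf}}_h:=\Phi(\approx^{\mathrm{leaf}}_h,\mu_h)$ and $\Phi_h:=\Phi(\approx_h,\mu_h)$. The basic link to probabilities is \cref{eq:phi-dominates}: the failure probability for two independent labellings is $\bbE\,q(X)$, where $q$ is the bad degree of the relation, and this is at most $\bbE\,\phi_\alpha(q(X))=\Phi$. So it suffices to bound the potentials of the matching relations at each height.

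\emph{Leaf case.} By \cref{eq:leaf-rec}, the relation at height $h+1$ on $\cX_h^2$ with law $\mu_h^2$ is exactly $(\approx^{\mathrm{leaf}}_h)^\square$. I will prove by induction that $\Phi^{\mathrm{leaf}}_h\leq\Phi_0\rho^h$. The base case is \cref{eq:base}. For the inductive step, the hypothesis gives $\Phi^{\mathrm{leaf}}_h\leq\Phi_0$, so $A_\alpha+B_\alpha\Phi^{\mathrm{leaf}}_h\leq\rho<1$. Then \cref{eq:strict} yields $\Phi^{\mathrm{leaf}}_{h+1}\leq\rho\,\Phi^{\mathrm{leaf}}_h$. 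Combining with the first paragraph gives \cref{eq:leaf-red}.

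\emph{Full case.} By \cref{eq:full-rec}, the relation at height $h+1$ is $R_0\otimes(\approx_h)^\square$ under $\mu\otimes\mu_h^2$, where the root label is independent of the subtrees. I will prove by induction that $\Phi_h\leq\mathfrak K\Phi_0$; the base case holds because \cref{eq:full-hyp} forces $\mathfrak K\geq1$. For the step, \cref{thm:product} followed by \cref{thm:contraction} (in the form \cref{eq:one-step}) gives
\[
\Phi_{h+1}\leq\Phi_0+\bigl(1+2\alpha\Phi_0\bigr)\bigl(A_\alpha\Phi_h+B_\alpha\Phi_h^2\bigr)\leq\Phi_0+\bigl(1+2\alpha\Phi_0\bigr)\bigl(A_\alpha\mathfrak K+B_\alpha\mathfrak K^2\Phi_0\bigr)\Phi_0.
\]
The second inequality uses monotonicity of the right-hand side in $\Phi_h$. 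By \cref{eq:full-hyp}, the last expression is at most $\Phi_0+(\mathfrak K-1)\Phi_0=\mathfrak K\Phi_0$. Hence the failure probability at every finite height is at most $\mathfrak K\Phi_0$, which is \cref{eq:full-red}.

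\emph{Infinite trees.} The main obstacle is passing from finite heights to the infinite tree, because the events "match through height $h$" decrease in $h$. For fixed realisations, let $S_h$ be the set of automorphisms of $\bbB_h$ that match every vertex. Each $S_h$ is finite. Restriction maps $S_{h+1}$ into $S_h$, since a full match through height $h+1$ restricts to a full match through height $h$. If every $S_h$ is nonempty, K\H{o}nig's infinity lemma produces a compatible sequence of automorphisms, and these define an automorphism of the infinite tree matching every vertex. Conversely, an infinite match restricts to a match at every height. So the infinite matching event is $\bigcap_h\{S_h\neq\emptyset\}$, a decreasing intersection. Continuity of measure then gives probability $\lim_h\bbP(S_h\neq\emptyset)\geq1-\mathfrak K\Phi_0$. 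This step requires that the height-$h$ restrictions of the infinite i.i.d. fields have law $\mu_h$, which holds by construction.
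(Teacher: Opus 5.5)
Your proposal is correct and follows the paper's proof essentially step for step. It runs the same induction $\Phi^{\mathrm{leaf}}_h\leq\rho^h\Phi_0$ via \cref{eq:strict}, and the same invariant-interval induction for the full potential using \cref{thm:product} and then \cref{thm:contraction}, with $\mathfrak K\geq1$ handling the base case. It also makes the same passage from potentials to failure probabilities via $q\leq\phi_\alpha(q)$, and the same K\H{o}nig-lemma identification of the infinite matching event as a decreasing intersection of finite-height events.
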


The condition \cref{eq:full-hyp} can hold only if $A_\alpha<1$, since it implies
$A_\alpha\mathfrak K\leq\mathfrak K-1$. Conversely, when $A_\alpha<1$, every
$\mathfrak K>1/(1-A_\alpha)$ satisfies it for sufficiently small $\Phi_0$.
At $\alpha=5/2$, \cref{eq:constants-at-five-halves} gives
$\rho\leq\tfrac{253}{256}$ for $\Phi_0\leq\tfrac1{256}$, and $\mathfrak K=16$ with
$\Phi_0\leq10^{-4}$ satisfies \cref{eq:full-hyp}, since then
$(1+5\Phi_0)(14+7424\Phi_0)\leq\tfrac{2001}{2000}\cdot\tfrac{9214}{625}<15$.

\begin{proof}
\emph{Leaf case.}
Put $\Phi_h:=\Phi(\approx^{\mathrm{leaf}}_h,\mu_h)$, so $\Phi_0$ agrees with
\cref{eq:Phi0} by \cref{eq:base}. Because
$\approx^{\mathrm{leaf}}_{h+1}=\bigl(\approx^{\mathrm{leaf}}_h\bigr)^\square$,
\cref{thm:contraction} applies at each level. We show
$\Phi_h\leq\rho^h\Phi_0$ by induction. It holds at $h=0$. If
$\Phi_h\leq\Phi_0$ then $A_\alpha+B_\alpha\Phi_h\leq\rho$, so \cref{eq:strict} gives
$\Phi_{h+1}\leq\rho\,\Phi_h\leq\rho^{h+1}\Phi_0$, and in
particular $\Phi_{h+1}\leq\Phi_0$, so the induction continues. Write $q_h(x)$ for the bad
degree of a leaf labelling $x$, the probability that an independent leaf labelling does not
match it. For independent leaf labellings $x,y$, the failure probability is the mean bad
degree, and $q\leq\phi_\alpha(q)$ by
\cref{eq:phi-dominates}:
\[
 \bbP(x\not\approx^{\mathrm{leaf}}_h y)=\bbE\,q_h(x)
 \leq\bbE\,\phi_\alpha(q_h(x))=\Phi_h\leq\Phi_0\,\rho^{\,h}.
\]

\emph{Full case.}
Put $\Theta_h:=\Phi(\approx_h,\mu_h)$, so $\Theta_0=\Phi_0$. Let
$\Phi^\square_h:=\Phi(\approx_h^\square,\mu_h^2)$ be the
potential of the child-pair relation. \Cref{thm:contraction} gives
\begin{equation}\label{eq:sigma}
 \Phi^\square_h\leq A_\alpha\Theta_h+B_\alpha\Theta_h^2 .
\end{equation}
By \cref{eq:full-rec}, $\approx_{h+1}=R_0\otimes\approx_h^\square$ on the
product space $V\times\cX_h^2$ with the product measure
$\mu\otimes\mu_h^2$. The two factors have potentials $\Phi_0$ and $\Phi^\square_h$.
Applying \cref{thm:product} and then \cref{eq:sigma} gives
\begin{equation}\label{eq:theta-rec}
 \begin{aligned}
 \Theta_{h+1}&\leq\Phi_0+\Phi^\square_h+2\alpha\Phi_0\Phi^\square_h\\
 &=\Phi_0+(1+2\alpha\Phi_0)\Phi^\square_h\\
 &\leq\Phi_0+(1+2\alpha\Phi_0)\bigl(A_\alpha\Theta_h+B_\alpha\Theta_h^2\bigr).
 \end{aligned}
\end{equation}
Write $\mathfrak f(t)$ for the last expression with $\Theta_h$ replaced by $t\geq0$.
This map is increasing. The hypothesis \cref{eq:full-hyp} makes the interval
$[0,\mathfrak K\Phi_0]$ invariant under $\mathfrak f$: for $0\leq t\leq\mathfrak K\Phi_0$,
\[
 \begin{aligned}
 \mathfrak f(t)&\leq\mathfrak f(\mathfrak K\Phi_0)\\
 &=\Phi_0+(1+2\alpha\Phi_0)
   \bigl(A_\alpha\mathfrak K\Phi_0+B_\alpha\mathfrak K^2\Phi_0^2\bigr)\\
 &=\Phi_0+\Phi_0\,(1+2\alpha\Phi_0)
   \bigl(A_\alpha\mathfrak K+B_\alpha\mathfrak K^2\Phi_0\bigr)\\
 &\leq\Phi_0+(\mathfrak K-1)\Phi_0=\mathfrak K\Phi_0.
 \end{aligned}
\]
The same hypothesis implies $\mathfrak K\geq1$, so
$\Theta_0=\Phi_0\leq\mathfrak K\Phi_0$. Induction therefore yields
\begin{equation}\label{eq:theta-uniform}
 \Theta_h\leq\mathfrak K\Phi_0\qquad(h\geq0).
\end{equation}
Now let $q_h(x)$ be the bad degree for the full matching relation. Exactly as in the leaf
case, for independent full labellings $x,y$,
\[
 \bbP(x\not\approx_h y)=\bbE\,q_h(x)\leq\Theta_h\leq\mathfrak K\Phi_0,
\]
which is \cref{eq:full-red}.

\emph{Infinite tree.}
Matches chosen separately at each finite height need not agree under restriction. We use
compactness to obtain a compatible sequence of them.
Realise the labels on two infinite rooted binary trees, let $M$ be the
event that some automorphism of the infinite tree matches every vertex, and
let $\cM_h$ be the event that some $\pi\in\Aut(\bbB_h)$ matches all vertices
through level $h$. Then $\bbP(\cM_h)\geq1-\mathfrak K\Phi_0$ by \cref{eq:full-red}, as
$\cM_h$ depends only on levels $0,\dots,h$. An automorphism of $\bbB_{h+1}$
restricts, by level preservation, to one of $\bbB_h$ matching through level
$h$. Hence $\cM_{h+1}\subseteq\cM_h$, and continuity from above gives
\[
 \bbP\Bigl(\bigcap_{h\geq0}\cM_h\Bigr)=\lim_{h\to\infty}\bbP(\cM_h)\geq1-\mathfrak K\Phi_0 .
\]
We claim $M=\bigcap_{h\geq0}\cM_h$. An automorphism of the infinite tree
preserves levels, so its restriction to $\bbB_h$ matches through level $h$, giving
$M\subseteq\bigcap_h\cM_h$. For the converse, take two labellings for which every finite
level admits a matching. The sets $\cF_h\subseteq\Aut(\bbB_h)$ of matching automorphisms are
nonempty and finite, and each $\pi_{h+1}\in\cF_{h+1}$ restricts to a unique parent
in $\cF_h$. This is an infinite, finitely-branching, rooted tree, so
K\H{o}nig's infinity lemma provides a compatible branch $(\pi_h)_{h\geq0}$,
whose common extension is an automorphism of the infinite tree matching
every vertex. The claim gives both the measurability of $M$ and
$\bbP(M)\geq1-\mathfrak K\Phi_0$.
\end{proof}

\section{Graph matching and special cases}\label{sec:graph}

We specialise to the one-site relation \cref{eq:compat},
$v\mathrel{R_0}w\iff d_G(v,w)\leq1$, which is symmetric and reflexive.
For a fixed label $v$ the probability that an independent label is
compatible with it is $b(v)=\mu(B_G(v,1))$, so the good degree is $b(v)$
and the bad degree $1-b(v)$. Hence, for every $\alpha\geq1$, the one-site potential
\cref{eq:Phi0} is exactly
\[
 \Phi_0=\Phi(R_0,\mu)
 =\sum_{v\colon\mu(v)>0}\mu(v)\,\frac{1-b(v)}{b(v)^{\alpha}}
 =\eta_{G,\alpha}(\mu).
\]

\begin{proof}[Proof of \cref{thm:matching}]
Apply \cref{thm:reduction} at $\alpha=5/2$ with $\Phi_0=\eta_{G,5/2}(\mu)$. By
\cref{eq:constants-at-five-halves}, $\Phi_0\leq\tfrac1{256}$ gives $\rho\leq\tfrac{253}{256}$ in
\cref{eq:leaf-red}, which is part \labelcref{it:matching-leaf}, and $\Phi_0\leq10^{-4}$ makes
\cref{eq:full-hyp} hold at $\mathfrak K=16$, so \cref{eq:full-red} and the infinite-tree statement
give part \labelcref{it:matching-full}. Positivity holds since $16\cdot10^{-4}<1$.
\end{proof}

\paragraph*{\bf Local dominance} To estimate $\eta_{G,\alpha}(\mu)$, we separate the contribution of a distinguished label from
the remaining weighted sum. The first term is controlled by the mass outside its closed unit
ball, while each remaining term is bounded using its compatible mass.

\begin{corollary}[Local-dominance condition]\label{thm:dominance}
Fix $\alpha\geq1$ and $0\in V$ with $\mu(0)>0$, and put
\[
 \eps_0:=1-\mu(0),
 \qquad
 \tau:=\mu\bigl(V\setminus B_G(0,1)\bigr),
 \qquad
 \cD_{G,\alpha}(\mu,0):=
 \frac{\tau}{(1-\eps_0)^{\alpha}}
 +\sum_{\substack{v\neq0\\ \mu(v)>0}}\frac{\mu(v)}{\mu(B_G(v,1))^{\alpha}} .
\]
Then $\eta_{G,\alpha}(\mu)\leq\cD_{G,\alpha}(\mu,0)$. If $\cD_{G,5/2}(\mu,0)\leq\tfrac1{256}$ the
leaf conclusion of \cref{thm:matching} holds, and if $\cD_{G,5/2}(\mu,0)\leq10^{-4}$ the full
conclusion holds with $16\eta_{G,5/2}(\mu)$ replaced by $16\,\cD_{G,5/2}(\mu,0)$.
\end{corollary}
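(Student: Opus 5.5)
The plan is to bound the summands of \cref{eq:etaG} term by term, splitting off the distinguished label $0$ from all other supported labels. Everything else then follows by feeding the bound into \cref{thm:matching}.

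\emph{The term at $0$.} We have $1-b(0)=\mu\bigl(V\setminus B_G(0,1)\bigr)=\tau$. Reflexivity gives $b(0)\geq\mu(0)=1-\eps_0$. Hence the summand at $0$ satisfies
\[
 \mu(0)\,\frac{1-b(0)}{b(0)^\alpha}\leq\frac{\tau}{(1-\eps_0)^\alpha},
\]
using $\mu(0)\leq1$.

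\emph{The remaining terms.} For $v\neq0$ with $\mu(v)>0$, drop the factor $1-b(v)\leq1$ and use $b(v)=\mu(B_G(v,1))>0$. This bounds the summand by $\mu(v)/\mu(B_G(v,1))^\alpha$. Summing the two contributions gives $\eta_{G,\alpha}(\mu)\leq\cD_{G,\alpha}(\mu,0)$.

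\emph{Matching consequences.} Specialise to $\alpha=5/2$.
\begin{itemize}
 \item If $\cD_{G,5/2}(\mu,0)\leq\tfrac1{256}$, then also $\eta_{G,5/2}(\mu)\leq\tfrac1{256}$. Part \labelcref{it:matching-leaf} of \cref{thm:matching} then applies directly. Its bound $\eta_{G,5/2}(\mu)(253/256)^h$ is itself at most $\cD_{G,5/2}(\mu,0)(253/256)^h$.
 \item Similarly, $\cD_{G,5/2}(\mu,0)\leq10^{-4}$ implies $\eta_{G,5/2}(\mu)\leq10^{-4}$. Part \labelcref{it:matching-full} then gives success probability at least $1-16\eta_{G,5/2}(\mu)\geq1-16\,\cD_{G,5/2}(\mu,0)$. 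This holds both at finite height and for the infinite tree.
\end{itemize}

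There is no real obstacle here. The only points to check are that every denominator is positive on the support of $\mu$, which follows from reflexivity, and that $\mu(0)>0$, so that $(1-\eps_0)^{-\alpha}$ is finite.
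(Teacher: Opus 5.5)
Your proof is correct and is essentially the paper's argument: bound the summand at $0$ using $1-b(0)=\tau$ and $b(0)\geq\mu(0)$, drop $1-b(v)\leq1$ for the other supported labels, and then feed $\eta_{G,5/2}(\mu)\leq\cD_{G,5/2}(\mu,0)$ into both parts of \cref{thm:matching}. Nothing is missing.
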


\begin{proof}[Proof of \cref{thm:dominance}]
Since $0\in B_G(0,1)$ we have $b(0)\geq\mu(0)=1-\eps_0$, while
$1-b(0)=\mu(V\setminus B_G(0,1))=\tau$. Using $\mu(0)\leq1$ as well,
the $v=0$ term of $\eta_{G,\alpha}(\mu)$ satisfies
\[
 \mu(0)\,\frac{1-b(0)}{b(0)^{\alpha}}\leq\frac{\tau}{(1-\eps_0)^{\alpha}} .
\]
For every $v\neq0$ with $\mu(v)>0$, dropping $1-b(v)\leq1$ gives
\[
 \mu(v)\,\frac{1-b(v)}{b(v)^{\alpha}}\leq\frac{\mu(v)}{\mu(B_G(v,1))^{\alpha}} .
\]
Summing proves $\eta_{G,\alpha}(\mu)\leq\cD_{G,\alpha}(\mu,0)$. Under the respective
hypotheses, \cref{thm:matching} gives the leaf conclusion and a full matching probability at least
$1-16\eta_{G,5/2}(\mu)\geq1-16\cD_{G,5/2}(\mu,0)$.
\end{proof}

\paragraph*{\bf Stars and paths} We now specialise the potential \cref{eq:etaG} and the local-dominance bound of
\cref{thm:dominance} to two label graphs, a star and the path on $\bbN_0$. We use the notation of \cref{thm:dominance}, with $\mu(0)>0$ and
$\eps_0=1-\mu(0)$. Suppose every other vertex in the support of $\mu$ is adjacent to
$0$. Then $B_G(0,1)$ contains the whole support, so $\tau=0$, and
$b(v)\geq\mu(0)=1-\eps_0$ for every such $v$. Hence
\begin{equation}\label{eq:star}
 \eta_{G,\alpha}(\mu)\leq\sum_{v\neq0,\;\mu(v)>0}\frac{\mu(v)}{(1-\eps_0)^{\alpha}}
 =\frac{\eps_0}{(1-\eps_0)^{\alpha}} ,
\end{equation}
and whenever $\eps_0/(1-\eps_0)^{5/2}$ is at most $10^{-4}$ the full matching
probability is uniformly at least $1-16\eps_0/(1-\eps_0)^{5/2}$. For the graph consisting of a single edge, every pair of labels is compatible, so the matching
probability is one and the exact potential is zero. Thus the coarse bound in \cref{eq:star}
need not be sharp.

\subsection{The integer alphabet as a path graph}\label{sec:integer}

Let $\mathsf P$ be the path on $\bbN_0$ with edges $\set{j,j+1}$,
$j\geq0$. Then $d_{\mathsf P}(j,k)=\abs{j-k}$, so the compatibility rule
\cref{eq:compat} becomes the nearest-neighbour rule $\abs{j-k}\leq1$. Let
$p=(p_j)_{j\geq0}$ be a probability law on $\bbN_0$ and set $p_{-1}=0$. Then
\[
 B_{\mathsf P}(j,1)=\set{j-1,j,j+1}\cap\bbN_0,
 \qquad
 b(j)=s_j:=p_{j-1}+p_j+p_{j+1}.
\]
For every $j$ with $p_j>0$, we have $s_j\geq p_j>0$, so \cref{eq:etaG} becomes
\[
 \eta_{\mathsf P,\alpha}(p)=\sum_{\substack{j\geq0\\p_j>0}}p_j\,\frac{1-s_j}{s_j^{\alpha}} ,
\]
and \cref{thm:matching} specialises verbatim to i.i.d.\ labels on
$\bbN_0$ with the rule $\abs{j-k}\leq1$: if $\eta_{\mathsf P,5/2}(p)\leq\tfrac1{256}$
the leaf matching probability tends to one, and if $\eta_{\mathsf P,5/2}(p)\leq10^{-4}$
the full matching probability is at least $1-16\eta_{\mathsf P,5/2}(p)$, uniformly in the
height and for the infinite tree.
The following rapidly decaying law lets us control the terms with $j\geq2$ using the heavier
preceding label. We estimate the terms at $j=0$ and $j=1$ separately.

\begin{corollary}[Double-exponential tail]\label{thm:double-exp}
For $D\geq5$ put $p_j=e^{-D^j}$ $(j\geq1)$ and
$p_0=1-\sum_{j\geq1}e^{-D^j}$. Then $p_0>\tfrac12$ and
\begin{equation}\label{eq:eta-double}
 \eta_{\mathsf P,5/2}(p)\leq4\,e^{-D(D-5/2)}<10^{-4} .
\end{equation}
Consequently the full matching probability is at least
$1-16\eta_{\mathsf P,5/2}(p)\geq1-64\,e^{-D(D-5/2)}>0.9996$, uniformly in the height and
for the infinite tree.
\end{corollary}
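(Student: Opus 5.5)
We bound $\eta_{\mathsf P,5/2}(p)$ term by term, using the path formula $\eta=\sum_j p_j(1-s_j)s_j^{-5/2}$ with $s_j=p_{j-1}+p_j+p_{j+1}$. The consequence for the matching probability then follows directly from \cref{thm:matching}\labelcref{it:matching-full}, since $64e^{-D(D-5/2)}\leq 64e^{-12.5}<4\cdot10^{-4}$ at $D=5$, and the bound decreases in $D$.

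\textbf{Preliminary tail estimate.} Since $D^j\geq jD$ for $j\geq1$ and $D\geq5$,
\[
\sum_{j\geq1}e^{-D^j}\leq\sum_{j\geq1}e^{-jD}=\frac{e^{-D}}{1-e^{-D}}\leq 1.01\,e^{-D}.
\]
This gives $p_0\geq1-1.01e^{-5}>\tfrac12$. More generally, $\sum_{j\geq k}p_j\leq 1.01\,e^{-D^k}$, because the ratios of consecutive terms are at most $e^{-D^k(D-1)}$.

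\textbf{Terms $j=0$ and $j=1$.} For $j=0$ we have $s_0=p_0+p_1$, so
\[
1-s_0=\sum_{j\geq2}p_j\leq 1.01\,e^{-D^2}.
\]
With $s_0>\tfrac12$, the $j=0$ term is at most $2^{5/2}\cdot1.01\,e^{-D^2}$. This is much smaller than $e^{-D(D-5/2)}=e^{-D^2}e^{5D/2}$. For $j=1$ we have $s_1\geq p_0>\tfrac12$ and $1-s_1\leq1$, so the term is at most
\[
2^{5/2}p_1=2^{5/2}e^{-D}.
\]

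\textbf{Reconciling the $j=1$ term with the target.} As written, $2^{5/2}e^{-D}$ is \emph{not} bounded by $4e^{-D(D-5/2)}$ for large $D$. The target behaves like $e^{-D^2}$, while this term is only $e^{-D}$. So the $j=1$ estimate must be sharpened, and this is the main obstacle. The resolution is that $1-s_1=\sum_{j\geq3}p_j$, since $s_1=p_0+p_1+p_2$. This gives $1-s_1\leq1.01\,e^{-D^3}$. The $j=1$ term is then at most
\[
2^{5/2}e^{-D}\cdot1.01\,e^{-D^3},
\]
which is negligible. The same refinement already handles $j=0$.

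\textbf{Terms $j\geq2$.} Drop the factor $1-s_j\leq1$ and use $s_j\geq p_{j-1}=e^{-D^{j-1}}$. Each term is then at most
\[
e^{-D^j+\frac52 D^{j-1}}=e^{-D^{j-1}(D-5/2)}.
\]
At $j=2$ this equals $e^{-D(D-5/2)}$. For $j\geq3$ the exponents grow geometrically: the ratio of consecutive terms is at most $e^{-D(D-1)(D-5/2)}\leq e^{-50}$. So the sum over $j\geq2$ is at most $1.01\,e^{-D(D-5/2)}$.

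\textbf{Conclusion.} Adding the three pieces gives $\eta\leq 4e^{-D(D-5/2)}$, with ample room. At $D=5$ this equals $4e^{-12.5}\approx1.5\cdot10^{-5}<10^{-4}$, and the bound decreases in $D$. This proves \cref{eq:eta-double}.
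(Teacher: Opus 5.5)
Your proof is correct and takes essentially the same route as the paper. Like the paper, you treat $j=0$ and $j=1$ separately, using $s_0,s_1>p_0>\tfrac12$ together with the tails $1-s_0=\sum_{k\geq2}p_k$ and $1-s_1=\sum_{k\geq3}p_k$, and you control $j\geq2$ via $s_j\geq p_{j-1}$ and a rapidly convergent geometric series; the only differences (your tail constant $1.01$ in place of the paper's factor $2$, and the crude first $j=1$ bound, which you rightly discard) are cosmetic.
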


\begin{proof}
For $j\geq1$,
\[
 \frac{p_{j+1}}{p_j}=e^{-(D-1)D^j}\leq e^{-(D-1)D}\leq e^{-20}<\tfrac12 ,
\]
so $\sum_{j\geq\ell}p_j\leq2p_\ell$ for every $\ell\geq1$.
In particular
$\sum_{j\geq1}p_j\leq2e^{-D}<\tfrac1{10}$, whence $p_0>\tfrac12$.
For the first two terms, $s_0,s_1>p_0>\tfrac12$, so
$s_0^{-5/2},s_1^{-5/2}<2^{5/2}<6$. Since
$1-s_0=\sum_{k\geq2}p_k$ and $1-s_1=\sum_{k\geq3}p_k$, and $p_0\leq1$,
\[
 p_0\frac{1-s_0}{s_0^{5/2}}\leq6\sum_{k\geq2}p_k\leq12e^{-D^2},
 \qquad
 p_1\frac{1-s_1}{s_1^{5/2}}\leq6e^{-D}\sum_{k\geq3}p_k\leq12e^{-(D+D^3)} .
\]
For $j\geq2$, $s_j\geq p_{j-1}$ and $1-s_j\leq1$, so
\[
 p_j\frac{1-s_j}{s_j^{5/2}}\leq\frac{p_j}{p_{j-1}^{5/2}}
 =\exp\bigl(-(D-\tfrac52)D^{j-1}\bigr),
\]
and the ratio of consecutive terms of the right side is at most
$\exp(-(D-\tfrac52)(D-1)D)\leq e^{-50}<\tfrac12$. Put $\beta:=e^{-D(D-5/2)}$.
The geometric tail bound gives
$\sum_{j\geq2}p_j\frac{1-s_j}{s_j^{5/2}}\leq2e^{-(D-5/2)D}=2\beta$.

To compare the first two bounds with $\beta$, note that
$e^3>\sum_{k=0}^{8}3^k/k!=\tfrac{89641}{4480}>20$.
It follows that $e^{(5/2)D}\geq e^{12}>20^4>12$. Since
$D^3-D^2+\tfrac72D>\tfrac52D$, we have
\begin{align*}
 \frac{12e^{-D^2}}{\beta}&=12e^{-(5/2)D}\leq1,\\
 \frac{12e^{-(D+D^3)}}{\beta}&=12e^{-(D^3-D^2+(7/2)D)}\leq1.
\end{align*}
Each of the first two contributions is therefore at most $\beta$. Adding the
three contributions, $\eta_{\mathsf P,5/2}(p)\leq\beta+\beta+2\beta=4\beta$. Finally
$D(D-\tfrac52)\geq\tfrac{25}{2}>12$, so
\[
 4\beta<4e^{-12}<\frac{4}{20^4}=\frac1{40000}<10^{-4},
\]
and $64\beta<64e^{-12}<64/20^4=0.0004$, giving the stated bounds.
\end{proof}

\section{Matching and quasi-isometry}

We now combine the matching theorem with the chain and shape constructions to prove the
quasi-isometry classification for offspring distributions supported on $\set{0,1,2}$.
Matching supplies compatible labels, and the local comparison and transfer results turn the local
quasi-isometries into a global quasi-isometry.

\subsection{The potential of the quantised law}

The quantised labels of \cref{sec:quantisation} are independent and identically distributed on
$\bbN_0$ with the law $p^{(D)}$. We compare them on the path $\mathsf P$ of \cref{sec:integer},
so two labels are compatible when they differ by at most one. \Cref{thm:matching} then applies
once the potential $\eta_{\mathsf P,5/2}(p^{(D)})$ falls below $10^{-4}$. By
\cref{thm:quantised-law}, the class $0$ carries all but $\theta_1^{D-1}$ of the mass, and class $j$
carries at most $\theta_1^{D^j-1}$ for $j\geq1$. Thus the masses decay doubly exponentially in
the class index. For large $D$, each class away from the origin has a heavier preceding class.
The estimate below uses this compatible mass to control the denominator in the potential.

\begin{lemma}[Chain potential bound]\label{thm:eta-bound}
  For every $\theta_1\in(0,1)$ there exists $D_0=D_0(\theta_1)\geq5$ such that for every integer
  $D\geq D_0$ the law $p^{(D)}$ of \cref{thm:quantised-law} satisfies
  \begin{equation}\label{eq:eta-bound}
    \eta_{\mathsf P,5/2}\bigl(p^{(D)}\bigr)\leq16\,\theta_1^{D(D-5/2)}\leq10^{-4}.
  \end{equation}
\end{lemma}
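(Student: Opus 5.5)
I will follow the proof of \cref{thm:double-exp} almost verbatim, with $e^{-1}$ replaced by $\theta_1$. Write $p_j=p^{(D)}_j$ and $s_j=p_{j-1}+p_j+p_{j+1}$. By \cref{thm:quantised-law}, $p_0=1-\theta_1^{D-1}$, $p_j\leq\theta_1^{D^j-1}$ for $j\geq1$, and the tail identity gives $\sum_{k\geq\ell}p_k=\theta_1^{D^\ell-1}$ exactly. This telescoping tail is in fact cleaner than the double-exponential example, so no geometric-series bound is needed for the tails. I split the potential into the terms $j=0$, $j=1$ and $j\geq2$.

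For the first two terms, choose $D_0$ so large that $\theta_1^{D-1}\leq\tfrac12$ for $D\geq D_0$. Then $s_0,s_1\geq p_0\geq\tfrac12$ and $s_0^{-5/2},s_1^{-5/2}<6$. Since $1-s_0=\sum_{k\geq2}p_k=\theta_1^{D^2-1}$, the $j=0$ term is at most $6\theta_1^{D^2-1}$. Similarly, the $j=1$ term is at most $6p_1\theta_1^{D^3-1}\leq6\theta_1^{D^3-1}$. Both are at most a fixed multiple of $\theta_1^{D(D-5/2)}\theta_1^{(5/2)D-1}$, so for large $D$ each is at most $\theta_1^{D(D-5/2)}$.

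For $j\geq2$, I drop $1-s_j\leq1$ and use $s_j\geq p_{j-1}$. This requires a lower bound on $p_{j-1}$. Writing
\[
  p_{j-1}=\theta_1^{D^{j-1}-1}\bigl(1-\theta_1^{D^j-D^{j-1}}\bigr)\geq\tfrac12\theta_1^{D^{j-1}-1}
\]
for large $D$, each term is at most
\[
  2^{5/2}\,\theta_1^{D^j-1}\theta_1^{-\frac52(D^{j-1}-1)}
  \leq 6\,\theta_1^{-3/2}\,\theta_1^{D^{j-1}(D-5/2)}.
\]
Consecutive ratios are $\theta_1^{(D-1)D^{j-1}(D-5/2)}\leq\tfrac12$ for large $D$, so the sum over $j\geq2$ is at most $12\theta_1^{-3/2}\theta_1^{D(D-5/2)}$.

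The main obstacle is this stray constant $\theta_1^{-3/2}$, which comes from the $-1$ in the exponents. I would absorb it by tightening the estimate at $j=2$ rather than enlarging $D_0$. Keeping the exact $p_2\leq\theta_1^{D^2-1}$ and $p_1\geq\tfrac12\theta_1^{D-1}$, the $j=2$ term is at most $2^{5/2}\theta_1^{D^2-1-\frac52(D-1)}=2^{5/2}\theta_1^{3/2}\,\theta_1^{D(D-5/2)}$, which is below $6\,\theta_1^{D(D-5/2)}$. The terms with $j\geq3$ carry an extra factor $\theta_1^{D(D-1)(D-5/2)}$, which kills $\theta_1^{-3/2}$ once $D$ is large, so their total is at most $\theta_1^{D(D-5/2)}$. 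Altogether this gives $\eta_{\mathsf P,5/2}(p^{(D)})\leq(1+1+6+1)\,\theta_1^{D(D-5/2)}\leq16\,\theta_1^{D(D-5/2)}$. Finally, enlarging $D_0\geq5$ so that $16\,\theta_1^{D(D-5/2)}\leq10^{-4}$ gives the second inequality in \cref{eq:eta-bound}.
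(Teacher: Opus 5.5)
Your proof is correct and is essentially the paper's argument. It treats the first two terms through the exact geometric tails $1-s_0=\theta_1^{D^2-1}$ and $1-s_1=\theta_1^{D^3-1}$. For $j\geq2$ it uses $s_j\geq p_{j-1}\geq\tfrac12\theta_1^{D^{j-1}-1}$ together with a geometric series of ratio at most $\tfrac12$.

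The only correction is that the obstacle you identify does not exist. The offset in the exponent is $-1+\tfrac52=+\tfrac32$, so $\theta_1^{D^j-1}\theta_1^{-\frac52(D^{j-1}-1)}=\theta_1^{3/2}\,\theta_1^{D^{j-1}(D-5/2)}$, not $\theta_1^{-3/2}\,\theta_1^{D^{j-1}(D-5/2)}$. Hence every $j\geq2$ term is at most $6\,\theta_1^{D^{j-1}(D-5/2)}$. The paper sums these directly and obtains the total bound $14\,\theta_1^{D(D-5/2)}$, so your separate treatment of $j=2$ is harmless but unnecessary.
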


\begin{proof}
  Write $a=\theta_1$. Since $0<a<1$, we may choose $D_0=D_0(a)\geq5$ so that for all
  $D\geq D_0$,
  \begin{equation}\label{eq:d0-conditions}
    a^{D-1}\leq\tfrac1{10},\qquad
    a^{D^2-D}\leq\tfrac12,\qquad
    6\,a^{(5/2)D-1}\leq1,\qquad
    16\,a^{D(D-5/2)}\leq10^{-4}.
  \end{equation}
  Fix an integer $D\geq D_0$ and write $p_j=p^{(D)}_j$ and
  $s_j=p_{j-1}+p_j+p_{j+1}$, with $p_{-1}=0$. By \cref{thm:quantised-law}, $p_j>0$ and hence
  $s_j>0$ for every $j\geq0$. We write $t_j=p_j(1-s_j)/s_j^{5/2}$ for the terms of
  \cref{eq:potential} with $G=\mathsf P$, and estimate the first two terms separately from the
  remaining tail.

  The first condition gives $p_0=1-a^{D-1}\geq\tfrac9{10}$, so $s_0,s_1\geq p_0>\tfrac12$ and
  $s_0^{-5/2},s_1^{-5/2}<2^{5/2}<6$. The tails $1-s_0$ and $1-s_1$ are the tails of the geometric
  law at $D^2$ and $D^3$, namely $a^{D^2-1}$ and $a^{D^3-1}$. Both are at most $a^{D^2-1}$, so
  $t_0+t_1\leq12\,a^{D^2-1}$. Writing
  $a^{D^2-1}=a^{(5/2)D-1}a^{D(D-5/2)}$ and applying the third condition gives
  $t_0+t_1\leq2\,a^{D(D-5/2)}$.

  For $j\geq2$, \cref{thm:quantised-law} and the second condition give
  $s_j\geq p_{j-1}=a^{D^{j-1}-1}\bigl(1-a^{D^j-D^{j-1}}\bigr)\geq\tfrac12\,a^{D^{j-1}-1}$, so,
  dropping $1-s_j\leq1$,
  \[
    t_j\leq a^{D^j-1}\bigl(\tfrac12\,a^{D^{j-1}-1}\bigr)^{-5/2}
    =2^{5/2}a^{3/2}\,a^{D^{j-1}(D-5/2)}
    \leq6\,a^{D^{j-1}(D-5/2)}.
  \]
  Consecutive bounds have ratio $a^{(D-5/2)(D^j-D^{j-1})}$, whose exponent is at least $D^2-D$
  because $D\geq5$ gives $D-5/2\geq1$ and $D^j-D^{j-1}\geq D^2-D$ for $j\geq2$. The second
  condition therefore bounds the ratio by $\tfrac12$, so summing the geometric series gives
  $\sum_{j\geq2}t_j\leq12\,a^{D(D-5/2)}$. Adding the two contributions and then using the
  fourth condition, we obtain
  \[
    \eta_{\mathsf P,5/2}(p^{(D)})\leq14\,a^{D(D-5/2)}
    \leq16\,a^{D(D-5/2)}\leq10^{-4}.\qedhere
  \]
\end{proof}

\subsection{Proof of \texorpdfstring{\cref{thm:twovalue}}{the two-value theorem}}
\label{sec:proof-main}

The encoding of \cref{sec:encoding} turns each of the two realisations into a label field on $\bbB$,
and by \cref{thm:quantised-law} the quantised fields are independent of each other and, within
each realisation, independent and identically distributed with the law $p^{(D)}$.
\Cref{thm:eta-bound} shows that this law satisfies the hypothesis of \cref{thm:matching}.
Thus, with probability close to one, a single automorphism of $\bbB$ aligns the two fields to
within one class. By \cref{thm:level}, this alignment makes the chain lengths comparable up to
the factor $D^2$, and \cref{thm:transfer} turns this comparison into a quasi-isometry of the two
realisations. Taking a union over integer scales $D$ then gives the almost sure statement.

\begin{restate}{thm:twovalue}
  Let $\theta$ be an offspring distribution with $\theta_1+\theta_2=1$, and let
  $\cT(\omega),\cT(\omega')$ be two independent Galton--Watson trees with offspring distribution
  $\theta$. Then, almost surely, there is a root-preserving quasi-isometry
  $\cT(\omega)\to\cT(\omega')$.
  Moreover, if $0<\theta_1<1$, there are constants $D_0=D_0(\theta_1)$ and $C=C(\theta_1)$ such
  that for every integer $D\geq D_0$,
  \[
    \bbP\bigl(\text{there is no root-preserving }(D^2+3)\text{-quasi-isometry }
    \cT(\omega)\to\cT(\omega')\bigr)
    \leq C\,\theta_1^{D(D-5/2)}.
  \]
\end{restate}

\begin{proof}[Proof of \cref{thm:twovalue}]
  If $\theta_1\in\set{0,1}$ the tree is deterministic and the statement is trivial, so assume
  $0<\theta_1<1$. Conditioned on the almost sure event $\omega,\omega'\in\Omega_0$, both
  encodings $(\bbB,\lambda(\cdot,\omega))$ and $(\bbB,\lambda(\cdot,\omega'))$ of
  \cref{sec:encoding} are defined, with independent label fields, each i.i.d.\ with the
  geometric law of \cref{thm:geometric}.

  Fix an integer $D\geq D_0(\theta_1)$, with $D_0$ as in \cref{thm:eta-bound}, and consider the
  quantised label fields $\ell_D(\lambda(w,\omega))$ and $\ell_D(\lambda(w,\omega'))$, which are
  i.i.d.\ with law $p^{(D)}$ on $\bbN_0$. Define the event
  \[
    E_D=\set*{\exists\,\pi\in\Aut(\bbB)\ \forall w\in\bbB\colon
      \abs[\big]{\ell_D\bigl(\lambda(\pi w,\omega)\bigr)-\ell_D\bigl(\lambda(w,\omega')\bigr)}
      \leq1}.
  \]
  By \cref{thm:eta-bound} the potential condition of \cref{thm:matching} holds for the path graph
  on $\bbN_0$, so
  \begin{equation}\label{eq:ed-bound}
    \bbP(E_D)\geq1-16\,\eta_{\mathsf P,5/2}\bigl(p^{(D)}\bigr)\geq1-256\,\theta_1^{D(D-5/2)}.
  \end{equation}

  Conditioned on $E_D$, fix a matching automorphism $\pi$. By \cref{thm:level},
  \[
    D^{-2}\lambda(w,\omega')\leq\lambda(\pi w,\omega)\leq D^{2}\lambda(w,\omega')
    \qquad\text{for all }w\in\bbB.
  \]
  By \cref{thm:isometry} the tree encoded by $w\mapsto\lambda(\pi w,\omega)$ is isometric to
  $\cT(\omega)$, and \cref{thm:transfer}, applied with constant $D^2$, yields a
  root-preserving $(D^2+3)$-quasi-isometry $\cT(\omega)\to\cT(\omega')$. This proves
  \cref{eq:rate} with $C=256$.

  Finally, every $E_D$ is contained in the event that a root-preserving quasi-isometry
  $\cT(\omega)\to\cT(\omega')$ exists. Since
  \[
    \bbP\Bigl(\bigcup_{\substack{D\in\bbN\\D\geq D_0}}E_D\Bigr)
    \geq\sup_{\substack{D\in\bbN\\D\geq D_0}}
    \left(1-256\,\theta_1^{D(D-5/2)}\right)=1,
  \]
  such a map exists almost surely.
\end{proof}

\begin{remark}[The zero--one law is not needed]\label{rem:no-zero-one}
  The almost sure statement does not use a zero--one law. Every $E_D$ is an explicit event
  contained in the event that the trees admit a root-preserving quasi-isometry, and
  $\bbP(E_D)\to1$ by
  \cref{eq:ed-bound}. The argument requires neither monotonicity of $E_D$ in $D$ nor the reverse
  containment. Indeed, a quasi-isometry of the two trees need not arise from a matching
  automorphism.

  A zero--one law is nevertheless available. Represent the tree by one of the i.i.d.\ fields used
  in this part: the chain lengths $\lambda(w)$, $w\in\bbB$, of \cref{thm:geometric} when
  $\theta_0=0$, or the shapes $S(w)$, $w\in\bbB$, of \cref{thm:shape-iid} when $\theta_0>0$ and
  the tree is conditioned on having infinite diameter. Exchanging the values at finitely many
  vertices affects the constants of any quasi-isometry but not whether a quasi-isometry exists.
  Applying the Hewitt--Savage zero--one law~\cite[Theorem~2.5.4]{Durrett2019} then gives the
  desired zero--one law for $\set{\cT(\omega)\simeq\cT(\omega')}$.
\end{remark}

\subsection{Universality in the chain regime}

Two realisations drawn from different two-value laws carry label fields with different geometric
parameters, so their quantised classes have different probabilities.
\Cref{thm:chain-coupling} re-quantises the second law, using boundary randomisation with one
auxiliary uniform variable per vertex, so that its classes carry the probabilities $p^{(D)}_k$
of the first. The two quantised fields then have the same law, and we can apply the matching
argument of \cref{sec:proof-main}. The classes of the second law meet at the cut points $D_k'$
rather than at the powers $D^k$. These cut points are comparable to $D^k$ up to a factor depending
only on the two laws, which introduces an extra bounded factor in the distortion.

\begin{proposition}[Universality in the chain regime]\label{thm:cross-law}
  Let $\theta$ and $\theta'$ be offspring distributions with $\theta_1+\theta_2=1$ and
  $\theta_1'+\theta_2'=1$, where $\theta_1,\theta_1'\in(0,1)$, and let $\cT(\omega)$ and
  $\cT'(\omega')$ be independent Galton--Watson trees with these offspring distributions. Then,
  almost surely, there is a root-preserving quasi-isometry $\cT(\omega)\to\cT'(\omega')$.
  Moreover, with $\gamma$ as in \cref{thm:chain-coupling} and
  \[
    \gamma_*=\frac{\max(1,\gamma)}{\min(1,\gamma/2)},
  \]
  there is $D_1=D_1(\theta_1,\theta_1')$ such that for every integer $D\geq D_1$,
  \begin{equation}\label{eq:rate-cross}
    \bbP\bigl(\text{there is no root-preserving }(\gamma_*D^2+3)\text{-quasi-isometry }
    \cT(\omega)\to\cT'(\omega')\bigr)
    \leq256\,\theta_1^{D(D-5/2)}.
  \end{equation}
\end{proposition}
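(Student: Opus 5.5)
We follow the proof of \cref{thm:twovalue} from \cref{sec:proof-main}, with \cref{thm:chain-coupling} inserted to equalise the laws of the two quantised fields. Encode both realisations as in \cref{sec:encoding}, on the almost sure events $\omega\in\Omega_0$ and $\omega'\in\Omega_0'$. This gives independent label fields $\lambda(\cdot,\omega)$ and $\lambda'(\cdot,\omega')$ on $\bbB$. By \cref{thm:geometric}, each field is i.i.d.\ geometric, with parameters $\theta_1$ and $\theta_1'$ respectively. Fix an integer $D$ with $D\geq D_0(\theta_1)$ from \cref{thm:eta-bound} and $\gamma(D-1)\geq1$. Let $D_1$ be the least such integer; it depends only on $\theta_1,\theta_1'$. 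Quantise the first field by $\ell_D$. For the second field, draw independent uniforms $(U_w)_{w\in\bbB}$, independent of everything else, and use the labels $\ell'(\lambda'(w,\omega'),U_w)$ of \cref{thm:chain-coupling}. By \cref{thm:quantised-law} and \cref{thm:chain-coupling}\labelcref{it:chain-coupling-law}, both quantised fields are i.i.d.\ with the common law $p^{(D)}$, and the two fields are independent.

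Let $E_D$ be the event that some $\pi\in\Aut(\bbB)$ satisfies
\[
\abs[\big]{\ell_D(\lambda(\pi w,\omega))-\ell'(\lambda'(w,\omega'),U_w)}\leq1
\]
for every $w\in\bbB$. \Cref{thm:eta-bound} and \cref{thm:matching}\labelcref{it:matching-full}, applied to the path $\mathsf P$, give
\[
\bbP(E_D)\geq1-16\cdot16\,\theta_1^{D(D-5/2)}.
\]
This is exactly the bound \cref{eq:rate-cross}, once we show that $E_D$ forces a root-preserving $(\gamma_*D^2+3)$-quasi-isometry.

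That deterministic comparison is the step needing care. Suppose $m$ lies in class $k$ of the first quantisation and $m'$ lies in class $k'$ of the second, with $\abs{k-k'}\leq1$.
\begin{itemize}
\item For the first quantisation, $D^k\leq m<D^{k+1}$.
\item For the second, \cref{thm:chain-coupling}\labelcref{it:chain-coupling-classes} gives $D'_{k'}\leq m'\leq D'_{k'+1}$, whichever value $U_w$ takes.
\item By \cref{thm:chain-coupling}\labelcref{it:chain-coupling-ratio}, $\min(1,\gamma/2)D^{k'}\leq m'\leq\max(1,\gamma)D^{k'+1}$.
\end{itemize}
With $k'\leq k+1$ these give
\[
m'\leq\max(1,\gamma)D^{k+2}\leq\max(1,\gamma)D^2m.
\]
With $k'\geq k-1$ they give
\[
m<D^{k+1}\leq D^{k'+2}\leq D^2m'/\min(1,\gamma/2).
\]
Both ratios are therefore bounded by $\gamma_*D^2$, because $\max(1,\gamma)\geq1$ and $\min(1,\gamma/2)\leq1$. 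The closed endpoint $D'_{k'+1}$ costs nothing extra, since item (iii) covers every index.

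On $E_D$, \cref{thm:isometry} lets us replace $\lambda(\cdot,\omega)$ by $\lambda(\pi\,\cdot,\omega)$. Then \cref{thm:transfer}, applied with constant $\gamma_*D^2$, yields a root-preserving $(\gamma_*D^2+3)$-quasi-isometry. The auxiliary uniforms only enlarge the probability space. The event that a quasi-isometry exists is measurable with respect to the two trees and contains $E_D$, so the bound \cref{eq:rate-cross} transfers to it.

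For the almost sure statement, take the union of the events $E_D$ over integers $D\geq D_1$. Its probability is at least $\sup_D(1-256\,\theta_1^{D(D-5/2)})=1$. The event that a root-preserving quasi-isometry exists contains every $E_D$, so it has probability one, as in \cref{rem:no-zero-one}.
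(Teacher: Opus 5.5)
Your proposal is correct and follows the paper's proof essentially step for step. It uses the same threshold $D_1$, the same randomised requantisation via \cref{thm:chain-coupling} to get two independent i.i.d.\ fields with law $p^{(D)}$, and the same appeal to \cref{thm:eta-bound} and \cref{thm:matching} giving $\bbP(E_D)\geq1-256\,\theta_1^{D(D-5/2)}$. The cut-point ratio bounds yielding comparability within $\gamma_*D^2$, the transfer through \cref{thm:isometry} and \cref{thm:transfer}, and the final union over $D$ all match the paper's argument.
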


\begin{proof}
  Let $D_1$ be the least integer $D\geq D_0(\theta_1)$ with $\gamma(D-1)\geq1$, where $D_0$ is
  the threshold of \cref{thm:eta-bound}, and fix an integer $D\geq D_1$.

  We condition on the almost sure event $\omega,\omega'\in\Omega_0$, so that both encodings of
  \cref{sec:encoding} are defined. Write
  $\lambda(\cdot,\omega)$ and $\lambda'(\cdot,\omega')$ for the two label fields, which by
  \cref{thm:geometric} are independent of each other and i.i.d.\ within themselves, with the
  geometric laws of parameters $\theta_2$ and $\theta_2'$. Enlarge the probability space by a
  family $(U(w))_{w\in\bbB}$ of independent uniform variables on $[0,1)$, independent of both
  fields, and quantise the two fields by
  \[
    x(w)=\ell_D\bigl(\lambda(w,\omega)\bigr),
    \qquad
    y(w)=\ell'\bigl(\lambda'(w,\omega'),U(w)\bigr),
  \]
  with $\ell'$ as in \cref{thm:chain-coupling}, which applies since $\gamma(D-1)\geq1$. By
  \cref{thm:quantised-law} the field $x$ is i.i.d.\ with law $p^{(D)}$ on $\bbN_0$. The pairs
  $\bigl(\lambda'(w,\omega'),U(w)\bigr)$ are i.i.d.\ in $w$, so by
  \cref{thm:chain-coupling}\labelcref{it:chain-coupling-law} the field $y$ is i.i.d.\ with the same
  law $p^{(D)}$, and the two fields are independent. Consider the event
  \[
    E_D=\set*{\exists\,\pi\in\Aut(\bbB)\ \forall w\in\bbB\colon
      \abs[\big]{x(\pi w)-y(w)}\leq1}.
  \]
  Since $D\geq D_0(\theta_1)$, \cref{thm:eta-bound} bounds the potential of $p^{(D)}$, so
  \cref{thm:matching} applies to the path $\mathsf P$ on $\bbN_0$ and gives
  \begin{equation}\label{eq:ed-cross}
    \bbP(E_D)\geq1-16\,\eta_{\mathsf P,5/2}(p^{(D)})\geq1-256\,\theta_1^{D(D-5/2)}.
  \end{equation}

  Conditioned on $E_D$, fix a matching automorphism $\pi$. Let $w\in\bbB$ and put $k=x(\pi w)$
  and $k'=y(w)$, so that $\abs{k-k'}\leq1$. By the definition in \cref{sec:quantisation}, we have
  $D^k\leq\lambda(\pi w,\omega)<D^{k+1}$, and by
  \cref{thm:chain-coupling}\labelcref{it:chain-coupling-classes} we have
  $D_{k'}'\leq\lambda'(w,\omega')\leq D_{k'+1}'$. Hence
  \cref{thm:chain-coupling}\labelcref{it:chain-coupling-ratio} gives
  \begin{align*}
    \frac{\lambda'(w,\omega')}{\lambda(\pi w,\omega)}
      &\leq\frac{D_{k'+1}'}{D^{k}}\leq\max(1,\gamma)\,D^{k'+1-k}\leq\gamma_*D^2,\\
    \frac{\lambda'(w,\omega')}{\lambda(\pi w,\omega)}
      &\geq\frac{D_{k'}'}{D^{k+1}}\geq\min(1,\gamma/2)\,D^{k'-k-1}\geq\gamma_*^{-1}D^{-2},
  \end{align*}
  that is,
  \[
    \bigl(\gamma_*D^2\bigr)^{-1}\lambda'(w,\omega')
    \leq\lambda(\pi w,\omega)
    \leq\gamma_*D^2\,\lambda'(w,\omega')
    \qquad\text{for all }w\in\bbB.
  \]
  By \cref{thm:isometry} the tree encoded by $w\mapsto\lambda(\pi w,\omega)$ is isometric to
  $\cT(\omega)$, and \cref{thm:transfer}, applied with the constant $\gamma_*D^2$, yields a
  root-preserving $(\gamma_*D^2+3)$-quasi-isometry $\cT(\omega)\to\cT'(\omega')$. Thus, in the
  enlarged probability space, $E_D$ is contained in the event that the original trees admit such a
  quasi-isometry. This latter event depends only on the two trees, so its probability under
  their original product law equals its probability in the enlarged space. The bound
  \cref{eq:ed-cross} therefore proves \cref{eq:rate-cross}.

  As in \cref{sec:proof-main}, letting $D\to\infty$ through integers gives the almost sure
  conclusion, since the failure bound in \cref{eq:rate-cross} tends to zero.
\end{proof}

\subsection{The bushy regime}
We now consider $\theta$ supported on $\set{0,1,2}$ and supercritical with
$\theta_0>0$. Recall the shapes $S(w)$ of \cref{def:shape}, the net $\cR_D$ with the map
$\rep_D$, the label graph $G_D$ of \cref{def:shape-net}, and the law $\bbP^*$ conditioned on
survival from \cref{sec:shapes}. We write $\mu$ for the shape law under $\bbP^*$. We bound the
potential of the shape labels and construct a common label law for two offspring distributions.
Matching these labels and applying glued transfer will give universality within the regime.

\subsubsection{The shape label law and its potential}\label{sec:shape-eta}

The label of $w$ is the class $\rep_D(S(w))$ of its shape. By \cref{thm:shape-iid} the labels
are independent and identically distributed with the pushforward law $\mu_D=\mu\circ\rep_D^{-1}$ on
the vertex set $\cR_D$ of $G_D$, and two of them are compatible when they are equal or adjacent in
$G_D$. Thus \cref{thm:matching} applies once the potential of $\mu_D$ is small.

To bound the potential, we shrink large shapes to smaller members of the same support. Their
representatives are compatible, and each smaller shape supplies a lower bound for the compatible
mass. We then group the potential sum by the sizes of the original shapes. For large $D$, the
exponential size tail of \cref{thm:shape-mass} outweighs the growth of the reciprocal compatible
mass, giving a geometric sum. This is the same comparison as in \cref{thm:eta-bound}, where
the geometric chain law gives double-exponential decay in the class index because the cut
points grow as powers of $D$.

We record the shrinking first, in the form that both this bound and the coupling of
\cref{sec:shape-coupling} use, in which the target support may belong to a second offspring
distribution.

\begin{lemma}[Shrinking a shape into a support]\label{thm:shape-shrink}
  Let $\theta'$ be supported on $\set{0,1,2}$ and supercritical with $\theta_0'>0$, and let $\mu'$
  be its shape law.
  \begin{enumerate}[(i),ref=(\roman*)]
    \item\label{it:shape-fix} Every shape $\sigma$ admits a shape $\sigma^\circ\in\supp\mu'$, its
      \emph{support fix}, with $\abs{\sigma^\circ}\leq2\abs\sigma$ and a $2$-marked quasi-isometry
      $\sigma\to\sigma^\circ$, and $\sigma^\circ=\sigma$ whenever $\sigma\in\supp\mu'$.
    \item\label{it:shape-shrink} Let $D\geq2$ be an integer and put $s=\floor{\sqrt{D/2592}}$. If
      $s\geq1$, then every shape $\sigma$ of size $n$ admits a shape $\sigma_0\in\supp\mu'$ with
      $\abs{\sigma_0}\leq16(n/s+1)$ and a $D$-marked quasi-isometry $\sigma\to\sigma_0$.
  \end{enumerate}
  We fix the choices in these constructions, so that $\sigma\mapsto\sigma^\circ$ and
  $\sigma\mapsto\sigma_0$ are well-defined maps $\cS\to\supp\mu'$.
\end{lemma}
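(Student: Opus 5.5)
The two parts ask for different things. Part \cref{it:shape-fix} only has to land in the support, with size at most doubled. Part \cref{it:shape-shrink} has to land in the support and also divide the size by roughly $s$. I would prove \cref{it:shape-fix} by a local padding construction. For \cref{it:shape-shrink} I would reuse the contraction from the proof of \cref{thm:entropy-dilution}, repair the contracted tree into a shape, and then apply \cref{it:shape-fix}.

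\emph{Step 1: the support.} First I would describe $\supp\mu'$ using \cref{thm:shape-iid} and \cref{thm:harris}. Supercriticality with $\theta_0'>0$ gives $\theta_2'>\theta_0'>0$. Hence the bush law $\theta'^\dagger=(\theta_2',\theta_1',\theta_0')$ charges $0$ and $2$, and a neck carries a nonempty bush with positive probability. The only possibly missing ingredients are unary vertices inside bushes and empty decorations, and both are missing exactly when $\theta_1'=0$. So, whatever $\theta_1'$ is, a shape lies in $\supp\mu'$ whenever every neck carries a bush and every bush vertex has $0$ or $2$ children. When $\theta_1'>0$, $\supp\mu'=\cS$.

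\emph{Step 2: the support fix.} I would define $\sigma^\circ$ as follows:
\begin{itemize}
\item if $\sigma\in\supp\mu'$, set $\sigma^\circ=\sigma$;
\item otherwise, attach a single leaf as bush at every neck with empty decoration;
\item and give every unary bush vertex a second, leaf child.
\end{itemize}
Each original vertex gains at most one new vertex, so $\abs{\sigma^\circ}\leq2\abs\sigma$. The inclusion $\sigma\to\sigma^\circ$ is an isometric embedding that fixes entry and exit, and its image is $1$-dense, so it is a $2$-marked quasi-isometry.

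\emph{Step 3: shrinking.} For \cref{it:shape-shrink} I would take the part decomposition from the proof of \cref{thm:entropy-dilution}, with part sizes in $[s,2s-1]$. The projection $P$ is a surjective $2s$-marked quasi-isometry onto the contracted tree $T(\sigma)$, which has at most $n/s+1$ vertices. The tree $T(\sigma)$ is not yet a shape, for three reasons:
\begin{itemize}
\item a part may be adjacent to up to about $2s$ other parts;
\item several subtrees may hang from one spine part, where the spine is the path of parts from the entry part to the exit part;
\item subtrees may hang from the exit part.
\end{itemize}
I would repair $T(\sigma)$ in three moves. Replace each high-degree non-spine vertex by a balanced binary tree over its children, which has depth $O(\log s)\leq O(s)$. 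Replace each spine vertex carrying $j$ hanging subtrees by $j$ consecutive spine vertices, each carrying one subtree. Reattach subtrees of the exit part to an inserted predecessor. Each of these moves adds at most a bounded number of vertices per edge of $T(\sigma)$, giving a shape $\tau$ with $\abs\tau\leq8(n/s+1)$. The map $T(\sigma)\to\tau$ is an $O(s)$-marked quasi-isometry.

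I would then set $\sigma_0=\tau^\circ$, so $\abs{\sigma_0}\leq16(n/s+1)$ by Step 2. Composing $P$, the repair map and the support-fix inclusion, \cref{thm:shape-net} gives a marked constant of the form $c\,s^2$. The target is to check $c\leq2592$, so that the hypothesis $s=\floor{\sqrt{D/2592}}$ gives $2592s^2\leq D$. Since a $K$-marked map is $K'$-marked for $K'\geq K$, the result is a $D$-marked quasi-isometry.

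\emph{Main obstacle.} I expect the main difficulty to be the bookkeeping in the repair step. The distortion must stay $O(s)$ rather than $O(s\log s)$ after composition, which needs the comparison $\log_2(2s)\leq2s$. The exit must remain a path end without a bush. The factors of $3$ from each composition must multiply out below $2592$. If the constants do not fit, my fallback is to use caterpillars in place of balanced trees. These have length at most $2s$ but still only multiply distances by $O(s)$, and a cruder but still admissible constant then comes from composing three maps with constants $2s$, $6s$ and $2$.
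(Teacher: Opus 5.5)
Your proposal is correct and follows the same route as the paper. Part \cref{it:shape-fix} pads bare neck vertices and unary bush vertices with leaves, and part \cref{it:shape-shrink} contracts with the cut from \cref{thm:dilution}, binarises, repairs the neck and exit, applies the support fix, and composes the marked maps via \cref{thm:shape-net}. The gadgets differ only cosmetically. The paper binarises every vertex, spine vertices included, by a caterpillar of $d-1$ vertices, then appends a fresh exit below the mark and joins the mark's at most two children into one bush; this gives constants $2s,4s,6,2$ and exactly $3^3\cdot2s\cdot4s\cdot6\cdot2=2592s^2$. Your single repair map, once the subtrees moved to the inserted predecessor are also spread along the neck, is about $4s$-marked and yields roughly $144s^2$, comfortably within budget.
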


\begin{proof}
  For \cref{it:shape-fix}, the shape law of \cref{thm:shape-iid} has full support when
  $\theta_1'>0$, so we take $\sigma^\circ=\sigma$. When $\theta_1'=0$, its support consists
  precisely of the shapes with a bush at every neck vertex other than the exit and with zero or
  two children at every bush vertex. Attach a one-vertex bush at each bare neck vertex other
  than the exit and a leaf at each bush vertex with one child. This adds at most one vertex per
  original vertex, giving $\sigma^\circ\in\supp\mu'$ with
  $\abs{\sigma^\circ}\leq2\abs\sigma$, and leaves supported shapes unchanged. The inclusion
  preserves distances and both marks and has $1$-dense image, so it is $2$-marked. Collapsing
  the added leaves to their attachment vertices preserves both marks and changes distances by
  at most $2$, giving a $2$-marked map in the reverse direction as well.

  For \cref{it:shape-shrink}, apply the cut from the proof of \cref{thm:dilution} at scale $s$.
  The projection onto the tree $T$ of parts is $2s$-marked, with the entry part as root and the
  exit part as mark. There are $k\leq n/s+1$ parts, each containing at most $2s-1$ vertices.
  A connected part with $m$ vertices in a tree with at most two children per vertex has at most
  $m+1$ outgoing child edges, so every vertex of $T$ has at most $2s$ children. We turn $T$ into
  a supported shape in three steps.

  First, replace each vertex with $d>2$ children by a spine of $d-1$ vertices, attaching its
  children in their order so that every spine vertex has two children. Send each old vertex
  to the first vertex of its spine, or to itself if it is unchanged, and take the images of the
  root and mark as the marks of the resulting binary tree $B$. Collapsing the spines back to
  their original vertices does not increase distances, while each old edge becomes a path of
  length at most $2s$. Every added vertex is within $2s$ of the image, so this map is
  $4s$-marked. At most $k-1$ vertices are added, giving $\abs B\leq2k$.

  Second, append a new exit as a child of the marked vertex and take the path from the root to
  this exit as the neck. If the mark had two children, attach them through one new vertex to
  form a single bush. Every other neck vertex already has at most one subtree off the neck,
  so the result is a shape with at most $\abs B+2$ vertices. The map keeping the original
  vertices changes distances by at most one and has $1$-dense image. It preserves the root
  and sends the old mark within one step of the new exit, so it is $2$-marked and hence
  $6$-marked.

  Third, let $\sigma_0$ be the support fix of this shape. By \cref{it:shape-fix}, this step is
  $2$-marked and at most doubles the size, so
  \[
    \abs{\sigma_0}\leq2(\abs B+2)\leq4k+4\leq16k\leq16(n/s+1).
  \]
  By \cref{thm:shape-net}, composing the projection and the three maps gives a marked
  quasi-isometry with constant
  \[
    3^3\cdot2s\cdot4s\cdot6\cdot2=2592s^2\leq D.
  \]
  The deterministic cut and fixed child order for the spines and leaf attachments make these
  assignments well-defined.
\end{proof}

\begin{lemma}[Shape potential bound]\label{thm:shape-eta}
  There are $c=c(\theta)>0$ and $D_0=D_0(\theta)$ such that for every integer
  $D\geq D_0$,
  \begin{equation}\label{eq:shape-eta}
    \eta_{G_D,5/2}(\mu_D)\leq e^{-cD^{2}}\leq10^{-4}.
  \end{equation}
  Moreover the class of the one-vertex shape carries $\mu_D$-mass at least
  $1-e^{-cD^{2}}\geq\tfrac12$.
\end{lemma}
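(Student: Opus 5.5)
The approach is to show first that every shape of diameter at most $D^2$ falls into the class of the one-vertex shape, so that this class has mass at least $1-e^{-cD^2}$. I then bound the remaining terms of the potential \cref{eq:potential} using entropy dilution (\cref{thm:dilution}) together with the shrinking map of \cref{thm:shape-shrink}.

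\emph{Step 1: the dominant class.} Write $\sigma_\bullet$ for the one-vertex shape. It has size one, so it comes first in the enumeration of \cref{def:shape-net} and is retained. Let $\sigma$ be any shape of diameter at most $D^2$. The constant map $\sigma\to\sigma_\bullet$ sends both marks to the marks of $\sigma_\bullet$. It satisfies $D^{-1}d(x,y)-D\leq0\leq Dd(x,y)+D$ precisely because $d(x,y)\leq D^2$, and its image is trivially dense. Hence it is a $D$-marked quasi-isometry, and since $\sigma_\bullet$ is the earliest retained shape, $\rep_D(\sigma)=\sigma_\bullet$. Every shape of size at most $D^2$ has diameter at most $D^2$. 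Consequently $1-\mu_D(\sigma_\bullet)\leq\mu(\abs S>D^2)\leq e^{-c_1D^2}$ by \cref{thm:shape-mass}\labelcref{it:shape-mass-tail}, once $D^2\geq n_0$. The summand of $\eta_{G_D,5/2}(\mu_D)$ at $\sigma_\bullet$ is then at most $2^{5/2}e^{-c_1D^2}$, using $b(\sigma_\bullet)\geq\mu_D(\sigma_\bullet)\geq\tfrac12$.

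\emph{Step 2: compatible mass of the other classes.} Fix a representative $\sigma^*\neq\sigma_\bullet$ of size $n$. By Step 1 it is not $D$-marked quasi-isometric to $\sigma_\bullet$, so its diameter, and hence $n$, exceeds $D^2$. Apply \cref{thm:shape-shrink}\labelcref{it:shape-shrink} with $\theta'=\theta$ and $s=\floor{\sqrt{D/2592}}\geq1$. This gives $\sigma_0\in\supp\mu$ with $\abs{\sigma_0}\leq16(n/s+1)$ and a $D$-marked map $\sigma^*\to\sigma_0$. Composing with the $D$-marked map $\sigma_0\to\rep_D(\sigma_0)$ yields a $3D^2$-marked map by \cref{thm:shape-net}. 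Since $3D^2\leq27D^4$, the labels $\rep_D(\sigma_0)$ and $\sigma^*$ are equal or adjacent in $G_D$. Therefore
\[
 b(\sigma^*)\geq\mu_D\bigl(\rep_D(\sigma_0)\bigr)\geq\mu(\sigma_0)\geq e^{-16C(n/s+1)}
\]
by \cref{thm:shape-mass}\labelcref{it:shape-mass-point}. For the mass of the class itself, every member $\sigma$ of the class of $\sigma^*$ has $\abs\sigma\geq\abs{\sigma^*}=n$, by the size bound in \cref{thm:shape-net}. Hence $\mu_D(\sigma^*)\leq\mu(\abs S\geq n)\leq e^{-c_1n}$.

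\emph{Step 3: summing by size.} This is the step requiring care. Drop the factor $1-b\leq1$ and group the representatives by size $n>D^2$. By \cref{thm:dilution} there are at most $e^{c_0(nD^{-1/3}+1)}$ representatives of each size $n$. The remaining part of the potential is therefore at most
\[
 \sum_{n>D^2}\exp\Bigl(c_0nD^{-1/3}+c_0-c_1n+40C(n/s+1)\Bigr).
\]
Since $s\asymp\sqrt D$, for $D\geq D_0(\theta)$ we have $c_0D^{-1/3}+40C/s\leq c_1/2$. The sum is then a geometric series, bounded by $e^{c_0+40C}\,e^{-c_1D^2/2}/(1-e^{-c_1/2})$. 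Adding the contribution from Step 1 and enlarging $D_0$ to absorb the constants gives $\eta_{G_D,5/2}(\mu_D)\leq e^{-cD^2}$ with $c=c_1/3$, say. Enlarging $D_0$ further ensures $e^{-cD^2}\leq10^{-4}$. The mass statement for the class of $\sigma_\bullet$ follows from Step 1, because $e^{-c_1D^2}\leq e^{-cD^2}\leq\tfrac12$.

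The main obstacle is Step 3. The entropy count and the reciprocal compatible mass both grow exponentially in $n$, so they must be beaten by the exponential tail of \cref{thm:shape-mass}\labelcref{it:shape-mass-tail}. This works only because both of their exponents carry factors $D^{-1/3}$ and $1/s$ that become small for large $D$, while the summation starts at $n>D^2$, which is what produces the quadratic rate $e^{-cD^2}$.
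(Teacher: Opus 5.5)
Your proof is correct, and your Step 1 matches the paper's, but Steps 2 and 3 organise the estimate differently.

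\emph{The paper's route.} The paper never counts representatives. It writes $\mu_D(v)=\sum_{\sigma\in\rep_D^{-1}(v)}\mu(\sigma)$ and bounds $b(v)$ separately from each $\mu$-positive member $\sigma$ of the class. It shrinks $\sigma$ itself, not the representative, to $\sigma_0$. It then composes a $3D^2$-marked quasi-inverse of $\sigma\to\rep_D(\sigma)$ with $\sigma\to\sigma_0\to\rep_D(\sigma_0)$; this composition is exactly where the edge threshold $27D^4$ of \cref{def:shape-net} comes from. The result is $b(\rep_D(\sigma))\geq e^{-C_1(\abs\sigma D^{-1/2}+1)}$ for every member. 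The part of the potential outside the dominant class is therefore bounded by
\[
 \sum_{\abs\sigma>D^2}\mu(\sigma)\,e^{\frac52C_1(\abs\sigma D^{-1/2}+1)},
\]
a series against the size tail of $\mu$ alone.

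\emph{Your route.} You shrink the representative instead. This needs only a $3D^2$-marked comparison, so for this lemma a smaller edge threshold would even suffice. In exchange, you bound each class mass crudely by $\mu(\abs S\geq n)$, using $\abs{\rep_D(\sigma)}\leq\abs\sigma$. You then multiply by the number of representatives of size $n$ from \cref{thm:dilution}, and the resulting entropy factor $e^{c_0nD^{-1/3}}$ must be absorbed by the tail.

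\emph{Comparison.} Your argument therefore relies on an extra combinatorial input, the Catalan count, which the paper's proof does not use. The paper's member-level bookkeeping needs only \cref{thm:shape-mass} and the shrinking map. It is also the same bookkeeping the paper reuses for the coupled labels in \cref{thm:shape-coupling,thm:relabel}. Both routes reach the rate $e^{-cD^2}$ for the same reason: outside the dominant class, only shapes of size exceeding $D^2$ contribute.

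A small wording point in your Step 2: the fact that $\sigma^*$ admits no $D$-marked map to $\sigma_\bullet$ comes from the greedy retention rule, equivalently from $\rep_D(\sigma^*)=\sigma^*$. Step 1 then gives $n>D^2$ by contraposition.
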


\begin{proof}
  Write $c$, $C$ and $n_0$ for the constants of \cref{thm:shape-mass} and $v_0$ for the class of
  the one-vertex shape, which is the first shape in the enumeration and therefore lies in $\cR_D$.
  A shape of size at most $D^2$ has diameter at most $D^2$, and the map sending every one of its
  vertices to the one vertex is then a $D$-marked quasi-isometry, its only nontrivial requirement
  being $D^{-1}d(x,y)-D\leq0$. The one-vertex shape is the earliest representative, so
  $\rep_D(\sigma)=v_0$ for every such $\sigma$, and every class other than $v_0$ has all its
  $\mu$-positive members of size exceeding $D^2$. For $D^2\geq n_0$,
  \[
    \mu_D(v_0)\geq\mu\bigl(\abs S\leq D^2\bigr)\geq1-e^{-cD^2}\geq\tfrac12,
  \]
  which is the final claim.

  Write $b(v)=\mu_D(B_{G_D}(v,1))$ for the compatible mass at $v$. We bound it using a smaller
  shape in a compatible class. Let
  $\sigma\in\supp\mu$ have size $n$, put $s=\floor{\sqrt{D/2592}}$, which is at least $1$ for
  $D\geq D_0$, and let $\sigma_0\in\supp\mu$ be the shape that
  \cref{thm:shape-shrink}\labelcref{it:shape-shrink} produces from $\sigma$ with $\theta$ itself as
  the target law, so that $\abs{\sigma_0}\leq16(n/s+1)$ and $\sigma\to\sigma_0$ is $D$-marked.
  Passing to the representatives composes a $3D^2$-marked quasi-inverse of
  $\sigma\to\rep_D(\sigma)$ with the $D$-marked map $\sigma\to\sigma_0$ and the $D$-marked map
  $\sigma_0\to\rep_D(\sigma_0)$, so the composition and quasi-inverse bounds of
  \cref{thm:shape-net} make $\rep_D(\sigma)\to\rep_D(\sigma_0)$ a
  $3\cdot(3\cdot3D^2\cdot D)\cdot D=27D^4$-marked quasi-isometry. The edge condition of
  \cref{def:shape-net} is set at exactly this value, so the two representatives are equal or
  adjacent in $G_D$, and consequently
  \begin{equation}\label{eq:shape-domination}
    b\bigl(\rep_D(\sigma)\bigr)
    \geq\mu_D\bigl(\rep_D(\sigma_0)\bigr)
    \geq\mu(\sigma_0)
    \geq e^{-16C(n/s+1)}
    \geq e^{-C_1(nD^{-1/2}+1)},
  \end{equation}
  where $C_1:=16C\bigl(2\sqrt{2592}+1\bigr)$,
  using \cref{thm:shape-mass}\labelcref{it:shape-mass-point} and $s^{-1}\leq2\sqrt{2592/D}$. The
  lower bound depends on the size $n$ of the original shape through $nD^{-1/2}$.

  We now sum the terms of \cref{eq:potential}. Since $b(v_0)\geq\mu_D(v_0)\geq\tfrac12$ we have
  $b(v_0)^{-5/2}\leq2^{5/2}=\sqrt{32}\leq6$, so the $v_0$-term is at most
  $(1-b(v_0))\,b(v_0)^{-5/2}\leq6\,e^{-cD^2}$. For the remaining classes, every
  $\mu$-positive member has size exceeding $D^2$, so grouping the sum by members and applying
  \cref{eq:shape-domination} to each,
  \begin{align*}
    \sum_{\substack{v\neq v_0\\\mu_D(v)>0}}\mu_D(v)\,\frac{1-b(v)}{b(v)^{5/2}}
    &\leq\sum_{\substack{\sigma\in\supp\mu\\\abs\sigma>D^2}}
      \mu(\sigma)\,e^{\frac52C_1(\abs\sigma D^{-1/2}+1)}\\
    &\leq e^{3C_1}\sum_{n>D^2}e^{-cn}\,e^{\frac52C_1nD^{-1/2}}
    \leq e^{3C_1}\sum_{n>D^2}e^{-cn/2},
  \end{align*}
  using \cref{thm:shape-mass}\labelcref{it:shape-mass-tail} and, for $D\geq D_0$,
  $\tfrac52C_1D^{-1/2}\leq\tfrac c2$. The final sum is at most
  $(1-e^{-c/2})^{-1}e^{-cD^2/2}$, so, with $C_2=e^{3C_1}(1-e^{-c/2})^{-1}$,
  \[
    \eta_{G_D,5/2}(\mu_D)\leq6\,e^{-cD^2}+C_2\,e^{-cD^2/2}.
  \]
  Dividing by $e^{-cD^2/4}$ and using $e^{-y}\leq y^{-1}$ for $y>0$, the right-hand side is at
  most $e^{-cD^2/4}$ as soon as
  \[
    6e^{-3cD^2/4}+C_2e^{-cD^2/4}\leq\frac{8}{cD^2}+\frac{4C_2}{cD^2}\leq1,
  \]
  that is, as soon as $cD^2\geq8+4C_2$. Renaming $c/4$ to $c$ and increasing $D_0$ so that
  $e^{-cD^2}\leq10^{-4}$ for every $D\geq D_0$ gives \cref{eq:shape-eta}.
\end{proof}

\subsubsection{Shape classes across two laws}\label{sec:shape-coupling}

Two bushy laws generally have different shape laws. To apply \cref{thm:matching}, we give their
label fields a common law. \Cref{thm:chain-coupling} achieves this for two chain laws by
re-cutting the second law at thresholds chosen to reproduce the class probabilities of the
first. For shapes, we use a coupling of the two laws. Each shape draws a partner from the other
law, and we record the pair with the same coordinate order in both labellings. The resulting
labels have a common law on pairs. We require comparisons between compatible pair labels to
give marked maps between the original shapes at a scale polynomial in $D$, while the label law
has a small potential.

\begin{lemma}[Shape coupling]\label{thm:shape-coupling}
  Let $\theta$ and $\theta'$ be supercritical, supported on $\set{0,1,2}$, with
  $\theta_0,\theta_0'>0$, and let $\mu$ and $\mu'$ be their shape laws. There are
  $D_1=D_1(\theta,\theta')$ and $c_3=c_3(\theta,\theta')>0$ such that for every integer
  $D\geq D_1$ there exist a countable graph $\mathsf Q$, a law $\mu_{\mathsf Q}$ on its
  vertices, and maps $\ell,\ell'\colon\cS\times[0,1)\to V(\mathsf Q)$ with the following properties.
  \begin{enumerate}[(i),ref=(\roman*)]
    \item\label{it:shape-coupling-law} With $S\sim\mu$, $S'\sim\mu'$, and $U,U'$ uniform on
      $[0,1)$, all independent, the labels $\ell(S,U)$ and $\ell'(S',U')$ both have law
      $\mu_{\mathsf Q}$.
    \item\label{it:shape-coupling-qi} If
      $d_{\mathsf Q}\bigl(\ell(\sigma,u),\ell'(\sigma',u')\bigr)\leq1$, then there is a
      $972D^{4}$-marked quasi-isometry $\sigma\to\sigma'$, and the same holds for two labels
      of one law.
    \item\label{it:shape-coupling-eta}
      $\eta_{\mathsf Q,5/2}(\mu_{\mathsf Q})\leq e^{-c_3D^2}\leq10^{-4}$.
  \end{enumerate}
\end{lemma}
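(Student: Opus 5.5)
The plan is to give both label fields the common law of a \emph{pair} of shapes, one from each offspring law, and to build $\mathsf Q$ on pairs of net representatives. Concretely, I would construct a joint law $\pi$ on $\cS\times\cS$ with marginals $\mu$ and $\mu'$ and with the following two properties.

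\begin{enumerate}[(a),ref=(\alph*)]
\item\label{it:plan-pi-comparable} Every pair $(\tau,\tau')\in\supp\pi$ admits a $D$-marked quasi-isometry $\tau\to\tau'$.
\item\label{it:plan-pi-lower} Whenever $\sigma\in\supp\mu$ has size $n$, there is a pair in $\supp\pi$ whose first coordinate lies in a class compatible with $\rep_D(\sigma)$ and whose $\pi$-mass is at least $e^{-C(nD^{-1/2}+1)}$. The symmetric statement holds for $\sigma'\in\supp\mu'$.
\end{enumerate}

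I would use a single greedy net $\cR_D$ over all of $\cS$, as in \cref{def:shape-net}, for both laws. Given $\pi$, put
\[
\ell(\sigma,u)=\bigl(\rep_D(\sigma),\rep_D(\tau_u)\bigr),
\qquad
\ell'(\sigma',u')=\bigl(\rep_D(\tau'_{u'}),\rep_D(\sigma')\bigr).
\]
Here $\tau_u$ is drawn from the conditional law $\pi(\sigma,\cdot\,)$ by the quantile of $u$, and $\tau'_{u'}$ is drawn from $\pi(\cdot\,,\sigma')$ in the same way. Both labels then have law $\mu_{\mathsf Q}=\pi\circ(\rep_D\times\rep_D)^{-1}$, which is \labelcref{it:shape-coupling-law}. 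I would join two pair labels in $\mathsf Q$ when their first coordinates are equal or adjacent in $G_D$.

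For \labelcref{it:shape-coupling-qi}, suppose $\ell(\sigma,u)$ and $\ell'(\sigma',u')$ are compatible. I would build the map by chaining four marked maps through \cref{thm:shape-net}. The first goes from $\sigma$ to $\rep_D(\sigma)$ and is $D$-marked. The second goes from $\rep_D(\sigma)$ to $\rep_D(\tau'_{u'})$ and is $27D^4$-marked, by adjacency in $G_D$. The third is a quasi-inverse of the map from $\tau'_{u'}$ to its representative. The fourth goes from $\tau'_{u'}$ to $\sigma'$ and is $D$-marked, by property \labelcref{it:plan-pi-comparable}. A careful choice of orientation (inverting the shorter factors rather than the long one) should bring the constant to the stated $972D^4=4\cdot243D^4$. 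Two labels of one law are handled identically, using the first coordinates together with \labelcref{it:plan-pi-comparable}.

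For \labelcref{it:shape-coupling-eta}, I would repeat the proof of \cref{thm:shape-eta}. First, small shapes of size at most $D^2$ lie in the one-vertex class $v_0$. The small parts of $\mu$ and $\mu'$ both carry mass $1-e^{-cD^2}$, so $\pi$ can pair them with each other, and $(v_0,v_0)$ gets mass at least $1-2e^{-cD^2}$. Second, every other label has a first coordinate in a class containing only shapes of size exceeding $D^2$. Property \labelcref{it:plan-pi-lower}, combined with \cref{thm:shape-shrink}\labelcref{it:shape-shrink}, gives a compatible mass at least $e^{-C_1(nD^{-1/2}+1)}$. The exponential tails of \cref{thm:shape-mass}, applied to $\mu$ and $\mu'$, then sum to a geometric series of order $e^{-cD^2/2}$, exactly as before.

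The main obstacle is constructing $\pi$ with both properties \labelcref{it:plan-pi-comparable} and \labelcref{it:plan-pi-lower}. The product coupling fails \labelcref{it:plan-pi-comparable}. A coupling that forces incomparable pairs on the excess mass creates labels whose balls carry tiny mass, and these blow up the potential. I would try first to couple the shapes coordinatewise along their Harris structure. The skeleton chain lengths would be coupled by the quantile construction of \cref{thm:chain-coupling}, which gives lengths comparable within the factor $\gamma_*$. The neck decorations and bushes would be coupled by a monotone coupling of the bush indicators and of the progeny of the two subcritical laws $\theta^\dagger$ and $\theta'^\dagger$, via their exponential tails. The shorter of each coupled pair would then be corrected by the support fix and the shrinking map of \cref{thm:shape-shrink}, at scale $D$. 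If this coordinatewise coupling does not give $D$-comparability for large shapes, I would fall back on coupling within each class of the common net. Mass of one law with no counterpart in the other would then be routed to the class of its shrunk shape $\sigma_0$. Since $\sigma_0$ is comparable to the original shape at scale $D$, this routing would keep \labelcref{it:plan-pi-comparable}, possibly with a polynomially larger constant. The shrink bound $\mu(\sigma_0)\geq e^{-16C(n/s+1)}$ would supply \labelcref{it:plan-pi-lower}.
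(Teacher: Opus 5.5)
Your architecture is the paper's: a coupling $\pi$ of $\mu$ and $\mu'$, with each label recording a shape and a partner drawn from the conditional law of $\pi$. Joining labels through their first coordinates in $G_D$ is also a genuine simplification. The compatible mass of a label $(r_1,r_2)$ is exactly $\mu_D(B_{G_D}(r_1,1))$, so $\eta_{\mathsf Q,5/2}(\mu_{\mathsf Q})=\eta_{G_D,5/2}(\mu_D)$, and \labelcref{it:shape-coupling-eta} follows from \cref{thm:shape-eta} without your property (b).

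\textbf{The missing coupling.} You never construct $\pi$, and that construction is the actual content of the lemma.

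\emph{The coordinatewise Harris coupling fails.} Neck positions and bush indicators of the two laws do not correspond. Large $\theta^\dagger$- and $(\theta')^\dagger$-bushes of equal size need not be comparable at any fixed scale. ``Correcting'' a coordinate by shrinking or a support fix changes the shape and destroys the marginal.

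\emph{The fallback lacks the balancing mechanism.} It names the right map, but not the mechanism that keeps the marginals exact. Routing the $\mu$-mass of a tail shape $\sigma$ to $\shrink(\sigma)\in\supp\mu'$ uses up $\mu'$-mass there. When $\shrink(\sigma)$ is itself a tail shape, its own mass must be routed back to $\supp\mu$, and so on. Whether a coupling supported on the comparability relation exists at all is a Hall--Strassen condition, which you do not address.

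\emph{How the paper resolves this.}
\begin{enumerate}
\item It shrinks at the smaller scale $E=\lfloor\sqrt{D/3}\rfloor$, so that both $\sigma\to\shrink(\sigma)$ and its quasi-inverse are $D$-marked.
\item It proves the capacity bound $\mu(\shrink^{-1}(\tau))\le\tfrac12\mu'(\tau)$, and its mirror. This compares the tail mass $e^{-c_4n}$ at the smallest source size $n>D^2$ with the point mass $e^{-C_\star\abs\tau}$, where $\abs\tau\le16(n/s+1)$ and $s\to\infty$.
\item It solves the balance equations $\mathrm{out}(\sigma)+\sum_{\rho\in(\shrink')^{-1}(\sigma)}\mathrm{out}'(\rho)=\mu(\sigma)$ on the tail by the alternating series $\sum_k(-1)^kM_k$. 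The halving $M_{k+1}\le\tfrac12M_k$ gives convergence and $\mathrm{out}\ge\tfrac12\mu$.
\end{enumerate}
The leftover masses then sit on shapes of size at most $D^2$ and have equal totals. They can be coupled arbitrarily, since any two such shapes are $9D^3$-comparable. That $\pi$, whose pairs are $9D^3$-comparable in both directions, is exactly what your scheme needs.

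\textbf{The constant in \labelcref{it:shape-coupling-qi}.} Even with such a $\pi$, your scheme does not give $972D^4$. Each composition in \cref{thm:shape-net} multiplies constants. Your chain $\sigma\to\rep_D(\sigma)\to\rep_D(\tau')\to\tau'\to\sigma'$ therefore has constant at least of order $D\cdot27D^4\cdot3D^2\cdot D$, of degree $8$ in $D$, and no choice of orientation lowers that degree. Two first-law labels give the $729D^7$ of \cref{thm:shape-net}, and two second-law labels give more.

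The paper gets $972D^4=3^2\cdot2\cdot27D^4\cdot2$ because its $\mathsf Q$ is built differently. Its vertices are the supported pairs of shapes themselves. Adjacency means each component of one pair is $27D^4$-comparable, in both directions, with each component of the other. Compatible labels therefore yield a direct $9D^3$- or $27D^4$-marked map, composed only with two $2$-marked support-fix maps. You also need those support fixes, to define $\ell,\ell'$ off the supports.

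A polynomial constant would still suffice for \cref{thm:hairy}, but it does not prove the lemma as stated. The price of the paper's definition is that its potential bound must check, by hand, that each cascade pair is adjacent to a heavy pair. Your first-coordinate graph avoids that check.
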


\begin{proof}
  We construct $\mu_{\mathsf Q}$ as a coupling of $\mu$ and $\mu'$. Shapes of size at most
  $D^2$ can all be compared to one another, while larger shapes will be paired with smaller
  shapes in the other support. We use an alternating series to assign masses to these pairs so
  that the resulting measure has the prescribed marginals on all larger shapes. We then couple
  the remaining mass arbitrarily among shapes of size at most $D^2$.

  We put
  \[
    A_D=\set{\sigma\in\cS\colon\abs\sigma\leq D^2},
    \qquad T=\supp\mu\setminus A_D,
    \qquad T'=\supp\mu'\setminus A_D.
  \]
  We call the shapes in $A_D$ \emph{small} and those in $T$ and $T'$ \emph{tail shapes}
  for the respective laws. By \cref{thm:shape-mass}, we may choose $c_4,C_\star>0$,
  depending only on the two laws, such that, for sufficiently large $D$,
  \[
    \mu(T)\leq e^{-c_4D^2},
    \qquad \mu'(T')\leq e^{-c_4D^2},
    \qquad \mu(\sigma)\geq e^{-C_\star\abs\sigma}
    \quad(\sigma\in\supp\mu),
  \]
  with the same point mass bound for $\mu'$ on its support.
  Every shape in $A_D$ maps to the one-vertex shape by a $D$-marked quasi-isometry, as in the
  proof of \cref{thm:shape-eta}. Composing one such map with a $3D^2$-marked quasi-inverse
  shows that any two shapes in $A_D$ admit $9D^3$-marked maps in both directions.

  To choose partners for the tail shapes, set $E=\floor{\sqrt{D/3}}$ and
  $s=\floor{\sqrt{E/2592}}$. For $\sigma\in T$, let $\shrink(\sigma)\in\supp\mu'$ be
  the shape supplied by \cref{thm:shape-shrink}\labelcref{it:shape-shrink} at scale $E$,
  with $\mu'$ as the target shape law. Interchanging the laws gives a second map
  $\shrink'\colon T'\to\supp\mu$. We have
  $\abs{\shrink(\sigma)}\leq16(\abs\sigma/s+1)$, and the same size bound holds for
  $\shrink'$. The map $\sigma\to\shrink(\sigma)$ is $E$-marked and has a
  $3E^2$-marked quasi-inverse by \cref{thm:shape-net}. Our choice of $E$ therefore gives
  $D$-marked maps in both directions, and likewise for $\shrink'$. Either shrinking
  may send a tail shape to a small shape.

  The total $\mu$-mass of shapes assigned to any one partner is at most half of its
  $\mu'$-mass. More precisely, after increasing the lower threshold for $D$,
  \begin{equation}\label{eq:capacity}
    \mu\bigl(\shrink^{-1}(\tau)\bigr)\leq\tfrac12\mu'(\tau)
    \qquad(\tau\in\supp\mu'),
  \end{equation}
  and the analogous bound holds for $\shrink'$ with the laws interchanged.
  Indeed, if the fibre is nonempty and $n>D^2$ is its smallest source size, then
  $\abs\tau\leq16(n/s+1)$. The tail and point mass bounds of \cref{thm:shape-mass} give
  \[
    \frac{\mu\bigl(\shrink^{-1}(\tau)\bigr)}{\mu'(\tau)}
    \leq e^{-c_4n+C_\star\abs\tau}
    \leq e^{-(c_4-16C_\star/s)n+16C_\star}
    \leq\tfrac12
  \]
  uniformly in $\tau$ for large $D$, since $s\to\infty$ and $n>D^2$. The bound is immediate
  for an empty fibre.

  We now choose how much mass to give each pair. A tail shape $\sigma\in T$ occurs both
  in its own pair with $\shrink(\sigma)$ and as the partner of shapes in $T'$. The mass
  assigned to its own pair must therefore be $\mu(\sigma)$ minus the total mass assigned
  through those other pairs, and the corresponding condition holds on $T'$. To solve these
  equations, for $\sigma\in\supp\mu$ and $\tau\in\supp\mu'$ define
  \begin{align*}
    M_0(\sigma)&=\mu(\sigma),
    & M'_0(\tau)&=\mu'(\tau),\\
    M_{k+1}(\sigma)&=\sum_{\rho\in(\shrink')^{-1}(\sigma)}M'_k(\rho),
    & M'_{k+1}(\tau)&=\sum_{\rho\in\shrink^{-1}(\tau)}M_k(\rho).
  \end{align*}
  Thus $M_1(\sigma)$ is the $\mu'$-mass of shapes whose chosen partner is $\sigma$,
  and each further step repeats this operation with the laws interchanged.
  The capacity bound \cref{eq:capacity} and induction give
  $M_{k+1}(\sigma)\leq\tfrac12M_k(\sigma)$ and the same inequality for $M'_k$.
  For $\sigma\in T$ and $\tau\in T'$, we set
  \[
    \mathrm{out}(\sigma)=\sum_{k\geq0}(-1)^kM_k(\sigma),
    \qquad
    \mathrm{out}'(\tau)=\sum_{k\geq0}(-1)^kM'_k(\tau).
  \]
  The halving bounds imply absolute convergence. Since $M_k(\sigma)$ decreases with $k$,
  the alternating-series bounds give
  \[
    \tfrac12\mu(\sigma)
    \leq\mu(\sigma)-M_1(\sigma)
    \leq\mathrm{out}(\sigma)\leq\mu(\sigma),
  \]
  and the corresponding bounds hold for $\mathrm{out}'$ and $\mu'$.
  The double series over $k$ and $\rho\in(\shrink')^{-1}(\sigma)$ has sum of absolute values
  at most $2M_1(\sigma)$, so we may sum term by term. For $\sigma\in T$, this gives
  \[
    \sum_{\rho\in(\shrink')^{-1}(\sigma)}\mathrm{out}'(\rho)
    =\sum_{k\geq0}(-1)^kM_{k+1}(\sigma)
    =\mu(\sigma)-\mathrm{out}(\sigma).
  \]
  For $\sigma\in A_D\cap\supp\mu$, the sum on the left is at most
  $\mu'((\shrink')^{-1}(\sigma))\leq\tfrac12\mu(\sigma)$, using
  $\mathrm{out}'\leq\mu'$ and \cref{eq:capacity}. Interchanging the laws gives the
  corresponding conclusions on $\supp\mu'$.

  Let $\mu_{\mathsf Q}^{\mathrm{cas}}$ be the measure obtained by assigning weight
  $\mathrm{out}(\sigma)$ to $(\sigma,\shrink(\sigma))$ for each $\sigma\in T$, and
  weight $\mathrm{out}'(\tau)$ to $(\shrink'(\tau),\tau)$ for each $\tau\in T'$.
  We call these the \emph{cascade pairs}. The first marginal of
  $\mu_{\mathsf Q}^{\mathrm{cas}}$ equals $\mu$ on $T$ and is at most $\mu$ on $A_D$, while
  its second marginal satisfies the same conditions for $\mu'$ and $T'$. Since
  $\mathrm{out}\leq\mu$ and $\mathrm{out}'\leq\mu'$, its total mass satisfies
  \[
    \eps\coloneqq \mu_{\mathsf Q}^{\mathrm{cas}}(\cS\times\cS)
    \leq\mu(T)+\mu'(T')\leq2e^{-c_4D^2}.
  \]
  Subtracting its marginals from $\mu$ and $\mu'$ leaves two nonnegative measures
  supported on $A_D$, each of mass $1-\eps$. Let $\mu_{\mathsf Q}^{\mathrm{cl}}$ be any
  coupling of these remainders, and put
  $\mu_{\mathsf Q}=\mu_{\mathsf Q}^{\mathrm{cas}}+\mu_{\mathsf Q}^{\mathrm{cl}}$.
  We call the pairs in $\supp\mu_{\mathsf Q}^{\mathrm{cl}}\subseteq A_D\times A_D$ the
  \emph{closure pairs}. This constructs a coupling of $\mu$ and $\mu'$ in which the
  components of every pair in $\supp\mu_{\mathsf Q}$ admit $9D^3$-marked quasi-isometries
  in both directions.

  We take $\supp\mu_{\mathsf Q}$ as the vertex set of $\mathsf Q$, joining distinct pairs when each
  component of one admits a $27D^4$-marked quasi-isometry to each component of the other
  in both directions. For $\sigma\in\supp\mu$, partition $[0,1)$ into half-open intervals
  of lengths $\mu_{\mathsf Q}(\sigma,\tau)/\mu(\sigma)$, indexed by the $\tau$ with
  $\mu_{\mathsf Q}(\sigma,\tau)>0$, and set $\ell(\sigma,u)=(\sigma,\tau)$ on the
  corresponding interval. Define $\ell'$ similarly using the conditional masses
  $\mu_{\mathsf Q}(\tau,\sigma')/\mu'(\sigma')$ and the labels $(\tau,\sigma')$. By
  construction, $\ell(S,U)$ and $\ell'(S',U')$ both have law $\mu_{\mathsf Q}$, proving
  \cref{it:shape-coupling-law}. For inputs outside the respective supports, extend each
  labelling by first taking the support fix from
  \cref{thm:shape-shrink}\labelcref{it:shape-fix}.

  Components selected from equal labels admit $9D^3$-marked maps, and those selected from
  adjacent labels admit $27D^4$-marked maps. To compare the original input shapes, compose
  such a map with the inclusion into the source support fix and the map collapsing the leaves added
  by the target support fix. Both are $2$-marked, as shown in the proof of
  \cref{thm:shape-shrink}, so the composition is
  $3^2\cdot2\cdot27D^4\cdot2=972D^4$-marked. The graph condition allows either component
  of each label to be selected, so this also applies to two labels of the same law.
  This proves \cref{it:shape-coupling-qi}.

  It remains to bound the potential. For $v\in\supp\mu_{\mathsf Q}$, write
  $b(v)=\mu_{\mathsf Q}(B_{\mathsf Q}(v,1))$. Every closure pair is equal or adjacent to every other
  closure pair. The same holds between a cascade pair whose chosen partner lies in $A_D$
  and any closure pair: each component comparison uses a $9D^3$-marked map between two
  shapes in $A_D$, possibly composed with one $D$-marked shrinking map, giving a constant
  at most $27D^4$. For all these vertices,
  \[
    b(v)\geq \mu_{\mathsf Q}^{\mathrm{cl}}(\cS\times\cS)=1-\eps,
    \qquad
    \frac{1-b(v)}{b(v)^{5/2}}\leq\frac{\eps}{(1-\eps)^{5/2}}\leq6\eps
  \]
  once $\eps\leq\tfrac12$. Their total contribution to the potential is therefore at most
  $6\eps\leq12e^{-c_4D^2}$.

  Now let $v=(\sigma,\tau)$ with $\sigma\in T$ and $\tau=\shrink(\sigma)\in T'$.
  This pair is equal or adjacent to $(\shrink'(\tau),\tau)$, since the component
  comparisons use at most two $D$-marked shrinking maps and hence have constant at most
  $3D^2\leq27D^4$. The latter pair has $\mu_{\mathsf Q}$-mass at least
  $\mathrm{out}'(\tau)\geq\tfrac12\mu'(\tau)\geq\tfrac12e^{-C_\star\abs\tau}$.
  Consequently,
  \[
    \frac{1-b(v)}{b(v)^{5/2}}
    \leq2^{5/2}e^{\frac52C_\star\abs\tau}
    \leq2^{5/2}e^{40C_\star(\abs\sigma/s+1)}.
  \]
  The same argument applies to pairs generated from $T'$ whose partners lie in $T$.
  We multiply by the assigned weights, use $\mathrm{out}\leq\mu$ and
  $\mathrm{out}'\leq\mu'$, and sum over both families of pairs. Grouping their sources by
  size gives
  \begin{align*}
    \eta_{\mathsf Q,5/2}(\mu_{\mathsf Q})
    &\leq12e^{-c_4D^2}+C_1\sum_{n>D^2}e^{-c_4n+40C_\star n/s}\\
    &\leq12e^{-c_4D^2}+C_2e^{-c_4D^2/2},
  \end{align*}
  where $C_1,C_2$ depend only on the two laws. Here we used
  $\mu(\set{\sigma\in\cS\colon\abs\sigma=n})\leq e^{-c_4n}$ and its $\mu'$ counterpart
  from \cref{thm:shape-mass}, and increased the threshold for $D$ so that
  $40C_\star/s\leq c_4/2$.
  Taking $c_3=c_4/4$ and choosing $D_1$ large enough
  to meet all the preceding conditions and absorb the fixed prefactors gives
  $\eta_{\mathsf Q,5/2}(\mu_{\mathsf Q})\leq e^{-c_3D^2}\leq10^{-4}$.
\end{proof}

\subsubsection{Universality in the bushy regime}

\Cref{thm:shape-coupling} labels the two realisations by fields that are independent and identically
distributed with one law on a graph of small potential. \Cref{thm:matching} matches the two fields
along a single automorphism of $\bbB$, and a matched pair of labels supplies a marked
quasi-isometry between the shapes sitting at the two vertices it identifies.
\Cref{thm:glued-transfer} glues these into a quasi-isometry of the two assemblies.

\begin{proposition}[Universality in the bushy regime]\label{thm:hairy}
  Let $\theta$ and $\theta'$ be supercritical, supported on $\set{0,1,2}$, with
  $\theta_0,\theta_0'>0$, and let $\cT,\cT'$ be independent Galton--Watson trees with these
  offspring distributions, conditioned on having infinite diameter. Then almost surely there
  is a root-preserving quasi-isometry $\cT\to\cT'$. Moreover, there is $D_2=D_2(\theta,\theta')$
  such that for every integer $D\geq D_2$, with
  $\mathsf Q$, $\mu_{\mathsf Q}$ and $c_3$ the label graph, the label law and the constant of
  \cref{thm:shape-coupling} at scale $D$,
  \begin{equation}\label{eq:hairy-rate}
    \bbP\bigl(\text{there is no root-preserving }8\cdot972^2D^{8}\text{-quasi-isometry }\cT\to\cT'\bigr)
    \leq16\,\eta_{\mathsf Q,5/2}(\mu_{\mathsf Q})\leq16\,e^{-c_3D^2}.
  \end{equation}
\end{proposition}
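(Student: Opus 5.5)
The proof follows the template of \cref{thm:cross-law}, with shapes in place of chain lengths and \cref{thm:glued-transfer} in place of \cref{thm:transfer}.

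First, under $\bbP^*$ for each law, \cref{thm:shape-iid} gives i.i.d.\ shape fields $(S(w))_{w\in\bbB}$ and $(S'(w))_{w\in\bbB}$ with laws $\mu$ and $\mu'$. The two fields are independent, and each tree is isometric to the assembly of its field by a root-preserving isometry. Fix an integer $D\geq D_2$, where $D_2\geq D_1(\theta,\theta')$ is taken from \cref{thm:shape-coupling}. Enlarge the probability space by independent uniform families $(U(w))$ and $(U'(w))$, independent of everything else, and set $x(w)=\ell(S(w),U(w))$ and $y(w)=\ell'(S'(w),U'(w))$. By \cref{thm:shape-coupling}\labelcref{it:shape-coupling-law}, the pairs $(S(w),U(w))$ are i.i.d.\ in $w$, so $x$ and $y$ are independent fields, each i.i.d.\ with common law $\mu_{\mathsf Q}$ on the vertices of $\mathsf Q$.

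Second, \cref{thm:shape-coupling}\labelcref{it:shape-coupling-eta} gives $\eta_{\mathsf Q,5/2}(\mu_{\mathsf Q})\leq e^{-c_3D^2}\leq10^{-4}$. The infinite-tree part of \cref{thm:matching}\labelcref{it:matching-full} therefore yields an event $E_D$ with $\bbP(E_D)\geq1-16\eta_{\mathsf Q,5/2}(\mu_{\mathsf Q})$. On $E_D$ there is $\pi\in\Aut(\bbB)$ with $d_{\mathsf Q}(x(\pi w),y(w))\leq1$ for every $w\in\bbB$. \Cref{thm:shape-coupling}\labelcref{it:shape-coupling-qi} then supplies, for each $w$, a $972D^4$-marked quasi-isometry $S(\pi w)\to S'(w)$.

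Third, the automorphism $\pi$ induces a root-preserving isometry between the assembly of $(S(w))$ and the assembly of $(S(\pi w))$. This is the shape analogue of \cref{thm:isometry}: an automorphism permutes child pairs at each exit, and the assembly edges join an exit only to the entries of its two children, so the permutation preserves the assembly. \Cref{thm:glued-transfer} with $K=972D^4$ then gives a root-preserving $8K^2=8\cdot972^2D^8$-quasi-isometry between the assemblies. Composing with the isometries from the first step gives a map $\cT\to\cT'$ with the same constant.

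The event that such a map exists depends only on the trees. Its probability in the enlarged space therefore equals its probability under $\bbP^*\otimes\bbP^*$, which proves \cref{eq:hairy-rate}. The almost sure statement follows by taking the union of the $E_D$ over integers $D\geq D_2$, since $16e^{-c_3D^2}\to0$.

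The main obstacle is checking the isometry-invariance in the third step and the measurability of the events involved. Every map in the argument is built from countably many choices, so measurability should follow as in \cref{thm:cross-law}. The remaining steps only concatenate earlier results.
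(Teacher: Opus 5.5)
Your proposal is correct and follows essentially the same route as the paper's proof. Both arguments encode the two trees by shape fields via \cref{thm:shape-iid} and relabel them with the coupling maps of \cref{thm:shape-coupling} using auxiliary uniforms. They then match the two fields with the infinite-tree part of \cref{thm:matching}, relabel by $\pi$ to obtain an isometric assembly, glue with \cref{thm:glued-transfer} at $K=972D^4$, and finally note that the target event depends only on the trees before sending $D\to\infty$.
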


\begin{proof}
  We work with the product of the two laws conditioned on having infinite diameter and enlarge
  it by families $(U(w))_{w\in\bbB}$ and $(U'(w))_{w\in\bbB}$ of uniform variables on $[0,1)$,
  all independent of one another and of the two trees. Let $D_2$ be at least the $D_1$ of
  \cref{thm:shape-coupling} and fix
  an integer $D\geq D_2$, with $\mathsf Q$, $\mu_{\mathsf Q}$, $\ell$, $\ell'$ as there. By
  \cref{thm:shape-iid}, $\cT$ is isometric to the assembly of an i.i.d.\ shape sequence
  $(S(w))_{w\in\bbB}$ with law $\mu$ and $\cT'$ to that of $(S'(w))_{w\in\bbB}$ with law
  $\mu'$, the two sequences independent. The label fields
  \[
    x(w)=\ell\bigl(S(w),U(w)\bigr),
    \qquad
    y(w)=\ell'\bigl(S'(w),U'(w)\bigr)
  \]
  are therefore independent of each other and i.i.d.\ with the common law $\mu_{\mathsf Q}$, by
  \cref{thm:shape-coupling}\labelcref{it:shape-coupling-law}. By
  \cref{thm:shape-coupling}\labelcref{it:shape-coupling-eta} the potential condition of
  \cref{thm:matching} holds for $\mathsf Q$, so with probability at least
  $1-16\,\eta_{\mathsf Q,5/2}(\mu_{\mathsf Q})$ there is
  $\pi\in\Aut(\bbB)$ with $d_{\mathsf Q}\bigl(x(\pi w),y(w)\bigr)\leq1$ for all $w\in\bbB$.
  When this occurs, \cref{thm:shape-coupling}\labelcref{it:shape-coupling-qi} provides a
  $972D^4$-marked quasi-isometry $S(\pi w)\to S'(w)$ for every $w$. The assembly of
  $(S(\pi w))_{w\in\bbB}$ is isometric to that of $(S(w))_{w\in\bbB}$, since carrying the copy of
  $S(\pi w)$ at $w$ identically to its copy at $\pi w$ respects the gluing, $\pi$ mapping
  the children of a word to the children of its image. Hence \cref{thm:glued-transfer},
  applied with $K=972D^4$, yields a root-preserving $8\cdot972^2D^8$-quasi-isometry
  $\cT\to\cT'$. The identifications with the assemblies send each root to the entry of the
  root copy, and the relabelling isometry preserves this entry because $\pi$ fixes the root.
  The event of successful label matching is therefore contained in the event that such a
  quasi-isometry exists. This latter event depends only on the original trees, so its probability
  is unchanged on passing to their conditioned product law. This proves \cref{eq:hairy-rate}, its
  second inequality being \cref{thm:shape-coupling}\labelcref{it:shape-coupling-eta}. As at the
  end of the proof of \cref{thm:twovalue}, letting $D\to\infty$ through integers in
  \cref{eq:hairy-rate} gives the almost sure statement.
\end{proof}

\subsection{Quasi-isometry classification for offspring supported on
\texorpdfstring{$\set{0,1,2}$}{\{0,1,2\}}}\label{sec:trichotomy-simple}

The pieces of this part assemble into the classification restricted to offspring distributions
supported on $\set{0,1,2}$.

\begin{theorem}[Quasi-isometry classification for offspring supported on $\set{0,1,2}$]
  \label{thm:trichotomy-simple}
  Let $\theta=(\theta_0,\theta_1,\theta_2)$ and
  $\theta'=(\theta_0',\theta_1',\theta_2')$ be offspring distributions, each either
  supercritical or concentrated at $1$, and let $\cT,\cT'$ be independent Galton--Watson trees
  with these offspring distributions, conditioned on having infinite diameter. The classes of
  \cref{thm:trichotomy} take
  the form
  \begin{itemize}
    \item[(R)] $\theta_1=1$, in which $\cT$ is the ray.
    \item[(F)] $\theta_0=\theta_1=0$, so that $\theta_2=1$ and $\cT$ is the binary tree.
    \item[$(\mathrm{C}_{\langle1\rangle})$] $\theta_0=0<\theta_1<1$, the two-value family
      $\theta_1+\theta_2=1$ of \cref{thm:twovalue}.
    \item[(B)] $\theta_0>0$, where supercriticality implies $\theta_2>\theta_0$.
  \end{itemize}
  If $\theta$ and $\theta'$ belong to the same class, then almost surely there is a
  root-preserving quasi-isometry $\cT\to\cT'$. If they belong to different classes, then almost
  surely $\cT\not\simeq\cT'$. In particular, almost every realisation from a law outside class (R) is
  not quasi-isometric to the ray.
\end{theorem}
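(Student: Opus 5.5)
The proof splits into a positive part, which collects results already proved, and a negative part, which rests on coarse obstructions realised almost surely at arbitrarily large scales.

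\textbf{Positive part.} In class (R) both trees are the ray, and the identity is root-preserving. In class (F) with support $\set{0,1,2}$ we have $\theta_2=1$, so both trees equal $\bbB$. In class $(\mathrm{C}_{\langle1\rangle})$ we apply \cref{thm:twovalue} when the laws coincide and \cref{thm:cross-law} otherwise. For (B) we apply \cref{thm:hairy}. Conditioning on infinite diameter is automatic outside (B), since there $\theta_0=0$.

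\textbf{Negative part.} We use three quasi-isometry invariants that are monotone under coarse equivalence. The plan is to show that, almost surely, each class realises exactly one profile of these invariants.

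\begin{enumerate}
\item \emph{Ray versus the rest.} Every non-ray supercritical realisation almost surely has a skeleton vertex with two infinite child subtrees; the boundary argument in the proof of \cref{thm:boundary} gives this. Such a tree therefore has at least two ends, indeed a Cantor set of ends. A ray has one end, and a quasi-isometry induces a bijection of ends, so no such tree is quasi-isometric to the ray.
\item \emph{Leafless versus bushy, via deep dead ends.} Call $x$ an $r$-dead end if $x$ is at distance at least $r$ from every vertex lying on a bi-infinite geodesic (equivalently, from the skeleton with its rays). In (F) and $(\mathrm{C}_{\langle1\rangle})$ every vertex lies on an infinite ray, so dead-end depth is $0$. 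In (B), bushes of arbitrary height occur almost surely: the bushes attached to the skeleton are independent, and each has positive probability of having any prescribed height $r$, by \cref{thm:harris}. A $(A,B,C)$-quasi-isometry moves geodesic rays within bounded Hausdorff distance of geodesic rays (Morse lemma in trees). Hence an $r$-dead end maps to an $(r/A-\mathrm{const})$-dead end, and unbounded depth is preserved. This separates (B) from (F) and from $(\mathrm{C}_{\langle1\rangle})$.
\item \emph{Chain versus full, via thin balls.} In $(\mathrm{C}_{\langle1\rangle})$, by \cref{thm:geometric}, chain lengths $\ge 3r$ occur almost surely. So there are vertices $v$ with $B(v,r)$ a path of at most $2r+1$ vertices, and $\abs{B(v,r)}$ is linear in $r$. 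In $\bbB$ every ball of radius $r$ contains at least $2^{r}$ vertices. A $D$-quasi-isometry maps $B(v,r)$ into $B(\psi v,Dr+D)$ with $C$-dense image near $B(\psi v, r/D - \mathrm{const})$. In bounded degree, this forces $\abs{B(\psi v,r/D-c)}\le c'\abs{B(v,r)}$, and an exponential lower bound cannot sit below a linear upper bound for large $r$.
\end{enumerate}

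The main obstacle is making the dead-end invariant in step 2 rigorous, since a quasi-isometry need not preserve the set of vertices lying on bi-infinite geodesics exactly. The fix is to define the invariant coarsely: $\sup_x d(x,Y)$, where $Y$ is the union of all geodesic rays from the root. The Morse property in a $0$-hyperbolic space shows that $\psi(Y)$ and $Y'$ lie within bounded Hausdorff distance. With this definition, finiteness of the supremum is a quasi-isometry invariant. It is finite (zero) in the leafless classes and almost surely infinite in (B).
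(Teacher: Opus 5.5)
Your proof is correct and has the same case structure as the paper. The positive part is \cref{thm:cross-law} and \cref{thm:hairy}, and your thin-ball step is the paper's \cref{thm:bottleneck} and \cref{thm:converse}.

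You take a different route in two of the separations.

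\emph{Against the ray.} You use the number of ends. This needs only one skeleton split, but it imports the quasi-isometry invariance of the end space. The paper instead proves \cref{thm:three-rays} from its elementary \cref{thm:tree-stability}. That argument never uses coarse surjectivity, so the paper reuses it in \cref{sec:embedding-hierarchy} to exclude quasi-isometric embeddings into the ray.

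\emph{Against bushes.} The paper measures distance to lines and argues directly in \cref{thm:bush-separation}. The initial neck of a chain realisation lies on no line, so the paper has to allow the finite radius $\lambda(\troot)-1$ in \cref{thm:regime-obstructions}\labelcref{it:obstr-lines}. Your final invariant is $\sup_x d(x,Y)$, with $Y$ the union of geodesic rays from the root, which is the distance to the skeleton. It vanishes on every leafless tree and is infinite exactly when bushes have unbounded depth, so it is cleaner. The cost is a Morse lemma for quasi-geodesic rays, plus a quasi-inverse argument to place $Y'$ in a bounded neighbourhood of $\psi(Y)$.

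Two small corrections.
\begin{enumerate}
\item With your first definition via bi-infinite geodesics, the dead-end depth in $(\mathrm{C}_{\langle1\rangle})$ is finite but not zero. For the same initial-neck reason, that definition is not equivalent to distance from the skeleton.
\item For the existence of a split, cite \cref{thm:harris} and \cref{thm:geometric} directly. The proof of \cref{thm:boundary} itself rests on \cref{thm:trichotomy}, so citing it looks circular.
\end{enumerate}
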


This is the case $J=2$ of \cref{thm:trichotomy}, with all four infinite regimes present. The
branching semigroup does not further split the chain regime. Indeed, a chain-regime law supported on
$\set{0,1,2}$ gives weight to the arity $2$ and no other arity above one, so
$\Lambda_\theta=\langle1\rangle=\bbN_0$. The chain regime is therefore the single class
$(\mathrm{C}_{\langle1\rangle})$ here. The positive conclusions follow from the deterministic ray and binary tree cases and from
\cref{thm:cross-law,thm:hairy}. We now separate the different classes by their geometry.

\paragraph*{\bf Rays and bushes} Almost surely, a realisation in class (F) or $(\mathrm{C}_{\langle1\rangle})$ has every vertex
below its first branch point on a geodesic line, and the remaining vertices lie in a bounded set
near the root. A realisation in class (B) almost surely carries bushes of unbounded depth, each
cut off from the rest of the tree by a single vertex. A vertex at the bottom of a deep bush is
far from every line. The first proposition below separates these cases. The second uses three
rays from a common vertex to separate all three non-ray classes from the ray. We then use long
chains to separate the chain class from the binary tree, giving \cref{thm:converse}.

\paragraph*{\bf Lines} A \emph{line} (or \emph{bi-infinite geodesic}) in a tree $T$ is a subset isometric to $\bbZ$, equivalently the union of two rays
from a common vertex leaving it through distinct neighbours. A \emph{bush of depth $h$} attached
at a vertex $c$ is a finite component $P$ of $T\setminus\set c$ with
$P\subseteq B_T(c,h)$ containing a vertex at distance $h$ from $c$. Removing a vertex of $P$
leaves components that lie inside $P$, hence are bounded, together with one component containing
$c$. Thus no vertex of a bush lies on a line, and bushes of unbounded depth contain vertices
arbitrarily far from every line. 

\paragraph*{\bf Separation} Both separation proofs below use the following elementary form of geodesic stability in a tree, whose proof is standard but which we include for completeness. A more general statement, for quasi-geodesics in a Gromov hyperbolic space, is~\cite[Theorem~III.H.1.7]{BridsonHaefliger1999}.

\begin{lemma}[Stability in a tree]\label{thm:tree-stability}
  Let $T$ and $T'$ be trees, let $\psi\colon T\to T'$ be a $D$-quasi-isometry, and let
  $x,y\in T$. Every vertex of the geodesic $[\psi(x),\psi(y)]$ lies within distance $D$ of the
  image under $\psi$ of the geodesic $[x,y]$.
\end{lemma}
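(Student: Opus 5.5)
The plan is a direct path argument that uses only the upper bound of the quasi-isometry inequality, together with the fact that in a tree every path between two vertices contains the geodesic between them. Coarse surjectivity and the lower bound play no role. I treat $T$ and $T'$ as vertex sets with the graph metric, as throughout the paper.

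First, list the vertices of the geodesic $[x,y]$ as $x=v_0,v_1,\dots,v_n=y$, with $d(v_i,v_{i+1})=1$. If $n=0$ there is nothing to prove, since $[\psi(x),\psi(y)]=\set{\psi(x)}$ lies in the image. By \cref{def:qi} with $A=B=D$,
\[
 d'\bigl(\psi(v_i),\psi(v_{i+1})\bigr)\leq D\cdot1+D=2D .
\]
Second, choose the geodesic $\gamma_i=[\psi(v_i),\psi(v_{i+1})]$ in $T'$ and concatenate $\gamma_0,\dots,\gamma_{n-1}$. This gives an edge path in $T'$ from $\psi(x)$ to $\psi(y)$.

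Third, I use the tree property. Any edge path in a tree from $a$ to $b$ visits every vertex of the unique geodesic $[a,b]$. Indeed, removing an interior vertex $z$ of $[a,b]$ disconnects $a$ from $b$, so any path from $a$ to $b$ must pass through $z$. Hence every vertex $z\in[\psi(x),\psi(y)]$ lies on some $\gamma_i$. Since $\gamma_i$ is a geodesic of length at most $2D$, we get
\[
 d'(z,\psi(v_i))+d'(z,\psi(v_{i+1}))\leq 2D,
\]
so $z$ is within distance $D$ of $\psi(v_i)$ or of $\psi(v_{i+1})$. Both of these points lie in $\psi([x,y])$, which is the claim.

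There is no real obstacle here. The only point needing care is the third step, the separation property of trees, which replaces the Morse-lemma machinery of \cite[Theorem~III.H.1.7]{BridsonHaefliger1999}. Because of it the constant comes out exactly as $D$, with no loss.
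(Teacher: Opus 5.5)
Your proof is correct and is essentially the paper's argument: bound the images of adjacent vertices of $[x,y]$ by $2D$, use the cut-vertex property of trees to place each vertex of $[\psi(x),\psi(y)]$ on the geodesic between some consecutive pair $\psi(v_i),\psi(v_{i+1})$, and split the length $2D$ in half. The paper phrases the separation step as some consecutive pair lying in different components of $T'\setminus\{m\}$, which is equivalent to your observation that the concatenated path must pass through every vertex of the geodesic.
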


\begin{proof}
  Write $x=z_0,z_1,\dots,z_n=y$ for the vertices of $[x,y]$ in order. Consecutive ones are
  adjacent, so $d'(\psi(z_i),\psi(z_{i+1}))\leq D+D=2D$. Let $m$ lie on $[\psi(x),\psi(y)]$.
  If $m=\psi(z_i)$ for some $i$, the conclusion is immediate. Otherwise every image avoids $m$.
  Removing $m$ separates $\psi(x)$ from $\psi(y)$, and $\psi(z_0)$ and $\psi(z_n)$ therefore lie
  in different components of $T'\setminus\set m$, so some consecutive pair
  $\psi(z_i),\psi(z_{i+1})$ does too. The geodesic joining that pair passes through $m$, whence
  $d'(\psi(z_i),m)+d'(m,\psi(z_{i+1}))\leq2D$ and one of the two summands is at most $D$, as
  required.
\end{proof}

\begin{proposition}[Bushes against lines]\label{thm:bush-separation}
  Let $T$ and $T'$ be trees. Suppose that $T$ has bushes of unbounded depth
  and that every vertex of $T'$ lies within distance $r$ of a line of $T'$. Then $T\not\simeq T'$.
\end{proposition}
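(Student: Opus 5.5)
We argue by contradiction. Suppose $\psi\colon T\to T'$ is a $D$-quasi-isometry; after enlarging $D$ we may assume $D\geq1$. Let $\bar\psi\colon T'\to T$ be a quasi-inverse, with constant $D'$ depending only on $D$ (\cref{def:qi} and the triangle inequality give one), and with $d(\bar\psi(\psi(x)),x)\leq D'$ for every $x\in T$. The idea is to transport lines of $T'$ back into $T$. A transported line stays uniformly close to a line of $T$, and it passes near every point of $T$ up to a uniform constant. A vertex deep in a bush is far from every line of $T$, which gives the contradiction.

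\emph{Step 1: quasi-geodesic lines pull back near lines.} Let $L'\subseteq T'$ be a line, with vertices $\dots,m_{-1},m_0,m_1,\dots$ in order. The points $z_i=\bar\psi(m_i)$ form a $D'$-quasi-geodesic in $T$. Since $d(z_i,z_j)\geq |i-j|/D'-D'\to\infty$, the two halves of this sequence eventually escape every bounded set.

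Consider the geodesics $[z_{-n},z_n]$ in $T$. Apply \cref{thm:tree-stability} to $\psi$, together with the fact that $\psi$ and $\bar\psi$ are quasi-inverse. This shows that each $z_i$ with $|i|\leq n$ lies within a constant $R=R(D)$ of $[z_{-n},z_n]$, and conversely. A local-finiteness argument is not available, since $T$ need not be locally finite. Instead, we use the tree structure directly. Fix $i$ and let $n\to\infty$. The geodesics $[z_{-n},z_n]$ all pass within $R$ of $z_0$. Their projections onto the ball $B_T(z_0,R)$ stabilise, in the sense that their nested restrictions eventually agree on any bounded region. This stabilisation uses the quasi-geodesic divergence: once $z_{-n}$ and $z_n$ are far from $z_0$, the branch through which $[z_{-n},z_n]$ leaves any bounded region is determined by which end of the sequence it points to. The geodesics therefore converge to a bi-infinite geodesic, that is, a line $L\subseteq T$, with every $z_i$ within $R$ of $L$.

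We expect this limiting step to be the main obstacle. If it proves awkward, an alternative is to avoid constructing $L$ altogether. For a bush vertex $x$ of depth $h$ attached at $c$, we only need two far-away points $z_{-n},z_n$ lying outside the bush $P$. The geodesic $[z_{-n},z_n]$ then avoids $P$ entirely, because $P$ is cut off by the single vertex $c$ and both endpoints lie outside it. Any point of $P$ within $R$ of this geodesic is then within $R$ of $c$.

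\emph{Step 2: every point of $T$ is near a transported line.} Take $x\in T$. By hypothesis, $\psi(x)$ lies within $r$ of some vertex $m_0$ of a line $L'$ of $T'$. Then
\[
  d(x,\bar\psi(m_0))\leq D'\bigl(r+D'\bigr)+D'+D' \eqqcolon R_1,
\]
using the quasi-inverse bounds. Combined with Step 1, this places $x$ within $R_1+R$ of a geodesic in $T$ joining two points far from $x$.

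\emph{Step 3: contradiction.} Choose a bush $P$ attached at $c$, of depth $h>2(R_1+R)+2$, and a vertex $x\in P$ with $d(x,c)=h$. The points $z_{\pm n}$ escape to infinity, while $P$ is finite, so for large $n$ both lie outside $P$. The geodesic $[z_{-n},z_n]$ therefore meets $P\cup\set c$ at most in $c$. Any point of this geodesic within distance $R_1+R$ of $x$ would then satisfy
\[
  d(x,c)\leq R_1+R<h,
\]
contradicting $d(x,c)=h$. This contradiction rules out the quasi-isometry $\psi$, so $T\not\simeq T'$.
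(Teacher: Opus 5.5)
Your proof is correct. It shares the paper's two core ingredients, \cref{thm:tree-stability} and the fact that a bush is cut off by the single vertex $c$, but it arranges them differently.

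\textbf{The paper's route.} The paper avoids a quasi-inverse. It puts $w=\psi(c)$ and takes the ray of the line from $y'$ that points away from $w$. It chooses a far vertex $z$ on that ray and a coarse preimage $p$ of $z$, checks that $p\notin P$ and that $y'\in[w,\psi(p)]$, and applies stability to the single geodesic $[c,p]$. This yields $q\in[c,p]$ within $2D^2+Dr$ of $x$, which is impossible because $[c,p]$ misses $P$.

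\textbf{Your route.} You pull both ends of the line back through $\bar\psi$ and apply stability to $[z_{-n},z_n]$. This costs a quasi-inverse and worse constants. In exchange, you never have to choose a ray direction or verify $y'\in[w,\psi(p)]$: both endpoints automatically lie far outside the bounded bush, so $[z_{-n},z_n]$ meets $P\cup\set c$ at most in $c$.

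\textbf{What to trim and what to spell out.} The limiting construction of a line $L\subseteq T$ in Step 1 is unnecessary, and your stabilisation argument for it is only a sketch. Step 3 uses nothing about $L$; it needs only that $z_0$ lies within $R$ of $[z_{-n},z_n]$ for every $n$, so you should drop the limit and the ``conversely'' clause and write out this one case. Since $m_0\in[m_{-n},m_n]$ and $\psi(z_{\pm n})$ lies within $D'$ of $m_{\pm n}$, the tree inclusion $[m_{-n},m_n]\subseteq[m_{-n},\psi(z_{-n})]\cup[\psi(z_{-n}),\psi(z_n)]\cup[\psi(z_n),m_n]$ puts $m_0$ within $D'$ of $[\psi(z_{-n}),\psi(z_n)]$. \Cref{thm:tree-stability} then gives $u\in[z_{-n},z_n]$ with $d'(\psi(u),m_0)\leq D+D'$. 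Hence $d'(\psi(u),\psi(z_0))\leq D+2D'$, and the lower quasi-isometry bound gives $d(u,z_0)\leq D(2D+2D')$. Finally, the threshold $h>R_1+R$ already suffices in Step 3; the factor $2$ and the $+2$ are not needed.
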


\begin{proof}
  Suppose $\psi\colon T\to T'$ is a $D$-quasi-isometry. Let $P$ be a bush of depth
  $h>2D^2+Dr$ at a vertex $c$, with $x\in P$ at distance $h$ from $c$. Write $w=\psi(c)$ and
  $y=\psi(x)$.

  Choose a line of $T'$ within distance $r$ of $y$ and a vertex $y'$ on it with
  $d'(y,y')\leq r$. Of the two rays of the line from $y'$, choose one for which
  $y'\in[w,z]$ for every vertex $z$ on the ray. Take such a $z$ with
  $d'(y',z)>Dh+2D$, and choose $p\in T$ with $d'(\psi(p),z)\leq D$. Then $p\notin P$, since
  otherwise $d'(w,\psi(p))\leq Dh+D$, whereas
  $d'(w,\psi(p))\geq d'(w,z)-D\geq d'(y',z)-D>Dh+D$.

  Moreover, $y'\in[w,\psi(p)]$: the common initial segment of $[w,z]$ and
  $[w,\psi(p)]$ ends within distance $D$ of $z$, beyond $y'$. Hence
  \cref{thm:tree-stability} gives $q\in[c,p]$ with $d'(\psi(q),y')\leq D$, so
  $d'(\psi(q),\psi(x))\leq D+r$ and $d(q,x)\leq D(D+r)+D^2=2D^2+Dr$. On the other hand $c$
  and $p$ lie outside the component $P$, so $[c,p]$ misses $P$ and
  $d(q,x)=d(q,c)+h\geq h>2D^2+Dr$, a contradiction.
\end{proof}

\begin{proposition}[Three rays against the ray]\label{thm:three-rays}
  Let $T$ be a tree containing three rays that pairwise meet only in their common initial vertex.
  Then $T$ is not quasi-isometric to the ray $\bbN_0$.
\end{proposition}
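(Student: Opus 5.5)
Suppose for contradiction that $\psi\colon T\to\bbN_0$ is a $D$-quasi-isometry. Let $o$ be the common initial vertex and $R_1,R_2,R_3$ the three rays, with vertices $x^i_n$ at distance $n$ from $o$ along $R_i$. By the disjointness hypothesis, for $i\neq j$ we have $d(x^i_n,x^j_m)=n+m$. The plan is a pigeonhole argument on the direction in which each ray escapes in $\bbN_0$.

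First, along each ray the images $\psi(x^i_n)$ satisfy $\abs{\psi(x^i_n)-\psi(o)}\geq n/D-D$, so they tend to infinity. Since consecutive images differ by at most $2D$, they eventually lie to the right of $\psi(o)$ and stay there: $\bbN_0$ has only one end. Choose $n$ large, with $n/D-D>\psi(o)+2D$, so that all three images $a_i:=\psi(x^i_n)$ exceed $\psi(o)$. In fact $a_i\geq n/D-D$.

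Next, apply \cref{thm:tree-stability} to $\psi$ restricted to the geodesic $[o,x^i_n]$. The target geodesic $[\psi(o),a_i]$ is an interval. Take the point $m:=\min_i a_i$, attained, say, at $i=1$. For $j=2,3$, the point $m$ lies in $[\psi(o),a_j]$, so it is within distance $D$ of some $\psi(z_j)$ with $z_j\in[o,x^j_n]$. Then $d(z_j,x^1_n)\leq D(D+D)+D^2=3D^2$. But $z_j$ lies on $R_j$ at some distance $k$ from $o$, so $d(z_j,x^1_n)=k+n\geq n$ when $z_j\neq o$. When $z_j=o$, the same bound $d(o,x^1_n)=n$ holds. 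Choosing $n>3D^2$ gives a contradiction.

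This shows that even two rays suffice. That is consistent, because two rays form a line, and a line is not quasi-isometric to $\bbN_0$. Thus the third ray is not needed, but the hypothesis is harmless.

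The only delicate step is the one-endedness claim: that the images along each ray end up on the correct side of $\psi(o)$. It is immediate here because $\bbN_0$ has a single unbounded direction, so every image far from $\psi(o)$ is to its right. Beyond that, the argument is only bookkeeping of the constants in \cref{thm:tree-stability}.
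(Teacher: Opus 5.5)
Your proof is correct. Its core matches the paper's proof: use \cref{thm:tree-stability} to find a point on one ray's initial segment whose image lies within $D$ of the image of another ray's endpoint, then contradict the fact that the rays meet only at the base vertex.

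The two proofs differ in how they obtain the ordering that puts one endpoint image on the segment from $\psi(o)$ to the other.
\begin{itemize}
\item The paper uses pigeonhole. Of the three images, two lie weakly on the same side of $\psi(v)$, and the nearer of them lies on the segment to the farther. This needs three rays. It uses nothing about $\bbN_0$ beyond being a line-like target, so the same proof shows such a tree is not quasi-isometric to $\bbZ$ either.
\item You use that $\bbN_0$ has only one unbounded direction. Every image is nonnegative and satisfies $\abs{\psi(x^i_n)-\psi(o)}\geq n/D-D>\psi(o)$, so every image lies to the right of $\psi(o)$. You can then take the minimum, and only two rays (a line in $T$) are needed.
\end{itemize}
Your version trades the two-ended generality for a weaker hypothesis on $T$. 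Both arguments use only the two metric inequalities of \cref{def:qi}. So both also rule out quasi-isometric embeddings into $\bbN_0$, which is how the paper reuses this result in \cref{sec:embedding-hierarchy}.

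Minor points:
\begin{itemize}
\item The one-endedness step needs neither the $2D$ increments nor any pigeonhole framing. The inequality above directly gives $\psi(x^i_n)\geq\psi(o)+n/D-D$.
\item The lower bound gives $d(z_j,x^1_n)\leq D\cdot D+D^2=2D^2$, so your $3D^2$ is loose but harmless.
\end{itemize}
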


\begin{proof}
  Suppose $\psi\colon T\to\bbN_0$ is a $D$-quasi-isometry and let $v$ be the common initial vertex
  of three such rays. Fix an integer $r>2D^2$ and let $a_1,a_2,a_3$ be the vertices at distance
  $r$ from $v$ on the three rays.
  Each of the three images is at least $\psi(v)$ or at most $\psi(v)$, so two of them lie weakly on
  one side of $\psi(v)$, and of those two the nearer to $\psi(v)$ lies on the geodesic joining
  $\psi(v)$ to the further. Relabelling, $\psi(a_1)$ lies on $[\psi(v),\psi(a_2)]$.

  By \cref{thm:tree-stability} there is $q$ on the geodesic $[v,a_2]$ with
  $\abs{\psi(q)-\psi(a_1)}\leq D$, so
  $d(q,a_1)\leq D^2+D^2$. But $[v,a_2]$ is an initial segment of the second ray, which meets the
  first ray only at $v$, so every path from $a_1$ to a vertex of $[v,a_2]$ passes through $v$ and
  $d(a_1,q)\geq d(a_1,v)=r>2D^2$, a contradiction.
\end{proof}

\paragraph*{\bf The supercritical regimes} We now verify these geometric hypotheses almost surely in the three supercritical regimes.
Class (B) supplies the bushes of unbounded depth, classes (F) and
$(\mathrm{C}_{\langle1\rangle})$ supply the nearby lines, and all three contain the required rays.

\begin{lemma}[The regimes against the obstructions]\label{thm:regime-obstructions}
  Let $\theta$ be supercritical and supported on $\set{0,1,2}$, and let $\cT$ be a Galton--Watson
  tree with offspring distribution $\theta$, conditioned on having infinite diameter.
  \begin{enumerate}[(i),ref=(\roman*)]
    \item\label{it:obstr-rays} Almost surely $\cT$ contains three rays that pairwise meet only in
      their common initial vertex.
    \item\label{it:obstr-lines} If $\theta_0=0$, then almost surely there is a finite $r$ such that
      every vertex of $\cT$ lies within distance $r$ of a line.
    \item\label{it:obstr-bushes} If $\theta_0>0$, then almost surely $\cT$ has bushes of unbounded
      depth.
  \end{enumerate}
\end{lemma}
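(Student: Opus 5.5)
We treat the three parts separately, using the encodings already built. Throughout, we work on an event of full conditional probability where the chain encoding of \cref{sec:encoding} (when $\theta_0=0$) or the shape decomposition of \cref{thm:shape-iid} (when $\theta_0>0$) is defined.

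For \cref{it:obstr-rays}, the case $\theta_0=0$ uses the chain encoding of \cref{thm:chains}. Let $v$ be the branch point ending the root chain, $\iota(\troot)1^{\lambda(\troot)-1}$. Its two children start the chains of $1$ and $2$, and the chain of $1$ ends at a further branch point $u$. Take three rays: from $v$ up to the root, from $v$ through $u$ and then along $\iota(11)$ downward, and from $v$ through $\iota(2)$ downward. This is awkward, since the root side is finite. It is simpler to take $u$ itself as the common vertex. One ray goes back to $v$ and then down the subtree of $2$. The other two go down the subtrees of $11$ and $12$. Each ray is built from the first-child rays of \cref{thm:chains}(ii) concatenated along successive chains; for example, follow the words $1,11,111,\dots$ in $\bbB$. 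These rays are infinite because every chain is finite on $\Omega_0$, and they are disjoint beyond $u$ because they lie in distinct subtrees of $\bbB$. When $\theta_0>0$, the skeleton is a two-value Galton--Watson tree by \cref{thm:harris}\labelcref{it:harris-skeleton}, so we apply the same argument to it. Its rays are rays of $\cT$.

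For \cref{it:obstr-lines}, take $r$ to be the distance from the root to the end of the root chain, $\lambda(\troot)-1$, which is finite almost surely. Every vertex $x=\iota(w)1^l$ with $w\neq\troot$ lies on a line. We take the ray through its own chain continuing downward (first-child words below $w$), and a ray going upward from $x$ to the branch point ending the chain of the parent of $w$ and then down into the sibling subtree. These two rays leave $x$ through distinct neighbours, so their union is a line. Vertices in the root chain lie within $r$ of the first branch point $v$, which lies on the line formed by two rays into the subtrees of $1$ and $2$. The only care needed is to handle $x$ being the first vertex of its chain, where the upward neighbour is the branch point itself; this causes no difficulty.

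For \cref{it:obstr-bushes}, we use \cref{thm:shape-iid}: the shapes $S(w)$ are i.i.d.\ under $\bbP^*$. For each $h$, the event that $S(w)$ contains a bush of depth at least $h$ has positive probability $p_h$. Indeed, with positive probability $m\geq2$ and the first decoration is a $\theta^\dagger$-Galton--Watson tree of height exactly $h$, since $\theta^\dagger_2=\theta_2>0$ and $\theta^\dagger_0>0$. By independence over the infinitely many $w\in\bbB$ and Borel--Cantelli, some shape carries such a bush almost surely. Intersecting over $h$ gives unbounded depths. Each such dying subtree, attached at a neck vertex $c$, is a finite component of $\cT\setminus\set c$, so it is a bush in the sense of the paragraph on lines. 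This part is the main point to check, and the only real subtlety is matching the definition of bush depth: we take the bush exactly of height $h$, so that $P\subseteq B_T(c,h+1)$ with a vertex at distance $h+1$. This is routine, so I expect no genuine obstacle.
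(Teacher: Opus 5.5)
Your proposal is correct and follows essentially the same route as the paper. The three rays at a branch point come from the (skeleton's) chain encoding, the line argument uses $r=\lambda(\troot)-1$, and Borel--Cantelli is applied to independent bush decorations. You package that last step through the i.i.d.\ shapes of \cref{thm:shape-iid} rather than conditioning on the skeleton, which is equivalent.

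Two minor corrections. First, $\theta^\dagger=(\theta_2,\theta_1,\theta_0)$ gives $\theta^\dagger_2=\theta_0>0$ and $\theta^\dagger_0=\theta_2>0$; you wrote $\theta^\dagger_2=\theta_2$, but the positivity you need holds either way. Second, when $\theta_1=0$ the chain encoding degenerates to $\lambda\equiv1$, so your argument also covers the binary tree, which the paper handles as a separate case.
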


\begin{proof}
  Supercriticality forces $\theta_1+2\theta_2>1$ and hence $\theta_2>0$. Write $\cT^*$ for $\cT$
  when $\theta_0=0$ and for its skeleton when $\theta_0>0$. In both cases $\cT^*$ is a
  Galton--Watson tree whose offspring distribution is supported on $\set{1,2}$, by
  \cref{thm:harris}\labelcref{it:harris-skeleton} in the second case, so every vertex of $\cT^*$
  has an infinite line of descent, and $\cT^*$ is a subtree of $\cT$ carrying the induced metric.

  For \cref{it:obstr-rays}, we exhibit the three rays in $\cT^*$. If $\theta_0=\theta_1=0$, then
  $\cT^*=\cT=\bbB$ and a child $v$ of the root carries them, one descending from each of the two
  children of $v$ and one climbing to the root and descending into its other child. Otherwise the
  offspring distribution of $\cT^*$ gives positive weight to $1$, so \cref{thm:chains} applies to
  it: the chain of $\troot$ ends at a branch point $u$, and the chain starting at the second
  child of $u$ ends at a branch point $v$ strictly below $u$. Descending from each of the two
  children of $v$ gives two rays leaving $v$ through distinct neighbours, and climbing from $v$
  to $u$ and descending into the other child of $u$ gives a third ray, leaving $v$ through its
  parent. The three pairwise meet only at $v$.

  For \cref{it:obstr-lines}, let $\theta_0=0$, so that $\cT=\cT^*$ and every subtree of $\cT$ is
  infinite. If $\theta_1=0$, then $\cT=\bbB$ and every vertex lies on a line, so $r=0$ works.
  Otherwise $0<\theta_1<1$. Let $b$ be the branch point ending the chain of $\troot$, at distance
  $\lambda(\troot)-1$ from the root. The two subtrees below the children of $b$ are infinite, so
  $b$ lies on a line. Let $v$ lie strictly below $b$, say in the subtree below one child of $b$.
  The subtree below $v$ is infinite, and the component of $\cT\setminus\set v$ containing the
  parent of $v$ contains the subtree below the other child of $b$ and is infinite as well, so $v$
  lies on a line too. Every remaining vertex lies on the chain of $\troot$ strictly above $b$, at
  distance at most $\lambda(\troot)-1$ from $b$. So $r=\lambda(\troot)-1$ works, and it is finite
  almost surely by \cref{thm:geometric}.

  For \cref{it:obstr-bushes}, let $\theta_0>0$. A bush attached at a neck vertex $c$ is joined
  to the rest of $\cT$ by the single edge at $c$, so it is a component of
  $\cT\setminus\set c$, and it is finite. Its depth is the largest distance from $c$ to one of
  its vertices. By
  \cref{thm:harris}\labelcref{it:harris-bushes}, conditioned on the skeleton, the
  neck decorations are independent. Each is nonempty with probability
  $2\theta_0/\widetilde\theta_1>0$ and, when nonempty, is a Galton--Watson tree with conjugate
  law $\theta^\dagger=(\theta_2,\theta_1,\theta_0)$. Fix $h\in\bbN$. Since
  $\theta^\dagger_0=\theta_2>0$ and $\theta^\dagger_2=\theta_0>0$, the
  complete binary tree of height $h$ has positive $\theta^\dagger$-probability, so a given neck
  vertex carries a bush of depth exceeding $h$ with a probability $p_h>0$ that does not depend on
  the vertex. \Cref{thm:geometric} applied to the skeleton makes the chain lengths
  $\widetilde\lambda(w)$, $w\in\bbB$, independent and geometric, so almost surely infinitely many
  of them exceed $1$ and $\cT$ has infinitely many neck vertices. Conditioned on the skeleton, the
  decorations are independent, so the second Borel--Cantelli lemma gives, almost surely, infinitely
  many bushes of depth exceeding $h$. Intersecting over $h\in\bbN$ proves
  \cref{it:obstr-bushes}, as required.
\end{proof}

\paragraph*{\bf The binary tree and the chain regime} The remaining separation within the two-value family is between the chain regime and the binary
tree. Long chains create arbitrarily large balls with only linear growth, whereas balls in the
binary tree grow exponentially.

\begin{proposition}[Bottleneck obstruction]\label{thm:bottleneck}
  Let $\cT$ be a tree such that for every $\ell\in\bbN$ there is a vertex $v_\ell$ whose ball
  $B_{\cT}(v_\ell,\ell)$ is isometric to a segment of $\bbZ$. Then $\cT$ is not quasi-isometric
  to $\bbB$.
\end{proposition}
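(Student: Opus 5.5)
Suppose, for contradiction, that $\psi\colon\cT\to\bbB$ is a $D$-quasi-isometry. The idea is that a long segment-ball in $\cT$ has only two ``far'' directions, whereas in $\bbB$ every ball of large radius that meets the image coarsely contains three diverging directions. We exploit the fact that $\bbB$ branches everywhere: every vertex $u$ of $\bbB$ lies on three rays that pairwise meet only at $u$ (two descending through its children, one through its parent when $u\neq\troot$; at the root take a child instead, at bounded cost). Choose $\ell$ very large compared with $D$. Pick a vertex $u\in\bbB$ with $d(u,\psi(v_\ell))\leq D$; only the coarse density of $\psi$ is used here. Take three vertices $y_1,y_2,y_3$ on three such rays at distance $R$ from $u$, where $R$ will be of order $\ell/(2D)$. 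They are then pairwise at distance $2R$.

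We pull these points back. By density there are $x_i\in\cT$ with $d(\psi(x_i),y_i)\leq D$. The lower quasi-isometry bound gives $d(x_i,x_j)\geq D^{-1}(2R-2D)-D$, while the upper bound gives $d(x_i,v_\ell)\leq D(R+2D)+D^2$. Choosing $R\approx\ell/(2D)$ keeps every $x_i$ inside $B_{\cT}(v_\ell,\ell)$, which is a segment $[a,b]$ centred at $v_\ell$, while still forcing them to be pairwise far apart, of order $\ell/D^2$.

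The key step is the \emph{betweenness} contradiction. Three points on a segment are linearly ordered, so one of them, say $x_2$, lies on the geodesic $[x_1,x_3]$, which is contained in the segment. The geodesic in $\bbB$ from $y_1$ to $y_3$ passes through $u$ and stays at distance at least $R-\text{const}$ from $y_2$, since the ray of $y_2$ meets the other two only at $u$. We then compare the two trees, using that $\psi$ maps the path $[x_1,x_3]$ to a $2D$-coarse path. The image of the portion of $[x_1,x_3]$ on either side of $x_2$ is coarsely connected, and it starts near $y_1$, respectively near $y_3$. Reaching $\psi(x_2)$, which is near $y_2$, forces it to pass within $D$ of $u$ (cut-vertex argument as in \cref{thm:tree-stability}). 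This yields points $q_1\in[x_1,x_2]$ and $q_3\in[x_2,x_3]$, on opposite sides of $x_2$, both with images within $2D$ of $u$. Hence $d(q_1,q_3)\leq D(4D)+D^2$ is bounded, whereas $d(q_1,x_2)\geq D^{-1}(R-3D)-D$, and likewise for $q_3$. Since $x_2$ lies between $q_1$ and $q_3$ on a geodesic, $d(q_1,q_3)=d(q_1,x_2)+d(x_2,q_3)$ grows linearly in $\ell$, a contradiction for large $\ell$.

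The main obstacle is bookkeeping. We need the pulled-back points to stay inside the segment ball, so that betweenness applies and geodesics between them stay in the ball, and we need the cut-vertex argument at $u$ in $\bbB$ to run along a coarse path rather than a geodesic. We handle the latter with the same separation reasoning as in the proof of \cref{thm:tree-stability}: consecutive images are within $2D$, and removing $u$ separates the ray-branch containing $y_2$ from those containing $y_1$ and $y_3$.
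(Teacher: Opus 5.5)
Your argument is correct, but it is not the paper's route. The paper argues by volume growth. Coarse density and the lower quasi-isometry inequality give
$B_{\bbB}(\psi(v_\ell),r)\subseteq\bigcup_{x\in B_{\cT}(v_\ell,\ell)}B_{\bbB}(\psi(x),D)$ for $r=\lfloor\ell/D-2D\rfloor$. The left side has at least $2^r$ vertices, the right side at most $(2\ell+1)\cdot3\cdot2^D$, and this is a contradiction for large $\ell$.

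You instead give a local, coarse-topological version of \cref{thm:three-rays}. Metric betweenness in the segment puts one pulled-back point $x_2$ on $[x_1,x_3]$. Once $R>D$, each $\psi(x_i)$ lies in the same component of $\bbB\setminus\set{u}$ as $y_i$, so \cref{thm:tree-stability} applied to $[x_1,x_2]$ and $[x_2,x_3]$ directly produces your $q_1,q_3$. No separate coarse-path argument is needed. You also do not need the geodesics to stay inside the ball, because in a tree the identity $d(x_1,x_3)=d(x_1,x_2)+d(x_2,x_3)$ already forces $x_2\in[q_1,q_3]$. Relabelling the middle point is harmless, since the three rays play symmetric roles. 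Your ``lower''/``upper'' labels for the two quasi-isometry inequalities are swapped, and some constants are loose, but every inequality you display is valid.

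As for what each approach buys: the paper's count is a two-line estimate with an explicit threshold ($\ell$ of order $D^2$, against order $D^4$ with your constants). It does, however, rely on the bounded valence and exponential growth of $\bbB$. Your argument uses neither. It needs only that every vertex of the target lies uniformly close to a vertex with three unbounded complementary components. It therefore shows that such a $\cT$ is not quasi-isometric to any bushy tree in the Mosher--Sageev--Whyte sense, even of unbounded valence. It also sits naturally alongside \cref{thm:three-rays} and the star obstruction of \cref{thm:chain-star-obstruction}.
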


\begin{proof}
  Assume for contradiction that $\psi\colon\cT\to\bbB$ is a $D$-quasi-isometry. Fix
  $\ell\in\bbN$, put $v=v_\ell$, and let $r\in\bbN$. We first show that
  \begin{equation}\label{eq:ball-cover}
    B_{\bbB}\bigl(\psi(v),r\bigr)
    \subseteq\bigcup_{x\in B_{\cT}(v,\,D(r+2D))}B_{\bbB}\bigl(\psi(x),D\bigr).
  \end{equation}
  Indeed, let $s\in B_{\bbB}(\psi(v),r)$. Coarse density provides $x\in\cT$ with
  $d(\psi(x),s)\leq D$, so $d(\psi(x),\psi(v))\leq r+D$. The lower quasi-isometry bound then gives
  $d(x,v)\leq D(r+2D)$, which proves \cref{eq:ball-cover}.

  Now choose $r=\floor{\ell/D-2D}$, which is positive for all sufficiently large $\ell$. Then
  $D(r+2D)\leq\ell$. Since $B_{\cT}(v,\ell)$ is a segment, the index set on the right-hand side of
  \cref{eq:ball-cover} has at most $2\ell+1$ elements, and every ball of radius $D$ in $\bbB$ has
  at most $3\cdot2^D$ vertices. On the other hand, $B_{\bbB}(\psi(v),r)$ contains the complete
  subtree of depth $r$ below $\psi(v)$ and hence at least $2^r$ vertices. Therefore
  \[
    2^{\floor{\ell/D-2D}}\leq(2\ell+1)\cdot3\cdot2^D,
  \]
  which fails for all sufficiently large $\ell$.
\end{proof}

\begin{theorem}[Separation from the binary tree]\label{thm:converse}
  Let $\theta$ satisfy $\theta_1+\theta_2=1$ with $0<\theta_1<1$, and let $\cT$ be a
  Galton--Watson tree with offspring distribution $\theta$. Then, almost surely, $\cT$ is
  not quasi-isometric to the binary tree.
\end{theorem}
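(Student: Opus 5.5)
The plan is to verify the hypothesis of \cref{thm:bottleneck} almost surely and conclude at once. We work on the full-probability event $\Omega_0$ of \cref{sec:encoding}, so that $\cT$ is the tree associated with the label field $\lambda$. By \cref{thm:chains}, the chain of $w\in\bbB$ is a path of $\lambda(w)$ vertices. It starts at $\iota(w)$, whose parent is the terminating branch point of the chain of the parent word, and it ends at the branch point $\iota(w)1^{\lambda(w)-1}$, which has exactly two children.

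The key step is the following observation. Suppose $w\neq\troot$ and $\lambda(w)\geq 2\ell+1$, and let $v$ be the middle vertex $\iota(w)1^{\ell}$ of this chain. Every vertex within distance $\ell$ of $v$ lies in the chain or at its parent end. Going upward, we reach $\iota(w)$ after $\ell$ steps. Going downward, we reach $\iota(w)1^{2\ell}$, which is at most the branch point $\iota(w)1^{\lambda(w)-1}$. All intermediate vertices are unary, with one parent and one child. Hence $B_{\cT}(v,\ell)$ is exactly the path $\set{\iota(w)1^{j}\colon 0\leq j\leq 2\ell}$, which is isometric to a segment of $\bbZ$. (Using $\lambda(w)\geq2\ell+2$ would add a safety margin if we wanted to avoid the endpoint branch point entirely, but equality is harmless since a ball of radius $\ell$ only needs to reach distance $\ell$.) Taking $w=\troot$ also works if we centre at $1^{\ell}$ with $\lambda(\troot)\geq 2\ell+1$ and note that the root has no parent. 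It is cleaner, however, to simply allow any $w$.

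It remains to show that, almost surely, $\sup_{w}\lambda(w)=\infty$. By \cref{thm:geometric}, the labels are i.i.d.\ geometric with $\bbP(\lambda(w)\geq n)=\theta_1^{n-1}>0$ because $\theta_1>0$. Along the infinitely many words $w\in\bbB$, the second Borel--Cantelli lemma, or simply independence, gives, for each fixed $\ell$, infinitely many $w$ with $\lambda(w)\geq 2\ell+1$ almost surely. Intersecting over $\ell\in\bbN$ yields vertices $v_\ell$ for every $\ell$ simultaneously. \Cref{thm:bottleneck} then shows that $\cT\not\simeq\bbB$.

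I expect no real obstacle. The only care needed is the boundary bookkeeping in the ball computation, namely checking that the radius-$\ell$ ball does not reach a branch point's second child. This is ensured by the margin $\lambda(w)\geq 2\ell+1$, or $2\ell+2$ to be safe.
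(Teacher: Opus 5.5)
Your proof is correct and follows the paper's argument: the i.i.d.\ geometric labels of \cref{thm:geometric} give arbitrarily long unary chains almost surely, the middle vertex of such a chain has a radius-$\ell$ ball that is a path, and \cref{thm:bottleneck} concludes. The paper uses the threshold $2\ell+2$; your $\lambda(w)\geq2\ell+1$ also suffices, because the children of the terminating branch point lie at distance $\ell+1$ from the centre, as your ball computation checks.
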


\begin{proof}
  Since $0<\theta_1<1$, the labels $\lambda(w)$, $w\in\bbB$, are independent with unbounded
  support by \cref{thm:geometric}. Thus almost surely $\sup_w\lambda(w)=\infty$, and $\cT$ contains
  chains of single-descendant vertices of arbitrary length. Choose a middle vertex $v_\ell$ of
  a chain of length at least $2\ell+2$. Its radius-$\ell$ ball stays inside the chain and is
  isometric to a segment of $\bbZ$. Thus \cref{thm:bottleneck} shows that almost surely
  $\cT$ is not quasi-isometric to $\bbB$.
\end{proof}

\begin{proof}[Proof of \cref{thm:trichotomy-simple}]
  Suppose first that $\theta$ and $\theta'$ belong to the same class. In class (R) both
  realisations are the ray. In class (F) the support forces $\theta_2=\theta_2'=1$ and both
  realisations are the binary tree. In class $(\mathrm{C}_{\langle1\rangle})$ both are
  two-value laws with $\theta_1,\theta_1'\in(0,1)$, every realisation has infinite diameter, and
  \cref{thm:cross-law} gives $\cT\simeq\cT'$ almost surely. In class (B) it is \cref{thm:hairy}.

  Suppose next that they belong to different classes. If one law belongs to class (R), its
  realisation is the ray, the other law is supercritical, and by
  \cref{thm:regime-obstructions}\labelcref{it:obstr-rays} its realisation almost surely contains
  three rays meeting only in their common initial vertex, so \cref{thm:three-rays} gives
  $\cT\not\simeq\cT'$ almost surely. If one belongs to class (F) and the other to
  $(\mathrm{C}_{\langle1\rangle})$, then the realisation in the chain class is almost surely not
  quasi-isometric to the binary tree, by \cref{thm:converse}. In the two remaining cases one law
  belongs to class (B) and the other to class (F) or $(\mathrm{C}_{\langle1\rangle})$.
  Exchanging the names if necessary, we may take $\theta_0>0$ and $\theta_0'=0$. By
  \cref{thm:regime-obstructions}\labelcref{it:obstr-bushes,it:obstr-lines}, almost surely $\cT$
  has bushes of unbounded depth and there is a finite $r$ such that every vertex of $\cT'$ lies
  within distance $r$ of a line. Hence \cref{thm:bush-separation} gives
  $\cT\not\simeq\cT'$ almost surely.

  The first different-class case also gives the stated separation from the ray.
\end{proof}

\clearpage
\part{Full classification of Galton--Watson trees with finitely supported offspring distributions}\label{part:full-classification}

We now extend the classification of \cref{part:binary-classification} to all finitely supported
offspring laws. We again compare trees by matching labelled pieces and assembling local comparisons
into quasi-isometries. The new difficulty is that
branching can vary both within each tree and between the two offspring laws. Encoding this variable
branching in a binary tree introduces dependence between labels, which requires a matching theorem
beyond the independent setting of \cref{part:binary-classification}. We first prepare the geometric
descriptions needed for this encoding, then prove the Markov matching theorem. We combine these
ingredients with geometric separation arguments to complete the classification. We then use the
classification to prove mutual quasi-isometric embeddability across all supercritical regimes.

\section{Matching classes for arbitrary offspring laws}\label{sec:general-classes}
We first extend the shape construction of \cref{sec:shapes} to finitely supported offspring laws.
For offspring supported on $\set{0,1,2}$, every split has exactly two surviving children and
no dying children. The reduced skeleton is therefore the binary tree, and a split carries no
bush. Only the shape law had to be quantised and coupled.

With larger offspring numbers, splits may have different arities and may carry bushes of their
own. The shape ending at a split can therefore depend on its arity. We construct labels whose
law is independent of that arity, while retaining the geometric comparisons needed for
quasi-isometry. For two offspring laws, the labels must also have a common law, while the arity laws
may remain different. We will compare them by replacing each split by a finite binary tree
with one leaf for each child.

We first describe the general reduced skeleton and develop the shape relabelling in the bushy
regime. In the chain regime there are no bushes, and the neck length is already independent
of the arity. We recall this simpler label law in \cref{sec:general-chain} before constructing
binary encodings for both regimes.

Throughout, $\theta$ is supercritical with support in $\set{0,\dots,J}$ and $\theta_J>0$, so
that $J$ is its largest supported offspring number. We write
$f(s)=\sum_{j=0}^{J}\theta_js^j$ for its generating function and use the probability convention
of \cref{sec:gw-trees}: throughout the skeleton and shape constructions, $\bbP$ denotes the
survival-conditioned law, together with any independent relabelling variables.

\subsection{The reduced skeleton}

As in \cref{sec:shape-harris}, a vertex of $\cT(\omega)$ is a \emph{skeleton} vertex if its
subtree is infinite, a \emph{split} if at least two of its children are skeleton vertices, and a
\emph{neck} vertex if it is a skeleton vertex with exactly one skeleton child. The finite subtrees
rooted at the remaining children are the \emph{bushes}, and a split may carry bushes of its own.
Conditioned on $\cT(\omega)$ having infinite diameter, the \emph{reduced skeleton} is the tree
of splits. Its root is the first split at or below the original root. The children of a split in
the reduced skeleton are the first splits reached by descending from each of its skeleton children.

Starting at the original root or at a skeleton child of a split, we follow the unique skeleton
child until we reach a split. We define the \emph{neck length} $m$ to be the number of vertices
visited, including the terminating split. Thus there are $m-1$ preceding neck vertices, and
$m=1$ when the starting vertex is already a split. We call the descent from the original root the
\emph{initial neck}.

The decomposition is classical, due to Harris~\cite{Harris1948} and
Sevastyanov~\cite{Sevastyanov1951}. See
Athreya--Ney~\cite[Chapter~I.12]{AthreyaNey1972} and
Lyons--Peres~\cite[Proposition~5.28]{LyonsPeres2016} for standard accounts. As in Part 2, we rederive its
arithmetic to be fairly self-contained and because the chain and shape estimates use the
constants explicitly.

\begin{proposition}[Harris decomposition, general support]\label{thm:harris-general}
  Let $q\in[0,1)$ be the extinction probability, the root of $s=f(s)$. For a
  Galton--Watson tree with offspring distribution $\theta$, conditioned on having infinite
  diameter, the following hold:
  \begin{enumerate}[(i),ref=(\roman*)]
    \item\label{it:harris-general-skeleton} The skeleton is a Galton--Watson tree with offspring
      distribution $\widetilde\theta$ given by the Harris--Sevastyanov transform
      \begin{equation}\label{eq:hs-transform}
        \widetilde f(s)=\frac{f\bigl(q+(1-q)s\bigr)-q}{1-q},
      \end{equation}
      which satisfies $\widetilde\theta_0=\widetilde f(0)=0$ and
      $\widetilde\theta_1=\widetilde f'(0)=f'(q)$. The skeleton never dies out, is supercritical,
      and $0\leq\widetilde\theta_1<1$.
    \item\label{it:harris-general-reduced} The reduced skeleton is a Galton--Watson tree with
      offspring distribution $\widetilde\nu$ on $\set{2,\dots,J}$,
      \begin{equation}\label{eq:reduced-law}
        \widetilde\nu_k=\frac{\widetilde\theta_k}{1-\widetilde\theta_1}
        \qquad(2\leq k\leq J),
      \end{equation}
      and the neck lengths, including the initial neck, are independent of one another and of
      the reduced skeleton, with geometric law
      $\bbP(m=r)=\widetilde\theta_1^{\,r-1}(1-\widetilde\theta_1)$ for $r\geq1$. When
      $\widetilde\theta_1=0$, every neck has length one almost surely.
    \item\label{it:harris-general-bushes} If $q>0$, then, conditioned on the skeleton, the
      decorations at distinct vertices are independent. A skeleton vertex with $k$ surviving
      children and total offspring $j$ carries $j-k$ bushes. Conditioned on these counts, the
      bushes are independent Galton--Watson trees with the conjugate law
      \begin{equation}\label{eq:conjugate-general}
        \theta^\dagger_j=\theta_jq^{\,j-1},
      \end{equation}
      which is subcritical with mean $f'(q)$. When $q=0$, there are no bushes.
  \end{enumerate}
\end{proposition}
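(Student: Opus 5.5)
The proof follows the same route as \cref{thm:harris}, replacing the explicit quadratic computations by generating-function identities.

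\emph{Step 1: the extinction probability and the type of each child.} Since $\theta$ is supercritical, $q<1$ is the smallest root of $s=f(s)$ in $[0,1]$. Each child independently founds an infinite subtree with probability $1-q$.

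\emph{Step 2: the skeleton law, part (i).} Fix a vertex with $j$ children, and let $k\geq1$ of them survive. The joint weight of $j$ and $k$ is
\[
  \theta_j\binom jk(1-q)^kq^{j-k}.
\]
Summing over $j$ and dividing by $1-q$ gives
\[
  \widetilde\theta_k=\frac{1}{1-q}\sum_j\theta_j\binom jk(1-q)^kq^{j-k}.
\]
Its generating function is $\sum_k\widetilde\theta_ks^k=\bigl(f(q+(1-q)s)-q\bigr)/(1-q)$, after removing the $k=0$ term $f(q)=q$. This is \cref{eq:hs-transform}. Differentiating at $0$ gives $\widetilde\theta_1=f'(q)$. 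Convexity of $f$ together with $q<1$ being a fixed point below the second fixed point $1$ gives $f'(q)<1$. The mean $\widetilde f'(1)=f'(1)>1$ shows supercriticality, and $\widetilde\theta_0=0$ shows the skeleton never dies out. That the skeleton conditioned on survival is itself Galton--Watson, and that the decorations are conditionally independent, I would take from the two-type branching process embedding, citing \cite[Proposition~5.28]{LyonsPeres2016} and \cite[Chapter~I.12]{AthreyaNey1972} exactly as in \cref{thm:harris}.

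\emph{Step 3: the reduced skeleton, part (ii).} Apply the strong branching property along the skeleton. From any starting skeleton vertex, successive skeleton vertices independently have one skeleton child with probability $\widetilde\theta_1$. The neck length $m$ is therefore the first index at which the arity is at least $2$, so $m$ is geometric with parameter $1-\widetilde\theta_1$. The arity at that point has the conditional law $\widetilde\theta_k/(1-\widetilde\theta_1)$ and is independent of $m$. The subsequent descents from the children are fresh independent copies of the same process. By induction over generations this gives both the independence claims and \cref{eq:reduced-law}. When $\widetilde\theta_1=0$ the geometric law degenerates to $m=1$.

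\emph{Step 4: the bushes, part (iii).} Conditioned on the vertex having $j$ children of which $k$ survive, the remaining $j-k$ children are conditioned to die. A Galton--Watson tree conditioned on extinction has offspring weights proportional to $\theta_jq^j$. Normalising by $q$ gives \cref{eq:conjugate-general}. Its generating function is $f(qs)/q$, whose mean is $f'(q)<1$. When $q=0$ no child dies, so there are no bushes.

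\emph{Main obstacle.} The main obstacle is the bookkeeping in the conditional independence statements. One has to show that decorations are independent of the reduced skeleton and of the necks given the arities, and the cleanest route is the two-type embedding cited above. The rest is algebra.
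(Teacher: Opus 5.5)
Your proposal is correct and follows the paper's proof essentially step for step. It uses the same binomial-thinning generating function $f(q+(1-q)s)$ with the $k=0$ term $f(q)=q$ removed, the same splitting of neck length and terminating arity via $\widetilde\theta_1^{\,r-1}\widetilde\theta_k=\widetilde\theta_1^{\,r-1}(1-\widetilde\theta_1)\widetilde\nu_k$, the same tilted bush law $\theta_jq^{\,j-1}$, and the same appeal to \cite{LyonsPeres2016} and \cite{AthreyaNey1972} for the Galton--Watson property and conditional independence. The only difference is cosmetic: you obtain supercriticality from $\widetilde f'(1)=f'(1)>1$, while the paper obtains it from $\widetilde\theta_0=0$ and $\widetilde\theta_1<1$, and these are equivalent.
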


\begin{proof}
  Before conditioning, each child survives independently with probability $1-q$, so the number of
  surviving children has generating function
  \begin{equation}\label{eq:survive-coeff}
    f\bigl(q+(1-q)s\bigr)=\sum_{k=0}^{J}c_k s^k,
    \qquad
    c_k=\sum_{j=k}^{J}\theta_j\binom{j}{k}(1-q)^kq^{\,j-k},
  \end{equation}
  by the binomial theorem. Since $c_0=f(q)=q$, conditioning on survival gives
  $\widetilde\theta_0=0$ and $\widetilde\theta_k=c_k/(1-q)$ for $k\geq1$, which is
  \cref{eq:hs-transform}. Moreover $c_1=(1-q)f'(q)$, so
  $\widetilde\theta_1=f'(q)<1$, where the inequality follows from supercriticality and the
  convexity of $f$. Thus the skeleton never dies out and has positive probability of splitting,
  so it is supercritical.

  A neck of length $r$ consists of $r-1$ unary skeleton vertices followed by a split. Hence, for
  $r\geq1$ and $2\leq k\leq J$,
  \[
    \bbP(m=r,\ \text{terminating arity}=k)
    =\widetilde\theta_1^{\,r-1}\widetilde\theta_k
    =\widetilde\theta_1^{\,r-1}(1-\widetilde\theta_1)\widetilde\nu_k.
  \]
  Summation and factorisation give the geometric neck law and its independence from the terminating
  arity. Applying the branching property below every skeleton child, and at the original root,
  proves \cref{it:harris-general-reduced}. If $\widetilde\theta_1=0$, the geometric law is
  concentrated at one.

  Finally, suppose that $q>0$. A child conditioned on extinction gives rise to a Galton--Watson tree with
  the offspring law in \cref{eq:conjugate-general}, since
  $\sum_j\theta_jq^{j-1}=f(q)/q=1$. Its mean is $f'(q)<1$. A vertex with $j$ children, of which $k$
  survive, has $j-k$ such bushes. The branching property and the conditional independence in the
  Harris--Sevastyanov decomposition prove \cref{it:harris-general-bushes}. If $q=0$, every child
  survives almost surely, so there are no bushes.
\end{proof}

\Cref{thm:harris-general} specialises to \cref{thm:harris} at $J=2$: there
$\widetilde f(s)=\widetilde\theta_1s+\widetilde\theta_2s^2$ with
$\widetilde\theta_1=f'(q)=\theta_1+2\theta_0$ and $\widetilde\theta_2=\theta_2-\theta_0$, the
reduced law $\widetilde\nu$ is the point mass at $2$, and a split has $j=k=2$, and hence no bushes.

When extinction is possible, the transform gives positive weight to every arity from $2$ to
$J$, even if the original offspring law has gaps in its support.

\begin{corollary}[The reduced law has full support]\label{thm:full-support}
  If $\theta_0>0$, then $\widetilde\nu_k>0$ for every $2\leq k\leq J$.
\end{corollary}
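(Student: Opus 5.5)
By \cref{eq:reduced-law}, $\widetilde\nu_k>0$ exactly when $\widetilde\theta_k>0$, and \cref{thm:harris-general}\labelcref{it:harris-general-skeleton} gives $\widetilde\theta_k=c_k/(1-q)$, with $c_k$ the coefficient in \cref{eq:survive-coeff}. So the plan is to fix $2\leq k\leq J$ and show $c_k>0$.

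We will pick out the single summand $j=J$ in
\[
  c_k=\sum_{j=k}^{J}\theta_j\binom{j}{k}(1-q)^kq^{\,J-k}\Big|_{j=J}+\cdots .
\]
All summands are nonnegative, so $c_k\geq\theta_J\binom{J}{k}(1-q)^kq^{J-k}$, and it suffices that each factor here is positive. We have $\theta_J>0$ by the choice of $J$. The binomial coefficient is positive since $k\leq J$. Supercriticality gives $q<1$, so $(1-q)^k>0$.

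The only point needing care is $q^{J-k}>0$, which requires $q>0$ whenever $k<J$. This is where the hypothesis $\theta_0>0$ enters: $q=f(q)\geq\theta_0>0$. Hence the $j=J$ term is strictly positive, so $c_k>0$ and $\widetilde\nu_k>0$. The argument is routine, and the main step is simply noticing that the top arity $J$ is thinned binomially by extinctions onto every intermediate arity $k$.
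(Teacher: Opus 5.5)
Your proposal is correct and is the paper's proof: you isolate the $j=J$ summand of $c_k$ in \cref{eq:survive-coeff} and check that $\theta_J$, $\binom{J}{k}$, $1-q$ and $q\geq\theta_0$ are all positive. Your display is slightly garbled, since the general summand should carry $q^{\,j-k}$ and becomes $q^{\,J-k}$ only at $j=J$, but this does not affect the argument.
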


\begin{proof}
  Fix $2\leq k\leq J$. Among the terms of the coefficient $c_k$ of \cref{eq:survive-coeff}, the one with
  $j=J$ is $\theta_J\binom{J}{k}(1-q)^kq^{\,J-k}$, and each of its four factors is positive:
  $\theta_J>0$ because $J$ is the largest offspring number in the support, $\binom{J}{k}>0$ because
  $k\leq J$, $1-q>0$ by supercriticality, and $q>0$ because $q=f(q)\geq f(0)=\theta_0>0$. The other
  terms are nonnegative, so $\widetilde\theta_k>0$ and hence $\widetilde\nu_k>0$, as claimed.
\end{proof}

A vertex with $J$ children can have exactly $k$ surviving children for any $2\leq k\leq J$.
This is why no arity in that interval is missing from the reduced law. In the chain regime,
$q=0$ and every child survives, so the surviving arity is the offspring number itself and gaps
may remain in the reduced law.

\subsection{Shapes at general arity}\label{sec:general-shapes}

We now discuss the shape construction and relabelling and assume $\theta_0>0$ throughout, that is
that we are in the bushy regime.
The shape of \cref{def:shape} widens in two ways: a neck vertex may now carry several bushes, and
the terminating split carries bushes of its own.

\begin{definition}[Shapes, general support]\label{def:shape-general}
  A \emph{shape} is a pair $\sigma=(m,(\beta_1,\dots,\beta_m))$, where $m\geq1$ and each $\beta_i$
  is a finite, possibly empty, list of finite trees with offspring numbers in $\set{0,\dots,J}$.
  Its \emph{realisation} is the finite rooted tree obtained from a path $v_1\cdots v_m$ by
  attaching the roots of the trees in $\beta_i$ by an edge at $v_i$. The \emph{entry (vertex)}
  is $v_1$, the \emph{exit (vertex)} is $v_m$, and $\abs\sigma$ is the number of vertices of the
  realisation.
  We write $\cS$ for the set of all shapes.
\end{definition}

\paragraph*{\bf Shapes, arities, and assembly} We write $S(w)$ for the shape at a vertex $w$ of the reduced skeleton, $k(w)$ for the arity of the
split at $w$, and $m(w)$ for the neck length of $S(w)$. This neck starts at a skeleton child of the
preceding split, or at the original root for the first shape, and ends at $w$. The lists
$\beta_1,\dots,\beta_{m-1}$ record the bushes of the preceding neck vertices, and the \emph{exit
bouquet} $\beta_m$ records the bushes of the terminating split. A family of shapes indexed by the
reduced skeleton is \emph{assembled} by joining the exit of the shape at $w$ to the entries of the
shapes at its children by an edge. The root of the assembly is the entry of the first shape. The
space $\cS$ also contains shapes that have zero probability under the offspring law. 
We first compute how the bushes at a split depend on its arity.

\begin{lemma}[The joint law at a split]\label{thm:split-joint}
  Conditioned on being a split, the arity $k$ of a skeleton vertex and the number $r$ of its
  bushes have the joint law
  \begin{equation}\label{eq:split-joint}
    \bbP(k,r)=\frac{\theta_{k+r}\binom{k+r}{k}(1-q)^{k-1}q^{\,r}}{1-\widetilde\theta_1}
    \qquad(k\geq2,\ r\geq0).
  \end{equation}
\end{lemma}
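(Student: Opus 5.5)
The plan is to compute the law of the pair (arity, bush count) at a vertex of the unconditioned tree and then condition, in two stages, first on survival and then on being a split. We begin at a fixed vertex of the Galton--Watson tree before any conditioning. It has $j$ children with probability $\theta_j$. As in the proof of \cref{thm:harris-general}, each child independently founds an infinite subtree with probability $1-q$ and dies out with probability $q$.

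Hence the probability that exactly $k$ children survive and the other $r=j-k$ die is
\[
  \theta_{k+r}\binom{k+r}{k}(1-q)^kq^{\,r}.
\]
This is the $j=k+r$ term of the coefficient $c_k$ in \cref{eq:survive-coeff}. The dying children are exactly the bushes of the vertex, so the count $r$ is the number of bushes.

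Next we condition on the vertex being a skeleton vertex, that is, on survival, which has probability $1-q$. Dividing by $1-q$ gives
\[
  \bbP(k,r\mid\text{skeleton})=\theta_{k+r}\binom{k+r}{k}(1-q)^{k-1}q^{\,r}.
\]
Summing over $r\geq0$ recovers $\widetilde\theta_k=c_k/(1-q)$, which is consistent with \cref{eq:hs-transform}. By definition, a split is a skeleton vertex with $k\geq2$. Its conditional probability given survival is
\[
  \sum_{k\geq2}\widetilde\theta_k=1-\widetilde\theta_1,
\]
using $\widetilde\theta_0=0$. Dividing once more yields \cref{eq:split-joint} for $k\geq2$ and $r\geq0$, with the convention $\theta_j=0$ for $j>J$.

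The remaining point to justify is that this one-vertex computation is the correct law for a split vertex of the reduced skeleton under $\bbP$. This needs more care than the counting step. By the branching property, the offspring count and the survival indicators of the children at a given skeleton vertex are independent of the history above it. Conditioning on that vertex surviving therefore leaves exactly the law computed above.

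This also matches the description in \cref{thm:harris-general}\labelcref{it:harris-general-reduced}: the terminating arity of a neck is independent of the neck length and has law $\widetilde\nu$. The first marginal of \cref{eq:split-joint} is $\widetilde\theta_k/(1-\widetilde\theta_1)=\widetilde\nu_k$, which serves as a consistency check.
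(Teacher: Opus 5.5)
Your proposal is correct and follows the paper's argument. You compute the unconditioned mass $\theta_{k+r}\binom{k+r}{k}(1-q)^kq^{\,r}$ as the $j=k+r$ term of $c_k$ and divide by the split probability $(1-q)(1-\widetilde\theta_1)$. Your only departure is to do this in two stages (survival, then $k\geq2$), which is equivalent because a vertex with at least two surviving children survives automatically.
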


\begin{proof}
  Each child founds a surviving subtree independently with probability $1-q$, so a vertex has
  total offspring $k+r$ with exactly $k$ children surviving with probability
  $\theta_{k+r}\binom{k+r}{k}(1-q)^kq^{\,r}$, the $j=k+r$ term of the coefficient $c_k$ in
  \cref{eq:survive-coeff}. Being a split is the event $k\geq2$, whose probability is
  $(1-q)(1-\widetilde\theta_1)$: the vertex survives with probability $1-q$, and conditioned on
  survival its number of surviving children is at least two with probability
  $1-\widetilde\theta_1$, by \cref{thm:harris-general}\labelcref{it:harris-general-skeleton}.
  Conditioning divides the masses by this probability, and one factor $1-q$ cancels, which is
  \cref{eq:split-joint}.
\end{proof}

\begin{remark}[No product structure]
  For $J\geq3$, the joint law is not a product of a law in $k$ and a law in $r$. Indeed, it gives
  positive mass to $(J,0)$ and $(2,J-2)$, by $\theta_J>0$ and $q>0$, but none to $(J,J-2)$, since
  $2J-2>J$. The three points are distinct, whereas a product law giving positive mass to the first
  two would give positive mass to the third.
\end{remark}

\subsection{The conditional laws and the relabelling}\label{sec:general-relabel}

For the later matching argument, we need labels whose law is independent of the arity. The shape
$S(w)$ itself records the exit bouquet, so \cref{thm:split-joint} makes the shape and arity
dependent when $J\geq3$. We use a randomised relabelling to ensure independence from the arity.

Redistributing the exit bushes among the children creates other dependencies. Sending them to
all children makes siblings share a component, while sending them to one designated child makes
its law differ from those of its siblings. We instead leave each shape in place and couple its
law, conditioned on the terminating arity, to one reference law. We use independent randomness
to sample a partner from this coupling and read the label from the partner. The coupling may
depend on the arity, but the partner has the same law for every arity. We construct the couplings
so that each shape and its partner admit a controlled marked comparison.

\begin{definition}[Conditional shape laws]\label{def:conditional-laws}
  For $k\in\supp\widetilde\nu$ let $\mu_k$ be the law of the shape at the root of the reduced
  skeleton conditioned on its terminating split having arity $k$.
  We write $\mu$ for the shape law before conditioning on the terminating arity.
\end{definition}

\begin{lemma}[Explicit form of the conditional laws]\label{thm:conditional-explicit}
  Fix $k\in\supp\widetilde\nu$, and let
  $S=(m,(\beta_1,\dots,\beta_m))$ be a random shape with law $\mu_k$. Then
  \[
    \bbP_{\mu_k}(m=\ell)=\widetilde\theta_1^{\,\ell-1}(1-\widetilde\theta_1)
    \qquad(\ell\geq1).
  \]
  Conditioned on $m=\ell$, the bouquet sizes are independent. For $1\leq i<\ell$,
  \[
    \bbP_{\mu_k}(\#\beta_i=r\mid m=\ell)
    =\frac{(r+1)\theta_{r+1}q^{\,r}}{\widetilde\theta_1}
    \qquad(0\leq r\leq J-1),
  \]
  while the exit bouquet satisfies
  \[
    \bbP_{\mu_k}(\#\beta_\ell=r\mid m=\ell)
    =\frac{\theta_{k+r}\binom{k+r}{k}q^{\,r}}
    {\displaystyle\sum_{s=0}^{J-k}\theta_{k+s}\binom{k+s}{k}q^{\,s}}
    \qquad(0\leq r\leq J-k).
  \]
  Conditioned further on these sizes, all bushes in the bouquets are independent
  Galton--Watson trees with offspring law $\theta^\dagger$.
  The shape law is the mixture
  $\mu=\sum_{k\in\supp\widetilde\nu}\widetilde\nu_k\,\mu_k$.
\end{lemma}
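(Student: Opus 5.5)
We compute the joint law of the root shape and the terminating arity directly, by conditioning on the reduced skeleton and using the branching property along the initial neck. The first step is the neck length. By \cref{thm:harris-general}\labelcref{it:harris-general-reduced}, the initial neck length is geometric with parameter $\widetilde\theta_1$ and independent of the terminating arity. Conditioning on $k$ therefore leaves its law unchanged, which gives the first display.

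Next we treat the bouquet sizes. Conditioned on the skeleton, \cref{thm:harris-general}\labelcref{it:harris-general-bushes} makes the decorations at distinct vertices independent, and the bushes independent $\theta^\dagger$-trees given their counts. It remains to compute the conditional law of the bush count at each vertex given its skeleton role.

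For a neck vertex (exactly one surviving child), the probability of total offspring $r+1$ with exactly one survivor is $\theta_{r+1}(r+1)(1-q)q^{r}$. Summing over $r$ gives $c_1=(1-q)f'(q)=(1-q)\widetilde\theta_1$. Dividing, we get $(r+1)\theta_{r+1}q^r/\widetilde\theta_1$, as claimed.

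For the terminating split with exactly $k$ surviving children, the same computation gives weights $\theta_{k+r}\binom{k+r}{k}(1-q)^kq^r$. After cancelling $(1-q)^k$ and normalising, this is the stated exit-bouquet law; this is just \cref{thm:split-joint} conditioned on $k$.

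The main point needing care is independence, and it is bookkeeping rather than a genuine obstacle. We must check that conditioning on the event \{neck length $\ell$, arity $k$\} keeps the per-vertex decorations independent with the above marginals. The argument is that the event $\{m=\ell,\ k(\troot)=k\}$ is a product of events, one per vertex along the neck, each stating how many children survive. The branching property makes the pairs (survivor count, total offspring) at these vertices independent. Conditioning a product of independent variables on a product event preserves independence and yields the conditional marginals computed above. Given the counts, the dying children are independent extinction-conditioned $\theta$-trees, i.e.\ $\theta^\dagger$-trees by \cref{eq:conjugate-general}.

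Finally, the mixture identity $\mu=\sum_k\widetilde\nu_k\mu_k$ is the law of total probability. By \cref{thm:harris-general}\labelcref{it:harris-general-reduced}, the terminating arity of the root shape has law $\widetilde\nu$.
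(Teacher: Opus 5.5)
Your proposal is correct and follows the paper's proof step for step. The geometric neck law and its independence from the arity come from \cref{thm:harris-general}, the unary bouquet law comes from normalising by $c_1=(1-q)\widetilde\theta_1$, the exit bouquet law is \cref{thm:split-joint} conditioned on $k$, and the mixture is the law of total probability. Your product-event argument for conditional independence only spells out what the paper attributes to the branching property; strictly, the neck path is random, so the product structure is that of the i.i.d.\ survivor--offspring pairs along successive surviving children, stopped at the first split.
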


\begin{proof}
  By \cref{thm:harris-general}\labelcref{it:harris-general-reduced}, the neck length has the
  stated geometric law and is independent of the arity of the terminating split. At a unary
  skeleton vertex, the probability of one surviving child and $r$ dying children before
  conditioning is
  \[
    \theta_{r+1}\binom{r+1}{1}(1-q)q^{\,r}.
  \]
  Summing over $r$ gives $(1-q)f'(q)=(1-q)\widetilde\theta_1$, so conditioning on one surviving
  child gives the stated law of $\#\beta_i$. At the terminating split, conditioning the law in
  \cref{eq:split-joint} on arity $k$ cancels the factors independent of $r$ and gives the stated
  law of $\#\beta_\ell$. The conditional independence of the bouquet sizes and the conjugate
  laws of all their bushes follow from
  \cref{thm:harris-general}\labelcref{it:harris-general-bushes} and the branching property.

  Finally the arity of the terminating split has law $\widetilde\nu$, since the reduced skeleton
  is Galton--Watson with that offspring law, so averaging the $\mu_k$ against $\widetilde\nu$
  gives the shape law.
\end{proof}

\paragraph*{\bf Conditional independence} Since distinct shapes use disjoint necks and decorations, the branching property gives the
following conditional independence, which we record for later use.

\begin{lemma}[Conditional independence]\label{thm:conditional-iid}
  Conditioned on the arity field $(k(w))_w$ of the reduced skeleton, the shapes $S(w)$ are
  independent with $S(w)\sim\mu_{k(w)}$.
\end{lemma}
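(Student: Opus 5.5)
We will derive \cref{thm:conditional-iid} from the branching property of the survival-conditioned tree, organised by the Harris decomposition of \cref{thm:harris-general}. The plan is to write each shape as a measurable function of disjoint blocks of offspring data, and then check that conditioning on the arity field leaves these blocks independent, each with the correct conditional law.

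First we fix the combinatorial description. Each vertex $w$ of the reduced skeleton has a neck segment running from its starting vertex (a skeleton child of the parent split, or the original root) down to the split at $w$. Attached to this segment are the dying children of its vertices together with their finite subtrees. The shape $S(w)$ is a function of the offspring counts on this neck, of the survival status of their children, and of the bushes hanging from the neck vertices. The arity $k(w)$ is the number of surviving children of the terminating split. Distinct $w$ use disjoint vertex sets for these data, because necks partition the skeleton and bushes are attached to exactly one skeleton vertex.

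Next we condition on the reduced skeleton's genealogy, meaning the arity field. We do this via the two-type description cited in the proof of \cref{thm:harris-general}, with surviving and dying vertices as the two types. Given that a skeleton vertex has a given type, the data below it are independent of everything outside its subtree. We then peel off one neck at a time, using induction on generations of the reduced skeleton and working up to a finite generation $n$. The start vertex of the neck at $w$ is a surviving vertex. The joint law of its neck, its bushes, and the identity of the surviving children of its terminating split does not depend on anything above it. Given that joint law, the subtrees below the $k(w)$ surviving children are independent survival-conditioned Galton--Watson trees, independent of $S(w)$. Consequently the joint law of $(S(w),k(w))$ factorises across $w$, in the sense that
\[
  \bbP\bigl(S(w)=\sigma_w,\ k(w)=k_w\ \text{for all } w \text{ up to generation } n\bigr)
  =\prod_w \bbP(S=\sigma_w,\ k=k_w),
\]
where the product uses the tree structure determined by the $k_w$. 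Dividing by $\prod_w\widetilde\nu_{k_w}$ turns each factor into $\mu_{k_w}(\sigma_w)$, by \cref{def:conditional-laws} together with the fact that the root shape has the same joint law as any other shape. Since the finite-dimensional distributions determine the law, this gives the claim.

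The main obstacle is making rigorous the fact that the root shape and every non-root shape share the same joint law with the arity. Non-root necks start at a skeleton child of a split rather than at a root conditioned only on survival. The way around this is to note that a skeleton child, conditioned on its parent's data, is the root of a survival-conditioned tree, so \cref{thm:conditional-explicit} applies verbatim. Bushes at the terminating split belong to $S(w)$ rather than to its children, and this assignment keeps the blocks disjoint.
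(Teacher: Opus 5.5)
Your proposal is correct and is essentially the paper's argument. The paper justifies this lemma in one line: distinct shapes use disjoint necks and decorations, so the branching property (in its Harris two-type form) gives the conditional independence. Your generation-by-generation factorisation of the joint law of $(S(w),k(w))$, together with the observation that every non-root neck starts at a survival-conditioned root, is a faithful expansion of that sentence. The phrase ``disjoint blocks of offspring data'' is slightly loose, since survival status depends on entire subtrees; it is your step-by-step conditioning on vertex types along each neck that makes the argument rigorous.
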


\paragraph*{\bf Tail and point mass bounds} We now check that the conditional laws $\mu_k$ and their mixture $\mu$ satisfy, uniformly in $k$,
the tail and point-mass bounds of \cref{thm:shape-mass}.

\begin{lemma}[Uniform mass bounds]\label{thm:mass-uniform}
  There are $c,C,n_0>0$, depending only on $\theta$, such that every
  $\lambda\in\{\mu\}\cup\{\mu_k\colon k\in\supp\widetilde\nu\}$ satisfies
  \begin{enumerate}[(i),ref=(\roman*)]
    \item $\lambda(\{\sigma\in\cS\colon\abs\sigma\geq n\})\leq e^{-cn}$ for all $n\geq n_0$,
      and
    \item $\lambda(\sigma)\geq e^{-C\abs\sigma}$ for every $\sigma\in\cS$ with
      $\lambda(\sigma)>0$.
  \end{enumerate}
\end{lemma}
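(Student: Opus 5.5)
We follow the proof of \cref{thm:shape-mass}, using the explicit description of \cref{thm:conditional-explicit} and keeping every constant independent of $k$. Since $\supp\widetilde\nu\subseteq\set{2,\dots,J}$ is finite, uniformity in $k$ amounts to taking a maximum over finitely many laws. We could even do the estimates separately for each $\mu_k$ and then take the worst constants. For the mixture $\mu=\sum_k\widetilde\nu_k\mu_k$, the tail bound follows because the mixture of the tails is bounded by their maximum. For the point-mass bound, if $\mu(\sigma)>0$ then some $\mu_k(\sigma)>0$, so $\mu(\sigma)\geq\widetilde\nu_{\min}e^{-C\abs\sigma}$, where $\widetilde\nu_{\min}$ is the least positive $\widetilde\nu_k$. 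This constant prefactor is absorbed by enlarging $C$, using $\abs\sigma\geq1$.

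\emph{Point masses.} For a shape $\sigma$ of positive $\mu_k$-probability, write $\mu_k(\sigma)$ as a product. The geometric neck law contributes $m$ factors, equal to $\widetilde\theta_1$ or $1-\widetilde\theta_1$. Each of the $m$ bouquet-size probabilities contributes one factor. Each bush vertex contributes one offspring weight $\theta^\dagger_j$. The bushes in a bouquet form an ordered list of independent trees, so there is no combinatorial multiplicity to account for, beyond perhaps a bounded ordering factor. The number of factors is therefore at most $2m+\abs\sigma\leq3\abs\sigma$. Each factor is a positive member of a finite set of constants determined by $\theta$ and $J$: the neck probabilities, the explicit bouquet-size probabilities for $r\leq J-1$ and for each $k$, and the weights $\theta^\dagger_j$. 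Let $p$ be the least positive constant in this set. Then $\mu_k(\sigma)\geq p^{3\abs\sigma}$, and we take $C=3\log p^{-1}$.

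\emph{Tails.} The neck length $m$ is geometric with parameter $\widetilde\theta_1<1$, so it has an exponential moment. If $\widetilde\theta_1=0$, then $m=1$. Every bouquet has at most $J$ bushes. Each bush is a subcritical Galton--Watson tree with law $\theta^\dagger$, and $\theta^\dagger$ has bounded support and mean $f'(q)<1$. A Chernoff bound on the exploration walk therefore gives $\bbE e^{tZ}<\infty$ for its total progeny $Z$ and some $t>0$. It follows that $\abs S$ is stochastically dominated, uniformly in $k$, by $m+\sum_{i=1}^{Jm}Z_i$ with i.i.d.\ $Z_i$ independent of $m$. Choose $t'>0$ small enough that $\bbE e^{t'Z}$ is close to $1$ and $\widetilde\theta_1(\bbE e^{t'Z})^{J}e^{t'}<1$. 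Then $\bbE e^{t'\abs S}<\infty$ with a bound independent of $k$, and Markov's inequality gives $\lambda(\abs S\geq n)\leq e^{-t'n/2}$ for $n\geq n_0$. When $q=0$ there are no bushes, and the bound is immediate. The main step needing care is this choice of $t'$, together with the fact that the domination holds uniformly over the exit bouquets. Both are routine once we use that the number of bushes per vertex is bounded by $J$.
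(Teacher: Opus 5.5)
Your proposal is correct and follows the paper's proof essentially step by step. For the tails, you dominate $\abs{S_k}$ by $m+\sum_i Z_i$ with a $k$-independent exponential moment, choosing $t$ so that $\widetilde\theta_1e^{t}(\bbE e^{tZ})^{J}<1$. For the point masses, you write them as products of boundedly many factors from a finite set of positive constants. You handle the mixture through the least positive weight $\widetilde\nu_k$ and $\abs\sigma\geq1$.

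The differences are only bookkeeping: you allow $J$ rather than $J-1$ bushes per bouquet, and you count at most $3\abs\sigma$ rather than $(2+J)\abs\sigma$ factors. Your side remarks on $q=0$ and $\widetilde\theta_1=0$ are unnecessary, since $\theta_0>0$ is assumed here.
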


\begin{proof}
  Fix $k\in\supp\widetilde\nu$, let $S_k$ have law $\mu_k$ and write $m$ for its neck length.
  Let $Z$ be the total progeny of a Galton--Watson tree with law $\theta^\dagger$. The argument
  in the proof of \cref{thm:shape-mass} gives $\bbE e^{tZ}<\infty$ for all sufficiently small
  $t>0$. By \cref{thm:conditional-explicit}, $\abs{S_k}$ is stochastically dominated by
  \[
    m+\sum_{i=1}^{(J-1)m}Z_i,
  \]
  where the $Z_i$ are independent copies of $Z$, independent of $m$. Choose $t>0$ small enough
  that $\widetilde\theta_1e^t(\bbE e^{tZ})^{J-1}<1$. The geometric law of $m$ then gives
  \[
    \bbE e^{t\abs{S_k}}
    \leq\bbE\bigl[\bigl(e^t(\bbE e^{tZ})^{J-1}\bigr)^m\bigr]<\infty.
  \]
  The right-hand side is independent of $k$, so Markov's inequality gives the first bound
  uniformly in $k$.

  If $\sigma\in\supp\mu_k$, its mass given by \cref{thm:conditional-explicit} is a product of
  at most $(2+J)\abs\sigma$ positive factors drawn from a finite set determined by $\theta$.
  If $p$ is the least such factor, then, uniformly in $k$,
  \[
    \mu_k(\sigma)\geq p^{(2+J)\abs\sigma}=e^{-C_0\abs\sigma},
    \qquad C_0=(2+J)\log p^{-1}.
  \]

  Finally, the mixture identity in \cref{thm:conditional-explicit} preserves the tail bound. If
  $\mu(\sigma)>0$, then $\mu_k(\sigma)>0$ for some $k\in\supp\widetilde\nu$, and
  $\mu(\sigma)\geq\widetilde\nu_k\mu_k(\sigma)$. Taking the minimum of the finitely many
  positive weights $\widetilde\nu_k$ and using $\abs\sigma\geq1$ gives the second bound for
  $\mu$ after increasing $C_0$.
\end{proof}

\paragraph*{\bf Nets and cuts} The net construction of \cref{sec:shape-net} uses only countability, size and marked
comparability, so it applies to the general shape space $\cS$. We continue using its naming
conventions: 
$\cR_D$ is the set of representatives, $\rep_D$ the representative map and $G_D$ the label graph.
The cut used in \cref{thm:dilution} does not require bounded arity. We record its general form
and the additional degree bound.

\begin{lemma}[General-arity dilution]\label{thm:general-dilution}
  Let $\sigma\in\cS$ and let $s\geq1$ be an integer. There are a finite rooted marked tree $T$
  and a surjection $P$ from the realisation of $\sigma$ onto $T$, preserving the entry and exit,
  such that $P$ is a $2s$-marked quasi-isometry and
  \[
    \abs T\leq\frac{\abs\sigma}{s}+1,
    \qquad \diam P^{-1}(v)\leq2s-2\quad(v\in T).
  \]
  If the realisation has at most $J$ children per vertex, then
  \[
    \abs{P^{-1}(v)}\leq1+J(s-1)\qquad(v\in T),
  \]
  and every vertex of $T$ has at most $J\bigl(1+J(s-1)\bigr)$ children.

  For all integers $D\geq2$ and $n\geq1$, every family of shapes of size at most $n$ whose
  distinct members are not $D$-comparable has at most
  $\exp\bigl(27(nD^{-1/3}+1)\bigr)$ members. The same bound holds for the members of $\cR_D$
  of size at most $n$.
\end{lemma}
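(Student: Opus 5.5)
We reuse the leaf-to-root cut from the proof of \cref{thm:dilution}, now allowing up to $J$ children per vertex. Process the vertices of the realisation of $\sigma$ in order of decreasing depth. At each vertex $v$, form the set consisting of $v$ and all descendants of $v$ not yet assigned. Declare this set a part when it has at least $s$ vertices, and otherwise leave it unassigned. The root's leftover set, if nonempty, becomes a final part.

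Each part is connected, since it consists of $v$ together with unassigned pieces hanging from $v$. Contract every part to a vertex to obtain $T$, root $T$ at the part of the entry, mark it at the part of the exit, and let $P$ be the projection.

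The size bound follows because every declared part has at least $s$ vertices and at most one extra part remains at the root, so $\abs T\leq\abs\sigma/s+1$. For the diameter, when a part is formed at $v$, each child subtree contributes fewer than $s$ unassigned vertices, all within a connected piece hanging from a child of $v$. So every vertex of the part lies within distance $s-1$ of $v$, giving $\diam P^{-1}(v)\leq2s-2$.

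With the bound $J$ on children, a part is $v$ plus at most $J$ pieces of at most $s-1$ vertices each. This gives $\abs{P^{-1}(v)}\leq1+J(s-1)$. The children of a part in $T$ correspond to edges leaving the part downward, and each vertex of the part has at most $J$ children. Hence $T$ has at most $J(1+J(s-1))$ children per vertex.

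For the quasi-isometry, take $x,y$ in the realisation. The geodesic from $x$ to $y$ crosses exactly $d(P(x),P(y))$ inter-part edges, since parts are connected and $T$ is a tree. It spends at most $2s-2$ steps inside each of the at most $d(P(x),P(y))+1$ parts it meets. This gives
\[
d(P(x),P(y))\leq d(x,y)\leq(2s-1)d(P(x),P(y))+2s-2.
\]
The map $P$ is surjective and preserves the entry and exit marks exactly, so it is a $2s$-marked quasi-isometry.

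For the counting statement we mimic \cref{thm:dilution}. The crucial observation is that the contracted tree $T$ need not have bounded degree, yet the count of rooted trees with one marked vertex and at most $K$ vertices is still at most $e^{3K}$ by the Catalan bound. That bound counts all rooted plane trees and does not depend on arity.

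The count splits into two cases.
\begin{itemize}
\item For $D\leq729$, the realisations themselves have at most $n$ vertices. Isomorphic marked realisations are $1$-comparable, so the family has at most $e^{3n}\leq e^{27nD^{-1/3}}$ members.
\item For $D>729$, set $s=\floor{(D/72)^{1/3}}$. Two shapes with isomorphic marked contractions are $72s^3\leq D$-comparable by \cref{thm:shape-net}. Hence distinct members of the family have nonisomorphic contractions, giving at most $e^{3(n/s+1)}\leq e^{27(nD^{-1/3}+1)}$ members.
\end{itemize}

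The statement about $\cR_D$ follows because the greedy construction forbids a $D$-marked map from a later representative to an earlier one. Since comparability only needs one direction, distinct representatives are not $D$-comparable.

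The main point requiring care is the diameter and degree bookkeeping at unbounded arity, where a part may now contain up to $J$ pieces instead of two. Nothing else in the argument changes.
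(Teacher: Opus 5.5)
Your proof follows the paper's route throughout: the leaf-to-root greedy cut, the path estimate for the projection, the fibre and degree counts under the bound $J$, and the Catalan count with $s=\floor{(D/72)^{1/3}}$ (and the direct bound for $D\leq729$). The family bound is proved correctly.

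One step is wrong as written: your deduction of the $\cR_D$ clause is a non sequitur. The greedy rule only forbids a $D$-marked map from a later representative to an earlier one. Nothing in it forbids a $D$-marked map from an earlier representative to a later one, and such a map need not reverse at the same constant, since a quasi-inverse of a $D$-marked map is only $3D^2$-marked. Because comparability needs only one direction, ruling out one direction does not show that distinct representatives are non-$D$-comparable.

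You do not need that claim; run the count on $\cR_D$ directly, as the paper does. Suppose two representatives had isomorphic marked contractions, or isomorphic marked realisations when $D\leq729$. Composing projections and quasi-inverses through the isomorphism gives $D$-marked maps in both directions. In particular there is one from the later representative to the earlier one, which the greedy construction excludes. So distinct representatives of size at most $n$ have nonisomorphic marked contractions, and the same bound $\exp\bigl(27(nD^{-1/3}+1)\bigr)$ follows.
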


\begin{proof}
  Apply the greedy cut from the proof of \cref{thm:dilution}. Every part that is cut off has at least
  $s$ vertices, apart from a possible smaller remainder at the root, so there are at most
  $\abs\sigma/s+1$ parts. A remainder passed towards the root has fewer than $s$ vertices.
  Hence every part has diameter at most $2s-2$. Root the quotient at the part containing the
  entry and mark the part containing the exit. For the quotient projection $P$, the same path
  estimate gives
  \[
    d(Px,Py)\leq d(x,y)\leq(2s-1)d(Px,Py)+2s-2.
  \]
  Thus $P$ is a $2s$-marked quasi-isometry with the stated fibre bounds.

  Under the degree bound, a cut part consists of its highest vertex and at most $J$ remainders,
  each with at most $s-1$ vertices. Hence it has at most $1+J(s-1)$ vertices, from which at most
  $J\bigl(1+J(s-1)\bigr)$ child parts can issue.

  For the counting bound, the Catalan estimate in \cref{thm:dilution} applies to rooted plane
  trees of arbitrary finite arity. For $D>729$, set
  $s=\floor{(D/72)^{1/3}}$. Then $72s^3\leq D$ and $s^{-1}\leq9D^{-1/3}$. If two marked
  quotient trees are isomorphic, composing their projections and marked quasi-inverses gives
  a $72s^3$-marked quasi-isometry in both directions. The quotient tree has at most $n/s+1$
  vertices, so the Catalan estimate gives at most
  \[
    \exp\bigl(3(n/s+1)\bigr)
    \leq\exp\bigl(27(nD^{-1/3}+1)\bigr)
  \]
  possible marked quotients. For $2\leq D\leq729$, the direct bound $e^{3n}$ is no larger than
  the displayed expression. This proves the claim for families whose distinct members are
  not $D$-comparable and for representatives by the greedy definition of $\cR_D$.
\end{proof}

\paragraph*{\bf Glued transfer} The glued-transfer argument depends only on the rooted indexing tree and the entry and exit of
each copy, not on the number of children.

\begin{lemma}[Glued transfer at general arity]\label{thm:general-glued}
  Let $(\sigma_w)_w$ and $(\tau_w)_w$ be families of finite trees with an entry and an exit,
  indexed by the same rooted tree. Let $\cT$ and $\cT'$ be their assemblies, formed by joining
  each parent's exit to its children's entries. If $K\geq1$ and every $\sigma_w$ admits a
  $K$-marked quasi-isometry to $\tau_w$, then there is a root-preserving
  $8K^2$-quasi-isometry $\cT\to\cT'$.
\end{lemma}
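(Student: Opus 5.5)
The plan is to repeat the proof of \cref{thm:glued-transfer} essentially verbatim, since that argument never used binary branching. We choose a $K$-marked quasi-isometry $\varphi_w\colon\sigma_w\to\tau_w$ for each $w$ and define $\psi$ copy by copy. We write $a_w,b_w$ for the entry and exit of the copy at $w$, $m_w=d(a_w,b_w)+1$, and use primes for the corresponding quantities in $\cT'$. Each copy is an induced subtree of its assembly, so distances inside a copy coincide with assembly distances. For $x$ in the copy at $u$ and $y$ in the copy at $v$, the geodesic in the assembly passes through the copies indexed by the geodesic between $u$ and $v$ in the indexing tree. This yields the two distance formulas \cref{eq:assembly-anc,eq:assembly-div}, where the prefix order is replaced by the ancestor order of the indexing tree and $z=u\wedge v$ is the last common ancestor. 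These formulas hold in exactly the same form in $\cT'$ with the same index sets, because both assemblies are indexed by the same rooted tree. The number of children never enters: joining edges always run from a parent's exit to a child's entry.

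Next we record the marked bounds. These are $d'(\psi(x),a'_u)\leq K\,d(x,a_u)+2K$ and its reverse with additive error $2K^2$, together with the analogous bounds for exits. From them we derive $m'_w\leq4K\,m_w$ and $m_w\leq8K^2m'_w$, absorbing the additive errors because $m_w,m'_w\geq1$. We then compare the distance formulas term by term. The intermediate copies and the joining edges contribute only multiplicatively, and only the two endpoint terms contribute additive errors. This gives $d'(\psi x,\psi y)\leq4K\,d(x,y)+4K$ and $d(x,y)\leq8K^2d'(\psi x,\psi y)+4K^2$. Within a single copy, the bounds for $\varphi_w$ already suffice. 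Every vertex of $\cT'$ lies in some copy, so the image is $K$-dense.

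Finally, we redefine $\psi$ at the root so that $\psi(a_{\troot})=a'_{\troot}$. Since the marks are preserved up to distance $K$, this moves one value by at most $K$. The additive constants grow to $5K$ and $8K^3+4K^2$, and the density constant becomes $2K$. The last step, which is the only real point to check, is that all these constants fit under the single constant $8K^2$ in \cref{def:qi}, where additive errors are measured as in the binary proof. I would verify this exactly as in \cref{thm:glued-transfer}, dividing the reverse inequality by the multiplicative constant where needed, so I expect no new obstacle beyond confirming that the index-tree geodesic decomposition holds for arbitrary arity.
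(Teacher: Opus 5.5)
Your proposal is correct and follows the paper's proof. The paper likewise notes only that \cref{eq:assembly-anc,eq:assembly-div} remain valid for an arbitrary rooted indexing tree, with the ancestor order in place of the prefix order and all children of a vertex joined to its exit, and then reuses the termwise estimates, copy-by-copy density and root correction of \cref{thm:glued-transfer}. The final constant check you flag also goes through: dividing $d\leq8K^2d'+8K^3+4K^2$ by $8K^2$ leaves additive error $K+\tfrac12\leq8K^2$, and $5K$ and $2K$ are also at most $8K^2$.
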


\begin{proof}
  Define the assembly map by a chosen marked quasi-isometry on each copy. The distance formul\ae\ 
  \cref{eq:assembly-anc,eq:assembly-div} remain valid for an arbitrary rooted indexing tree, so
  the termwise estimates in the proof of \cref{thm:glued-transfer} give the $8K^2$ distance
  bounds. Coarse density holds copy by copy, and the same correction at the root makes the map
  root-preserving.
\end{proof}

\paragraph*{\bf Coupling} We now couple each conditional shape law to the mixture $\mu$ and use the net label of the
reference partner. The resulting labels have a common law, satisfy a geometric comparability, and
have bounded potentials for matching.

\begin{lemma}[Transport relabelling]\label{thm:relabel}
  There is $D_2=D_2(\theta)$ such that for every integer $D\geq D_2$ there are a probability law
  $\mu_D$ on $\cR_D$ and maps $\ell_k\colon\cS\times[0,1)\to\cR_D$, $k\in\supp\widetilde\nu$,
  into the net of the mixture $\mu$, with the following properties.
  \begin{enumerate}[(i),ref=(\roman*)]
    \item\label{it:relabel-law} If $S_k\sim\mu_k$ and $U$ is uniform on $[0,1)$, independent,
      then $\ell_k(S_k,U)\sim\mu_D$.
    \item\label{it:relabel-qi} For $\sigma\in\supp\mu_k$ and
      $\sigma'\in\supp\mu_{k'}$, if $\ell_k(\sigma,u)$ and $\ell_{k'}(\sigma',u')$ are equal
      or adjacent in $G_D$, then $\sigma$ and $\sigma'$ are $3^{15}D^{16}$-comparable.
    \item\label{it:relabel-eta} $\eta_{G_D,5/2}(\mu_D)\leq e^{-cD^2}\leq10^{-4}$ with
      $c=c(\theta)>0$, and the class $v_0$ of the one-vertex shape carries
      $\mu_D(v_0)\geq\tfrac12$.
  \end{enumerate}
\end{lemma}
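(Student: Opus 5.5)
The construction will mirror the cascade coupling of \cref{thm:shape-coupling}, now run between each conditional law $\mu_k$ and the single reference law $\mu$. Set $\mu_D=\mu\circ\rep_D^{-1}$, the net label law of the mixture. For each $k$ we build a coupling $\gamma_k$ of $\mu_k$ and $\mu$ such that every pair $(\sigma,\tau)\in\supp\gamma_k$ admits marked quasi-isometries in both directions with constant polynomial in $D$. The relabelling is then read off from these couplings. For $\sigma\in\supp\mu_k$, partition $[0,1)$ into intervals of lengths $\gamma_k(\sigma,\tau)/\mu_k(\sigma)$ and set $\ell_k(\sigma,u)=\rep_D(\tau)$ on the interval indexed by $\tau$; outside the support, extend arbitrarily. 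The partner $\tau$ then has law $\mu$, so $\ell_k(S_k,U)\sim\mu_D$ for every $k$, which is \labelcref{it:relabel-law}.

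To build $\gamma_k$, we split shapes into small ones, of size at most $D^2$, and tail shapes. Any two small shapes are $9D^3$-comparable through the one-vertex shape, exactly as in \cref{thm:shape-eta}. A tail shape of either law is paired with a shrunken partner in the other law's support. For this we use the general-arity cut \cref{thm:general-dilution} at a scale $s\approx D^{1/2}$. After the cut, the spine replacement of \cref{thm:shape-shrink} turns the quotient into a shape with at most $J$ children per vertex, and a support fix for $\supp\mu$ or $\supp\mu_k$ follows. The support fix must now respect the support constraints of general arity: bouquet sizes below $J$ at neck vertices, an exit bouquet of size at most $J-k$, and conjugate offspring numbers in $\supp\theta^\dagger$. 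This yields partners of size $O(n/s+1)$ via maps that are $D$-marked.

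Combining this shrinking with the uniform mass bounds of \cref{thm:mass-uniform} gives the capacity bound \cref{eq:capacity}. The alternating cascade then assigns masses $\mathrm{out}$, $\mathrm{out}'$ to tail pairs with exact marginals on the tails. The remaining mass, of order $e^{-cD^2}$ less than one on each side, is coupled arbitrarily among small shapes. Only finitely many $k$ occur, so we take $D_2$ to be the maximum of the thresholds over $k$.

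For \labelcref{it:relabel-qi}, suppose $\ell_k(\sigma,u)$ and $\ell_{k'}(\sigma',u')$ are equal or adjacent. Then we chain the following maps, where $\tau,\tau'$ are the partners:
\[
\sigma\to\tau\to\rep_D(\tau)\to\rep_D(\tau')\to\tau'\to\sigma'.
\]
The steps are, in order: a coupling map, a net map, a $27D^4$ edge, a $3D^2$ quasi-inverse, and a reversed coupling map. Each is polynomial in $D$, and \cref{thm:shape-net} composes them. With coupling constants of order $9D^3$, the product comes out of the form $3^{a}D^{b}$, and we will check that this is at most $3^{15}D^{16}$.

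The potential bound \labelcref{it:relabel-eta} is exactly \cref{thm:shape-eta} applied to the mixture $\mu$. Its proof uses only the mass bounds, which \cref{thm:mass-uniform} supplies, and the shrinking into $\supp\mu$, which the general cut supplies. Both the mass bound $\mu_D(v_0)\geq\frac12$ and $\eta_{G_D,5/2}(\mu_D)\leq e^{-cD^2}$ therefore carry over, after we enlarge $D_2$ so that $s$ is large.

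I expect the main obstacle to be the shrinking step at general arity. The cut quotient has vertices of degree up to $J(1+J(s-1))$, so after the spines the maps must land in the support of the specified $\mu_k$. In particular, the exit bouquet must have size at most $J-k$ and the exit must remain a split of arity $k$. The first fix I would try is to absorb excess structure into a single bush through binary spines of conjugate-supported vertices, keeping all constants of order $s^2\leq D$.
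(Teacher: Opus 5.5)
Your proposal is correct and follows the paper's proof essentially step for step: a cascade coupling of each $\mu_k$ with the mixture $\mu$ via shrinking maps built from the general-arity cut, labels read off as $\rep_D$ of the sampled partner, a composition of marked maps for (ii), and the \cref{thm:shape-eta} argument with uniform mass bounds for (iii). The paper resolves the obstacle you flag as you suggest. It appends a new exit, gathers the old mark's subtrees into a single bush, and pads every neck and bush vertex to exactly $J$ children, adding $J-k$ one-vertex exit bushes when targeting $\mu_k$. This padding also covers gaps in $\supp\theta$, which your stated constraint ``bouquet sizes below $J$'' does not capture. In (ii), note that a $G_D$ edge gives a marked map in only one direction, so exchange $\sigma$ and $\sigma'$ when the edge runs from $\rep_D(\tau')$ to $\rep_D(\tau)$.
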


\begin{proof}
  Fix $k\in\supp\widetilde\nu$. We use the cascade argument from the proof of
  \cref{thm:shape-coupling} and the uniform estimates of \cref{thm:mass-uniform} for the mass
  bounds. Put
  \[
    s=\floor{\sqrt{D/(216J^2)}}\geq1,
  \]
  where we let $D_2\geq 3$ be such that $s\geq1$ for $D\geq D_2$. Then $216J^2s^2\leq D$.

  We first construct the shrinking map from $\supp\mu_k$ to $\supp\mu$. Fix
  $\sigma\in\supp\mu_k$ of size $n$, and let $T$ be its contraction at scale $s$ from
  \cref{thm:general-dilution}. The projection $\sigma\to T$ is $2s$-marked. If $r=\abs T$, then
  $r\leq n/s+1$. Moreover, each vertex of $T$ has at most
  $J(1+J(s-1))\leq J^2s$ children. Let $B$ be the left-child/right-sibling binarisation of $T$,
  which has the same vertex set as $T$. A child edge of $T$ becomes a path through at most $J^2s$
  siblings in $B$, while adjacent siblings in $B$ are at distance two in $T$. Thus the identity
  on the vertex set is a $(J^2s+1)$-marked map $T\to B$.

  We extend the root-to-mark path of $B$ by a new exit vertex. We also add a second new child
  at the old mark and attach all its former child subtrees to this second child. The resulting
  shape $\rho$ has an empty exit bouquet. This changes only the edges at the mark and adds two
  vertices, so the natural map $B\to\rho$ is $2$-marked. We obtain $\widehat\sigma$ from $\rho$
  by adding leaves until each bush vertex has zero or $J$ children and each non-exit neck vertex
  has $J$ children.
  The inclusion $\rho\to\widehat\sigma$ is $1$-marked, and
  $\abs{\widehat\sigma}\leq J(r+2)$. By the composition bound in \cref{thm:shape-net}, the three
  maps from $T$ to $\widehat\sigma$ compose to an $18(J^2s+1)$-marked map. Since
  $J^2s+1\leq2J^2s$, its composition with the projection $\sigma\to T$ is $D$-marked, because
  \[
    3\cdot2s\cdot18(J^2s+1)
    \leq216J^2s^2
    \leq D.
  \]
  Moreover, $r\geq1$, and hence
  \[
    \abs{\widehat\sigma}\leq J(r+2)\leq3Jr\leq3J(n/s+1).
  \]
  Since $0,J\in\supp\theta$, these offspring numbers and the empty exit bouquet give
  $\widehat\sigma\in\supp\mu_J\subseteq\supp\mu$.

  For the shrinking map in the other direction, fix $\sigma\in\supp\mu$ of size $n$. Apply the
  contraction, binarisation, neck construction and padding to obtain $\widehat\sigma_J$. If $r$
  is the contracted size, then $\widehat\sigma_J\in\supp\mu_J$,
  $\abs{\widehat\sigma_J}\leq3Jr$, and the map $\sigma\to\widehat\sigma_J$ is $D$-marked. Add
  $J-k$ one-vertex bushes to the exit bouquet and denote the result by $\widehat\sigma_k$. The
  exit has $k+(J-k)=J$ children, so $\widehat\sigma_k\in\supp\mu_k$ and
  \[
    \abs{\widehat\sigma_k}
    \leq3Jr+(J-k)
    \leq4Jr
    \leq4J(n/s+1).
  \]
  The inclusion $\widehat\sigma_J\to\widehat\sigma_k$ is $1$-marked. Its composition with the
  $D$-marked map is therefore $3D$-marked. The greedy cut and the fixed child order make both
  shrinking maps deterministic. After increasing $D_2$, substituting these bounds and the
  uniform mass estimates in \cref{eq:capacity} gives the required capacity estimates.

  The cascade gives a coupling $\pi_k$ of $\mu_k$ and $\mu$, whose pairs are shrinking pairs
  or pairs of shapes of size at most $D^2$. Each shrinking pair admits a marked map in its
  construction direction at scale at most $3D$. By \cref{thm:shape-net}, its quasi-inverse is
  $27D^2$-marked, and $27D^2\leq9D^3$ since $D\geq3$. Between two shapes of size at most $D^2$,
  the constant map to the target entry is $D^2$-marked in either direction. Thus every pair in
  the support of $\pi_k$ admits $9D^3$-marked maps in both directions. There are finitely many
  arities, so we choose one threshold $D_2$ for all these couplings.

  Define $\mu_D:=\mu\circ\rep_D^{-1}$. For $\sigma\in\supp\mu_k$, set
  $\ell_k(\sigma,u):=\rep_D(\tau)$, where $u$ samples $\tau$ from
  $\pi_k(\,\cdot\mid\sigma)$ by partitioning $[0,1)$ into half-open intervals of the positive
  conditional masses. Outside $\supp\mu_k$, set $\ell_k(\sigma,u)=v_0$.
  The second marginal of $\pi_k$ is $\mu$, and hence $\ell_k(S_k,U)$ has law $\mu_D$. This proves
  \cref{it:relabel-law}.

  We next prove \cref{it:relabel-qi}. We write $\tau$ and $\tau'$ for the reference shapes
  behind two equal or adjacent labels. By \cref{thm:shape-net}, at least one direction between
  $\tau$ and $\tau'$ admits a $729D^7$-marked quasi-isometry. Since the desired conclusion is
  symmetric, we may exchange the two shapes and assume that this map runs from $\tau$ to $\tau'$.
  The coupling supports give $9D^3$-marked quasi-isometries $\sigma\to\tau$ and
  $\sigma'\to\tau'$. We compose the first map with $\tau\to\tau'$ and with a
  $3(9D^3)^2$-marked quasi-inverse of the second map. The resulting map from $\sigma$ to
  $\sigma'$ is marked at
  \[
    3\cdot\bigl(3\cdot9D^3\cdot729D^7\bigr)\cdot3(9D^3)^2
    =3^{15}D^{16}.
  \]
  Thus $\sigma$ and $\sigma'$ are $3^{15}D^{16}$-comparable.

  For \cref{it:relabel-eta}, every shape of size at most $D^2$ has a $D$-marked map to the
  one-vertex shape. Since this is the first representative, all such shapes have net label
  $v_0$. The tail bound of \cref{thm:mass-uniform} gives
  \[
    \mu_D(v_0)\geq\mu\bigl(\abs S\leq D^2\bigr)
    \geq1-e^{-cD^2}\geq\tfrac12,
  \]
  after increasing $D_2$. To estimate the potential, the shrinking construction above sends
  every $\sigma\in\supp\mu$ of size $n$ to a shape $\sigma_0\in\supp\mu_J\subseteq\supp\mu$
  of size at most $3J(n/s+1)$ by a $D$-marked map. The representative comparison in the proof of
  \cref{thm:shape-eta} makes their net labels equal or adjacent. The point-mass bound therefore
  gives, for some $C=C(\theta)>0$,
  \[
    \mu_D\bigl(B_{G_D}(\rep_D(\sigma),1)\bigr)
    \geq\mu(\sigma_0)\geq e^{-C(nD^{-1/2}+1)}.
  \]
  This and the uniform tail bound give
  $\eta_{G_D,5/2}(\mu_D)\leq e^{-cD^2}\leq10^{-4}$ after decreasing $c$ and increasing $D_2$
  sufficiently.
\end{proof}

\begin{proposition}[Product form for relabelled shapes]\label{thm:product-form}
  Let $\lambda$ be a probability law on a countable set $X$. For each
  $k\in\supp\widetilde\nu$, suppose that $L_k\colon\cS\times[0,1)\to X$ is a map such that
  $L_k(S_k,U)$ has law $\lambda$ whenever $S_k\sim\mu_k$ and $U$ is uniform on $[0,1)$,
  independent of $S_k$. Let $(U(w))_w$ be uniform on $[0,1)$, independent of one another and of
  the realisation, and set
  \[
    x(w)=L_{k(w)}\bigl(S(w),U(w)\bigr).
  \]
  Then the pairs $\bigl(x(w),k(w)\bigr)$ are i.i.d.\ with law
  $\lambda\times\widetilde\nu$.

  In particular, for $D\geq D_2$, the maps $L_k=\ell_k$ give the product law
  $\Pi_D=\mu_D\times\widetilde\nu$ on $\cR_D\times\set{2,\dots,J}$, and the class $v_0$ of the
  one-vertex shape carries $\mu_D(v_0)\geq\tfrac12$.
\end{proposition}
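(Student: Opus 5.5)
The plan is to condition on the arity field first and then integrate it out. By \cref{thm:harris-general}\labelcref{it:harris-general-reduced}, the arities $(k(w))_w$ of the reduced skeleton, indexed by its vertices, are i.i.d.\ with law $\widetilde\nu$. By \cref{thm:conditional-iid}, conditioned on the arity field, the shapes $S(w)$ are independent with $S(w)\sim\mu_{k(w)}$. The uniforms $(U(w))_w$ are independent of everything else. So, conditioned on $(k(w))_w$, the pairs $(S(w),U(w))$ are independent across $w$, and each has law $\mu_{k(w)}\otimes\mathrm{Unif}[0,1)$.

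The key computation is a finite-dimensional one. Fix finitely many distinct vertices $w_1,\dots,w_n$, labels $a_i\in X$ and arities $k_i\in\supp\widetilde\nu$. We write
\[
 \bbP\bigl(x(w_i)=a_i,\ k(w_i)=k_i\ \forall i\bigr)
 =\bbE\Bigl[\prod_i\mathbf 1_{\{k(w_i)=k_i\}}\,
 \bbP\bigl(L_{k_i}(S(w_i),U(w_i))=a_i\ \forall i\bigm|(k(w))_w\bigr)\Bigr].
\]
By conditional independence, the inner conditional probability factorises over $i$. By hypothesis each factor equals $\lambda(a_i)$, and this value does not depend on $k_i$; that independence of $k_i$ is exactly the point of the hypothesis. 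The expression then reduces to $\prod_i\lambda(a_i)\,\bbP(k(w_i)=k_i\ \forall i)=\prod_i\lambda(a_i)\widetilde\nu_{k_i}$. This holds for all finite cylinder sets, so the pairs are i.i.d.\ with law $\lambda\times\widetilde\nu$. Since $X$ is countable, these cylinders determine the law.

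One technical point needs care: the index set. The reduced skeleton is itself random, so the vertices $w$ must be indexed in a canonical way. We use words over $\set{1,\dots,J}$, with $k(w)$ giving the number of children, so that $(k(w))$ becomes an i.i.d.\ field on a fixed countable set, with Ulam--Harris labelling. Alternatively, we interpret the claim along the random tree through this coding, as in \cref{sec:encoding}. With this coding, \cref{thm:conditional-iid} can be used in the form ``conditioned on the arity field''. I expect this bookkeeping to be the only real obstacle. It requires checking that conditional independence holds jointly with an i.i.d.\ arity field on the word index. That in turn follows from the branching property together with the independence of necks from arities in \cref{thm:harris-general}.

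The final assertion follows by taking $X=\cR_D$, $\lambda=\mu_D$ and $L_k=\ell_k$. The hypothesis on $L_k$ is \cref{thm:relabel}\labelcref{it:relabel-law}. The bound $\mu_D(v_0)\geq\tfrac12$ is \cref{thm:relabel}\labelcref{it:relabel-eta}. The support of $\widetilde\nu$ lies in $\set{2,\dots,J}$ by \cref{eq:reduced-law}.
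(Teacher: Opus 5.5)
Your proposal is correct and is essentially the paper's argument: condition on the arity field, use \cref{thm:conditional-iid} to see that the labels are conditionally independent with law $\lambda$, a law that does not depend on the arities, and then integrate against the i.i.d.\ $\widetilde\nu$ arity field from \cref{thm:harris-general}\labelcref{it:harris-general-reduced}; the final assertion then follows from \cref{thm:relabel}\labelcref{it:relabel-law,it:relabel-eta}. Your explicit cylinder-set computation and your remark on Ulam--Harris indexing of the random reduced skeleton only make precise what the paper leaves implicit.
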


\begin{proof}
  Conditioned on the arity field, the shapes are independent with $S(w)\sim\mu_{k(w)}$ by
  \cref{thm:conditional-iid}. Hence the labels are conditionally independent with law $\lambda$,
  which does not depend on the arity field. The arities are i.i.d.\ with law $\widetilde\nu$ by
  \cref{thm:harris-general}\labelcref{it:harris-general-reduced}. This proves the product form.
  For $L_k=\ell_k$, the common label law and the bound $\mu_D(v_0)\geq\tfrac12$ follow from
  \cref{thm:relabel}\labelcref{it:relabel-law,it:relabel-eta}.
\end{proof}

\subsection{The cross-law step in the bushy regime}
We now focus on being able to match between two different distribution in the bushy regime.
Throughout this subsection we fix $\theta$ and $\theta'$ as two supercritical distributions
with bounded support and
$\theta_0,\theta_0'>0$, with offspring bounds $J\leq J'$. The objects of
\cref{sec:general-shapes,sec:general-relabel} are formed for each law separately: the
conditional laws $\mu_k$ and $\mu'_k$, the mixtures $\mu$ and $\mu'$, and the reduced laws
$\widetilde\nu$ and $\widetilde\nu'$, supported on the full intervals $\set{2,\dots,J}$ and
$\set{2,\dots,J'}$ by \cref{thm:full-support}. We denote the two shape spaces by $\cS_J$ and
$\cS_{J'}$, so that $\cS_J\subseteq\cS_{J'}$. For distribution $\theta$, we
form $\cR_D$, $\rep_D$ and $G_D$ on $\cS_J$ and
use $\mu$ as the reference mixture. We give the two realisations a common label law by coupling
both families of conditional laws to $\mu$ and labelling their partners in this one net. Thus the
net and its label graph depend only on $\theta$, while the arity laws may differ. We check their
compatibility in \cref{sec:trichotomy} using the matching theorem.

The binary encoding will insert vertices between a split and its children. Each inserted
vertex forms a one-vertex piece, to which we give the label $v_0$. We include these pieces in
the geometric comparison below, so that the labels will also compare the encoded trees.

\begin{lemma}[Cross-law relabelling]\label{thm:cross-relabel}
  There exists $D_3=D_3(\theta,\theta')$ such that for every integer $D\geq D_3$ there are a
  probability law $\mu_D$ on $\cR_D$ and maps
  \[
    \ell_k\colon\cS_J\times[0,1)\to\cR_D,
    \qquad
    \ell'_k\colon\cS_{J'}\times[0,1)\to\cR_D,
  \]
  indexed respectively by $k\in\supp\widetilde\nu$ and
  $k\in\supp\widetilde\nu'$, with the following properties.
  \begin{enumerate}[(i),ref=(\roman*)]
    \item\label{it:cross-relabel-law} Let $k\in\supp\widetilde\nu$ and
      $k'\in\supp\widetilde\nu'$. If $S_k\sim\mu_k$ and $S'_{k'}\sim\mu'_{k'}$, with $U$
      uniform on $[0,1)$ and independent of these shapes, then $\ell_k(S_k,U)\sim\mu_D$ and
      $\ell'_{k'}(S'_{k'},U)\sim\mu_D$.
    \item\label{it:cross-relabel-qi} Suppose that each of two pieces is either in the support of
      the conditional shape law used to label it or is the one-vertex piece. If their labels are
      equal or adjacent in $G_D$, then they admit $3^{31}D^{32}$-marked quasi-isometries in both
      directions.
    \item\label{it:cross-relabel-eta} $\eta_{G_D,5/2}(\mu_D)\leq e^{-cD^2}\leq10^{-4}$ with
      $c=c(\theta)>0$ as in \cref{thm:relabel}, and the class $v_0$ of the one-vertex shape
      has mass $\mu_D(v_0)\geq\tfrac12$.
  \end{enumerate}
\end{lemma}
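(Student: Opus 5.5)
We reuse the construction in the proof of \cref{thm:relabel} and change only the source laws. The reference mixture $\mu$, the net $\cR_D$, the map $\rep_D$, the graph $G_D$ and the pushforward $\mu_D=\mu\circ\rep_D^{-1}$ are all formed from $\theta$ alone. Part \labelcref{it:cross-relabel-eta} is then inherited verbatim from \cref{thm:relabel}\labelcref{it:relabel-eta}, once $D_3\geq D_2(\theta)$. For the first law we take the maps $\ell_k$ of \cref{thm:relabel} unchanged. For the second law we build, for each $k\in\supp\widetilde\nu'$, a cascade coupling $\pi'_k$ of $\mu'_k$ with $\mu$, and set $\ell'_k(\sigma,u)=\rep_D(\tau)$, where $u$ samples $\tau$ from $\pi'_k(\cdot\mid\sigma)$. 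Outside the support we set $\ell'_k=v_0$. Since the second marginal is $\mu$, part \labelcref{it:cross-relabel-law} follows exactly as in \cref{thm:relabel}.

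The cascade for $\pi'_k$ needs shrinking maps in both directions: one from $\supp\mu'_k$ into $\supp\mu$, and one from $\supp\mu$ into $\supp\mu'_k$.
\begin{itemize}
\item For the map into $\supp\mu$, we contract at scale $s=\floor{\sqrt{D/(216J'^2)}}$ by \cref{thm:general-dilution}, using arity bound $J'$. We then binarise, form the neck as before, and pad to the largest arity $J$ of $\theta$. This lands in $\supp\mu_J\subseteq\supp\mu$, has size $O(J'(n/s+1))$, and uses a $D$-marked map.
\item For the reverse map, we contract and pad to arity $J'$. We then add $J'-k$ one-vertex bushes to the exit bouquet, as in \cref{thm:relabel}, and obtain a $3D$-marked map into $\supp\mu'_k$.
\end{itemize}
These constructions use only $0,J\in\supp\theta$ and $0,J'\in\supp\theta'$. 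The uniform mass bounds of \cref{thm:mass-uniform}, applied to both laws, then give the capacity estimate \cref{eq:capacity} in both directions for large $D$. The alternating-series cascade of \cref{thm:shape-coupling} then produces $\pi'_k$. Every pair in the support admits $9D^3$-marked maps in both directions, because it is either a shrinking pair or a pair of shapes of size at most $D^2$. There are finitely many arities, so one threshold $D_3$ works for all of them.

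For \labelcref{it:cross-relabel-qi} we go through the reference partners. Let the two pieces be $\sigma,\sigma'$ with partners $\tau,\tau'$ in $\supp\mu$, and suppose the labels $\rep_D(\tau),\rep_D(\tau')$ are equal or adjacent. By \cref{thm:shape-net}, there is a $729D^7$-marked map in one direction between $\tau$ and $\tau'$. This gives $\sigma\to\tau\to\tau'\to\sigma'$ at constant $3^{15}D^{16}$, as computed in \cref{thm:relabel}. The statement, however, asks for maps in \emph{both} directions. We obtain the reverse map by taking a $3K^2$-marked quasi-inverse of this one, which gives the constant $3(3^{15}D^{16})^2=3^{31}D^{32}$. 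This explains the stated exponent.

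The one-vertex piece is labelled $v_0$ and does not come from a coupling. We treat it as its own partner, which is legitimate because the one-vertex shape is the representative $v_0$ and trivially $D$-marked to itself. The same chain of maps then applies with the coupling map replaced by the identity. That identity map has a constant no larger than the $9D^3$ used above, so the bound $3^{31}D^{32}$ still holds.

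The main obstacle is the cross-law shrinking step. The map $\supp\mu\to\supp\mu'_k$ must respect the support of $\theta'$, which may have gaps in the bush offspring numbers and an exit bouquet constrained by $k$. The support fix therefore has to pad every bush vertex to $0$ or $J'$ children and the exit to total offspring exactly $J'$. Both conditions are legal because $\theta'_0,\theta'_{J'}>0$. We also need to check that the size inflation is only by a factor depending on $J'$, so that the capacity estimate \cref{eq:capacity} survives.
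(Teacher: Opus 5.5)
Your proposal is correct and follows essentially the same route as the paper. You keep $\mu_D=\mu\circ\rep_D^{-1}$, the net and the maps $\ell_k$ from \cref{thm:relabel}, and for each $k\in\supp\widetilde\nu'$ you build a cascade coupling $\pi'_k$ of $\mu'_k$ with $\mu$. The shrinking maps contract at scale $\floor{\sqrt{D/(216(J')^2)}}$ and pad at $J$, or at $J'$ with $J'-k$ added exit bushes. Part (ii) then comes from composing through the reference partners to get $3^{15}D^{16}$ and taking a quasi-inverse to reach $3^{31}D^{32}$, with the one-vertex piece serving as its own partner, exactly as in the paper.
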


\begin{proof}
  We take $\mu_D=\mu\circ\rep_D^{-1}$ and use the maps $\ell_k$ from \cref{thm:relabel} on
  $\cS_J$. For $\sigma\in\supp\mu_k$, we have $\ell_k(\sigma,u)=\rep_D(\tau)$, where $u$
  samples a partner $\tau$ from the coupling $\pi_k$ of $\mu_k$ and $\mu$.

  We fix $k\in\supp\widetilde\nu'$ and take
  \[
    s=\floor{\sqrt{D/(216(J')^2)}}\geq1,
  \]
  after increasing $D_3$ if necessary. The two instances of \cref{thm:mass-uniform}, for
  $\theta'$ and $\theta$, give the mass bounds required by the cascade. Given a shape
  $\sigma\in\supp\mu'_k$ of size $n$, we contract its realisation at scale $s$, binarise the
  contracted tree and pad the resulting shape at the support point $J$ of $\theta$. The image
  lies in $\supp\mu_J\subseteq\supp\mu$, has size at most $3J(n/s+1)$ and admits a $D$-marked
  quasi-isometry from $\sigma$. Given instead $\sigma\in\supp\mu$, we contract and binarise its
  realisation, pad the resulting shape at the support point $J'$ of $\theta'$, and add $J'-k$
  one-vertex bushes to its exit bouquet. The image lies in $\supp\mu'_k$, has size at most
  $4J'(n/s+1)$ and admits a $3D$-marked quasi-isometry from $\sigma$. For shapes of size at most
  $D^2$, the constant map to the entry of the target is $D^2$-marked and hence $9D^3$-marked.
  The cascade therefore gives a coupling $\pi'_k$ of $\mu'_k$ and $\mu$ supported on pairs
  admitting $9D^3$-marked maps in both directions, by the quasi-inverse bounds in the proof
  of \cref{thm:relabel}.

  For $\sigma\in\supp\mu'_k$, define $\ell'_k(\sigma,u):=\rep_D(\tau)$, where $u$ samples
  $\tau$ from $\pi'_k(\,\cdot\mid\sigma)$, and set $\ell'_k(\sigma,u)=v_0$ elsewhere. Each
  cascade has a threshold determined by its mass and shrinking bounds. Since
  $\supp\widetilde\nu'$ is finite, we choose $D_3$ to be the maximum of these
  thresholds and the constant $D_2$ from \cref{thm:relabel}.

  The conditional-sampling argument in the proof of \cref{thm:relabel} applies to each
  $\pi'_k$: its second marginal is the same mixture $\mu$, so applying $\rep_D$ gives law
  $\mu_D$. Together with the unprimed construction this proves \cref{it:cross-relabel-law}.

  For \cref{it:cross-relabel-qi}, each supported shape has a $\mu$-partner joined to it by
  $9D^3$-marked maps in both directions. Equal or adjacent labels give the same
  $729D^7$-marked comparison between the reference partners as in \cref{thm:relabel}.
  The one-vertex piece may be its own reference partner, with its identity map and label
  $v_0$. The same comparison therefore includes pairs with an inserted one-vertex piece.
  The composition argument in \cref{thm:relabel} gives a $3^{15}D^{16}$-marked map in one
  direction. Its marked quasi-inverse has constant
  $3(3^{15}D^{16})^2=3^{31}D^{32}$, which also bounds the original map.

  Finally, $\cR_D$, $\rep_D$ and $G_D$ were formed on $\cS_J$, while
  $\mu_D=\mu\circ\rep_D^{-1}$. Hence they all depend only on $\theta$, and
  \cref{thm:relabel}\labelcref{it:relabel-eta} gives \cref{it:cross-relabel-eta}.
\end{proof}

\section{Binary encodings of reduced skeletons}\label{sec:general-chain}
We now encode the reduced skeletons so that trees with different branching arities can be
compared by matching labels on the fixed binary tree $\bbB$. We first recall that, in the
chain regime, the quantised neck length and the terminating arity are independent. In both
regimes we then replace each split of arity $k$ by a finite binary tree with $k$ leaves,
keeping its original shape at the root and assigning one-vertex pieces to the other
internal vertices. We show that this construction preserves the quasi-isometry class and
identify the resulting label process. Finally, we show how uniformly controlled marked
comparisons of matched pieces give a root-preserving quasi-isometry of the original trees.

\subsection{Independent necks and arities}

We now restrict to chain-regime laws: $\theta_0=0$ and $\theta_1\in(0,1)$, with finite support
and largest supported offspring number $J$. Every vertex belongs to the skeleton, the
extinction probability is $q=0$, and the transform of \cref{thm:harris-general} is the identity.
There are no bushes, so a shape is determined by its neck length. These lengths lie in $\bbN$,
and their level labels from \cref{sec:quantisation} lie in $\bbN_0$, with the path graph
$\mathsf P$ of \cref{sec:integer} as their comparison graph.

\paragraph*{\bf Neck length} For a reduced-skeleton vertex $w$, let $m(w)$ be the length of the neck ending at $w$ and let
$k(w)$ be the arity of its terminating split. By
\cref{thm:harris-general}\labelcref{it:harris-general-reduced}, the pairs
$\bigl(m(w),k(w)\bigr)$ are i.i.d.\ and have the joint law
\begin{equation}\label{eq:chain-joint}
  \bbP\bigl(m(w)=r,\ k(w)=j\bigr)
  =\theta_1^{\,r-1}\theta_j
  =\theta_1^{\,r-1}(1-\theta_1)\widetilde\nu_j
  \qquad(r\geq1,\ j\geq2).
\end{equation}
Thus the neck length is geometric with success probability $1-\theta_1$ and is independent of
the terminating arity, which has law $\widetilde\nu$.

\paragraph*{\bf $D$-scales} For an integer $D\geq2$, set $x(w)=\ell_D\bigl(m(w)\bigr)$. By
\cref{eq:chain-joint,thm:quantised-law}, its marginal law is
\begin{equation}\label{eq:chain-class-law}
  p^{(D)}_i=\theta_1^{\,D^i-1}-\theta_1^{\,D^{i+1}-1}
  \qquad(i\geq0).
\end{equation}
Applying \cref{thm:product-form}, with a shape identified with its neck length and
$L_k(m,u)=\ell_D(m)$, shows that the pairs $\bigl(x(w),k(w)\bigr)$ are i.i.d.\ with product law
$\Pi_D=p^{(D)}\times\widetilde\nu$. Moreover
$p^{(D)}_0=1-\theta_1^{\,D-1}\geq\tfrac12$ once $D$ is large, and
$\eta_{\mathsf P,5/2}(p^{(D)})\to0$ as $D\to\infty$ by \cref{thm:eta-bound}. Thus the chain regime
needs no transport relabelling of the kind used in \cref{thm:product-form}.

\subsection{The binary encoding}\label{sec:binary-presentations}

We now return to both regimes. We replace each split by a finite binary tree, keeping its
original shape at the root and attaching its children at the leaves. The added vertices have
no geometric decoration. If the depths of the replacement trees are bounded, the resulting
assembly is quasi-isometric to the original one. This lets us seek a matching on the fixed
indexing tree $\bbB$, even when the original reduced skeletons have different arities.

\paragraph*{\bf Binary profiles} A \emph{binary profile} is a finite rooted tree in which every nonleaf vertex has two ordered
children. For each arity $k$ in a finite set $A\subseteq\set{2,3,\ldots}$, choose a profile $C_k$
with $k$ leaves, listed from left to right. To encode a vertex of arity $k$, place its original
piece at the root of $C_k$, a one-vertex piece at every other nonleaf vertex, and the encoded
child pieces at its leaves in their original order. Every joining edge runs from an exit to
an entry. For example, take $C_3$ to be the subtree of $\bbB$ with vertex set
$\set{\troot,1,2,21,22}$ and ordered leaves $1,21,22$. The original piece occupies $\troot$, the
inserted one-vertex piece occupies $2$, and the three child pieces attach at the leaves.

\paragraph*{\bf Assemblies} An \emph{assembly} over a rooted skeleton is the rooted tree obtained from finite tree pieces,
each with one entry and one exit, indexed by the vertices of the skeleton. The exit of each piece
is joined to the entries of its child pieces, and the entry of the root piece is the root of the
assembly. Repeating the construction above gives an assembly indexed by $\bbB$.

\paragraph*{\bf Profiles and distortions} We write
\[
  h_C\coloneqq\max_{k\in A}\operatorname{height}(C_k)\geq1.
\]

\noindent For an assembly $X$ and its binary encoding $X_C$, let $j_C\colon X\to X_C$ keep every vertex of
the original pieces, and let $p_C\colon X_C\to X$ fix these vertices and send each inserted vertex
to the exit of its original piece. Then $p_Cj_C=\operatorname{id}$, and $p_C$ is onto and
$1$-Lipschitz. Each original joining edge becomes a path of length at most $h_C$, while edges
inside the pieces are unchanged. Consequently,
\[
  d(x,y)\leq d_C\bigl(j_C(x),j_C(y)\bigr)\leq h_Cd(x,y),
  \qquad d_C\bigl(z,j_Cp_C(z)\bigr)\leq h_C.
\]
Moreover,
\[
  d_C(z,z')\leq2h_C+h_Cd\bigl(p_C(z),p_C(z')\bigr),
  \qquad d\bigl(p_C(z),p_C(z')\bigr)\leq d_C(z,z').
\]
Both maps fix the root, and these bounds make them $(h_C+1)$-quasi-isometries. We record this in
the following lemma.

\begin{lemma}[Bounded-profile encoding]\label{thm:profile-encoding}
  For every assembly $X$ over an infinite reduced skeleton whose arities belong to $A$, the maps
  $j_C\colon X\to X_C$ and $p_C\colon X_C\to X$ are root-preserving
  $(h_C+1)$-quasi-isometries. They satisfy $p_Cj_C=\operatorname{id}$ and
  \[
    d(x,y)\leq d_C\bigl(j_C(x),j_C(y)\bigr)\leq h_Cd(x,y),
    \qquad d_C\bigl(z,j_Cp_C(z)\bigr)\leq h_C.
  \]
\end{lemma}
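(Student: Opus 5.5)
The plan is to work directly with the explicit description of $X_C$ and to check the four displayed inequalities one at a time. Each then gives the corresponding half of \cref{def:qi}, and root preservation is by construction.

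First I describe $X_C$. Its vertices are the vertices of the original pieces together with the inserted one-vertex pieces. For a piece $P$ at a reduced-skeleton vertex of arity $k$, the exit of $P$ sits at the root of a copy of $C_k$. In $X$ this exit is joined to each child entry by one edge. In $X_C$ it is joined to each child entry by the root-to-leaf path of $C_k$, whose length is at most $h_C$ and at least $1$. Internal piece edges are unchanged.

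The distance bounds for $j_C$ follow from this. The geodesic in $X_C$ between two original vertices passes through the same sequence of pieces as the geodesic in $X$ between them, because both spaces are trees with the same branching pattern at the level of pieces. Along this geodesic, each joining edge of $X$ is replaced by a path of length between $1$ and $h_C$, and every other edge is kept. Summing these gives $d\leq d_C(j_Cx,j_Cy)\leq h_C\,d$.

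Next I handle $p_C$. An inserted vertex lies on the profile below the exit of its piece, at depth at most $h_C-1$. Hence $d_C(z,j_Cp_C z)\leq h_C$. The map $p_C$ is $1$-Lipschitz: adjacent vertices of $X_C$ are either both original, and adjacent in $X$, or they map to the same exit, or one is an exit or inserted vertex and the other is a child entry, in which case their images are the exit and that child entry, which are adjacent in $X$. The reverse inequality comes from the triangle inequality through $j_Cp_C$ together with the $j_C$ bound: $d_C(z,z')\leq 2h_C+h_C\,d(p_Cz,p_Cz')$. Surjectivity of $p_C$ and the $h_C$-density of $j_C(X)$ complete \cref{def:qi} with constant $h_C+1$. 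The identity $p_Cj_C=\mathrm{id}$ holds by definition, and both maps fix the root because the entry of the root piece is original.

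The only delicate point is the geodesic-correspondence claim in the $j_C$ step, namely that geodesics cross the same joining edges in both trees. I would verify it by noting that the geodesic from an original vertex $x$ to an original vertex $y$ in $X_C$ passes through exactly the exits and entries of the pieces lying on the path between their pieces in the reduced skeleton.
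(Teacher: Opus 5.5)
Your overall route is the paper's: the same maps, the bound $d_C(z,j_Cp_C(z))\leq h_C$, the case check that $p_C$ is $1$-Lipschitz, and the triangle-inequality estimate $d_C(z,z')\leq 2h_C+h_C\,d(p_C(z),p_C(z'))$.

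The step you flag as delicate, however, is false. The geodesic-correspondence claim you use for both $j_C$ bounds does not hold. Take the paper's profile $C_3$ on $\set{\troot,1,2,21,22}$ with leaves $1,21,22$, and let $b$ be the exit of the original piece at $\troot$.
\begin{itemize}
\item In $X$, the entries of the second and third child pieces are at distance $2$, via $b$.
\item In $X_C$, both entries are adjacent to the inserted vertex at $2$. Their geodesic has length $2$ and avoids $b$, and hence the whole parent piece.
\end{itemize}
In general, when $x$ and $y$ lie below different children of a split $z$, the $X_C$-geodesic crosses $C_{k(z)}$ through the lowest common ancestor of the two leaves. This is typically an inserted vertex. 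So the geodesic does not pass through the exits and entries of the pieces on the reduced-skeleton path, and your edge-by-edge replacement count does not apply. Your justification, that both trees have the same branching pattern at the level of pieces, is exactly what breaks: $X_C$ is indexed by $\bbB$, not by the reduced skeleton.

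The inequalities themselves still hold, and you already have what is needed; this is how the paper argues.
\begin{itemize}
\item Upper bound: no geodesic is required. Replace each joining edge of the $X$-geodesic by the root-to-leaf path of its profile. This gives some path from $j_C(x)$ to $j_C(y)$ in $X_C$ of length at most $h_C\,d(x,y)$.
\item Lower bound: use that $p_C$ is $1$-Lipschitz and $p_Cj_C=\operatorname{id}$, so
\[
d(x,y)=d\bigl(p_Cj_C(x),p_Cj_C(y)\bigr)\leq d_C\bigl(j_C(x),j_C(y)\bigr).
\]
Alternatively, distinct leaves of a profile are at distance at least $2$, matching the two joining edges at the exit of $z$ in $X$, but the Lipschitz argument is cleaner.
\end{itemize}
With this replacement the rest of your argument goes through unchanged: density, the $p_C$ bounds, the constant $h_C+1$, and root preservation.
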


\paragraph*{\bf The binary profile process} We next describe the label law separately. Let $\mu$ be a probability law on a countable label
set with distinguished element $0$, and let $\nu$ be a probability law on $A$. The corresponding
\emph{binary profile process} is the random label field on $\bbB$ obtained as follows. Draw a label
with law $\mu$ and, independently, an arity $k$ with law $\nu$. Give the root of $C_k$ this label
and its other internal vertices label $0$, then attach independent copies of the same process at
its $k$ leaves. For an i.i.d.\ $\mu\times\nu$ label--arity field, the descendant fields below each
vertex are independent copies of the original field, so its binary encoding has exactly this law.
We record this in the following lemma.

\begin{lemma}[Law of the binary encoding]\label{thm:profile-encoding-law}
  Suppose that the label--arity pairs at the vertices of an infinite reduced skeleton are i.i.d.\
  with law $\mu\times\nu$. If every inserted vertex has label $0$, then the encoded label field
  has the law of the binary profile process.
\end{lemma}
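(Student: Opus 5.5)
We identify the two laws on $\bbB$ recursively, using the branching property of the i.i.d.\ label--arity field on the reduced skeleton. First we fix the encoding map concretely. The root of the skeleton, with pair $(x,k)$, is sent to the root of $C_k$, which receives label $x$. Each inserted internal vertex of $C_k$ receives label $0$. The $i$th leaf of $C_k$, in left-to-right order, becomes the root of the encoding of the $i$th child subtree. The encoded field is therefore a deterministic measurable function $F$ of the pair at the root together with the $k$ child subtrees of the skeleton. The binary profile process admits the same recursive description: draw $(x,k)$, place $x$ at the root of $C_k$ and $0$ at its other internal vertices, then attach $k$ independent copies at the leaves.

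Next we check that the conditional laws agree. By hypothesis, the pairs at the skeleton vertices are i.i.d.\ with law $\mu\times\nu$. By \cref{thm:harris-general}\labelcref{it:harris-general-reduced} together with \cref{thm:product-form} (or \cref{eq:chain-joint} in the chain regime), we have the following. Conditioned on the root pair $(x,k)$, the $k$ child subtrees are independent. Each of them carries an i.i.d.\ $\mu\times\nu$ field, indexed by an infinite reduced skeleton with the same law as the whole. Hence the encoded field, conditioned on its top profile, consists of the fixed labels on $C_k$ and $k$ independent copies of the encoded field attached at the leaves. This is exactly the recursion defining the binary profile process.

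It remains to turn this matching recursion into an equality of laws. Both fields take values in $V^{\bbB}$, which has the product $\sigma$-algebra. It therefore suffices to compare finite-dimensional marginals on the balls $\bbB_n$. We argue by induction on $n$. The restriction to $\bbB_n$ of either field is determined by the root pair and by the restrictions of the $k$ attached copies to the balls of radius $n-\operatorname{depth}_{C_k}(\text{leaf})\leq n-1$ below the leaves. Every leaf of $C_k$ has depth at least one, since $k\geq2$. Applying the induction hypothesis to these smaller restrictions, together with the conditional independence, shows that the two joint laws agree on $\bbB_n$. A $\pi$--$\lambda$ argument (Kolmogorov consistency) then gives equality of the full laws.

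The one place needing care is that the profiles $C_k$ have varying heights. Consequently, a given vertex of $\bbB$ may be an inserted vertex or a piece root depending on earlier draws. The induction on $n$ handles this because the depth of every leaf is at least one. The positional data (which vertices carry random labels) is a function of the arity draws only, so it enters both constructions identically.
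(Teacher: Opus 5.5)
Your proposal is correct and follows the paper's argument. The paper's justification is simply that, for an i.i.d.\ $\mu\times\nu$ field, the descendant fields below each vertex are independent copies of the whole, so the encoding satisfies the defining recursion of the binary profile process. Your induction on the balls $\bbB_n$, using that every leaf of $C_k$ has depth at least one, makes the resulting identification of laws explicit. The appeal to the Harris decomposition and the product-form proposition is unnecessary. The branching property you need follows directly from the i.i.d.\ hypothesis, which is all the lemma assumes.
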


We will use the preceding metric comparison through a rooted automorphism of $\bbB$. Such an
automorphism may pair an original piece with an inserted one-vertex piece, so the local comparison
hypothesis must cover both kinds of pieces.

\begin{corollary}[Transfer through binary encodings]\label{thm:profile-transfer}
  Let $X_C$ and $Y_{C'}$ be binary encodings of assemblies $X$ and $Y$, with all profiles of
  height at most an integer $h\geq1$. Suppose that $K\geq1$ and that there is a rooted automorphism
  $\pi\in\Aut(\bbB)$ such that, for every $w\in\bbB$, the piece of $X_C$ indexed by $w$, whether
  original or inserted, admits a $K$-marked quasi-isometry to the piece of $Y_{C'}$ indexed by
  $\pi(w)$, whether original or inserted. Then there is a root-preserving
  $72(h+1)^2K^2$-quasi-isometry $X\to Y$.
\end{corollary}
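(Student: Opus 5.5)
The plan is to factor the desired map as
\[
  X\xrightarrow{\;j_C\;}X_C\xrightarrow{\;\Psi\;}Y_{C'}\xrightarrow{\;p_{C'}\;}Y,
\]
where $j_C$ and $p_{C'}$ are the root-preserving $(h+1)$-quasi-isometries of \cref{thm:profile-encoding}, and $\Psi$ is produced by glued transfer along the automorphism $\pi$. Both encodings $X_C$ and $Y_{C'}$ are assemblies over the binary tree $\bbB$: every piece, original or inserted, is a finite tree with an entry and an exit, and each exit is joined to the entries of its two child pieces. The one-vertex inserted pieces have entry equal to exit.

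\emph{Constructing $\Psi$.} First reindex $Y_{C'}$ by $\pi$. The assembly whose piece at $w$ is the piece of $Y_{C'}$ at $\pi(w)$ is isometric to $Y_{C'}$. The reason is the same as in the proof of \cref{thm:hairy}: $\pi$ maps the children of $w$ to the children of $\pi(w)$, so the joining edges are respected. This isometry fixes the root entry because $\pi$ fixes $\troot$. The hypothesis supplies a $K$-marked quasi-isometry from the piece at $w$ in $X_C$ to the reindexed piece at $w$, for every $w$. \Cref{thm:general-glued}, or equivalently \cref{thm:glued-transfer} since the indexing tree is $\bbB$, then gives a root-preserving $8K^2$-quasi-isometry $\Psi\colon X_C\to Y_{C'}$.

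\emph{Composing.} We compose the three root-preserving maps, with constants $A_1=h+1$, $A_2=8K^2$ and $A_3=h+1$. A composition of an $(a,b,c)$-map with an $(a',b',c')$-map is an $(aa',\,a'b+b',\,a'c+c')$-map, up to an extra $a'$ in the density term from the triangle inequality. Applying this twice gives a multiplicative constant $8(h+1)^2K^2$. The additive errors are of order $(h+1)K^2\cdot(h+1)+8K^2+(h+1)$, which is bounded by $72(h+1)^2K^2$ since $h,K\geq1$. The density constant is bounded similarly. Root preservation is inherited from each factor.

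\emph{Main obstacle.} The one point needing care is that the distance bounds for $j_C$ and $p_{C'}$ are stated in the asymmetric form of \cref{thm:profile-encoding}, and these must be converted into $(h+1,\,2h,\,h)$-type constants before composing. For $p_{C'}$, the additive error $2h_{C'}$ divided by $h_{C'}$ leaves an additive $2$ and a multiplicative $h$ in the reverse inequality. If the crude composition overshoots $72$, I would instead not compose generically. The alternative is to feed $j_C$ and $p_{C'}$ directly into the distance formul\ae{} \cref{eq:assembly-anc,eq:assembly-div}, using that $j_C$ is $h$-bi-Lipschitz on the original pieces and that $p_{C'}$ is $1$-Lipschitz and within $h$ of a section. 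This bounds $d(p_{C'}\Psi j_C x,\,p_{C'}\Psi j_C y)$ by $8K^2h\,d(x,y)+O(K^2h)$ and gives the reverse bound similarly, so that every constant fits within $72(h+1)^2K^2$.
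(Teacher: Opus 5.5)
Your proposal is correct and follows the paper's proof. You reindex $Y_{C'}$ by $\pi$, apply \cref{thm:general-glued} to get a root-preserving $8K^2$-quasi-isometry $X_C\to Y_{C'}$, and compose it with $j_C$ and $p_{C'}$ from \cref{thm:profile-encoding}. The paper uses the $3KK'$ composition bound of \cref{thm:shape-net} twice, giving exactly $3\bigl(3(h+1)8K^2\bigr)(h+1)=72(h+1)^2K^2$; your direct composition also stays within this constant. The fallback in your last paragraph is therefore unnecessary: \cref{thm:profile-encoding} already states that $j_C$ and $p_{C'}$ are $(h+1)$-quasi-isometries, since $h_C,h_{C'}\leq h$.
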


\begin{proof}
  Reindexing the pieces of $Y_{C'}$ by $\pi$ preserves its metric and places the compared pieces
  over the same indexing vertex. Hence \cref{thm:general-glued} gives a root-preserving
  $8K^2$-quasi-isometry $F\colon X_C\to Y_{C'}$. By \cref{thm:profile-encoding}, the inclusion
  $j_C\colon X\to X_C$ and collapse $p_{C'}\colon Y_{C'}\to Y$ are
  $(h+1)$-quasi-isometries. The composition bound in \cref{thm:shape-net} therefore makes
  $p_{C'}Fj_C$ a root-preserving quasi-isometry with constant
  \[
    3\bigl(3(h+1)8K^2\bigr)(h+1)=72(h+1)^2K^2.\qedhere
  \]
\end{proof}

\paragraph*{\bf Dependence} The encoded labels are generally dependent: an arity choice determines which descendants
receive independent labels and which are prescribed to be $0$. The next section proves the
Markov matching theorem needed for this recursive law.

\section{The Markov matching theorem}\label{sec:markov-proof}

We now prove the Markov matching theorem, \cref{thm:markov-matching}. 
We first recall the construction in \cref{sec:process} to set up how our approach differs for the
general matching theorem.
We consider random labellings of the rooted binary tree $\bbB$ with values in the vertex
set $V$ of a countable graph $G=(V,E)$. Two labels $v,w\in V$ are compatible when
$d_G(v,w)\leq1$. We fix a probability law $\mu$ on $V$ and a distinguished label $0\in V$.

We describe the dependence between labels by assigning a type to each vertex of $\bbB$.
Let $\cI$ be a nonempty countable set of types. Choose a subset $\cI_\mu\subseteq\cI$ and, for
each $t\in\cI$, a probability law $P_t$ on $\cI\times\cI$. Starting from a fixed
root type, we generate the labels and child types recursively. At a vertex of type $t$,
we draw the label with law $\mu$ if $t\in\cI_\mu$, and choose label $0$ deterministically otherwise. 
Independently of this label, we draw the types of its two children with law $P_t$. Conditioned on
these types, the two descendant processes are independent and follow the same rule.
The choices at distinct vertices use independent draws. In the binary encoding of
\cref{sec:binary-presentations}, types in $\cI_\mu$ describe the original vertices,
and the remaining types describe the vertices inserted inside the profiles.

For two independent realisations started at types $s,t$, we write $\cM_h(s,t)$ for the
event that an automorphism of $\bbB$ pairs compatible labels through height $h$. We write
$\cM_\infty(s,t)$ for the event that one automorphism pairs compatible labels at every
vertex. The automorphism need not preserve types. We can compare models with different
transition laws by taking the disjoint union of their type sets and retaining the
transition laws within each model.

For a fixed label $v\in V$, the probability of matching a draw from $\mu$ is
$b(v)=\mu\bigl(B_G(v,1)\bigr)$. For $\alpha\geq1$, the one-site potentials are
\[
 \eta_\alpha=\sum_{v\colon\mu(v)>0}\mu(v)\frac{1-b(v)}{b(v)^\alpha},\qquad
 \zeta_\alpha=\max\left\{\eta_\alpha,\frac{1-b(0)}{b(0)^\alpha}\right\}.
\]
The sum averages over $v$ with law $\mu$, while the additional term in $\zeta_\alpha$
uses the fixed label $0$ in place of $v$.
We keep to the convention $\zeta_\alpha=\infty$ when $b(0)=0$.

When $b(0)=1$, no further hypotheses on the types or transitions are needed. When $b(0)<1$,
we assume that $\cI$ is finite and impose the conditions of \cref{sec:finite-hypotheses},
which we now recall. We require a partition of $\cI$ into classes $\cI_0,\ldots,\cI_{g-1}$,
where $g\geq1$, such that
\[
 \cI_\mu\subseteq\cI_0,\qquad
 \supp P_t\subseteq\cI_{i+1}\times\cI_{i+1}
 \quad(t\in\cI_i),
\]
with indices read modulo $g$. A possible type path follows either child in transitions
of positive probability. We require the following two conditions:
\begin{enumerate}[label=(M\arabic*),ref=(M\arabic*),leftmargin=*]
\item \emph{Positive matching.} For every $h\geq0$ and $s,t\in\cI_\mu$, each labelling that occurs
with positive probability from $s$ through height $h$ has a positive probability of matching an
independent labelling started at $t$.
\item \emph{Bounded returns.} There is an integer $H\geq0$ such that, whenever $s,t$ belong to the
same class $\cI_i$, every possible type path from $s$ visits $\cI_\mu$ at some depth $n\leq H$
for which a possible type path from $t$ also visits $\cI_\mu$.
\end{enumerate}
The path from $t$ in condition~\labelcref{it:markov-returns} may depend on the path from $s$.

The contraction estimate at a binary split uses the coefficient from
\cref{sec:admissible-exponent}:
\[
 \lambda_\alpha=2\min_{0\leq\beta\leq1}\left[
 \max_{0\leq q\leq1}
 \left\{\frac{q}{(1+q)^\alpha}+\beta(1-q)^\alpha\right\}
 +\frac{\alpha^\alpha(1-\beta)^{\alpha+1}}{(\alpha+1)^{\alpha+1}}
 \right].
\]
The condition $\lambda_\alpha<1$ holds, in particular, for $\alpha\geq4/3$.
We now restate the theorem with the quantitative dependence of its constants.

\begin{restate}{thm:markov-matching}
Fix $\alpha\geq1$ with $\lambda_\alpha<1$. Assume either that $b(0)=1$, or that the
finite-type hypotheses recalled above hold.

There are $K<\infty$ and $\eps>0$ such that, if $\zeta_\alpha\leq\eps$, then for every
pair of initial types $s,t$ when $b(0)=1$, and for every pair $s,t$ in the same class
$\cI_i$ otherwise,
\[
 \bbP\bigl(\cM_h(s,t)^c\bigr)\leq K\zeta_\alpha\quad(h\geq0),\qquad
 \bbP\bigl(\cM_\infty(s,t)^c\bigr)\leq K\zeta_\alpha.
\]
When $b(0)=1$, the constants depend only on $\alpha$. Otherwise, they depend only on
$\alpha,H$ and upper bounds for the largest class size and the sums of inverse powers
of the transition probabilities,
\[
 \max_{0\leq i<g}\#\cI_i,
 \qquad
 \max_{t\in\cI}\sum_{j\in\supp P_t}P_t(j)^{-\alpha}.
\]
This dependence may be sharpened by choosing a nonempty set $J_t\subseteq\supp P_t$ of
retained transitions for each type $t$. Suppose that, at every finite height, every pair of
source child labellings with values in $\supp\mu\cup\{0\}$ that has positive matching
probability against the $P_t$-mixture has positive matching probability conditional on some
transition $j\in J_t$. Then the second quantity in the preceding display may be replaced by
\[
 \max_{t\in\cI}\sum_{j\in J_t}P_t(j)^{-\alpha}.
\]
In both cases, $K$ and $\eps$ have no further dependence on the graph $G$, the label law
$\mu$, the type set or the transition laws.

Further, if $\mu(0)\geq p>0$, then $\eta_\alpha\leq p\eps$ implies
\[
 \bbP\bigl(\cM_h(s,t)^c\bigr)\leq\frac Kp\eta_\alpha\quad(h\geq0),\qquad
 \bbP\bigl(\cM_\infty(s,t)^c\bigr)\leq\frac Kp\eta_\alpha.
\]
\end{restate}

\paragraph*{\bf Positive matching probability} To adapt the i.i.d.\ proof, we fix a finite source labelling produced from one initial type
and consider its probability of matching an independent target labelling from another type.
This probability can be zero, in which case the potential used in the i.i.d.\ proof is
infinite. We therefore sum only over source labellings with positive matching
probability and estimate the probability of the excluded labellings separately. When
$b(0)=1$, every source labelling of positive probability has a possible match against every
target type, so no labellings of positive probability are excluded.

\paragraph*{\bf Zero matching probability} When $b(0)<1$, a source labelling of positive probability can have zero probability of
matching the target type. We show that one of three events then occurs. The root label is
incompatible with $0$, or each of the two child subtrees has zero probability of matching
some possible child type of the target, or one child subtree has zero probability of
matching every such type. If the third event occurs, we apply the same alternatives to that
child subtree and all possible child types of the target. Condition
\labelcref{it:markov-returns} ends this descent along a single source branch. Within $H$
steps, the source type and a type reached by a possible target path both belong to
$\cI_\mu$, where the positivity condition~\labelcref{it:markov-positivity} gives positive matching
probability. At each vertex, incompatibility with $0$ has probability at most $1-b(0)$. Conditioned on the
child types, the two child subtrees are independent, so the second event has probability
quadratic in the bounds for zero matching probability at smaller heights. When
$\zeta_\alpha$ is sufficiently small, these estimates and $\lambda_\alpha<1$ let us choose
a potential bound proportional to $\zeta_\alpha$ that holds at every finite height. Adding
the bound for the excluded labellings gives the stated failure estimate. K\H{o}nig's lemma
and continuity of probability then give the infinite-tree bound.

\section{The local matching estimate}

We extend the binary matching estimate of \cref{sec:contraction} to child subtrees with
different laws. We first define, for arbitrary source and target laws, the potential
restricted to source points with positive matching probability. The estimate for a binary
split will also involve the probability of the excluded points and a weighted sum over
them, which we estimate separately in \cref{sec:quantitative-stopping}.

We fix $\alpha>1$ and abbreviate
\(
 \eta=\eta_\alpha(\mu), \zeta=\zeta_\alpha(\mu).
\)
We assume $\zeta<\infty$, so $b(0)>0$.

\subsection{The restricted potential}\label{sec:restricted-potential}

Let $R$ be a symmetric relation on a countable set, and say that $x$ matches $y$ when
$x\mathrel R y$. For a source law $\rho$ and a target law $\tau$ on this set, the probability
that a fixed source point $x$ matches an independent target point is its \emph{matching
degree},
\[
 r_\tau(x)\coloneqq\tau\{y\colon x\mathrel R y\},\qquad
 q_\tau(x)\coloneqq1-r_\tau(x).
\]
We recall the weight from \cref{eq:alpha}, writing
\[
 \phi(q)=\phi_\alpha(q)=\frac{q}{(1-q)^\alpha}\qquad(0\leq q<1).
\]
The \emph{restricted potential} is
\[
 \cE(\rho,\tau)\coloneqq
 \sum_{x\colon r_\tau(x)>0}\rho\{x\}\,\phi\bigl(q_\tau(x)\bigr)
 =\bbE_\rho\bigl[\mathbf1_{\{r_\tau(X)>0\}}\phi\bigl(q_\tau(X)\bigr)\bigr],\qquad X\sim\rho.
\]
We sum over source points with positive matching degree, using their original $\rho$-mass.
Thus this is a restriction of the sum, without conditioning or renormalising the source
law. We use the convention that the subscript of an expectation denotes the law of the variable averaged
over, and for a pair of independent variables it is the product law. 
We write $\cE(\rho,\tau;R)$ when the relation needs to be explicit. If $R$ is reflexive and
$\rho=\tau$, every point with positive $\rho$-mass has positive matching degree, so
$\cE(\rho,\rho;R)$ equals the i.i.d.\ potential $\Phi(R,\rho)$ of \cref{eq:iid-potential}.

\paragraph*{\bf Weighting} The denominator in the potential leads us to estimate inverse powers of matching degrees.
We use the weight
\[
 W_\tau(x)\coloneqq
 \begin{cases}
  r_\tau(x)^{-\alpha},&r_\tau(x)>0,\\
  0,&r_\tau(x)=0.
 \end{cases}
\]
Its value at zero matching degree lets us sum over the whole source space while retaining
only the points with positive matching degree.

\subsection{Four child laws}

At a binary split, the source and target each have two child subtrees. Conditioned on their
types, the two source subtrees are independent, as are the two target subtrees. Their four
laws can all differ, and matching permits either pairing of the children. We now bound the
probabilities of this choice for arbitrary probability laws. For pairs of points, we use the
relation $R^\square$ of \cref{eq:square}: a source pair matches a target pair if either the
first points and the second points match, or the first source point matches the second
target point and the second source point matches the first target point.

The parameter $\beta\in[0,1]$ lets us optimise the linear coefficient in the potential
bound. We write this coefficient as the sum of two terms, with its minimum given by
$\lambda_\alpha$ from \cref{eq:markov-exponent}:
\begin{equation}\label{eq:mean-constants}
 \begin{split}
 L_\alpha(\beta)&\coloneqq
 \max_{0\leq q\leq1}
 \left\{\frac{q}{(1+q)^\alpha}+\beta(1-q)^\alpha\right\},\\
 K_\alpha(\beta)&\coloneqq
 \frac{\alpha^\alpha}{(\alpha+1)^{\alpha+1}}(1-\beta)^{\alpha+1},\\
 \lambda_\alpha(\beta)&\coloneqq2\bigl(L_\alpha(\beta)+K_\alpha(\beta)\bigr),\qquad
 \lambda_\alpha=
 \min_{0\leq\beta\leq1}\lambda_\alpha(\beta).
 \end{split}
\end{equation}
The extrema exist by compactness. At $\beta=0$, we recover the constants
$L_\alpha=L_\alpha(0)$ and $K_\alpha=K_\alpha(0)$ from \cref{eq:maxima}.
For the quadratic term, the proof splits the range of a sum of failure probabilities at
$u\in(0,1)$. The resulting coefficient is
\begin{equation}\label{eq:four-law-constants}
 C_\alpha(u)\coloneqq
 4L_\alpha\frac{(1-u)^{-\alpha}-1}{u}
 +\frac4u+8\alpha+2\alpha^2.
\end{equation}

We now prove a general contraction lemma.

\begin{lemma}\label{thm:four-law-contraction}
Let $\rho_1,\rho_2,\tau_1,\tau_2$ be probability laws on the countable set with symmetric
relation $R$, let $X_1\sim\rho_1$ and $X_2\sim\rho_2$ be independent, and let
$M,\delta_1,\delta_2\geq0$ be finite. Suppose, for every $i,j\in\{1,2\}$, that
\begin{align*}
 \cE(\rho_i,\tau_j),\ \cE(\tau_j,\rho_i)&\leq M,\\
 \rho_i\{r_{\tau_j}=0\},\ \tau_j\{r_{\rho_i}=0\}&\leq \delta_1,\\
 \bbE_{\rho_i}\bigl[\mathbf1_{\{r_{\tau_j}(X_i)=0\}}W_{\tau_{3-j}}(X_i)\bigr]&\leq\delta_2.
\end{align*}
For every $\beta\in[0,1]$ and $u\in(0,1)$,
\begin{equation}\label{eq:four-law-contraction}
 \begin{split}
 \cE(\rho_1\times\rho_2,\tau_1\times\tau_2;R^\square)
 &\leq\lambda_\alpha(\beta)M+C_\alpha(u)M^2\\
 &\quad+(2+2\beta+4\alpha M)\delta_1+4(1+\alpha M)\delta_2.
 \end{split}
\end{equation}
\end{lemma}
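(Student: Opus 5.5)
We follow the proof of \cref{thm:contraction} pointwise, now with four laws. Fix a source pair $x=(x_1,x_2)$ with $r^\square(x)>0$, where $r^\square$ is the matching degree against $\tau_1\times\tau_2$. Write $r_{ij}=r_{\tau_j}(x_i)$ and $q_{ij}=1-r_{ij}$. The good degree is exact by inclusion--exclusion:
\[
 r^\square=r_{11}r_{22}+r_{12}r_{21}-c_{11}c_{22},
\]
where $c_{jj}$ is the probability that a $\tau_j$-draw matches both $x_1$ and $x_2$. For the bad degree, \cref{thm:transversal} gives
\[
 q^\square\leq q_{11}q_{12}+q_{21}q_{22}+\bar c_1+\bar c_2,
\]
where $\bar c_j$ is the probability that a $\tau_j$-draw matches neither $x_i$.

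We split the source pairs into two kinds. The first kind has all four $r_{ij}>0$. The second kind has some $r_{ij}=0$ but $r^\square>0$; then one whole diagonal is positive and the crossed entry vanishes. On the first kind we mimic the row and overlap estimates. We lower bound $r^\square$ by the better diagonal times a correction. The row terms $q_{i1}q_{i2}$ are handled with AM--GM, $q_{i1}q_{i2}\leq\tfrac12(q_{i1}^2+q_{i2}^2)$, and then \cref{eq:product-estimate}-type bounds. Here the parameter $\beta$ enters: we split the available good mass in the chord step as a convex combination. One part is charged to $\frac{q}{(1+q)^\alpha}$, the other to $\beta(1-q)^\alpha$, which absorbs the fact that the diagonals now have different probabilities. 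After averaging over independent $X_1,X_2$, each $\phi(q_{ij})$ averages to at most $\cE(\rho_i,\tau_j)\leq M$. There are four such terms, which produces $2L_\alpha(\beta)M$.

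For the overlap terms $\bar c_j/(r_{i j}\cdots)^\alpha$ we repeat Fubini with the weighted incompatibility function $H_j(y)=\bbE[\mathbf1_{\{X\not\mathrel R y\}}W(X)]$. We bound $W-1\leq\alpha\phi$ by \cref{eq:tangent}, and the resulting $q(y)^2$ term via $t(1-t)^\alpha$ maximised with the leftover weight $(1-\beta)$. This yields $2K_\alpha(\beta)M$ and, by symmetry of $R$, the reverse potentials $\cE(\tau_j,\rho_i)$. Here also the target points with $r_{\rho_i}=0$ appear. They are exactly what contributes $\delta_1$ terms, with coefficients from $W\leq1+\alpha\phi$, giving the $4\alpha M\delta_1$ cross term.

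Quadratic remainders are collected into $C_\alpha(u)M^2$. We split according to whether $q_{\min}$ or the relevant sum of failure probabilities exceeds $u$. Below $u$ we use the chord-type bound $(1-z)^{-\alpha}\leq1+\frac{(1-u)^{-\alpha}-1}{u}z$, which gives the first term of $C_\alpha(u)$. Above $u$ we use Markov, $\mathbf 1\{q>u\}\leq q/u\leq\phi(q)/u$, which gives $4/u$. The $8\alpha+2\alpha^2$ terms come from expanding products of the factors $1+\alpha\phi$ as in \cref{eq:split}.

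On the second kind of pair, say $r_{12}=0$, we have $q^\square=1-r_{11}r_{22}$ and $r^\square=r_{11}r_{22}$. By \cref{eq:split} we bound $\phi(q^\square)$ by $\phi(q_{11})+\phi(q_{22})+2\alpha\phi\phi$. Each factor is weighted by the indicator $\{r_{\tau_2}(X_1)=0\}$. Integrating the indicator against $W_{\tau_1}(X_1)\leq1+\alpha\phi(q_{11})$ gives at most $\delta_2$ (the hypothesis with $3-j$). The other factor averages to $1+\alpha M$ or to $\delta_1$. Summing over the four choices of vanishing entry gives $4(1+\alpha M)\delta_2$ and the remaining $2\delta_1+2\beta\delta_1$. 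The $\beta$-share appears because in the first-kind chord bound we also charged $\beta(1-q)^\alpha$ against points whose crossed degree may vanish.

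The main obstacle is the first-kind row estimate with unequal diagonals. We must show that the pointwise inequality still yields linear coefficient $2L_\alpha(\beta)+2K_\alpha(\beta)$, rather than the cruder $2L_\alpha+4K_\alpha$. We would first try ordering by $\max(r_{11}r_{22},r_{12}r_{21})$ and using the slack $s=1+q_{\max}-2q_{\min}$ as in \cref{eq:iid-R-lower}, distributing it with weights $\beta,1-\beta$.
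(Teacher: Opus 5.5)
Your skeleton matches the paper's: pairs with all four degrees positive versus the rest, the transversal union bound for $q^\square$, Fubini for the column terms, a split at $u$, and $\delta_2$ via independence. However, the step that produces the linear coefficient $\lambda_\alpha(\beta)=2\bigl(L_\alpha(\beta)+K_\alpha(\beta)\bigr)$ is missing, as you concede, and the mechanism you sketch for $\beta$ cannot work. Bounding the column (overlap) terms by $2K_\alpha(\beta)M$ would need $q^2\leq K_\alpha(\beta)\phi(q)$. This is false for every $\beta>0$: $K_\alpha(\beta)=K_\alpha(0)(1-\beta)^{\alpha+1}$, and $K_\alpha(0)=\max_q q(1-q)^\alpha$ is exactly the sharp constant in $q^2\leq K\phi(q)$.

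The missing idea is a pair of refined pointwise bounds with \emph{opposite} linear terms. For the row terms, $\phi(q^2)=\phi(q)\,q(1+q)^{-\alpha}\leq L_\alpha(\beta)\phi(q)-\beta q$, fed through $\phi(xy)\leq\frac12(\phi(x^2)+\phi(y^2))$, gives $\phi(xy)\leq\frac{L_\alpha(\beta)}2(\phi(x)+\phi(y))-\frac\beta2(x+y)$. For the column terms, maximising $(q-\beta)(1-q)^\alpha$ gives $q^2\leq K_\alpha(\beta)\phi(q)+\beta q$. After averaging, the rows contribute $-\frac\beta2Q_\Sigma$ and the columns $+\frac\beta2Q_\Sigma$, where $Q_\Sigma=\sum_{i,j}\bbE_{\rho_i}[q_{\tau_j}(X_i)]=\sum_{i,j}\bbE_{\tau_j}[q_{\rho_i}(Y_j)]$. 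The two expressions agree because both equal the total mass of incompatible source--target pairs under the symmetric relation $R$, so the two linear terms cancel. That cancellation is how $\beta$ enters; it has nothing to do with the two diagonals having different probabilities.

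You also place the obstacle in the wrong term. The gain from $4K_\alpha$ to $2K_\alpha$ comes from the column terms. After Fubini, expand $(q_{\rho_1}(Y_j)+\alpha M)(q_{\rho_2}(Y_j)+\alpha M)$ exactly, so that the mixed part $\alpha M(q_{\rho_1}+q_{\rho_2})$ is of order $M^2+M\delta_1$, and then use $2q_{\rho_1}q_{\rho_2}\leq q_{\rho_1}^2+q_{\rho_2}^2$. The crude $(a+b)^2\leq2a^2+2b^2$ of the i.i.d.\ proof is what doubles $K_\alpha$.

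The row terms need no ordering. The slack $1+q_{\max}-2q_{\min}$ of \cref{eq:iid-R-lower} does not transfer directly, since each source point now has two degrees. For $q_{11}q_{12}/(r^\square)^\alpha$, split on $s=q_{21}+q_{22}$. If $s\leq u$, inclusion--exclusion gives $r^\square\geq(1-s)(1-q_{11}q_{12})$, a function of $X_1$ times a function of $X_2$, and the chord bound on $[0,u]$ produces the $L_\alpha$-term of $C_\alpha(u)$. If $s>u$, use $r^\square\geq r_{11}r_{22}$ and $\mathbf1_{\{s>u\}}W_{\tau_2}(X_2)\leq(u^{-1}+\alpha)(\phi(q_{21})+\phi(q_{22}))$.

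Finally, your $\delta$-bookkeeping does not produce the stated constants. The term $(2+2\beta)\delta_1$ arises on the all-positive pairs, not on the remaining pairs. Restricting $Q_\Sigma$ to the all-positive pairs costs at most $8\delta_1$, multiplied by $\beta/2$. Target points with $q_{\rho_i}(Y_j)=1$ cost $(1-\beta)\delta_1$ each in the square bound. On the remaining pairs, bounding through \cref{eq:split} yields $(1+2\alpha M)\delta_2+M\delta_1$ per event. Instead, on $\{r_{\tau_2}(X_1)=0,\ r^\square>0\}$ use $\phi(q^\square)\leq(r^\square)^{-\alpha}=W_{\tau_1}(X_1)W_{\tau_2}(X_2)$ and factor by independence, which gives $(1+\alpha M)\delta_2$.
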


\begin{proof}
We begin with an outline of the proof. We will divide the source pairs that contribute to the
restricted potential into two cases. First, we will consider pairs for which
$r_{\tau_j}(x_i)>0$ for every $i,j\in\{1,2\}$, so each source point has positive matching
probability against either target law. If neither pairing succeeds, then some source point
matches neither target point, or some target point matches neither source point. A union bound
over these four events, together with the product and square estimates below, will bound this
part of the potential.

Finally, we will treat source pairs for which at least one of the four degrees
$r_{\tau_j}(x_i)$ is zero, but the pair still has positive matching probability. Only one
pairing can then succeed. We will bound this part of the potential using products of the two
corresponding weights, and use independence to apply the $\delta_2$ hypothesis to one factor
and a moment bound to the other.

To carry out this argument, we first refine the product and square estimates of
\cref{thm:toolkit}. The product bound
has a negative linear correction, which will cancel a corresponding term from the square
bound after taking expectations. For $0\leq x,y,q<1$,
\begin{align}
 \phi(xy)
 &\leq\frac{L_\alpha(\beta)}2\bigl(\phi(x)+\phi(y)\bigr)
 -\frac\beta2(x+y),\label{eq:four-law-product}\\
 \phi(xy)
 &\leq\frac{L_\alpha}2\bigl(\phi(x)+\phi(y)\bigr),
 \label{eq:four-law-quadratic-product}\\
 q^2&\leq K_\alpha(\beta)\phi(q)+\beta q,
 \label{eq:four-law-square}\\
 (1-q)^{-\alpha}&\leq1+\alpha\phi(q).
 \label{eq:four-law-weight}
\end{align}
As in the proof of \cref{eq:product-estimate}, we have
$\phi(xy)\leq(\phi(x^2)+\phi(y^2))/2$.
By the definition of $L_\alpha(\beta)$,
$\phi(q^2)\leq L_\alpha(\beta)\phi(q)-\beta q$, which proves
\cref{eq:four-law-product}. Its case $\beta=0$ is
\cref{eq:four-law-quadratic-product}. For \cref{eq:four-law-square}, the maximum of
$(q-\beta)(1-q)^\alpha$ on $[0,1]$ is $K_\alpha(\beta)$. When $\beta<1$, this follows
by putting $q=\beta+(1-\beta)v$ on $[\beta,1]$ and differentiating
$v(1-v)^\alpha$. The expression is nonpositive for $q<\beta$, and the case $\beta=1$
is immediate. The weight bound \cref{eq:four-law-weight} is \cref{eq:tangent}. Each weight
vanishes at zero matching degree, so applying the bound at positive degrees and using the
potential hypotheses gives, for every $i,j\in\{1,2\}$,
\begin{equation}\label{eq:four-law-moment}
 \bbE_{\rho_i}\bigl[W_{\tau_j}(X_i)\bigr]\leq1+\alpha M.
\end{equation}

We first consider source pairs for which all four child comparisons have positive matching
probability. These pairs form $F\times F$, where
\[
 F\coloneqq\{x\colon r_{\tau_1}(x)>0,\ r_{\tau_2}(x)>0\}.
\]
For the random source pair $(X_1,X_2)$, abbreviate
\[
 q_i^j\coloneqq q_{\tau_j}(X_i),\qquad
 r_i^j\coloneqq1-q_i^j,\qquad
 c_j\coloneqq\tau_j\{y\colon X_1\not\mathrel R y,\ X_2\not\mathrel R y\}.
\]
The matching and failure probabilities of the source pair against an independent target pair
are
\[
 \begin{split}
 r^\square&\coloneqq(\tau_1\times\tau_2)
 \bigl\{(y_1,y_2)\colon(X_1,X_2)\mathrel{R^\square}(y_1,y_2)\bigr\},\\
 q^\square&\coloneqq1-r^\square.
 \end{split}
\]
All of these are functions of $(X_1,X_2)$, and the bounds below between them hold pointwise.
The quantity $c_j$ is the probability that target point $j$ matches neither source point.
If both pairings
fail, either one source point matches neither target point, or one target point matches
neither source point. Taking a union bound over these four events gives
\begin{equation}\label{eq:four-law-numerator}
 q^\square\leq q_1^1q_1^2+q_2^1q_2^2+c_1+c_2.
\end{equation}
The two possible successful pairings separately give
\begin{equation}\label{eq:four-law-denominators}
 r^\square\geq r_1^1r_2^2,
 \qquad r^\square\geq r_1^2r_2^1.
\end{equation}
Dividing \cref{eq:four-law-numerator} by $(r^\square)^\alpha$ bounds the weight
$\phi(q^\square)$ of the source pair. We estimate the expectations of the four resulting
terms for realisations in $\{X_1,X_2\in F\}$.

For the term $q_1^1q_1^2/(r^\square)^\alpha$, put
\[
 s\coloneqq q_2^1+q_2^2,\qquad c\coloneqq\frac{(1-u)^{-\alpha}-1}{u}.
\]
If $s\leq u$, inclusion--exclusion for the two
successful pairings gives the next lower bound. Their intersection requires both target
points to match the first source point, so its probability is at most $r_1^1r_1^2$. Hence
\begin{align*}
 r^\square
 &\geq r_1^1r_2^2+r_1^2r_2^1-r_1^1r_1^2
 =1-q_1^1q_1^2-r_1^1q_2^2-r_1^2q_2^1
 \geq(1-s)(1-q_1^1q_1^2).
\end{align*}
Convexity bounds $(1-s)^{-\alpha}$ by its chord $1+cs$ on $[0,u]$. Thus
\[
 \frac{q_1^1q_1^2}{(r^\square)^\alpha}
 \leq\phi(q_1^1q_1^2)\bigl(1+c(q_2^1+q_2^2)\bigr).
\]
The right-hand side is a function of $X_1$ times a function of $X_2$. By
\cref{eq:four-law-quadratic-product} and the potential hypotheses,
\[
 \bbE_{\rho_i}\bigl[\mathbf1_{\{X_i\in F\}}\phi(q_i^1q_i^2)\bigr]\leq L_\alpha M
 \qquad(i\in\{1,2\}),
\]
and since $q\leq\phi(q)$ and $F\subseteq\{r_{\tau_j}>0\}$, the same hypotheses give
$\bbE_{\rho_2}[\mathbf1_{\{X_2\in F\}}s]\leq2M$. Independence therefore gives
\begin{align*}
 \bbE_{\rho_1\times\rho_2}\Bigl[\mathbf1_{\{X_1,X_2\in F\}}\mathbf1_{\{s\leq u\}}\frac{q_1^1q_1^2}{(r^\square)^\alpha}\Bigr]
 &\leq\bbE_{\rho_1}\bigl[\mathbf1_{\{X_1\in F\}}\phi(q_1^1q_1^2)\bigr]\,
 \bbE_{\rho_2}\bigl[\mathbf1_{\{X_2\in F\}}(1+cs)\bigr]\\
 &\leq(1+2cM)\,\bbE_{\rho_1}\bigl[\mathbf1_{\{X_1\in F\}}\phi(q_1^1q_1^2)\bigr]\\
 &\leq\bbE_{\rho_1}\bigl[\mathbf1_{\{X_1\in F\}}\phi(q_1^1q_1^2)\bigr]+2cL_\alpha M^2.
\end{align*}

If $s>u$, the first bound in \cref{eq:four-law-denominators} instead gives
\[
 \frac{q_1^1q_1^2}{(r^\square)^\alpha}
 \leq\phi(q_1^1)W_{\tau_2}(X_2).
\]
For realisations in $\{X_2\in F\}$, applying \cref{eq:four-law-weight} gives
\[
 \mathbf1_{\{s>u\}}W_{\tau_2}(X_2)
 \leq(u^{-1}+\alpha)\bigl(\phi(q_2^1)+\phi(q_2^2)\bigr).
\]
Indeed, the indicator is at most $s/u$, and the nonconstant part of the weight is at most
$\alpha\phi(q_2^2)$. Hence
\begin{align*}
 &\bbE_{\rho_1\times\rho_2}\Bigl[\mathbf1_{\{X_1,X_2\in F\}}\mathbf1_{\{s>u\}}\frac{q_1^1q_1^2}{(r^\square)^\alpha}\Bigr]\\
 &\quad\leq(u^{-1}+\alpha)\,\bbE_{\rho_1}\bigl[\mathbf1_{\{X_1\in F\}}\phi(q_1^1)\bigr]\,
 \bbE_{\rho_2}\bigl[\mathbf1_{\{X_2\in F\}}\bigl(\phi(q_2^1)+\phi(q_2^2)\bigr)\bigr]
 \leq2(u^{-1}+\alpha)M^2.
\end{align*}
Adding the two cases,
\begin{align*}
 &\bbE_{\rho_1\times\rho_2}\Bigl[\mathbf1_{\{X_1,X_2\in F\}}\frac{q_1^1q_1^2}{(r^\square)^\alpha}\Bigr]\\
 &\quad\leq\bbE_{\rho_1}\bigl[\mathbf1_{\{X_1\in F\}}\phi(q_1^1q_1^2)\bigr]
 +(2cL_\alpha+2u^{-1}+2\alpha)M^2.
\end{align*}
Exchanging the roles of $X_1$ and $X_2$ gives the same bound for
$q_2^1q_2^2/(r^\square)^\alpha$, with the expectation of $\phi(q_2^1q_2^2)$ under $\rho_2$ in
place of that of $\phi(q_1^1q_1^2)$ under $\rho_1$, since $r^\square$ and the hypotheses
are symmetric in the two source points. Together,
\begin{align*}
 &\bbE_{\rho_1\times\rho_2}\Bigl[\mathbf1_{\{X_1,X_2\in F\}}\frac{q_1^1q_1^2+q_2^1q_2^2}{(r^\square)^\alpha}\Bigr]\\
 &\quad\leq\sum_{i=1}^2\bbE_{\rho_i}\bigl[\mathbf1_{\{X_i\in F\}}\phi(q_i^1q_i^2)\bigr]
 +(4cL_\alpha+4u^{-1}+4\alpha)M^2.
\end{align*}

To estimate the two expectations of $\phi(q_i^1q_i^2)$, we retain the correction involving
$\beta$ in \cref{eq:four-law-product}. Its expectation involves the sum of the four failure
probabilities, which we denote by
\[
 Q_\Sigma\coloneqq\sum_{i,j=1}^2\bbE_{\rho_i}\bigl[q_{\tau_j}(X_i)\bigr]
 =\sum_{i,j=1}^2\bbE_{\tau_j}\bigl[q_{\rho_i}(Y_j)\bigr],
\]
where $Y_1\sim\tau_1$ and $Y_2\sim\tau_2$ are independent of each other and of $X_1,X_2$.
Both sides equal $\sum_{i,j}(\rho_i\times\tau_j)\{(x,y)\colon x\not\mathrel R y\}$, by the
symmetry of $R$. Since $F^c=\{r_{\tau_1}=0\}\cup\{r_{\tau_2}=0\}$, we have
$\rho_i(F^c)\leq2\delta_1$. Restricting each of the four expectations defining $Q_\Sigma$ to
$\{X_i\in F\}$ omits at most $2\delta_1$, since $q_{\tau_j}\leq1$. The total omitted contribution
is therefore at most $8\delta_1$. Applying \cref{eq:four-law-product} to $\phi(q_i^1q_i^2)$ in both
expectations, the combined contribution of the two product terms satisfies
\begin{equation}\label{eq:four-law-rows}
 \bbE_{\rho_1\times\rho_2}\Bigl[\mathbf1_{\{X_1,X_2\in F\}}\frac{q_1^1q_1^2+q_2^1q_2^2}{(r^\square)^\alpha}\Bigr]
 \leq2L_\alpha(\beta)M-\frac\beta2Q_\Sigma+4\beta \delta_1
 +(4cL_\alpha+4u^{-1}+4\alpha)M^2.
\end{equation}

We now estimate the terms containing $c_1,c_2$. For every $i,j\in\{1,2\}$ and every
target point $y$, \cref{eq:four-law-weight} and symmetry give
\[
 \bbE_{\rho_i}\bigl[\mathbf1_{\{X_i\not\mathrel R y\}}W_{\tau_j}(X_i)\bigr]
 \leq q_{\rho_i}(y)+\alpha M.
\]
For $c_1$, we use the first bound in \cref{eq:four-law-denominators} and expand the
definition of $c_1$ as a sum against $\tau_1$, so that for realisations in  $\{X_1,X_2\in F\}$,
\[
 \frac{c_1}{(r^\square)^\alpha}
 \leq c_1W_{\tau_1}(X_1)W_{\tau_2}(X_2)
 =\sum_y\tau_1\{y\}\,
 \mathbf1_{\{X_1\not\mathrel R y\}}W_{\tau_1}(X_1)\,
 \mathbf1_{\{X_2\not\mathrel R y\}}W_{\tau_2}(X_2).
\]
The right-hand side is nonnegative, so we may drop the restriction to $F$ and exchange the
sum with the expectation. By independence of $X_1$ and $X_2$, each term is then a product
of two expectations, each bounded by the preceding display, giving
\begin{align*}
 \bbE_{\rho_1\times\rho_2}\Bigl[\mathbf1_{\{X_1,X_2\in F\}}\frac{c_1}{(r^\square)^\alpha}\Bigr]
 &\leq\sum_y\tau_1\{y\}\bigl(q_{\rho_1}(y)+\alpha M\bigr)\bigl(q_{\rho_2}(y)+\alpha M\bigr)\\
 &=\bbE_{\tau_1}\bigl[\bigl(q_{\rho_1}(Y_1)+\alpha M\bigr)\bigl(q_{\rho_2}(Y_1)+\alpha M\bigr)\bigr].
\end{align*}
For $c_2$, we use the crossed pairing in \cref{eq:four-law-denominators} and obtain the
same bound with $\tau_2$ and $Y_2$ in place of $\tau_1$ and $Y_1$. The bounds for the
reversed comparisons, together with \cref{eq:four-law-square}, give
\begin{align*}
 \bbE_{\tau_j}\bigl[q_{\rho_i}(Y_j)\bigr]&\leq M+\delta_1,\\
 \bbE_{\tau_j}\bigl[q_{\rho_i}(Y_j)^2\bigr]
 &\leq K_\alpha(\beta)M
 +\beta\,\bbE_{\tau_j}\bigl[q_{\rho_i}(Y_j)\bigr]+(1-\beta)\delta_1.
\end{align*}
In the second bound the last term accounts for $q_{\rho_i}(Y_j)=1$, where the square is
$1$ and the term $\beta q_{\rho_i}(Y_j)$ contributes only $\beta$. Adding the bounds for
$c_1$ and $c_2$, we obtain
\begin{align}
 \bbE_{\rho_1\times\rho_2}\Bigl[\mathbf1_{\{X_1,X_2\in F\}}\frac{c_1+c_2}{(r^\square)^\alpha}\Bigr]
 &\leq\sum_{j=1}^2\bbE_{\tau_j}\bigl[q_{\rho_1}(Y_j)q_{\rho_2}(Y_j)\bigr]
 +\alpha M\sum_{i,j=1}^2\bbE_{\tau_j}\bigl[q_{\rho_i}(Y_j)\bigr]+2\alpha^2M^2\notag\\
 &\leq\frac12\sum_{i,j=1}^2\bbE_{\tau_j}\bigl[q_{\rho_i}(Y_j)^2\bigr]+\alpha MQ_\Sigma+2\alpha^2M^2\notag\\
 &\leq2K_\alpha(\beta)M+\frac\beta2Q_\Sigma+2(1-\beta)\delta_1+(4\alpha+2\alpha^2)M^2+4\alpha M\delta_1.
 \label{eq:four-law-overlaps}
\end{align}
The first step expands the products, the second applies
$2q_{\rho_1}q_{\rho_2}\leq q_{\rho_1}^2+q_{\rho_2}^2$ and the definition of $Q_\Sigma$, and
the third applies the two displays above and $Q_\Sigma\leq4(M+\delta_1)$. The terms containing
$Q_\Sigma$ cancel between \cref{eq:four-law-rows,eq:four-law-overlaps}, and adding the two
bounds gives
\[
 \bbE_{\rho_1\times\rho_2}\bigl[\mathbf1_{\{X_1,X_2\in F\}}\phi(q^\square)\bigr]
 \leq\lambda_\alpha(\beta)M
 +(4cL_\alpha+4u^{-1}+8\alpha+2\alpha^2)M^2+(2+2\beta+4\alpha M)\delta_1.
\]
The coefficient of the $M^2$ term is $C_\alpha(u)$ by \cref{eq:four-law-constants}, so this is the
asserted bound on $\{X_1,X_2\in F\}$, except for the term involving $\delta_2$.

It remains to consider the source pairs outside $F\times F$ that have positive matching
probability. By definition of the restricted potential, and since $r^\square\geq
r_1^1r_2^2>0$ on $\{X_1,X_2\in F\}$,
\begin{align*}
 \cE(\rho_1\times\rho_2,\tau_1\times\tau_2;R^\square)
 &=\bbE_{\rho_1\times\rho_2}\bigl[\mathbf1_{\{X_1,X_2\in F\}}\phi(q^\square)\bigr]\\
 &\quad+\bbE_{\rho_1\times\rho_2}\bigl[\mathbf1_{\{r^\square>0\}\setminus\{X_1,X_2\in F\}}\phi(q^\square)\bigr].
\end{align*}
The first expectation has just been bounded. For the second, let
\[
 A_1\coloneqq\{r_{\tau_1}(X_1)=0\},\qquad
 A_2\coloneqq\{r_{\tau_2}(X_1)=0\},
\]
and let $A_3$ and $A_4$ be the corresponding events for $X_2$. Since
$F^c=\{r_{\tau_1}=0\}\cup\{r_{\tau_2}=0\}$, the complement of $\{X_1,X_2\in F\}$ is
$A_1\cup A_2\cup A_3\cup A_4$.

For $A_1\cap\{r^\square>0\}$, the straight pairing, in which $X_1$ matches $Y_1$ and $X_2$
matches $Y_2$, has probability $r_{\tau_1}(X_1)r_{\tau_2}(X_2)=0$, so only the crossed
pairing can succeed and
\[
 r^\square=r_{\tau_2}(X_1)\,r_{\tau_1}(X_2)>0.
\]
Both factors are positive, so both weights are the corresponding powers, and since
$\phi(q)=q(1-q)^{-\alpha}\leq(1-q)^{-\alpha}$ for $q<1$,
\[
 \phi(q^\square)
 \leq(r^\square)^{-\alpha}
 =r_{\tau_2}(X_1)^{-\alpha}\,r_{\tau_1}(X_2)^{-\alpha}
 =W_{\tau_2}(X_1)\,W_{\tau_1}(X_2).
\]
The right-hand side is nonnegative and is a function of $X_1$ times a function of $X_2$.
Enlarging the event $A_1\cap\{r^\square>0\}$ to $A_1$, which depends on $X_1$ alone, and
using independence gives
\begin{align*}
 \bbE_{\rho_1\times\rho_2}\bigl[\mathbf1_{A_1\cap\{r^\square>0\}}\phi(q^\square)\bigr]
 &\leq\bbE_{\rho_1}\bigl[\mathbf1_{\{r_{\tau_1}(X_1)=0\}}W_{\tau_2}(X_1)\bigr]\,
 \bbE_{\rho_2}\bigl[W_{\tau_1}(X_2)\bigr]\\
 &\leq(1+\alpha M)\delta_2,
\end{align*}
by the weighted hypothesis with $i=j=1$ for the first factor and by
\cref{eq:four-law-moment} for the second.

For realisations in $A_2\cap\{r^\square>0\}$, the crossed pairing has probability
$r_{\tau_2}(X_1)r_{\tau_1}(X_2)=0$ instead, so $r^\square=r_{\tau_1}(X_1)r_{\tau_2}(X_2)$
and $\phi(q^\square)\leq W_{\tau_1}(X_1)W_{\tau_2}(X_2)$. The same argument gives the
bound $(1+\alpha M)\delta_2$, now by the weighted hypothesis with $i=1$, $j=2$. When $A_3$ and
$A_4$ hold, the roles of $X_1$ and $X_2$ are exchanged, and the weighted hypotheses with $i=2$
give the same bound. Since $\phi(q^\square)\geq0$, the expectation over
$\{r^\square>0\}\setminus\{X_1,X_2\in F\}$ is at most the sum of the four expectations over
$A_k\cap\{r^\square>0\}$, hence at most $4(1+\alpha M)\delta_2$. Adding this to the bound on
$\{X_1,X_2\in F\}$ proves \cref{eq:four-law-contraction}.
\end{proof}

\section{Labellings with zero matching probability}\label{sec:quantitative-stopping}

We now bound the two error terms in \cref{thm:four-law-contraction} for the laws of the
Markov model. We first show that both errors vanish when $b(0)=1$.

\subsection{Potentials and weights by type}\label{sec:type-potentials}\label{sec:positive-degrees}

For each integer $h\geq0$, put
\[
 \cL_h\coloneqq\{x\colon\bbB_h\to V\}.
\]
For $t\in\cI$, let $\rho_{t,h}$ be the law on the countable set $\cL_h$ of the labelling
generated from root type $t$, with the type information discarded. For $x,y\in\cL_h$, we write
$x\approx_h y$ when an automorphism of $\bbB_h$ matches their labels.
Using the notation of \cref{sec:restricted-potential}, for $x\in\cL_h$ we set
\[
 r_{t,h}(x)\coloneqq r_{\rho_{t,h}}(x)=\rho_{t,h}\{y\in\cL_h\colon x\approx_h y\},\qquad
 W_{t,h}(x)\coloneqq W_{\rho_{t,h}}(x),
\]
and, for $s,t\in\cI$,
\[
 \cE_h(s,t)\coloneqq\cE(\rho_{s,h},\rho_{t,h};\approx_h)
 =\sum_{\substack{x\in\cL_h\\r_{t,h}(x)>0}}\rho_{s,h}\{x\}\,\phi\bigl(1-r_{t,h}(x)\bigr).
\]

To bound the probability of zero matching degree uniformly over the types being compared,
we set
\[
 \delta_{1,h}\coloneqq
 \begin{cases}
  \displaystyle\sup_{s,t\in\cI}\rho_{s,h}\{x\in\cL_h\colon r_{t,h}(x)=0\},&b(0)=1,\\[4pt]
  \displaystyle\max_{\substack{0\leq i<g\\s,t\in\cI_i}}
  \rho_{s,h}\{x\in\cL_h\colon r_{t,h}(x)=0\},&b(0)<1.
 \end{cases}
\]
Thus we compare all initial types when $b(0)=1$, and types in the same class of the
partition $\cI_0,\ldots,\cI_{g-1}$ from the finite-type hypotheses otherwise.
In \cref{thm:four-law-contraction}, $\delta_{1,h}$ bounds the first error, the probability that a
source labelling has no possible match against a specified target type, and the second
error sums the weight against a type $u$ over the labellings that cannot match a
type $t$. For any types $s,u$, \cref{eq:tangent} gives
\begin{equation}\label{eq:moment}
 \sum_{x\in\cL_h}\rho_{s,h}\{x\}\,W_{u,h}(x)\leq1+\alpha \cE_h(s,u).
\end{equation}

When $b(0)=1$, we show by induction on height that every source labelling of positive
probability has positive matching degree against every target type. At height zero, this
follows because each label in $\supp\mu$ matches itself and $0$ is compatible with every
label in $\supp\mu$. For the induction step, choose a source transition that produces the
given child labellings with positive probability and any target transition of positive
probability. By induction, both child comparisons have positive matching degree. Conditioned
on the target transition, the target root and its child subtrees are independent, so the full
labelling also has positive matching degree. Thus, for all $s,t$ and $h\geq0$,
\begin{equation}\label{eq:positive-degrees}
 \rho_{s,h}\{x\in\cL_h\colon r_{t,h}(x)=0\}=0.
\end{equation}
Both error terms in \cref{thm:four-law-contraction} therefore vanish, including when the
set of types is infinite.

\subsection{The probability of an impossible match}\label{sec:unweighted}

Throughout this subsection and the following subsection, we assume the finite-type
hypotheses of \cref{thm:markov-matching}.
An impossible comparison can pass from a source labelling to one of its children. We show
that, within $H$ steps, either a source label is incompatible with $0$ or both source
children have an impossible comparison.

We write
\[
 T\coloneqq\max_{0\leq i<g}\#\cI_i
\]
for the largest number of types in one class. This bounds the number of target types that
can occur in a comparison at a given depth.
As we descend the tree, one source labelling may fail against several target types
simultaneously. For a source type $s\in\cI_i$ and a nonempty set $\cJ\subseteq\cI_i$, we
therefore consider the event
\[
 \cU_h(\cJ)\coloneqq
 \bigl\{x\in\cL_h\colon r_{t,h}(x)=0\text{ for every }t\in\cJ\bigr\}
\]
that a source labelling at height $h$ has degree zero against every type in $\cJ$. For a
singleton $\cJ=\{t\}$ with $s,t$ in one class, $\delta_{1,h}$ bounds $\rho_{s,h}(\cU_h(\{t\}))$.
The possible target child types form the set
\[
 \cJ'\coloneqq
 \bigcup_{t\in \cJ}\ \bigcup_{(t_1,t_2)\in\supp P_t}\{t_1,t_2\}
 \subseteq\cI_{i+1}.
\]
We include every transition of positive probability. Thus $\cJ'$ is nonempty, and it has
at most $T$ members. Both source child types also lie in $\cI_{i+1}$.

\begin{lemma}\label{thm:zero-degree-dichotomy}
Let $h\geq1$, and let $x\in\cL_h$ be a source labelling of positive probability whose root is
compatible with $0$, with child labellings $x_1,x_2\in\cL_{h-1}$. If $x\in\cU_h(\cJ)$, then
at least one of the following two conditions holds.
\begin{enumerate}[(i),ref=(\roman*)]
\item\label{it:dichotomy-one-child} One source child cannot match any type in $\cJ'$:
\[
 \exists k\in\{1,2\}\ \forall t\in \cJ',\qquad r_{t,h-1}(x_k)=0,
\]
that is, $x_k\in\cU_{h-1}(\cJ')$ for some $k$.
\item\label{it:dichotomy-both-children} Each source child has an impossible comparison,
with possibly different target types:
\[
 \forall k\in\{1,2\}\ \exists t\in \cJ',\qquad r_{t,h-1}(x_k)=0,
\]
that is, $x_1\in\cU_{h-1}(\{t_1\})$ and $x_2\in\cU_{h-1}(\{t_2\})$ for some
$t_1,t_2\in \cJ'$.
\end{enumerate}
\end{lemma}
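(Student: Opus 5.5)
We argue by contraposition: assume both alternatives fail, and show that $x$ has positive matching degree against some $t\in\cJ$, contradicting $x\in\cU_h(\cJ)$. Failure of \labelcref{it:dichotomy-both-children} means that some child, say $x_1$ after relabelling, has $r_{t,h-1}(x_1)>0$ for every $t\in\cJ'$. Failure of \labelcref{it:dichotomy-one-child} applied to the other child $x_2$ gives some $t^*\in\cJ'$ with $r_{t^*,h-1}(x_2)>0$.

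Next we pick a target type $t\in\cJ$ and a transition $(t_1,t_2)\in\supp P_t$ such that $t^*\in\{t_1,t_2\}$. Such a choice exists by the definition of $\cJ'$. If $t^*=t_2$, we use the straight pairing: $x_1$ against the target child of type $t_1$ and $x_2$ against the target child of type $t_2$. If $t^*=t_1$, we use the crossed pairing instead. In either case both child comparisons have positive matching degree, since $x_1$ matches every type in $\cJ'$ with positive probability and $t_1,t_2\in\cJ'$.

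It remains to check the target root. Since $t\in\cJ\subseteq\cI_i$ and the transition $(t_1,t_2)$ is drawn independently of the root label, the root label is either the prescribed label $0$ (when $t\notin\cI_\mu$) or a draw from $\mu$. If it is $0$, compatibility with the source root label holds by hypothesis. If $t\in\cI_\mu$, we instead use that the source root label $a$ has positive probability, so either $a=0$ or $a\in\supp\mu$. In the first case $b(0)>0$, guaranteed by $\zeta<\infty$, gives a compatible draw. In the second, reflexivity gives $\mu(B_G(a,1))\geq\mu(a)>0$.

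Finally we combine the pieces. Conditional on the transition $(t_1,t_2)$, the target root label and the two target child subtrees are independent, with laws given by the root rule and by $\rho_{t_1,h-1},\rho_{t_2,h-1}$. Multiplying the positive probabilities of the transition, the root compatibility and the two child matches yields $r_{t,h}(x)>0$, because any automorphism assembled from the chosen pairing and the child matchings matches $x$. This is the contradiction. The main point requiring care is the root step when $t\in\cI_\mu$; everything else is bookkeeping with the definition of $\cJ'$ and the conditional independence of the Markov model.
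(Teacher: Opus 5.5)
Your proof is correct and is essentially the paper's argument. Both use two facts: the target root is compatible with positive probability for every type in $\cJ$, and a positive-degree comparison for one child against a type in an available pair forces the other child to have zero degree against the remaining type. The only difference is that you argue by contraposition, whereas the paper derives (ii) directly from the failure of (i).
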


\begin{proof}
The root comparison has positive probability against every type in $\cJ$. A target root
with prescribed label $0$ has compatible mass one, and a target root with law $\mu$ has
positive compatible mass, since the source label belongs to $\supp\mu\cup\{0\}$ and
$b(0)>0$. As $x\in\cU_h(\cJ)$, neither pairing of its children can therefore have positive
matching probability against any target child pair available from $\cJ$.
Suppose \labelcref{it:dichotomy-one-child} fails. Each source child then has positive
degree against some member of $\cJ'$. Choose an available target pair containing a type that
the first source child can match. The second source child must have degree zero against the
other type in that pair, or the pairing would succeed. Now choose a target pair containing
a type that the second source child can match. The first source child must have degree
zero against the other type. This gives \labelcref{it:dichotomy-both-children}.
\end{proof}

\paragraph*{\bf Source positions} There are $2^d$ source positions at depth $d$. The total number through depth $H$ is
\[
 S_H\coloneqq\sum_{d=0}^{H}2^d=2^{H+1}-1.
\]

\begin{lemma}\label{thm:explicit-zero-bound}
We have
\[
 \delta_{1,0}\leq1-b(0),\qquad
 \delta_{1,h}\leq S_H(1-b(0))+S_HT^2\left(\max_{j<h}\delta_{1,j}\right)^2\quad(h\geq1).
\]
If $4S_H^2T^2(1-b(0))\leq1$, then $\delta_{1,h}\leq Z$ for all $h$, where
\begin{equation}\label{eq:explicit-zero-bound}
 Z\coloneqq
 \frac{2S_H(1-b(0))}{1+\sqrt{1-4S_H^2T^2(1-b(0))}}\leq2S_H(1-b(0)).
\end{equation}
\end{lemma}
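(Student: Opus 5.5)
The plan is to prove the recursive inequality by iterating \cref{thm:zero-degree-dichotomy} along a single source branch for at most $H$ steps, using \labelcref{it:markov-returns} to stop the descent and \labelcref{it:markov-positivity} to guarantee positive matching once the descent stops. The explicit bound $Z$ then follows from a quadratic fixed-point argument.

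The base case is direct. At height zero, take $s,t$ in one class. If $t\notin\cI_\mu$, the target label is $0$, and a source label in $\supp\mu\cup\{0\}$ fails only if it is incompatible with $0$. If $s,t\in\cI_\mu$, positivity gives degree $b(v)>0$. If $s\notin\cI_\mu$, the source label is $0$, with degree $b(0)>0$ against $\mu$. Hence $\delta_{1,0}\leq1-b(0)$; the only failing event is a source label at a vertex incompatible with $0$.

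For $h\geq1$, fix $s,t\in\cI_i$ and a source labelling $x\in\cU_h(\{t\})$ of positive probability. Set $\cJ_0=\{t\}$ and build a descent $v_0=\troot,v_1,\ldots$ with target sets $\cJ_{d+1}=\cJ_d'$, so that $\#\cJ_d\leq T$. At each depth $d$ one of three things happens.
\begin{itemize}
\item[(a)] The source label at $v_d$ is incompatible with $0$.
\item[(b)] Alternative \labelcref{it:dichotomy-both-children} holds at $v_d$: both children of $v_d$ lie in $\cU_{h-d-1}(\{t_k\})$ for some $t_1,t_2\in\cJ_{d+1}$.
\item[(c)] Alternative \labelcref{it:dichotomy-one-child} holds, and we continue to the child $v_{d+1}\in\cU_{h-d-1}(\cJ_{d+1})$.
\end{itemize}

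The key step is that (c) cannot persist beyond depth $H$, nor past height zero without ending in (a). Continuing means the subtree at $v_d$ fails against every type in $\cJ_d$, and $\cJ_d$ contains every type reachable at depth $d$ along possible target paths from $t$. By \labelcref{it:markov-returns}, the source path $v_0,\ldots$ reaches $\cI_\mu$ at some depth $n\leq H$ at which some possible target path also reaches $\cI_\mu$, at a type $t^*\in\cJ_n$. Then \labelcref{it:markov-positivity}, applied to the source subtree at $v_n$, which occurs with positive probability, against $t^*$, gives $r_{t^*}>0$. This contradicts $v_n\in\cU(\cJ_n)$. Hence the descent stops by depth $\min(H,h)$ through (a) or (b); a stop at height zero is (a) by the base-case analysis. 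I expect the main obstacle to be making this rigorous, in particular checking that positive matching at the class-$\cI_\mu$ pair propagates through the remaining heights. The point to check is that positivity is stated for labellings through arbitrary height $h$, so it applies to the whole subtree at $v_n$.

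To count, union-bound over the at most $S_H$ source positions $v$ within depth $H$.
\begin{itemize}
\item The label at $v$ is incompatible with $0$. Each position has probability at most $1-b(0)$, contributing at most $S_H(1-b(0))$.
\item Both children of $v$ fail against specified types $t_1,t_2$, for one of at most $T^2$ choices of the pair. Conditioned on the child types, which lie in the same class $\cI_{i+d+1}$ as $t_1,t_2$, the two child subtrees are independent. Each fails with probability at most $\delta_{1,h-d-1}\leq\max_{j<h}\delta_{1,j}$.
\end{itemize}
Summing gives $S_H(1-b(0))+S_HT^2(\max_{j<h}\delta_{1,j})^2$.

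Finally, set $a=S_H(1-b(0))$ and $c=S_HT^2$. The number $Z$ in \eqref{eq:explicit-zero-bound} is the smaller root of $Z=a+cZ^2$, which exists since $4ac=4S_H^2T^2(1-b(0))\leq1$. Moreover $Z\geq a\geq1-b(0)\geq\delta_{1,0}$. Induction using monotonicity of $z\mapsto a+cz^2$ gives $\delta_{1,h}\leq a+cZ^2=Z$. The bound $Z\leq2a$ follows because the denominator is at least $1$.
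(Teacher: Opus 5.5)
Your proposal is correct and follows the paper's proof essentially step for step. It applies \cref{thm:zero-degree-dichotomy} recursively along a single source branch and stops within $H$ steps by (M2) and (M1). A union bound over the at most $S_H$ positions gives the terms $1-b(0)$ and $T^2(\max_{j<h}\delta_{1,j})^2$, using conditional independence of the two child subtrees given the source transition. The bound $Z$ then follows by the smaller-root induction.

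The only point the paper makes more explicit is that the descent runs on the joint type--label realisation, conditioning on source transitions and then averaging. This matters because the labelling alone does not determine the source type path to which (M2) is applied.
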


\begin{proof}
We apply \cref{thm:zero-degree-dichotomy} recursively. In \labelcref{it:dichotomy-one-child},
we repeat it for the event $x_k\in\cU_{h-1}(\cJ')$, including both choices of $k$ in our upper
bound. In \labelcref{it:dichotomy-both-children}, we bound the two events
$x_1\in\cU_{h-1}(\{t_1\})$ and $x_2\in\cU_{h-1}(\{t_2\})$ separately and stop descending.
We also stop at a source label incompatible with $0$ or at height zero. Whenever we use
independence of the source children, we condition on their type transition and then average
with its original law. This introduces no factor for the number of source transitions.

We next show why the recursion stops within $H$ steps. Each successive use of
\labelcref{it:dichotomy-one-child} follows a possible source type path. Iterating
$\cJ\mapsto \cJ'$ retains every type reached
at the corresponding depth by a possible target path from $\cJ$. Fix any initial target
type in $\cJ$. Condition \labelcref{it:markov-returns} gives a depth at most $H$ at which
the source type and an available target type both belong to $\cI_\mu$.
Condition \labelcref{it:markov-positivity} then gives positive matching probability for
the realised source subtree against that target type. This contradicts the condition of
degree zero against every current target type. Thus the recursion must already have
stopped, or reached height zero.

At any fixed source position, incompatibility with $0$ has probability at most $1-b(0)$.
In \labelcref{it:dichotomy-both-children}, each child has degree zero against at least one
of at most $T$ types in its own
class. At remaining height $j<h$, the union bound gives probability at most $T\delta_{1,j}$ for
each child. To bound their intersection, we discard the requirements imposed above this
position. Conditioned on the source transition, the two child labellings are independent
with their original laws. Their contribution is therefore at most
\[
 T^2\cdot\left(\max_{j<h}\delta_{1,j}\right)^2.
\]
At height zero, a root compatible with $0$ has positive degree against every target type,
so only root incompatibility can contribute.

Starting with $\cJ=\{t\}$, the event $\cU_h(\{t\})$ is contained in the union of these
terminal events over source positions through depth $H$. There are at most $S_H$ such
positions. Summing their bounds and maximising over the initial types gives the recurrence.

The value in \cref{eq:explicit-zero-bound} is the smaller nonnegative solution of
\[
 Z=S_H\cdot(1-b(0))+S_HT^2Z^2.
\]
It is at least $1-b(0)$, so the bound holds at height zero. If it holds at all smaller
heights, the recurrence gives $\delta_{1,h}\leq Z$.
\end{proof}

We now use this probability bound to control the weighted zero-degree sum in
\cref{thm:four-law-contraction}.

\subsection{The weighted mass of an impossible match}\label{sec:weighted-mass}

Continuing with the finite-type hypotheses, we now estimate the mass of a zero-degree event under
the weight $W_{u,h}$. The type $u$ in this weight may differ from every type defining the
event, as it does in the error term $\delta_2$ of \cref{thm:four-law-contraction}.
For a source root label $v\in\supp\mu\cup\{0\}$ and target type $u$, the
root factor in the weight is
\[
 \begin{cases}
  b(v)^{-\alpha},&u\in\cI_\mu,\\
  \mathbf1_{\{v\in B_G(0,1)\}},&u\notin\cI_\mu.
 \end{cases}
\]
We separate the contribution of source labels incompatible with $0$ from the bound on
compatible labels by putting
\begin{equation*}
 \begin{split}
 I_{\mathrm{bad}}&\coloneqq\sum_{\substack{v\notin B_G(0,1)\\\mu(v)>0}}\mu(v)b(v)^{-\alpha},\\
 R_\mu&\coloneqq
 \max\left\{1,\ b(0)^{-\alpha},\
             \sum_{\substack{v\in B_G(0,1)\\\mu(v)>0}}\mu(v)b(v)^{-\alpha}\right\}.
 \end{split}
\end{equation*}

By the weight inequality \cref{eq:tangent},
\begin{equation}\label{eq:root-parameter-bounds}
 1-b(0)\leq\zeta,\qquad
 I_{\mathrm{bad}}\leq1-b(0)+\alpha\eta\leq(1+\alpha)\zeta,\qquad
 R_\mu\leq1+\alpha\zeta.
\end{equation}
These bounds require no positive mass at $0$. If $\mu(0)\geq p>0$, we also have
\begin{equation*}
 \frac{1-b(0)}{b(0)^\alpha}\leq\frac\eta p,\qquad
 I_{\mathrm{bad}}\leq\frac\eta p,\qquad
 1-b(0)\leq\frac{(1-p)^\alpha}{p}\eta.
\end{equation*}
For the last two inequalities, a label incompatible with $0$ has
$1-b(v)\geq p$ and $b(v)\leq1-p$. The first follows from the summand of $\eta$ at $0$.

The source root label is independent of its transition and child labellings. We can
therefore use these averaged root factors at each step. To pass the weight to the children,
we first account for the target transition mixture.

Fix a height $h\geq1$, a source child pair $(x_1,x_2)\in\cL_{h-1}^2$, and a target type $u$.
A transition $j=(j_1,j_2)\in\supp P_u$ specifies the two target child
types. We write $r_j(x_1,x_2)$ for the matching probability against this fixed pair of
types, allowing either pairing, and $r(x_1,x_2)$ for its average over the target transition:
\[
 \begin{split}
 r_j(x_1,x_2)&\coloneqq(\rho_{j_1,h-1}\times\rho_{j_2,h-1})
 \bigl\{(y_1,y_2)\in\cL_{h-1}^2\colon
 (x_1,x_2)\mathrel{(\approx_{h-1})^\square}(y_1,y_2)\bigr\},\\
 r(x_1,x_2)&\coloneqq
 \sum_{j\in\supp P_u}P_u(j)r_j(x_1,x_2).
 \end{split}
\]
Suppressing the source pair in the notation, we have $r\geq P_u(j)r_j>0$ for at least one
$j$ whenever $r>0$. Hence
\begin{equation}\label{eq:selected-inverse-mixture}
 \mathbf1_{\{r>0\}}r^{-\alpha}
 \leq\sum_{j\in\supp P_u}P_u(j)^{-\alpha}
       \mathbf1_{\{r_j>0\}}r_j^{-\alpha}.
\end{equation}
Here and below, a restricted inverse power is understood to be zero when its degree is
zero. The sum of the coefficients in this expansion is at most
\begin{equation}\label{eq:transition-budget}
 B\coloneqq\max_{t\in\cI}
 \sum_{j\in\supp P_t}P_t(j)^{-\alpha}<\infty.
\end{equation}

For the selected-transition refinement in the theorem, its hypothesis ensures that at least
one component indexed by $J_u$ has positive degree whenever $r(x_1,x_2)>0$. The argument
giving \cref{eq:selected-inverse-mixture} therefore applies with the sum restricted to
$J_u$, and in all the following bounds we may replace $B$ by
\[
 \max_{t\in\cI}\sum_{j\in J_t}P_t(j)^{-\alpha}.
\]
The sets $\cJ'$ in the preceding argument still use every possible target transition.

We assume $4S_H^2T^2(1-b(0))\leq1$ and use $Z$ from \cref{eq:explicit-zero-bound}.
We write
\[
 G_H(x)\coloneqq\sum_{d=0}^{H}x^d\qquad(x\geq0).
\]
For a bound $M\geq0$ on the restricted potentials at smaller heights, define
\begin{equation}\label{eq:explicit-weighted-error}
 E(M)\coloneqq
 G_H\bigl(4BR_\mu(1+\alpha M)\bigr)
 \left[2B(1+\alpha M)^2I_{\mathrm{bad}}+2BR_\mu\bigl(TZ+\alpha M\bigr)^2\right].
\end{equation}
The two terms in brackets correspond to an incompatible root and to a restriction on both
source children. The finite sum accounts for the steps before either event occurs.

\begin{lemma}\label{thm:explicit-weighted-bound}
Assume $4S_H^2T^2(1-b(0))\leq1$. Let $h\geq1$ and $M\geq0$, and suppose
\[
 \cE_k(s,u)\leq M\qquad
 (0\leq k<h,\ 0\leq i<g,\ s,u\in\cI_i).
\]
For $s,u\in\cI_i$ and a nonempty set $\cJ\subseteq\cI_i$,
\[
 \sum_{x\in\cU_h(\cJ)}\rho_{s,h}\{x\}\,W_{u,h}(x)
 \leq E(M).
\]
At height zero the same bound holds with $M=0$.
\end{lemma}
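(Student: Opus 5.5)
We will follow the recursive descent of \cref{thm:explicit-zero-bound}, this time carrying the weight $W_{u,h}$ through each step. The first task is to factor the weight at the root. Write a source labelling $x\in\cL_h$ as its root label $v$ together with its children $(x_1,x_2)$. Conditioning on the target root type $u$, the target root is independent of the target transition and of the child subtrees. Hence
\[
 r_{u,h}(x)=\bigl(\text{root compatibility probability of }v\text{ against }u\bigr)\cdot r(x_1,x_2),
\]
with $r(x_1,x_2)$ the transition-averaged pair degree defined above. The weight $W_{u,h}(x)$ therefore splits as the root factor times $\mathbf 1_{\{r>0\}}r^{-\alpha}$. By \cref{eq:selected-inverse-mixture}, the second factor is bounded by $\sum_jP_u(j)^{-\alpha}\mathbf1_{\{r_j>0\}}r_j^{-\alpha}$.

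For fixed target child types $j=(j_1,j_2)$, a nonzero $r_j$ is at least the success probability of one of the two pairings. This gives
\[
 \mathbf1_{\{r_j>0\}}r_j^{-\alpha}\leq W_{j_1,h-1}(x_1)W_{j_2,h-1}(x_2)+W_{j_2,h-1}(x_1)W_{j_1,h-1}(x_2),
\]
which accounts for the factor $2$. The source root label is independent of the source transition and of the children. Averaging over $v$ then splits the sum over $\cU_h(\cJ)$ into two parts. The first is the part with $v\notin B_G(0,1)$. Here we drop the restriction $\cU_h(\cJ)$ on the children and bound each child factor by the moment bound \cref{eq:moment}, namely $1+\alpha M$. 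This yields the term $2B(1+\alpha M)^2I_{\mathrm{bad}}$. The second is the part with $v\in B_G(0,1)$, where the root factor averages to at most $R_\mu$. On that part we invoke the dichotomy of \cref{thm:zero-degree-dichotomy}.

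\emph{Case (ii) of the dichotomy.} Both children lie in some $\cU_{h-1}(\{t_k\})$. We condition on the source transition, which makes the children independent, and take a union over the at most $T$ choices of each $t_k$. For each child we must bound
\[
 \bbE\bigl[\mathbf 1_{\cU_{h-1}(\{t\})}W_{j,h-1}\bigr].
\]
We will use the pointwise inequality $W\leq\mathbf 1_{\{W>0\}}+\alpha\phi(q)\mathbf1_{\{r>0\}}$, which follows from \cref{eq:tangent}. The indicator part is at most $\delta_{1,h-1}\leq Z$ by \cref{thm:explicit-zero-bound}, and the $\phi$ part is at most $\cE_{h-1}\leq M$. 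This gives a bound $TZ+\alpha M$ per child, hence the term $2BR_\mu(TZ+\alpha M)^2$.

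The bound $TZ+\alpha M$ is deliberately loose, because the moment term is not restricted to the zero-degree event. We also need the types $j$ to lie in the same class as the source child types, so that the hypotheses apply. This holds by the class structure, since both lie in $\cI_{i+1}$.

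\emph{Case (i) of the dichotomy.} One child $x_k$ lies in $\cU_{h-1}(\cJ')$. Its weighted mass is an instance of the same quantity one level down, and the other child contributes at most $1+\alpha M$ by \cref{eq:moment}. Summing over $k\in\{1,2\}$, over the two pairings and over the transitions multiplies by at most $4BR_\mu(1+\alpha M)$ per level of descent.

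We then iterate. Bounded returns \labelcref{it:markov-returns} together with positivity \labelcref{it:markov-positivity} end every chain of case~(i) steps within $H$ levels, exactly as in the proof of \cref{thm:explicit-zero-bound}. We reach height zero with a compatible root only when the degree is positive, so that branch contributes nothing. Summing the geometric factors over at most $H$ case~(i) steps, each followed by one terminal event, gives the prefactor $G_H(4BR_\mu(1+\alpha M))$, hence $E(M)$. At height zero only an incompatible root survives, and its contribution is at most $I_{\mathrm{bad}}\leq E(0)$.

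The main obstacle is the bookkeeping of the descent in case~(i). Each step multiplies by the per-level factor, and the terminal events must be charged with children's weights whose types may differ from the types defining the zero-degree events. We would handle this by proving, by induction on the number of remaining return steps (at most $H$), that the weighted mass of $\cU(\cJ)$ is bounded by the corresponding partial sum of $E(M)$, uniformly in $\cJ$, $u$ and $s$ within a class.
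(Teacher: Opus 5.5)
Your proposal is correct and follows the paper's proof essentially step for step. You split off the root weight into $R_\mu$ for a compatible root or $I_{\mathrm{bad}}$ for an incompatible one; the inverse mixture and the two pairings give the coefficient $2B$; case (i) costs $4BR_\mu(1+\alpha M)$ per step; case (ii) terminates with $2BR_\mu(TZ+\alpha M)^2$; and (M1)--(M2) stop the descent within $H$ steps, which produces the factor $G_H$.

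The one point to phrase with care is case (ii). To obtain $TZ+\alpha M$ per child rather than $T(Z+\alpha M)$, apply your pointwise bound on $W$ to the indicator of the union $\bigcup_{t\in\cJ'}\cU_{h-1}(\{t\})$, and use the union bound only for the mass of that union, as the paper does. Summing the weighted bounds over the $T$ choices of $t$ would give the larger constant.
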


\begin{proof}
We use the cases \labelcref{it:dichotomy-one-child,it:dichotomy-both-children} of
\cref{thm:zero-degree-dichotomy} and the recursion in the proof of
\cref{thm:explicit-zero-bound}, now weighting the event inclusions by $W_{u,h}$. First we
pass this weight to the two children. If $r_j>0$, at least one pairing has a positive product
of child degrees, and $r_j$ is at least this product. Hence
\[
 \mathbf1_{\{r_j>0\}}r_j^{-\alpha}
 \leq W_{j_1,h-1}(x_1)W_{j_2,h-1}(x_2)
 +W_{j_2,h-1}(x_1)W_{j_1,h-1}(x_2).
\]
We combine this bound with \cref{eq:selected-inverse-mixture}. The transition coefficients
sum to at most $B$, and there are two pairings. The root factor, summed against the source
root law over the labels $v$ compatible with $0$, is at most $R_\mu$. Thus passing the
weight to the children gives total coefficient at most $2BR_\mu$. The realised source child types, the target types
in $\cJ'$, and the types $j_1,j_2$ in the weights all belong to the next class. The moment
bounds at smaller heights therefore apply to every child comparison below.

In case \labelcref{it:dichotomy-one-child}, one child must still have degree zero against
every type in $\cJ'$. We continue the recursion on this child with its corresponding weight
and discard any further restrictions on the sibling. Conditioned on the source transition,
the resulting sum factors by independence. The sibling's weighted sum is at most $1+\alpha M$ by
\cref{eq:moment}. Including both choices of the child on which we continue gives a factor
at most
\[
 4BR_\mu(1+\alpha M)
\]
per step. Summing the conditional bounds over the source child types $(s_1,s_2)$ with
their probabilities $P_s(s_1,s_2)$ leaves this coefficient unchanged.

In case \labelcref{it:dichotomy-both-children}, each child must have degree zero against
at least one type in $\cJ'$. For any height $k\geq0$, types $s,u$ in one class, and a set
$\cJ$ in that class, the weight bound
\cref{eq:four-law-weight} gives
\begin{equation}\label{eq:zero-union-moment}
 \sum_{\substack{x\in\cL_k\\r_{t,k}(x)=0\text{ for some }t\in\cJ}}\rho_{s,k}\{x\}\,W_{u,k}(x)
 \leq\#\cJ\,\delta_{1,k}+\alpha \cE_k(s,u)
 \leq T\delta_{1,k}+\alpha \cE_k(s,u).
\end{equation}
The constant part of the weight contributes the mass of the union, which is at most
$\#\cJ\,\delta_{1,k}$ by the union bound, and its potential part is summed over the whole source
space. Applied at the smaller height, this bounds each child's weighted sum by
$TZ+\alpha M$.
We enlarge the source event to the product of these two child unions. Conditioned on the
source transition, the weighted sum then factors. Using the coefficient $2BR_\mu$ obtained
above, the contribution of case \labelcref{it:dichotomy-both-children} is at most
\[
 2BR_\mu(TZ+\alpha M)^2.
\]

The root factor, summed against the source root law over labels incompatible with $0$,
is at most $I_{\mathrm{bad}}$. The transition expansion and two pairings have total
coefficient at most $2B$, and the independent child weights each have expectation at most
$1+\alpha M$, after discarding the zero-degree restrictions on the children. The contribution
is therefore at most
\[
 2B(1+\alpha M)^2I_{\mathrm{bad}}.
\]
At height zero it is at most $I_{\mathrm{bad}}$, already bounded by this expression since $B\geq1$.

As in the unweighted proof, conditions \labelcref{it:markov-returns,it:markov-positivity}
prevent case \labelcref{it:dichotomy-one-child} from continuing beyond $H$ steps. After $d$
such steps, the coefficient
is at most $(4BR_\mu(1+\alpha M))^d$. Summing the two possible terminal contributions
over $0\leq d\leq H$ gives \cref{eq:explicit-weighted-error}. Every child or sibling
potential used in this calculation has height strictly smaller than $h$.
\end{proof}

For fixed $\alpha$ and bounded $H,T,B$, \cref{eq:root-parameter-bounds} bounds $Z$ by a
constant times $\zeta$ and $E(M)$ by a constant times $\zeta+M^2$ for $\zeta,M\leq1$, so the
two error terms add no linear term in $M$ to the local estimate and leave its contraction
intact. This will be made explicit in \cref{sec:completion}.

\section{The scalar induction}\label{sec:completion}

We now seek a bound $M$ on the restricted potentials that holds at every height. Throughout
this section, we compare all pairs of types when $b(0)=1$, and pairs in the same class
otherwise. Choose $\beta_\ast\in[0,1]$ attaining the minimum in
\cref{eq:mean-constants}. Thus
\[
 \lambda_\alpha(\beta_\ast)=\lambda_\alpha<1.
\]
We use $u=1/2$ in the four-law estimate. When $b(0)=1$, we set $Z=E(M)=0$.
Otherwise, $Z$ and $E(M)$ are the bounds from
\cref{eq:explicit-zero-bound,eq:explicit-weighted-error}, defined under the condition
$4S_H^2T^2(1-b(0))\leq1$. Put
\[
 C\coloneqq
 \begin{cases}
  4,&b(0)=1,\\
  4+8(\alpha+1)B,&b(0)<1,
 \end{cases}
\]
and define
\begin{equation}\label{eq:closure-map}
 Q(M)\coloneqq
 \lambda_\alpha M+C_\alpha(1/2)M^2
 +(2+2\beta_\ast+4\alpha M)Z+C(1+\alpha M)E(M).
\end{equation}
Finally, let
\[
 A_\mu\coloneqq\max\left\{1,\
       \alpha\eta+\sum_{v\colon\mu(v)>0}\mu(v)b(v)^{1-\alpha},\
       \alpha\phi(1-b(0))+b(0)^{1-\alpha}\right\}.
\]

The following lemma is the one-step estimate used in the height induction.

\begin{lemma}\label{thm:one-step-closure}
Let $h\geq0$ and $M\geq0$. Suppose that
\[
 \cE_k(s,t)\leq M\qquad(0\leq k\leq h)
\]
for all $s,t\in\cI$ when $b(0)=1$, and for all $s,t$ in the same class otherwise. When
$b(0)<1$, assume also that $4S_H^2T^2(1-b(0))\leq1$. Then,
\[
 \cE_{h+1}(s,t)\leq\zeta+A_\mu Q(M).
\]
Moreover,
\[
 A_\mu\leq1+(2\alpha-1)\zeta.
\]
\end{lemma}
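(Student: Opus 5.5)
We decompose a height-$(h+1)$ labelling from type $s$ into its root label $a$ and the pair of child labellings $(x_1,x_2)\in\cL_h^2$, with child types drawn from $P_s$. The target from $t$ is decomposed the same way, with transition $j\in\supp P_t$. By \cref{eq:full-rec}, $x\approx_{h+1}y$ holds exactly when the roots are compatible and the child pairs satisfy $(\approx_h)^\square$. Conditioned on the target transition, the target root and the target children are independent. Hence the matching degree factorises as
\[
 r_{t,h+1}(x)=r_{\mathrm{root}}(a)\,\bar r(x_1,x_2),
\]
where $r_{\mathrm{root}}(a)$ is $b(a)$ or $\mathbf 1_{\{a\in B_G(0,1)\}}$ according to whether $t\in\cI_\mu$, and $\bar r$ is the $P_t$-mixture of the pair degrees $r_j$. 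We apply \cref{eq:split} to the product: with $e=1-r_{\mathrm{root}}$ and $q=1-\bar r$ this gives $\phi(q_{h+1})\le\phi(e)+\phi(q)+2\alpha\phi(e)\phi(q)$ on the positive-degree set. Averaging over the source root, which is independent of the children, turns the root term into at most $\zeta$. This covers a root drawn with law $\mu$ against either target root type, and the prescribed root $0$, since $\phi(1-b(0))\le\zeta$. The cross factor $1+2\alpha\phi(e)$, combined with the extra $\phi(e)$, produces the averaged root multiplier. I would bound this multiplier by $A_\mu$, using $(1+\alpha\phi(e))\,r^{-\alpha}\cdot r$-type rewriting, because $A_\mu$ is built from $\alpha\eta+\sum\mu(v)b(v)^{1-\alpha}$ and its analogue at $0$.

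The remaining task is to bound the child-pair term $\bbE[\mathbf 1_{\{\bar r>0\}}\phi(1-\bar r)]$ by $Q(M)$. We convexify over target transitions. Since $\phi(1-r)=r^{-\alpha}-r^{1-\alpha}$ is convex and decreasing in $r$, the mixture over $j$ is controlled by the $P_t$-average of the restricted potentials against each fixed pair $(\rho_{j_1,h},\rho_{j_2,h})$. Degree-zero components spoil this, and they are handled by \cref{eq:selected-inverse-mixture}, which loses at most the factor $B$ on the excluded part. We also condition on the source transition $(s_1,s_2)$. For each pair of fixed transitions, \cref{thm:four-law-contraction} applies with $\rho_i=\rho_{s_i,h}$, $\tau_j=\rho_{j_\cdot,h}$, $\beta=\beta_\ast$ and $u=1/2$. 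All four child types lie in one class, so the hypotheses hold as follows:
\begin{itemize}
 \item[(a)] $M$ bounds the restricted potentials, by assumption;
 \item[(b)] $\delta_1\le Z$, by \cref{thm:explicit-zero-bound} when $b(0)<1$, and $\delta_1=0$ by \cref{eq:positive-degrees} when $b(0)=1$;
 \item[(c)] $\delta_2\le E(M)$, by \cref{thm:explicit-weighted-bound}.
\end{itemize}
This yields $\lambda_\alpha M+C_\alpha(1/2)M^2+(2+2\beta_\ast+4\alpha M)Z+4(1+\alpha M)E(M)$. The extra $8(\alpha+1)B$ in $C$ would account for the excluded mixture components, via the $B$-weighted $W$-moments \cref{eq:moment}.

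Finally, the bound $A_\mu\le1+(2\alpha-1)\zeta$ is elementary. We use $b^{1-\alpha}=b\cdot b^{-\alpha}\le b+b(1-b)b^{-\alpha}\alpha$ in the spirit of \cref{eq:tangent}; more simply, $b^{1-\alpha}-1\le(\alpha-1)(1-b)b^{-\alpha}$ by the mean value theorem. Summing against $\mu$ gives $1+(\alpha-1)\eta$, so the middle entry is at most $1+(2\alpha-1)\eta$. The same estimate at $0$ uses $\phi(1-b(0))\le\zeta$. The main obstacle is the transition mixture: passing the potential of the mixed degree $\bar r$ to the fixed-transition potentials without losing linear terms in $M$. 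Jensen handles the positive part with coefficient exactly $1$. The $B$-dependent losses must be shown to multiply only $Z$ and $E(M)$, and not $M$ itself. This is what keeps the contraction $\lambda_\alpha<1$ intact.
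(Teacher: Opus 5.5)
Your overall architecture matches the paper's: you apply the four-law contraction at fixed transitions with $\delta_1\le Z$ and $\delta_2\le E(M)$, average over the transitions, add the independent root, and finish with the elementary bound on $A_\mu$, which you prove correctly. However, two steps are not carried out, and one of them fails as you set it up.

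\textbf{The root step gives a larger constant than $A_\mu$.} Applying \cref{eq:split} yields the multiplier $1+2\alpha\phi(e)$, which averages to $1+2\alpha\eta$ for a random source root. Your own final estimate gives $\sum_v\mu(v)b(v)^{1-\alpha}\le1+(\alpha-1)\eta$. Hence $1+2\alpha\eta$ exceeds the entry $\alpha\eta+\sum_v\mu(v)b(v)^{1-\alpha}$ of $A_\mu$ by at least $\eta$. When $b(0)=1$ that entry equals $A_\mu$ itself. So your route only gives $\zeta+(1+2\alpha\zeta)Q(M)$, not the stated bound. The vague ``rewriting'' must be the exact identity
\[
 \frac{1-b(v)r}{(b(v)r)^\alpha}=\phi\bigl(1-b(v)\bigr)\,r^{-\alpha}+b(v)^{1-\alpha}\,\phi(1-r).
\]
Sum this over child pairs with $r>0$. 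By \cref{eq:tangent}, $\bbE\bigl[\mathbf 1_{\{r>0\}}r^{-\alpha}\bigr]\le1+\alpha Q(M)$. Independence of the source root then gives exactly $\eta+\bigl(\alpha\eta+\sum_v\mu(v)b(v)^{1-\alpha}\bigr)Q(M)$, with the analogue at a prescribed source root $0$. A prescribed target root contributes at most $Q(M)$.

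\textbf{The transition-mixture step is left open.} You flag this yourself but never show that zero-degree transitions cost only a multiple of $E(M)$. The tool you cite, \cref{eq:moment}, cannot do it: it bounds the weights by $1+\alpha M$ and supplies no small factor. Plain Jensen also fails, because $\phi(1-r_j)$ is infinite when $r_j=0$. What you need is Jensen with a defect, which is \cref{thm:zero-mixture-convexity}. For $r=\sum_jp_jr_j>0$ and $w=\sum_{j\colon r_j=0}p_j$,
\[
 \frac{1-r}{r^\alpha}\leq\sum_{j\colon r_j>0}p_j\frac{1-r_j}{r_j^\alpha}+(\alpha+1)\,w\,r^{-\alpha}.
\]
This follows by applying Jensen to the renormalised positive components and the tangent bound to $(1-w)^{\alpha+1}$. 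To bound the expected defect:
\begin{itemize}
 \item[(i)] If $r_j(x_1,x_2)=0$, then $r_{j_k,h}(x_i)=0$ for some $i,k\in\{1,2\}$.
 \item[(ii)] Expand $r^{-\alpha}$ by \cref{eq:selected-inverse-mixture} into at most $B$ times the pairing sums $W_{\ell_1,h}(x_1)W_{\ell_2,h}(x_2)+W_{\ell_2,h}(x_1)W_{\ell_1,h}(x_2)$.
 \item[(iii)] Factor each of the eight resulting sums by independence of the source children. The child carrying the zero-degree indicator contributes at most $E(M)$ by \cref{thm:explicit-weighted-bound}, which allows the weight type to differ from the zero-degree type. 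The other child contributes at most $1+\alpha M$.
\end{itemize}
This yields $8(\alpha+1)B(1+\alpha M)E(M)$, which is the origin of $C=4+8(\alpha+1)B$. The $B$-loss multiplies only $E(M)$, not $Z$ or a linear term in $M$. When $b(0)=1$, \cref{eq:positive-degrees} gives $w=0$, so the defect vanishes.
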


\subsection{Averaging the transitions}\label{sec:averaging}

The target child law is a mixture over its type transition. Some components can have zero
matching degree even when the mixture has positive degree. A convexity argument bounds the
potential weight with a correction involving the total probability of those components.

\begin{lemma}\label{thm:zero-mixture-convexity}
Let $\alpha\geq1$, let $(p_1,\ldots,p_m)$ be a probability vector, and let
$r_1,\ldots,r_m\in[0,1]$. Put
\[
 r\coloneqq\sum_{j=1}^m p_jr_j,\qquad
 w\coloneqq\sum_{j\colon r_j=0}p_j.
\]
Assume that $r>0$. Then
\[
 \frac{1-r}{r^\alpha}
 \leq\sum_{j\colon r_j>0}p_j\frac{1-r_j}{r_j^\alpha}
       +(\alpha+1)wr^{-\alpha}.
\]
\end{lemma}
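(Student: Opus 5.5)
We prove the inequality of \cref{thm:zero-mixture-convexity} by splitting $r$ into its contributions from components with positive and zero degree and then applying convexity of the map $x\mapsto(1-x)x^{-\alpha}=x^{-\alpha}-x^{1-\alpha}$ on $(0,1]$, or rather of $x\mapsto x^{-\alpha}$, which is convex for $\alpha\geq1$.

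\emph{Setting up.} Let $P=\{j\colon r_j>0\}$ and $p_+\coloneqq\sum_{j\in P}p_j=1-w$. Since $r>0$, the set $P$ is nonempty and $p_+>0$. Put $\bar r\coloneqq r/p_+=\sum_{j\in P}(p_j/p_+)r_j$, the average of the positive degrees under the renormalised weights. Write
\[
 \frac{1-r}{r^\alpha}=r^{-\alpha}-r^{1-\alpha}.
\]
We will compare this with $\sum_{j\in P}p_j(r_j^{-\alpha}-r_j^{1-\alpha})$. First observe that the term $-r^{1-\alpha}$ is harmless: it suffices to prove
\[
 r^{-\alpha}-r^{1-\alpha}\leq\sum_{j\in P}p_jr_j^{-\alpha}-\sum_{j\in P}p_jr_j^{1-\alpha}+(\alpha+1)wr^{-\alpha}.
\]

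\emph{Main estimates.} For the inverse power, $r=p_+\bar r$ gives $r^{-\alpha}=p_+^{-\alpha}\bar r^{-\alpha}$. Jensen for the convex function $x^{-\alpha}$ gives $\bar r^{-\alpha}\leq\sum_{j\in P}(p_j/p_+)r_j^{-\alpha}$, so $p_+\,r^{-\alpha}\leq p_+^{1-\alpha}\sum_{j\in P}p_jr_j^{-\alpha}$. That route produces a factor $p_+^{-\alpha}$, which is awkward, so instead rearrange: $r^{-\alpha}=p_+r^{-\alpha}+wr^{-\alpha}$, and $p_+r^{-\alpha}=p_+^{1-\alpha}\bar r^{-\alpha}$. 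Using Jensen, $p_+\bar r^{-\alpha}\leq\sum_{j\in P}p_jr_j^{-\alpha}$, hence $p_+r^{-\alpha}\leq p_+^{-\alpha}\sum_{j\in P}p_jr_j^{-\alpha}$. This again leaves $p_+^{-\alpha}$. So the cleaner plan is to handle the term $r^{1-\alpha}$ together with the inverse power: by concavity for $\alpha\leq2$, or convexity for $\alpha\geq2$, of $x^{1-\alpha}$, the quantity $\sum_{j\in P}p_jr_j^{1-\alpha}$ compares with $p_+\bar r^{1-\alpha}=p_+^{\alpha}r^{1-\alpha}$. I would therefore aim directly at the one-variable inequality obtained after the Jensen step: with $g(x)=x^{-\alpha}-x^{1-\alpha}$, which is convex and decreasing on $(0,1]$ for $\alpha\geq1$ (check $g''=\alpha(\alpha+1)x^{-\alpha-2}-(\alpha-1)\alpha x^{-\alpha-1}\geq0$ since $x\leq1$), Jensen gives $\sum_{j\in P}(p_j/p_+)g(r_j)\geq g(\bar r)$. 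The claim then reduces to
\[
 g(p_+\bar r)\leq p_+g(\bar r)+(\alpha+1)(1-p_+)(p_+\bar r)^{-\alpha},
\]
a two-variable inequality in $p_+\in(0,1]$ and $\bar r\in(0,1]$.

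\emph{Main obstacle.} The remaining work is this scalar inequality. Multiply by $(p_+\bar r)^{\alpha}$ and set $x=p_+$, $y=\bar r$: it becomes $1-xy\leq x^{1+\alpha}(1-y)+(\alpha+1)(1-x)$. Since $1-xy\leq(1-x)+x(1-y)$, it suffices that $x(1-y)\leq x^{1+\alpha}(1-y)+\alpha(1-x)$, i.e.\ $(1-y)x(1-x^\alpha)\leq\alpha(1-x)$, which follows from $1-x^\alpha\leq\alpha(1-x)$ (Bernoulli, or the mean value theorem) together with $x(1-y)\leq1$. I expect this final reduction, and in particular choosing the right way to absorb the $p_+$ factors rather than leaving a stray $p_+^{-\alpha}$, to be the delicate step. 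The convexity of $g$ on $(0,1]$ is routine.
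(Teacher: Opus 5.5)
Your final argument is correct and is essentially the paper's proof: Jensen for the convex function $g(x)=(1-x)x^{-\alpha}$ over the positive-degree components at the renormalised mean $r/(1-w)$, followed by an elementary scalar inequality (the paper drops the nonnegative term $r\bigl(1-(1-w)^\alpha\bigr)$ and uses $1-(1-w)^{\alpha+1}\le(\alpha+1)w$, whereas you write $1-xy=(1-x)+x(1-y)$ and use Bernoulli's inequality $1-x^\alpha\le\alpha(1-x)$, which is the same inequality). The exploratory detours before that should be deleted: they are not needed, and the remark that $x^{1-\alpha}$ is concave for $\alpha\le2$ is false, since the exponent $1-\alpha$ is nonpositive and so $x^{1-\alpha}$ is convex for every $\alpha\ge1$.
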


\begin{proof}
Since $r>0$ we have $w<1$. The positive-degree components have total weight $1-w$, so their
normalised mean degree is
\[
 \sum_{j\colon r_j>0}\frac{p_j}{1-w}r_j=\frac{r}{1-w}.
\]
The second derivative of $x\mapsto(1-x)x^{-\alpha}$ is
\[
 \alpha x^{-\alpha-2}\bigl((\alpha+1)-(\alpha-1)x\bigr)\geq0
 \qquad(0<x\leq1),
\]
so Jensen's inequality gives
\[
 \sum_{j\colon r_j>0}p_j\frac{1-r_j}{r_j^\alpha}
 \geq(1-w)^\alpha(1-w-r)r^{-\alpha}.
\]
The difference between $(1-r)r^{-\alpha}$ and the lower bound on the right is
\[
 \frac{1-(1-w)^{\alpha+1}-r\bigl(1-(1-w)^\alpha\bigr)}{r^\alpha}
 \leq\frac{1-(1-w)^{\alpha+1}}{r^\alpha}
 \leq(\alpha+1)wr^{-\alpha},
\]
where the first inequality uses $(1-w)^\alpha\leq1$, and the second is the tangent bound
for $(1-w)^{\alpha+1}$ at $w=0$.
\end{proof}

\begin{lemma}\label{thm:averaged-child-pair}
Under the hypotheses of \cref{thm:one-step-closure}, the potential of the two height-$h$
child labellings, after averaging the source and target transitions but before including
the root labels, is at most $Q(M)$.
\end{lemma}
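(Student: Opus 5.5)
We fix source type $s$ and target type $t$ in one class (or arbitrary types when $b(0)=1$). We condition on the source transition $(s_1,s_2)\sim P_s$ and the target transition $j=(j_1,j_2)\sim P_t$. Given these, the two source child labellings are independent with laws $\rho_{s_1,h},\rho_{s_2,h}$, and the two target child labellings are independent with laws $\rho_{j_1,h},\rho_{j_2,h}$. All four types lie in the next class, so the hypotheses $\cE_h\leq M$ apply to every cross comparison $\cE(\rho_{s_i,h},\rho_{j_k,h})$ and to its reverse. The object to bound is
\[
 \sum_{(s_1,s_2)}P_s(s_1,s_2)\,\bbE\bigl[\mathbf1_{\{r>0\}}\phi(1-r)\bigr],
\]
where $r=\sum_jP_t(j)r_j$ is the mixture degree of the source pair from \cref{sec:weighted-mass}. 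The plan has two steps. First, reduce the mixture to the fixed components $j$ using \cref{thm:zero-mixture-convexity}. Second, bound each fixed component by \cref{thm:four-law-contraction}.

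For the first step, I apply \cref{thm:zero-mixture-convexity} pointwise in the source pair with weights $p_j=P_t(j)$ and degrees $r_j$. This gives
\[
 \phi(1-r)\leq\sum_{j\colon r_j>0}P_t(j)\phi(1-r_j)+(\alpha+1)w\,r^{-\alpha},
\]
where $w$ is the $P_t$-mass of components with $r_j=0$. Averaging the first term over the source pair produces $\sum_jP_t(j)\,\cE(\rho_{s_1,h}\times\rho_{s_2,h},\rho_{j_1,h}\times\rho_{j_2,h};\approx_h^\square)$. Each summand is then bounded by \cref{thm:four-law-contraction} with $\beta=\beta_\ast$, $u=1/2$, $\delta_1=Z$ (from \cref{thm:explicit-zero-bound}) and $\delta_2=E(M)$ (from \cref{thm:explicit-weighted-bound}, applied with $\cJ=\{j_k\}$ and weight type $j_{3-k}$). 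This yields
\[
 \lambda_\alpha M+C_\alpha(1/2)M^2+(2+2\beta_\ast+4\alpha M)Z+4(1+\alpha M)E(M).
\]
That accounts for the constant $4$ in $C$. When $b(0)=1$, \cref{eq:positive-degrees} makes $w=0$ and $Z=E=0$, which finishes that case.

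For the correction term when $b(0)<1$, I bound $w\leq\sum_jP_t(j)\mathbf1_{\{r_j=0\}}$ and use \cref{eq:selected-inverse-mixture} for $\mathbf1_{\{r>0\}}r^{-\alpha}$. This turns $w\,r^{-\alpha}$ into a double sum over $j,j'$. The factor $P_t(j)\leq1$ is dropped, and the coefficients $P_t(j')^{-\alpha}$ total at most $B$. Each term then has the form $\mathbf1_{\{r_j=0,\,r_{j'}>0\}}\,r_{j'}^{-\alpha}$. On this event I expand $r_{j'}^{-\alpha}$ as a sum over the two pairings of products of child weights. The condition $r_j=0$ with $r_{j'}>0$ forces some child $x_k$ to have zero degree against a type of $j$. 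I therefore enlarge to events of the form $\{r_{j_a,h}(x_k)=0\}$ times a weight on $x_k$ times a weight on the sibling. By independence, the $x_k$ factor is at most $E(M)$ (via \cref{thm:explicit-weighted-bound} with a singleton $\cJ$) and the sibling factor is at most $1+\alpha M$ by \cref{eq:moment}.

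Counting gives two pairings, two choices of child and two types, for a coefficient of $8$. Hence the correction is at most $8(\alpha+1)B(1+\alpha M)E(M)$, which matches the remaining part of $C$. Summing over the source transitions, with $P_s$ a probability, preserves the bound and gives $Q(M)$.

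The main obstacle I expect is this last bookkeeping step. The event $\{r_j=0\}$ is not literally a zero-degree event for a single child, because of the pairings. I must check that every configuration with $r_j=0$ and $r_{j'}>0$ is covered by some child having zero degree against a specific type. I must also check that this type and the weight type lie in the same class, so that \cref{thm:explicit-weighted-bound} applies. If the direct enlargement fails, the fallback is the dichotomy argument of \cref{thm:zero-degree-dichotomy} applied to the fixed pair $j$.
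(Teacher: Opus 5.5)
Your plan is the paper's proof. You condition on the transitions, apply \cref{thm:zero-mixture-convexity} pointwise in the source pair, and bound each fixed-transition term by \cref{thm:four-law-contraction} with $\delta_1=Z$ and $\delta_2=E(M)$. You then control the correction term by expanding $\mathbf 1_{\{r>0\}}r^{-\alpha}$ via \cref{eq:selected-inverse-mixture} and the two-pairing product of child weights.

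There is one bookkeeping slip: you must not drop the factor $P_t(j)$. Your bound on each $(j,j')$ term, $8(1+\alpha M)E(M)$, is uniform in $j$. So keep $P_t(j)$ and use $\sum_jP_t(j)=1$. Discarding it leaves an unweighted sum over $j$ and costs a factor $\#\supp P_t$, so you would not reach the coefficient $C=4+8(\alpha+1)B$ in $Q(M)$.

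The obstacle you anticipate does not arise. Given $j$, the two target children are independent. Hence the straight and crossed pairings have probabilities $r_{j_1,h}(x_1)r_{j_2,h}(x_2)$ and $r_{j_2,h}(x_1)r_{j_1,h}(x_2)$. If $r_j=0$, both vanish, so $\{r_j=0\}\subseteq\bigcup_{i,k\in\{1,2\}}\{r_{j_k,h}(x_i)=0\}$. This uses neither $r_{j'}>0$ nor \cref{thm:zero-degree-dichotomy}.

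The source child types, the types $j_1,j_2$, and the weight types coming from $j'$ all lie in the next class. Moreover, \cref{thm:explicit-weighted-bound} explicitly allows the weight type to differ from the types in $\cJ$, so the singleton application is legitimate. Four zero-degree events times two pairings gives exactly the paper's factor $8$.

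Two small omissions remain. At $h=0$ you need the height-zero clause of \cref{thm:explicit-weighted-bound} together with $E(0)\leq E(M)$. When $b(0)=1$ the transition laws may have countable support, so apply Jensen's inequality directly (with $w=0$) rather than the finite-$m$ statement of \cref{thm:zero-mixture-convexity}.
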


\begin{proof}
The zero-degree probabilities at height $h$ are at most $Z$, independently of the assumed
potential bound. The weighted zero-degree sums are at most $E(M)$. When $b(0)<1$ and
$h=0$, we use the height-zero clause of \cref{thm:explicit-weighted-bound} and
$E(0)\leq E(M)$.

Conditioned on their type transitions, the source and target each have two independent
child subtrees of height $h$. Write $\rho_1,\rho_2$ for these source laws and
$\tau_1,\tau_2$ for the target laws. Their matching potential is
\[
 \cE^\square\coloneqq
 \cE(\rho_1\times\rho_2,\tau_1\times\tau_2;(\approx_h)^\square).
\]
When $b(0)<1$, all four child types lie in the same class at the next level. In either
case, the assumed bounds apply to all forward and reversed comparisons in
\cref{thm:four-law-contraction}. Hence
\begin{equation}\label{eq:closure-local-input}
 \cE^\square
 \leq\lambda_\alpha M+C_\alpha(1/2)M^2
 +(2+2\beta_\ast+4\alpha M)Z+4(1+\alpha M)E(M).
\end{equation}

First suppose $b(0)<1$, and keep the source child types $s_1,s_2$ fixed while averaging
the target transition. Write $p_j$ for the probability of a target transition
$j=(j_1,j_2)$, and let $r_j(x_1,x_2)$ be the matching probability of the source child pair
against types $j_1,j_2$, as in \cref{eq:selected-inverse-mixture}, now with child height
$h$. Put
\[
 r(x_1,x_2)\coloneqq\sum_jp_jr_j(x_1,x_2),\qquad
 w(x_1,x_2)\coloneqq\sum_{j\colon r_j(x_1,x_2)=0}p_j.
\]
For each source pair $(x_1,x_2)\in\cL_h^2$ with $r(x_1,x_2)>0$, apply
\cref{thm:zero-mixture-convexity}. If $r_j(x_1,x_2)=0$, neither pairing succeeds, so
\[
 \bigl\{(x_1,x_2)\in\cL_h^2\colon r_j(x_1,x_2)=0\bigr\}
 \subseteq
 \bigcup_{i,k=1}^2\bigl\{(x_1,x_2)\in\cL_h^2\colon r_{j_k,h}(x_i)=0\bigr\}.
\]
This gives four possible zero-degree events for the source children.

For $(x_1,x_2)\in\cL_h^2$ with $r(x_1,x_2)>0$, we expand the inverse degree in the correction
term using \cref{eq:selected-inverse-mixture}. Its coefficients sum to at most $B$.
Each retained target transition gives two products of child weights, one for each pairing.
The type defining a zero-degree event may differ from the target type in its weight.
For example, let $(\ell_1,\ell_2)$ be a retained target transition and $k\in\{1,2\}$.
Bounding by the zero-degree event and using independence, a zero-degree condition on the
first child gives
\[
 \begin{split}
 &\sum_{\substack{x_1\in\cL_h\\r_{j_k,h}(x_1)=0}}\ \sum_{x_2\in\cL_h}
 \rho_{s_1,h}\{x_1\}\rho_{s_2,h}\{x_2\}\,
 W_{\ell_1,h}(x_1)W_{\ell_2,h}(x_2)\\
 &\qquad=
 \left(\sum_{\substack{x_1\in\cL_h\\r_{j_k,h}(x_1)=0}}\rho_{s_1,h}\{x_1\}\,W_{\ell_1,h}(x_1)\right)
 \left(\sum_{x_2\in\cL_h}\rho_{s_2,h}\{x_2\}\,W_{\ell_2,h}(x_2)\right)
 \leq E(M)(1+\alpha M).
 \end{split}
\]
The bound is the same when the second child has the zero-degree condition or the weights
are paired in the other order.

Expanding $w$, applying \cref{eq:selected-inverse-mixture}, using
$\mathbf1_{\{r_j(x_1,x_2)=0\}}\leq\sum_{i,k=1}^2\mathbf1_{\{r_{j_k,h}(x_i)=0\}}$, and applying the
two-pairing child-weight bound from the proof of \cref{thm:explicit-weighted-bound} bounds
the expectation of the correction term. After extending the sum to all
$(x_1,x_2)\in\cL_h^2$, the bound without the factor $\alpha+1$ is
\[
 \begin{split}
 &\sum_jp_j\sum_{\ell\ \mathrm{retained}}p_\ell^{-\alpha}
 \sum_{i,k=1}^2\sum_{x_1,x_2\in\cL_h}
 \rho_{s_1,h}\{x_1\}\rho_{s_2,h}\{x_2\}\,
 \mathbf1_{\{r_{j_k,h}(x_i)=0\}}\\
 &\hspace{7cm}\cdot\bigl(
 W_{\ell_1,h}(x_1)W_{\ell_2,h}(x_2)
 +W_{\ell_2,h}(x_1)W_{\ell_1,h}(x_2)
 \bigr).
 \end{split}
\]
For each $j$ and $\ell$, each of the eight inner sums indexed by $i,k$ and the two pairings
is at most $E(M)(1+\alpha M)$ by the calculation above. Moreover,
$\sum_jp_j=1$ and $\sum_{\ell\ \mathrm{retained}}p_\ell^{-\alpha}\leq B$. Multiplying by
$\alpha+1$ therefore gives
\[
 (\alpha+1)\sum_{\substack{x_1,x_2\in\cL_h\\r(x_1,x_2)>0}}
 \rho_{s_1,h}\{x_1\}\rho_{s_2,h}\{x_2\}\,
 w(x_1,x_2)r(x_1,x_2)^{-\alpha}
 \leq8(\alpha+1)B(1+\alpha M)E(M).
\]
The first term in the convexity bound is the average of the fixed-pair potentials, each
bounded by \cref{eq:closure-local-input}. Averaging over the source transition preserves
this bound because its transition probabilities sum to one.

When $b(0)=1$, \cref{eq:positive-degrees} gives $w=0$ for every source pair of positive
probability, so the correction term is absent and the ordinary convexity bound applies, by
Jensen's inequality when the transition law is countable. In this case $E(M)=0$, so the
coefficient $C=4$ in \cref{eq:closure-map} has no effect. Thus, after averaging both
transitions, the child-pair potential is at most $Q(M)$.
\end{proof}

\subsection{The independent root}\label{sec:independent-root}

\begin{proof}[Proof of \cref{thm:one-step-closure}]
By \cref{thm:averaged-child-pair}, the child-pair potential is at most $Q(M)$. We now
include the root labels. First suppose the target root has law $\mu$, and fix a
source root label $v\in\supp\mu\cup\{0\}$. Its compatible target mass is
$b(v)>0$. For a child-pair matching probability $0<r\leq1$, independence of the target
root and children gives full matching degree $b(v)r$, and
\[
 \frac{1-b(v)r}{(b(v)r)^\alpha}
 =\phi(1-b(v))r^{-\alpha}+b(v)^{1-\alpha}\phi(1-r).
\]
We sum only over child pairs with positive degree. Their potential is at most
$Q(M)$, so their inverse moment is at most $1+\alpha Q(M)$ by the weight bound
\cref{eq:four-law-weight}. The source root label is independent of its child types and
subtrees, so the two terms factor when summed over the source law.
If the source root also has law $\mu$, we obtain the full potential bound
\[
 \eta+\left(\alpha\eta+
       \sum_{v\colon\mu(v)>0}\mu(v)b(v)^{1-\alpha}\right)Q(M).
\]
A source root with prescribed label $0$ instead gives the bound
\[
 \phi(1-b(0))+
 \bigl(\alpha\phi(1-b(0))+b(0)^{1-\alpha}\bigr)Q(M).
\]
If the target root has prescribed label $0$, fix a source root label
$v\in\supp\mu\cup\{0\}$. If $v\notin B_G(0,1)$, the root labels are incompatible, so
every corresponding full matching degree is zero and its term is omitted from the restricted
potential. If $v\in B_G(0,1)$, the root labels are compatible and the full matching degree
equals the child-pair degree. The conditional potential is therefore at most $Q(M)$.
Averaging over a random source root label, or retaining the prescribed source label $0$, does
not increase this bound.

By the definition of $A_\mu$, the constant terms are at most $\zeta$ and the three
coefficients of $Q(M)$ are at most $A_\mu$. Thus, for every pair of initial types under
consideration, the full potential satisfies
\[
 \cE_{h+1}(s,t)\leq\zeta+A_\mu Q(M).
\]

To bound $A_\mu$ in terms of $\zeta$, convexity gives
$x-x^\alpha\leq(\alpha-1)(1-x)$ for $0<x\leq1$. Dividing by $x^\alpha$ yields
\[
 x^{1-\alpha}\leq1+(\alpha-1)\phi(1-x).
\]
We apply this with $x=b(v)$ and average over $\mu$, or take $x=b(0)$ for a prescribed
source label. In both cases the averaged potential is at most $\zeta$, so
\[
 A_\mu\leq1+(2\alpha-1)\zeta<\infty.\qedhere
\]
\end{proof}

Consequently, the bound $M$ is preserved from one height to the next provided
\begin{equation}\label{eq:scalar-barrier-condition}
 \zeta+A_\mu Q(M)\leq M.
\end{equation}
Because $Q(M)\geq0$, it is enough to impose the following scalar condition, expressed in
terms of the one-site potential $\zeta$ and the child-error bounds $Z$ and $E(M)$ entering
$Q(M)$:
\[
 \zeta+\bigl(1+(2\alpha-1)\zeta\bigr)Q(M)\leq M.
\]

\subsection{Uniform matching bounds}\label{sec:completion-proof}

\begin{proof}[Proof of \cref{thm:markov-matching}]
We first suppose that $M\geq0$ satisfies \cref{eq:scalar-barrier-condition} and, when
$b(0)<1$, that $4S_H^2T^2(1-b(0))\leq1$. We prove by induction that
\[
 \cE_h(s,t)\leq M\qquad(h\geq0)
\]
for all pairs $s,t$ when $b(0)=1$, and all pairs in the same class otherwise. At height
zero the child subtrees are absent. Two roots with law $\mu$ have potential $\eta$, a
prescribed source root against law $\mu$ has potential $\phi(1-b(0))$, and a prescribed
target root has restricted potential zero. Each is at most $\zeta\leq M$.

Suppose the bound holds through height $h$. By
\cref{thm:one-step-closure,eq:scalar-barrier-condition},
\[
 \cE_{h+1}(s,t)\leq\zeta+A_\mu Q(M)\leq M.
\]

To recover the failure probability, we separate labellings with zero degree from those
with positive degree and use $1-r\leq\phi(1-r)$ for $r>0$. For each pair of starting
types under consideration,
\begin{align*}
 \bbP\bigl(\cM_h(s,t)^c\bigr)
 &=\rho_{s,h}\{x\in\cL_h\colon r_{t,h}(x)=0\}
  +\sum_{\substack{x\in\cL_h\\r_{t,h}(x)>0}}\rho_{s,h}\{x\}\bigl(1-r_{t,h}(x)\bigr)\\
 &\leq Z+\cE_h(s,t)\leq Z+M.
\end{align*}
When $b(0)<1$ and both $s,t\in\cI_\mu$, the first term vanishes by the positivity
condition~\labelcref{it:markov-positivity}, so their failure probability is at most $M$. When
$b(0)=1$, the same bound holds for all initial types by \cref{eq:positive-degrees}.

The events $\cM_h(s,t)$ decrease with height. The deterministic argument using K\H{o}nig's
infinity lemma in the proof of \cref{thm:reduction} applies to every pair of labellings and
gives
\[
 \cM_\infty(s,t)=\bigcap_{h\geq0}\cM_h(s,t).
\]
Continuity from above gives the same failure bounds at infinite height.

It remains to choose $M$ proportional to $\zeta$, uniformly over the stated families of
laws. The choices of $\beta_\ast$ and $u=1/2$ at the start of the section depend only on
$\alpha$, and $\lambda_\alpha<1$.

We restrict to $0\leq\zeta,M\leq1$. When $b(0)<1$ we also require
$4S_H^2T^2\zeta\leq1$. Then \cref{eq:explicit-zero-bound} gives
$Z\leq2S_H\zeta$. By \cref{eq:root-parameter-bounds}, the factor $G_H$ in
\cref{eq:explicit-weighted-error} is bounded on this range, its first term is at
most a constant times $\zeta$, and its second is at most a constant times
$\zeta^2+M^2\leq\zeta+M^2$. Hence
\[
 E(M)\leq C_1(\zeta+M^2),
\]
where $C_1$ depends only on $\alpha,H,T,B$.
Using $A_\mu\leq1+(2\alpha-1)\zeta$, we obtain
\[
 \zeta+A_\mu Q(M)\leq\lambda_\alpha M+C_2(\zeta+M^2),
\]
with $C_2>0$ depending on the same parameters. When $b(0)=1$, on the same range
$0\leq\zeta,M\leq1$, this bound follows directly from
$Q(M)=\lambda_\alpha M+C_\alpha(1/2)M^2$, with $C_2$ depending only on $\alpha$.

We choose a coefficient for the potential bound and then set
\[
 K_{\mathrm{pot}}>\max\left\{1,\frac{C_2}{1-\lambda_\alpha}\right\},
 \qquad M\coloneqq K_{\mathrm{pot}}\zeta.
\]
The right side of the preceding bound is at most $K_{\mathrm{pot}}\zeta$ whenever
\[
 C_2K_{\mathrm{pot}}^2\zeta
 \leq(1-\lambda_\alpha)K_{\mathrm{pot}}-C_2.
\]
The quantity $(1-\lambda_\alpha)K_{\mathrm{pot}}-C_2$ is positive by the choice of
$K_{\mathrm{pot}}$. We can therefore choose $\eps>0$ so that
\[
 K_{\mathrm{pot}}\eps\leq1,
 \qquad C_2K_{\mathrm{pot}}^2\eps
 \leq(1-\lambda_\alpha)K_{\mathrm{pot}}-C_2,
\]
and, when $b(0)<1$, also $4S_H^2T^2\eps\leq1$.
We take the constant in the theorem to be
\[
 K\coloneqq
 \begin{cases}
  K_{\mathrm{pot}},&b(0)=1,\\
  K_{\mathrm{pot}}+2S_H,&b(0)<1.
 \end{cases}
\]
For $0\leq\zeta\leq\eps$, all the preceding restrictions hold. The scalar criterion,
together with $Z\leq2S_H\zeta$ when $b(0)<1$, then gives failure probability at most
$K\zeta$ at every finite height and at infinite height. The constants $K$ and $\eps$ depend
only on the parameters asserted in the theorem: $T$ is the largest class size, and
\cref{eq:transition-budget} gives $B$ in terms of the transition probabilities. Prescribed
upper bounds on $H,T,B$ give the same uniform estimates. The restricted calculation after
\cref{eq:transition-budget} proves the selected-transition refinement stated in the theorem.

Finally, if $\mu(0)\geq p>0$, the summand at $0$ in $\eta$ gives
\[
 \eta\geq\mu(0)\phi(1-b(0))\geq p\phi(1-b(0)),\qquad
 \zeta\leq\frac\eta p.
\]
Thus $\eta\leq p\eps$ implies $\zeta\leq\eps$. The failure bound in terms of $\eta$ has
constant $K/p$.
\end{proof}

\ifSubfilesClassLoaded{

}{}

\end{document}

\section{Product laws with a common branching semigroup}\label{sec:common-presentations}

We now apply \cref{thm:markov-matching} to two product laws $\mu\times\nu_L$ and
$\mu\times\nu_R$ with the same label law $\mu$, where the arity laws $\nu_\sigma$ have nonempty
finite supports
\[
 S_\sigma\coloneqq\supp\nu_\sigma\subseteq\{2,3,\ldots\}
 \qquad(\sigma\in\{L,R\}).
\]
We further assume that both measures generate the same branching semigroup:
\[
 \Lambda\coloneqq\Lambda_{\nu_L}
 =\langle k-1\colon k\in S_L\rangle
 =\langle k-1\colon k\in S_R\rangle=\Lambda_{\nu_R}.
\]
As before, the exponent $\alpha\geq1$ s chosen large enough such that $\lambda_\alpha<1$.

\paragraph*{\bf Shared arities} We recall the encoding of \cref{sec:binary-presentations}. For each supported arity $k$,
we choose a binary profile $C_k$ with $k$ leaves. On side $\sigma$, the encoded field has the
recursive law of \cref{thm:profile-encoding-law}, with label law $\mu$, arity law $\nu_\sigma$
and distinguished label $0$. The two encodings will use a nonempty set of shared arities satisfying
\[
 S_{\mathrm{core}}\subseteq S_L\cap S_R,
 \qquad
 \Lambda=\langle k-1\colon k\in S_{\mathrm{core}}\rangle,
\]
which we call the \emph{common generating core}, or simply the \emph{common core}. We choose
the same profile $C_k$ on both sides for each $k\in S_{\mathrm{core}}$. Every other profile
is obtained by composing core profiles. We continue to write $C_k$ on either side, allowing
the noncore profiles to differ between sides. We fix these finite lists of profiles before
varying the arity probabilities or the label law.

\paragraph*{\bf Replacements} Replacing a leaf of an $a$-leaf profile by a $b$-leaf profile gives $a+b-1$ leaves. Thus
each replacement adds the shifted arity $b-1$. We first use this identity to construct the
profiles from common semigroup generators, then verify the Markov hypotheses.

\subsection{The common generators}\label{sec:common-generators}

An \emph{atom} of $\Lambda$ is a positive element which cannot be written as the sum
of two positive elements of $\Lambda$. We write $\cA$ for the set of atoms and note that it is
nonempty: the least positive element of $\Lambda$ cannot be a sum of two smaller positive
elements. We write $s(\Lambda)$ for this least positive element and put
\[
 s_0\coloneqq\min\cA=s(\Lambda)=\min(\Lambda\setminus\{0\}).
\]
We start with a simple combinatorial observation.
\begin{lemma}\label{thm:common-atoms}
The set $\cA$ is finite, generates $\Lambda$, and satisfies
\[
 \{a+1\colon a\in \cA\}\subseteq S_L\cap S_R.
\]
For every $k\geq2$ with $k-1\in\Lambda$, any expression of $k-1$ as a sum of atoms has
at most $\lfloor(k-1)/s_0\rfloor$ summands.
\end{lemma}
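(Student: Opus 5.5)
The proof is a direct argument about finitely generated submonoids of $\bbN_0$, applied to each side's generating set. We show three things in turn: every element of $\Lambda$ is a sum of atoms, $\cA$ is contained in every generating set of $\Lambda$, and the length bound follows by counting.

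\emph{Atoms generate $\Lambda$.} We argue by strong induction on the value of a positive element $x\in\Lambda$. Either $x$ is an atom, or $x=y+z$ with $y,z\in\Lambda$ positive. In the second case both $y$ and $z$ are strictly smaller than $x$, so by induction each is a sum of atoms, and hence so is $x$. Together with $0$ as the empty sum, this shows that $\cA$ generates $\Lambda$.

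\emph{Atoms lie in every generating set.} Fix $\sigma\in\{L,R\}$. Since $\Lambda=\langle k-1\colon k\in S_\sigma\rangle$, each atom $a\in\cA$ can be written as $a=\sum_i(k_i-1)$ with $k_i\in S_\sigma$. Every summand $k_i-1\geq1$ is positive, because $S_\sigma\subseteq\{2,3,\ldots\}$. The sum is nonempty since $a>0$, and it cannot have two or more summands, as that would split $a$ into two positive elements of $\Lambda$. Hence $a=k-1$ for a single $k\in S_\sigma$, so $a+1\in S_\sigma$. Applying this to both sides gives $\{a+1\colon a\in\cA\}\subseteq S_L\cap S_R$. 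Finiteness of $\cA$ follows immediately, since $a\mapsto a+1$ is injective into the finite set $S_L$.

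\emph{Length bound.} Let $k-1=a_1+\dots+a_n$ with each $a_i\in\cA$. Every atom is a positive element of $\Lambda$, so $a_i\geq s_0$ for all $i$. This gives $ns_0\leq k-1$, and since $n$ is an integer, $n\leq\lfloor(k-1)/s_0\rfloor$.

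There is no real obstacle in this argument. The one point needing care is the positivity of the generators $k-1$, which rests on the hypothesis $S_\sigma\subseteq\{2,3,\ldots\}$. If some $k_i-1$ could be zero, a decomposition of an atom could contain zero summands alongside the atom, and the single-summand conclusion in the second step would have to be restated for the nonzero summands.
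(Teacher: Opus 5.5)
Your proposal is correct and follows the paper's proof essentially step for step: the single-summand argument placing each atom in both shifted supports (which also gives finiteness), strong induction for generation, and the bound $a_i\geq s_0$ for the length estimate. You also make explicit the positivity of the generators $k-1$ coming from $S_\sigma\subseteq\{2,3,\ldots\}$, which the paper uses implicitly.
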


\begin{proof}
Express an atom as a sum of generators from either shifted support. There must be exactly
one summand, since two or more would decompose the atom into two positive elements of
$\Lambda$. Thus every atom belongs to both shifted supports, which also proves finiteness.

We prove generation by induction on positive elements of $\Lambda$. An element which is
not an atom is the sum of two smaller positive elements, each a sum of atoms by induction.
Finally, each atomic summand is at least $s_0$, which proves the bound on their number.
\end{proof}

For the main uniform statement below, we take
\(
 S_{\mathrm{core}}\coloneqq\{a+1\colon a\in\cA\}
\)
as the canonical common generating core and choose an $(a+1)$-leaf profile $C_{a+1}$ for
each atom $a$.
For example, the nonnested supports $S_L=\{4,6,7\}$ and $S_R=\{4,6,9\}$ have common
shifted semigroup $\langle3,5\rangle$ and atomic core $\{4,6\}$, so the uniform bounds below
require the masses of arities $4$ and $6$, but not those of $7$ and $9$, to remain bounded
away from zero.
For every supported arity $k$, choose an expression
\[
 k-1=a_1+\cdots+a_{n_k},\qquad a_i\in \cA.
\]
When $k=a+1$ is a core arity we use the one-term expression. Starting from a single leaf,
replace a current leaf by $C_{a_1+1}$, then a current leaf by $C_{a_2+1}$, and continue.
The resulting profile $C_k$ has $1+\sum_i a_i=k$ leaves.

We can fix all choices using only the supports. After choosing one core profile for each
atom, order the atoms, choose a shortest expression, order its summands, and at each step
replace a shallowest leaf, resolving ties lexicographically. We make the same choices for
arities present on both sides. The resulting profiles are independent of the arity masses
and the label law.

For a chain-regime offspring law $\theta$, the reduced law $\widetilde\nu$ is the original law
conditioned on arities $k\geq2$. Its shifted support differs from that of $\theta$ only by the
element $0$, and hence
\[
  \Lambda_{\widetilde\nu}=\Lambda_\theta.
\]
Thus chain laws with the same branching semigroup give reduced arity laws to which the preceding
construction applies. For a bushy-regime law, \cref{thm:full-support} gives
$\widetilde\nu_2>0$. Hence two bushy laws have the common core arity $2$, and repeated replacement
by the two-leaf profile constructs every required binary profile.

\subsection{The depth of the profiles}\label{sec:profile-depths}
The depth of the profiles directly determines the return bound $H$ used in \cref{thm:markov-matching}.

\begin{lemma}\label{thm:shallow-grafting}
Assume that every core profile $C_{a+1}$ has height at most $L$ and every replacement uses a
shallowest current leaf,
then
\[
 \operatorname{height}(C_k)\leq L+\lfloor\log_2(k-s_0)\rfloor.
\]
\end{lemma}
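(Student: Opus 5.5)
The plan is to track where each core profile is grafted and to bound the depth of that graft point by a Kraft-type counting argument. Fix a supported arity $k$ with its chosen atomic expression $k-1=a_1+\cdots+a_{n}$, where $n=n_k$. By construction, $C_k$ is obtained from a single leaf by successively replacing a shallowest current leaf by $C_{a_i+1}$ for $i=1,\dots,n$. Every vertex of $C_k$ lies in one of the grafted copies, and a vertex in the copy of $C_{a_i+1}$ has depth at most $d_i+L$, where $d_i$ is the depth of the leaf replaced at step $i$. So it suffices to prove
\[
 d_i\leq\lfloor\log_2(k-s_0)\rfloor\qquad(1\leq i\leq n).
\]

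First, I would count leaves before each step. Before step $i$ the current tree has $N_i=1+a_1+\cdots+a_{i-1}$ leaves. These counts increase with $i$, and the largest one satisfies
\[
 N_n=k-a_n\leq k-s_0,
\]
because every atom is at least $s_0$ by \cref{thm:common-atoms}. In particular $k-s_0\geq1$, so the logarithm is defined; in the one-term case $n=1$ the bound reduces to $d_1=0$.

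Second, I would bound the depth of a shallowest leaf. Every intermediate tree is a binary profile, since each nonleaf vertex has exactly two children. For such a tree the Kraft identity $\sum_{\text{leaves }v}2^{-\operatorname{depth}(v)}=1$ holds, by induction on the number of internal vertices. If every leaf had depth at least $d$, this identity would give $N\,2^{-d}\geq1$. Hence a shallowest leaf of an $N$-leaf profile has depth at most $\lfloor\log_2N\rfloor$. Applying this at step $i$ gives $d_i\leq\lfloor\log_2N_i\rfloor\leq\lfloor\log_2(k-s_0)\rfloor$, and adding the height $L$ of the grafted core profile gives the claim.

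The only point that needs care is that vertices grafted early, at shallow leaves, must not end up deeper than the final bound. This is handled by bounding every graft depth by the final leaf count $N_n$, using the monotonicity $N_1\leq N_2\leq\cdots\leq N_n$, rather than bounding only the last graft.
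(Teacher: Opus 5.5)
Your proof is correct and follows the paper's argument: you bound the leaf count before each replacement by $k-s_0$, since the remaining replacements add at least $s_0$ leaves; you bound the depth $d$ of a shallowest leaf via $2^d\leq k-s_0$; and you add the core height $L$. The only cosmetic difference is that you get $2^d\leq N$ from the Kraft identity, whereas the paper notes that every vertex above depth $d$ is internal, so the $2^d$ vertices at depth $d$ have disjoint leaf descendants.
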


\begin{proof}
Before any replacement, there are at most $k-s_0$ leaves, since that replacement adds at least
$s_0$ leaves. If the chosen shallowest leaf has depth $d$, every vertex at smaller depth is
internal. There are therefore $2^d$ vertices at depth $d$, each with a leaf descendant, so
\[
 2^d\leq k-s_0.
\]
The replacement creates leaves at depths at most $d+L\leq L+\lfloor\log_2(k-s_0)\rfloor$.
Every final leaf is created in some replacement, giving the bound.
\end{proof}

\paragraph*{\bf Returns} For the simplest return argument, choose $C_{a+1}$ with one root child a leaf and the other
the root of a full binary tree with $a$ leaves. We construct the latter recursively: a tree
with one leaf is a single vertex, and a tree with $m\geq2$ leaves has two child subtrees with
$\lfloor m/2\rfloor$ and $\lceil m/2\rceil$ leaves, constructed in the same way. This gives
a tree of height $\lceil\log_2a\rceil$. For $a=1$, both root children of $C_{a+1}$ are leaves.
Thus each profile has a leaf of depth one and height at most
\[
 L\coloneqq1+\lceil\log_2\max\cA\rceil.
\]
With the same profile choice on both sides, put
\[
 N\coloneqq\max(S_L\cup S_R),\qquad
 h_C\coloneqq\max_{k\in S_L\cup S_R}\operatorname{height}(C_k).
\]
The lemma gives
\[
 h_C\leq L+\lfloor\log_2(N-s_0)\rfloor.
\]

\subsection{Verification of the Markov hypotheses}

We now verify the hypotheses of \cref{thm:markov-matching} for any finite choice of profiles
built from a common generating core as described at the start of this section. We use $h_C$
for the maximum profile height over both sides.

\subsubsection{Types and transitions}\label{sec:types-degrees}

On side $\sigma$, an original vertex has the type $F_\sigma$. For every profile $C_k$ on
side $\sigma$ and every internal vertex $w$ of $C_k$ other than its root, we introduce a
type recording the subtree of $C_k$ rooted at $w$. When two such subtrees are identical,
we may use one type for both. The type set $\cI$ consists of $F_L$, $F_R$ and these subtree
types. The root label of $F_\sigma$ has law $\mu$, and the root label of every subtree type
is $0$, so
\[
 \cI_\mu=\{F_L,F_R\}.
\]
The transitions are as follows. With probability $\nu_\sigma(k)$, the type $F_\sigma$ gives
its two children the types of the two children of the root of $C_k$, where a child that is
a leaf of $C_k$ receives the type $F_\sigma$. A subtree type gives its two children the
types determined in the same way by the subtree it records, with probability one.
Conditioned on the chosen child types, the two child labellings are independent, so these
root label laws and transitions define a process of the form in \cref{sec:process}, in which
the original vertices of the encoding are the vertices of type $F_L$ or $F_R$.

Counting both sides, we have
\begin{equation}\label{eq:type-count}
 \#\cI\leq
 2+\sum_{\sigma\in\{L,R\}}\sum_{k\in S_\sigma}(k-2),
\end{equation}
since a full binary $k$-leaf profile has $k-1$ internal vertices, of which only the root is
original. Identifying repeated subtrees often gives a substantially smaller count.

\subsubsection{Positive matching probability}

We assume $b(0)>0$, which holds when $\zeta_\alpha<\infty$. Every realised label belongs to
$\supp\mu\cup\{0\}$, and each label in this set has a compatible label of
positive $\mu$-mass. Indeed,
\[
 b(v)\geq\mu(v)>0\quad(v\in\supp\mu),\qquad b(0)>0.
\]
We use this fact to replace a composite target profile by successive draws of its core
profiles without losing the possibility of a match.

We use the following core-replacement construction twice. Prescribe the values of the label
and arity draws at finitely many original vertices, with every prescribed value in the
support of its law, and suppose that these draws determine an encoded labelling through a
given height. Fix a matching with a source labelling whose labels belong to
$\supp\mu\cup\{0\}$. For any chosen occurrence of $C_k$, replace the prescribed arity $k$
at its root by the root core arity in the fixed composition defining $C_k$. At each
additional core-profile root, prescribe its core arity and a label of positive $\mu$-mass
compatible with the source label paired to that vertex. Retain the label at the original
root, all other prescribed draws and the fixed matching. The binary profile is unchanged.
Every value in the new finite assignment has positive probability, so independence implies
that the event on which all these values are drawn has positive probability on the original
side. If every profile occurrence is replaced, all prescribed arities are core arities, and
the same event has positive probability on the other side as well.

\begin{lemma}\label{thm:fresh-positive}
For every $h\geq0$ and $s,t\in\{F_L,F_R\}$, each labelling of positive probability
under $\rho_{s,h}$ has positive matching degree against $\rho_{t,h}$.
\end{lemma}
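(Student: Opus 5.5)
We fix $h\geq0$, types $s,t\in\{F_L,F_R\}$, and a source labelling $x\in\cL_h$ with $\rho_{s,h}\{x\}>0$. The aim is to exhibit, for the target side $t$, an event of positive probability on which the target labelling through height $h$ matches $x$. The argument uses the core-replacement construction described just before the statement, applied twice.

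\textbf{Step 1: a positive-probability realisation on the source side.} Since $x$ has positive probability, some finite assignment produces it. This assignment prescribes labels in $\supp\mu$ and arities in $S_s$ at the finitely many original vertices reaching height $h$, and every prescribed value has positive probability. We use the identity automorphism and match $x$ against this prescribed realisation of itself, which is a $\rho_{s,h}$-positive labelling. Every vertex is then paired with an identical label, which is compatible by reflexivity.

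\textbf{Step 2: replace every profile occurrence by core profiles.} We work through each occurrence of a profile $C_k$ met within height $h$. By construction in \cref{sec:common-generators}, each $C_k$ is obtained by successive replacements of leaves by core profiles $C_{a+1}$, with $a\in\cA$. These core profiles are the same on both sides, and $a+1\in S_L\cap S_R$ by \cref{thm:common-atoms}. We apply the core-replacement construction to every occurrence:
\begin{itemize}
\item the root arity is replaced by the first core arity;
\item at each inserted core-profile root, a core arity is prescribed, together with a label of positive $\mu$-mass compatible with the source label paired there;
\item all other data are retained.
\end{itemize}
The inserted labels exist because every label in $\supp\mu\cup\{0\}$ has a compatible label of positive $\mu$-mass: $b(v)\geq\mu(v)>0$ for $v\in\supp\mu$, and $b(0)>0$.

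The binary profile is unchanged, so the matching persists. Vertices that formerly had prescribed label $0$ and now become original vertices carry a compatible random label. Vertices that remain inserted still carry $0$, matched as before.

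\textbf{Step 3: transfer to the target side.} After all replacements, every prescribed arity is a core arity and so lies in $S_t$. Every prescribed label has positive $\mu$-mass. By independence of the finitely many draws, the event that side $t$ draws exactly these values has positive $\rho_{t,h}$-probability. On this event the target labelling matches $x$ via the fixed automorphism, so $r_{t,h}(x)>0$.

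The main obstacle is Step 2. We must check carefully that the decomposition of $C_k$ into core profiles places core-profile roots exactly at vertices of $C_k$, so that relabelling them as original vertices keeps the tree shape. We must also check that types of internal vertices disappear consistently and that the replacement terminates within height $h$, since only finitely many occurrences meet $\bbB_h$. We would handle this by induction on the number of replacement steps in the fixed composition of each $C_k$.
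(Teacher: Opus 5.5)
Your proposal is correct and matches the paper's proof step for step. You fix a positive-probability finite assignment of draws producing $x$, apply the core-replacement construction to every profile occurrence, and prescribe at each new core-profile root a positive-mass label compatible with the source label $0$. The resulting assignment uses only core arities, which lie in $S_L\cap S_R$, so under the identity matching it has positive probability on side $t$ and gives $r_{t,h}(x)\geq\rho_{t,h}\{y\}>0$.

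The structural point you flag as the main obstacle needs no separate induction. Core-profile roots lie at vertices of $C_k$, and the binary profile is unchanged, directly from the leaf-replacement construction of the composite profiles in \cref{sec:common-generators}; the paper takes this as immediate.
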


\begin{proof}
Fix $s=F_\sigma$ and a labelling $x$ through height $h$ with $\rho_{s,h}\{x\}>0$. Under
$\rho_{s,h}$ the labelling is a function of the labels and arities drawn at the original
vertices of depth at most $h$. The countably many assignments of values to these draws
partition the event that the resulting labelling is $x$, so one assignment has positive
probability. Apply the core-replacement construction to every profile occurrence represented
in this assignment, use the same core profiles on the side of $t$, and retain the source
label at every original root. At each additional core-profile root, the source label is $0$,
so we prescribe a positive-mass label compatible with $0$. With the identity matching, the
resulting finite assignment determines a labelling $y$ on the side of $t$ such that
$\rho_{t,h}\{y\}>0$ and $x$ matches $y$. Hence
$r_{t,h}(x)\geq\rho_{t,h}\{y\}>0$.
\end{proof}

The same construction lets us use only core arities in the inverse-mixture bound of
\cref{eq:selected-inverse-mixture}. Fix a side $\sigma$ and a finite child height $h$.
Let $Y_1,Y_2$ be the two target child labellings through height $h$. Conditioned on the
root arity $k$, their laws are determined by the two branches of $C_k$. For a fixed source
child pair $x_1,x_2$, put
\[
 \begin{split}
 r_k(x_1,x_2)&\coloneqq
 \bbP\bigl((x_1,x_2)\mathrel{(\approx_h)^\square}(Y_1,Y_2)
       \mid\text{root arity}=k\bigr),\\
 r(x_1,x_2)&\coloneqq
 \sum_{k\in S_\sigma}\nu_\sigma(k)r_k(x_1,x_2).
 \end{split}
\]

\begin{lemma}\label{thm:atomic-normalisation}
If all source labels in $x_1,x_2$ belong to $\supp\mu\cup\{0\}$, then
\[
 r(x_1,x_2)>0\quad\Longleftrightarrow\quad
 r_j(x_1,x_2)>0\text{ for some }j\in S_{\mathrm{core}}.
\]
For every $\alpha\geq1$,
\begin{equation}\label{eq:core-inverse-mixture}
 \mathbf1_{\{r>0\}}r^{-\alpha}
 \leq\sum_{j\in S_{\mathrm{core}}}\nu_\sigma(j)^{-\alpha}
       \mathbf1_{\{r_j>0\}}r_j^{-\alpha},
\end{equation}
where each restricted inverse is defined to be zero at degree zero.
\end{lemma}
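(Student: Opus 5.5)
The equivalence has an easy direction and a hard one. If $r_j>0$ for some core $j$, then $r\geq\nu_\sigma(j)r_j>0$, because every core arity lies in $S_{\mathrm{core}}\subseteq S_\sigma$ and so carries positive mass. For the converse, suppose $r>0$. Then some supported arity $k\in S_\sigma$ has $r_k(x_1,x_2)>0$. I will fix one target child pair $(y_1,y_2)$ of positive conditional probability given arity $k$ that matches $(x_1,x_2)$ under $(\approx_h)^\square$. I will also fix a finite assignment of labels and arities at the original vertices of depth at most $h+1$ that realises it with positive probability, together with the matching automorphism.

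Next I apply the core-replacement construction described just before \cref{thm:fresh-positive} to the root occurrence of $C_k$. The prescribed root arity $k$ becomes the first core arity $j=a_1+1$ in the fixed composition defining $C_k$. At each further core-profile root inside $C_k$, I prescribe the next core arity and a positive-mass label compatible with the paired source label. Those source labels lie in $\supp\mu\cup\{0\}$ by hypothesis, so such labels exist because $b(v)\geq\mu(v)>0$ and $b(0)>0$. The binary profile, and hence the encoded target child pair through height $h$, keeps the same shape. The labels change only at inserted vertices, which now carry compatible draws, so the fixed matching survives. By independence the new finite assignment has positive probability conditional on root arity $j$. Hence $r_j(x_1,x_2)>0$ with $j\in S_{\mathrm{core}}$.

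The step needing the most care is that, under root arity $j$, the first grafting steps of $C_k$ appear as original vertices rather than inserted $0$-labelled vertices. I will check this by matching depths: each core-profile root created by the fixed grafting order of $C_k$ sits at the leaf position of the previous profile, which in the root-$j$ process is exactly an original vertex of the next generation. So the construction is consistent with the recursive law of \cref{thm:profile-encoding-law}, provided every core profile is the same on both descriptions. This is how the profiles were chosen in \cref{sec:common-generators}.

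For the inequality \cref{eq:core-inverse-mixture}, suppose $r>0$. The equivalence gives some $j\in S_{\mathrm{core}}$ with $r_j>0$, and then $r\geq\nu_\sigma(j)r_j$. Taking inverse powers, $r^{-\alpha}\leq\nu_\sigma(j)^{-\alpha}r_j^{-\alpha}$. This single term is bounded by the full nonnegative sum over core arities, with the convention that terms at degree zero vanish. If $r=0$, the left side is zero and there is nothing to prove. The same argument as \cref{eq:selected-inverse-mixture} then applies with $J_t$ taken to be the core transitions.
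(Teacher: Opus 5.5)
Your proposal is correct and follows essentially the same route as the paper. The easy direction uses $r\geq\nu_\sigma(j)r_j$. The hard direction fixes a matching target child pair of positive conditional probability and a realising assignment, then applies the core-replacement construction to the root occurrence of $C_k$, with $j$ the root core arity of its decomposition. The inequality comes from the argument of \cref{eq:selected-inverse-mixture} restricted to core arities; your explicit depth check that the grafted core-profile roots become original vertices under root arity $j$ spells out a point the paper leaves implicit in that construction.
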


\begin{proof}
Suppose that $r_k(x_1,x_2)>0$. Since the finite-height target space is countable, there is
a target child pair of positive conditional probability that matches $(x_1,x_2)$. Fix the
child pairing and descendant automorphisms that give this match. Among the countably many
assignments of label and arity draws producing this child pair, choose one whose conditional
event has positive probability. Apply the core-replacement construction to the root
occurrence of $C_k$ in this assignment, and let $j\in S_{\mathrm{core}}$ be the core arity
at the root of its chosen decomposition. The new assignment has positive probability
conditioned on arity $j$ and preserves the fixed matching. Hence
\[
 r_k(x_1,x_2)>0\quad\Longrightarrow\quad r_j(x_1,x_2)>0
 \quad\text{for some }j\in S_{\mathrm{core}}.
\]
If $r(x_1,x_2)>0$, some $r_k(x_1,x_2)$ is positive, so the implication provides a positive
core degree. Conversely, if $r_j(x_1,x_2)>0$ for some $j\in S_{\mathrm{core}}$, then
$r(x_1,x_2)\geq\nu_\sigma(j)r_j(x_1,x_2)>0$. This proves the equivalence. Applying the
argument of \cref{eq:selected-inverse-mixture} with the sum restricted to the core arities
gives \cref{eq:core-inverse-mixture}.
\end{proof}

At $F_L$ and $F_R$, retain precisely the transitions induced by the core arities, and at
every other type retain its unique transition. By \cref{thm:atomic-normalisation}, these
sets satisfy the selected-transition hypothesis in \cref{thm:markov-matching}. At every other
type, the unique transition has probability one, so the sum in \cref{eq:transition-budget}
equals one. Since each nonempty core sum is at least one, we may use
\begin{equation*}
 B\coloneqq\max_{\sigma\in\{L,R\}}
              \sum_{k\in S_{\mathrm{core}}}\nu_\sigma(k)^{-\alpha}.
\end{equation*}
For the core obtained from atoms, we therefore
assume that $p_{\mathrm{core}}>0$ satisfies
\[
 \nu_\sigma(a+1)\geq p_{\mathrm{core}}\qquad
 (a\in\cA,\ \sigma\in\{L,R\}).
\]
Then
\[
 B\leq\#\cA\,p_{\mathrm{core}}^{-\alpha}.
\]
Thus this part of the matching bound depends only on the core probabilities. That is,
the noncore arity masses do not contribute.

\subsubsection{Return times to original vertices}\label{sec:return-times}

When a path reaches a profile leaf, it reaches the next original vertex and hence a type
in $\cI_\mu$. The possible distances for core profiles form the set
\[
 R_{\mathrm{core}}\coloneqq
 \{|v|\colon v\text{ is a leaf of }C_k,\ k\in S_{\mathrm{core}}\}.
\]
Here $|v|$ denotes depth from the profile root. We put
\[
 g\coloneqq\gcd R_{\mathrm{core}},\qquad
 \Gamma\coloneqq\langle \ell\colon \ell\in R_{\mathrm{core}}\rangle.
\]
Concatenating paths through core profiles realises every additional length in $\Gamma$.
Every leaf depth of a composite profile is also such a sum, and hence a multiple of $g$.

We use these congruences to assign the type classes directly. Put $F_L,F_R$ in $\cI_0$.
For an inserted type $t$, let $d$ be the depth of any leaf in its remaining finite profile,
and assign its class by
\[
 t\in\cI_i\quad\Longleftrightarrow\quad i+d\equiv0\pmod g
 \qquad(0\leq i<g).
\]
This does not depend on the chosen leaf. Indeed, the remaining depths are obtained by
subtracting the depth of the inserted vertex from full-profile leaf depths, all multiples
of $g$. It also does not depend on which occurrence of an identified subtree is
used. Descending to an inserted child decreases each remaining depth by one. If the child
is original, its remaining distance was one, so its parent's class is $\cI_{g-1}$.
At an original vertex, the same congruence applied to each root branch gives child class
$\cI_1$. Thus every transition advances both child types by one class, with indices read
modulo $g$, verifying the cyclic class condition in \cref{sec:finite-hypotheses}.

\begin{lemma}\label{thm:bounded-return}
There is an integer $H\geq0$ such that, whenever $s,t$ are types in the same class and
$x_0=s,x_1,\ldots,x_H$ is a possible source path, there is a depth $k\leq H$ for which
$x_k\in\cI_\mu$ and some possible target path $y_0=t,y_1,\ldots,y_k$ ends in $\cI_\mu$.
If some core profile has a leaf at depth one, then $g=1$ and one may take $H=2h_C$.
\end{lemma}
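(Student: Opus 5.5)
We reduce to arithmetic of leaf depths. Every possible type path from a type $s$ descends through the remaining part of the current profile, reaches an original vertex (type $F_L$ or $F_R$), draws a fresh profile there, and so on. The types in $\cI_\mu$ visited along a source path therefore occur at depths $d_1<d_2<\cdots$. Here $d_1$ is a leaf depth of the remaining profile of $s$, which is at most $h_C$. Each successive gap $d_{i+1}-d_i$ is a leaf depth of some profile $C_k$, hence between $1$ and $h_C$ and a multiple of $g$. Likewise, the depths at which some possible target path from $t$ visits $\cI_\mu$ form a set $D_t$. It contains every number $e_0+\gamma$ with $e_0$ a leaf depth of the remaining profile of $t$ and $\gamma\in\Gamma$: after reaching an original vertex we may choose core arities (which occur with positive probability on both sides) and leaves of prescribed depths in $R_{\mathrm{core}}$. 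Concatenation realises all of $\Gamma$.

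Since $s,t$ are in the same class $\cI_i$, every source visiting depth $d_j$ and every $e_0$ satisfy $d_j\equiv e_0\equiv -i\pmod g$. We use the classical fact that the numerical semigroup $\Gamma/g$ contains all integers at least some Frobenius bound $F$. So $\Gamma$ contains every multiple of $g$ that is at least $gF$. Hence $D_t$ contains every integer $\geq h_C+gF$ congruent to $-i$ mod $g$. The source visits depths spaced at most $h_C$ apart, all in this residue class. So some visit lies in $[h_C+gF,\,2h_C+gF]$, and we can take $H=2h_C+gF$. The bound depends only on the fixed profiles, and $F$ depends only on $R_{\mathrm{core}}$. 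Finitely many types means no uniformity issue beyond this.

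For the special case, a core profile with a leaf at depth one gives $1\in R_{\mathrm{core}}$. Then $g=1$ and $\Gamma=\bbN_0$, so $F=0$ and $D_t\supseteq\{e_0+n\colon n\geq0\}$ contains all depths $\geq e_0$, hence all $\geq h_C$. The source's visits have gaps at most $h_C$ starting from $d_1\leq h_C$, so some visit lies in $[h_C,2h_C]$, giving $H=2h_C$. The main point needing care is that target paths through core arities are genuinely possible on both sides: this holds because $S_{\mathrm{core}}\subseteq S_L\cap S_R$ and the same core profiles are used on both sides. We also need the depth window to handle the source's first visit, when $d_1$ may be smaller than $e_0$; the window starting at $h_C$ absorbs this.
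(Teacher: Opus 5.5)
Your proposal is correct and follows the paper's proof essentially step for step. The paper likewise takes a constant $c_{\mathrm{ret}}$ (your $gF$) past which every multiple of $g$ lies in $\Gamma$, shows the target side reaches $\cI_\mu$ at every depth $n\geq d+c_{\mathrm{ret}}$ with $n\equiv d\pmod g$, and uses the first source visit at or after depth $h_C+c_{\mathrm{ret}}$. This gives $H=c_{\mathrm{ret}}+2h_C$ in general and $H=2h_C$ when $1\in R_{\mathrm{core}}$. The one detail to state explicitly is that for an original type the relevant initial depth is $0$; the paper handles this by taking $d=0$ when $t\in\cI_\mu$.
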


\begin{proof}
The generators $\ell/g$, for $\ell\in R_{\mathrm{core}}$, have greatest common divisor one. Hence the
numerical semigroup that they generate contains every sufficiently large integer; see
Niven--Zuckerman--Montgomery~\cite[Section~1.2]{NivenZuckermanMontgomery1991}. It follows that
there is an integer
$c_{\mathrm{ret}}\geq0$ such that every multiple of $g$ that is at least
$c_{\mathrm{ret}}$ lies in $\Gamma$. Set
\[
 H\coloneqq c_{\mathrm{ret}}+2h_C.
\]

Let $s,t$ be types in the same class, and let $x_0=s,x_1,\ldots,x_H$ be a possible source
path. Choose a possible target path from $t$ to an original vertex at depth $d\leq h_C$,
taking $d=0$ if $t\in\cI_\mu$. Appending paths through core profiles extends it to possible
target paths ending in $\cI_\mu$ at every depth in $\{d+m\colon m\in\Gamma\}$. If
$n\geq d+c_{\mathrm{ret}}$ and $n\equiv d\pmod g$, then $n-d$ is a multiple of $g$ at least
$c_{\mathrm{ret}}$, hence lies in $\Gamma$. Thus a possible target path of length $n$ ends
in $\cI_\mu$ for every such $n$.

On the source side, every possible path reaches $\cI_\mu$ within $h_C$ steps, and the gaps
between successive visits are at most $h_C$. The first visit of this path to $\cI_\mu$ at or
after depth $h_C+c_{\mathrm{ret}}$ therefore occurs at a depth $k$ with
\[
 h_C+c_{\mathrm{ret}}\leq k\leq2h_C+c_{\mathrm{ret}}=H.
\]
The initial types lie in the same class, so $k\equiv d\pmod g$, and
$k\geq h_C+c_{\mathrm{ret}}\geq d+c_{\mathrm{ret}}$. By the previous paragraph, some possible
target path of length $k$ ends in $\cI_\mu$.

If some core profile has a leaf at depth one, then $1\in R_{\mathrm{core}}$, so $g=1$ and
$\Gamma=\bbN_0$. We may then take $c_{\mathrm{ret}}=0$ in the construction above, which gives
$H=2h_C$.
\end{proof}

These available target paths verify the return condition~\labelcref{it:markov-returns}. The count in
\cref{eq:type-count} bounds the number of types in every class. The theorem therefore
applies, in particular, to the initial types $F_L$ and $F_R$.

We can now state the resulting bound for the two product laws.

\begin{proposition}[Uniform common-semigroup matching]
\label{thm:common-semigroup-matching}
Fix $\alpha\geq1$ with
$\lambda_\alpha<1$, finite supports $S_L,S_R$ with the same branching semigroup, and
$p_{\mathrm{core}}>0$. Choose the common atomic profiles as above. There are $K<\infty$ and
$\eps>0$, depending only on $\alpha,S_L,S_R,p_{\mathrm{core}}$, such that the following holds for
every pair of arity laws with these supports and every label law $\mu$. If
\[
 \nu_\sigma(a+1)\geq p_{\mathrm{core}}\quad(a\in\cA,\ \sigma\in\{L,R\}),\qquad
 \zeta_\alpha(\mu)\leq\eps,
\]
then
\[
 \bbP\bigl(\cM_h(F_L,F_R)^c\bigr)\leq K\zeta_\alpha(\mu)\quad(h\geq0),\qquad
 \bbP\bigl(\cM_\infty(F_L,F_R)^c\bigr)\leq K\zeta_\alpha(\mu).
\]
\end{proposition}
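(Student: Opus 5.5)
The plan is to apply \cref{thm:markov-matching} in its finite-type form to the combined type set built in \cref{sec:types-degrees}. The work is to check that every structural parameter on which its constants depend is bounded in terms of $\alpha,S_L,S_R,p_{\mathrm{core}}$ alone. Since $F_L,F_R\in\cI_0$ belong to the same class, the conclusion for $\cM_h(F_L,F_R)$ and $\cM_\infty(F_L,F_R)$ is exactly the statement. If $b(0)=1$, the theorem applies with constants depending only on $\alpha$, so we may assume $b(0)<1$. The hypothesis $\zeta_\alpha(\mu)\leq\eps<\infty$ then gives $b(0)>0$, which is what \cref{thm:fresh-positive,thm:atomic-normalisation} require.

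First, fix the profiles. They are chosen from the supports alone by the canonical atomic construction of \cref{sec:common-generators}, so the type set $\cI$, the classes $\cI_i$ of \cref{sec:return-times} and the possible transitions are determined by $S_L,S_R$ and do not depend on the arity masses or on $\mu$. Second, verify the hypotheses in turn. The cyclic class condition was checked in \cref{sec:return-times}. The positive-matching condition \labelcref{it:markov-positivity} is \cref{thm:fresh-positive}. The bounded-return condition \labelcref{it:markov-returns} is \cref{thm:bounded-return}; its $H=c_{\mathrm{ret}}+2h_C$ depends only on the core leaf depths and $h_C$, hence only on the supports. The largest class size $T$ is at most $\#\cI$, which by \cref{eq:type-count} is bounded by the supports.

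Third, control the transition budget $B$, the only quantity that could depend on the arity probabilities. Noncore arities may carry arbitrarily small mass, so the unrestricted sum $\sum_{j\in\supp P_t}P_t(j)^{-\alpha}$ is not uniformly bounded. I would therefore use the selected-transition refinement of \cref{thm:markov-matching}: at $F_\sigma$ retain only the transitions induced by core arities, and at every inserted type retain its unique transition. \Cref{thm:atomic-normalisation} shows that these retained sets satisfy the refinement's hypothesis for source labels in $\supp\mu\cup\{0\}$. The refined budget is then at most $\#\cA\,p_{\mathrm{core}}^{-\alpha}$, using $\nu_\sigma(a+1)\geq p_{\mathrm{core}}$.

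One subtlety remains. Distinct arities may induce the same child-type pair, so a transition $j$ of $P_{F_\sigma}$ is a type pair and its probability is at least, not equal to, the core mass producing it. This only improves the bound. With $\alpha,H,T$ and the refined $B$ all bounded by quantities depending only on $\alpha,S_L,S_R,p_{\mathrm{core}}$, \cref{thm:markov-matching} yields $K$ and $\eps$ with exactly this dependence, uniformly over $\mu$ and over the noncore masses. I expect the main obstacle to be confirming that the refinement's hypothesis holds at every finite height for all source child pairs, in the exact form required. That is precisely what \cref{thm:atomic-normalisation} provides through the core-replacement construction, so the argument reduces to quoting it carefully.
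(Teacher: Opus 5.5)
Your proposal is correct and follows the paper's own proof: apply the selected-transition refinement of \cref{thm:markov-matching}, note that the profile heights, class sizes and return bound $H$ depend only on $S_L,S_R$, and bound the refined budget by $\#\cA\,p_{\mathrm{core}}^{-\alpha}$ via \cref{thm:atomic-normalisation}. Your additional remarks, the separate $b(0)=1$ case and the possible non-injectivity of arities into type pairs, are harmless refinements. Within one side distinct arities give distinct profiles and hence distinct child-type pairs anyway.
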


\begin{proof}
The profile heights, class sizes and return bounds depend only on the supports, while
$B\leq\#\cA\,p_{\mathrm{core}}^{-\alpha}$. The selected-transition refinement of
\cref{thm:markov-matching} therefore applies to the original types $F_L,F_R$.
\end{proof}

\paragraph*{\bf Choosing constants} For a fixed pair of arity laws, we may always take
\[
 p_{\mathrm{core}}\coloneqq\min_{\substack{\sigma\in\{L,R\}\\a\in\cA}}\nu_\sigma(a+1)>0,
\]
since the finite atomic core belongs to both supports. The same $K$ and $\eps$ apply while
these core probabilities remain bounded below by a common positive constant.

\subsection{An obstruction for preassigned profiles}\label{sec:profile-obstruction}

The common-profile requirement cannot be omitted: equal branching semigroups alone do not
ensure matching for arbitrary fixed binary encodings. The following counterexample works for
every fixed potential exponent. Take
\[
 \nu_L=\delta_4,\qquad
 \nu_R=\tfrac12\delta_4+\tfrac12\delta_7,
\]
and use balanced binary profiles at both arities. Both shifted supports generate
$3\bbN_0$. The four-leaf profile has all leaves at depth two, whereas the seven-leaf
profile has leaves at depths two and three. In the left process every odd-depth vertex
therefore has prescribed label $0$.

Use the path relation $0$--$1$--$2$, including reflexive pairs, and put
\[
 \mu(0)=\tfrac12,\qquad \mu(1)=\tfrac12-t,\qquad \mu(2)=t,
 \qquad 0<t<\tfrac12.
\]
Then
\[
 \eta_\alpha(\mu)
 =\frac{t}{2(1-t)^\alpha}+2^{\alpha-1}t\longrightarrow0
 \qquad(t\downarrow0).
\]

In the right process follow original vertices $u_0,u_1,\ldots$, choosing a fixed leaf at
depth two when the current arity is four and a fixed leaf at depth three when it is seven.
Given the process up to $u_i$ before its new arity is drawn, exactly one of the two
equiprobable arities makes $|u_{i+1}|$ odd. The label at $u_{i+1}$ equals $2$ with
independent probability $t$. Successive conditioning therefore gives
\[
 \bbP\bigl(X'_{u_i}\neq2\text{ whenever }|u_i|\text{ is odd},\ 1\leq i\leq n\bigr)
 =(1-t/2)^n.
\]
Every rooted automorphism preserves depth. A right label $2$ at odd depth cannot match any
left label at that depth, since all of them are $0$. Also $|u_i|\leq3i$. Hence
\[
 \bbP(\cM_{3n})\leq(1-t/2)^n\longrightarrow0,
 \qquad \bbP(\cM_\infty)=0.
\]

The common-profile construction avoids this obstruction. Here the only atom is $3$.
Choose a four-leaf profile $C_4$ and build $C_7$ by replacing a leaf of $C_4$ by another
copy of $C_4$. We may retain the balanced $C_4$, in which case every return to an original
vertex has even depth and we may use $g=2$. Alternatively, use a four-leaf profile with a
depth-one leaf. Both choices give a common presentation and the matching conclusion.

\section{Evaluation of the bounds}\label{sec:exponent-values}

The proof of \cref{thm:markov-matching} reduces the quantitative estimate to a scalar
barrier inequality for $M$. We first solve it when $b(0)=1$ and then record a constructive
uniform version for finite type models. We subsequently determine the admissible exponents,
give rational bounds at exponents $4/3$ and $2$, quantify the dependence on the largest
arity, and compare the upper bounds with two lower bounds.

\subsection{An explicit bound when the prescribed label is always compatible}

Fix $\alpha\geq1$ and $\beta\in[0,1]$ such that
\[
 \lambda_\alpha(\beta)=2\bigl(L_\alpha(\beta)+K_\alpha(\beta)\bigr)<1.
\]
By \cref{eq:mean-constants}, we have $\lambda_\alpha(\beta)\geq\lambda_\alpha$, with equality
when $\beta$ minimises the linear coefficient. We also fix $u\in(0,1)$, which controls the
quadratic coefficient $C_\alpha(u)$ in the four-law estimate \cref{eq:four-law-contraction}.

When $b(0)=1$, the solution is explicit. Then $\zeta=\eta$ and the zero-degree terms
vanish. Applying \cref{thm:four-law-contraction} with the chosen $\beta$ and $u$ bounds
the child-pair potential by $\lambda_\alpha(\beta)M+C_\alpha(u)M^2$. Put
\[
 d\coloneqq1+(2\alpha-1)\eta,
\]
so that $A_\mu\leq d$, and suppose that
\[
 d\lambda_\alpha(\beta)<1,\qquad
 \bigl(1-d\lambda_\alpha(\beta)\bigr)^2\geq4dC_\alpha(u)\eta.
\]
The smallest nonnegative solution of $M=\eta+d\bigl(\lambda_\alpha(\beta)M+C_\alpha(u)M^2\bigr)$ is
\begin{equation}\label{eq:zero-compatible-quadratic}
 M_\eta\coloneqq
 \frac{2\eta}{1-d\lambda_\alpha(\beta)
 +\sqrt{\bigl(1-d\lambda_\alpha(\beta)\bigr)^2-4dC_\alpha(u)\eta}}.
\end{equation}
Its denominator is positive, including at $\eta=0$, where it equals
$2\bigl(1-\lambda_\alpha(\beta)\bigr)$. Since
$A_\mu\leq d$, the one-step bound is at most
$\eta+d\bigl(\lambda_\alpha(\beta)M+C_\alpha(u)M^2\bigr)$, so the height induction closes
with $M=M_\eta$.
Consequently, for every countable type model with $b(0)=1$ and every pair $s,t\in\cI$,
\[
 \bbP\bigl(\cM_h(s,t)^c\bigr)\leq M_\eta\quad(h\geq0),
 \qquad
 \bbP\bigl(\cM_\infty(s,t)^c\bigr)\leq M_\eta.
\]

\subsection{A constructive uniform threshold}

\begin{proposition}[Constructive uniform threshold]\label{thm:constructive-threshold}
Fix $\alpha\geq1$, an integer return bound $H\geq0$, a class-size bound $T\geq1$ and a
transition bound $B\geq1$. Choose $\beta\in[0,1]$ such that
$\lambda_\alpha(\beta)<1$.
Fix also $u\in(0,1)$. There are constants $K_{\mathrm{match}}<\infty$ and $\eps>0$, given
explicitly in terms of $\alpha,\beta,u,H,T,B$, such that every finite type model satisfying the hypotheses of
\cref{thm:markov-matching}, with $b(0)<1$ and the indicated bounds, satisfies
\[
 \bbP\bigl(\cM_h(s,t)^c\bigr)\leq K_{\mathrm{match}}\zeta_\alpha(\mu)\quad(h\geq0),
 \qquad
 \bbP\bigl(\cM_\infty(s,t)^c\bigr)\leq K_{\mathrm{match}}\zeta_\alpha(\mu)
\]
for all types $s,t$ in the same class whenever $\zeta_\alpha(\mu)\leq\eps$. The constants
may be chosen so that these failure probabilities are at most $1/2$.
\end{proposition}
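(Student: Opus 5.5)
The plan is to rerun the scalar induction of \cref{sec:completion} with every implicit constant replaced by an explicit function of $\alpha,\beta,u,H,T,B$. I would restrict to the range $0\leq\zeta\leq\eps$ and $0\leq M\leq1$ with $4S_H^2T^2\zeta\leq1$, where $S_H=2^{H+1}-1$. On this range \cref{thm:explicit-zero-bound} together with \cref{eq:root-parameter-bounds} gives $Z\leq2S_H\zeta$. The bounds $I_{\mathrm{bad}}\leq(1+\alpha)\zeta$ and $R_\mu\leq1+\alpha\leq2\alpha$ make the geometric factor in \cref{eq:explicit-weighted-error} at most $G_H(4B(1+\alpha)^2)$, an explicit number. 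Expanding $(TZ+\alpha M)^2\leq2T^2Z^2+2\alpha^2M^2\leq 8S_H^2T^2\zeta+2\alpha^2M^2$, using $\zeta\leq1$ and $Z\leq2S_H\zeta$, then yields an explicit $C_1=C_1(\alpha,H,T,B)$ with $E(M)\leq C_1(\zeta+M^2)$.

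The next step is to insert these bounds into $Q(M)$ from \cref{eq:closure-map}, now using the chosen $\beta$ and $u$ in place of $\beta_\ast$ and $1/2$. By \cref{thm:four-law-contraction} this substitution is legitimate for any fixed $\beta$ and $u$, and the proofs of \cref{thm:averaged-child-pair,thm:one-step-closure} go through verbatim with $\lambda_\alpha(\beta)$ and $C_\alpha(u)$. Combining with $A_\mu\leq1+(2\alpha-1)\zeta$ gives
\[
 \zeta+A_\mu Q(M)\leq \lambda_\alpha(\beta)M+C_2(\zeta+M^2),
\]
where $C_2$ is assembled explicitly from $C_\alpha(u)$, $2+2\beta+4\alpha$, $2S_H$, $C=4+8(\alpha+1)B$, $(1+\alpha)$, $C_1$ and $(2\alpha-1)$. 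Here the terms of order $\zeta M$ are absorbed into $\zeta$ using $M\leq1$.

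Following the end of the proof of \cref{thm:markov-matching}, I would then set
\[
 K_{\mathrm{pot}}\coloneqq1+\frac{2C_2}{1-\lambda_\alpha(\beta)},\qquad M\coloneqq K_{\mathrm{pot}}\zeta.
\]
The threshold $\eps$ is taken as the minimum of four quantities: $1/K_{\mathrm{pot}}$, $1/(4S_H^2T^2)$, $\bigl((1-\lambda_\alpha(\beta))K_{\mathrm{pot}}-C_2\bigr)/(C_2K_{\mathrm{pot}}^2)$, and finally $1/(2K_{\mathrm{match}})$ with $K_{\mathrm{match}}\coloneqq K_{\mathrm{pot}}+2S_H$. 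The last entry enforces $K_{\mathrm{match}}\zeta\leq1/2$. With these choices the scalar barrier \cref{eq:scalar-barrier-condition} holds, so the height induction gives $\cE_h\leq M$. The failure decomposition then gives $\bbP(\cM_h^c)\leq Z+M\leq K_{\mathrm{match}}\zeta$, and the K\H{o}nig and continuity argument transfers the same bound to $h=\infty$.

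The main obstacle is purely bookkeeping. The task is to verify that no hidden dependence on $G$, $\mu$, the type set, or individual transition probabilities enters beyond $H,T,B$. The place where this could fail is the weighted zero-degree bound $E(M)$, which must remain quadratic in $M$ with no linear term so that $\lambda_\alpha(\beta)$ is untouched. I would check this term first.
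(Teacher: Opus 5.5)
Your proposal is correct. Its skeleton is the paper's: a scalar induction with $M=K\zeta$, the bounds $Z\le 2S_H\zeta$ and $E(M)\le C_1(\zeta+M^2)$ with no linear term in $M$, and $\eps$ small enough that $\zeta+A_\mu Q(M)\le M$ closes. The difference is only in how the constants are extracted.

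You follow the closing argument in the proof of \cref{thm:markov-matching}. You restrict to $\zeta,M\le1$ and replace every $\zeta$- and $M$-dependent factor by its crude value on that range, for instance $G_H(4B(1+\alpha)^2)$. You then read off a closed-form $K_{\mathrm{pot}}=1+2C_2/(1-\lambda_\alpha(\beta))$ and take $\eps$ as a minimum of four explicit numbers.

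The paper instead substitutes $1-b(0)\mapsto t$, $I_{\mathrm{bad}}\mapsto(\alpha+1)t$ and $R_\mu\mapsto1+\alpha t$ exactly. It forms $J_K(t)=\bigl(t+(1+(2\alpha-1)t)\overline Q(t,Kt)\bigr)/t$ and checks that this is continuous and nondecreasing on $[0,t_0]$, with $J_K(0)=1+\lambda_\alpha(\beta)K+A_0$. It then takes $K=1+(1+A_0)/(1-\lambda_\alpha(\beta))$ and defines $\eps$ as the largest $t\le t_*$ with $J_K(t)\le K$.

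The paper's route gives a sharper multiplier, fixed only by the $\zeta\downarrow0$ limit $A_0=(2+2\beta)S_H+2CB(\alpha+1)G_H(4B)$. Your $C_2$ additionally carries $C_\alpha(u)$ and the inflated geometric factor. The paper also gets the largest threshold its scalar criterion allows. The price is that its $\eps$ is defined implicitly, as the endpoint of a sublevel set of a monotone explicit function, whereas yours is a closed formula. Both establish the proposition as stated, including the bound of $1/2$ on the failure probabilities.
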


\begin{proof}
Fix the parameters in the proposition and put
\[
 C\coloneqq4+8(\alpha+1)B,
\]
as well as
\[
 S_H\coloneqq\sum_{d=0}^H2^d,
 \qquad
 G_H(x)\coloneqq\sum_{d=0}^Hx^d.
\]
For $0\leq t\leq t_0\coloneqq1/(4S_H^2T^2)$ and $M\geq0$, define
\[
 \overline Z(t)\coloneqq
 \frac{2S_Ht}{1+\sqrt{1-4S_H^2T^2t}}
\]
and
\[
 \overline E(t,M)\coloneqq
 G_H\bigl(4B(1+\alpha t)(1+\alpha M)\bigr)
 \left(2B(1+\alpha M)^2(\alpha+1)t
       +2B(1+\alpha t)\bigl(T\overline Z(t)+\alpha M\bigr)^2\right).
\]
These are the expressions defining $Z$ and $E(M)$ in
\cref{eq:explicit-zero-bound,eq:explicit-weighted-error}, with $1-b(0)$,
$I_{\mathrm{bad}}$ and $R_\mu$ replaced by $t$, $(\alpha+1)t$ and $1+\alpha t$,
respectively. The right sides of those bounds are nondecreasing in the three substituted
quantities. For a model covered by \cref{thm:constructive-threshold}, write
$\zeta\coloneqq\zeta_\alpha(\mu)$. By \cref{eq:root-parameter-bounds},
\[
 Z\leq\overline Z(\zeta),
 \qquad
 E(M)\leq\overline E(\zeta,M)
 \qquad(0\leq\zeta\leq t_0,\ M\geq0).
\]
Here $\zeta\leq t_0$ also implies the condition
$4S_H^2T^2(1-b(0))\leq1$ under which $Z$ is defined.

Put
\[
 \overline Q(t,M)\coloneqq
 \lambda_\alpha(\beta)M+C_\alpha(u)M^2+(2+2\beta+4\alpha M)\overline Z(t)
 +C(1+\alpha M)\overline E(t,M).
\]
Suppose that the restricted potentials through height $h$ are bounded by $M$. Applying
\cref{thm:four-law-contraction} with the chosen $\beta$ and $u$ in the proof of
\cref{thm:one-step-closure}, together with the preceding bounds on $Z$ and $E(M)$, bounds
the child-pair potential by $\overline Q(\zeta,M)$. The estimate
$A_\mu\leq1+(2\alpha-1)\zeta$ then gives
\[
 \cE_{h+1}(s,t)
 \leq\zeta+\bigl(1+(2\alpha-1)\zeta\bigr)\overline Q(\zeta,M)
 \qquad(0\leq\zeta\leq t_0,\ M\geq0).
\]

Fix $K>0$ and take $M=Kt$. For $0<t\leq t_0$, define
\begin{align*}
 J_K(t)&\coloneqq
 \frac{t+\bigl(1+(2\alpha-1)t\bigr)\overline Q(t,Kt)}{t}\\
 &=1+\bigl(1+(2\alpha-1)t\bigr)
 \biggl(\lambda_\alpha(\beta)K+C_\alpha(u)K^2t
 +(2+2\beta+4\alpha Kt)\frac{\overline Z(t)}{t}\\
 &\hspace{10cm}+C(1+\alpha Kt)\frac{\overline E(t,Kt)}{t}\biggr).
\end{align*}
The right side is at most $M=K\zeta$ whenever
$0<\zeta\leq t_0$ and $J_K(\zeta)\leq K$. The two quotients in $J_K$ are
\begin{align*}
 \frac{\overline Z(t)}{t}
 &=\frac{2S_H}{1+\sqrt{1-4S_H^2T^2t}},\\
 \frac{\overline E(t,Kt)}{t}
 &=G_H\bigl(4B(1+\alpha t)(1+\alpha Kt)\bigr)\\
 &\quad\cdot\left(2B(1+\alpha Kt)^2(\alpha+1)
 +2B(1+\alpha t)t
 \left(T\frac{\overline Z(t)}t+\alpha K\right)^2\right).
\end{align*}
They extend continuously to $t=0$, with values $S_H$ and
$2B(\alpha+1)G_H(4B)$, and every factor in these expressions and in $J_K$ is
nonnegative and nondecreasing on $[0,t_0]$. Hence $J_K$ extends to a continuous
nondecreasing function on $[0,t_0]$, with
\[
 J_K(0)=1+\lambda_\alpha(\beta)K+A_0,
 \qquad
 A_0\coloneqq(2+2\beta)S_H+2CB(\alpha+1)G_H(4B).
\]
Set
\[
 K\coloneqq1+\frac{1+A_0}{1-\lambda_\alpha(\beta)},
 \qquad
 K_{\mathrm{match}}\coloneqq K+2S_H,
 \qquad
 t_*\coloneqq\min\left\{t_0,\frac1{2K_{\mathrm{match}}}\right\}.
\]
Then $K-J_K(0)=1-\lambda_\alpha(\beta)>0$. Continuity and monotonicity therefore show that
\begin{equation}\label{eq:constructive-threshold}
 \eps_K\coloneqq
 \max\{t\in[0,t_*]\colon J_K(t)\leq K\}
\end{equation}
exists and is positive, and that $J_K(t)\leq K$ for $0\leq t\leq\eps_K$.

Suppose $\zeta\leq\eps_K$. If $\zeta>0$, the right side of the one-step bound is at most
$M=K\zeta$. If $\zeta=0$, then $\overline Q(0,0)=0$, and the same holds with $M=0$.
Moreover, $Z\leq2S_H\zeta$. The height induction and failure-probability argument in the
proof of \cref{thm:markov-matching} then give
\[
 \bbP\bigl(\cM_h(s,t)^c\bigr)
 \leq M+Z\leq K_{\mathrm{match}}\zeta
 \qquad(h\geq0)
\]
for types $s,t$ in the same class, with the same bound at infinite height. Finally,
$\eps_K\leq1/(2K_{\mathrm{match}})$ makes every stated failure probability at most $1/2$.
Taking $\eps\coloneqq\eps_K$ proves the proposition.
\end{proof}

\subsection{The exponent range}\label{sec:exponent-range}

We show that $\lambda_\alpha$ is continuous and strictly decreasing on $[1,\infty)$, so
that the exponents with $\lambda_\alpha<1$ form an open half-line.

For continuity, let $0<s\leq1$ and $1\leq\alpha\leq\alpha'$. Then
\[
 0\leq s^\alpha-s^{\alpha'}\leq\alpha'-\alpha,
\]
since $s^{\alpha'-\alpha}\geq1+(\alpha'-\alpha)\log s$, $s^\alpha\leq s$ and
$-s\log s\leq1-s$, and both sides vanish at $s=0$. We apply this to $s=(1+q)^{-1}$ in
the first summand and to $s=1-q$ in the second summand of the function maximised in
\cref{eq:mean-constants}. Since $q\leq1$, this gives
\[
 L_{\alpha'}(\beta)\leq L_\alpha(\beta)\leq L_{\alpha'}(\beta)+(1+\beta)(\alpha'-\alpha).
\]
Since $K_\alpha(\beta)=K_\alpha(0)(1-\beta)^{\alpha+1}$ and $K_\alpha(0)$ is the maximum of
$q(1-q)^\alpha$ over $[0,1]$, the same inequality applied to both factors gives
$K_{\alpha'}(\beta)\leq K_\alpha(\beta)
\leq K_{\alpha'}(\beta)+2(\alpha'-\alpha)$. Taking the minimum over $\beta$ gives
\[
 \lambda_{\alpha'}\leq\lambda_\alpha
 \leq\lambda_{\alpha'}+8(\alpha'-\alpha).
\]
Thus $\lambda_\alpha$ is nonincreasing on $[1,\infty)$.

For a strict decrease, fix $1\leq\alpha<\alpha'$ and a $\beta$ attaining the minimum
defining $\lambda_\alpha$. The choice $\beta=1$ does not attain it, since $q=0$ gives
$2(L_\alpha(1)+K_\alpha(1))\geq2$, whereas $q/(1+q)^\alpha\leq1/2$ and
$q(1-q)^\alpha\leq1/4$ give $2(L_\alpha(0)+K_\alpha(0))\leq3/2$. Thus $\beta<1$. The
maximum defining $K_{\alpha'}(0)$ is attained at an interior point $q$ of $[0,1]$, where
$(1-q)^{\alpha'}<(1-q)^\alpha$, so $K_{\alpha'}(0)<K_\alpha(0)$ and
$K_{\alpha'}(\beta)<K_\alpha(\beta)$. Consequently
\[
 \lambda_{\alpha'}\leq2\bigl(L_{\alpha'}(\beta)+K_{\alpha'}(\beta)\bigr)
 <2\bigl(L_\alpha(\beta)+K_\alpha(\beta)\bigr)=\lambda_\alpha.
\]

At $\alpha=1$, the values $q=1$ and $q=0$ give $L_1(\beta)\geq\max\{1/2,\beta\}$, and
$K_1(\beta)=(1-\beta)^2/4$, so $2(L_1(\beta)+K_1(\beta))\geq9/8$ for every $\beta$ and
$\lambda_1>1$. At $\alpha=2$, the choice $\beta=1/16$ in \cref{sec:two-exponent-values}
gives $\lambda_2\leq381/512<1$. Continuity and monotonicity therefore give a unique
$\alpha_*\in(1,2)$ such that
\[
 \lambda_{\alpha_*}=1,
 \qquad
 \lambda_\alpha<1\quad\Longleftrightarrow\quad\alpha>\alpha_*
 \quad(\alpha\geq1).
\]
The rational calculation in \cref{sec:exponent-four-thirds} shows that
$\alpha_*<4/3$, while numerical minimisation gives
$\alpha_*\approx1.325660330$.

\subsection{An explicit exponent of four thirds}\label{sec:exponent-four-thirds}

We verify $\lambda_{4/3}<1$ by a finite rational calculation, with $\beta=11/40$. Write
\[
 F_{4/3}(q)\coloneqq\frac{q}{(1+q)^{4/3}}+\frac{11}{40}(1-q)^{4/3}
 \qquad(0\leq q\leq1),
\]
so that $L_{4/3}(11/40)=\max F_{4/3}$. Its derivative is
\[
 F_{4/3}'(q)
 =\frac{1-q/3}{(1+q)^{7/3}}-\frac{11}{30}(1-q)^{1/3}.
\]
For $0\leq q<1$, put
\[
 R(q)\coloneqq
 \frac{1-q/3}{(1+q)^{7/3}(1-q)^{1/3}}.
\]
The sign of $F_{4/3}'(q)$ is the sign of $R(q)-11/30$, and logarithmic
differentiation gives
\[
 \frac{R'(q)}{R(q)}
 =\frac{-5q^2+30q-21}{3(3-q)(1-q)(1+q)}.
\]
The numerator on the right is strictly increasing on $[0,1]$ and has its unique zero there
in $(4/5,1)$. Hence $R$ first decreases and then increases. To locate its first crossing of
$11/30$, we cube the two positive terms in $F_{4/3}'(q)$. The derivative then has the same
sign as
\[
 1000(3-q)^3-1331(1-q)(1+q)^7.
\]
Direct substitution shows that this expression is positive at $q=589/1000$ and negative at
$q=59/100$.
Since $R(0)=1$ and $R(q)\to\infty$ as $q\uparrow1$, the derivative has exactly two zeros.
The first, denoted by $q_0$, belongs to $[589/1000,59/100]$ and is the only interior
maximum. The first summand in $F_{4/3}$ is increasing and the second is decreasing, so
\[
 \begin{split}
 F_{4/3}(q_0)
 &\leq\frac{59/100}{(159/100)^{4/3}}
   +\frac{11}{40}\left(\frac{411}{1000}\right)^{4/3}\\
 &<\frac{159}{500}+\frac{17}{200}=\frac{403}{1000}.
 \end{split}
\]
The two strict inequalities are verified by cubing, and
$F_{4/3}(1)=2^{-4/3}<2/5$. Thus
$L_{4/3}(11/40)<403/1000$. Moreover,
\[
 K_{4/3}(11/40)^3
 =\frac{465746660343}{527067520000000}
 <\left(\frac{12}{125}\right)^3,
\]
so $K_{4/3}(11/40)<12/125$. Consequently
\begin{equation*}
 \lambda_{4/3}\leq2\bigl(L_{4/3}(11/40)+K_{4/3}(11/40)\bigr)<\frac{499}{500}<1.
\end{equation*}
With $u=1/2$, the quadratic coefficient is $C_{4/3}(1/2)=272/9-2^{5/3}<28$, since
$L_{4/3}=2^{-4/3}$. Thus \cref{thm:four-law-contraction} gives the rational bound
\begin{equation*}
 \cE(\rho_1\times\rho_2,\tau_1\times\tau_2;R^\square)
 \leq\frac{499}{500}M+28M^2
 +\left(\frac{51}{20}+\frac{16}{3}M\right)\delta_1
 +4\left(1+\frac43M\right)\delta_2,
\end{equation*}
where $M$, $\delta_1$ and $\delta_2$ bound the eight directed potentials, the zero-degree probabilities
and the weighted zero-degree sums as in the hypotheses of that lemma.
For $\beta=11/40$, the exact value of $2(L_{4/3}(11/40)+K_{4/3}(11/40))$ is
approximately $0.995292$.

\subsection{The optimal coefficient at exponent two}\label{sec:optimal-coefficient-two}

At exponent $2$, the minimum in \cref{eq:mean-constants} can be computed exactly. The
derivative of the function maximised in $L_2(\beta)$ is
\[
 \frac{d}{dq}\left(\frac{q}{(1+q)^2}+\beta(1-q)^2\right)
 =(1-q)\bigl((1+q)^{-3}-2\beta\bigr),
\]
so for $1/16<\beta<1/2$ the maximum is attained at $q=(2\beta)^{-1/3}-1$. Moreover
$K_2(\beta)=(4/27)(1-\beta)^3$. The polynomial
\begin{equation}\label{eq:optimal-coefficient-two-quartic}
 3q^4+8q^3+6q^2-2
\end{equation}
is strictly increasing for $q>0$, negative at $q=1/3$ and positive at $q=1/2$, so it has
exactly one root $q$ in $(1/3,1/2)$. Expanding the product shows that
$(3q-1)(1+q)^3=1$ for this root. Put $\beta_0\coloneqq(3q-1)/2$. Then
$2\beta_0(1+q)^3=1$, so $\beta_0\in(1/16,1/2)$ and $L_2(\beta_0)=q/(1+q)^2+\beta_0(1-q)^2$.
For every $\beta'\in[0,1]$, the maximum defining $L_2(\beta')$ is at least the value at
$q$, so
\[
 L_2(\beta')+K_2(\beta')
 \geq\frac{q}{(1+q)^2}+\beta'(1-q)^2+\frac4{27}(1-\beta')^3.
\]
Since $(1-q)^2=(4/9)(1-\beta_0)^2$, the right side exceeds its value at $\beta'=\beta_0$ by
\[
 \frac4{27}(\beta'-\beta_0)^2(3-2\beta_0-\beta')\geq0.
\]
Thus $\beta_0$ attains the minimum in \cref{eq:mean-constants}, and the identity
$(3q-1)(1+q)^3=1$ reduces the minimum to
\[
 \lambda_2=2\bigl(L_2(\beta_0)+K_2(\beta_0)\bigr)=8q^3.
\]
The signs of the polynomial at $44403/100000$ and at $444036/1000000$ give
$0.7003<\lambda_2<0.7004$. Numerically,
\[
 q=0.444033223343499\ldots,\qquad
 \beta_0=0.166049835015249\ldots,\qquad
 \lambda_2=0.700384272171343\ldots.
\]
These decimal values describe the exact algebraic choice specified by
\cref{eq:optimal-coefficient-two-quartic}. The next subsection uses the simpler choice
$\beta=1/16$ to obtain a rational coefficient.

\subsection{Numerical bounds at exponent two}\label{sec:two-exponent-values}

We now take $\alpha=2$, $\beta=1/16$ and $u=1/2$, which give rational coefficients. The
identity
\[
 \frac14-\frac{q}{(1+q)^2}
 =\frac{(1-q)^2}{4(1+q)^2}\geq\frac{(1-q)^2}{16}
\]
shows that $L_2(1/16)=1/4$, attained at $q=1$, and $K_2(1/16)=125/1024$. Hence
$\lambda_2(1/16)=381/512$, and $C_2(1/2)=38$ since $L_2=1/4$.
\Cref{thm:four-law-contraction} gives
\begin{equation}\label{eq:mean-exponent-two}
 \cE(\rho_1\times\rho_2,\tau_1\times\tau_2;R^\square)
 \leq\frac{381}{512}M+38M^2
 +\left(\frac{17}{8}+8M\right)\delta_1+4(1+2M)\delta_2,
\end{equation}
with $M$, $\delta_1$ and $\delta_2$ as in the hypotheses of that lemma.

When $b(0)=1$, the scalar condition follows from
$\eta+(1+3\eta)(381M/512+38M^2)\leq M$, since $A_\mu\leq1+3\eta$. We take
$M=2\eta/(1-\lambda_2(1/16))=(1024/131)\eta$.
Substituting this value and dividing by $\eta$ reduces the
inequality to
\[
 \frac{40145354}{17161}\eta+
 \frac{119537664}{17161}\eta^2\leq1.
\]
The two coefficients are less than $2340$ and $7000$, and the left side is increasing in
$\eta$, so the inequality holds for $\eta\leq1/2500$, where
$2340/2500+7000/2500^2=5857/6250<1$. Consequently, for every countable type model
with $b(0)=1$ and every $s,t\in\cI$,
\[
 \eta_2(\mu)\leq\frac1{2500}
 \quad\Longrightarrow\quad
 \begin{aligned}
 \bbP\bigl(\cM_h(s,t)^c\bigr)
 &\leq\frac{1024}{131}\eta_2(\mu) &&(h\geq0),\\
 \bbP\bigl(\cM_\infty(s,t)^c\bigr)
 &\leq\frac{1024}{131}\eta_2(\mu).
 \end{aligned}
\]
The solution $M_\eta$ of \cref{eq:zero-compatible-quadratic} with $\lambda_2(1/16)=381/512$ and
$C_2(1/2)=38$ is defined for $\eta$ up to approximately $0.0004269348229$, and
\[
 \frac{M_\eta}{\eta}\longrightarrow\frac1{1-\lambda_2(1/16)}=\frac{512}{131}
 \qquad(\eta\downarrow0).
\]
The algebraic minimum $\lambda_2$ of \cref{sec:optimal-coefficient-two} in place of $\lambda_2(1/16)$
gives the limiting coefficient $1/(1-\lambda_2)=3.337608500\ldots$. These are evaluations
of the same sufficient criterion. We do not claim that they are optimal matching thresholds.

\subsection{Dependence on the largest arity}\label{sec:arity-dependence}

We make the constants of \cref{sec:common-presentations} explicit at exponent $2$ for a canonical
choice of core profiles. Fix a nonempty finite set $\cA$ of positive integers and a number
$p_{\mathrm{core}}>0$. For $a\in\cA$, choose the core profile $C_{a+1}$ by the recursive
construction in \cref{sec:profile-depths}. Let $\nu_L$ and $\nu_R$ be finitely
supported laws on $\{2,3,\ldots\}$ with supports $S_L$ and $S_R$, and suppose that
\[
 \nu_\sigma(a+1)\geq p_{\mathrm{core}}
 \qquad(a\in\cA,\ \sigma\in\{L,R\}),
 \quad\text{and}\quad
 k-1\in\langle\cA\rangle
 \qquad(k\in S_\sigma,\ \sigma\in\{L,R\}).
\]
Together, these conditions make $\{a+1\colon a\in\cA\}$ a common generating core for the
two supports. Hence the construction of \cref{sec:common-presentations} applies. Put
\[
 N\coloneqq\max(S_L\cup S_R),
 \qquad
 h_C\coloneqq\max_{\substack{\sigma\in\{L,R\}\\k\in S_\sigma}}
 \operatorname{height}(C_k),
\]
\[
 L\coloneqq1+\lceil\log_2\max\cA\rceil,
 \qquad
 B_0\coloneqq\max\{1,\#\cA\,p_{\mathrm{core}}^{-2}\},
\]
and
\begin{align*}
 p_{\mathrm{ar}}&\coloneqq2\log_2(16B_0)\geq8,
 & A_{\mathrm{ar}}&\coloneqq500B_0^2(16B_0)^{2L+1},\\
 K_{\mathrm{ar}}&\coloneqq\frac{1024}{131}A_{\mathrm{ar}},
 & \eps_{\mathrm{ar}}&\coloneqq\frac1{K_{\mathrm{ar}}^2 2^{4L+4}}.
\end{align*}
These constants depend only on $\cA$ and $p_{\mathrm{core}}$. Let $F_\sigma$ be the original-vertex
type in the encoding of the law $\nu_\sigma$. We claim that, for every pair of arity laws satisfying
the two conditions above, every label law $\mu$, and all $\sigma,\tau\in\{L,R\}$,
\[
 \zeta_2(\mu)\leq\eps_{\mathrm{ar}}N^{-2p_{\mathrm{ar}}}
 \quad\Longrightarrow\quad
 \begin{aligned}
 \bbP\bigl(\cM_h(F_\sigma,F_\tau)^c\bigr)
 &\leq K_{\mathrm{ar}}N^{p_{\mathrm{ar}}}\zeta_2(\mu) &&(h\geq0),\\
 \bbP\bigl(\cM_\infty(F_\sigma,F_\tau)^c\bigr)
 &\leq K_{\mathrm{ar}}N^{p_{\mathrm{ar}}}\zeta_2(\mu).
 \end{aligned}
\]
No lower bound is imposed on the probabilities of noncore arities. If
$\mu(0)\geq p_0>0$, then $\zeta_2\leq\eta_2/p_0$, and the claim gives
\[
 \eta_2(\mu)\leq p_0\eps_{\mathrm{ar}}N^{-2p_{\mathrm{ar}}}
 \quad\Longrightarrow\quad
 \begin{aligned}
 \bbP\bigl(\cM_h(F_\sigma,F_\tau)^c\bigr)
 &\leq\frac{K_{\mathrm{ar}}}{p_0}N^{p_{\mathrm{ar}}}\eta_2(\mu) &&(h\geq0),\\
 \bbP\bigl(\cM_\infty(F_\sigma,F_\tau)^c\bigr)
 &\leq\frac{K_{\mathrm{ar}}}{p_0}N^{p_{\mathrm{ar}}}\eta_2(\mu).
 \end{aligned}
\]

To prove the claim, we bound each parameter of the scalar induction in terms of $N$. Every core
profile has height at most $L$, so \cref{thm:shallow-grafting} gives $h_C\leq
L+\lfloor\log_2N\rfloor$. Every core profile has a leaf at depth one, so \cref{thm:bounded-return}
allows us to take
\[
 H=2h_C\leq2L+2\lfloor\log_2N\rfloor.
\]
The type count in \cref{eq:type-count}, the sum $S_H=2^{H+1}-1$ and the sum $G_H$ then
satisfy
\[
 T\leq2+2\sum_{k=2}^{N}(k-2)\leq N^2,
 \qquad
 S_H\leq2^{2L+1}N^2,
 \qquad
 G_H(16B_0)
 \leq\frac{(16B_0)^{2L+1}}{16B_0-1}N^{p_{\mathrm{ar}}}.
\]
Put
\[
 R_N\coloneqq(16B_0)^{2L+1}N^{p_{\mathrm{ar}}}.
\]
Since $16B_0-1\geq15$ and
$2^{2L+1}N^2\leq R_N$, the preceding bounds give
$S_H\leq R_N$ and $G_H(16B_0)\leq R_N/15$.
Let $\zeta\coloneqq\zeta_2(\mu)$ and $M\geq0$ satisfy $\zeta,M\leq1/2$ and $4S_H^2T^2\zeta\leq1$.
Then \cref{eq:root-parameter-bounds} gives $R_\mu\leq2$ and $I_{\mathrm{bad}}\leq3\zeta$, and
$T^2Z^2\leq4S_H^2T^2\zeta^2\leq\zeta$, so \cref{eq:explicit-zero-bound,eq:explicit-weighted-error}
give
\[
 Z\leq2S_H\zeta,
 \qquad
 E(M)\leq\frac{32}{15}B_0R_N(\zeta+M^2).
\]
We insert these bounds and the coefficients of \cref{eq:mean-exponent-two} into
$\zeta+A_\mu Q(M)$. The bound $A_\mu\leq1+3\zeta\leq5/2$, together with
$B\leq B_0$, $4+24B\leq28B_0$ and $\zeta M\leq(\zeta+M^2)/2$, gives
\begin{align*}
 \zeta
 &\leq B_0^2R_N(\zeta+M^2),\\
 3\frac{381}{512}\zeta M
 &\leq\frac32B_0^2R_N(\zeta+M^2),\\
 \frac52\,38M^2
 &\leq95B_0^2R_N(\zeta+M^2),\\
 \frac52\left(\frac{17}{8}+8M\right)Z
 &\leq\frac{245}{8}B_0^2R_N(\zeta+M^2),\\
 \frac52(4+24B)(1+2M)E(M)
 &\leq\frac{896}{3}B_0^2R_N(\zeta+M^2).
\end{align*}
The sum of the five coefficients is $10243/24<500$. Therefore
\[
 \zeta+A_\mu Q(M)
 \leq\frac{381}{512}M+A_{\mathrm{ar}}N^{p_{\mathrm{ar}}}(\zeta+M^2).
\]
With
\[
 M\coloneqq K_{\mathrm{ar}}N^{p_{\mathrm{ar}}}\zeta
 =\frac{2A_{\mathrm{ar}}}{1-381/512}N^{p_{\mathrm{ar}}}\zeta,
\]
the right side is at most $M$ exactly when $M^2\leq\zeta$, that is, when
$\zeta\leq K_{\mathrm{ar}}^{-2}N^{-2p_{\mathrm{ar}}}$. The hypothesis
$\zeta\leq\eps_{\mathrm{ar}}N^{-2p_{\mathrm{ar}}}$ implies this inequality, as well as $M\leq1/2$ and
$4S_H^2T^2\zeta\leq1$. Hence \cref{eq:scalar-barrier-condition} holds with this $M$.
Since $F_L$ and $F_R$ belong to $\cI_\mu$, the proof of \cref{thm:markov-matching} bounds
their failure probability by $M$, which is the claim.

\subsection{A lower bound for the linear coefficient}\label{sec:contraction-lower-bound}

We show that the linear coefficient of any uniform four-law contraction for the potential
$\phi(q)=q/(1-q)^\alpha$ is bounded below. This concerns the local estimate of
\cref{thm:four-law-contraction} alone and does not assert that the matching theorem
fails at the same exponent. Fix $p,q\in(0,1)$ and put
\[
 x\coloneqq
 \frac{1-(1-p)^\alpha}{1-(1-p)^\alpha(1-q)^\alpha},
 \qquad
 y\coloneqq
 \frac{1-(1-q)^\alpha}{1-(1-p)^\alpha(1-q)^\alpha}.
\]
These coefficients satisfy
\[
 x+(1-p)^\alpha y=(1-q)^\alpha x+y=1.
\]
Let $R$ be the symmetric relation on six distinct points $v_0,\ldots,v_5$ in which every
pair is compatible except $(v_0,v_3)$ and $(v_1,v_4)$, in either order. For small $s>0$,
set
\[
 m_s\coloneqq\frac{sx}{\phi(q)},\qquad
 n_s\coloneqq\frac{sy}{\phi(p)},
\]
and define
\[
 \rho_s\coloneqq m_s\delta_{v_0}+p\delta_{v_1}+(1-p-m_s)\delta_{v_2},
 \qquad
 \tau_s\coloneqq q\delta_{v_3}+n_s\delta_{v_4}+(1-q-n_s)\delta_{v_5}.
\]
These are probability laws for sufficiently small $s$. Take both source laws to be
$\rho_s$ and both target laws to be $\tau_s$. Every point of positive mass has positive
matching degree in both directions. The two directed potentials are
\begin{align*}
 \cE(\rho_s,\tau_s)
 &=m_s\phi(q)+p\phi(n_s)
 =s\bigl(x+(1-p)^\alpha y\bigr)+O(s^2)=s+O(s^2),\\
 \cE(\tau_s,\rho_s)
 &=q\phi(m_s)+n_s\phi(p)
 =s\bigl((1-q)^\alpha x+y\bigr)+O(s^2)=s+O(s^2).
\end{align*}
Since both source laws equal $\rho_s$ and both target laws equal $\tau_s$, the eight
directed potentials in \cref{thm:four-law-contraction} take these two values, and their
maximum is
\[
 M_s\coloneqq
 \max\{\cE(\rho_s,\tau_s),\cE(\tau_s,\rho_s)\}
 =s+O(s^2).
\]

\paragraph*{\bf First order contributions} There are two contributions of first order to the pair potential. A source pair with one
coordinate equal to $v_0$ fails when both target coordinates equal $v_3$, giving
$2m_s\phi(q^2)+O(s^2)$. A source pair equal to $(v_1,v_1)$ fails when at least one target
coordinate equals $v_4$, giving $2p^2n_s+O(s^2)$. 

\paragraph*{\bf Other contributions} All other contributions have order
$s^2$. More explicitly, the pair potential is
\begin{align*}
 &m_s^2\phi(2q-q^2)
 +2m_sp\phi(q^2+n_s^2)
 +2m_s(1-p-m_s)\phi(q^2)\\
 &\quad+p^2\phi(2n_s-n_s^2)
 +2p(1-p-m_s)\phi(n_s^2)
 =2m_s\phi(q^2)+2p^2n_s+O(s^2).
\end{align*}
Consequently
\[
 \lim_{s\downarrow0}
 \frac{\cE(\rho_s\times\rho_s,\tau_s\times\tau_s;R^\square)}{M_s}
 =\frac{2q}{(1+q)^\alpha}\,x+2p(1-p)^\alpha y.
\]
Suppose that $C\geq0$ and that
$\cE^\square\leq\lambda M+CM^2$ holds for every four-law comparison without zero degrees
whenever all eight directed potentials are at most $M$. We let $s$ tend to zero with
$p,q$ fixed, and then $q$ tend to one. This gives
\[
 \lambda\geq2^{1-\alpha}
 +\bigl(2p-2^{1-\alpha}\bigr)(1-p)^\alpha.
\]
The right side is maximised at
\[
 p=\frac{1+\alpha2^{-\alpha}}{\alpha+1}.
\]
Thus every such estimate must satisfy
\[
 \lambda\geq
 \underline{\lambda}_\alpha\coloneqq2^{1-\alpha}
 +2K_\alpha(0)(1-2^{-\alpha})^{\alpha+1}.
\]
In particular, $\underline{\lambda}_1=9/8$, so a linear coefficient at most one is
impossible at exponent one. The maximising value of $p$ lies in $(0,1)$ and is a stationary
point of the expression being maximised. The chain rule therefore shows that the derivative
of $\underline{\lambda}_\alpha$ is
\[
 2(\log 2)2^{-\alpha}\bigl((1-p)^\alpha-1\bigr)
 +2(p-2^{-\alpha})(1-p)^\alpha\log(1-p).
\]
The displayed maximiser satisfies $p>2^{-\alpha}$, so both terms are negative, and
$\underline{\lambda}_\alpha$ is strictly decreasing. A direct rational estimate gives
\[
 \underline{\lambda}_{4/3}
 \leq\frac45+2\left(\frac{10}{49}\right)\left(\frac8{25}\right)<1.
\]
Consequently, there is a unique $\underline\alpha_*\in(1,4/3)$ such that
$\underline{\lambda}_{\underline\alpha_*}=1$, and numerically
$\underline\alpha_*=1.192576572\ldots$.

\paragraph*{\bf The smallest exponent} Thus the smallest exponent admitting a uniform four-law contraction with linear
coefficient below one for this potential lies between
$\underline\alpha_*=1.192576572\ldots$ and $\alpha_*=1.325660330\ldots$ from
\cref{sec:exponent-range}. We do not determine it.

\subsection{The order of the failure probability}\label{sec:extension-limits}

The preceding subsection concerns the local estimate. We now show that the linear order of
the failure bound in \cref{thm:markov-matching} cannot be improved, even for i.i.d.\
labellings.

Take the label graph $0$--$1$--$2$ and let $\cM_h$ and $\cM_\infty$ be the matching events
for two independent i.i.d.\ labellings with law
\[
 \mu_\eps=(3/4-\eps)\delta_0+\tfrac14\delta_1+\eps\delta_2,
 \qquad 0<\eps\leq1/4.
\]
The incompatible ordered root pairs $(0,2)$ and $(2,0)$ have combined probability
$2(3/4-\eps)\eps$. Since every matching pairs the two roots,
\[
 \bbP(\cM_h^c)\geq2(3/4-\eps)\eps\quad(h\geq0),
 \qquad
 \bbP(\cM_\infty^c)\geq2(3/4-\eps)\eps.
\]
On the other hand,
\[
 \eta_\alpha(\mu_\eps)
 =(3/4-\eps)\eps
 \bigl((1-\eps)^{-\alpha}+(1/4+\eps)^{-\alpha}\bigr).
\]
Consequently, for every $h\geq0$,
\[
 \liminf_{\eps\downarrow0}
 \frac{\bbP(\cM_h^c)}{\eta_\alpha(\mu_\eps)}
 \geq\frac{2}{1+4^\alpha}>0.
\]
The same inequality holds with $\cM_\infty$ in place of $\cM_h$. Thus the linear order
$O(\eta_\alpha)$ cannot be improved to $o(\eta_\alpha)$ uniformly.

\ifSubfilesClassLoaded{

}{}

\end{document}

\section{Proof of the complete classification}\label{sec:trichotomy}
We now complete the classification and
first handle the full tree class, extend the geometric obstructions of
\cref{sec:trichotomy-simple} to general finite supports, and separate chain laws with different
branching semigroups. We then prove universality in the bushy and chain regimes. The relabellings
and binary encodings constructed in \cref{sec:general-classes,sec:general-chain} allow us to apply
the Markov matching theorem. Matching the encoded labels gives a quasi-isometry of the original
trees.

\subsection{Full trees and regime obstructions}
We start by proving quasi-isometry to the binary tree in the full tree regime. Recall that we call a rooted tree
\emph{full} if every vertex has at least two children. In the geometric group theory literature
a tree is called \emph{bushy} if every point lies within uniformly bounded distance of a vertex
with at least three unbounded complementary components. Full trees of bounded valence are bushy
in this geometric sense. Note however that in our classification we reserve \emph{bushy regime} for
the regime (B), whose realisations also contain finite, but generally unbounded bushes.

\begin{proposition}[{Full trees; Mosher--Sageev--Whyte~\cite[\S2.1]{MosherSageevWhyte2003}}]\label{thm:bushy}
  Let $N\in\bbN$ with $N\geq2$, and let $\cT\subseteq\cN(N)$ be a rooted subtree such that
  $v1,v2\in\cT$ for every $v\in\cT$. Then there is a root-preserving quasi-isometry
  $\cT\to\bbB$.
\end{proposition}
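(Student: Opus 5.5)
We plan to reduce the statement to the binary-profile encoding of \cref{sec:binary-presentations} and then to an explicit embedding of a binary tree. Every vertex of $\cT$ has between $2$ and $N$ children, so $\cT$ is an assembly over itself with all pieces one-vertex pieces and arities in $A=\set{2,\dots,N}$. For each $k\in A$ we fix a binary profile $C_k$ with $k$ leaves, for instance the balanced one, of height at most $h_C=\lceil\log_2N\rceil$.

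Applying the encoding gives a binary tree $\cT_C$ in which every vertex has exactly two children, since all pieces are single vertices and every inserted vertex is a one-vertex piece with two children. Such a tree is canonically isomorphic to $\bbB$ as a rooted tree: we map the root to $\troot$ and recursively send the two children of a vertex to its images followed by $1$ and $2$. This map is a root-preserving isometry $\cT_C\to\bbB$.

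By \cref{thm:profile-encoding}, the inclusion $j_C\colon\cT\to\cT_C$ is a root-preserving $(h_C+1)$-quasi-isometry. Composing it with the isometry gives the required root-preserving quasi-isometry $\cT\to\bbB$, with constant depending only on $N$. The only point to check is that \cref{thm:profile-encoding}, stated for assemblies over an infinite reduced skeleton, applies here. Its proof uses only that each original joining edge is replaced by a path of length at most $h_C$ and that inserted vertices lie within $h_C$ of the image, so it holds verbatim.

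There is no real obstacle. The case to watch is a vertex with more than two children, which the profiles absorb. Vertices of $\cT$ with exactly two children may keep $C_2$, the single split, so nothing changes there.
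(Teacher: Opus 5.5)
Your proof is correct, and it takes a different route from the paper's. The paper constructs no map. It observes that $\cT$ has bounded valence and that every non-root vertex separates at least three unbounded complementary components, so $\cT$ is bushy in the sense of Mosher--Sageev--Whyte. It then quotes \cite[\S2.1]{MosherSageevWhyte2003} for some quasi-isometry $f\colon\cT\to\bbB$ and redefines $f(\troot)$, which is a bounded change.

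You instead build the map from the paper's own binary-profile encoding. Every piece is a single vertex and every arity lies in $\set{2,\dots,N}$, so each vertex of $\cT_C$ has exactly two children, and $\cT_C$ is therefore isomorphic to $\bbB$ as a rooted tree. The deterministic metric argument behind \cref{thm:profile-encoding} then gives $d(x,y)\leq d_C\bigl(j_C(x),j_C(y)\bigr)\leq h_C\,d(x,y)$ with $h_C$-dense image. That argument uses neither the reduced-skeleton setting nor \cref{thm:bushy}, so there is no circularity.

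Your route gives a self-contained proof, root preservation with no correction step, and an explicit constant $\lceil\log_2N\rceil+1$ that is uniform over all such $\cT$. In fact $j_C$ is bi-Lipschitz onto a coarsely dense subset. The paper's route is shorter and places the statement inside a more general theorem. The Mosher--Sageev--Whyte result covers every bounded-valence tree in which a vertex with three unbounded complementary components lies uniformly close to every point, including trees with bounded finite hanging pieces. That generality is not needed here, since the hypothesis $v1,v2\in\cT$ is exactly what makes your encoding land in $\bbB$.
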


\begin{proof}
  The tree $\cT$ has bounded valence by construction. Further, every non-root vertex separates
  at least three unbounded complementary components because every vertex has at least two children,
  and the root lies at distance one from such a vertex. So $\cT$ is bushy in the terminology of
  Mosher--Sageev--Whyte, and \cite[\S2.1]{MosherSageevWhyte2003} gives a quasi-isometry
  $f\colon\cT\to\bbB$. Replacing $f(\troot)$ by the root of $\bbB$ changes $f$ by a bounded
  amount, so the resulting map remains a quasi-isometry and preserves the root.
\end{proof}

Recall the terminology of \cref{sec:trichotomy-simple}: rays, lines and bushes of depth $h$
at a vertex. We now collect the possible obstruction mechanisms.

\begin{lemma}[General-support regime obstructions]\label{thm:regime-obstructions-general}
  Let $\theta$ be a finitely supported supercritical offspring distribution, and let $\cT$ be
  a Galton--Watson tree with offspring distribution $\theta$, conditioned on having infinite
  diameter.
  \begin{enumerate}[(i),ref=(\roman*)]
    \item\label{it:gen-obstr-rays} Almost surely $\cT$ contains three rays that pairwise meet
      only in their common initial vertex.
    \item\label{it:gen-obstr-lines} If $\theta_0=0$, then almost surely there exists $\delta>0$
      such that every vertex of $\cT$ lies within distance $\delta$ of some line in $\cT$.
    \item\label{it:gen-obstr-bushes} If $\theta_0>0$, then almost surely $\cT$ has bushes of
      unbounded depth.
    \item\label{it:gen-obstr-thin} If $\theta_1>0$, then almost surely, for every
      $\ell\in\bbN$, there is a vertex $v_\ell$ whose ball $B_{\cT}(v_\ell,\ell)$ is
      isometric to a segment of $\bbZ$.
  \end{enumerate}
\end{lemma}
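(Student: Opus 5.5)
The plan is to mirror the proof of \cref{thm:regime-obstructions}, replacing the two-value skeleton by the general Harris decomposition of \cref{thm:harris-general}. Write $\cT^*$ for $\cT$ when $\theta_0=0$ and for its skeleton when $\theta_0>0$. By \cref{thm:harris-general}\labelcref{it:harris-general-skeleton}, $\cT^*$ is a Galton--Watson tree with $\widetilde\theta_0=0$ and $\widetilde\theta_1<1$, so every vertex of $\cT^*$ has an infinite line of descent. By \cref{thm:harris-general}\labelcref{it:harris-general-reduced}, the reduced skeleton is Galton--Watson with arities in $\set{2,\dots,J}$, and the neck lengths are i.i.d.\ geometric, independent of the arities.

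For \labelcref{it:gen-obstr-rays}, let $u$ be the first split, at the end of the initial neck. Let $v$ be the first split below one skeleton child of $u$. Then $v$ has at least two skeleton children, each carrying a ray. A third ray climbs from $v$ to $u$ and descends through a different skeleton child of $u$. These three rays leave $v$ through distinct neighbours. For \labelcref{it:gen-obstr-lines}, every vertex is a skeleton vertex when $\theta_0=0$. I would repeat the argument of \cref{thm:regime-obstructions}\labelcref{it:obstr-lines} with $b$ the first split. Every vertex strictly below $b$ has an infinite subtree below it. Through its parent it also reaches an infinite subtree via another skeleton child of $b$. Hence it lies on a line, and so does $b$. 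The remaining vertices lie on the initial neck, so $\delta$ can be taken to be its length, which is finite almost surely.

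For \labelcref{it:gen-obstr-bushes}, when $\theta_0>0$ we have $q>0$, and bushes are components of $\cT\setminus\set c$ for the skeleton vertex $c$ at which they hang. Each bush is conditionally an independent $\theta^\dagger$ Galton--Watson tree with $\theta^\dagger_0=\theta_0/q>0$ and $\theta^\dagger_J=\theta_Jq^{J-1}>0$. So a bush has depth exceeding $h$ with some fixed positive probability $p_h$, witnessed by a complete $J$-ary tree of height $h$. It remains to see that infinitely many bushes occur. A split of arity $k<J$ carries $J-k$ bushes with positive conditional probability, as \cref{thm:split-joint} shows for $(k,r)=(2,J-2)$ when $J\geq3$. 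When $J=2$, necks of length at least $2$ occur infinitely often and each neck vertex carries a bush with probability $2\theta_0/\widetilde\theta_1>0$. In either case, the independence from \cref{thm:harris-general}\labelcref{it:harris-general-bushes} together with the second Borel--Cantelli lemma gives infinitely many bushes of depth exceeding $h$ for every $h$.

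For \labelcref{it:gen-obstr-thin}, if $\theta_1>0$ then $\widetilde\theta_1=f'(q)\geq\theta_1>0$, so the neck lengths are i.i.d.\ geometric with unbounded support and are almost surely unbounded. I would take $v_\ell$ to be the middle vertex of a neck of length at least $2\ell+2$. The main obstacle, and the one delicate step, is when $\theta_0>0$. Neck vertices may then carry bushes, which would spoil the segment. For this case I would require the neck vertices within distance $\ell$ of $v_\ell$ to carry no bush. Given the skeleton, each neck vertex is undecorated independently with probability $\theta_1/\widetilde\theta_1>0$. The event that a given neck of length at least $2\ell+2$ has $2\ell+1$ consecutive undecorated vertices therefore has probability bounded below. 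Since there are infinitely many independent necks of that length, Borel--Cantelli applies again. The resulting radius-$\ell$ ball contains only path vertices, one skeleton predecessor and one successor at each end at most, and is isometric to a segment.
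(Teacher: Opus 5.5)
Your proposal is correct and follows the paper's proof essentially step for step. Parts (i) and (ii) are the same arguments transported to the skeleton. Part (iv) uses the same ingredients as the paper: the conditional probability $\theta_1/f'(q)$ that a neck vertex is undecorated, together with Borel--Cantelli over the conditionally independent necks.

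The only deviation is in (iii). Your case split, bushes at splits of arity $k<J$ when $J\ge 3$ and bushes at neck vertices when $J=2$, is valid but unnecessary. Since $q>0$ gives $\widetilde\theta_1=f'(q)\ge J\theta_Jq^{J-1}>0$, neck vertices occur infinitely often in every case. The paper therefore uses neck vertices with total offspring $J$ uniformly.
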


\begin{proof}
  Write $\cT^*$ for $\cT$ when $\theta_0=0$ and for its skeleton when $\theta_0>0$. By
  \cref{thm:harris-general}\labelcref{it:harris-general-skeleton}, the tree $\cT^*$ never dies
  out and is a subtree of $\cT$ carrying the induced metric. The proofs of
  \cref{thm:regime-obstructions}\labelcref{it:obstr-rays,it:obstr-lines} use only these facts and
  that every neck ends almost surely at a split. The latter follows from
  \cref{thm:harris-general}\labelcref{it:harris-general-reduced}. Those proofs therefore apply
  unchanged. At a split of arity greater than two, we simply select two children, which proves
  \cref{it:gen-obstr-rays,it:gen-obstr-lines}.

  For \cref{it:gen-obstr-bushes}, let $q\in(0,1)$ be the extinction probability and set
  $J=\max\supp\theta$. At a unary skeleton vertex, the event that the total offspring number is
  $J$ has positive conditional weight $J\theta_J(1-q)q^{\,J-1}$. Conditioned on this event there is a bush
  with offspring law $\theta^\dagger$. Since $\theta^\dagger_0=\theta_0/q>0$ and
  $\theta^\dagger_J=\theta_Jq^{\,J-1}>0$, this bush has positive probability of containing the
  complete $J$-ary tree of any prescribed height. Moreover, $\widetilde\theta_1=f'(q)>0$, so the
  conditional independence in
  \cref{thm:harris-general}\labelcref{it:harris-general-reduced,it:harris-general-bushes} allows
  the Borel--Cantelli argument from
  \cref{thm:regime-obstructions}\labelcref{it:obstr-bushes} to apply unchanged.

  Finally, suppose that $\theta_1>0$, and let $q$ be the extinction probability. At a unary
  skeleton vertex, the conditional probability that the total offspring number is one equals
  $\theta_1/f'(q)>0$. Thus, for every $L$, a given neck has positive probability of containing
  $L$ consecutive vertices with exactly one child. Conditioned on the infinite reduced skeleton,
  these events are independent by \cref{thm:harris-general}. The second Borel--Cantelli lemma
  therefore gives such chains of arbitrary length almost surely. The midpoint $v_\ell$ of a chain
  of length at least $2\ell+2$ has
  $B_{\cT}(v_\ell,\ell)$ contained in that chain and isometric to a segment of $\bbZ$, proving
  \cref{it:gen-obstr-thin}.
\end{proof}

\paragraph*{\bf Coarse branching counts} The branching semigroup also separates distinct chain classes. We now prove that the coarse
branching counts recorded by \cref{def:branching-semigroup} are quasi-isometry invariants of
chain-regime realisations.

\begin{proposition}[Separation within the chain regime]\label{thm:chain-separation}
  Let $\theta$ and $\theta'$ be finitely supported supercritical offspring distributions
  in the chain regime with $\Lambda_{\theta}\neq\Lambda_{\theta'}$, and let $\cT,\cT'$ be
  independent Galton--Watson trees with these offspring distributions. Then almost surely
  $\cT\not\simeq\cT'$.
\end{proposition}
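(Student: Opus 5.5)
The plan is to exhibit a coarse invariant of the form ``number of unbounded complementary components of a bounded region''. In the chain regime this number is forced to lie in $\Lambda+2$, and a realisation from $\theta$ almost surely realises a value outside $\Lambda_{\theta'}+2$ at arbitrarily large scales.

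By symmetry we may assume there is $m\in\Lambda_\theta\setminus\Lambda_{\theta'}$. Write $m=(k_1-1)+\dots+(k_n-1)$ with each $\theta_{k_i}>0$ and $k_i\geq2$.

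\emph{Step 1: the deterministic count in $\cT'$.} Since $\theta'_0=0$, every vertex of $\cT'$ has an infinite line of descent. Let $S\subseteq\cT'$ be a finite connected subtree not containing the root. Counting the edges leaving $S$ gives
\[
\#\{\text{components of }\cT'\setminus S\}=2+\sum_{u\in S}(\chi'_u-1),
\]
one edge up and $\sum_u\chi'_u-(\abs S-1)$ edges down. The sum lies in $\Lambda_{\theta'}$, because unary vertices contribute $0$. Every such component is infinite. If $S$ contains the root, the count is $1+\sum_{u\in S}(\chi'_u-1)$, and we avoid this case by working far from the root.

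\emph{Step 2: coarse stars in $\cT$.} We call the following configuration an $L$-star. It is a cluster of $n$ splits of arities $k_1,\dots,k_n$, arranged so that consecutive splits are joined by edges of length one. For instance, the $i$th split is a child of the $(i-1)$th, and all remaining vertices are unary. The cluster has diameter at most $n$. Above it there is a unary chain of length at least $L$, and each of the $m+2-1=m+1$ downward outgoing edges begins a unary chain of length at least $L$. In particular, $B_\cT(v,L)$, for $v$ the top split, is the cluster together with $m+2$ arms, and outside the cluster it contains only unary vertices.

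Each neck has positive probability, uniform in its position, of beginning such a configuration. This uses the independence of neck lengths and arities from \cref{thm:harris-general}\labelcref{it:harris-general-reduced}: an arm has length $\geq L$ with probability $\theta_1^{L}>0$. Applying the second Borel--Cantelli lemma along an infinite antichain of reduced-skeleton vertices, as in \cref{thm:regime-obstructions-general}, we obtain almost surely an $L$-star for every $L$.

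\emph{Step 3: transfer under a $D$-quasi-isometry.} Suppose $\psi\colon\cT\to\cT'$ is a $D$-quasi-isometry and let $\bar\psi$ be a quasi-inverse. Fix $R\gg D^2 n$ and $L\gg D^4R$, and take an $L$-star at $v$. Let $S'$ be the convex hull of $\psi(B_\cT(v,R))$ in $\cT'$, enlarged by its $2D$-neighbourhood. Then $S'$ is finite and connected, and it avoids the root once $v$ is deep enough. We then show that the components of $\cT'\setminus S'$ are in bijection with the $m+2$ arms, by proving three things.

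\textbf{(a)} The image of the part of an arm at distance between $2DR$ and $L$ from $v$ is $2D$-chained and stays outside $S'$, so it lies in a single component.

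\textbf{(b)} Distinct arms land in distinct components. Otherwise a $\cT'$-geodesic between far points on the two images avoids $S'$. Applying \cref{thm:tree-stability} to $\bar\psi$, the $\cT$-geodesic between the corresponding far arm points, which passes through the cluster near $v$, would then lie near the $\bar\psi$-image of a path missing $\psi(B_\cT(v,R))$, a contradiction.

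\textbf{(c)} Every component $C$ receives an arm. Since $C$ is infinite and connected, it contains a point $z$ at distance between $D^2R$ and $L/D^2$ from $S'$. Then $\bar\psi(z)$ lies at distance in $(R/D,\,L)$ from $v$ and outside the cluster, so it lies on an arm. That arm's image lands in $C$ by (a).

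\emph{Conclusion.} The bijection gives $m+2=2+\sum_{u\in S'}(\chi'_u-1)$, so $m\in\Lambda_{\theta'}$, a contradiction. Since the existence of arbitrarily large stars is an almost sure event that makes the argument work for every $D$ simultaneously, we get $\cT\not\simeq\cT'$ almost surely.

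I expect the main obstacle to be step 3(b)--(c): the constant bookkeeping that makes the component count exactly $m+2$. In particular, the hull $S'$ must not swallow a far portion of an arm. I would try first to define $S'$ as the hull of $\psi$ of a ball, and control its diameter by $DR+D$ using the tree structure.
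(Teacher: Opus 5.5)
Your strategy is the paper's in substance. Its proof also counts the edges leaving a bounded subtree of $\cT'$ far from the root, and matches these edges with the arms of a long star in $\cT$ through \cref{thm:tree-stability}. Where you use a hull neighbourhood, the paper uses a sphere $S_{\cT'}(\psi(v),r)$, of size $2+\sum_{x\in B_{\cT'}(\psi(v),r-1)}(\chi'_x-1)\in 2+\Lambda_{\theta'}$. The paper also dispenses with your cluster. If every generator $k-1$ of $\Lambda_\theta$ lay in $\Lambda_{\theta'}$, then $\Lambda_\theta\subseteq\Lambda_{\theta'}$. So a single split of some arity $k$ with $\theta_k>0$ and $k-1\notin\Lambda_{\theta'}$ suffices. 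Your version can be made to work, but the bookkeeping you flagged is wrong in three places as written.

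First, in (a), images that are $2D$-chained and lie outside $S'$ need not lie in one component. Two components of $\cT'\setminus S'$ can hang off the same vertex of $S'$, and then their first vertices are at distance $2$. You need the images to lie at distance greater than $D$ from $S'$.
\begin{itemize}
\item The bound $S'\subseteq B_{\cT'}(\psi(v),DR+3D)$ does not give this at arm distance $2DR$, because $\psi$ may contract by the factor $D$.
\item Instead, note that the hull is the union of the geodesics $[\psi(a),\psi(b)]$ with $a,b\in B_\cT(v,R)$. Then \cref{thm:tree-stability} and convexity of balls put $S'$ inside the $3D$-neighbourhood of $\psi(B_\cT(v,R))$.
\item Consequently $d(\psi(x),S')\leq D$ forces $d(x,v)\leq R+5D^2$, and (a) holds for the part of each arm beyond distance $R+5D^2$.
\end{itemize}

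Second, this lower threshold is what links (c) to (a). Choose $\bar\psi(z)$ with $d(\psi(\bar\psi(z)),z)\leq D$. Your $z$ then only gives $d(\bar\psi(z),v)\geq DR-2$. This exceeds $R+5D^2$ for $D\geq2$ and $R\gg D^2$, but it does not exceed $2DR$, so your stated threshold in (a) is never reached.

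Third, (c) uses the claim in Step 1 that every component is infinite. This fails when $S'$ meets the initial neck of $\cT'$, since the component containing the root is then a finite path. Choose the star so that the depth of $\psi(v)$ exceeds that of the first split of $\cT'$ by more than $DR+3D$. This is possible because infinitely many stars exist and their centres tend to infinity. The paper's counterpart is the requirement that $\psi(v)$ be far from the root, which it obtains from two stars with distant centres.
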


\paragraph*{\bf Obstructions} Before proving \cref{thm:chain-separation}, we establish a deterministic obstruction lemma. Recall
that we write $\chi_x$ for the number of children of a vertex $x$.
We extend the notion of a \emph{branching semigroup} to rooted locally finite trees $\cS$
without leaves by setting
\[
  \Lambda(\cS)\coloneqq\left\langle \chi_x-1\colon x\in\cS\right\rangle.
\]
For the Galton--Watson trees considered below, this agrees almost surely with the branching
semigroup of the offspring law.
For $L\in\bbN$ and $n\geq3$, a vertex $v$ of a tree $T$ is the \emph{centre of an $n$-star of
length $L$} if $B_T(v,L)$ is obtained by identifying one endpoint of $n$ disjoint paths of
length $L$.

\begin{lemma}[Deterministic star obstruction]\label{thm:chain-star-obstruction}
  Let $\cS$ be a rooted locally finite tree without leaves, let $n\geq3$ satisfy
  $n-2\notin\Lambda(\cS)$, and let $D\in\bbN$ with $D\geq1$. There are $L,L'\in\bbN$ such that,
  if a rooted locally finite tree $\cS'$ contains two $n$-stars of length $L$ whose centres are
  at distance at least $L'$, then there is no $D$-quasi-isometry $\cS'\to\cS$.
\end{lemma}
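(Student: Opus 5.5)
We argue by contradiction. Suppose $\psi\colon\cS'\to\cS$ is a $D$-quasi-isometry and $\cS'$ contains two $n$-stars of length $L$ centred at $c_1,c_2$ with $d(c_1,c_2)\geq L'$; we will choose $L$ and $L'$ depending only on $D$ and $n$. The plan is to show that the image of a long star is coarsely a star in $\cS$. Such a star is a finite subtree with $n$ long arms, and every vertex of it adjacent to more than two arm directions is a genuine branch point of $\cS$. A counting argument on excesses should then force $n-2\in\Lambda(\cS)$, which contradicts the hypothesis. We need two stars far apart only because the root of $\cS$ is special: a vertex of $\cS$ has $\chi_x+1$ neighbours, except the root, which has $\chi_{\troot}$. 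At most one of the two star images can lie near the root, so we work with the star $c_i$ whose image satisfies $d(\psi(c_i),\troot)>R$ for a suitable $R=R(D,L)$. This works once $L'>D(2R+D)$.

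\emph{Step 1 (coarse image of a star).} Let $a_1,\dots,a_n$ be the arm endpoints at distance $L$ from $c=c_i$. By \cref{thm:tree-stability}, each geodesic $[\psi(c),\psi(a_j)]$ lies in the $D$-neighbourhood of the image of the arm. The lower quasi-isometry bound shows that for $j\neq k$ the geodesics $[\psi(c),\psi(a_j)]$ and $[\psi(c),\psi(a_k)]$ share only an initial segment of length at most some $r_0=r_0(D)$, roughly $3D^2$. The reason is that points of distinct arms at distance $t$ from $c$ are $2t$ apart in $\cS'$, so their images cannot be close once $t$ exceeds a constant times $D$. Consequently $Y=\bigcup_j[\psi(c),\psi(a_j)]$ is a finite subtree of $\cS$ with $n$ leaves. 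All its branching occurs in $B_{\cS}(\psi(c),r_0)$, and each leaf lies at distance at least $L/D-D\gg r_0$ from $\psi(c)$.

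\emph{Step 2 (no extra branching near the star).} This is the main obstacle. We must show that every branch of $\cS$ leaving $Y$ inside $B_{\cS}(\psi(c),r_0+1)$ would contradict coarse surjectivity. Since $\cS$ has no leaves, such an extra child $x$ of a vertex $y$ of $Y$ starts an infinite ray whose subtree avoids $Y$ beyond $y$. Coarse density gives points $p\in\cS'$ whose images lie within $D$ of this ray at distance about $R_1$ from $\psi(c)$, with $R_1$ of order $D^2 r_0$ and $L\gg D R_1$. Such a $p$ lies within $DR_1+D^2$ of $c$, and hence inside the star ball, which is exactly the union of the arms. But points on an arm at distance $t\gg r_0$ from $c$ map within $D$ of the corresponding leg of $Y$, far beyond $y$. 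Points with small $t$ map near $\psi(c)$, and hence too close to be at distance $R_1$ along the new ray. Tuning the constants, we expect to conclude that the ball $B_{\cS}(\psi(c),r_0+1)$ contains no edges of $\cS$ outside $Y$. We will try to make this quantitative by following the pattern of \cref{thm:bush-separation}.

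\emph{Step 3 (counting).} The vertices of $Y$ lie away from the root, since $R>r_0+D$. So each vertex $y\in Y$ has degree $\chi_y+1$ in $\cS$, and by Step 2 this equals its degree in $Y$. A finite tree with $n$ leaves satisfies $\sum_{y\text{ internal}}(\deg y-2)=n-2$, and each summand equals $\chi_y-1\in\Lambda(\cS)$. Therefore $n-2\in\Lambda(\cS)$, a contradiction. Finally, we fix $r_0$, then $R$ and $R_1$, then $L$, and then $L'$, all depending only on $D$ (and on $n$ through the count), which yields the constants asserted in the lemma.
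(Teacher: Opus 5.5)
Your architecture matches the paper's. Step~1 is the paper's median argument via \cref{thm:tree-stability}, showing that the $n$ arms give $n$ distinct directions at $\psi(c)$. Your Step~3 leaf count, applied to a ball around $\psi(c)$ that avoids the root, is the sphere identity \cref{eq:sphere-count}. One small correction: the branch points of $Y$ lie within $3D^3+2D$ of $\psi(c)$; the bound $3D^2$ is the source-side distance $d(c,u_j)$.

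The genuine gap is Step~2. It carries all the remaining content, and you leave it at the level of ``we expect to conclude''. The one concrete claim it rests on is that points of an arm at distance $t$ from $c$ map within $D$ of the corresponding leg of $Y$. \Cref{thm:tree-stability} does not provide this: it gives the opposite inclusion, namely that the geodesic lies near the image of the arm, not that the image lies near the geodesic. The claim also cannot be expected with constant $D$, since an arm's image may make excursions of size of order $D^3$ off its leg. In addition, you only treat extra \emph{children} of $y$. An extra neighbour of $y$ may be its parent, and then you need $d(\psi(c),\troot)>R_1+r_0+1$ to find far points in that branch; this must be built into your choice of $R$.

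Step~2 can be repaired in two ways.
\begin{itemize}
\item \emph{Prove reverse stability.} If $\psi(z)$ lies off the leg with nearest point $\pi$, the arm's image must enter and leave the component of $\cS\setminus\set{\pi}$ containing $\psi(z)$ by jumps of length at most $2D$. This gives source points on both sides of $z$ whose images lie within $D$ of $\pi$, and the lower quasi-isometry bound then puts $\psi(z)$ within $3D^3+2D$ of the leg. With this bound no case split on $t$ is needed. The whole image of the star lies within $3D^3+2D$ of $Y$, whereas a point at distance $R_1>3D^3+3D$ from $y$ in the extra branch is at distance $R_1$ from $Y$. Its coarse preimage lies in the star ball once $L$ is large, which gives the contradiction.
\item \emph{Follow the paper, which avoids reverse stability.} Extend each point of $S_{\cS}(w,k)$ along a geodesic to distance $k'$ and take coarse preimages; these lie on arms because $L$ is large. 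If two of them, $x$ and $x'$, lie on the same arm, apply \cref{thm:tree-stability} to $[x,x']$ at the median of $w,\psi(x),\psi(x')$. This forces the nearer preimage to lie within $D(k+3D)+D^2$ of $v$, which contradicts $k'\le D\,d(v,x)+2D$.
\end{itemize}
Either route yields $\abs{S_{\cS}(w,k)}\le n$, which is exactly what your Step~2 is meant to supply.
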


\begin{proof}
  Write $S_{\cS}(w,r)$ for the set of vertices at distance $r$ from $w$.
  For $w\in\cS$ and $\rho\in\bbN_0$ with $\rho<d(w,\troot)$, counting the edges leaving
  $B_{\cS}(w,\rho)$ gives
  \begin{equation}\label{eq:sphere-count}
    \abs{S_{\cS}(w,\rho+1)}
    =2+\sum_{x\in B_{\cS}(w,\rho)}\bigl(\chi_x-1\bigr)
    \in 2+\Lambda(\cS).
  \end{equation}
  Choose integers $k,k',L,L'$ successively so that
  \[
    k>3D^3+2D,\qquad
    k'>D\bigl(D(k+3D)+D^2\bigr)+2D,\qquad
    L>D(k'+D)+D^2,\qquad
    L'>D(2k'+D).
  \]
  Suppose that $\psi\colon\cS'\to\cS$ is a $D$-quasi-isometry. The choice of $L'$ ensures that
  the image $w=\psi(v)$ of one of the two star centres satisfies $d(w,\troot)>k'$. Write $a_i$
  for the endpoint of its $i$th arm. For $i\neq j$, let $m$ be the median of
  $w,\psi(a_i),\psi(a_j)$.
  By \cref{thm:tree-stability}, we can choose $u_i\in[v,a_i]$ and $u_j\in[v,a_j]$ whose images
  lie within $D$ of $m$. The two arms meet only at $v$, so the quasi-isometry bounds give
  \[
    \begin{aligned}
      d(v,u_i)&\leq d(u_i,u_j)
      \leq D\bigl(d(\psi(u_i),\psi(u_j))+D\bigr)\leq3D^2,\\
      d(w,m)&\leq Dd(v,u_i)+2D\leq3D^3+2D<k.
    \end{aligned}
  \]
  The $n$ arms therefore determine distinct vertices of $S_{\cS}(w,k)$.

  Conversely, extend each $g\in S_{\cS}(w,k)$ along its geodesic from $w$ to a vertex $y$ at
  distance $k'$, and choose a coarse preimage $x$ with $d(\psi(x),y)\leq D$. The choice of $L$
  puts $x$ in $B_{\cS'}(v,L)\setminus\set v$, so $x$ lies on a unique arm. Suppose that distinct
  $g,g'$ are assigned to the same arm, with extensions $y,y'$ and coarse preimages $x,x'$.
  We may assume that $x$ is nearer to $v$, and put $t=d(v,x)$. Since
  $d(y,y')\geq2(k'-k)$, the median $m'$ of $w,\psi(x),\psi(x')$ satisfies
  \[
    2d(w,m')=d(w,\psi(x))+d(w,\psi(x'))-d(\psi(x),\psi(x'))\leq2k+4D.
  \]
  By \cref{thm:tree-stability}, there is $z\in[x,x']$ with $d(\psi(z),m')\leq D$. Since $x$
  lies between $v$ and $z$, the lower quasi-isometry bound gives
  \[
    t\leq d(v,z)\leq D\bigl(d(w,\psi(z))+D\bigr)\leq D(k+3D)+D^2.
  \]
  The upper bound also gives $k'=d(w,y)\leq d(w,\psi(x))+D\leq Dt+2D$, contradicting the
  choice of $k'$. Thus $\abs{S_{\cS}(w,k)}=n$, whereas
  \cref{eq:sphere-count} gives $\abs{S_{\cS}(w,k)}\in2+\Lambda(\cS)$. This contradicts
  $n-2\notin\Lambda(\cS)$.
\end{proof}

\begin{proof}[Proof of \cref{thm:chain-separation}]
  Exchanging the two laws if necessary, we may assume that
  $\Lambda_\theta\nsubseteq\Lambda_{\theta'}$. Hence there is some $k\geq2$ such that
  $\theta_k>0$ and $k-1\notin\Lambda_{\theta'}$. Almost surely,
  $\Lambda(\cT')=\Lambda_{\theta'}$: every supported offspring number occurs infinitely often
  along the ray $(1^d)_{d\geq0}$ by the second Borel--Cantelli lemma. Put $n\coloneqq k+1$.
  For each $L$, consider sufficiently separated vertices $1^d$. The event that $1^d$ has $k$
  children and that the $L$ vertices along its parent arm and each child arm are unary has a
  fixed positive probability, and these events are independent. The second Borel--Cantelli
  lemma therefore gives infinitely many $n$-stars of length $L$ almost surely. Intersecting over
  $L\in\bbN$, we obtain such stars with arbitrarily distant centres.

  Fix $D\geq1$. A pair with the lengths and separation given by
  \cref{thm:chain-star-obstruction} for $\cS=\cT'$ rules out a $D$-quasi-isometry
  $\cT\to\cT'$. Taking the countable intersection over $D\in\bbN$ proves the proposition.
\end{proof}

\subsection{Universality in the bushy and chain regimes}

We now compare two trees in the same regime. The relabelling constructions give a common
label law whose potential tends to zero as the geometric scale $D$ increases. We choose the
binary profiles independently of $D$, apply \cref{thm:markov-matching} at the admissible exponent
$5/2$, using $\lambda_{5/2}<1$ from \cref{sec:admissible-exponent}, and then glue the marked
comparisons between matched pieces. Since the profiles and arity laws are fixed, the matching
constants are independent of $D$.

\begin{proposition}[Universality in the bushy regime]\label{thm:hairy-general}
  Let $\theta$ and $\theta'$ be finitely supported supercritical offspring distributions
  with $\theta_0>0$ and $\theta_0'>0$, and let $\cT,\cT'$ be independent Galton--Watson trees
  with these offspring distributions, conditioned on having infinite diameter. Then almost
  surely there is a root-preserving quasi-isometry $\cT\to\cT'$.
\end{proposition}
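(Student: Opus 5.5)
The proof follows the template of \cref{thm:hairy}, with the i.i.d.\ matching theorem replaced by \cref{thm:markov-matching} applied to binary encodings.

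\emph{Common label law.} Fix an integer $D\geq D_3(\theta,\theta')$ and let $\mu_D$, $\ell_k$, $\ell'_k$ be given by \cref{thm:cross-relabel}. Enlarge the conditioned product space by independent uniform families $(U(w))$ and $(U'(w))$, and set
\[
x(w)=\ell_{k(w)}\bigl(S(w),U(w)\bigr),\qquad
x'(w)=\ell'_{k'(w)}\bigl(S'(w),U'(w)\bigr)
\]
on the two reduced skeletons. By \cref{thm:product-form}, applied with $\lambda=\mu_D$ to each law separately, the pairs $(x(w),k(w))$ are i.i.d.\ with law $\mu_D\times\widetilde\nu$, and the pairs $(x'(w),k'(w))$ are i.i.d.\ with law $\mu_D\times\widetilde\nu'$. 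The two fields are independent.

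\emph{Binary encoding.} By \cref{thm:full-support}, $\widetilde\nu_2,\widetilde\nu'_2>0$, so $1$ is a common atom and $\Lambda=\bbN_0$ on both sides. Take the common core $\{2\}$ and build every $C_k$ by repeated two-leaf replacement, as in \cref{sec:common-presentations}; these profiles do not depend on $D$. Every core profile has a depth-one leaf, so \cref{thm:bounded-return} gives $g=1$ and $H=2h_C$. Inserted vertices receive the label $v_0$, which plays the role of the distinguished label $0$. By \cref{thm:profile-encoding-law}, the encoded fields are binary profile processes with label law $\mu_D$, and the hypotheses of \cref{thm:markov-matching} hold with $T$, $H$ and $B\leq\max_\sigma\nu_\sigma(2)^{-5/2}$ independent of $D$. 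Positivity \labelcref{it:markov-positivity} comes from \cref{thm:fresh-positive}, and the selected-transition refinement from \cref{thm:atomic-normalisation}.

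\emph{Smallness of $\zeta$.} Because $\mu_D(v_0)\geq\tfrac12$ by \cref{thm:cross-relabel}\labelcref{it:cross-relabel-eta}, the final clause of \cref{thm:markov-matching} gives
\[
\bbP\bigl(\cM_\infty(F_L,F_R)^c\bigr)\leq 2K\,\eta_{G_D,5/2}(\mu_D)\leq 2Ke^{-cD^2}
\]
once $e^{-cD^2}\leq\eps/2$, with $K$ and $\eps$ independent of $D$. This step is the one most worth checking. The constants must not depend on $D$, and they do not, since the label law enters only through $\zeta$. One also has to confirm that $b(v_0)<1$ is harmless: the finite-type hypotheses are exactly the verified ones.

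\emph{Transfer and conclusion.} On the matching event we have a rooted automorphism $\pi$ with $d_{G_D}\bigl(x(\pi w),x'(w)\bigr)\leq1$ at every vertex, whether original or inserted. Each encoded piece is either in the support of its conditional law or is the one-vertex piece. Hence \cref{thm:cross-relabel}\labelcref{it:cross-relabel-qi} gives a $3^{31}D^{32}$-marked quasi-isometry between the pieces at $\pi w$ and $w$, in both directions. \Cref{thm:profile-transfer} then yields a root-preserving quasi-isometry between the assemblies, with constant $72(h_C+1)^2 3^{62}D^{64}$. By \cref{thm:shape-iid}, extended to general arity, these assemblies are isometric to $\cT$ and $\cT'$ respecting roots. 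The event that such a quasi-isometry exists depends only on the trees, so its probability is the same in the enlarged space. Its failure probability is at most $2Ke^{-cD^2}$, and letting $D\to\infty$ shows that it exists almost surely.
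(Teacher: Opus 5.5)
Your proposal is correct and follows essentially the same route as the paper: you obtain a common label law $\mu_D$ from \cref{thm:cross-relabel} and \cref{thm:product-form}, build binary encodings from the common core arity $2$, and apply Markov matching with $D$-independent constants. The paper packages your checks of \labelcref{it:markov-positivity}, \labelcref{it:markov-returns} and the selected transitions as \cref{thm:common-semigroup-matching}, and uses $\zeta\leq2\eta$ in place of the $\eta$-clause; both proofs then finish with \cref{thm:profile-transfer} and $D\to\infty$. The only thing you leave implicit is the normalisation $J\leq J'$ required by \cref{thm:cross-relabel}; you may assume it after exchanging the two trees, since a root-preserving quasi-isometry can be inverted to a root-preserving one.
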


\begin{proof}
  Write $J$ and $J'$ for the largest supported offspring numbers. Exchanging
  $(\theta,\cT)$ and $(\theta',\cT')$ if necessary, we may assume that $J\leq J'$. By
  \cref{sec:common-generators}, the reduced arity laws have the common core arity $2$. Choose the
  balanced $k$-leaf profile for every supported arity $k$, successively dividing its leaves into
  groups of $\lfloor k/2\rfloor$ and $\lceil k/2\rceil$. Every such profile is obtained by repeated
  leaf replacements with the two-leaf profile, so these choices give a common presentation
  as in \cref{sec:common-presentations}. Set
  $h\coloneqq\lceil\log_2\max\{J,J'\}\rceil$, which bounds their heights.

  Fix a sufficiently large integer $D$. Associate independent uniform random variables in $[0,1)$
  at the vertices on both sides, independent of one another and of the trees, to apply the maps of
  \cref{thm:cross-relabel}.
  By \cref{thm:cross-relabel}\labelcref{it:cross-relabel-law}, the maps on both sides satisfy the
  hypothesis of \cref{thm:product-form} with common label law $\mu_D$. Thus the label--arity
  pairs are i.i.d.\ with laws $\mu_D\times\widetilde\nu$ and
  $\mu_D\times\widetilde\nu'$, respectively. By \cref{thm:profile-encoding-law}, the binary
  encodings have the corresponding Markov laws, started at the types assigned to original
  vertices. We take the one-vertex label $v_0$ as the distinguished label $0$.

  The reduced arity laws and the chosen profiles satisfy the hypotheses of
  \cref{thm:common-semigroup-matching}. Their supports and the positive probabilities of
  arity $2$ are fixed by the offspring laws, so the proposition's constants are independent
  of $D$.
  Moreover, \cref{thm:cross-relabel}\labelcref{it:cross-relabel-eta} gives, for some $c>0$
  depending only on those laws,
  \[
    \eta_{G_D,5/2}(\mu_D)\leq e^{-cD^2},\qquad \mu_D(v_0)\geq\tfrac12,
  \]
  and therefore $\zeta_{G_D,5/2}(\mu_D)\leq2e^{-cD^2}$. Thus
  \cref{thm:common-semigroup-matching} applies for every sufficiently large $D$. The two
  entire binary fields can therefore be matched with probability at least $1-Ke^{-cD^2}$,
  for a constant $K$ depending only on $\theta$ and $\theta'$.

  For realisations in this this matching event, \cref{thm:cross-relabel}\labelcref{it:cross-relabel-qi} gives a
  $3^{31}D^{32}$-marked quasi-isometry from each source piece to its matched target piece.
  This includes pairs containing an inserted one-vertex piece. Hence
  \cref{thm:profile-transfer}, with $K=3^{31}D^{32}$, gives a root-preserving
  $72\cdot3^{62}(h+1)^2D^{64}$-quasi-isometry $\cT\to\cT'$.

  Let $A$ be the event that the original pair admits a root-preserving quasi-isometry. The
  matching event is contained in $A$, and $A$ depends only on the trees. Its probability
  therefore equals its probability on the space enlarged by the uniform variables, so
  $\bbP(A)\geq1-Ke^{-cD^2}$ for every sufficiently large $D$. Letting $D\to\infty$ gives
  $\bbP(A)=1$.
\end{proof}

\begin{proposition}[Universality within a chain class]\label{thm:chain-general}
  Let $\theta$ and $\theta'$ be finitely supported supercritical offspring distributions
  in the chain regime with $\Lambda_\theta=\Lambda_{\theta'}$, and let $\cT,\cT'$
  be independent Galton--Watson trees with these offspring distributions. Then almost surely
  there is a root-preserving quasi-isometry $\cT\to\cT'$.
\end{proposition}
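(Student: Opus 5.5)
We follow the proof of \cref{thm:hairy-general}, replacing the shape relabelling by the quantised neck lengths of \cref{sec:general-chain} and the cross-law coupling by the chain coupling of \cref{thm:chain-coupling}. Since $\theta_0=\theta_0'=0$, every vertex lies on the skeleton, the reduced skeleton has arity law $\widetilde\nu$ (respectively $\widetilde\nu'$), and by \cref{eq:chain-joint} the pairs $(m(w),k(w))$ are i.i.d.\ with independent geometric neck length and arity. By \cref{sec:common-generators}, $\Lambda_{\widetilde\nu}=\Lambda_\theta=\Lambda_{\theta'}=\Lambda_{\widetilde\nu'}$. We therefore fix, independently of $D$, the atomic common core and the common profiles of \cref{sec:common-presentations}, with heights bounded by some $h$ depending only on the supports. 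We also fix $p_{\mathrm{core}}$ as the minimum of the finitely many core masses.

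Fix an integer $D$ large. On the first side we label $x(w)=\ell_D(m(w))$. On the second side we label $y(w)=\ell'(m'(w),U(w))$, using independent uniforms and the map $\ell'$ of \cref{thm:chain-coupling} for the parameters $\theta_1,\theta_1'$. Then \cref{thm:product-form}, with $L_k$ independent of $k$, makes the label--arity pairs i.i.d.\ with laws $p^{(D)}\times\widetilde\nu$ and $p^{(D)}\times\widetilde\nu'$. By \cref{thm:profile-encoding-law}, the binary encodings are the corresponding Markov profile processes, with inserted vertices labelled $0\in\bbN_0$ on the path graph $\mathsf P$. By \cref{thm:eta-bound}, $\eta_{\mathsf P,5/2}(p^{(D)})\leq16\theta_1^{D(D-5/2)}$, and $p^{(D)}_0\geq\tfrac12$, so $\zeta_{5/2}\leq2\eta_{5/2}\to0$. \Cref{thm:common-semigroup-matching} then yields an automorphism $\pi\in\Aut(\bbB)$ matching the two encoded fields with probability at least $1-K\zeta_{5/2}$, where $K$ is independent of $D$.

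On this event, we convert matched labels into marked comparisons of pieces; each piece is a path of neck length with entry and exit at its ends. Suppose two original pieces carry labels differing by at most one. As in the proof of \cref{thm:cross-law}, their lengths are comparable within the factor $\gamma_*D^2$, and the linear rescaling of \cref{thm:transfer} is a marked quasi-isometry with constant $O(\gamma_*D^2)$. The main obstacle is matching an original piece against an inserted one-vertex piece. Such a pairing forces the original label to be $0$ or $1$, hence its length is below $D^2$ up to the factor $\gamma_*$. The constant map from the path to a single vertex is then an $O(\gamma_*D^2)$-marked quasi-isometry, and the reverse map is marked with the same constant. This is the step to check carefully, including the boundary-randomised classes of $\ell'$ at label $0$, where $D'_1\leq\max(1,\gamma)D$.

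With uniform $K'=O(\gamma_*D^2)$ marked comparisons for all matched pieces, \cref{thm:profile-transfer} gives a root-preserving $72(h+1)^2K'^2$-quasi-isometry $\cT\to\cT'$ on the matching event. That event lies in the event $A$ that such a map exists, and $A$ depends only on the trees, so $\bbP(A)\geq1-K\zeta_{5/2}(p^{(D)})$ for all large $D$. Letting $D\to\infty$ gives $\bbP(A)=1$.
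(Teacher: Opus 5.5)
Your proposal is correct and follows the paper's proof essentially step for step: fixed atomic common-core profiles, $\ell_D$ on one side and the randomised chain-coupling map on the other, then \cref{thm:product-form} and \cref{thm:profile-encoding-law} to obtain the two Markov profile processes. It then applies \cref{thm:common-semigroup-matching} with $\zeta_{5/2}\le2\eta_{5/2}$ and $D$-independent constants, and finishes with \cref{thm:profile-transfer}. The only cosmetic difference is in the inserted one-vertex pieces. The paper regards each of them as a neck of length $1$, which receives label $0$ under both $\ell_D$ and $\ell'$ because $D_0'=1$ and $\beta_0=0$, so the adjacent-label length comparison covers those pairs directly; you instead treat them separately by constant maps, and both routes give $O(D^2)$-marked comparisons.
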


\begin{proof}
  The reduced arity laws have the same branching semigroup as the original laws, see
  \cref{sec:common-generators}. Write $\cA$ for its atoms. By \cref{thm:common-atoms},
  both reduced supports contain every arity $a+1$ with $a\in\cA$, and the atoms generate
  every supported shift. Choose profiles for these common arities and construct the remaining
  profiles by the leaf replacements of
  \cref{sec:common-generators}. We fix these choices throughout the proof and write $h$ for their
  maximum height.

  At a sufficiently large integer scale $D$, label the first tree's necks by $\ell_D$ and
  the second tree's necks by the randomised map of \cref{thm:chain-coupling}. Use independent
  uniform variables at the vertices, independent also of the two trees. By
  \cref{eq:chain-joint}, the geometric neck lengths are independent of the arities. The two
  label maps therefore satisfy the hypotheses of \cref{thm:product-form}, with common label law
  $p^{(D)}$ by \cref{eq:chain-class-law,thm:chain-coupling}. Hence the label--arity pairs are
  i.i.d.\ with product laws $p^{(D)}\times\widetilde\nu$ and
  $p^{(D)}\times\widetilde\nu'$. \Cref{thm:profile-encoding-law} then identifies their binary
  encodings with the two corresponding Markov laws, started at the types assigned to original
  vertices.

  The reduced arity laws and the chosen profiles satisfy the hypotheses of
  \cref{thm:common-semigroup-matching}. They are fixed independently of $D$, and every
  common core arity has a fixed positive probability under each law. Consequently the
  proposition's constants are independent of $D$. By
  \cref{thm:quantised-law,thm:eta-bound}, we have $p^{(D)}_0\geq\tfrac12$ for all sufficiently
  large $D$ and
  \[
    \eta_{\mathsf P,5/2}(p^{(D)})\leq16\theta_1^{D(D-5/2)}\longrightarrow0.
  \]
  Since $\zeta_{\mathsf P,5/2}(p^{(D)})\leq2\eta_{\mathsf P,5/2}(p^{(D)})$, the proposition
  matches the two entire binary fields with probability at least
  $1-2K\eta_{\mathsf P,5/2}(p^{(D)})$, for the same constant $K$ at every such scale.

  For realisations in this event, necks with equal or adjacent labels have comparable lengths by
  \cref{thm:chain-coupling}\labelcref{it:chain-coupling-classes,it:chain-coupling-ratio}.
  We regard each inserted one-vertex piece as a neck of length $1$ with label $0$.
  Rounding the affine maps between the endpoints gives $C(\theta,\theta')D^2$-marked
  quasi-isometries in both directions, with constant maps covering one-vertex pieces. By
  \cref{thm:profile-transfer}, there is a root-preserving
  $72(h+1)^2C(\theta,\theta')^2D^4$-quasi-isometry of the original trees. Let $A$ be the event
  that the original pair admits a root-preserving quasi-isometry with some finite constant.
  This event does not depend on $D$. The matching event at each such scale is contained in $A$, and
  adjoining the auxiliary variables does not change $\bbP(A)$. Therefore
  $\bbP(A)\geq1-2K\eta_{\mathsf P,5/2}(p^{(D)})$ for every sufficiently large $D$, and hence
  $\bbP(A)=1$.
\end{proof}

\subsection{Completion of the classification}

Recall that the possible branching semigroups are the additive submonoids of $\bbN_0$ generated
by a nonempty finite set of positive integers, and that $(\mathrm{C}_\Lambda)$ denotes the chain
class with branching semigroup $\Lambda$.

\begin{restate}{thm:trichotomy}
  All finite Galton--Watson realisations form the quasi-isometry class $(\mathrm{Fin})$, and no
  finite tree is quasi-isometric to an infinite tree.

  Let $\theta$ and $\theta'$ be finitely supported offspring distributions, each either
  supercritical or with $\theta_1=1$, and let $\cT,\cT'$ be independent Galton--Watson trees
  with these offspring distributions, conditioned on having infinite diameter. Every such
  distribution belongs to exactly one of the following classes:
  \begin{itemize}
    \item[(R)] $\theta_1=1$, the \emph{ray} class, in which $\cT$ is the ray.
    \item[(F)] $\theta_0=\theta_1=0$, the \emph{full tree} class, in which $\cT$ admits
      a quasi-isometry to the binary tree.
    \item[$(\mathrm{C}_\Lambda)$] $\theta_0=0<\theta_1<1$ and
      $\Lambda_\theta=\Lambda$, the \emph{chain} class indexed by the branching
      semigroups $\Lambda$.
    \item[(B)] $\theta_0>0$, the \emph{bushy} class.
  \end{itemize}
  Two independent realisations belonging to the same class almost surely admit a root-preserving
  quasi-isometry.
  If they belong to different classes, then they are almost surely not quasi-isometric.
\end{restate}

\begin{proof}[Proof of \cref{thm:trichotomy}]
  All finite metric spaces are quasi-isometric. A quasi-isometry from a bounded space has bounded
  image, and coarse density then forces its target to be bounded. This proves the assertions about
  $(\mathrm{Fin})$. For finite trees, the constant map to the target root is root-preserving.

  Suppose first that $\theta$ and $\theta'$ belong to the same class. In class (R), the two
  trees are the ray. In class (F), \cref{thm:bushy} gives root-preserving quasi-isometries
  from both trees to the binary tree, and hence a root-preserving quasi-isometry between them.
  In a class $(\mathrm{C}_\Lambda)$, \cref{thm:chain-general} gives such a map almost surely,
  and \cref{thm:hairy-general} does so in class (B).

  Suppose next that the two laws belong to different classes. If they belong to
  $(\mathrm{C}_\Lambda)$ and $(\mathrm{C}_{\Lambda'})$ with $\Lambda\neq\Lambda'$, then
  \cref{thm:chain-separation} gives $\cT\not\simeq\cT'$ almost surely. It remains to consider
  laws in different coarse regimes.

  Suppose one law belongs to class (R). Its realisation is the ray, while the other law is
  supercritical. By
  \cref{thm:regime-obstructions-general}\labelcref{it:gen-obstr-rays}, the other realisation almost
  surely contains three rays that pairwise meet only in their common initial vertex. It follows
  from \cref{thm:three-rays} that $\cT\not\simeq\cT'$ almost surely.

  Suppose one law belongs to class (F) and the other to a chain class. The chain realisation almost
  surely satisfies the thin-ball hypothesis of \cref{thm:bottleneck}, by
  \cref{thm:regime-obstructions-general}\labelcref{it:gen-obstr-thin}. It is therefore not
  quasi-isometric to $\bbB$, whereas the realisation in the full tree class is quasi-isometric
  to $\bbB$ by \cref{thm:bushy}. Transitivity of quasi-isometry gives $\cT\not\simeq\cT'$ almost
  surely.

  In the remaining cases, one law belongs to class (B) and the other to class (F) or a chain
  class. After exchanging their names, we may assume that $\theta_0>0=\theta_0'$. By
  \cref{thm:regime-obstructions-general}\labelcref{it:gen-obstr-bushes,it:gen-obstr-lines}, the
  realisation $\cT$ almost surely has bushes of unbounded depth, while there is a finite $r$ such
  that every vertex of $\cT'$ lies within distance $r$ of a line. Hence
  \cref{thm:bush-separation} gives
  $\cT\not\simeq\cT'$ almost surely.
\end{proof}

\subsection{Dependence on common arity weights}

We conclude by using the separation of chain classes in \cref{thm:chain-separation} to show
that the constants in \cref{thm:common-semigroup-matching} cannot be chosen independently of
the probabilities of the common core arities. This obstruction already occurs for profiles
of height at most two.

\begin{proposition}[A common-weight obstruction]\label{thm:core-weight-obstruction}
Use the two-leaf profile and the three-leaf profile obtained by replacing the left leaf of
the two-leaf profile by another copy of it. For $0\leq t<1$, put
\[
 \nu_L=\delta_2,\qquad \nu_{R,t}=t\delta_2+(1-t)\delta_3.
\]
For a countable graph $G$ with distinguished vertex $0$ and a probability law $\mu$ on its
vertices, let $\bbP_t$ be the joint law of the two profile processes with common label law
$\mu$ and these arity laws. There are no constants $K<\infty$ and $\eps>0$ such that
\[
 \mu(0)\geq\tfrac12,\quad \eta_{G,5/2}(\mu)\leq\eps
 \quad\Longrightarrow\quad
 \bbP_t\bigl(\cM_h(F_L,F_R)^c\bigr)\leq K\eta_{G,5/2}(\mu)
\]
for every such $G$ and $\mu$, every $h\geq0$ and every $0<t<1$. Nor do such constants exist
with $\cM_\infty(F_L,F_R)$ in place of $\cM_h(F_L,F_R)$.
\end{proposition}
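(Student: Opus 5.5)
Assume, for contradiction, that constants $K<\infty$ and $\eps>0$ exist. The aim is to find one graph $G$ and one family of label laws $\mu_D$ with $\mu_D(0)\geq\tfrac12$ and $\eta_{G,5/2}(\mu_D)\to0$, for which the failure probability under $\bbP_t$ stays bounded away from zero as $t\downarrow0$. The natural source of such laws is the chain model. Take $G=\mathsf P$, the path on $\bbN_0$, and $\mu=p^{(D)}$ from \cref{thm:quantised-law} for a fixed $\theta_1\in(0,1)$. By \cref{thm:eta-bound}, $\eta_{\mathsf P,5/2}(p^{(D)})\to0$ and $p^{(D)}_0\geq\tfrac12$ for large $D$. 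Fix one such $D$ with $\eta\leq\eps$. Then the hypothesis holds uniformly in $t$, and we need only show
\[
  \sup_{0<t<1}\bbP_t\bigl(\cM_\infty(F_L,F_R)^c\bigr)\geq c_0>K\eta_{\mathsf P,5/2}(p^{(D)})
\]
for this $D$, possibly after enlarging $D$. Since $\cM_\infty\subseteq\cM_h$, it suffices to bound $\bbP_t(\cM_h^c)$ from below for a fixed finite $h$. This also settles the version with $\cM_\infty$, because $\bbP(\cM_\infty^c)\geq\bbP(\cM_h^c)$.

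The key observation concerns the limit $t\downarrow0$. Then $\nu_{R,t}\to\delta_3$, and the right process tends in law to the encoding with only three-leaf profiles. That encoding corresponds to chain-regime trees with branching semigroup $\langle2\rangle$, while the left process corresponds to $\langle1\rangle$. On a fixed finite height $h$, the laws $\bbP_t$ converge in total variation to $\bbP_0$, because only finitely many arity draws matter and each equals $3$ with probability $(1-t)^{\#}\to1$. So
\[
  \bbP_t\bigl(\cM_h^c\bigr)\to\bbP_0\bigl(\cM_h^c\bigr),
\]
and it suffices to show that $\bbP_0(\cM_h^c)$ exceeds $K\eta$ for some $h$.

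This lower bound is obtained by a local obstruction rather than by the global separation theorem. Under $\bbP_0$, the inserted vertex of each three-leaf profile sits at depth one below an original vertex and carries label $0$. A matching would have to pair the right process's structure, where a neck is followed by a depth-one forced $0$ and then a further split, against the left process's structure, where each child is an original vertex. The plan is to find a finite configuration of positive $\bbP_0$-probability, bounded below independently of $D$, that no automorphism can match.

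The candidate mechanism is label $2$ (a long neck) on the right at a vertex whose depth parity is forced by the arity structure, facing a left vertex whose compatible neighbourhood is forced. This mirrors the odd-depth argument of \cref{sec:profile-obstruction}. The difficulty is that this naive event has probability of order $p^{(D)}_2$, which is comparable to $\eta$, so it cannot beat $K\eta$ for arbitrary $K$.

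This step is the main obstacle. The fix is to exploit \cref{thm:chain-separation} quantitatively. For $t=0$, the two infinite fields correspond to chain trees in different classes $(\mathrm{C}_{\langle1\rangle})$ and $(\mathrm{C}_{\langle2\rangle})$. Any full matching would glue, via \cref{thm:profile-transfer} and the level comparisons of \cref{thm:level}, into a quasi-isometry between them. That quasi-isometry has probability zero by \cref{thm:chain-separation}, so $\bbP_0(\cM_\infty)=0$. Continuity from above then yields an $h$ with $\bbP_0(\cM_h^c)\geq\tfrac12>K\eta$. Total-variation convergence transfers this bound to $\bbP_t$ for small $t>0$, contradicting the assumed bound.

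The delicate point to check is the gluing step. Matched labels may pair an original neck with an inserted one-vertex piece, and we must confirm that those pieces compare with bounded constants, as in the proof of \cref{thm:chain-general}. With labels $|k-k'|\leq1$ on the path this holds with constant $O(D^2)$, which is finite for the fixed $D$, so the resulting quasi-isometry is genuine.
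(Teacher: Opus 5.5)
Your proposal is correct and follows essentially the paper's proof. You realise the two profile processes, with $G=\mathsf P$ and $\mu=p^{(D)}$, as the encoded quantised neck labels of chain trees with laws $\theta_1\delta_1+(1-\theta_1)\delta_2$ and $\theta_1\delta_1+(1-\theta_1)\delta_3$; these have the same unary mass, so both label laws are $p^{(D)}$. You then deduce $\bbP_0(\cM_\infty)=0$ from \cref{thm:chain-separation} and \cref{thm:profile-transfer}, and transfer to small $t>0$ by coupling the finitely many arity draws below height $h$.

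Your direct ordering (continuity from above at $t=0$, then perturbing $t$) is just the contrapositive of the paper's argument. It relies on $\cM_\infty=\bigcap_h\cM_h$, which comes from K\H{o}nig's lemma, and you should state this explicitly.
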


\begin{proof}
  Fix $a\in(0,1)$. Consider the chain laws
  \[
    \theta=a\delta_1+(1-a)\delta_2,\qquad
    \theta'_t=a\delta_1+(1-a)\bigl(t\delta_2+(1-t)\delta_3\bigr),
    \quad 0\leq t<1.
  \]
  The neck law is geometric with ratio $a$ on both sides, independently of $t$. At an
  integer scale $D$, label the necks on both sides by $\ell_D$ and write $p^{(D)}$ for their
  common law on the path graph $\mathsf P$. This agrees with the cross-law labelling of
  \cref{thm:chain-coupling}: the two unary probabilities are equal to $a$, so $\gamma=1$,
  the cut points are $D_k'=D^k$ and all boundary probabilities $\beta_k$ vanish. Hence the
  randomised map for the second law is exactly $\ell_D$. By
  \cref{thm:quantised-law,thm:eta-bound},
  \[
    p^{(D)}_0\longrightarrow1,\qquad
    \eta_{\mathsf P,5/2}(p^{(D)})\longrightarrow0.
  \]
  The arity laws are $\nu_L$ and $\nu_{R,t}$, so \cref{thm:profile-encoding-law} identifies
  their binary label fields with the two processes in the statement.

  At $t=0$, the branching semigroups of the original independent trees are $\bbN_0$ and
  $2\bbN_0$. They are almost surely not quasi-isometric by \cref{thm:chain-separation}.
  A matching of their entire binary fields would give a quasi-isometry by
  \cref{thm:profile-transfer}: the neck comparison in the proof of
  \cref{thm:chain-general} does not require the branching semigroups to agree and also covers
  inserted one-vertex pieces. Hence
  $\bbP_0\bigl(\cM_\infty(F_L,F_R)\bigr)=0$ at every fixed sufficiently large $D$.

  For a fixed height $h$, couple the right arities at $t$ and at $0$ at every binary address.
  Their disagreement probability is $t$, so the finite label fields agree with probability at
  least $1-(2^{h+1}-1)t$. In particular,
  \[
   \bbP_t\bigl(\cM_h(F_L,F_R)^c\bigr)
   \longrightarrow \bbP_0\bigl(\cM_h(F_L,F_R)^c\bigr)
   \qquad(t\downarrow0).
  \]

  Suppose the proposed constants existed. Choose $D$ with
  $p^{(D)}_0\geq\tfrac12$, $\eta_{\mathsf P,5/2}(p^{(D)})\leq\eps$ and
  $K\eta_{\mathsf P,5/2}(p^{(D)})<\tfrac12$. Passing to $t=0$ at each fixed $h$ gives
  \[
    \bbP_0\bigl(\cM_h(F_L,F_R)\bigr)
    \geq1-K\eta_{\mathsf P,5/2}(p^{(D)})>\tfrac12.
  \]
  By the finite-branching matching argument in \cref{sec:completion-proof},
  $\cM_\infty(F_L,F_R)=\bigcap_h\cM_h(F_L,F_R)$. Continuity from above therefore gives
  $\bbP_0\bigl(\cM_\infty(F_L,F_R)\bigr)\geq\tfrac12$, a contradiction. An
  infinite-matching bound would imply the same finite-height bounds, so it is impossible as
  well.
\end{proof}

\section{Quasi-isometric embeddings}\label{sec:embedding-hierarchy}

We now use the classification to compare trees by quasi-isometric embeddings. We recall the
theorem from the introduction.

\begin{restate}{thm:embedding-hierarchy}
  Let $\theta$ be a finitely supported supercritical offspring distribution, and let $\cT$ be a
  Galton--Watson tree with offspring distribution $\theta$, conditioned on having infinite
  diameter. Almost surely, there are quasi-isometric embeddings
  \[
    \cT\hookrightarrow\bbB
    \quad\text{and}\quad
    \bbB\hookrightarrow\cT.
  \]
\end{restate}

The embedding into $\bbB$ follows from the bounded offspring support. For the reverse direction,
the property $\bbB\preccurlyeq X$ is preserved under quasi-isometry of $X$, by composition.
We may therefore choose a convenient offspring law within each chain class. For a bushy
realisation, it suffices to embed into its survival skeleton, since the inclusion of the skeleton
with its original unit edges is isometric. We first prove the pruning lemma that supplies a binary
subtree for the chosen representatives, then complete the proof of
\cref{thm:embedding-hierarchy}.

\subsection{Pruning a concentrated offspring law}

Fix an integer $J\geq24$ and put $m=\floor{J/2}+1$. We use the independent offspring
variables $(\chi_v)_{v\in\cN(J)}$ on the ambient word tree, including at words outside the
Galton--Watson realisation. Define the sets of vertices surviving successive pruning rounds by
\begin{align*}
  V_0&=\cN(J),\\
  V_{n+1}&=\left\{v\in\cN(J)\colon \chi_v=J,\quad
    \#\set{j\in\set{1,\ldots,J}\colon vj\in V_n}\geq m\right\},\\
  V_\infty&=\bigcap_{n\geq0}V_n.
\end{align*}
We call the vertices in $V_\infty$ \emph{retained}. From a retained vertex that belongs to
the realisation, we form its \emph{retained descendant tree} by keeping all retained children
and repeating this rule at each successive vertex.

\begin{lemma}[A regular subtree]\label{thm:concentrated-regular-subtree}
  Let $\theta$ be an offspring law supported on $\set{1,\ldots,J}$ with
  $\theta_J\geq7/8$, and let $\cT$ be its Galton--Watson tree. Its root is retained with
  probability at least $3/4$. Conditioned on this event, its retained descendant tree is a
  Galton--Watson tree with offspring supported on $\set{m,\ldots,J}$, and it contains a
  regular $m$-ary subtree rooted at the original root. Every vertex of this subtree has $J$
  children in $\cT$. Almost surely, a retained vertex occurs in $\cT$ at finite depth. In
  particular, $\cT$ almost surely contains an isometric copy of the binary tree.
\end{lemma}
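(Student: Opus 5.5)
The root-retention bound comes from a fixed-point recursion for $p_n=\bbP(\troot\in V_n)$. By induction on $n$, the events $\{v\in V_n\}$ depend only on the offspring variables in the depth-$n$ subtree below $v$, so they are identically distributed in $v$. For distinct children $vj$ they are independent, and they are independent of $\chi_v$. Hence
\[
  p_{n+1}=\theta_J\,\bbP\bigl(\mathrm{Bin}(J,p_n)\geq m\bigr),\qquad p_0=1 .
\]
The sets $V_n$ decrease with $n$, which follows by induction from $V_1\subseteq V_0$ and the monotonicity of the defining rule. By continuity from above, $\bbP(\troot\in V_\infty)=\lim_n p_n$. It therefore suffices to show that $g(p)=\theta_J\bbP(\mathrm{Bin}(J,p)\geq m)$ maps $[3/4,1]$ into itself, since the iterates then stay at least $3/4$. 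For $p\geq3/4$, the number of failures in $\mathrm{Bin}(J,p)$ is dominated by $\mathrm{Bin}(J,1/4)$. We need this count to exceed $J-m\approx J/2$ with probability at most $1-\frac{3/4}{7/8}=1/7$. A Hoeffding bound gives $\exp(-2J(1/4)^2)\leq e^{-3}<1/7$ for $J\geq24$, since the gap between $1/2$ and $1/4$ is at least $1/4$ up to a rounding correction of order $1/J$. This is presumably why $J\geq24$ appears in the statement. Checking the rounding of $m=\floor{J/2}+1$ carefully is a routine step.

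For the structure on the event $\{\troot\in V_\infty\}$, note that a vertex $v\in V_\infty$ lies in every $V_{n+1}$. So $\chi_v=J$, and for each $n$ at least $m$ children lie in $V_n$. Because the children number only $J$, finitely many, and the sets $V_n$ decrease, at least $m$ children lie in $V_\infty$. Since $\chi_v=J$, all children $v1,\dots,vJ$ belong to $\cT$ whenever $v$ does. Choosing $m$ retained children recursively then builds a regular $m$-ary subtree rooted at $\troot$, every vertex of which has $J$ children in $\cT$. Because $m\geq2$, this subtree contains an isometric binary tree: the induced metric of a subtree of a tree is its own graph metric.

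For the Galton--Watson claim, condition on $\troot\in V_\infty$. The subtrees below the children are independent and have the same law as the whole tree. The conditional law of the retained-children count at $\troot$ is $\mathrm{Bin}(J,p_\infty)$ conditioned to be at least $m$. Given which children are retained, their subtrees are independent and distributed as the whole tree conditioned on retention at the root. Formalising this conditioning on an event of the form $\bigcap_n$ is the main technical point. I would first do it at finite $n$, where the branching property is exact, and then pass to the limit.

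Finally, to find a retained vertex at finite depth, use the first-child ray $1^d$, which lies in $\cT$ because the offspring support excludes $0$. The events $\{1^d\in V_\infty\}$ are not independent, but they are for a sparse sequence if we use finite-depth approximations. A cleaner route uses the antichain $\set{1^{d}2}$, $d\geq0$, whose subtrees are disjoint. Each of these vertices lies in $\cT$ exactly when $\chi_{1^d}\geq2$, which happens infinitely often by Borel--Cantelli. Retention at such a vertex depends only on the variables in its own subtree, so the events are independent, each with probability at least $3/4$, and one occurs almost surely.
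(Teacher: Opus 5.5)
Your proposal is correct and follows the paper's proof essentially step for step. The shared steps are: the recursion $p_{n+1}=\theta_J\,\bbP(\mathrm{Bin}(J,p_n)\geq m)$ closed by showing that $[3/4,1]$ is invariant; the fixed-point description of $V_\infty$, which gives the regular $m$-ary subtree and the Galton--Watson structure; and independent trials at the vertices $1^d2$ along the first-child ray.

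The differences are minor:
\begin{itemize}
\item The paper uses Chebyshev, $\bbP(\mathrm{Bin}(J,3/4)<m)\leq3/J\leq1/8$, which is what the threshold $J\geq24$ is tuned to, rather than your Hoeffding bound. Your bound would in fact allow smaller $J$, and it needs no rounding correction.
\item The paper requires $\chi_{1^d}=J$ at the spine vertex, where you use $\chi_{1^d}\geq2$.
\item The paper needs no finite-$n$ limit for the conditioning. The reverse inclusion of the fixed-point identity makes it exact: a vertex with $\chi_v=J$ and $m$ children in $V_\infty\subseteq V_n$ lies in every $V_{n+1}$.
\end{itemize}
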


\begin{proof}
  The operation defining $V_{n+1}$ is nondecreasing in $V_n$. Since $V_1\subseteq V_0$,
  induction gives $V_{n+1}\subseteq V_n$ for every $n$. A vertex belongs to $V_n$ precisely
  when it begins a regular $m$-ary subtree of height $n$ whose vertices at depth less than
  $n$ all have offspring number $J$.

  Write $p_n=\bbP(\troot\in V_n)$, so $p_0=1$. Membership of $vj$ in $V_n$ depends only
  on offspring variables at words beginning with $vj$. These variables are independent for
  distinct children and independent of $\chi_v$. Each child belongs to $V_n$ with
  probability $p_n$, so
  \begin{equation}\label{eq:pruned-subtree-recursion}
    p_{n+1}=\theta_J\,\bbP\bigl(\operatorname{Bin}(J,p_n)\geq m\bigr).
  \end{equation}

  We show by induction that $p_n\geq3/4$. If $Y$ has the binomial law with parameters $J$
  and $3/4$, then its mean is $3J/4$ and its variance is $3J/16$. Since $Y<m$ implies
  $Y\leq J/2$, Chebyshev's inequality gives
  \[
    \bbP(Y<m)
    \leq\bbP\bigl(\abs{Y-3J/4}\geq J/4\bigr)
    \leq\frac{3J/16}{(J/4)^2}
    =\frac3J.
  \]
  The binomial upper tail is nondecreasing in its success probability. Thus, if
  $p_n\geq3/4$, \cref{eq:pruned-subtree-recursion} yields
  \[
    p_{n+1}\geq\theta_J\left(1-\frac3J\right)
    \geq\frac78\cdot\frac78
    =\frac{49}{64}>\frac34.
  \]
  This proves the induction.

  Continuity of probability for the decreasing events $\set{\troot\in V_n}$ gives
  \[
    p\coloneqq\bbP(\troot\in V_\infty)
    =\lim_{n\to\infty}p_n\geq\frac34.
  \]
  We next check that surviving every finite pruning round leaves enough children to continue
  indefinitely. For a fixed vertex $v$, the sets
  $\set{j\in\set{1,\ldots,J}\colon vj\in V_n}$ decrease and are finite, so their
  cardinalities converge to the cardinality of their intersection. Therefore
  \begin{equation}\label{eq:pruned-subtree-fixed-point}
    v\in V_\infty
    \quad\Longleftrightarrow\quad
    \chi_v=J\quad\text{and}\quad
    \#\set{j\in\set{1,\ldots,J}\colon vj\in V_\infty}\geq m.
  \end{equation}
  For the forward implication, each finite-round count is at least $m$. For the reverse
  implication, $m$ retained children belong to every $V_n$, so $v$ belongs to every
  $V_{n+1}$ and hence to $V_\infty$.

  By \cref{eq:pruned-subtree-fixed-point}, the retained descendant tree of any retained
  vertex in $\cT$ has at least $m$ children at every vertex. Choosing its first $m$ retained
  children at each vertex constructs a regular $m$-ary subtree. All chosen children belong
  to $\cT$, since their parents have $J$ children there.

  We also identify the branching law after pruning. The indicators
  $\mathbf1_{\{vj\in V_\infty\}}$, $1\leq j\leq J$, are independent Bernoulli variables
  with parameter $p$, and are independent of $\chi_v$. By
  \cref{eq:pruned-subtree-fixed-point}, conditioned on $v$ being retained, the number of
  retained children has the binomial law with parameters $J,p$ conditioned to be at least
  $m$. Its probabilities are
  \[
    \widehat\theta_r
    =\frac{\binom Jr p^r(1-p)^{J-r}}
      {\displaystyle\sum_{k=m}^J\binom Jk p^k(1-p)^{J-k}}
    \qquad(m\leq r\leq J).
  \]
  Conditioned on which children are retained, the descendant offspring fields of the retained
  children are independent, each with the original law conditioned on its own root being retained.
  The further condition that $v$ is retained only specifies $\chi_v=J$ and that at least
  $m$ children are retained. Consequently, after listing retained children in their original
  order, their retained descendant trees are independent copies of the retained tree
  conditioned on its root being retained. This proves the Galton--Watson assertion.

  Since every vertex has at least one child, all vertices $1^n$, $n\geq0$, belong to
  $\cT$. Consider the events
  \[
    A_n=\set{\chi_{1^n}=J,\ 1^n2\in V_\infty}\qquad(n\geq0).
  \]
  On $A_n$, the retained vertex $1^n2$ belongs to $\cT$. The events $A_n$ depend on
  disjoint sets of offspring variables: the respective spine vertices and their disjoint
  second-child descendant trees. They are therefore independent, with common probability
  $\theta_Jp\geq21/32$. By the second Borel--Cantelli lemma, almost surely infinitely
  many $A_n$ occur. More precisely, for
  $\tau=\min\set{n\geq0\colon A_n\text{ occurs}}$, with $\tau=\infty$ if none occurs,
  independence gives
  \[
    \bbP(\tau\geq N)=(1-\theta_Jp)^N
    \leq\left(\frac{11}{32}\right)^N\qquad(N\in\bbN_0).
  \]
  Thus a retained descendant tree occurs at the almost surely finite depth $\tau+1$.

  At any successful vertex, select two of the $m\geq2$ retained children at every successive
  vertex. The resulting binary tree is a subtree of $\cT$ with its original unit edges,
  so its inclusion is isometric.
\end{proof}

\subsection{The embeddings and strict hierarchy}

\begin{proof}[Proof of \cref{thm:embedding-hierarchy}]
  Choose $N\geq2$ bounding the offspring support. The word representation gives an isometric
  inclusion $\cT\hookrightarrow\cN(N)$, and \cref{thm:bushy} gives $\cN(N)\simeq\bbB$.
  Composing these maps proves $\cT\preccurlyeq\bbB$.

  For the reverse direction, the full tree case follows directly from \cref{thm:bushy}.
  Next suppose that $\cT$ belongs to a chain class $(\mathrm{C}_\Lambda)$, and let $\cA$
  be the finite set of positive atoms of $\Lambda$. These atoms generate $\Lambda$, by
  \cref{thm:common-atoms}.
  Since $\Lambda$ contains arbitrarily large positive multiples of any one of its atoms,
  we may choose an integer $J$ such that
  \[
    J\geq24,\qquad J>1+\max\cA,\qquad J-1\in\Lambda.
  \]
  Choose $0<\eps\leq1/8$ and a probability law $\nu$ giving positive mass to each element
  of $\set1\cup\set{a+1\colon a\in\cA}$ and no mass elsewhere. Define a representative
  offspring law by
  \[
    \theta'_J=1-\eps,\qquad
    \theta'_k=\eps\nu_k\quad(1\leq k<J),\qquad
    \theta'_0=0,
  \]
  with zero mass above $J$. Its branching semigroup is
  \[
    \Lambda_{\theta'}=\langle\cA\cup\set{J-1}\rangle=\Lambda,
  \]
  and $0<\theta'_1<1$, so it belongs to the chain class $(\mathrm{C}_\Lambda)$.
  In particular, only offspring numbers whose excess lies in $\Lambda$ receive mass.
  Since $\theta'_J\geq7/8$, \cref{thm:concentrated-regular-subtree} shows that its
  realisation $\cT'$ almost surely contains an isometric copy of $\bbB$.

  Take $\cT'$ independently of $\cT$. By \cref{thm:trichotomy}, the two realisations are
  almost surely quasi-isometric. On the joint probability-one event where both assertions hold,
  we compose the binary-tree inclusion with a quasi-isometry $\cT'\to\cT$. Thus almost
  every $\cT$ admits a quasi-isometric embedding of $\bbB$. Since $\Lambda$ was arbitrary,
  this covers all chain classes.

  Finally, suppose that $\cT$ is in the bushy regime and let $\cT^*$ be its survival
  skeleton with the original unit edges. By \cref{thm:harris-general,thm:full-support},
  the skeleton has a finitely supported supercritical offspring law $\widetilde\theta$ with
  $\widetilde\theta_0=0$, $0<\widetilde\theta_1<1$ and $\widetilde\theta_2>0$.
  Its branching semigroup is therefore $\bbN_0$, so the chain case gives
  $\bbB\preccurlyeq\cT^*$ almost surely. Composing with the isometric inclusion
  $\cT^*\hookrightarrow\cT$ gives $\bbB\preccurlyeq\cT$.
\end{proof}

For any fixed pair of finitely supported supercritical laws and independent survival-conditioned
realisations, \cref{thm:embedding-hierarchy} gives, almost surely,
\[
  \cT\preccurlyeq\bbB\preccurlyeq\cT',
  \qquad
  \cT'\preccurlyeq\bbB\preccurlyeq\cT.
\]
We finish by checking the strict comparisons with the finite and ray categories. Every finite
tree $F$ is quasi-isometric to a point, which embeds isometrically into the ray $\bbN_0$.
The lower metric bound prevents an unbounded space from embedding into a bounded space, so
$F\prec\bbN_0$.

Every infinite locally finite rooted tree contains an infinite ray from its root, whose inclusion
is isometric. Thus $\bbN_0\preccurlyeq\cT$. By
\cref{thm:regime-obstructions-general}\labelcref{it:gen-obstr-rays}, a supercritical realisation
conditioned on survival almost surely contains three rays meeting only at their common initial
vertex. The proof of \cref{thm:three-rays}, including its use of \cref{thm:tree-stability},
uses only the two metric inequalities and does not use coarse surjectivity. The same argument
therefore excludes a quasi-isometric embedding into $\bbN_0$. Consequently, almost surely,
\[
  F\prec\bbN_0\prec\cT.
\]
These comparisons give the strict hierarchy stated in the introduction.

\clearpage

\clearpage
\appendix
\crefalias{section}{appendix}
\crefalias{subsection}{subappendix}

\section{Glossary}\label{app:glossary}

The two tables below collect notation and terminology used beyond the passage in which they are
introduced. Each entry points to its definition and page. We omit standard notation and variables
local to one proof or calculation. When a symbol has different persistent roles, the group heading
and the entry itself distinguish the relevant contexts.

\makeatletter
\newcommand{\glossarysubsection}{%
  \@startsection{subsection}{2}%
    \z@{.5\linespacing\@plus.7\linespacing}{.3\linespacing}%
    {\normalfont\bfseries}}
\makeatother

\glossarysubsection{Symbols}

\begingroup
\footnotesize
\def\glgroup#1{\multicolumn{4}{@{}l@{}}{\rule{0pt}{3.2ex}\itshape#1}\\[2pt]}%
\tablehead{\textbf{Symbol} & \textbf{Meaning} & \textbf{Defined at} &
 \textbf{Page}\\[2pt] \hline}
\begin{supertabular}{@{}L{0.17\textwidth}|L{0.49\textwidth}|L{0.15\textwidth}|c@{}}
\shrinkheight{24pt}
\glgroup{Trees and coarse geometry}
$\cN=\cN(N)$ & the tree of finite words over $\set{1,\dots,N}$ &
 \cref{sec:words} & \pageref{sec:words}\\
$\bbB$, $\bbB_h$ & the binary tree $\cN(2)$ and the complete binary tree of height $h$ &
 \cref{sec:words,sec:matching} & \pageref{sec:words}, \pageref{sec:matching}\\
$\Aut(\bbB)$, $\Aut(\bbB_h)$ & the groups of rooted automorphisms of the infinite and
 height-$h$ binary trees & \cref{sec:words,sec:setup} &
 \pageref{sec:words}, \pageref{sec:setup}\\
$\chi_v$, $\theta$, $\theta_i$ & the number of children assigned to $v$, the offspring law,
 and its mass at $i$ & \cref{sec:gw-trees} & \pageref{sec:gw-trees}\\
$\cT$, $\cT(\omega)$ & a Galton--Watson tree, with the realisation displayed when needed &
 \cref{sec:gw-trees} & \pageref{sec:gw-trees}\\
$\partial\cT$, $d_{\mathrm{vis}}$ & the boundary of $\cT$ and its visual metric, given by
 $d_{\mathrm{vis}}(\xi,\zeta)=2^{-\abs{\xi\wedge\zeta}}$ for $\xi\ne\zeta$ and by
 $d_{\mathrm{vis}}(\xi,\xi)=0$ &
 \cref{sec:visual,eq:visual-metric} & \pageref{sec:visual}\\
$Y_1\simeq Y_2$ & the metric spaces $Y_1$ and $Y_2$ are quasi-isometric &
 \cref{def:qi} & \pageref{def:qi}\\
$X\preccurlyeq Y$, $X\prec Y$ & quasi-isometric embeddability, and embeddability with no
 embedding in the reverse direction & \cref{sec:qi-qs} & \pageref{sec:qi-qs}\\
$\eta$ & the distortion gauge of an $\eta$-quasisymmetric homeomorphism &
 \cref{def:qs} & \pageref{def:qs}\\
\glgroup{Classification}
$\Lambda_\theta$ & the branching semigroup of an offspring law satisfying $\theta_0=0$ &
 \cref{def:branching-semigroup} & \pageref{def:branching-semigroup}\\
$s(\Lambda)$ & the least positive element of a branching semigroup &
 \cref{sec:common-generators} & \pageref{sec:common-generators}\\
$(\mathrm{Fin})$ & the finite quasi-isometry class &
 \cref{sec:classification} & \pageref{sec:classification}\\
(R), (F), (B) & the ray, full-tree and bushy coarse regimes and quasi-isometry classes &
 \cref{sec:classification} & \pageref{sec:classification}\\
(C), $(\mathrm{C}_\Lambda)$ & the coarse chain regime and its semigroup-indexed
 quasi-isometry classes &
 \cref{sec:classification} & \pageref{sec:classification}\\
\glgroup{Graph matching}
$G=(V,E)$, $\mu$ (labels), $0$ & the countable label graph, the label law and, in the Markov
 setting, the prescribed label & \cref{sec:matching,sec:process} &
 \pageref{sec:matching}, \pageref{sec:process}\\
$B_G(v,1)$, $b(v)$ & the closed unit ball around $v$ and its $\mu$-mass, the probability
 that an independent label is compatible with $v$ &
 \cref{sec:matching} & \pageref{sec:matching}\\
$\eta_{G,\alpha}(\mu)$ & the graph potential of the label law &
 \cref{eq:potential} & \pageref{eq:potential}\\
$p_c$ & the probability that two independent labels are compatible &
 \cref{sec:iid-scalar-recursion} & \pageref{sec:iid-scalar-recursion}\\
$\alpha$, $\phi_\alpha(t)$ & the potential exponent and the weight
 $\phi_\alpha(t)=t/(1-t)^\alpha$ & \cref{eq:alpha} & \pageref{eq:alpha}\\
$\cX$, $R$, $q(x)$, $r(x)$ & an abstract label space with a symmetric reflexive relation,
 and the bad and good degrees of $x$ & \cref{sec:contraction} &
 \pageref{sec:contraction}\\
$\Phi(R,\mu)$ & the potential of the relation $R$ under $\mu$ &
 \cref{eq:iid-potential} & \pageref{eq:iid-potential}\\
$R^\square$ & the symmetrised square of a relation &
 \cref{eq:square} & \pageref{eq:square}\\
$L_\alpha$, $K_\alpha$, $c_\alpha$ & the constants in the pointwise potential estimates &
 \cref{eq:maxima,eq:chord} & \pageref{eq:maxima}, \pageref{eq:chord}\\
$A_\alpha$, $B_\alpha$, $M_\alpha$ & the constants in the i.i.d.\ contraction estimates &
 \cref{eq:contraction-constants} & \pageref{eq:contraction-constants}\\
$x\approx^{\mathrm{leaf}}_hy$, $x\approx_hy$ & leaf matching and full matching of two
 height-$h$ labellings & \cref{sec:setup} & \pageref{sec:setup}\\
$R_0$, $\Phi_0$ & the one-site matching relation and its potential &
 \cref{eq:Phi0} & \pageref{eq:Phi0}\\
$\cX_h$, $\mu_h$ & the set of height-$h$ labellings and its product law &
 \cref{eq:base} & \pageref{eq:base}\\
$\Phi_h$, $\rho$ & the leaf potential and its contraction ratio &
 \cref{eq:leaf-red} & \pageref{eq:leaf-red}\\
$\Theta_h$, $\Phi^\square_h$, $\mathfrak K$ & the full potential, the child-pair potential
 and the full-recursion constant & \cref{eq:sigma,eq:full-hyp} & \pageref{eq:sigma}\\
$\mathsf P$, $s_j$, $\eta_{\mathsf P,\alpha}(p)$ & the path graph on $\bbN_0$, the compatible
 mass at $j$ and the corresponding path potential &
 \cref{sec:integer} & \pageref{sec:integer}\\
\glgroup{Branching-time models}
$l$, $X_l$ & a positive length field on $\bbB$ and its associated metric tree &
 \cref{sec:branching-times} & \pageref{sec:branching-times}\\
$l(w)$ & the independent random length assigned to the binary branch vertex $w$ &
 \cref{sec:branching-times} & \pageref{sec:branching-times}\\
$f^+_\lambda$, $f^\pm_\lambda$ & the shifted exponential and two-sided branching-time
 densities & \cref{sec:branching-times} & \pageref{sec:branching-times}\\
$X^+_\lambda$, $X^\pm_\lambda$ & the random metric trees with the corresponding independent
 branching-time lengths & \cref{sec:branching-times} & \pageref{sec:branching-times}\\
$p_k$, $q^{(D)}$, $\mathsf P_{\bbZ}$ & the one-sided bin masses, the symmetric bin law on
 $\bbZ$ and the path graph used to compare its bins &
 \cref{thm:two-sided-potential} & \pageref{thm:two-sided-potential}\\
\glgroup{Chains and quantisation}
$\kappa(v,\omega)$, $\Omega_0$ & the chain length starting at $v$ and the full-probability
 event on which every offspring number lies in $\set{1,2}$ and all chain lengths are finite &
 \cref{sec:encoding} &
 \pageref{sec:encoding}\\
$\iota(w,\omega)$, $\lambda(w,\omega)$ & the initial vertex and length of the chain indexed
 by $w\in\bbB$ & \cref{eq:phi-def,eq:lambda-def} &
 \pageref{eq:phi-def}, \pageref{eq:lambda-def}\\
$(\bbB,\lambda)$ & the labelled binary tree encoding a binary-support chain-regime realisation &
 \cref{sec:encoding} & \pageref{sec:encoding}\\
$\ell_D$, $p^{(D)}$, $D_0(\theta_1)$ & the level map, the law of the quantised labels and the
 scale above which the potential estimate applies &
 \cref{sec:quantisation,thm:quantised-law,thm:eta-bound} &
 \pageref{sec:quantisation}, \pageref{thm:quantised-law}, \pageref{thm:eta-bound}\\
$\cC_k$ & the level class $\set{n\colon D^k\leq n<D^{k+1}}$ &
 \cref{sec:quantisation} & \pageref{sec:quantisation}\\
$\gamma$, $D_k'$, $\ell'$, $\cC_k'(u)$ & the exponent ratio, the cut points, the randomised
 level map for a second chain law and its classes &
 \cref{thm:chain-coupling} & \pageref{thm:chain-coupling}\\
$\gamma_*$ & the comparability constant
 $\max(1,\gamma)/\min(1,\gamma/2)$ &
 \cref{thm:cross-law} & \pageref{thm:cross-law}\\
\glgroup{Skeletons and shapes}
$\bbP^*$ & the law conditioned on having infinite diameter &
 \cref{sec:gw-trees} & \pageref{sec:gw-trees}\\
$J$, $f$ & the largest supported offspring number and the offspring generating function
 $f(s)=\sum_{j=0}^J\theta_js^j$ & \cref{sec:general-classes} &
 \pageref{sec:general-classes}\\
$q$ (extinction), $\widetilde\theta$, $\theta^\dagger$ & the extinction probability, the skeleton
 offspring law and, when $q>0$, the conjugate bush law; for general support,
 $\theta^\dagger_j=\theta_jq^{j-1}$ &
 \cref{thm:harris,thm:harris-general,eq:conjugate-general} &
 \pageref{thm:harris}, \pageref{thm:harris-general}, \pageref{eq:conjugate-general}\\
$\widetilde f$, $\widetilde\nu$ & the generating function of the skeleton law and the
 reduced-skeleton arity law & \cref{eq:hs-transform,eq:reduced-law} &
 \pageref{eq:hs-transform}, \pageref{eq:reduced-law}\\
$\widetilde\lambda(w)$ & the chain length at $w$ in the binary-support skeleton &
 \cref{sec:shape-harris} & \pageref{sec:shape-harris}\\
$\sigma$, $\cS$, $\abs\sigma$ & a shape, the countable shape space and the number of vertices
 in its realisation; in the binary-support case
 $\sigma=(m,(b_1,\dots,b_{m-1}))$ &
 \cref{def:shape,def:shape-general} & \pageref{def:shape}, \pageref{def:shape-general}\\
$m(w)$, $k(w)$ & the neck length and terminating arity of the general shape at $w$ &
 \cref{sec:general-shapes} & \pageref{sec:general-shapes}\\
$S(w)$ & the shape indexed by $w$; the binary-support shapes are i.i.d.\ with law $\mu$,
 while the general shapes have conditional law $\mu_{k(w)}$ given the arity field &
 \cref{thm:shape-iid,thm:conditional-iid} &
 \pageref{thm:shape-iid}, \pageref{thm:conditional-iid}\\
$\mu_k$, $\mu$ (shapes) & the general shape law conditioned on terminating arity $k$ and its mixture
 against $\widetilde\nu$ & \cref{def:conditional-laws,thm:conditional-explicit} &
 \pageref{def:conditional-laws}, \pageref{thm:conditional-explicit}\\
$\cR_D$, $\rep_D$, $G_D$ & the shape net, its representative map and its label graph &
 \cref{def:shape-net,sec:general-relabel} &
 \pageref{def:shape-net}, \pageref{sec:general-relabel}\\
$\mu_D$ & the pushforward shape-label law on $\cR_D$ &
 \cref{sec:shape-eta,thm:relabel} & \pageref{sec:shape-eta}, \pageref{thm:relabel}\\
$v_0$ & the label of the one-vertex shape, also assigned to inserted profile vertices &
 \cref{thm:shape-eta,thm:cross-relabel} &
 \pageref{thm:shape-eta}, \pageref{thm:cross-relabel}\\
$\mathsf Q$, $\mu_{\mathsf Q}$, $\ell$, $\ell'$ & the coupling graph, its label law and the two
 label maps in the binary-support shape coupling & \cref{thm:shape-coupling} &
 \pageref{thm:shape-coupling}\\
$\pi_k$, $\pi'_k$, $\ell_k$, $\ell'_k$ & the transport couplings of the conditional shape
 laws with the common reference mixture $\mu$ and the resulting randomised label maps &
 \cref{thm:relabel,thm:cross-relabel} &
 \pageref{thm:relabel}, \pageref{thm:cross-relabel}\\
$C_k$, $h_C$ & a binary profile with $k$ ordered leaves and the largest chosen profile height &
 \cref{sec:binary-presentations} & \pageref{sec:binary-presentations}\\
$X_C$, $d_C$, $j_C$, $p_C$ & a binary encoding, its metric, the inclusion of the original
 assembly and the collapse back to it & \cref{thm:profile-encoding} &
 \pageref{thm:profile-encoding}\\
\glgroup{Markov matching}
$\cI$, $\cI_\mu$, $P_t$ & the type set, the types receiving random labels and the child-type
 law at type $t$ & \cref{sec:process} & \pageref{sec:process}\\
$J_t$ & an optional nonempty set of retained transitions at type $t$ &
 \cref{thm:markov-matching} & \pageref{thm:markov-matching}\\
$\cL_h$, $\rho_{t,h}$ & the space of maps $\bbB_h\to V$ and the law on this space of the
 labelling started at type $t$ &
 \cref{sec:type-potentials} & \pageref{sec:type-potentials}\\
$\cM_h(s,t)$, $\cM_\infty(s,t)$ & the finite- and infinite-height matching events for
 independent processes started at $s$ and $t$ &
 \cref{sec:process} & \pageref{sec:process}\\
$\zeta_{G,\alpha}(\mu)$, $\zeta_\alpha$ & the maximum of the one-site potential and the
 root-defect term $(1-b(0))/b(0)^\alpha$, and its abbreviated form; by convention,
 $\zeta_\alpha=\infty$ when $b(0)=0$ &
 \cref{eq:root-defect} & \pageref{eq:root-defect}\\
$g$, $\cI_i$, $H$ & the number and cyclic partition of the type classes and the uniform
 return bound; in the common-profile application, $g=\gcd R_{\mathrm{core}}$ &
 \cref{sec:finite-hypotheses,sec:return-times} &
 \pageref{sec:finite-hypotheses}, \pageref{sec:return-times}\\
$T$, $B$ & the largest type-class size and the transition budget &
 \cref{sec:unweighted,eq:transition-budget} & \pageref{sec:unweighted}\\
$\lambda_\alpha(\beta)$, $\lambda_\alpha$ & the linear coefficient for a chosen $\beta$ and
 its minimum over $\beta$, which determines the admissible exponents &
 \cref{eq:mean-constants,eq:markov-exponent} &
 \pageref{eq:mean-constants}, \pageref{eq:markov-exponent}\\
$L_\alpha(\beta)$, $K_\alpha(\beta)$, $C_\alpha(u)$ & the terms in the linear coefficient
 and the quadratic coefficient in the four-law estimate &
 \cref{eq:mean-constants,eq:four-law-constants} &
 \pageref{eq:mean-constants}, \pageref{eq:four-law-constants}\\
$r_\tau(x)$, $q_\tau(x)$, $W_\tau(x)$ & the matching degree against $\tau$, its complement
 and the inverse-degree weight, which is zero when the matching degree vanishes &
 \cref{sec:restricted-potential} & \pageref{sec:restricted-potential}\\
$\cE(\rho,\tau;R)$ & the restricted potential for a source law $\rho$, target law $\tau$
 and relation $R$ & \cref{sec:restricted-potential} &
 \pageref{sec:restricted-potential}\\
$\cE_h(s,t)$, $W_{t,h}$, $\delta_{1,h}$ & the typed restricted potential, inverse-degree weight and
 largest zero-degree probability at height $h$ &
 \cref{sec:type-potentials} & \pageref{sec:type-potentials}\\
$\cU_h(\cJ)$ & for a source type in $\cI_i$ and a nonempty set $\cJ\subseteq\cI_i$, the event
 of zero matching degree against every type in $\cJ$ &
 \cref{sec:unweighted} & \pageref{sec:unweighted}\\
$S_H$, $Z$ & the number of possible stopping positions and the uniform zero-degree bound &
 \cref{thm:explicit-zero-bound} & \pageref{thm:explicit-zero-bound}\\
$I_{\mathrm{bad}}$, $R_\mu$ & the averaged incompatible-root weight and compatible-root
 factor & \cref{sec:weighted-mass} & \pageref{sec:weighted-mass}\\
$G_H(x)$, $E(M)$, $Q(M)$, $A_\mu$ & the finite geometric sum, weighted zero-degree bound,
 averaged child-pair bound and root coefficient used in the scalar induction &
 \cref{eq:explicit-weighted-error,sec:averaging,sec:independent-root} &
 \pageref{eq:explicit-weighted-error}, \pageref{sec:averaging},
 \pageref{sec:independent-root}\\
\glgroup{Common-profile applications and quantitative bounds}
$\nu_L$, $\nu_R$, $S_L$, $S_R$ & the two arity laws and their finite supports &
 \cref{sec:common-presentations} & \pageref{sec:common-presentations}\\
$S_{\mathrm{core}}$, $\cA$, $s_0$ & the common generating core, the atoms of its branching
 semigroup and the least positive atom &
 \cref{sec:common-presentations,thm:common-atoms} &
 \pageref{sec:common-presentations}, \pageref{thm:common-atoms}\\
$F_L$, $F_R$ & the types assigned to original vertices in the two binary encodings &
 \cref{sec:types-degrees} & \pageref{sec:types-degrees}\\
$p_{\mathrm{core}}$ & a common positive lower bound for the probabilities of the core
 arities & \cref{thm:atomic-normalisation} & \pageref{thm:atomic-normalisation}\\
$R_{\mathrm{core}}$, $\Gamma$ & the core leaf depths and the additive semigroup they
 generate & \cref{sec:return-times} & \pageref{sec:return-times}\\
$M_\eta$ & the scalar barrier used when the prescribed label is always compatible &
 \cref{eq:zero-compatible-quadratic} & \pageref{eq:zero-compatible-quadratic}\\
$K_{\mathrm{match}}$ & the multiplier in the constructive uniform failure bound &
 \cref{thm:constructive-threshold} & \pageref{thm:constructive-threshold}\\
$\alpha_*$, $\underline\alpha_*$ & the threshold exponents for $\lambda_\alpha$ and its
 universal lower bound $\underline\lambda_\alpha$, respectively &
 \cref{sec:exponent-range,sec:contraction-lower-bound} &
 \pageref{sec:exponent-range}, \pageref{sec:contraction-lower-bound}
\end{supertabular}
\endgroup

\glossarysubsection{Terminology}

\begingroup
\footnotesize
\def\glgroup#1{\multicolumn{4}{@{}l@{}}{\rule{0pt}{3.2ex}\itshape#1}\\[2pt]}%
\tablehead{\textbf{Term} & \textbf{Meaning} & \textbf{Defined at} &
 \textbf{Page}\\[2pt] \hline}
\begin{supertabular}{@{}L{0.17\textwidth}|L{0.49\textwidth}|L{0.15\textwidth}|c@{}}
\glgroup{Coarse geometry and branching processes}
quasi-isometry & a map distorting distances by at most an affine amount and having coarsely
 dense image & \cref{def:qi} & \pageref{def:qi}\\
root-preserving quasi-isometry & a quasi-isometry between rooted spaces that maps the source
 root to the target root & \cref{sec:qi-qs} & \pageref{sec:qi-qs}\\
$D$-quasi-isometry & a quasi-isometry whose multiplicative, additive and density constants
 are all $D$ & \cref{sec:qi-qs} & \pageref{sec:qi-qs}\\
quasisymmetry & a homeomorphism that distorts distance ratios by at most a fixed gauge &
 \cref{def:qs} & \pageref{def:qs}\\
visual boundary, visual metric & the rays from the root, metrised by the depth of their
 longest common prefix & \cref{sec:visual} & \pageref{sec:visual}\\
offspring distribution & the finitely supported law of the number of children of a vertex &
 \cref{sec:gw-trees} & \pageref{sec:gw-trees}\\
supercritical & having offspring mean greater than one &
 \cref{sec:gw-trees} & \pageref{sec:gw-trees}\\
Galton--Watson tree & the random subtree of $\cN$ grown from independent offspring variables &
 \cref{sec:gw-trees} & \pageref{sec:gw-trees}\\
full tree & a rooted tree in which every vertex has at least two children. Every such tree
 of bounded valence is quasi-isometric to $\bbB$ &
 \cref{thm:bushy} & \pageref{thm:bushy}\\
Harris decomposition & the decomposition of a surviving Galton--Watson tree into its
 infinite skeleton and finite bushes &
 \cref{thm:harris,thm:harris-general} &
 \pageref{thm:harris}, \pageref{thm:harris-general}\\
skeleton & the subtree formed by the vertices with an infinite line of descent &
 \cref{sec:shape-harris,sec:general-classes} &
 \pageref{sec:shape-harris}, \pageref{sec:general-classes}\\
split & a skeleton vertex with at least two skeleton children &
 \cref{sec:shape-harris,sec:general-classes} &
 \pageref{sec:shape-harris}, \pageref{sec:general-classes}\\
neck & a skeleton vertex with exactly one skeleton child &
 \cref{sec:shape-harris,sec:general-classes} &
 \pageref{sec:shape-harris}, \pageref{sec:general-classes}\\
neck length & the number of vertices from the start of a neck through its terminating split &
 \cref{sec:general-classes} & \pageref{sec:general-classes}\\
bush & a finite subtree rooted at a child outside the skeleton &
 \cref{sec:shape-harris,sec:general-classes} &
 \pageref{sec:shape-harris}, \pageref{sec:general-classes}\\
reduced skeleton & the tree whose vertices are the skeleton splits and whose edges record
 successive splits along skeleton descent &
 \cref{sec:general-classes} & \pageref{sec:general-classes}\\
branching semigroup & for $\theta_0=0$, the additive submonoid generated by the excesses
 $k-1$ of the supported arities & \cref{def:branching-semigroup} &
 \pageref{def:branching-semigroup}\\
possible branching semigroup & an additive submonoid of $\bbN_0$ generated by a nonempty
 finite set of positive integers & \cref{sec:classification} &
 \pageref{sec:classification}\\
chain class $(\mathrm{C}_\Lambda)$ & the quasi-isometry class of chain-regime laws with
 branching semigroup $\Lambda$ & \cref{thm:trichotomy} & \pageref{thm:trichotomy}\\
line & a subset of a tree isometric to $\bbZ$ &
 \cref{sec:trichotomy-simple} & \pageref{sec:trichotomy-simple}\\
bush of depth $h$ & a finite component $P$ of $T\setminus\set c$ contained in
 $B_T(c,h)$ and containing a vertex at distance $h$ from $c$ &
 \cref{sec:trichotomy-simple} & \pageref{sec:trichotomy-simple}\\
\glgroup{Matching}
compatible & at graph distance at most one in the label graph &
 \cref{sec:matching} & \pageref{sec:matching}\\
leaf labelling, full labelling & a labelling of the leaves of $\bbB_h$, respectively of all
 its vertices & \cref{sec:matching} & \pageref{sec:matching}\\
leaf matching, full matching & the existence of one rooted automorphism making all leaf
 labels, respectively all vertex labels, compatible &
 \cref{sec:setup} & \pageref{sec:setup}\\
graph potential & the one-site quantity $\eta_{G,\alpha}(\mu)$ whose smallness drives the
 matching theorems & \cref{eq:potential} & \pageref{eq:potential}\\
good degree, bad degree & the mass compatible, respectively incompatible, with a point &
 \cref{sec:contraction} & \pageref{sec:contraction}\\
matching degree & the target mass related to a fixed source point &
 \cref{sec:restricted-potential} & \pageref{sec:restricted-potential}\\
restricted potential & the source average of the potential weight over points of positive
 matching degree against a possibly different target law &
 \cref{sec:restricted-potential} & \pageref{sec:restricted-potential}\\
symmetrised square & the relation on pairs holding when the straight or crossed pairing
 relates both coordinates & \cref{eq:square} & \pageref{eq:square}\\
transversal & a good main diagonal or antidiagonal of a $2\times2$ array &
 \cref{thm:transversal} & \pageref{thm:transversal}\\
product relation & the relation on a product space holding coordinatewise &
 \cref{thm:product} & \pageref{thm:product}\\
one-site potential & the potential of the label relation itself, to which both matching
 recursions reduce & \cref{eq:Phi0} & \pageref{eq:Phi0}\\
local dominance & concentration near one label sufficient for the graph-potential condition &
 \cref{thm:dominance} & \pageref{thm:dominance}\\
\glgroup{Chains, shapes and profiles}
chain & the vertices $\iota(w)1^l$, $0\leq l\leq\lambda(w)-1$, from the initial vertex
 $\iota(w)$ to the terminating branch point & \cref{sec:encoding} & \pageref{sec:encoding}\\
chain label & the length of the chain traversed at a vertex of $\bbB$, abbreviated to
 ``label'' in the chain sections &
 \cref{eq:lambda-def} & \pageref{eq:lambda-def}\\
normal form & the unique decomposition of a tree vertex into completed chains and its position
 in the final chain & \cref{eq:normal-form} & \pageref{eq:normal-form}\\
associated tree & the tree encoded by a labelling of $\bbB$ through the chain recursion &
 \cref{sec:transfer} & \pageref{sec:transfer}\\
transfer & the passage from multiplicative label comparability to a quasi-isometry of the
 associated trees & \cref{thm:transfer} & \pageref{thm:transfer}\\
level & the value of the quantising level map &
 \cref{sec:quantisation} & \pageref{sec:quantisation}\\
level class, representative & a fibre of the level map and the canonical chain chosen for it &
 \cref{sec:quantisation} & \pageref{sec:quantisation}\\
chain coupling & the randomised requantisation of a second chain law to the class probabilities
 of the first & \cref{thm:chain-coupling} & \pageref{thm:chain-coupling}\\
shape, entry, exit & a finite piece from the root or a child of one split to the next split,
 with its finite bushes and its two marked endpoints &
 \cref{def:shape,def:shape-general} &
 \pageref{def:shape}, \pageref{def:shape-general}\\
assembly & the rooted tree obtained by joining a family of finite marked pieces along an
 indexing skeleton & \cref{sec:shape-harris,sec:binary-presentations} &
 \pageref{sec:shape-harris}, \pageref{sec:binary-presentations}\\
marked quasi-isometry & a quasi-isometry of finite pieces that maps their entries and exits
 within the stated bound of one another & \cref{sec:shape-net} & \pageref{sec:shape-net}\\
$K$-comparable & admitting a $K$-marked quasi-isometry in at least one direction &
 \cref{sec:shape-net} & \pageref{sec:shape-net}\\
binary profile & a finite full binary tree whose ordered leaves replace the children of a
 split & \cref{sec:binary-presentations} & \pageref{sec:binary-presentations}\\
binary profile process & the Markov label field obtained by placing a random label at each
 profile root, the prescribed label at its other internal vertices and independent copies at
 its leaves & \cref{thm:profile-encoding-law} & \pageref{thm:profile-encoding-law}\\
common generating core & shared arities whose shifted values generate the common branching
 semigroup & \cref{sec:common-presentations} & \pageref{sec:common-presentations}\\
common presentation & a choice of common profiles for the core arities and composite profiles
 for all remaining supported arities & \cref{sec:common-presentations} &
 \pageref{sec:common-presentations}\\
atom & a positive semigroup element that is not a sum of two positive elements &
 \cref{thm:common-atoms} & \pageref{thm:common-atoms}
\end{supertabular}
\endgroup

\clearpage
\section{Paper--Lean correspondence}\label{app:lean}

Below, we list the Lean declarations and their correspondences to objects and results in this
paper. Notes on the differences in formulation are recorded in \texttt{paper-correspondence.yaml}
in the accompanying formalisation~\cite{AthreyaTroscheit2026}.

\subsection{Audited headline statements}
\leavevmode\par\nobreak

\begingroup
\scriptsize
\tablehead{\textbf{Paper statement} & \textbf{Declaration in \texttt{Challenge.lean}}\\[2pt]
\hline}
\begin{supertabular}{@{}L{0.34\textwidth}|L{0.62\textwidth}@{}}
\cref{thm:matching}, leaf bound &
 \texttt{Challenge.audit\_graph\_leaf\_matching\_bound}\\
\cref{thm:matching}, full bound &
 \texttt{Challenge.audit\_graph\_full\_matching\_bound}\\
\cref{thm:matching}, infinite tree &
 \texttt{Challenge.audit\_exists\_infinite\_tree\_matching\_graphAut}\\
\cref{thm:trichotomy}, finite class: bounded graphs &
 \texttt{Challenge.audit\_bounded\_graph\_qi\_point}\\
\cref{thm:trichotomy}, finite class: extinction &
 \texttt{Challenge.audit\_extinction\_of\_not\_supercritical}\\
\cref{thm:trichotomy}, complete classification &
 \texttt{Challenge.audit\_full\_classification\_ae\_iff}\\
\cref{thm:embedding-hierarchy}, mutual embeddability &
 \texttt{Challenge.audit\_mutual\_embeddability}\\
\cref{thm:twovalue}, almost-sure statement &
 \texttt{Challenge.audit\_twovalue\_ae\_tree\_family}\\
\cref{thm:twovalue}, quantitative rate &
 \texttt{Challenge.audit\_twovalue\_rate\_tree}\\
\cref{thm:markov-matching}, finite alternative &
 \texttt{Challenge.audit\_markov\_matching\_finite}\\
\cref{thm:markov-matching}, zero-compatible alternative &
 \texttt{Challenge.audit\_markov\_matching\_zero}\\
\cref{thm:markov-matching}, finite alternative at infinite height &
 \texttt{Challenge.audit\_markov\_matching\_finite\_infinite}\\
\cref{thm:markov-matching}, zero-compatible alternative at infinite height &
 \texttt{Challenge.audit\_markov\_matching\_zero\_infinite}\\
\end{supertabular}
\endgroup

\subsection{Mathematical objects}
\leavevmode\par\nobreak

\begingroup
\scriptsize
\noindent
\begin{tabular}{@{}L{0.33\textwidth}|L{0.61\textwidth}@{}}
\textbf{Mathematical object} & \textbf{Principal Lean declaration(s)}\\[2pt] \hline
Finite rooted trees and general-support shapes &
 \texttt{ChainClasses.RTree}, \texttt{ChainClasses.GShape}\\
Reduced skeleton and reduced arity law &
 \texttt{BranchingProcess.skeleton}, \texttt{ChainClasses.reducedPMF}\\
Binary profile &
 \texttt{GraphMarkovMatching.Stopped.MTree}\\
Finite family of profiles &
 \texttt{ChainClasses.Profile.Family}\\
Binary encoding and encoded assembly &
 \texttt{ChainClasses.CascadeEnc}, \texttt{ChainClasses.SkelAssembly}\\
Common-core presentation &
 \texttt{GraphMarkovMatching.Stopped.Presentation}\\
Markov model and cyclic classes &
 \texttt{GraphMarkovMatching.Stopped.Model},
 \texttt{GraphMarkovMatching.Stopped.Model.Phase}\\
Selected transitions &
 \texttt{GraphMarkovMatching.Stopped.Model.Selection}\\
General assembly &
 \texttt{ChainClasses.GAssembly}\\
Finite star obstruction &
 \texttt{ChainClasses.Star}\\
Quasi-isometric embedding &
 \texttt{BranchingProcess.IsQIEmbWith}, \texttt{BranchingProcess.QIEmbeddable}\\
Pruning rounds and retained vertices &
 \texttt{BranchingProcess.Retained}, \texttt{BranchingProcess.RetainedInf}\\
Concentrated representative of a law &
 \texttt{BranchingProcess.Offspring.concentrate}\\
\end{tabular}
\endgroup

\subsection{Library declarations}

\subsubsection{The i.i.d.\ matching argument}

Unless a namespace is displayed, the declarations in this table lie in
\texttt{GraphMatching}.

\begingroup
\scriptsize
\tablehead{\textbf{Paper declaration} & \textbf{Primary Lean declaration(s)}\\[2pt] \hline}
\begin{supertabular}{@{}L{0.29\textwidth}|L{0.67\textwidth}@{}}
\cref{def:qi} &
 \texttt{BranchingProcess.IsQIWith}, \texttt{BranchingProcess.QuasiIsometric},
 \texttt{BranchingProcess.IsQIEmbWith}, \texttt{BranchingProcess.QIEmbeddable}\\
\cref{thm:matching} &
 \texttt{graph\_leaf\_matching\_bound}, \texttt{graph\_full\_matching\_bound},
 \texttt{exists\_infinite\_tree\_matching\_graphAut}\\
\cref{thm:full-probability-recursion} &
 \texttt{full\_probability\_recursion}\\
\cref{thm:iid-scalar-failure} &
 \texttt{iid\_scalar\_failure}\\
\cref{thm:toolkit} &
 \texttt{GraphMarkovMatching.Stopped.phi\_mul\_le\_beta},
 \texttt{GraphMarkovMatching.Stopped.sq\_le\_K\_phi\_add},
 \texttt{GraphMarkovMatching.Support.rpow\_neg\_alpha\_le},
 \texttt{GraphMarkovMatching.Stopped.chord\_gen},
 \texttt{GraphMarkovMatching.Support.phi\_prod\_le}\\
\cref{thm:contraction} &
 \texttt{Phi\_square\_leA}, \texttt{Phi\_square\_le\_of\_smallA}\\
\cref{thm:transversal} &
 \texttt{transversal}, \texttt{bad\_row\_or\_bad\_col}\\
\cref{thm:product} &
 \texttt{PhiA\_prodPMF\_le}, \texttt{phiA\_prod\_le}\\
\cref{thm:reduction} &
 \texttt{leaf\_matching\_boundA}, \texttt{full\_matching\_boundA}\\
\cref{thm:dominance} &
 \texttt{etaGA\_le\_localDominance}\\
\cref{thm:double-exp} &
 \texttt{double\_exp\_matching}\\
\end{supertabular}
\endgroup

\subsubsection{Galton--Watson trees, shapes and classification}

Unless another namespace is displayed, the declarations in this table lie in
\texttt{ChainClasses}.

\begingroup
\scriptsize
\tablehead{\textbf{Paper declaration} & \textbf{Primary Lean declaration(s)}\\[2pt] \hline}
\begin{supertabular}{@{}L{0.29\textwidth}|L{0.67\textwidth}@{}}
\cref{def:branching-semigroup} &
 \texttt{shiftSupp} (generators)\\
\cref{thm:trichotomy} &
 \texttt{full\_classification\_rooted\_ae\_iff},
 \texttt{full\_classification\_ae\_iff}\\
\cref{thm:twovalue} &
 \texttt{twovalue\_ae\_tree\_family}, \texttt{twovalue\_rate\_tree}\\
\cref{thm:chains} &
 \texttt{tree\_eq\_chains}, \texttt{chain\_unique},
 \texttt{ray\_chain\_disjoint}\\
\cref{thm:geometric} &
 \texttt{chains\_ae}, \texttt{label\_marginal}, \texttt{label\_prod},
 \texttt{label\_iIndepFun}, \texttt{exists\_chain\_field}\\
\cref{thm:isometry} &
 \texttt{isometry\_of\_relabel}\\
\cref{thm:transfer} &
 \texttt{transfer}, \texttt{transferR}\\
\cref{thm:level} &
 \texttt{level\_comparable}, \texttt{level\_close\_comparable}\\
\cref{thm:quantised-law} &
 \texttt{qPMF}, \texttt{class\_sum},
 \texttt{quantised\_label\_iIndepFun}\\
\cref{thm:chain-coupling} &
 \texttt{coupling\_law}\\
\cref{thm:harris} &
 \texttt{IsBushy.survival\_decomposition}, \texttt{IsBushy.conjugate\_tilt},
 \texttt{IsBushy.conjugate\_subcritical},
 \texttt{BranchingProcess.survivalMeasure\_skelField\_pattern}\\
\cref{def:shape} &
 \texttt{Shape}, \texttt{Shape.realise}\\
\cref{thm:shape-iid} &
 \texttt{assembly\_isometric\_sample}, \texttt{survivalMeasure\_shapes},
 \texttt{shape\_decomposition}\\
\cref{thm:shape-mass} &
 \texttt{shape\_mass\_point}, \texttt{shape\_mass\_tail}\\
\cref{def:shape-net} &
 \texttt{netMem}, \texttt{repIdx}, \texttt{shapeFamily}\\
\cref{thm:shape-net} &
 \texttt{exists\_rep}, \texttt{repIdx\_le},
 \texttt{markedQI\_of\_repIdx\_adjacent}, \texttt{size\_repIdx\_le}\\
\cref{thm:shape-connected} &
 \texttt{NetLink}, \texttt{netLink\_connected\_shapeFamily}\\
\cref{thm:dilution} &
 \texttt{dilution}, \texttt{markedQI\_contract\_shape},
 \texttt{dilution\_count\_shape}\\
\cref{thm:glued-transfer} &
 \texttt{Assembly.glued\_transfer}\\
\cref{thm:eta-bound} &
 \texttt{qF\_le}, \texttt{quantised\_eta\_le}, \texttt{forall\_scale}\\
\cref{thm:cross-law} &
 \texttt{crosslaw\_rate\_tree}, \texttt{exists\_crosslaw\_rate\_tree},
 \texttt{crosslaw\_ae\_tree}\\
\cref{thm:shape-shrink} &
 \texttt{markedQI\_shape\_shrink}, \texttt{markedQI\_shape\_shrink\_of\_le},
 \texttt{markedQI\_shape\_fix}\\
\cref{thm:shape-eta} &
 \texttt{shape\_eta\_final}\\
\cref{thm:shape-coupling} &
 \texttt{exists\_isShapeCoupling\_etaS}\\
\cref{thm:hairy} &
 \texttt{bushy\_rate}, \texttt{bushy\_ae\_tree},
 \texttt{bushy\_rate\_two\_law},
 \texttt{bushy\_ae\_shape\_tree\_two\_law}\\
\cref{thm:trichotomy-simple} &
 \texttt{simple\_classification}, \texttt{simple\_classification\_ae\_iff}\\
\cref{thm:tree-stability} &
 \texttt{BranchingProcess.exists\_between\_dist\_le}\\
\cref{thm:bush-separation} &
 \texttt{BranchingProcess.not\_quasiIsometric\_of\_bush\_of\_line}\\
\cref{thm:three-rays} &
 \texttt{BranchingProcess.not\_quasiIsometric\_rayGraph}\\
\cref{thm:regime-obstructions} &
 \texttt{hasThreeRays\_sampleWord}, \texttt{nearLine\_inTree},
 \texttt{unbounded\_bushes\_ae}\\
\cref{thm:bottleneck} &
 \texttt{not\_isQIWith\_binary\_of\_thinBalls}\\
\cref{thm:converse} &
 \texttt{converse\_binary\_ae}\\
\cref{thm:harris-general} &
 \texttt{transform\_expansion}, \texttt{reduced\_sum},
 \texttt{conjugate\_mean\_general},
 \texttt{survivalMeasure\_gArities},
 \texttt{BranchingProcess.survivalMeasure\_skelField\_pattern},
 \texttt{BranchingProcess.survivalMeasure\_decorations\_treeLaw}\\
\cref{thm:full-support} &
 \texttt{reducedLaw\_pos}, \texttt{reducedWeight\_pos}\\
\cref{def:shape-general} &
 \texttt{GShape}, \texttt{GShape.realise}\\
\cref{thm:split-joint} &
 \texttt{splitJoint}, \texttt{splitJoint\_not\_product}\\
\cref{def:conditional-laws} &
 \texttt{gCondPMF}, \texttt{gMixMass}\\
\cref{thm:conditional-explicit} &
 \texttt{gCondMass}, \texttt{gCondMass\_def},
 \texttt{survivalMeasure\_gShapeSplit}, \texttt{gMixMass\_eq\_measure}\\
\cref{thm:conditional-iid} &
 \texttt{conditional\_iid}\\
\cref{thm:mass-uniform} &
 \texttt{ofReal\_pow\_le\_gCondMass}, \texttt{ofReal\_pow\_le\_gMixMass},
 \texttt{exists\_gShape\_size\_tail}\\
\cref{thm:general-dilution} &
 \texttt{RTree.cutCount\_le\_div}, \texttt{markedQI\_gContractTree},
 \texttt{gDilution}, \texttt{gDilution\_netMem}\\
\cref{thm:general-glued} &
 \texttt{GAssembly.glued\_transfer}\\
\cref{thm:relabel} &
 \texttt{relabel\_law}, \texttt{markedQI\_relabel},
 \texttt{exists\_gShapeCoupling\_relabel},
 \texttt{exists\_etaG\_gNetGraph\_small}\\
\cref{thm:product-form} &
 \texttt{product\_of\_constant\_conditional},
 \texttt{labelMeasure\_label\_pattern},
 \texttt{labelMeasure\_label\_marginal}\\
\cref{thm:cross-relabel} &
 \texttt{crossRelabel\_qi\_compat},
 \texttt{exists\_gShapeCoupling\_cond\_mix},
 \texttt{exists\_etaG\_gNetGraph\_small}\\
\cref{thm:profile-encoding} &
 \texttt{CascadeEnc.skelToB\_isQIMap},
 \texttt{Profile.profileEncHeight}\\
\cref{thm:profile-encoding-law} &
 \texttt{Profile.map\_encLab\_of\_pattern},
 \texttt{Profile.map\_encodedStates}\\
\cref{thm:profile-transfer} &
 \texttt{Profile.exists\_portrait\_of\_infMatchK},
 \texttt{Profile.sample\_qi\_of\_profile\_match}\\
\cref{thm:bushy} &
 \texttt{fullTree\_quasiIsometric\_binary}\\
\cref{thm:regime-obstructions-general} &
 \texttt{threeRays\_ae}, \texttt{nearLine\_gSample\_ae},
 \texttt{unbounded\_bushes\_gSample\_ae}, \texttt{thinBalls\_gSample\_ae}\\
\cref{thm:chain-separation} &
 \texttt{chainSeparation}\\
\cref{thm:chain-star-obstruction} &
 \texttt{Star}, \texttt{star\_pair\_no\_qi}\\
\cref{thm:hairy-general} &
 \texttt{bushy\_general\_ae}\\
\cref{thm:chain-general} &
 \texttt{chain\_general\_ae}\\
\cref{thm:core-weight-obstruction} &
 \texttt{Profile.no\_uniform\_core\_weight\_bound},
 \texttt{Profile.no\_uniform\_core\_weight\_infinite\_bound}\\
\cref{thm:concentrated-regular-subtree} &
 \texttt{BranchingProcess.retainedInf\_iff},
 \texttt{BranchingProcess.retainedProb\_succ},
 \texttt{BranchingProcess.ofReal\_le\_retainedProb},
 \texttt{BranchingProcess.ofReal\_le\_sampleMeasure\_retainedInf},
 \texttt{BranchingProcess.retainedTreeLaw\_eq\_treeLaw},
 \texttt{BranchingProcess.exists\_regular\_embedding\_of\_retainedInf},
 \texttt{BranchingProcess.exists\_binary\_embedding\_of\_retainedInf},
 \texttt{sampleMeasure\_spine\_fail}, \texttt{sampleMeasure\_spine\_fail\_le},
 \texttt{ae\_exists\_spine\_retained},
 \texttt{ae\_exists\_retained\_vertex},
 \texttt{ae\_qiEmbeddable\_binary\_of\_concentrated}\\
\cref{thm:embedding-hierarchy} &
 \texttt{qiEmbeddable\_sample\_binary},
 \texttt{ae\_qiEmbeddable\_binary\_sample},
 \texttt{embedding\_hierarchy\_ae},
 \texttt{mutual\_embeddability\_ae}\\
Strict hierarchy in \cref{sec:embedding-hierarchy} &
 \texttt{qiEmbeddable\_rayGraph\_of\_not\_survives},
 \texttt{not\_qiEmbeddable\_rayGraph\_of\_not\_survives},
 \texttt{qiEmbeddable\_rayGraph\_of\_survives},
 \texttt{ae\_not\_qiEmbeddable\_rayGraph},
 \texttt{BranchingProcess.not\_qiEmbeddable\_rayGraph}\\
\end{supertabular}
\endgroup

\subsubsection{The Markov matching argument}

Unless a nested namespace is displayed, the declarations in this table lie in
\texttt{GraphMarkovMatching.Stopped}.

\begingroup
\scriptsize
\tablehead{\textbf{Paper declaration} & \textbf{Primary Lean declaration(s)}\\[2pt] \hline}
\begin{supertabular}{@{}L{0.29\textwidth}|L{0.67\textwidth}@{}}
\cref{thm:markov-matching} &
 \texttt{markov\_matching\_finite}, \texttt{markov\_matching\_zero},
 \texttt{markov\_matching\_finite\_infinite},
 \texttt{markov\_matching\_zero\_infinite}, \texttt{finiteEps},
 \texttt{finiteKmatch}\\
\cref{thm:markov-matching}, retained transitions &
 \texttt{Model.Selection}, \texttt{markov\_matching\_finite},
 \texttt{markov\_matching\_finite\_infinite}\\
\cref{thm:markov-matching}, $\eta_\alpha$ bounds &
 \texttt{markov\_matching\_finite\_eta}, \texttt{Model.zeta\_le\_eta\_div}\\
\cref{thm:four-law-contraction} &
 \texttt{fourLaw\_contraction}\\
\cref{thm:zero-degree-dichotomy} &
 \texttt{Model.zeroEv\_succ\_imp}, \texttt{Model.zeroMass\_succ\_le}\\
\cref{thm:explicit-zero-bound} &
 \texttt{Model.zeroMass\_le\_of\_stops}, \texttt{Model.z\_le\_of\_returns},
 \texttt{Zbar\_fixed}\\
\cref{thm:explicit-weighted-bound} &
 \texttt{Model.wZero\_le\_of\_stops}, \texttt{Model.wZero\_le\_Efun}\\
\cref{thm:one-step-closure} &
 \texttt{Model.P\_succ\_le}, \texttt{Model.DmuC\_le},
 \texttt{Model.childPair\_le\_Qfun}, \texttt{Model.fourLawAt\_of\_params}\\
\cref{thm:zero-mixture-convexity} &
 \texttt{zero\_mixture\_convexity}\\
\cref{thm:averaged-child-pair} &
 \texttt{Model.childPair\_le\_Qfun}\\
\cref{thm:common-atoms} &
 \texttt{isAtom\_mem\_generators}, \texttt{atoms\_subset\_inter},
 \texttt{mem\_closure\_atoms}, \texttt{length\_le\_of\_sum\_atoms},
 \texttt{exists\_atoms\_finset}\\
\cref{thm:shallow-grafting} &
 \texttt{MTree.height\_composite\_le}, \texttt{MTree.minLeafDepth\_le\_log}\\
\cref{thm:fresh-positive} &
 \texttt{Presentation.freshPositive}\\
\cref{thm:atomic-normalisation} &
 \texttt{Presentation.selection}, \texttt{Presentation.inverseSum\_le}\\
\cref{thm:bounded-return} &
 \texttt{Presentation.commonReturns}\\
\cref{thm:common-semigroup-matching} &
 \texttt{two\_laws\_matching}, \texttt{atomicPresentation},
 \texttt{Presentation.presentation\_matching\_uniform},
 \texttt{Presentation.presentation\_matching\_uniform\_infinite}\\
\cref{thm:constructive-threshold} &
 \texttt{epsK\_spec}, \texttt{epsK0\_spec},
 \texttt{barrier\_of\_le\_epsK}\\
\end{supertabular}
\endgroup

\subsection{Statements without formal counterparts}
\leavevmode\par\nobreak

\begingroup
\scriptsize
\tablehead{\textbf{Paper declaration} & \textbf{Subject}\\[2pt] \hline}
\begin{supertabular}{@{}L{0.29\textwidth}|L{0.67\textwidth}@{}}
\cref{def:qs,thm:boundary} & Quasisymmetry and visual boundaries\\
\cref{thm:fractal-percolation} & Fractal-percolation application\\
\cref{thm:two-sided-potential,thm:branching-times} & Random branching times\\
\end{supertabular}
\endgroup


\begin{thebibliography}{99}
  \bibitem{Aldous1991}
    D.~Aldous.
    The continuum random tree.~I.
    \emph{Ann. Probab.}, \textbf{19} (1991), 1--28.

  \bibitem{Aldous1993}
    D.~Aldous.
    The continuum random tree.~III.
    \emph{Ann. Probab.}, \textbf{21} (1993), 248--289.

  \bibitem{AldousContatCurienHenard2023}
    D.~Aldous, A.~Contat, N.~Curien, and O.~H\'enard.
    Parking on the infinite binary tree.
    \emph{Probab. Theory Related Fields}, \textbf{187} (2023), 481--504.

  \bibitem{AnttilaErikssonBiquePyorala2025}
    R.~Anttila, S.~Eriksson-Bique, and A.~Py\"or\"al\"a.
    Quasisymmetric mappings on two variants of fractal percolation.
    Preprint (2025), \texttt{arXiv:2510.06900}.

  \bibitem{ArmstrongKempe2026}
    S.~Armstrong and J.~Kempe.
    Formalization of De Giorgi--Nash--Moser theory in Lean.
    Preprint (2026), \texttt{arXiv:2604.05984}.

  \bibitem{ArmstrongKuusi2025}
    S.~Armstrong and T.~Kuusi.
    Renormalization group and elliptic homogenization in high contrast.
    \emph{Invent. Math.}, \textbf{242} (2025), 895--1086.

  \bibitem{AthreyaNey1972}
    K.~B.~Athreya and P.~E.~Ney.
    \emph{Branching Processes}.
    Grundlehren der mathematischen Wissenschaften \textbf{196}, Springer, Berlin, 1972.

  \bibitem{AthreyaTroscheit2026}
    J.~S.~Athreya and S.~Troscheit.
    The quasi-isometry classes of Galton--Watson trees: Lean formalisation.
    Palomar, \texttt{PALOMAR-2026-09-25-000002} (2026).
    \url{https://palomar-registry.org/entry?id=PALOMAR-2026-09-25-000002}.

  \bibitem{BasuSidoraviciusSly2018}
    R.~Basu, V.~Sidoravicius, and A.~Sly.
    Lipschitz embeddings of random fields.
    \emph{Probab. Theory Related Fields}, \textbf{172} (2018), 1121--1179.

  \bibitem{BasuSly2014}
    R.~Basu and A.~Sly.
    Lipschitz embeddings of random sequences.
    \emph{Probab. Theory Related Fields}, \textbf{159} (2014), 721--775.

  \bibitem{Benjamini2013}
    I.~Benjamini.
    \emph{Coarse geometry and randomness}.
    Lecture Notes in Mathematics \textbf{2100}, Springer, Cham, 2013.

  \bibitem{BenjaminiPeres1994}
    I.~Benjamini and Y.~Peres.
    Markov chains indexed by trees.
    \emph{Ann. Probab.}, \textbf{22} (1994), 219--243.



  \bibitem{Bonk2011}
    M.~Bonk.
    Uniformization of Sierpi\'nski carpets in the plane.
    \emph{Invent. Math.}, \textbf{186} (2011), 559--665.

  \bibitem{BonkMerenkov2013}
    M.~Bonk and S.~Merenkov.
    Quasisymmetric rigidity of square Sierpi\'nski carpets.
    \emph{Ann. of Math. (2)}, \textbf{177} (2013), 591--643.

  \bibitem{BonkMeyer2022}
    M.~Bonk and D.~Meyer.
    Uniformly branching trees.
    \emph{Trans. Amer. Math. Soc.}, \textbf{375} (2022), 3841--3897.

  \bibitem{BonkSchramm2000}
    M.~Bonk and O.~Schramm.
    Embeddings of Gromov hyperbolic spaces.
    \emph{Geom. Funct. Anal.}, \textbf{10} (2000), 266--306.
%

  \bibitem{BridsonHaefliger1999}
    M.~R.~Bridson and A.~Haefliger.
    \emph{Metric spaces of non-positive curvature}.
    Grundlehren der mathematischen Wissenschaften 319, Springer, Berlin, 1999.

  \bibitem{BuragoKleiner1998}
    D.~Burago and B.~Kleiner.
    Separated nets in Euclidean space and Jacobians of biLipschitz maps.
    \emph{Geom. Funct. Anal.}, \textbf{8} (1998), 273--282.

  \bibitem{BuyaloSchroeder2007}
    S.~Buyalo and V.~Schroeder.
    \emph{Elements of asymptotic geometry}.
    EMS Monographs in Mathematics, European Mathematical Society, Z\"urich, 2007.

  \bibitem{ChrontsiosGaritsisIoannidisVellis2026}
    E.-K.~Chrontsios-Garitsis, F.~Ioannidis, and V.~Vellis.
    Universal quasiconformal trees.
    \emph{Adv. Math.}, \textbf{501} (2026), Article~111107.

  \bibitem{DavidSemmes1997}
    G.~David and S.~Semmes.
    \emph{Fractured fractals and broken dreams: self-similar geometry through metric and
    measure}.
    Oxford Lecture Series in Mathematics and its Applications 7, Oxford University Press, 1997.

  \bibitem{DeLaHarpe2000}
    P.~de la Harpe.
    \emph{Topics in geometric group theory}.
    Chicago Lectures in Mathematics, University of Chicago Press, Chicago, IL, 2000.

  \bibitem{Durrett2019}
    R.~Durrett.
    \emph{Probability: theory and examples}.
    Fifth edition. Cambridge Series in Statistical and Probabilistic Mathematics \textbf{49},
    Cambridge University Press, Cambridge, 2019.

  \bibitem{FlajoletSedgewick2009}
    P.~Flajolet and R.~Sedgewick.
    \emph{Analytic combinatorics}.
    Cambridge University Press, Cambridge, 2009.


  \bibitem{FraserMiaoTroscheit2018}
    J.~M.~Fraser, J.~J.~Miao, and S.~Troscheit.
    The Assouad dimension of randomly generated fractals.
    \emph{Ergodic Theory Dynam. Systems}, \textbf{38} (2018), no.~3, 982--1011.

  \bibitem{FraserTyson2026}
    J.~M.~Fraser and J.~T.~Tyson.
    Sobolev and quasiconformal distortion of intermediate dimension with applications to conformal
    dimension.
    \emph{J. Lond. Math. Soc. (2)}, \textbf{113} (2026), paper no. e70445.

  \bibitem{Hall1935}
    P.~Hall.
    On representatives of subsets.
    \emph{J. London Math. Soc.}, \textbf{10} (1935), 26--30.

  \bibitem{Harris1948}
    T.~E.~Harris.
    Branching processes.
    \emph{Ann. Math. Statistics}, \textbf{19} (1948), 474--494.

  \bibitem{Harris1963}
    T.~E.~Harris.
    \emph{The theory of branching processes}.
    Die Grundlehren der Mathematischen Wissenschaften \textbf{119}, Springer, Berlin, 1963.

  \bibitem{Heinonen2001}
    J.~Heinonen.
    \emph{Lectures on analysis on metric spaces}.
    Universitext, Springer-Verlag, New York, 2001.

  \bibitem{KolossvaryTroscheit2025}
    I.~Kolossv\'ary and S.~Troscheit.
    Recent progress on fractal percolation.
    To appear in \emph{Fractal Geometry and Stochastics VII}, Birkh\"auser;
    \texttt{arXiv:2508.08150}.

  \bibitem{Konig1927}
    D.~K\H{o}nig.
    \"Uber eine Schlussweise aus dem Endlichen ins Unendliche.
    \emph{Acta Sci. Math. (Szeged)}, \textbf{3} (1927), 121--130.

  \bibitem{LeGall2013}
    J.-F.~Le~Gall.
    Uniqueness and universality of the Brownian map.
    \emph{Ann. Probab.}, \textbf{41} (2013), 2880--2960.

  \bibitem{Lindquist2018}
    J.~Lindquist.
    Bilipschitz equivalence of trees and hyperbolic fillings.
    \emph{Conform. Geom. Dyn.}, \textbf{22} (2018), 225--234.

  \bibitem{LiYuZheng2026}
    Z.~Li, R.~Yu, and T.~Zheng.
    Quasi-isometric rigidity for random subsets in products of trees.
    Preprint, \texttt{arXiv:2606.05089} (2026).

  \bibitem{Lyons1992}
    R.~Lyons.
    Random walks, capacity and percolation on trees.
    \emph{Ann. Probab.}, \textbf{20} (1992), no.~4, 2043--2088.

  \bibitem{LyonsPeres2016}
    R.~Lyons and Y.~Peres.
    \emph{Probability on trees and networks}.
    Cambridge Series in Statistical and Probabilistic Mathematics, Cambridge University Press,
    New York, 2016.

  \bibitem{MackayTyson2010}
    J.~M.~Mackay and J.~T.~Tyson.
    \emph{Conformal dimension: theory and application}.
    University Lecture Series 54, American Mathematical Society, Providence, 2010.

  \bibitem{McMullen1998}
    C.~T.~McMullen.
    Lipschitz maps and nets in Euclidean space.
    \emph{Geom. Funct. Anal.}, \textbf{8} (1998), 304--314.

  \bibitem{Miermont2013}
    G.~Miermont.
    The Brownian map is the scaling limit of uniform random plane quadrangulations.
    \emph{Acta Math.}, \textbf{210} (2013), 319--401.

  \bibitem{MillerTian2026a}
    J.~Miller and Y.~Tian.
    The conformal dimension of the Brownian sphere is two.
    Preprint (2026), \texttt{arXiv:2603.24473}.

  \bibitem{MillerTian2026b}
    J.~Miller and Y.~Tian.
    The conformal dimension of the Brownian tree is one.
    Preprint (2026), \texttt{arXiv:2604.25769}.

  \bibitem{MillerTian2026c}
    J.~Miller and Y.~Tian.
    Quasisymmetric rigidity of the Brownian sphere.
    Preprint (2026), \texttt{arXiv:2606.10973}.

  \bibitem{MosherSageevWhyte2003}
    L.~Mosher, M.~Sageev, and K.~Whyte.
    Quasi-actions on trees I. Bounded valence.
    \emph{Ann. of Math. (2)}, \textbf{158} (2003), 115--164.

  \bibitem{NivenZuckermanMontgomery1991}
    I.~Niven, H.~S.~Zuckerman, and H.~L.~Montgomery.
    \emph{An introduction to the theory of numbers}.
    Fifth edition, Wiley, New York, 1991.

  \bibitem{Pansu1989}
    P.~Pansu.
    Dimension conforme et sph\`ere \`a l'infini des vari\'et\'es \`a courbure n\'egative.
    \emph{Ann. Acad. Sci. Fenn. Ser. A I Math.}, \textbf{14} (1989), 177--212.

  \bibitem{Papasoglu1995}
    P.~Papasoglu.
    Homogeneous trees are bilipschitz equivalent.
    \emph{Geom. Dedicata}, \textbf{54} (1995), 301--306.

  \bibitem{Peled2010}
    R.~Peled.
    On rough isometries of Poisson processes on the line.
    \emph{Ann. Appl. Probab.}, \textbf{20} (2010), 462--494.

  \bibitem{RossiSuomala2021}
    E.~Rossi and V.~Suomala.
    Fractal percolation and quasisymmetric mappings.
    \emph{Int. Math. Res. Not. IMRN}, \textbf{2021}, no.~10, 7372--7393.

  \bibitem{Sevastyanov1951}
    B.~A.~Sevast'yanov.
    The theory of branching random processes (in Russian).
    \emph{Uspekhi Mat. Nauk}, \textbf{6} (1951), no.~6(46), 47--99.

  \bibitem{SmilanskySolomon2026}
    Y.~Smilansky and Y.~Solomon.
    BiLipschitz and bounded displacement equivalence of Delone sets.
    Preprint (2026), \texttt{arXiv:2609.01763}.

  \bibitem{Troscheit2021}
    S.~Troscheit.
    On quasisymmetric embeddings of the Brownian map and continuum trees.
    \emph{Probab. Theory Related Fields}, \textbf{179} (2021), 1023--1046.

  \bibitem{Whyte1999}
    K.~Whyte.
    Amenability, bilipschitz equivalence, and the von Neumann conjecture.
    \emph{Duke Math. J.}, \textbf{99} (1999), 93--112.
\end{thebibliography}
\end{document}